\newcounter{enumglobal}
\newenvironment{myalign}{%
    \setlength{\mathindent}{0pt}%
    \setlength{\abovedisplayskip}{-\baselineskip}%
    \setlength{\abovedisplayshortskip}{\abovedisplayskip}%
    \align
  }%
  {\endalign}
\newenvironment{myalignst}{%
    \setlength{\mathindent}{0pt}%
    \setlength{\abovedisplayskip}{-\baselineskip}%
    \setlength{\abovedisplayshortskip}{\abovedisplayskip}%
    \csname align*\endcsname}
  {\csname endalign*\endcsname}
\newcommand{\fr}{\mathfrak}
\newcommand{\und}{\underline}
\newcommand{\ove}{\overline}
\def\re{\color{red}}
\def\blu{\color{cyan}}
\def\gre{\color{green}}
\def\ora{\color{orange}}
\def\deg{\operatorname{deg}}
\def\cups{\operatorname{cups}}
\def\caps{\operatorname{caps}}
\def\filt{\operatorname{filt}}
\def\res{\operatorname{res}}
\def\down{\vee}
\def\up{\wedge}
\def\id{\operatorname{id}}
\def\Id{\operatorname{Id}}
\def\C{{\mathbb C}}
\def\F{{\mathbb F}}
\def\D{{\mathcal D}}
\def\A{{\mathcal A}}
\def\K{{\mathcal K}}
\def\Z{{\mathbb Z}}
\def\O{{\mathcal O}}
\def\hom{{\operatorname{Hom}}}
\def\Ext{{\operatorname{Ext}}}
\def\End{{\operatorname{End}}}
\def\Mod{{\operatorname{Mod}}}
\def\mod{{\operatorname{mod}}}
\def\gmod{{\operatorname{gmod}}}
\def\res{{\operatorname{res}}}
\def\d{{\operatorname{d}}}
\def\nes{{\operatorname{nes}}}
\def\eps{{\varepsilon}}
\def\phi{{\varphi}}
\def\emptyset{{\varnothing}}
\def\la{{\lambda}}
\def\La{{\Lambda}}
\def\Ga{{\Gamma}}
\def\ga{{\gamma}}
\def\al{{\alpha}}
\def\be{{\beta}}
\def\alch{{\al \check{ \ }}}
\def\p{{\mathfrak p}}
\numberwithin{equation}{chapter}
\newtheorem{satz}{Satz}[chapter]
\newtheorem{Theorem}[satz]{Theorem}
\newtheorem{Cor}[satz]{Corollary}
\newtheorem{Lemma}[satz]{Lemma}
\newtheorem{Remark}[satz]{Remark}
\newtheorem{Example}[satz]{Example}
\newtheorem{Prop}[satz]{Proposition}
\newtheorem{Conjecture}[satz]{Conjecture}
\theoremstyle{definition}
\newtheorem{Def}[satz]{Definition}
\newtheorem{Notation}[satz]{Notation}
\renewcommand{\chaptermark}[1]{\markboth{#1}{}}
\renewcommand{\sectionmark}[1]{\markright{#1}}
\let\origdoublepage\cleardoublepage
\newcommand{\clearemptydoublepage}{%
  \clearpage
  {\pagestyle{empty}\origdoublepage}%
}
\begin{document}
	\pagestyle{empty}
	\begin{titlepage}%
\Large
	\begin{center}
		\textsc{Diplomarbeit}\\
		\bigskip
		\textit{$A_\infty$-Strukturen der $\Ext$-Algebra von Verma Moduln in der parabolischen Kategorie $\O$ \\ ($A_\infty$-structures on the algebra of extensions of Verma modules in the parabolic category $\O$)}
	\end{center}
	\vspace{\stretch{1}}
	\begin{center}
		Angefertigt am \\
		Mathematischen Institut
	\end{center}
	\vspace{\stretch{1}}
	\begin{center}
		Vorgelegt der \\
		Mathematisch-Naturwissenschaftlichen Fakult\"at der \\
		Rheinischen Friedrich-Wilhelms-Universit\"at Bonn
	\end{center}
	\vfill
	\begin{center}
		September 2010\\
		\bigskip
		von\\
		Angela Klamt\\
		\bigskip
		aus\\
		Leverkusen
	\end{center}
\end{titlepage}%

	\cleardoublepage
	
	\pagestyle{fancy}
	\pagenumbering{Roman}
	
	\tableofcontents
	
	\clearemptydoublepage

	\pagenumbering{arabic}
	\pagestyle{fancy}

%

\chapter*{Introduction}
\chaptermark{Introduction}
\addcontentsline{toc}{chapter}{Introduction}

\section*{English introduction}
\sectionmark{English introduction}
\addcontentsline{toc}{section}{English introduction}

In 1988 Shelton computed the dimensions of the $\Ext$-spaces $\Ext^k(M(\la), M(\mu))$ of Verma modules $M(\la)$ and $M(\mu)$ in the parabolic category $\O$ in the Hermitian symmetric cases \cite{Shel1988}. More recently Biagioli reformulated these recursion formulas combinatorially and gave a closed formula \cite{Biag2004}. A nice feature of (parabolic) Verma modules is that they form an exceptional sequence in the sense that there is a partially ordered set $(\La, \leq)$ of highest weights labelling these Verma modules such that for all $i\geq 0$ holds:
$$\hom(M(\la), M(\la))=\C \text{ and } \Ext^i(M(\la),M(\mu))=0 \text{ unless } \la \leq \mu.$$
The set $\La$ is infinite, but the category $\O^\p$ decomposes into indecomposable summands, so-called blocks, which each contain only finitely many parabolic Verma modules. In particular, restriction to the principal block yields finitely many Verma modules. Moreover, taking $M$ to be the direct sum of all Verma modules in this block leads to an algebra $\Ext(M,M)=\bigoplus e_\mu \Ext(M(\mu), M(\la)) e_\la$ with idempotents $e_\mu$ and $e_\la$. Therefore, we have much more structure if we regard it as an algebra than only looking at each vector space. The algebra structure can be obtained by viewing $\Ext(M,M)$ as the homology of the algebra $\hom(P_\bullet, P_\bullet)$ with $P_\bullet$ a projective resolution of $M$. In this situation multiplication reduces to the composition of maps between complexes. The construction of these projective resolutions and chain maps requires a deeper knowledge of the projective modules and morphisms between them. Note that already the question about non-vanishing $\hom$-spaces between parabolic Verma modules is non-trivial (cf. \cite{Boe85} or \cite[Theorem 9.10]{Hump08}).

 In $\cite{brun32008}$ Brundan and Stroppel developed a combinatorial description of the category $\O^\mathfrak{p}$ via a slight generalization of Khovanov's diagram algebra in the case of $\mathfrak{g}=\mathfrak{gl}_{n+m}$ and $\mathfrak{p}$ the parabolic subalgebra with Levi component $\mathfrak{gl}_n \oplus \mathfrak{gl}_m$ (cf. Theorem \ref{Th:equcat}). Using these combinatorial techniques along with classical Lie theoretical results, in the first part of this thesis we are able to compute projective resolutions and their morphisms. One crucial technical result, which is needed later, is a new proof of a version of the Delorme-Schmid theorem (cf. \cite{Delo77}, \cite{schm81}) in our situation.
 
 The main results of the first part are Theorems \ref{mainalg1} and \ref{mainalg2}:
 \begin{itemize}
 \item an explicit description of the $\Ext$-algebra in terms of a path algebra of a quiver with relations in the case for $n=1$ and $n=2$, respectively.
 \end{itemize}
The first algebra also occurs while analysing Floer Homology, as Khovanov and Seidel point out in \cite{khovanov2002quivers} (for more details cf. \cite{Asae2008}).
Since we finally want to obtain the algebra with all its structure as an algebra over the field $\C$, we carefully worked out the signs of the multiplication, too.

In the context of Fukaya categories these algebras come along with a natural $A_\infty$-structure. $A_\infty$-algebras are a generalization of algebras encoding more information about the object. An $A_\infty$-algebra, also known in topology as a strongly homotopic associative algebra, has higher multiplications satisfying so-called Stasheff relations (cf. \cite{Kell2001}). For example Keller points out that working with minimal models provides the possibility to recover the algebra of complexes filtered by a family of modules $M(i)$ from some $A_\infty$-structure on $\Ext(\bigoplus M(i), \bigoplus M(i))$. This $A_\infty$-structure is constructed as a minimal model, i.e. deduced from the algebra structure on $H^*(\hom(\bigoplus P(i)_{\bullet}, \bigoplus P(i)_{\bullet}))$. This kind of $A_\infty$-structure becomes interesting and is a natural structure on $\Ext(\bigoplus M(i), \bigoplus M(i))$.

Therefore, in the second part of the thesis we construct minimal models for our $\Ext$-algebras from above. Since we already viewed $\Ext(M,M)$ as the $\hom$-algebra of the projective resolutions, the previous results allow us to analyse the higher multiplications. For the construction of the minimal models we use a similar approach as it is worked out in \cite{Lu2009}. We combine formulas obtained by Merkulov \cite{Merk99} (for the case of superalgebras) and Kontsevich and Soibelman \cite{Kont2001}(for the $\F_2$-case). Again we have to keep track of the signs (which sometimes yields to tedious computations).

As a result, for $n=1$ we achieve the \textbf{first vanishing theorem} (Theorem \ref{Th:1stvanish}). In this theorem we prove the formality of the $\Ext$-algebra, i.e. we construct a minimal model with vanishing $m_k$ for $k \geq 3$. 

For $n=2$ in the \textbf{second vanishing theorem} (Theorem \ref{Th:2ndvanish}) we get an $A_\infty$-structure with non-vanishing $m_3$ but vanishing $m_k$ for $k \geq 4$. Therefore, we obtain an example of an $A_\infty$-algebra with non-trivial higher multiplications.

The main result of this thesis is presented in the \textbf{general vanishing theorem} (Theorem \ref{Th:genvanish}). It says that for arbitrary $n$ we get a minimal model with vanishing $m_k$ for $k \geq n^2+2$. The tools used for this proof are developed throughout the entire thesis.
\\
\\
\textbf{Structure of the thesis:} Part I of the thesis presents as the main result the algebra structure. In Chapter \ref{sec:Cat} the properties of the categories $\O$ and $\O^\p$ are stated. A review of the tools needed from homological algebra is given in Chapter \ref{ch:NotHom}. Chapter \ref{ch:Khov} starts with the basic theorem about the equivalence between $\O^\p$ and $K_m^n-\mod$. Later on the required definitions about $K_m^n$ are introduced and results about the grading, the endomorphisms of projective modules and the shape of projective resolutions are proved. In Chapter \ref{ch:Ext} we state Shelton's results and work them out for the Hermitian pair  ($\fr{sl}_{m+n}$, $(\fr{gl}_m \oplus \fr{gl}_n) \cap \fr{sl}_{m+n}$). Subsequent, in Chapter \ref{ch:spec}, we compute the algebra structure for $n=1$ and $n=2$. For ease of presentation most of the tedious computations are performed in the Appendix. At the end of the first part in Chapter \ref{ch:Koszul} the achieved results together with Koszul duality are used to give a proof of a graded version of Verma's Theorem in special cases.

The second part of the thesis deals with the $A_\infty$-structure. A short introduction and some basic ideas are given in Chapter \ref{ch:Ainf}. Finally in Chapter \ref{ch:expAinf} results from the first part and the previous chapter are combined to prove the main theorems mentioned above.
\\
\\
\textbf{Acknowledgements:} I would like to thank all those who supported me during the writing progress. My special gratitude belongs to my advisor Professor Catharina Stroppel for her encouraging support and helpful advice.
\newpage
\selectlanguage{ngerman}

\section*{German introduction}
\sectionmark{German introduction}
\addcontentsline{toc}{section}{German introduction}

1988 hat Shelton die Dimension der $\Ext$-Räume $\Ext(M(\la), M(\mu))$ von Vermamoduln $M(\la)$ und $M(\mu)$ in der parabolischen Kategorie $\O$ im hermitisch symmetrischen Fall berechnet (s. \cite{Shel1988}). Später hat Biagioli diese Rekursionsformeln umformuliert und in einer geschlossene Formel angegeben (s. \cite{Biag2004}). Eine besondere Eigenschaft von (parabolischen) Vermamoduln ist, dass diese eine exzeptionelle Folge bilden, das heißt, dass es eine partiell geordnete Menge $(\La, \leq)$ gibt, welche diese Vermamoduln indiziert und für alle $i\geq 0$ gilt:
$$\hom(M(\la), M(\la))=\C \text{ und } \Ext^i(M(\la),M(\mu))=0 \text{ außer für } \la \leq \mu.$$

Die Menge $\La$ ist nicht endlich, aber die Kategorie $\O^\p$ zerfällt in unzerlegbare Summanden, so genannte Blöcke, wovon jeder nur endlich viele Vermamoduln enthält. Insbesondere erhält man durch Einschränkung auf den prinzipalen Block eine Kategorie mit endlich vielen Vermamoduln. Wenn man $M$ als die direkte Summe aller Vermamoduln in diesem Block wählt, so erhält man eine Algebra $\Ext(M,M)=\bigoplus e_\mu \Ext(M(\mu), M(\la)) e_\la$ mit Idempotenten $e_\mu$ und $e_\la$. Wenn wir diesen Raum nun also als Algebra und nicht nur als Vektorraum betrachten, haben wir mehr Struktur, welche man erhalten kann, indem man die Homologie der Algebra $\hom(P_\bullet, P_\bullet)$ bestimmt, wobei $P_\bullet$ eine projektive Auflösung von $M$ ist. In dieser Situation wird die Multiplikation zur Verknüpfung von Abbildungen zwischen Komplexen. Die Konstruktion dieser Auflösung und der Kettenabbildungen erfordert detaillierte Kenntnis der projektiven Moduln. Schon die Frage, ob Abbildungen zwischen parabolischen Vermamoduln existieren, ist nicht trivial (s. \cite{Boe85} oder \cite[Theorem 9.10]{Hump08}).

In $\cite{brun32008}$ entwickeln Brundan und Stroppel mit Hilfe einer leichten Verallgemeinerung von Khovanovs Diagrammalgebra eine kombinatorische Beschreibung der Kategorie $\O^\mathfrak{p}$ für den Fall $\mathfrak{g}=\mathfrak{gl}_{n+m}$ und $\mathfrak{p}$ einer parabolischen Unteralgebra mit Levikomponente $\mathfrak{gl}_n \oplus \mathfrak{gl}_m$ (s. Theorem \ref{Th:equcat}). Unter Benutzung dieser kombinatorischen Techniken und klassischer Lie-theoretischer Resultate können wir im ersten Teil der Arbeit projektive Auflösungen und Morphismen zwischen diesen bestimmen. Ein wichtiges Resultat, welches wir später verwenden, ist ein Beweis einer Version des Delorme-Schmid Theorems (s. \cite{Delo77}, \cite{schm81}) in unserem Fall.

Die Hauptresultate des ersten Teils sind die Theoreme \ref{mainalg1} und \ref{mainalg2}:
 \begin{itemize}
 \item Eine explizite Beschreibung der $\Ext$-Algebra mit Hilfe von Wegealgebren eines Köchers mit Relationen für die Fälle $n=1$ bzw. $n=2$.
 \end{itemize}
 
Wie Khovanov und Seidel in \cite{khovanov2002quivers} erwähnen, taucht die erste Algebra auch bei der Analyse von Floer Homologie auf (für mehr Details s. \cite{Asae2008}). 
Da wir die Algebren mit ihrer gesamten Struktur als Algebren über $\C$ berechnen wollen, ist es notwendig, dass wir auch die Vorzeichen in den Multiplikationsregeln bestimmen.

Im Zusammenhang von Fukaya-Kategorien tragen diese Algebren natürliche $A_\infty$-Strukturen. $A_\infty$-Algebren sind eine Verallgemeinerung von Algebren, welche mehr Information über das Objekt beinhalten. Eine $A_\infty$-Algebra besitzt höhere Multiplikationen, welche die sogenannten Stasheff-Relationen erfüllen (s. \cite{Kell2001}). Keller zeigt in seiner Arbeit, dass es mithilfe der Theorie der minimalen Modelle möglich ist, aus einer $A_\infty$-Struktur auf $\Ext(\bigoplus M(i), \bigoplus M(i))$ die Kategorie der Komplexe, welche durch eine Familie von Moduln $M(i)$ filtriert sind, wiederzugewinnen.
Diese $A_\infty$-Struktur wird als minimales Modell konstruiert, d.h. sie kommt von einer $A_\infty$-Struktur auf $H^*(\hom(\bigoplus P(i)_{\bullet}, \bigoplus P(i)_{\bullet}))$. Eine solche $A_\infty$-Strukturen ist natürlich auf $\Ext(\bigoplus M(i), \bigoplus M(i))$.

Aufgrund dieser Betrachtungen konstruieren wir im zweiten Teil der Arbeit minimale Modelle für unsere $\Ext$-Algebra. Da wir diese schon zuvor als die Homologie der $\hom$-Algebra der projektiven Auflösungen betrachtet haben, können wir die Resultate aus dem ersten Teil nutzen um die höheren Multiplikationen zu analysieren. Zunächst konstruieren wir minimale Modelle mit einem ähnlichen Ansatz wie in \cite{Lu2009}. Dazu kombinieren wir Merkulovs Formeln \cite{Merk99} (im Fall für Superalgebren) und die von Kontsevich und Soibelman \cite{Kont2001} (im $\F_2$-Fall). Auch hier müssen wir auf die Vorzeichen achten (was manchmal zu sehr langwierigen Rechnungen führt).

Als Ergebnis zeigen wir für $n=1$ das "`\textbf{first vanishing theorem}"'(Theorem \ref{Th:1stvanish}). In diesem Satz beweisen wir die Formalität der $\Ext$-Algebra, d.h. wir konstruieren ein minimales Modell bei dem alle $m_k$ mit $k \geq 3$ verschwinden.

Für $n=2$ erhalten wir im "`\textbf{second vanishing theorem}"' (Theorem \ref{Th:2ndvanish}) eine $A_\infty$-Struktur mit nichtverschwindenen $m_3$, jedoch $m_k=0$ für $k \geq 4$. Daher haben wir ein Beispiel für eine $A_\infty$-Algebra mit nicht trivialen höheren Multiplikationen konstruiert.

Das Hauptresultat dieser Arbeit ist das "`\textbf{general vanishing theorem}"' (Theorem \ref{Th:genvanish}). Es besagt, dass es für beliebiges $n$ ein minimales Modell gibt, so dass $m_k$ für $k \geq n^2+2$ verschwindet. Die für den Beweis benötigten Hilfsmittel werden in der gesamten Arbeit entwickelt.
\\
\\
\textbf{Struktur der Arbeit:} 
Im ersten Teil der Arbeit werden die Hauptresultate über die Struktur der Algebra erarbeitet. In Kapitel \ref{sec:Cat} wird eine Einführung in die Kategorien  $\O$ and $\O^\p$ gegeben. Einen Überblick über die wichtigsten Hilfsmittel aus der homologischen Algebra findet man in Kapitel \ref{ch:NotHom}. Kapitel \ref{ch:Khov} beginnt mit dem wichtigen Satz über die Äquivalenz der Kategorien $\O^\p$ und $K_m^n-\mod$. Im Folgenden werden die benötigten Definitionen über $K_m^n$ eingeführt und Resultate über die Graduierung, Endomorphismen von projektiven Moduln und die Form der projektiven Auflösungen bewiesen. In Kapitel \ref{ch:Ext} stellen wir Sheltons Ergebnisse vor und arbeiten diese für das hermitische Paar ($\fr{sl}_{m+n}$, $(\fr{gl}_m \oplus \fr{gl}_n) \cap \fr{sl}_{m+n}$) aus. Anschließend, in Kapitel \ref{ch:spec}, berechnen wir die Algebrenstruktur für $n=1$ und $n=2$. Um die Lesbarkeit zu erhöhen, sind die meisten aufwändigen Rechnungen in den Anhang ausgelagert. Am Ende des ersten Teils werden in Kapitel \ref{ch:Koszul} die erzielten Resultate mit der Koszul-Dualität in Verbindung gesetzt und eine graduierte Version von Vermas Theorem in diesem speziellen Fall bewiesen.

Der zweite Teil der Arbeit beschäftigt sich mit der $A_\infty$-Struktur. Zunächst wird in Kapitel \ref{ch:Ainf} eine kurze Einleitung gegeben und grundlegende Ideen präsentiert. Abschließend verwenden wir in Kapitel \ref{ch:expAinf} Ergebnisse aus dem ersten Teil und dem vorhergehenden Kapitel um die oben erwähnten Hauptresultate zu beweisen.
\\
\\
\textbf{Danksagung:} An dieser Stelle möchte ich mich bei allen bedanken, die mich während meiner Diplomarbeit unterstützt haben. Mein besonderer Dank gilt meiner Professorin Catharina Stroppel für die umfangreiche und unermüdliche Betreuung.
	\clearemptydoublepage
\renewcommand{\sectionmark}[1]{\markright{\thesection\ #1}}	
\selectlanguage{english}
\part{Algebra structure}\label{Part:Alg}

\chapter{Categories $\O$ and $\O^\p$}\label{sec:Cat}

\section{Notation}
We just resume some basic notations about semisimple and reductive Lie algebras. For a more detailed explication see \cite{Hump08} for the semisimple case and \cite{Mood95} for the more general case of a reductive Lie algebra. Note that since a reductive Lie algebra is the direct sum of a semisimple Lie algebra and its center, there are only slight differences if one generalizes the theory from semisimple to reductive Lie algebras.

Let $\fr{g}$ be a reductive Lie algebra over $\C$ and $\fr{h} \subset \fr{b} \subset \fr{g}$ fixed \textit{Cartan} and \textit{Borel subalgebras}. Let $\p$ be a \textit{parabolic subalgebra}, i.e. $\fr{b} \subset \fr{p} \subset \fr{g}$. From now on fix $\fr{h}$ and $\fr{b}$. If $\fr{g}=\fr{gl}_n$ or $\fr{sl}_n$ we take the standard subalgebras, i.e. for $\fr{b}$ the upper triangular matrices and for $\fr{h}$ the diagonal matrices.

Denote by $\Phi \subset \fr{h}^*$ the \textit{root system} of $\fr{g}$ relative to $\fr{h}$. By choosing $\fr{b}$ as above we get a system of \textit{simple roots} $\Delta \subset \Phi$ and a \textit{positive system} $\Phi ^+ \subset \Phi$ respectively, such that we have the \textit{Cartan decomposition}
$$\fr{g}=\fr{n}^- \oplus \fr{h} \oplus \fr{n}$$
with $\fr{n}=\bigoplus_{\al \in \Phi^+} \fr{g}_\al$ and $\fr{h}$ acting on $\fr{g}_\alpha$ as $\al(\fr{h})$, such that $\fr{b}=\fr{n} \oplus \fr{h}$. 

We always work with a \textit{standard basis} of $\fr{n}$ and $\fr{n^-}$ consisting of root vectors $x_\al \in \fr{g}_\alpha$ and $y_\al \in \fr{g}_{-\alpha}$, $\al>0$ and vectors $h_\al=[x_\al, y_\al] \in \fr{h}$ for $\al \in \Delta$ so that all $\al(h_\al)=2$. Note that for a semisimple Lie algebra the $h_\al$ already give a basis of $\fr{h}$. In the reductive case there can be an additional direct summand from the center of the Lie algebra.

We define a \textit{dual root system} $\Phi \check{ \ }$ satisfying the condition
$$\langle \beta, \alch \rangle = \beta(h_\al) \text{ for all } \al \in \Phi.$$
Denote by $\rho=\frac{1}{2}\sum_{\al \in \Phi^+} \al$ the special weight satisfying $\langle \rho, \alch \rangle=1 \  \forall \  \al \in \Delta$ and by $\la_0$ the zero weight.

The symmetric group attached to $\Phi$ is its \textit{Weyl group} $W$, the group generated by all reflections $s_\al$ with $\al \in \Delta$. On $W$ we define a \textit{length function} $l(w)$, which gives us the smallest number of simple reflections needed to get $w$. If $w=s_1 \dots s_n$ with $s_i$ simple reflections and $l(w)=n$ we call such an expression \textit{reduced}. Define the \textit{Chevalley-Bruhat ordering} of $W$ as follows 
$$w' \leq w \Leftrightarrow w' \text{ occurs as a subexpression in a reduced expression for } w.$$
We get a natural action of $W$ on $\fr{h}^*$ with fixed point zero.

We also work with the \textit{dot-action} where the action is shifted by $-\rho$, i.e.
$$\text{ for }w \in W, \ \la \in \fr{h}^* \text{ define } w \cdot \la=w(\la+\rho)-\rho.$$
Later on we will need the \textit{integral weight lattice} defined as 
$$\La := \{ \la \in \fr{h}^*| \langle \la, \alch \rangle \in \Z \text{ for all } \al \in \Phi \}.$$
Its elements are called \textit{integral weights}. This space is stable under the action of $W$.\\
There is a natural partial ordering on $\La$ defined by $\mu \leq \la$ if and only if $\la-\mu$  is a $\Z^+$-linear combination of simple roots. Write $\Delta=\{ \al_1,\dots, \al_k\}$, then the group $\La$ is free abelian of rank $k$. The basis elements $\{\eps_1, \dots \eps_k\}$ satisfying $\langle \eps_i,  \al_j \check{ \ } \rangle = \delta_{ij}$ are called the \emph{fundamental weights}.

Also define the \textit{dominant weights} by
$$\La^+ := \{ \la \in \fr{h}^*| \langle \la, \alch \rangle \geq 0 \text{ for all } \al \in \Phi^+ \}.$$

The universal enveloping algebra $U(\fr{a})$ of a Lie algebra $\fr{a}$ is an essential tool in the construction of representations of the Lie algebra $\fr{a}$ which are the same as $U(\fr{a})$-modules. Note that every unital associative algebra $A$ is a Lie algebra with the Lie bracket given by the commutator $AB-BA$.
\begin{Def}
The \emph{universal enveloping algebra} $U(\fr{a})$ of a Lie algebra $\fr{a}$ is an associative unital algebra together with a Lie algebra morphism $\sigma: \fr{a} \to U(\fr{a})$ satisfying the following universal property:

For any unital associative $\C$-algebra $A$ and Lie algebra morphism $f:\fr{a} \to A$ there exists a unique morphism of unital associative algebras $\widetilde{f}:U(\fr{a}) \to A$ with $\widetilde{f} \circ \sigma = f$.
\begin{displaymath}
\xymatrix{\fr{a} \ar[r]^{\forall f}\ar[d]_{\sigma} & A\\ U(\fr{a}) \ar[ur]_{\exists \widetilde{f}}}
\end{displaymath}
\end{Def}
By standard arguments for universal properties the universal enveloping algebra is unique up to unique isomorphism. \\
One may also write down the algebra explicitly by taking the tensor algebra $T(\fr{a})$ and dividing out the ideal $I=\langle a \otimes b - b \otimes a -[a,b] | a,b \ \in \fr{a} \rangle$.\\
The adjoint action of $\fr{a}$ on itself induces an action of $\fr{a}$ on $U(\fr{a})$. For $x \in \fr{a}$ this action is given by $x\cdot u= xu - ux$ for $u \in U(\fr{a})$.

In the following we work with left $U(\fr{g})$-modules. Denote the category by $U(\fr{g})-\Mod$. 

For an arbitrary $U(\fr{g})$-module $M$ we define the \textit{weight space} relative to the action of the Cartan subalgebra $\fr{h}$. For $\la \in \fr{h}^*$ the weight space of weight $\la$ is defined as
$$M_\la:= \{v \in M | h \cdot v=\la(h)v \ \forall \ h \: \in \: \fr{h}\}.$$
\section{The category $\O$}
\subsection{Definition}
\begin{Def} The BGG-category $\O$ is defined to be the full subcategory of $U(\fr{g})-\Mod$  whose objects $M$ are satisfying the following conditions:
\begin{itemize}
		\item[$\O 1)$] $M$ is a finitely generated $U(\fr{g})$-module
		\item[$\O 2)$] $M$ is $\fr{h}$-semisimple, i.e., $M$ is a direct sum of its weight spaces $M=\bigoplus_{\la \in 		\fr{h}^*} M_\la$
		\item[$\O 3)$] $M$ is locally $\fr{n}$-finite: for each $v \in M$, the subspace $U(\fr{n}) \cdot v$ of $M$ is finite dimensional
\end{itemize}
\end{Def}
\begin{Lemma}[{\cite[Theorem 1.1 and Theorem 1.11]{Hump08}}]
$\O$ is an artinian, noetherian, abelian category with finite dimensional morphism spaces.
\end{Lemma}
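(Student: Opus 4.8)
The plan is to exhibit $\O$ as a full subcategory of $U(\fr{g})\text{-}\Mod$ that is closed under subobjects, quotients and finite direct sums, from which the abelian and Noetherian structure follows formally, and then to control objects of $\O$ tightly through their weights to extract the remaining finiteness properties. First I would establish that $U(\fr{g})$ is left Noetherian: by the PBW theorem the associated graded of $U(\fr{g})$ for the standard filtration is the symmetric algebra $S(\fr{g})$, a polynomial ring, hence Noetherian by Hilbert's basis theorem, and a filtered algebra whose associated graded is Noetherian is itself Noetherian. Consequently every submodule of a finitely generated $U(\fr{g})$-module is again finitely generated, so condition $\O1)$ is inherited by submodules and trivially by quotients, while $\O2)$ and $\O3)$ are manifestly local and pass to submodules, quotients and finite sums. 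Thus kernels, cokernels and images of morphisms between objects of $\O$, computed in $U(\fr{g})\text{-}\Mod$, again lie in $\O$, so $\O$ inherits the abelian structure; and an ascending chain of subobjects of $M\in\O$ is an ascending chain of submodules of a finitely generated module over a Noetherian ring, hence stabilizes, giving the Noetherian property.

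For finite-dimensionality of morphism spaces I would first bound the weights of an object. Given $M\in\O$, using $\O2)$ one chooses finitely many weight vectors $v_1,\dots,v_k$ generating $M$. By the PBW decomposition $U(\fr{g})=U(\fr{n}^-)U(\fr{h})U(\fr{n})$ together with $\O3)$, the space $V=\sum_i U(\fr{b})v_i$ is finite dimensional, is $\fr{b}$-stable, consists of weight vectors, and satisfies $M=U(\fr{n}^-)V$. Hence every weight $\nu$ of $M$ lies in a finite union of sets $\nu_i-\Z^+\Phi^+$, where the $\nu_i$ are the weights occurring in $V$, and each weight space $M_\nu$ is finite dimensional, being spanned by the finitely many weight-lowering monomials in $U(\fr{n}^-)$ applied to a basis of $V$. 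A morphism $f\colon M\to N$ in $\O$ is an $\fr{h}$-module map and so preserves weights; it is therefore determined by the images $f(v_i)\in N_{\nu_i}$, giving an injection $\hom(M,N)\hookrightarrow\prod_i N_{\nu_i}$ into a finite dimensional space, whence $\dim\hom(M,N)<\infty$.

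It remains to prove that $\O$ is Artinian, and I expect this to be the main obstacle, since objects are in general infinite dimensional and no direct descending-chain argument is available; the plan is to reduce the Artinian property to the statement that every $M\in\O$ has finite length. The crucial multiplicity bound is that a composition factor $L(\nu)$ occurs in $M$ with multiplicity at most $\dim M_\nu$, because each occurrence contributes a one-dimensional subspace to the $\nu$-weight space in any composition series, and $\dim M_\nu<\infty$ by the previous paragraph. To bound the set of $\nu$ that occur I would invoke the central character decomposition $M=\bigoplus_\chi M_\chi$ together with Harish-Chandra's description of the characters of $Z(\fr{g})$: the composition factors of a summand $M_\chi$ are the $L(\nu)$ whose highest weights lie in a single dot-orbit $W\cdot\la$, which is finite since $W$ is finite, and which meets the down-cone $\bigcup_i(\nu_i-\Z^+\Phi^+)$ in a finite set. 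Summing the finitely many finite multiplicities over the finitely many $\chi$ that occur then shows $M$ has finite length, and finite length yields both chain conditions, completing the proof that $\O$ is Artinian and reconfirming that it is Noetherian. The delicate points to handle carefully will be the verification that $V$ is finite dimensional and weight-graded, and above all the finiteness of the block decomposition $M=\bigoplus_\chi M_\chi$ for finitely generated $M$, which is where the interplay of $\O3)$, the PBW theorem and Harish-Chandra's theorem is essential.
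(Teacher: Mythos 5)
Your proposal is correct and follows essentially the same route as the source the paper cites for this lemma, namely the proofs of Theorems 1.1 and 1.11 in \cite{Hump08}: Noetherianity of $U(\fr{g})$ plus closure of $\O$ under subquotients gives the abelian and noetherian structure, finite-dimensional weight spaces give finite-dimensional morphism spaces, and the Harish-Chandra dot-orbit argument bounding composition multiplicities gives the artinian property. The paper itself offers no proof beyond this citation, so there is nothing further to compare.
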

\subsection{Special modules in $\O$} 
In the category $\O$ as in the other categories we will see later, there are three distinguished classes of modules whose isomorphism classes give a basis of the Grothendieck group of $\O$ and which can be labelled by weights $\la \in \fr{h}^*$. To avoid confusion and because we are later mostly working in $\O^\p$, we label the modules in $\O$ by an extra upper index, for instance $M^{\O}$. 

The Verma modules $M^\O(\la)$ are so-called highest weight modules of highest weight $\la$.
\begin{Def}
For $\la \in \fr{h}^*$ let $\C_\la$ be the $1$-dimensional $\fr{b}$-module with trivial $\fr{n}$-action (so for $h \in \fr{h} \ h \cdot x=\la(h)x \ \forall \ x \in \C_\la$). The \textit{Verma module} $M^\O(\la)$ is defined to be
$$M^\O(\la) := U(\fr{g}) \otimes_{U(\fr{b})} \C_\la$$
which has a natural structure of a left $U(\fr{g})$-module.
\end{Def}
\begin{Lemma}[{\cite[Theorem 1.2]{Hump08}, \cite[§2.3, Prop. 4]{Mood95}}]
The Verma module $M^\O(\la)$ has a unique maximal submodule and hence it has a simple head. 
\end{Lemma}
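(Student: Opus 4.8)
The plan is to exploit the weight space structure of $M^\O(\la)$: I would show that every proper submodule misses the top weight space $M^\O(\la)_\la$, so that the sum of all proper submodules is again proper and hence is the unique maximal submodule. This is the classical highest weight argument.

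First I would recall the PBW description. Ordering a PBW basis of $U(\fr{g})$ so that the root vectors $y_\al \in \fr{n}^-$ come last, multiplication induces a vector space isomorphism $U(\fr{n}^-) \otimes_\C \C_\la \xrightarrow{\sim} M^\O(\la)$. In particular $M^\O(\la)$ is $\fr{h}$-semisimple with all weights lying in $\la - \Z^+ \Delta$, the top weight space $M^\O(\la)_\la = \C(1 \otimes 1) =: \C v_\la$ is one-dimensional, and $v_\la$ generates $M^\O(\la)$ as a $U(\fr{g})$-module.

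Next I would verify that any submodule $N \subseteq M^\O(\la)$ is itself a weight module, i.e. $N = \bigoplus_\mu \bigl(N \cap M^\O(\la)_\mu\bigr)$; this is the standard fact that an $\fr{h}$-stable subspace of an $\fr{h}$-semisimple module is again $\fr{h}$-semisimple. The key step is then the following dichotomy: since $M^\O(\la)_\la = \C v_\la$ is one-dimensional and $v_\la$ generates the whole module, a submodule $N$ satisfies $N \cap M^\O(\la)_\la \neq 0$ if and only if $v_\la \in N$, which forces $N = U(\fr{g}) v_\la = M^\O(\la)$. Equivalently, every \emph{proper} submodule $N$ has $N \cap M^\O(\la)_\la = 0$.

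Finally I would let $N_{\max}$ be the sum of all proper submodules. Because each summand has trivial $\la$-weight space and passing to a fixed weight space commutes with forming sums of graded submodules (immediate from the weight decomposition of $M^\O(\la)$), the sum again satisfies $N_{\max} \cap M^\O(\la)_\la = 0$. Hence $N_{\max} \neq M^\O(\la)$ is itself a proper submodule, and by construction it contains every proper submodule, so it is the unique maximal submodule. The head $M^\O(\la)/N_{\max}$ therefore has no proper nonzero submodules, i.e. it is simple. The only point worth spelling out carefully is the last interchange of weight space and sum; everything else is a direct consequence of the one-dimensionality of the top weight space.
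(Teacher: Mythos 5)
Your proof is correct, and it is precisely the classical highest-weight argument behind the result the paper simply quotes from \cite[Theorem 1.2]{Hump08} without giving its own proof. Every step you outline --- the PBW description forcing a one-dimensional top weight space, the fact that submodules of an $\fr{h}$-semisimple module are weight modules, the dichotomy that a submodule meeting $M^\O(\la)_\la$ must be everything, and the compatibility of weight spaces with sums of graded submodules --- is sound and matches the cited source's approach.
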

\begin{Def}
For $\la \in \fr{h}^*$ denote the simple head of $M^\O(\la)$ by $L^\O(\la)$.
\end{Def}
The third class of modules we are interested in are the projective modules. 
\begin{Theorem}[{\cite[Theorem 3.8]{Hump08}, \cite[§2.10, Prop. 17]{Mood95}}]
Category $\O$ has enough projectives.
\end{Theorem}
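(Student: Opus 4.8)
The plan is to produce, for every integral weight $\gamma$, a projective object of $\O$ admitting a surjection onto the simple module $L^\O(\gamma)$; once this is available, the statement follows formally, since an arbitrary $N\in\O$ has finite length, so covering the semisimple head of $N$ by a finite direct sum of such projectives and lifting along $N\twoheadrightarrow \operatorname{hd}(N)$ via projectivity yields the required surjection from a projective onto $N$.

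First I would exhibit a single projective object. Using the decomposition of $\O$ into blocks $\O_\chi$ by central character together with the linkage principle, I claim that the Verma module $M^\O(\la)$ is projective whenever $\la$ is dominant, i.e. maximal under $\leq$ in its dot-orbit $W\cdot\la$. By the adjunction (Frobenius reciprocity) for the induction $M^\O(\la)=U(\fr g)\otimes_{U(\fr b)}\C_\la$, one has $\hom_\O(M^\O(\la),N)\cong\hom_{U(\fr b)}(\C_\la,N)=\{v\in N_\la:\fr n\cdot v=0\}$. The decisive point is that for $\la$ maximal in its block no weight of the form $\la+\al$ with $\al\in\Phi^+$ can occur in any $N\in\O_\chi$, because all weights of such $N$ lie below $W\cdot\la$; hence every vector of weight $\la$ is automatically a highest weight vector, so $\hom_\O(M^\O(\la),-)\cong(-)_\la$ is exact and $M^\O(\la)$ is projective.

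Next I would enlarge this supply by tensoring with finite-dimensional modules. For a finite-dimensional $\fr g$-module $E$ the functor $E\otimes-$ preserves $\O$ and is exact, and since $E^*\otimes-$ is a two-sided adjoint it carries projectives to projectives; in particular $E\otimes M^\O(\la)$ is projective. By the tensor identity $E\otimes M^\O(\la)\cong U(\fr g)\otimes_{U(\fr b)}(E\otimes\C_\la)$, and by filtering the $\fr b$-module $E\otimes\C_\la$ by one-dimensional pieces $\C_{\la+\nu}$ with $\nu$ running over the weights of $E$ (the extreme weights giving a submodule and a quotient respectively), one obtains a Verma flag of $E\otimes M^\O(\la)$ whose top quotient is $M^\O(\la+\nu_{\min})$ for the lowest weight $\nu_{\min}$ of $E$.

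Finally I would arrange the weights to hit an arbitrary target. Given $\gamma$, I choose $\la$ sufficiently dominant that $\gamma-\la$ is antidominant, hence realized as the lowest weight of some finite-dimensional $E$; then the projective module $E\otimes M^\O(\la)$ surjects onto its top Verma quotient $M^\O(\gamma)$, and composing with $M^\O(\gamma)\twoheadrightarrow L^\O(\gamma)$ produces a projective surjecting onto $L^\O(\gamma)$, completing the construction. I expect the main obstacle to be the first step: controlling the weights of a \emph{general} object in a block tightly enough, via the linkage principle and the maximality of $\la$, to force $\fr n\cdot v=0$ for every weight-$\la$ vector and thereby deduce exactness of $\hom_\O(M^\O(\la),-)$. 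The secondary difficulty is the bookkeeping in the last step, ensuring that each $\gamma$ genuinely arises as the top Verma quotient of a suitable $E\otimes M^\O(\la)$.
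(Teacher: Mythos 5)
Your proof is correct and is essentially the argument of \cite[Theorem 3.8]{Hump08}, which is exactly the source this thesis cites for the statement (the thesis gives no proof of its own): dominant Verma modules are projective by Frobenius reciprocity plus the linkage principle, tensoring a dominant Verma with a finite-dimensional module $E$ (an exact functor whose biadjoint $E^*\otimes-$ preserves projectivity) produces a projective with prescribed top Verma quotient, and finite length together with lifting through the head yields enough projectives. The one point to repair is your restriction to \emph{integral} $\gamma$: the theorem concerns all of $\O$, and for non-integral $\gamma$ the same argument goes through once ``dominant'' is interpreted as maximal in the linkage class of $\gamma$ (relative to its integral Weyl group) and one takes $\la=\gamma+\nu$ with $\nu$ integral and sufficiently dominant, so that $-\nu$ is still an integral antidominant weight and hence the lowest weight of a finite-dimensional module.
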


\begin{Def}
For $\la \in \fr{h}^*$ denote by  $P^\O(\la)$ the projective cover of $L^\O(\la)$.
\end{Def}
\begin{Lemma}[{\cite[Theorem 3.9]{Hump08}, \cite[§2.10, Prop. 10]{Mood95}}]
Every indecomposable projective module in $\O$ is isomorphic to some $P^\O(\la)$.
\end{Lemma}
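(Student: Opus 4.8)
The plan is to show that an arbitrary indecomposable projective $P$ in $\O$ coincides with the projective cover of one of its simple quotients. First I would use that $\O$ is artinian and noetherian, so that $P$ has finite length and in particular admits a nonzero simple quotient: the top $P/\operatorname{rad}(P)$ is a nonzero semisimple module, and every simple module in $\O$ is isomorphic to some $L^\O(\la)$ (the simple objects being exactly the heads of the Verma modules). I would then fix such a simple quotient $L^\O(\la)$ together with a surjection $p\colon P\twoheadrightarrow L^\O(\la)$.

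Next I would compare $p$ with the projective cover $\pi\colon P^\O(\la)\twoheadrightarrow L^\O(\la)$. Since $P$ is projective and $\pi$ is surjective, $p$ lifts to a morphism $f\colon P\to P^\O(\la)$ with $\pi\circ f=p$. The crucial point is that $f$ is itself surjective: for any $x\in P^\O(\la)$ there is $z\in P$ with $\pi(x)=p(z)=\pi(f(z))$, so that $x-f(z)\in\ker\pi$; hence $\operatorname{im}(f)+\ker\pi=P^\O(\la)$, and because $\ker\pi$ is superfluous (this is precisely the essential-surjection property defining a projective cover) one concludes $\operatorname{im}(f)=P^\O(\la)$.

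Finally, projectivity of $P^\O(\la)$ turns the epimorphism $f$ into a split epimorphism: lifting $\id_{P^\O(\la)}$ through $f$ yields $g\colon P^\O(\la)\to P$ with $f\circ g=\id$, so $P\cong\ker(f)\oplus P^\O(\la)$. As $P$ is indecomposable and $P^\O(\la)\neq 0$, the summand $\ker(f)$ must vanish, whence $P\cong P^\O(\la)$, as desired.

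The main obstacle I anticipate is the surjectivity of the lift $f$, which is exactly where the \emph{essential} (superfluous-kernel) nature of the projective cover, rather than mere projectivity of $P^\O(\la)$, is indispensable; the remaining steps are formal consequences of projectivity and of $\O$ having finite-length objects. One should also make sure the ambient facts are in place, namely the existence of the projective covers $P^\O(\la)$ and the identification of the simple objects with the $L^\O(\la)$, both of which are available from the results quoted earlier.
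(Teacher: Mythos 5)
Your proof is correct and complete: the lifting argument through the projective cover, surjectivity via the superfluous kernel, and the splitting forced by indecomposability are exactly the standard argument. The paper itself gives no proof of this lemma — it simply cites \cite[Theorem 3.9]{Hump08} and Moody--Pianzola — and your argument is essentially the one found in those references, so there is nothing further to reconcile.
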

\subsection{Blocks}\label{sec:block}
\begin{Theorem}[{\cite[Prop. 1.12]{Hump08}, \cite[§2.12 Prop.1, §6.7 Prop 4]{Mood95}}]
The above category splits into direct summands (so-called ``blocks'' )  $O_\mu$ such that
$$\O = \bigoplus_\mu O_\mu$$
where $\mu$ runs over a complete system of representatives for orbits under the dot action and $\O_\la$ the summand generated by the simple modules $L(\mu)$ with $\mu \in W \cdot \la$. 
\end{Theorem}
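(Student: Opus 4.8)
The plan is to separate the category by the action of the center $Z := Z(U(\fr{g}))$ of the universal enveloping algebra, and then to translate the bookkeeping by central characters into bookkeeping by dot-orbits using the Harish-Chandra theorem. First I would record how $Z$ acts on highest weight modules. The highest weight vector $v_\la$ of $M^\O(\la)$ generates the module, and every $z \in Z$ commutes with the $U(\fr{g})$-action; hence $z$ preserves the weight space $(M^\O(\la))_\la = \C v_\la$ and acts on $v_\la$ by a scalar $\chi_\la(z)$, and since $v_\la$ generates, $z$ acts on all of $M^\O(\la)$ by this same scalar. This defines an algebra homomorphism $\chi_\la : Z \to \C$, the \emph{central character} attached to $\la$, and the same character governs the action of $Z$ on the simple quotient $L^\O(\la)$.

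Next I would invoke the Harish-Chandra theorem as the deep input: for $\la, \mu \in \fr{h}^*$ one has $\chi_\la = \chi_\mu$ if and only if $\mu \in W \cdot \la$. The core of the argument is then a generalized-eigenspace decomposition. For an object $M$ of $\O$ and a central character $\chi$, set
$$M^\chi := \{ v \in M \mid (z - \chi(z))^N v = 0 \text{ for all } z \in Z \text{ and } N \gg 0\}.$$
Since $\O$ is artinian and noetherian, every $M$ has finite length, so only finitely many central characters occur among its composition factors $L^\O(\nu)$, and on each such factor $Z$ acts by the scalar $\chi_\nu$. Because $Z$ is commutative and only finitely many characters appear, a primary-decomposition (Chinese remainder) argument inside the image of $Z$ in $\End(M)$ yields $M = \bigoplus_\chi M^\chi$ as $U(\fr{g})$-modules, the sum running over the finitely many occurring $\chi$. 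Any morphism $f : M \to N$ commutes with $Z$, hence satisfies $f(M^\chi) \subseteq N^\chi$, so the decomposition is functorial.

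Finally I would package this categorically. For a dot-orbit representative $\mu$ let $\O_\mu$ be the full subcategory of objects $M$ with $M = M^{\chi_\mu}$; equivalently, the objects all of whose composition factors are of the form $L^\O(\nu)$ with $\nu \in W \cdot \mu$. Functoriality of the assignment $M \mapsto (M^\chi)_\chi$ gives the decomposition $\O = \bigoplus_\mu \O_\mu$, with the sum over a complete set of dot-orbit representatives, and the Harish-Chandra theorem guarantees that distinct representatives yield distinct, orthogonal summands.

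The main obstacle is the decomposition step itself: one must verify that the generalized-eigenspace data by central characters is genuinely a direct sum of \emph{submodules} and not merely a filtration. This is where finiteness of length is essential, since it bounds both the number of occurring characters and the nilpotency degree, and where the commutativity of $Z$ together with the separation of characters supplied by Harish-Chandra allows the Chinese remainder theorem to be applied. The Harish-Chandra theorem, although cited rather than reproved here, is the substantive ingredient that makes the dot-orbit labelling correct.
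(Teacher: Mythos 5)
Your proposal is correct, and it is essentially the argument behind the result the paper cites: the paper itself gives no proof here, attributing the statement to \cite[Prop. 1.12]{Hump08} and \cite{Mood95}, and the proof in those references is exactly the central-character route you follow (central characters on highest weight modules, Harish-Chandra's theorem to identify $\chi_\la = \chi_\mu$ with $\mu \in W\cdot\la$, generalized eigenspace decomposition, functoriality). The one point where you diverge mildly from the standard reference proof is the mechanism for the decomposition $M = \bigoplus_\chi M^\chi$: Humphreys obtains it weight space by weight space, using that $Z$ preserves each finite-dimensional weight space $M_\la$ (as $Z$ lies in the zero-weight part of $U(\fr{g})$ for the adjoint action), so that ordinary linear algebra applies; you instead use finite length of $M$ to bound the occurring characters and the nilpotency degree, and run a Chinese-remainder argument in the image of $Z$ inside $\End_{U(\fr{g})}(M)$. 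Your route is equally valid, and the inputs it needs (finite length, finite-dimensional morphism spaces, so that the image of $Z$ in $\End_{U(\fr{g})}(M)$ is a finite-dimensional commutative algebra) are precisely what the paper records in its earlier Lemma quoting \cite[Theorem 1.1 and Theorem 1.11]{Hump08}; the weight-space argument has the small advantage of not needing finite length at all, only finite-dimensionality of weight spaces.
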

Note that for integral weights $\la \in \La$ the subcategory $O_\la$ is a block in the usual sense \cite[Prop. 1.13]{Hump08}.

One can show that $M(\mu)$ and $P(\mu)$ are lying in $\O_\mu$.

Note that since an element of the center of $\fr{g}$ operates by a scalar on $M(\mu)$ and this central character $\chi(z)$ is invariant under the dot-action of $W$ (cf \cite[Chapter 1.7]{Hump08}) the center acts by $\chi(z)$ on the whole block.

Note that $\Ext_\O^i(M,N)={0}$ for $M$, $N$ in different blocks.
$\O_0$ is called the \textit{principal block} since it is the block containing the trivial representation. 

\section{The category $\O^\p$}
\subsection{Notations}
We work with a standard parabolic subalgebra $\p$. For a subset $I \subset \Delta$ we get a root system $\Phi_I \subset \Phi$ and the standard parabolic subalgebra $\p=\p_I=\fr{l}_I \oplus \fr{u}_I$ which is the Lie algebra with the chosen root system and $\fr{l}=\fr{l}_I$ the corresponding Levi subalgebra ($\fr{l}=\fr{h} \oplus \bigoplus_{\al \in \Phi_I} \fr{g}_\al$). For $I=\emptyset$ we get $\p= \fr{b}$, for $I=\Delta$ we get $\p_I=\fr{g}$. Let $W_\fr{l}$ be the Weyl group generated by all $\al \in I$. Denote by $W^\fr{l}$ the set of minimal-length coset representatives for $W_\fr{l} \backslash W$, that is 
\begin{equation*}W^\fr{l}=\{ w \in W |\  \forall \  \al \in I : l(s_\al w)>l(w)\}.\end{equation*}
We also have $\p$-dominant weights
\begin{equation}\label{pdom}\La_I^+:=\{\la \in \fr{h}^* | \langle \la, \alch \rangle \in \Z^+ \text{ for all } \al \in I \}\end{equation}
Denote by $E(\la)$ the finite dimensional $\fr{l}_I$-module with highest weight $\la \in \La_I^+$.
\subsection{Definition}
\begin{Def}
The category $\O^\p$ is the full subcategory of $\O$ consisting of all modules which are locally $\p$-finite.
\end{Def}
\subsection{Special modules in $\O^\p$}
A simple module in $\O^\p$ must be simple in $\O$. Moreover we get
\begin{Prop}[{\cite[Prop 9.3. and Theorem 9.4.]{Hump08}}]
The simple module $L^\O(\la)$ lies in $\O^\p$ if and only if $\la \in \La_I^+$. Denote those modules just by $L(\la)$.
\end{Prop}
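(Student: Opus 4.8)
The plan is to reduce the statement to a question about local $\fr{l}$-finiteness and then settle the two implications separately. First I would observe that, for a module $M$ already lying in $\O$, being locally $\p$-finite is equivalent to being locally $\fr{l}_I$-finite, where $\fr{l}=\fr{l}_I$ is the Levi factor. Indeed, condition $\O 3$ makes $M$ locally $\fr{n}$-finite and hence locally $\fr{u}_I$-finite, since $\fr{u}_I\subset\fr{n}$. Using the Poincar\'e--Birkhoff--Witt decomposition $U(\p)=U(\fr{u}_I)U(\fr{l})$ together with the fact that $\fr{u}_I$ is an ideal in $\p$, one checks that for any $v$ the space $U(\p)v=U(\fr{u}_I)\bigl(U(\fr{l})v\bigr)$ is finite dimensional as soon as $U(\fr{l})v$ is. The converse inclusion $\fr{l}\subset\p$ gives the reverse implication. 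So the task becomes: $L^\O(\la)$ is locally $\fr{l}$-finite if and only if $\la\in\La_I^+$.

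The forward implication is the easy one. Suppose $L^\O(\la)$ is locally $\fr{l}$-finite, and let $v_\la^+$ denote a highest weight vector generating $L^\O(\la)$. Since $\fr{n}_\fr{l}:=\fr{l}\cap\fr{n}$ annihilates $v_\la^+$, this vector is an $\fr{l}$-highest weight vector of weight $\la$, and by hypothesis it lies inside a finite dimensional $\fr{l}$-submodule. For each simple root $\al\in I$ I would restrict to the $\fr{sl}_2$-triple $(x_\al,h_\al,y_\al)$: inside a finite dimensional representation the eigenvalue of $h_\al$ on a vector killed by $x_\al$ is a non-negative integer, so $\langle\la,\alch\rangle=\la(h_\al)\in\Z^+$. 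As this holds for every $\al\in I$, we get $\la\in\La_I^+$.

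For the converse I would first produce the Serre-type relations. Set $m_\al:=\langle\la,\alch\rangle\in\Z^+$ for $\al\in I$. In the Verma module $M^\O(\la)$ the vector $y_\al^{\,m_\al+1}v_\la^+$ is a primitive vector of weight $s_\al\cdot\la$ (an application of the $\fr{sl}_2$-theory), hence lies in the maximal submodule and maps to zero in the simple quotient $L^\O(\la)$. Thus $y_\al^{\,m_\al+1}v_\la^+=0$ for all $\al\in I$, which forces $E:=U(\fr{l})v_\la^+$ to be a finite dimensional $\fr{l}$-module, namely $E(\la)$. Using the PBW decomposition $\fr{g}=\fr{u}_I^-\oplus\fr{l}\oplus\fr{u}_I$ and $\fr{u}_I v_\la^+=0$, I obtain $L^\O(\la)=U(\fr{u}_I^-)E$. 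Finally, since $\fr{l}$ normalises $\fr{u}_I^-$, its adjoint action makes each $\fr{h}$-weight component $U(\fr{u}_I^-)_{-\eta}$ a finite dimensional $\fr{l}$-submodule of $U(\fr{u}_I^-)$; the action map then sends the finite dimensional $\fr{l}$-module $U(\fr{u}_I^-)_{-\eta}\otimes E$ $\fr{l}$-equivariantly onto a finite dimensional $\fr{l}$-submodule $U(\fr{u}_I^-)_{-\eta}E$ of $L^\O(\la)$. Summing over $\eta$ exhibits $L^\O(\la)$ as a sum of finite dimensional $\fr{l}$-modules, proving local $\fr{l}$-finiteness.

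I expect the main obstacle to be the converse direction, specifically the two structural points: justifying the vanishing $y_\al^{\,m_\al+1}v_\la^+=0$ in the simple module (so that $U(\fr{l})v_\la^+$ is genuinely finite dimensional), and the propagation argument that upgrades finiteness of $E$ to local $\fr{l}$-finiteness of all of $U(\fr{u}_I^-)E$ via the weight-graded adjoint action of $\fr{l}$ on $U(\fr{u}_I^-)$. The forward implication and the reduction to local $\fr{l}$-finiteness are comparatively routine.
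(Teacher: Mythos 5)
The paper itself offers no proof of this statement -- it is quoted directly from \cite[Prop.~9.3, Theorem~9.4]{Hump08} -- so your argument can only be measured against the standard one. Your reduction of membership in $\O^\p$ to local $\fr{l}$-finiteness, the forward implication via $\fr{sl}_2$-theory, and the converse up to the finite dimensionality of $E=U(\fr{l})v_\la^+$ (via the relations $y_\al^{\,m_\al+1}v_\la^+=0$, which do hold in $L^\O(\la)$ for the reason you give) are all correct and follow the expected lines.

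The final propagation step, however, contains a genuine error. The $\fr{h}$-weight components $U(\fr{u}_I^-)_{-\eta}$ are \emph{not} $\fr{l}$-submodules for the adjoint action: $\mathrm{ad}(x_\al)$ with $\al\in I$ shifts the weight $-\eta$ to $-\eta+\al$. Concretely, for $\fr{g}=\fr{sl}_3$ and $I=\{\al_1\}$ one has that $[y_{\al_1},y_{\al_2}]$ is a nonzero multiple of $y_{\al_1+\al_2}$, so the line $U(\fr{u}_I^-)_{-\al_2}=\C y_{\al_2}$ is moved out of itself by $\mathrm{ad}(\fr{l})$, and your claimed finite dimensional $\fr{l}$-modules $U(\fr{u}_I^-)_{-\eta}\otimes E$ do not exist as stated. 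The repair is to replace the $\fr{h}$-weight grading by the coarser grading of $U(\fr{u}_I^-)$ by the class of the weight in $\Z\Phi/\Z\Phi_I$ (equivalently, by the multidegree in the simple roots outside $I$): these graded pieces \emph{are} $\mathrm{ad}(\fr{l})$-stable, and each is still finite dimensional because every root in $\Phi^+\setminus\Phi_I^+$ contributes at least $1$ to this degree, which bounds the length of the PBW monomials that can occur. With this substitution your equivariant-surjection argument closes the proof. For comparison, Humphreys' own proof sidesteps this decomposition entirely: he shows that each $y_\al$, $\al\in I$, acts locally nilpotently on $L^\O(\la)$ using the identity $y_\al^{M}uv_\la^+=\sum_k\binom{M}{k}\bigl((\mathrm{ad}\,y_\al)^k u\bigr)\,y_\al^{M-k}v_\la^+$ together with local nilpotence of $\mathrm{ad}\,y_\al$ on $U(\fr{g})$, and then deduces local $\p$-finiteness from this, $\O 2)$ and $\O 3)$.
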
 
Now we proceed with the parabolic Verma modules. 
\begin{Def}
For $\la \in \La_I^+$ we define the \textit{parabolic Verma module} 
$$M(\la):= U(\fr{g}) \otimes_{U(\fr{p}_I)} E(\la)$$
\end{Def}
$M(\la)$ has unique simple quotient $L(\la)$.
Sometimes it might be easier to work with the following identification
\begin{Prop}[{\cite[Corollary 1.3]{Stro2005}}]
For $\la \in \La_I^+$ there is an isomorphism $M(\la) \cong M^\O(\la)/M$, where M denotes the smallest submodule containing all composition factors not contained in $\O^\p$.
\end{Prop}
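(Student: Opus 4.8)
The plan is to realize $M(\la)$ as the \emph{largest quotient of $M^\O(\la)$ lying in $\O^\p$} and to observe that, because $\O^\p$ is a Serre subcategory, this largest quotient is exactly $M^\O(\la)/M$. Indeed $\O^\p$ is closed under subobjects, quotients and extensions (local $\p$-finiteness is inherited in all three situations), so by the Proposition above a module in $\O$ lies in $\O^\p$ if and only if all its composition factors $L^\O(\mu)$ satisfy $\mu \in \La_I^+$. Hence the submodule $M$ of the statement is the unique smallest submodule of $M^\O(\la)$ with $M^\O(\la)/M \in \O^\p$; that is, $M^\O(\la)/M$ is the maximal $\O^\p$-quotient of $M^\O(\la)$. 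It therefore suffices to prove that $M(\la)$ enjoys this universal property.

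First I would produce the canonical surjection $\pi\colon M^\O(\la)\twoheadrightarrow M(\la)$. By transitivity of induction, together with the fact that $\fr{u}$ acts trivially on $\C_\la$, one gets $U(\p)\otimes_{U(\fr{b})}\C_\la\cong M^{\fr{l}}(\la)$ as $\p$-modules (the Verma module of the Levi $\fr{l}$, inflated along the quotient $\p\twoheadrightarrow\fr{l}$), whence $M^\O(\la)\cong U(\fr{g})\otimes_{U(\p)}M^{\fr{l}}(\la)$. Since $E(\la)$ is the finite dimensional simple quotient of $M^{\fr{l}}(\la)$, applying the exact functor $U(\fr{g})\otimes_{U(\p)}-$ to the quotient map $M^{\fr{l}}(\la)\twoheadrightarrow E(\la)$ yields $\pi$. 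As $E(\la)$ is finite dimensional, $M(\la)=U(\fr{g})\otimes_{U(\p)}E(\la)$ is locally $\p$-finite, so $M(\la)\in\O^\p$; this already gives $\ker\pi\supseteq M$.

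For the reverse inclusion I would verify the universal property: any quotient $\phi\colon M^\O(\la)\twoheadrightarrow X$ with $X\in\O^\p$ factors through $\pi$. Such an $X$ is a highest weight module generated by the image $\bar v_\la$ of the highest weight vector, and the $\fr{l}$-submodule $U(\fr{l})\bar v_\la$ is a highest weight $\fr{l}$-module of highest weight $\la$, hence a quotient of $M^{\fr{l}}(\la)$. Local $\p$-finiteness of $X$ forces $U(\fr{l})\bar v_\la$ to be finite dimensional, and a finite dimensional highest weight $\fr{l}$-module of highest weight $\la$, being indecomposable (unique simple head) and semisimple (complete reducibility over the reductive $\fr{l}$), is necessarily $E(\la)$. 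Therefore $\phi$ annihilates $U(\fr{g})\cdot\ker\big(M^{\fr{l}}(\la)\twoheadrightarrow E(\la)\big)=\ker\pi$, so $\phi$ factors through $\pi$. This shows $M(\la)$ is the maximal $\O^\p$-quotient, giving $\ker\pi\subseteq M$ and hence $M(\la)\cong M^\O(\la)/\ker\pi=M^\O(\la)/M$.

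The main obstacle I anticipate is the local $\p$-finiteness step, which is used twice. One must show that $M(\la)$ itself lies in $\O^\p$ (this rests on finite dimensionality of $E(\la)$ together with the PBW decomposition $U(\fr{g})\cong U(\fr{u}^-)\otimes U(\p)$ to control the $\fr{u}$- and $\fr{l}$-actions), and, in the universal-property argument, that finite dimensionality of $U(\fr{l})\bar v_\la$ pins it down to exactly $E(\la)$ rather than some larger highest weight module. Once these are in place the identification $M(\la)\cong M^\O(\la)/M$ is formal.
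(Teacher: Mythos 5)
You should first be aware that the paper contains no proof of this Proposition to compare against: it is quoted, statement and all, from \cite[Corollary 1.3]{Stro2005}. Judged on its own terms, your argument is correct, and it is essentially the standard argument underlying that citation: characterize $M(\la)$ as the maximal quotient of $M^\O(\la)$ lying in $\O^\p$, and identify that maximal quotient with $M^\O(\la)/M$ using the fact that membership in $\O^\p$ is detected on composition factors. Both halves check out. Transitivity of induction and exactness of $U(\fr{g})\otimes_{U(\p)}(-)$ (PBW freeness) give the surjection $\pi$ with $\ker\pi=U(\fr{g})\cdot\ker\bigl(M^{\fr{l}}(\la)\twoheadrightarrow E(\la)\bigr)$, whence $M\subseteq\ker\pi$ once one knows $M(\la)\in\O^\p$; in the converse direction, your identification $U(\fr{l})\bar{v}_\la\cong E(\la)$ (finite dimensional by local $\p$-finiteness of $X$, indecomposable as a highest weight module, completely reducible, hence simple) correctly forces any surjection onto an object of $\O^\p$ to kill $\ker\pi$, and applying this to $X=M^\O(\la)/M$ closes the loop.

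Two steps you pass over quickly are where the real content sits, though both are standard and neither is a genuine gap. First, closure of $\O^\p$ under extensions, which you use twice (to see that an object of $\O$ lies in $\O^\p$ if and only if all its composition factors do, and in particular that $M^\O(\la)/M\in\O^\p$), is not as immediate as closure under subobjects and quotients: given $0\to A\to X\to B\to 0$ with $A,B\in\O^\p$ and $v\in X$, one needs Noetherianity of $U(\p)$, so that $U(\p)v\cap A$ is a finitely generated submodule of the locally finite module $A$ and hence finite dimensional, while $U(\p)v/(U(\p)v\cap A)$ embeds into $B$. Second, $M(\la)\in\O^\p$ itself rests on the decomposition of $U(\fr{u}^-)$ into finite dimensional $\fr{l}$-submodules under the adjoint action, cf. \cite[Theorem 9.4]{Hump08}, and the complete reducibility you invoke for the reductive $\fr{l}$ requires the centre of $\fr{l}$ to act semisimply, which holds here because all modules in sight are weight modules. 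With these standard facts supplied, your proof is complete and gives a self-contained substitute for the external citation.
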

Now we are left to define the projective modules.
\begin{Def}
For $\la \in \La_I^+$ define $P(\la)$ to be the projective cover of $L(\la)$.
\end{Def}
Similar to the Verma modules the projective modules in $\O^\p$ are quotients of those in $\O$. 
\begin{Prop}[{\cite[Proposition 1.2]{Stro2005}\label{Pro:projOp}}]
Let $Q \in \O^\p$ with projective cover $P \in O$. Then the projective cover of Q in $O^\p$ is (up to isomorphism) the quotient $P/M$, where M is the smallest submodule of $P$ containing all composition factors of $P$ not contained in $\O^\p$.
\end{Prop}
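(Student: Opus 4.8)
The plan is to realize $P/M$ as the representing object for the functor $\hom_\O(P,-)$ restricted to $\O^\p$, and then to check that the map it induces onto $Q$ is an essential epimorphism. Throughout I would use repeatedly that $\O^\p$ is a Serre subcategory of $\O$: since $\O$ is artinian and noetherian, every object has finite length, so a module $N \in \O$ lies in $\O^\p$ if and only if all of its composition factors $L(\la)$ have $\la \in \La_I^+$ (using the earlier Proposition identifying the simple objects of $\O^\p$). In particular $\O^\p$ is closed under submodules, quotients and extensions.

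First I would justify the existence of a smallest submodule $M \subseteq P$ with $P/M \in \O^\p$; this is the $M$ in the statement, read as the smallest submodule such that $P/M$ has no composition factors outside $\O^\p$. If $P/M_1, P/M_2 \in \O^\p$, then $P/(M_1 \cap M_2)$ embeds into $P/M_1 \oplus P/M_2$ via $x + (M_1\cap M_2) \mapsto (x+M_1, x+M_2)$, hence lies in $\O^\p$ by closure under submodules. Intersecting all submodules with quotient in $\O^\p$ then produces the desired smallest $M$.

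The central step is to show that $P/M$ is projective in $\O^\p$, and for this I would establish a natural isomorphism $\hom_{\O^\p}(P/M, N) \cong \hom_\O(P,N)$ for all $N \in \O^\p$. Pulling back along the surjection $P \twoheadrightarrow P/M$ is injective; conversely any $g \colon P \to N$ has image a submodule of $N \in \O^\p$, so $\im g \cong P/\ker g$ lies in $\O^\p$, forcing $M \subseteq \ker g$, whence $g$ factors through $P/M$. Since a short exact sequence in $\O^\p$ is also exact in $\O$ and $\hom_\O(P,-)$ is exact ($P$ being projective in $\O$), the functor $\hom_{\O^\p}(P/M,-)$ is exact on $\O^\p$, so $P/M$ is projective there.

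Finally I would show $P/M$ covers $Q$. The projective cover $p \colon P \twoheadrightarrow Q$ satisfies $M \subseteq \ker p$ by the factoring argument above (as $Q \in \O^\p$), so $p$ descends to a surjection $\bar p \colon P/M \twoheadrightarrow Q$ with $\ker \bar p = \ker p / M$. To see $\bar p$ is essential, suppose $M \subseteq N \subseteq P$ with $(N/M) + \ker \bar p = P/M$; this says $N + \ker p = P$, and since $p$ is an essential epimorphism in $\O$ its kernel is superfluous, forcing $N = P$ and hence $N/M = P/M$. Thus $\bar p$ is an essential epimorphism from a projective object of $\O^\p$, i.e. a projective cover. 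The main obstacle I anticipate is the central projectivity step, specifically verifying that the Hom-identification is natural and compatible with short exact sequences, whereas the descent of essentiality is a direct diagram chase once $M \subseteq \ker p$ is known.
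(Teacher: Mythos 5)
The paper never proves this statement itself: it is quoted, with its label, directly from \cite[Proposition 1.2]{Stro2005} and used as a black box (its only role in the text is the subsequent remark that $\O^\p$ therefore has enough projectives). So there is no in-paper argument to compare yours against, and your proof must stand on its own — which it does. Your three steps constitute the standard truncation argument and are all correct: (i) the smallest $M$ with $P/M \in \O^\p$ exists because $\O^\p$ is closed under submodules and finite direct sums and because objects of $\O$ have finite length, so the intersection of all admissible submodules is attained as a finite intersection; (ii) the natural isomorphism $\hom_{\O^\p}(P/M,N) \cong \hom_\O(P,N)$ for $N \in \O^\p$ — in effect the statement that "largest quotient lying in $\O^\p$" is left adjoint to the inclusion $\O^\p \hookrightarrow \O$ — combined with the facts that short exact sequences of $\O^\p$ are exact in $\O$ and that $P$ is projective in $\O$, yields projectivity of $P/M$ in $\O^\p$; (iii) essentiality descends because $\ker\bar p = \ker p/M$ is superfluous in $P/M$ whenever $\ker p$ is superfluous in $P$, which requires only the containment $M \subseteq \ker p$ that you established in (ii). The one point worth making fully explicit in a write-up: your reading of $M$ (smallest submodule with quotient in $\O^\p$) agrees with the proposition's phrasing in terms of composition factors precisely because membership in $\O^\p$ is detected on composition factors, i.e. $\O^\p$ is a Serre subcategory of $\O$, closed in particular under extensions; this is a genuine (if standard) input, which the present paper itself only invokes in passing in a later Remark. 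That is background to cite, not a gap.
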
 
Using the Proposition one easily shows that $\O^\p$ has enough projectives.

Let $\O^\p_0$ be the principal block of $\O^\p$ such that all $M \in \O^\p$ only have composition factors $L(\mu)$ with $\mu \in W \cdot {\la_0} \cap \La_i^+$. 

Note that these weights are exactly those given by $w \cdot {\la_0}$ with $w \in W^\p$. Since we work with left cosets, for better readability we write $P(x \cdot \la)=:P(\la.x)$ and similar for simple and Verma modules.

\section{Blocks of $\O(\fr{sl}_n)$ versus blocks of $\O(\fr{gl}_n)$}
Later on we want to combine results obtained for $\O(\fr{gl}_n)$ with those obtained for $\O(\fr{sl}_n)$. 
To make sure that this is possible we want to stress that on the level of blocks the two categories are almost the same. Making this more precise is the purpose of this section.

Note that since $\fr{gl}_n$ only differs from $\fr{sl}_n$ by the span of one more central element (the identity matrix $E$), they have the same root space decomposition and only differ in the Cartan subalgebra.
If $\fr{h}$ is a Cartan for $\fr{sl}_n$, then $\fr{h}'=\fr{h} \oplus \C E$ is a Cartan for $\fr{gl}_n$. In our setup $\fr{h}$ is the Lie algebra of diagonal matrices with trace zero and $\fr{h}'$ is the space of all diagonal matrices. 

\begin{Def}
For a $U(\fr{gl}_n)$-module $M$ denote by $\res(M)$ the corresponding $U(\fr{sl}_n)$-module.

For a $U(\fr{sl}_n)$-module $M$ and $a \in \C$ denote by $F_a(M)$ the $U(\fr{gl}_n)$-module with 
$$E.m=a\cdot m \quad \forall \ m \ \in M.$$
\end{Def}

\begin{Lemma}
If $M$ is in $\O(\fr{gl}_n)$, then $\res(M)$ lies in $\O(\fr{sl}_n)$.

The functor $\res$ maps a Verma module $M(\la')$ with $\la' \in \fr{h}'$ to the Verma module $M(\la)$ with $\la=\la'|_{h}$. 
\end{Lemma}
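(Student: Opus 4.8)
The plan is to check the two assertions separately: first that $\res$ preserves the three defining conditions $\O 1)$--$\O 3)$ of category $\O$, and then that it carries Verma modules to Verma modules. Throughout I would exploit the two structural facts recorded above: that $\fr{sl}_n$ and $\fr{gl}_n$ share the same root spaces, so that the nilpotent parts $\fr{n}$ and $\fr{n}^-$ are literally unchanged and only the Cartan grows from $\fr{h}$ to $\fr{h}'=\fr{h}\oplus\C E$; and that, since $E$ is central, the Poincar\'e--Birkhoff--Witt theorem gives $U(\fr{gl}_n)\cong U(\fr{sl}_n)\otimes_\C\C[E]$.

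Conditions $\O 2)$ and $\O 3)$ are almost immediate. For $\O 2)$, writing $M=\bigoplus_{\mu'\in(\fr{h}')^*}M_{\mu'}$ and restricting each weight $\mu'$ to $\fr{h}$ exhibits $\res(M)$ as the direct sum of the $\fr{h}$-weight spaces $(\res M)_\mu=\bigoplus_{\mu'|_\fr{h}=\mu}M_{\mu'}$, so $\res(M)$ is $\fr{h}$-semisimple. For $\O 3)$, since $\fr{n}$ is the same subalgebra for both Lie algebras, the condition that $U(\fr{n})\cdot v$ be finite dimensional for every $v$ is identical before and after restriction.

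The only point with genuine content is finite generation, $\O 1)$: a module that is finitely generated over $U(\fr{gl}_n)$ need not a priori be finitely generated over the smaller algebra $U(\fr{sl}_n)$. The key observation I would use to get around this is that every root of $\fr{gl}_n$ vanishes on the central element $E$, so $E$ acts on a weight vector of weight $\mu'$ by the scalar $\mu'(E)$; in particular weight vectors are $E$-eigenvectors. Choosing (as one may, since $M$ is $\fr{h}'$-semisimple) a finite set of \emph{weight}-vector generators $v_1,\dots,v_k$ of $M$, one then computes $U(\fr{gl}_n)\cdot v_i=U(\fr{sl}_n)\,\C[E]\cdot v_i=U(\fr{sl}_n)\cdot v_i$, whence $M=U(\fr{sl}_n)\cdot\{v_1,\dots,v_k\}$ is finitely generated over $U(\fr{sl}_n)$. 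This is the step I expect to be the crux, and the eigenvector remark is exactly what makes it work.

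For the Verma statement, I would argue by freeness over $U(\fr{n}^-)$. By the triangular decomposition, $M^\O(\la')=U(\fr{gl}_n)\otimes_{U(\fr{b}')}\C_{\la'}$ is free of rank one over $U(\fr{n}^-)$ on its highest weight vector $v^+=1\otimes1$, and likewise $M^\O(\la)$ with $\la=\la'|_\fr{h}$ is free of rank one over the same $U(\fr{n}^-)$. Now $v^+$ is annihilated by $\fr{n}$ and satisfies $h\cdot v^+=\la'(h)v^+=\la(h)v^+$ for $h\in\fr{h}$, so it is a highest weight vector of weight $\la$ for $\fr{sl}_n$; the universal property of $M^\O(\la)$ yields a unique $U(\fr{sl}_n)$-linear map $\phi\colon M^\O(\la)\to\res\bigl(M^\O(\la')\bigr)$ sending generator to $v^+$. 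Since $U(\fr{n}^-)\subset U(\fr{sl}_n)$, this $\phi$ carries the PBW $U(\fr{n}^-)$-basis of $M^\O(\la)$ bijectively onto that of $\res(M^\O(\la'))$, and is therefore an isomorphism. No sign or scalar subtleties arise here, so this half is routine once the freeness over the common $U(\fr{n}^-)$ is in place.
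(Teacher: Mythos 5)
Your proof is correct and follows essentially the same route as the paper: verify conditions $\O 1)$--$\O 3)$ and then deduce the Verma statement from the facts that the highest weight vector restricts to a highest weight vector and that the nilpotent parts (hence $U(\fr{n}^-)$) are unchanged. The only difference is one of detail: where the paper dismisses finite generation with the remark that a central element generates nothing, you make this precise via weight-vector generators and $U(\fr{gl}_n)\cong U(\fr{sl}_n)\otimes_\C \C[E]$ -- a welcome elaboration of the same idea, not a different argument.
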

\begin{proof}
We check the properties $\O1-\O3$. 
\begin{enumerate}
\item[$\O1$)] The module $\res(M)$ is still finitely generated, since a central element does not generate anything.
\item[$\O2$)] As the subalgebra $\fr{h'}$ is restricted to a smaller one, semisimplicity holds.
\item[$\O3$)] $U(\fr{n})$ does not change.
\end{enumerate}
The second statement follows then directly from the definitions and the facts that a highest weight vector is sent to a highest weight vector and that $U(\fr{n})$ does not change.
\end{proof}
\begin{Lemma}
If $M$ is in $\O(\fr{sl}_n)$, then $F_a(M)$ lies in $\O(\fr{gl}_n)$.

The functor $F_a$ maps the Verma module $M(\la)$ with $\la \in \fr{h}^*$ to the Verma module $M(\la')$ with $\la' \in \fr{h}'^*$ such that $\la'(h)=\la(h) \ \forall h \in \fr{h}$ and $\la'(E)=a$. 
\end{Lemma}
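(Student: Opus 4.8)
The plan is to mirror the proof of the preceding Lemma for $\res$, replacing restriction of scalars by extension on the central element. Before verifying the axioms $\O1$--$\O3$, I would first make sure that $F_a(M)$ really is a $U(\fr{gl}_n)$-module. Since $\fr{gl}_n=\fr{sl}_n\oplus\C E$ with $E$ central, a Lie algebra action of $\fr{gl}_n$ is the same datum as a Lie algebra action of $\fr{sl}_n$ together with a prescription of how $E$ acts, subject only to the relation $[E,x]=0$ being respected. Declaring $E$ to act by the scalar $a$ is automatically consistent: for every $x\in\fr{gl}_n$ and $m\in M$ one has $E\cdot(x\cdot m)=a\,(x\cdot m)=x\cdot(a\,m)=x\cdot(E\cdot m)$, so $[E,x]\cdot m=0$ holds. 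Thus $F_a(M)$ is a well-defined $U(\fr{gl}_n)$-module.

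Next I would check the three defining conditions. For $\O1$, any finite $U(\fr{sl}_n)$-generating set of $M$ still generates $F_a(M)$ over the larger algebra $U(\fr{gl}_n)$, since $E$ acts merely by a scalar and produces no new elements. For $\O3$, the positive nilpotent subalgebra $\fr{n}$ coincides for $\fr{sl}_n$ and $\fr{gl}_n$ (they share the root space decomposition), so local $\fr{n}$-finiteness is unchanged. The only point requiring a genuine argument is $\O2$: starting from the $\fr{h}$-weight decomposition $M=\bigoplus_{\la\in\fr{h}^*}M_\la$, I would extend each occurring weight $\la$ to $\la'\in\fr{h}'^*$ by $\la'|_{\fr{h}}=\la$ and $\la'(E)=a$. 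Because $E$ acts by $a$ on all of $M$, each $M_\la$ is exactly the $\fr{h}'$-weight space of weight $\la'$, and $F_a(M)=\bigoplus_{\la'}M_{\la'}$ is $\fr{h}'$-semisimple; hence $F_a(M)\in\O(\fr{gl}_n)$.

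For the statement about Verma modules I would use PBW. Both $M(\la)=U(\fr{sl}_n)\otimes_{U(\fr{b})}\C_\la$ and $M(\la')=U(\fr{gl}_n)\otimes_{U(\fr{b}')}\C_{\la'}$, with the enlarged Borel $\fr{b}'=\fr{n}\oplus\fr{h}'=\fr{b}\oplus\C E$, are free of rank one over $U(\fr{n}^-)$, and the subalgebra $\fr{n}^-$ is the same in both cases. The image of $1\otimes v_\la$ in $F_a(M(\la))$ has $\fr{h}'$-weight $\la'$ (by the computation above, using $\la'(E)=a$) and is annihilated by $\fr{n}$, so by the universal property of the induced module there is a unique $U(\fr{gl}_n)$-homomorphism $M(\la')\to F_a(M(\la))$ sending the canonical generator to it. This map is surjective because that highest weight vector already generates $F_a(M(\la))$ as a $U(\fr{n}^-)$-module, and it is injective because it is a map of free rank-one $U(\fr{n}^-)$-modules carrying generator to generator; therefore it is an isomorphism.

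I do not expect a serious obstacle, as the result is genuinely dual to the $\res$-Lemma. The only subtlety worth flagging is the bookkeeping: one must consistently use the enlarged Borel $\fr{b}'=\fr{b}\oplus\C E$ on the $\fr{gl}_n$-side and verify that the prescribed central action $\la'(E)=a$ agrees on both $F_a(M(\la))$ and the target $M(\la')$, which is precisely where the definition of $F_a$ enters.
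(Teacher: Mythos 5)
Your proposal is correct and follows essentially the same route as the paper: verify $\O1$--$\O3$ (using that $\fr{n}$, $\fr{n}^-$ and the root spaces are unchanged) and observe that the highest weight vector of $M(\la)$ becomes a highest weight vector of weight $\la'$ with $E$ acting by $a$. The paper states these steps without detail ("without any difficulties appearing"), whereas you supply the well-definedness of the $\fr{gl}_n$-action and the universal-property/PBW argument that the paper leaves implicit; this is a fleshing-out, not a different method.
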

\begin{proof}
Again, we check the properties $\O1-\O3$ without any difficulties appearing.

By the definition of $F_a(M)$ the highest weight vector $v \in M(\la)$ is mapped to a highest weight vector in $F_a(M(\la))$ with $E.v=a \cdot v$.
\end{proof}

\begin{Lemma}\label{resf}
$\res(F_a(M))=M$ for all $a \in \C$.
\end{Lemma}
Now we can check that these functors map blocks to blocks. Therefore recall from Section \ref{sec:block} that each element $z$ in the center acts by a scalar $\chi(z)$ on the modules in a block. Especially for an element in $Z(U(\fr{g})) \cap \fr{h}$ this scalar has to be $\la(z)$ since $z.v = \la(z)v$ for a highest weight vector.

\begin{Lemma}\label{fres}
For a module $M \in \O(\fr{gl}_n)_{\la'}$ with $\la' \in \fr{h'}^*$ we have
$F_a(\res M)=M$ with $a=\la'(E)$ .
\end{Lemma}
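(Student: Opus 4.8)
The plan is to exploit that, as Lie algebras, $\fr{gl}_n=\fr{sl}_n\oplus\C E$, where $E$ is the identity matrix spanning the center. Consequently a left $U(\fr{gl}_n)$-module structure on a vector space $V$ amounts to the same datum as a $U(\fr{sl}_n)$-module structure on $V$ together with the operator by which the central element $E$ acts (this operator commutes with the $\fr{sl}_n$-action automatically, since $E$ is central). The functor $\res$ simply forgets the $E$-action, leaving the $\fr{sl}_n$-action untouched, and $F_a$ reinstates it as multiplication by the scalar $a$. Since neither functor changes the underlying vector space, the entire content of the lemma reduces to one point: showing that on $M$ the element $E$ already acts by the single scalar $a=\la'(E)$. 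Once that is known, $M$ and $F_a(\res M)$ share the same space, the same $\fr{sl}_n$-action, and the same $E$-action, hence coincide as $U(\fr{gl}_n)$-modules.

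First I would check that $E$ acts on all of $M$ by a scalar. Because $E$ is the identity matrix it commutes with every element of $\fr{gl}_n$, so its image $\sigma(E)$ lies in the center $Z(U(\fr{gl}_n))$. By the block decomposition recalled in Section \ref{sec:block}, each central element acts by a fixed scalar on an entire block; applying this to $\sigma(E)$ and to the block $\O(\fr{gl}_n)_{\la'}$ containing $M$ shows that $E$ acts on $M$ by one and the same scalar $\chi(E)$.

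Next I would identify this scalar. By the remark preceding the statement, a central element in $Z(U(\fr{gl}_n))\cap\fr{h}'$ acts on the block $\O(\fr{gl}_n)_{\la'}$ by its value on a highest weight vector of weight $\la'$; taking the element to be $E$ gives $\chi(E)=\la'(E)=a$. It is also worth noting, as a consistency check, that $\la'(E)$ is independent of the chosen representative $\la'$ of the dot-orbit: since $W$ permutes the diagonal entries it fixes $E$, and the shift by $\rho$ cancels in $w\cdot\la'=w(\la'+\rho)-\rho$, so $(w\cdot\la')(E)=\la'(E)$.

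Putting these together, $\res M$ is $M$ with the $\fr{sl}_n$-action retained and the $E$-action discarded, and $F_a(\res M)$ restores the $E$-action as multiplication by $a=\la'(E)$, which by the previous step is exactly how $E$ acted on $M$ in the first place. Hence $F_a(\res M)=M$. The only non-formal step — and thus the main obstacle — is the passage in the second paragraph, where central-character theory is invoked to guarantee that $E$ acts by a \emph{single} scalar on the whole module; this is precisely where the hypothesis $M\in\O(\fr{gl}_n)_{\la'}$ is indispensable, since for a module spread across several blocks $E$ could act by different scalars on different summands and no single choice of $a$ would recover $M$.
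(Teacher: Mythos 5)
Your proposal is correct and is essentially the paper's own argument: the paper states this lemma without a separate proof precisely because the sentence preceding it (recalling that a central element, in particular one in $Z(U(\fr{g}))\cap \fr{h}'$ such as $E$, acts on a whole block by the scalar $\la'(E)$ it takes on a highest weight vector) is the entire content needed, and your write-up just spells out that $\res$ and $F_a$ only discard and reinstate this $E$-action on the fixed underlying $\fr{sl}_n$-module. Your added remark on the $W$-invariance of $\la'(E)$ is a harmless consistency check beyond what the paper records.
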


\begin{Theorem}
There is an equivalence of categories
\begin{align} F_a: \O(\fr{sl}_n)_\la  \to \O(\fr{gl}_n)_{\la'}\end{align}
 with $\la' \in \fr{h}'^*$ such that $\la'(h)=\la(h) \ \forall h \in \fr{h}$ and $\la'(E)=a$. The inverse functor is given by
 \begin{align} \res: \O(\fr{gl}_n)_{\la'}  \to \O(\fr{sl}_n)_\la.\end{align}

In particular, the principal blocks of both categories are equivalent.
\end{Theorem}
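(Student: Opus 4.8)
The plan is to show that $\res$ and $F_a$ are mutually inverse functors; this is in fact slightly stronger than producing an equivalence, since the two composites will turn out to be equal to the identity on the nose rather than merely naturally isomorphic to it.

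First I would upgrade the object-level statements of the preceding lemmas to honest functors. Restriction is functorial for free: every $U(\fr{gl}_n)$-module homomorphism is in particular $U(\fr{sl}_n)$-linear, so $\res$ sends morphisms to morphisms without altering them. For $F_a$, if $f\colon M \to N$ is a morphism in $\O(\fr{sl}_n)_\la$ then, after declaring $E$ to act by $a$ on both $F_a(M)$ and $F_a(N)$, one has $f(E.m)=f(a\,m)=a\,f(m)=E.f(m)$, so $f$ is automatically $U(\fr{gl}_n)$-linear. Hence $F_a$ is a functor, and crucially both functors induce \emph{equalities} (not just isomorphisms) of the underlying $\hom$-sets, so full faithfulness is immediate and nothing further need be said about morphisms.

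Next I would check that the functors land in the prescribed blocks. By the earlier lemmas $\res$ and $F_a$ already send Verma modules to Verma modules, and the same reasoning (a highest weight vector maps to a highest weight vector while $U(\fr{n})$ is unchanged, and submodules over $\fr{sl}_n$ and $\fr{gl}_n$ coincide) shows that $F_a$ carries a simple $L(\mu)$ to the simple $L(\mu')$ with $\mu'|_{\fr{h}}=\mu$ and $\mu'(E)=a$, and symmetrically for $\res$. It remains to see that a whole $W$-dot-orbit is sent into a single orbit. For $w\in W$ the evaluation at the central element $E$ is $W$-invariant and $\rho'(E)=0$ (every root vanishes on $E$), so $(w\cdot\la')(E)=\la'(E)=a$; meanwhile restriction to $\fr{h}$ intertwines the two dot-actions and $\rho'|_{\fr{h}}=\rho$, giving $(w\cdot\la')|_{\fr{h}}=w\cdot\la$. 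Thus $F_a$ maps the simple modules generating $\O(\fr{sl}_n)_\la$ into $\O(\fr{gl}_n)_{\la'}$, and $\res$ maps those generating $\O(\fr{gl}_n)_{\la'}$ back into $\O(\fr{sl}_n)_\la$; since a block is the summand generated by its simples, both functors are well defined on blocks.

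Finally I would assemble the equivalence. Lemma \ref{resf} gives $\res \circ F_a = \id$ on all of $\O(\fr{sl}_n)_\la$, and Lemma \ref{fres} gives $F_a \circ \res = \id$ on $\O(\fr{gl}_n)_{\la'}$ once we take $a=\la'(E)$, which is exactly the normalisation fixed in the hypothesis. Since both composites are literally the identity functor, $\res$ and $F_a$ are inverse isomorphisms of categories, in particular an equivalence; specialising to $\la=\la_0$ yields the statement about the principal blocks. I expect the only genuine obstacle to be the block-respecting step: one must make sure that extending and restricting the central element $E$ never moves a module out of its prescribed dot-orbit, and this is precisely where the interplay between $\rho$ and $\rho'$ and the $W$-invariance of the value at $E$ have to be pinned down carefully.
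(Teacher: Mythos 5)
Your proposal is correct and takes essentially the same route as the paper: the heart of the argument is exactly the paper's one-line proof, namely that Lemmas \ref{resf} and \ref{fres} show $\res \circ F_a$ and $F_a \circ \res$ are the identity (with $a=\la'(E)$), so the two functors are mutually inverse. The additional verifications you spell out --- functoriality of $F_a$ and $\res$, and block-preservation via the $W$-invariance of evaluation at the central element $E$ together with $\rho'(E)=0$ --- are precisely the points the paper treats implicitly in the preceding lemmas and in its remark that central elements act by a dot-action-invariant scalar on each block.
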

\begin{proof}
The theorem follows immediately by Lemma \ref{resf} and Lemma \ref{fres}.
\end{proof}
\begin{Remark}
Since the parabolic category $\O$ is a Serre subcategory of the ordinary category $\O$ given by all modules with composition factors from a prescribed set of simple highest weight modules, the above result also holds in the parabolic setting. Particularly, we can derive the result for the principal block which we are going to use later on.
\end{Remark}
\begin{Cor}
The functor $F_0$ induces an equivalence of categories
\begin{equation*}
O^{\p'}_0(\fr{gl}_{m+n})\cong O^\p_0(\fr{sl}_{m+n})
\end{equation*}
where $\p'$ is the parabolic subalgebra with corresponding Levi component $gl_m \oplus gl_n$ and $\p=\p' \cap \fr{sl}_{m+n}$.
\end{Cor}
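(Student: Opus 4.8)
The plan is to deduce the Corollary by specialising the equivalence $F_a$ of the preceding Theorem to the case where $n$ is replaced by $m+n$, with $a=0$ and $\la=\la_0$, and then to cut the resulting block equivalence down to the parabolic subcategories on each side. First I would invoke the Theorem together with the Remark above to obtain that $F_0$ restricts to an equivalence between the principal block $\O_0(\fr{sl}_{m+n})$ and the block $\O_{\la_0'}(\fr{gl}_{m+n})$, where $\la_0'$ is the extension of $\la_0$ with $\la_0'(E)=0$. Since the trivial $\fr{gl}_{m+n}$-module has $E$ acting by $0$, this weight $\la_0'$ labels the principal block of $\O(\fr{gl}_{m+n})$, so $F_0$ is an equivalence of principal blocks, with inverse $\res$ by Lemmas \ref{resf} and \ref{fres}.

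The second step is to show that this block equivalence restricts to the parabolic subcategories. Here I would use that $\O^\p$ is a Serre subcategory of $\O$ cut out by the condition that all composition factors are of the form $L(\mu)$ with $\mu\in\La_I^+$, and that $F_0$ and $\res$, being exact equivalences, preserve composition series and carry simple modules to simple modules. Thus it suffices to verify that $F_0$ matches the distinguished simple modules, i.e. that the $\fr{sl}_{m+n}$-weight $\mu$ lies in $\La_I^+$ if and only if the corresponding $\fr{gl}_{m+n}$-weight $\mu'$ lies in the analogous set $\La_{I'}^+$ attached to $\p'$.

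For this verification I would use that $\fr{sl}_{m+n}$ and $\fr{gl}_{m+n}$ share the same root system and the same set of simple roots $\Delta$, and that $\p'$ with Levi $\fr{gl}_m\oplus\fr{gl}_n$ and $\p=\p'\cap\fr{sl}_{m+n}$ are defined by the same subset $I\subset\Delta$. The $\p$-dominance condition $\langle\mu,\alch\rangle\in\Z^+$ for $\al\in I$ only involves pairings with the coroots $h_\al\in\fr{h}$ for $\al\in I$, and these lie in the trace-zero Cartan $\fr{h}$ common to both algebras. Since $F_0$ leaves $\mu'|_{\fr{h}}=\mu$ unchanged and only prescribes $\mu'(E)=0$, we have $\mu'(h_\al)=\mu(h_\al)$ for all $\al\in I$, so the two dominance conditions literally coincide. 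Hence $F_0$ carries the simples labelling $\O^\p_0(\fr{sl}_{m+n})$ bijectively onto those labelling $\O^{\p'}_0(\fr{gl}_{m+n})$, and therefore restricts to the desired equivalence of parabolic principal blocks, again with inverse $\res$.

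The only point requiring care—more a matter of bookkeeping than a genuine obstacle—is confirming that $F_0$ intertwines the simple objects $L(\mu)\mapsto L(\mu')$ together with their highest weights, so that the Serre-subcategory description of $\O^\p$ transports correctly. Once the dominance conditions on $I$ are seen to agree, the remainder is the formal fact that an exact equivalence restricts to an equivalence between the Serre subcategories generated by matching sets of simple objects.
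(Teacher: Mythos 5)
Your proof is correct and takes essentially the same route as the paper: the Corollary there is deduced directly from the block-equivalence Theorem together with the preceding Remark that $\O^\p$ is the Serre subcategory of $\O$ determined by the simples $L(\mu)$ with $\mu\in\La_I^+$, which is precisely your argument. You merely spell out the bookkeeping the paper leaves implicit, namely that $\la_0'(E)=0$ identifies the principal blocks and that the $\p$- and $\p'$-dominance conditions agree because the coroots $h_\al$ for $\al\in I$ lie in the common trace-zero Cartan $\fr{h}$.
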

\chapter{Notations and homological algebra}\label{ch:NotHom}
For later use we recall some notations and definitions from homological algebra.
\section{Complexes}
\begin{Def} Let $A$ be a graded algebra. A \emph{chain complex} $C_\bullet$ of graded $A$-modules is a family of graded $A$-modules $\{C_n\}_{n\in \Z}$ together with degree zero $A$-module morphisms $d_n:C_n \to C_{n-1}$ such that $d_{n-1} \circ d_n=0$. The object $C_n$ is often called the \textit{$n$th component} of $C_\bullet$. A complex is called \emph{acyclic} or \emph{exact} if its homology is zero.
\end{Def}
\begin{Notation}\label{Not:sign}
We have to define different kinds of shifts.

For a complex $C_\bullet$ define $C[i]_\bullet$ by $C[i]_j=C_{j-i}$. The differential is $d[i]_j=(-1)^i d_j$.\\
For explicit calculations in Section \ref{ch:spec} we want to define a shift functor $[\quad]_\hom$ where $C[i{]_\hom} _\bullet$ is precisely as above but leaving the differential unchanged.\\
For $M$ a graded $A$-module define the internal shift $M\langle i \rangle$ by $M\langle i \rangle_j=M_{j-i}$.\\
We denote by $C_\bullet\langle i \rangle$ the (internally) shifted complex $C_\bullet$ which one obtains just by shifting each object. Hence the internal grading is shifted up, the differential maps stay to be of degree zero.
\end{Notation}
\section{Projective resolutions}
\begin{Def} Choose $A$ as above and let $M$ be a graded $A$-module. A \emph{projective resolution} $P_\bullet$ of $A$ is a chain complex consisting of projective modules with $P_i=0 \ \forall \ i<0$ such that the complex $P_{\geq 0}\to M \to 0$ is acyclic.
\end{Def}
\begin{Def} A projective resolution $P_\bullet$ is called \emph{linear} if the $n$th part $P_n$ is generated in degree $n$.
\end{Def}
\begin{Lemma}\label{Le:Coneconstr}
Given an exact sequence of graded $A$-modules 
$$\xymatrix{0 \ar[r] &L\langle 1 \rangle \ar[r]^{f} &M \ar[r]^{g}& N \ar[r] &0}$$
and finite linear projective resolutions $P_\bullet^L$, $P_\bullet^M$ of $L$ and $M$ respectively, one may construct a linear projective resolution $P_\bullet^N$ of $N$. The construction works as follows: Set $f_\bullet: P_\bullet^L \langle 1 \rangle \to P_\bullet^M$ the map induced by $f$ and choose $P_\bullet^N= C(f_\bullet)$ where $C(f_\bullet)$ denotes the cone of the chain map $f_\bullet$. The map $P_0^N \to N$ is given by the composition of the maps $P_0^M \to M \to N$.
\end{Lemma}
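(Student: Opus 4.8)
The plan is to verify that the mapping cone $C(f_\bullet)$ of the chain map $f_\bullet : P_\bullet^L\langle 1\rangle \to P_\bullet^M$ is indeed a linear projective resolution of $N$, by checking three things in turn: that it consists of projective modules generated in the right degrees (linearity), that it is bounded and has a natural augmentation to $N$, and that it is acyclic. First I would recall the explicit shape of the cone: in each homological degree one has $C(f_\bullet)_n = P_{n-1}^L\langle 1\rangle \oplus P_n^M$, with differential given in matrix form by $\begin{pmatrix} -d^L & 0 \\ f_\bullet & d^M \end{pmatrix}$ (up to the sign conventions fixed in Notation \ref{Not:sign} for the shift $[1]$ implicit in the cone). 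Since $P_\bullet^L$ and $P_\bullet^M$ are projective resolutions, each $C(f_\bullet)_n$ is a direct sum of projectives, hence projective, so the first requirement is immediate.

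Next I would check linearity. By hypothesis $P_n^M$ is generated in degree $n$, and $P_{n-1}^L$ is generated in degree $n-1$; the internal shift $\langle 1\rangle$ raises the generating degree by one, so $P_{n-1}^L\langle 1\rangle$ is generated in degree $n$ as well. Hence $C(f_\bullet)_n$ is generated in degree $n$, which is exactly the linearity condition. This step is purely bookkeeping with the two shift operations and should cause no trouble, provided the internal shift $\langle 1\rangle$ is applied consistently to the whole resolution $P_\bullet^L$, as stated.

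For acyclicity I would invoke the long exact sequence in homology associated to the cone. Quite generally, the cone $C(f_\bullet)$ sits in a short exact sequence of complexes $0 \to P_\bullet^M \to C(f_\bullet) \to P_\bullet^L\langle 1\rangle[1] \to 0$, whose connecting homomorphism is precisely the map induced by $f_\bullet$ on homology. Taking homology and using that $P_\bullet^L$ resolves $L\langle 1\rangle$ and $P_\bullet^M$ resolves $M$, the long exact sequence collapses so that the homology of $C(f_\bullet)$ in positive degrees vanishes and in degree $0$ computes $\operatorname{coker}(f) = N$, using the injectivity of $f$ and exactness of the given short exact sequence $0 \to L\langle 1\rangle \to M \to N \to 0$. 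Concretely, the induced map $H_0(P_\bullet^L\langle 1\rangle) = L\langle 1\rangle \to M = H_0(P_\bullet^M)$ is just $f$ itself, so its cokernel is $N$ and its kernel (governing $H_1$) is zero; in higher degrees both outer terms already vanish.

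The main obstacle I anticipate is not the homological argument, which is standard, but rather the consistent tracking of signs between the cone differential and the shift convention $d[i]_j = (-1)^i d_j$ fixed in Notation \ref{Not:sign}, together with verifying that the augmentation $C(f_\bullet)_0 = P_0^M \to M \to N$ is well-defined as a chain map, i.e.\ that it kills the image of the degree-one differential. This amounts to checking that the composite $P_0^L\langle 1\rangle \oplus P_1^M \to P_0^M \to M \to N$ vanishes, which follows because $P_1^M \to P_0^M \to M$ is zero by the resolution property and because $P_0^L\langle 1\rangle \to P_0^M \to M$ factors through $f$, whose image lands in $\ker(M \to N)$ by exactness. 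Once these compatibilities are in place, the lemma follows.
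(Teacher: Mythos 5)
Your proof is correct, and it reaches the lemma by a genuinely different (though closely related) route from the paper's. Both arguments form the same complex $C(f_\bullet)_n = P_{n-1}^L\langle 1\rangle \oplus P_n^M$, and your linearity bookkeeping (internal shift raises the generating degree of $P_{n-1}^L$ from $n-1$ to $n$) is exactly the paper's closing remark; the difference is in how exactness over $N$ is established. The paper works with the \emph{augmented} complexes: it takes the cone of the induced map from $P_\bullet^L\langle 1\rangle \to L\langle 1\rangle \to 0$ to $P_\bullet^M \to M \to 0$, observes that this cone is acyclic (being the cone of a chain map between exact complexes), and then quotients by the subcomplex $0 \to L\langle 1\rangle \xrightarrow{\mathrm{id}} L\langle 1\rangle \to 0$ (a genuine subcomplex because $f$ is injective); the resulting quotient is, on the nose, the exact augmented complex $\cdots \to P_0^L\langle 1\rangle \oplus P_1^M \to P_0^M \to N \to 0$ with the stated matrix differentials. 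You instead take the cone of the unaugmented resolutions and run the long exact homology sequence of $0 \to P_\bullet^M \to C(f_\bullet) \to P_\bullet^L\langle 1\rangle[1] \to 0$, whose connecting map is induced by $f$: injectivity of $f$ kills $H_1$, and $H_0(C(f_\bullet)) \cong \operatorname{coker}(f) \cong N$. Your route is the cleaner, more standard homological argument and makes explicit where injectivity of $f$ enters; the paper's route is more constructive, in that it exhibits the exact augmented complex together with its explicit differentials and signs, which is precisely what gets reused later (e.g.\ in Theorem \ref{Th:constrproj} and Theorem \ref{Th:diff}), and it automatically identifies the augmentation with the isomorphism $H_0 \cong N$. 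In your version that last identification deserves one explicit line: the kernel of $g \circ \epsilon^M \colon P_0^M \to N$ equals $\operatorname{im}(f_0) + \operatorname{im}(d_1^M) = \operatorname{im}\bigl(d_1^{C(f_\bullet)}\bigr)$, which follows from surjectivity of the augmentation $\epsilon^L$ of $P_\bullet^L\langle 1\rangle$ together with $\ker \epsilon^M = \operatorname{im}(d_1^M)$; with that noted, your argument is complete.
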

\begin{proof}
Writing down the induced map $f$, we get a commuting diagram
$$\xymatrix{ \cdots \ar[r] &P_k^L\langle 1 \rangle \ar[d]^{f_k} \ar[r]^{d^L_k}  &\cdots \ar[r] &P_0^L\langle 1 \rangle \ar[d]^{f_0} \ar[r]^{d^L_0} &L \langle 1 \rangle \ar[d]^{f} \ar[r] &0 \\ \cdots \ar[r] &P_k^M\ar[r]^{d^M_k} &\cdots \ar[r] &P_0^M \ar[r] ^{d^M_0}&M \ar[r] &0}$$
The cone of this complex is an acyclic complex

\begin{displaymath}\xymatrix@M=3pt@C=20pt{ \cdots \ar[r] &P_{k-1}^L \langle 1 \rangle \oplus P_k^M \ar[r]^{\hspace{0.7cm}\widetilde d^N_k}  &\cdots \ar[r] &P_0^L \langle 1 \rangle \oplus P_1^M \ar[r]^{\widetilde{d}^N_1} &L \langle 1 \rangle \oplus P_0^M  \ar[r]^{\hspace{0.7cm} \widetilde{d}^N_0} &M \ar[r] & 0 }\end{displaymath}
\normalsize
with $\widetilde{d}_k^N=\left( \begin{matrix} -d_{k-1}^L&0\\f_{k-1}&d_{k}^M \end{matrix}\right)$. There is an obvious map between complexes

$$\xymatrix@M=3pt@C=20pt{\cdots \ar[r] &0\ar[d] \ar[r]& \cdots \ar[r] &0 \ar[r]\ar[d] & L \langle 1 \rangle \ar[r]\ar[d] & L \langle 1 \rangle \ar[r]\ar[d] &0\\
\cdots \ar[r] &P_{k-1}^L \langle 1 \rangle \oplus P_k^M \ar[r]^{\hspace{0.7cm}\widetilde d^N_k}  &\cdots \ar[r] &P_0^L \langle 1 \rangle \oplus P_1^M \ar[r]^{\widetilde{d}^N_1} &L \langle 1 \rangle \oplus P_0^M  \ar[r]^{\hspace{0.7cm}\widetilde{d}^N_0} &M \ar[r] & 0 }$$
\normalsize
Taking the quotient by the upper complex, we get a new acyclic complex 
$$\xymatrix{ \cdots \ar[r] &P_{k-1}^L \langle 1 \rangle \oplus P_k^M \ar[r]^{\hspace{0.7cm} d^N_k}  &\cdots \ar[r] &P_0^L \langle 1 \rangle \oplus P_1^M \ar[r]^{\hspace{0.7cm}{d}^N_1} &P_0^M  \ar[r]^{{d}^N_0} &N \ar[r] & 0 }$$
with the differentials $d_k^N=\begin{cases} \widetilde{d}_k^N &\text{ for }k\geq 2, \\
																									(f_0,d_1^M) &\text{ for } k=1, \\
																								 g \circ d_0 &\text{ for } k=0 \end{cases}$\\
which are the differentials assumed in the Lemma.

Since the part belonging to $P_\bullet^L$ is shifted in both directions, it occurs as $P_\bullet^L \langle 1 \rangle  [1]$. The terms belonging to $P_\bullet^M$ stay unchanged and so this projective resolution is linear.											
\end{proof}

\section{The algebra structure on $\Ext$}
\subsection{$\Ext$-spaces}\label{sec:Exthom}
We shortly review different ways to define $\Ext(A,B)$ with $A, B \in Ob(\A)$ and $\A$ an abelian category. 

If $\A=R-\Mod$ is the category of modules over a ring $R$, the most usual way is defining $\Ext_R^k(\_,B)$ as the $k$th right derived functor of $\hom_R(\_,B)$. For this one chooses a projective resolution $P_\bullet$ of the module $A$ and computes $Ext_R^k(A,B)=H^n(\hom_R(P_\bullet,B))$.

>From now on assume that $A$ and $B$ have finite projective dimension.
For $Q_\bullet$ a finite projective resolution of $B$, we define a differential graded structure on $\hom(P_\bullet,Q_\bullet)$ with $\hom(P_\bullet,Q_\bullet)^r=\prod_p \hom(P_p,Q_{p+r})$ and the differential $d_p(f)=d {\circ}f-(-1)^p f {\circ}d$ (c.f. \cite[chapter III.6.13]{Gelf88}). Now we are able to compute $\Ext$ using the derived category (for the arguments see  \cite[Chapter III]{Gelf88}): 
\begin{align*}
\Ext^k(A,B)&=	\hom_{\D(\A)}(A[0],B[k]) \\
&=	\hom_{\D(\A)}(P[0],Q[k]) \\
&=	\hom_{\K(\A)}(P[0],Q[k])	&\text{ since $P_\bullet$ is projective and bounded}\\
&= 	\hom_{\K(\A)}(P_\bullet,Q_\bullet)[k]	\\
&=	H^0(\hom(P_\bullet,Q_\bullet)[k])\\
&= H^k(\hom(P_\bullet,Q_\bullet))
\end{align*}

Therefore, one can also compute the homomorphism spaces of the projective resolutions and afterwards take its cohomology. \\
Cycles in $\hom(P_\bullet,Q_\bullet)$ are chain maps (according to the degree commuting or anticommuting) and boundaries are homotopies (up to sign). If one regards them as chain maps between translated complexes (i.e. in $\hom_{\D^b(\A)}(P[0],Q[k])$) the sign convention from Notation \ref{Not:sign} leads to cycles being commuting chain maps and boundaries being usual homotopies.

For getting less confused with the signs in the complexes, in our computations we do not change signs while shifting (i.e. we use the $[ \ ]_\hom$-shift), but therefore we have to check that the maps are commutative or anticommutative, respectively.

Note that for $A=\bigoplus_\al A_\al$ and $B=\bigoplus_\beta B_\beta$ two finite direct sums one has 
$$\Ext^k(A,B)=\bigoplus_{\al, \beta} \Ext^k(A_\al,B_\beta).$$

\subsection{Multiplication}
>From now on we choose $A=B$ and compute $\Ext^k(A,A)=H^k(\hom(P_\bullet,P_\bullet))$. For the ease of presentation, multiplication in the algebra $\hom(P_\bullet, P_\bullet)$, which is given by composing of chain maps, is written from left to right, i.e. for $\al$, $\be \in \hom(P_\bullet, P_\bullet)$ we have $(\al \cdot \be)(x)=\be(\al(x))$. The multiplication in $\Ext(A,A)$ is the induced multiplication, therefore it is also given by composing the corresponding chain maps.
If $A=\bigoplus\limits_{\al \in I} A_\al$, $P_{\al \bullet}$ is a projective resolution of $A_\al$ and $P_\bullet=\bigoplus\limits_{\al \in I} P_{\al \bullet}$, we have $\Id_\al=[\id]\in \Ext^0(A_\al,A_\al)$.\\
The $\Id_\al$ form a system of mutual orthogonal idempotents, hence we can write 
\begin{equation*}
\Ext^k(A,A)=\bigoplus_{\al, \beta \in I} \Id_\al \Ext^k(A_\al, A_\beta) \Id_\beta.
\end{equation*}
Therefore, it is enough to compute $\Ext^k(A_\al, A_\beta)$ and to look at products of elements $x \in \Ext^k(A_\al, A_\beta)$ and $y \in \Ext^l(A_\beta, A_\ga)$ interpreting their product 
$$x \cdot y =\Id_\al x\Id_\beta \Id_\beta y \Id_\gamma \in \Ext^{k+l}(A, A).$$

\chapter{$\O^\p$ via Khovanov's diagram algebra}\label{ch:Khov}
Now we want to specify our situation. Let $\fr{g}=\fr{gl}_{m+n}(\C)$ and $\p$ the parabolic subalgebra associated to the Levi subalgebra $\fr{l}=\fr{gl}_{m}(\C) \oplus \fr{gl}_{n}(\C)$. The key ingredient of the whole work is the main theorem from \cite{brun32008}.
\begin{Theorem}[{\cite[Corollary 8.21.]{brun32008}}]\label{Th:equcat}
There is an equivalence of categories 
$$\mathbb{E}:\O(m,n,I) \to K(m,n,I)-\mod$$ 
such that $\mathbb{E}(L(\la))\cong L(\la)$, $\mathbb{E}(M(\la))\cong M(\la)$ and $\mathbb{E}(P(\la))\cong P(\la)$ for each $\la \in \La(m,n,I)$.
\end{Theorem}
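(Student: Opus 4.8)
The plan is to reduce the statement to an isomorphism of finite-dimensional algebras and then identify that algebra with Khovanov's diagram algebra. First I would observe that the relevant truncation of $\O(m,n,I)$ is a finite-length abelian category with enough projectives (cf. Proposition \ref{Pro:projOp}) and only finitely many isomorphism classes of indecomposable projectives $P(\la)$, $\la \in \La(m,n,I)$. By the standard Morita-theoretic argument, choosing the minimal projective generator $P=\bigoplus_\la P(\la)$ and setting $A=\End_{\O^\p}(P)$, the functor $\mathbb{E}:=\hom(P,-)$ is an equivalence $\O(m,n,I)\to A\text{-}\mod$ sending $P(\la)$ to the indecomposable projective attached to the idempotent $\id_{P(\la)}$. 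Hence $\mathbb{E}(P(\la))\cong P(\la)$ holds by construction once we produce an isomorphism of algebras $A\cong K(m,n,I)$ matching the primitive idempotents indexed by $\la$; the claims for $M(\la)$ and $L(\la)$ then reduce to showing that $\mathbb{E}$ carries the highest-weight structure of $\O^\p$ to the cellular structure of $K(m,n,I)$.

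The core of the proof is therefore the computation of $A=\End_{\O^\p}(P)$. Here I would invoke Soergel's combinatorial description of $\O$: the structure functor $\V=\hom_\O(P(w_0\cdot\la_0),-)$ attached to the antidominant (big) projective is fully faithful on projectives, with image realized inside modules over the coinvariant algebra $\End_\O(P(w_0\cdot\la_0))$. Passing to the parabolic setting replaces the coinvariant algebra by its $W_\fr{l}$-invariants, and the translation functors — which are exact and preserve projectives — act on these $\hom$-spaces in a way I can make explicit. This gives a concrete model for each space $\hom_{\O^\p}(P(\la),P(\mu))$ together with the composition product, and in particular computes $\dim\hom_{\O^\p}(P(\la),P(\mu))$.

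The final step is the comparison with the diagram side. I would set up $K(m,n,I)$ from its combinatorial data: the weights $\La(m,n,I)$, equivalently the minimal coset representatives $W^\fr{l}$, correspond to $\{\circ,\times,\up,\down\}$-weight diagrams, from which one forms oriented cup-cap (circle) diagrams; the diagrammatic basis of $K$ is multiplied by the surgery/TQFT rule. The plan is to show that the number of oriented circle diagrams contributing to $e_\la K e_\mu$ equals $\dim\hom_{\O^\p}(P(\la),P(\mu))$ computed above — both being governed by the (combinatorially explicit) parabolic Kazhdan--Lusztig polynomials in this Hermitian symmetric case — and to define a degree-preserving linear map $A\to K$ on these matching bases. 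Identifying the cell modules of $K$ with the images under $\mathbb{E}$ of the parabolic Verma modules $M(\la)$, and their simple heads with the $L(\la)$, then yields the two remaining isomorphisms.

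The main obstacle is verifying that this bijection of bases is genuinely an algebra isomorphism, i.e. that the Lie-theoretically defined composition of morphisms between projectives agrees with the purely diagrammatic surgery multiplication of $K$, with all structure constants (including signs) correct. I expect this to require translating the action of the translation functors into the local cup-cap surgery moves and checking the relations cell by cell; equivalently, one can exploit that both $A$ and $K$ are cellular quasi-hereditary algebras with the same Cartan and decomposition data determined by Kazhdan--Lusztig combinatorics, and reduce the problem to checking that the two cellular (standard-module) filtrations coincide.
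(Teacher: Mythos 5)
First, a point of comparison: the paper does not prove Theorem \ref{Th:equcat} at all --- it is imported verbatim from \cite[Corollary 8.21]{brun32008}, and everything downstream (Corollary \ref{Cor:equcat}, the grading, the cell-module dictionary) rests on that citation. So what you are proposing is a proof of Brundan--Stroppel's theorem itself, not a reconstruction of any argument in this paper. Your overall strategy --- Morita theory to reduce to $\End_{\O^\p}(P)$ for a projective generator $P$, computation of that algebra via Soergel's functor $\V$ and translation functors, then comparison with the diagram algebra --- does follow the same broad route as Brundan--Stroppel's actual proof (spread over \cite{Brun12008}, \cite{Brun22008}, \cite{brun32008}), so the skeleton is reasonable. (A minor technical caveat: $\O(m,n,I)$ is an infinite sum of blocks and $K(m,n,I)$ is only a locally unital algebra, so the Morita argument must be run block by block; this is harmless but should be said.)

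The genuine gap is the final step, which you name as ``the main obstacle'' but do not close, and whose proposed workaround is false. A bijection between the diagram basis of $e_\la K e_\mu$ and a basis of $\hom_{\O^\p}(P(\la),P(\mu))$ that merely matches dimensions --- both being governed by the parabolic Kazhdan--Lusztig polynomials --- produces a linear isomorphism, not an algebra isomorphism: the entire content of the theorem is that Lie-theoretic composition agrees with the diagrammatic surgery product, structure constants and all, and that verification occupies most of \cite{brun32008}. Your fallback --- that two cellular quasi-hereditary algebras with the same Cartan matrix, decomposition numbers, and standard-filtration multiplicities must be isomorphic --- is simply not true: Cartan and decomposition data do not determine an algebra up to isomorphism, which is exactly why the multiplicative comparison cannot be short-circuited by combinatorics alone. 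Brundan--Stroppel close this gap by a lengthy explicit identification of the $\V P(\la)$ and of the maps induced by translation functors with the cup/cap bimodule calculus (with Koszulity of $K$, proved separately in \cite{Brun22008}, and uniqueness of Koszul gradings entering for the graded refinement); absent that verification, or a rigidity argument of comparable strength, your proof does not go through.
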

Here $\O(m,n,I)$ is an infinite sum of certain blocks of $\O^\p$ and $K(m,n)$ is an infinity algebra given by the direct sum of finite dimensional algebras $K_\La$ (cf. \cite[Section 2]{brun32008}). Here $K_\La$ is the algebra defined diagrammatically in \cite{brun32008} with an explicit basis given by certain diagrams and a multiplication defined by an explicit ``surgery'' construction.
The basis is in fact a (graded) cellular basis in the sense of Graham and Lehrer \cite{Grah2004} in the graded version of Hu and Mathas \cite{Hu2010}.
The algebra is shown to be quasi-hereditary in \cite[Section 5]{Brun12008}. Hence we have  standard modules $M(\la)$, their projective covers $P(\la)$ and irreducible quotients $L(\la)$. This is meant by the notation used in the theorem. These terms will be explained in detail below.

Since we are not going to work in this general setting, we do not introduce the notation in detail. We only need certain finite dimensional summands corresponding to the principal block of $\O^\p$ which we will introduce below.\\
The theorem simplifies by restriction to the principal block of $\O^\p$ and we deduce the following corollary

\begin{Cor}\label{Cor:equcat}
There is an equivalence of categories of the principal block of $\O^\p$ to the category of finite dimensional left modules over the Khovanov diagram algebra, $K_m^n-\mod$, sending the simple module $L(\la) \in \O^\p$ to the simple module $L(\la) \in K_m^n-\mod$, the Verma modules $M(\la)$ to the cell modules $M(\la)$ and the indecomposable projectives to the corresponding indecomposable projectives.
\end{Cor}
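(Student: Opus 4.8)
The plan is to deduce the statement directly from Theorem~\ref{Th:equcat} by restricting the equivalence $\mathbb{E}$ to a single block. The starting point is that both sides of that theorem carry compatible direct sum decompositions: the category $\O(m,n,I)$ is assembled from blocks of $\O^\p$, while the infinity algebra $K(m,n,I)=\bigoplus_\La K_\La$ is a direct sum of the finite dimensional algebras $K_\La$, so that its module category splits as $\bigoplus_\La K_\La-\mod$. The key structural fact I would invoke is that any equivalence of abelian categories respects such decompositions into indecomposable summands (equivalently, it matches up the decompositions of the identity into central idempotents). Hence $\mathbb{E}$ induces a bijection between the blocks of $\O(m,n,I)$ and the summands $K_\La-\mod$, and on each block it restricts to an equivalence onto the corresponding $K_\La-\mod$.

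With this in hand, I would identify the principal block $\O^\p_0$ as one of the block summands occurring in $\O(m,n,I)$ for a suitable choice of $I$. Recall that its weights are precisely those of the form $w\cdot\la_0$ with $w\in W^\p$, i.e. a single regular integral block, which therefore lies in the range covered by Theorem~\ref{Th:equcat}. Let $K_m^n$ denote the finite dimensional algebra $K_\La$ to which $\mathbb{E}$ sends this block under the bijection above. Restricting $\mathbb{E}$ to $\O^\p_0$ then yields the asserted equivalence $\O^\p_0 \to K_m^n-\mod$. The statements about the three distinguished classes of modules are inherited verbatim: for every $\la$ in the principal block (that is, $\la\in W\cdot\la_0\cap\La_I^+$) we get $\mathbb{E}(L(\la))\cong L(\la)$, $\mathbb{E}(M(\la))\cong M(\la)$ and $\mathbb{E}(P(\la))\cong P(\la)$, where on the $K_m^n$-side the $M(\la)$ are the cell modules, i.e. the standard modules of the quasi-hereditary structure on $K_m^n$ discussed just before the corollary.

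I expect the only real obstacle to be the bookkeeping in the middle step, namely verifying that the principal block genuinely appears among the blocks bundled into $\O(m,n,I)$ and pinning down the combinatorial datum $\La$, hence the algebra $K_m^n$, to which it corresponds. This is purely a matter of matching the Lie-theoretic labelling of the principal block by minimal-length coset representatives $W^\p$ against the diagrammatic labelling of blocks by weight diagrams in \cite{brun32008}; once that identification is made, the passage to the principal block and all three module correspondences are formal consequences of the block bijection, so no further computation is needed.
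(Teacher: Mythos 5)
Your proposal is correct and follows essentially the same route as the paper, which simply deduces the corollary by restricting the equivalence $\mathbb{E}$ of Theorem \ref{Th:equcat} to the principal block summand. The details you supply (equivalences respect block decompositions, the principal block is labelled by $W^\p$, hence corresponds to the summand $K_{\La_m^n}=K_m^n$, and the module correspondences are inherited from the theorem) are precisely the bookkeeping the paper leaves implicit.
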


\begin{Remark}\label{grading}
As we will see in the following sections, $K_m^n-\mod$ possesses a natural grading (cf. \cite[Theorem 5.3]{Brun12008}). Using the above equivalence from Corollary \ref{Cor:equcat} this gives rise to a graded version of the principal block of $\O^\p$, denoted by $\O_0^{\p \Z}$. Using the grading from $K(m,n)-\mod$ which will not be explained in this thesis, one obtains a graded category $\O^{\p \Z}$. By the unicity of Koszul gradings \cite[Section 2.5]{Beil1996} this is equivalent to the graded version of $\O^\p$ one can define geometrically \cite[Section 3.11]{Beil1996} (and \cite{Stro2005} for the principal block). 
\end{Remark}

\section{The algebra $K_m^n$ and its basic properties}
\subsection{Basic definitions} \label{basic}
For the construction of elements in $K_m^n$, we recall the notions of weights, cup/cap diagrams and finally circle diagrams in our situation (cf. {\cite[Section 2]{Brun12008}).

A {\it weight} $\la$ in the block $\La_m^n:=\La(m,n;m+n)$ belonging to $K_m^n=K_{\La_m^n}$ is an element obtained by permuting $n \ \up$'s and $m \  \down$'s placed at the $(m+n)$ places $i \in I=\{0,\dots,m+n-1\}$ on the number line. The {\it zero weight} $\la_0$ is the one having all $\up$'s on the left and all $\down$'s on the right (cf. figure \ref{fig:zeroweight}).
\begin{figure}
\caption{the zero weight for $n=2$ and $m=3$}
	\label{fig:zeroweight}
 \center
\begin{tikzpicture}
\begin{scope}
		\draw (-1, 0)--(2.6, 0);
		\draw (0,0) node[below=-3pt] {$\up$};
		\draw (0.4,0) node[below=-3pt] {$\up$};
		\draw (0.8, 0) node[above=-3pt] {$\down$};
		\draw (1.2,0) node[above=-3pt] {$\down$};
		\draw (1.6,0) node[above=-3pt] {$\down$};
		\end{scope}
		\begin{scope}[yshift=0cm]
		\scriptsize
		\draw (-1.4, 0) node {$\cdots$};
		\draw (-0.8, 0.1)--(-0.8, -0.1);
		\draw (-0.9, 0) node [above] {$-2$};
		\draw (-0.4, 0.1)--(-0.4, -0.1);
		\draw (-0.5,0) node[above] {$-1$};
		\draw (2,0) node[above] {$5$};
		\draw (2, 0.1)--(2, -0.1);	
		\draw (2.4,0) node[above] {$6$};
		\draw (2.4, 0.1)--(2.4, -0.1);		
		\draw (3,0) node {$\cdots$};
		\end{scope}
\end{tikzpicture}
\end{figure}
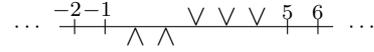

The connection with our previously defined weights is given by a weight dictionary similar to the one in \cite[Section 1]{brun32008}. For the ease of presentation we shift the weight $\rho$ used to define the zero weight:\\
Take $$\rho=\eps_{m+n-1}+2\eps_{m+n-2}+ \cdots +(m+n-1) \eps_{1} \in \fr{h}^*$$
and for $\la$ in $\La_I^+$ from equation \eqref{pdom} define
\begin{align*}
I_\down(\la)&:= \{(\la+\rho,\eps_1), \ldots, (\la+\rho,\eps_m)\}\\
I_\up(\la)&:= \{(\la+\rho,\eps_{m+1}), \ldots, (\la+\rho,\eps_{m+n})\}.
\end{align*}
Now label the $i$th vertex of the numberline by
\begin{equation*}
\begin{cases}\down &\text{if $i$ belongs to $I_\down(\la)$}\\
\up &\text{if $i$ belongs to $I_\up(\la)$} \end{cases}\end{equation*}
respectively. The obtained weight is the weight $\la \in \La_m^n$. 
The weights are partially ordered by the {\it Bruhat order}, i.e. an element becomes bigger by swapping $\down$'s to the right.

The symmetric group $S_{m+n}$ acts transitively on the set of weights by permuting the symbols. Since the zero weight has stabilizer $S_m \times S_n$, we get a bijection between $W^\fr{l}$ and the set of weights. Observe that the elements $s_{n}\cdot \ldots \cdot s_{i_n} \cdot s_{n-1}\cdot \ldots \cdot s_{i_{n-1}} \cdot \ldots \cdot s_{1}\cdot \ldots \cdot s_{i_1}$ with $n+m\geq i_n>i_{n-1}> \ldots >i_1\geq 0$ give a set of shortest elements (First permute the $n$th $\up$, then the $(n-1)$st and so on, cf. figure \ref{fig:permuting}).
The order of the elements in $\La_m^n$ corresponds precisely to the Bruhat order on the Weyl group via Corollary \ref{Cor:equcat} and the part after Prop \ref{Pro:projOp} and the explicit description given above. 
\begin{figure}
\caption{the Weyl group operating on the zero weight for $n=2$ and $m=3$}
	\label{fig:permuting}
 \center
\begin{tikzpicture}
\begin{scope}
		\draw (0.8, 1) node {$\la_0$};
		\draw (0,0) node[above=-1.55pt] {$\up$};
		\draw (0.4,0) node[above=-1.55pt] {$\up$};
		\draw (0.8, 0) node[above=-1.55pt] {$\down$};
		\draw (1.2,0) node[above=-1.55pt] {$\down$};
		\draw (1.6,0) node[above=-1.55pt] {$\down$};
		\draw[->] (1.9,0.3)--(2.8,0.3);
		\draw (2.3,0.3) node[above]{$s_2$};
\end{scope}
\begin{scope}[xshift=3cm]
		\draw (0.8, 1) node {$\la_0.s_2$};
		\draw (0,0) node[above=-1.55pt] {$\up$};
		\draw (0.4,0) node[above=-1.55pt] {$\down$};
		\draw (0.8, 0) node[above=-1.55pt] {$\up$};
		\draw (1.2,0) node[above=-1.55pt] {$\down$};
		\draw (1.6,0) node[above=-1.55pt] {$\down$};
		\draw[->] (1.9,0.3)--(2.8,0.3);
		\draw (2.3,0.3) node[above]{$s_3$};
\end{scope}
\begin{scope}[xshift=6cm]
		\draw (0.8, 1) node {$\la_0.s_2s_3$};
		\draw (0,0) node[above=-1.55pt] {$\up$};
		\draw (0.4,0) node[above=-1.55pt] {$\down$};
		\draw (0.8, 0) node[above=-1.55pt] {$\down$};
		\draw (1.2,0) node[above=-1.55pt] {$\up$};
		\draw (1.6,0) node[above=-1.55pt] {$\down$};
		\draw[->] (1.9,0.3)--(2.8,0.3);
		\draw (2.3,0.3) node[above]{$s_1$};		
\end{scope}
\begin{scope}[xshift=9cm]
		\draw (0.8, 1) node {$\la_0.s_2s_3s_1$};
		\draw (0,0) node[above=-1.55pt] {$\down$};
		\draw (0.4,0) node[above=-1.55pt] {$\up$};
		\draw (0.8, 0) node[above=-1.55pt] {$\down$};
		\draw (1.2,0) node[above=-1.55pt] {$\up$};
		\draw (1.6,0) node[above=-1.55pt] {$\down$};
\end{scope}
\end{tikzpicture}
\end{figure}
For $\la=\la_0.x$ with $x \in W^\fr{l}$ we write $l(\la)$ for $l(x)$.\\
For each index $i$ define \emph{the relative length}
\begin{equation*}
\begin{split}
l_i(\la,\mu)
:=&
\#\{j \in I\:|\:j \leq i
\text{ and vertex $j$ of $\la$ is labelled $\down$}\}
\\&-
\#\{j \in I\:|\:j \leq i
\text{ and vertex $j$ of $\mu$ is labelled $\down$}\}\\
=& \#\{j \in I\:|\:j \leq i
\text{ and vertex $j$ of $\mu$ is labelled $\up$}\}
\\
&-
\#\{j \in I\:|\:j \leq i
\text{ and vertex $j$ of $\la$ is labelled $\up$}\}.
\end{split}\end{equation*}
and note that by \cite[Section 5]{Brun12008} 
\begin{align}\label{diffl}
l(\la)-l(\mu)= \sum_{i \in I} \ell_i(\la,\mu).
\end{align}
A {\it cup diagram} is a diagram obtained by attaching rays and finitely many cups (lower semicircles) to the line of length $m+n$, so that rays join vertices down to infinity and do not intersect cups.\\
A {\it cap diagram} is the mirror image of a cup diagram, so caps (i.e. upper semicircles) instead of cups are used. The mirror image of a cup
 (resp. cap) diagram $c$ is denoted by $c^*$.\\
If $c$ is a cup diagram and $\la$ a weight in $\La_m^n$, we can glue $c$ and $\la$ and obtain a new diagram denoted $c\la$. $c\la$ is called {\it oriented cup diagram} if 
\begin{itemize}
	\item each cup is oriented, i.e. one of the vertices is labelled $\down$ and the other one $\up$
	\item there are not two rays in $c$ labelled $\down \up$ in this order from left to right.
\end{itemize}
An example is given in figure \ref{fig:anOrientedCupDiagram}.

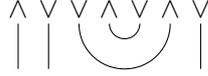
\begin{figure}
\caption{an oriented cup diagram}
	\label{fig:anOrientedCupDiagram}
 \center
\begin{tikzpicture}
\begin{scope}
		\draw (0,0) node[above=-1.55pt] {$\up$};
		\draw (0.4,0) node[above=-1.55pt] {$\down$};
		\draw (0.8, 0) node[above=-1.55pt] {$\down$};
		\draw (1.2,0) node[above=-1.55pt] {$\up$};
		\draw (1.6,0) node[above=-1.55pt] {$\down$};
		\draw (2,0) node[above=-1.55pt] {$\up$};
		\draw (2.4, 0) node[above=-1.55pt] {$\down$};
		\draw (0,0) -- (0,-0.6);
		\draw (0.4,0) -- (0.4,-0.6);
	  \draw (0.8,0) arc (180:360:0.6);
		\draw (1.2,0) arc (180:360:0.2);
		\draw (2.4,0) -- (2.4,-0.6);	
		\end{scope}
\end{tikzpicture}
\end{figure}
If $c$ is a cap diagram, it is called {\it oriented cap diagram} if $c^*\la$ is an oriented cup diagram.\\
A {\it circle diagram} is obtained by gluing a cup and a cap diagram. It consists of circles and lines.\\
An {\it oriented circle diagram} $a\la b$ is the diagram obtained by gluing the oriented cup diagram $a\la$ underneath the oriented cap diagram $\la b$. For an example look at figure \ref{fig:anOrientedCircleDiagram}. 

\begin{figure}
\caption{an oriented circle diagram}
	\label{fig:anOrientedCircleDiagram}
 \center
\begin{tikzpicture}
\begin{scope}
		\draw (0,0) node[above=-1.55pt] {$\up$};
		\draw (0.4,0) node[above=-1.55pt] {$\down$};
		\draw (0.8, 0) node[above=-1.55pt] {$\down$};
		\draw (1.2,0) node[above=-1.55pt] {$\up$};
		\draw (1.6,0) node[above=-1.55pt] {$\down$};
		\draw (2,0) node[above=-1.55pt] {$\up$};
		\draw (2.4, 0) node[above=-1.55pt] {$\down$};
		\draw (2.4,0.4) -- (2.4,0.8);
		\draw (0,0.4) arc (180:0:0.2);
		\draw (0.8,0.4) arc (180:0:0.2);
		\draw (1.6,0.4) arc (180:0:0.2);
		\draw (0,0) -- (0,-0.6);
		\draw (0.4,0) -- (0.4,-0.6);
	  \draw (0.8,0) arc (180:360:0.6);
		\draw (1.2,0) arc (180:360:0.2);
		\draw (2.4,0) -- (2.4,-0.6);	
		\end{scope}
\end{tikzpicture}
\end{figure}
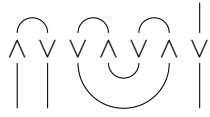
The {\it degree} of an oriented cup/cap diagram $a\la$ (or $\la b$) means the total number of oriented cups (caps) that it contains. So in $K_m^n$ one has $deg(a \la) \leq n$, since there are at most $n$ cups (caps).
The {\it degree} of an oriented circle diagram $a\la b$ is defined as the sum of the degree of $a\la$ and the degree of $\la b$.

The {\it cup diagram associated to a weight $\la$} is the unique cup diagram $\underline{\la}$ such that $\underline{\la} \la$ is an oriented cup diagram of degree 0. The construction works as follows: Look for two neighboured vertices labelled by $\down \up$ and connect them by a cup. Proceed this procedure ignoring vertices which are already joined to others. Finally draw rays to all vertices which are left. An example is given in figure \ref{fig:assocDiagram}.\\
\begin{figure}
\caption{The cup diagram associated to a weight $\la$}
	\label{fig:assocDiagram}
 \center
\begin{tikzpicture}
\begin{scope}
		\draw (-0.7,0) node[above=-1.55pt] {$\la$};
		\draw (-0.7,-0.6) node[above=-1.55pt] {$\und{\la}$};
		\draw (0,0) node[above=-1.55pt] {$\up$};
		\draw (0.4,0) node[above=-1.55pt] {$\down$};
		\draw (0.8, 0) node[above=-1.55pt] {$\up$};
		\draw (1.2,0) node[above=-1.55pt] {$\down$};
		\draw (1.6,0) node[above=-1.55pt] {$\down$};
		\draw (2,0) node[above=-1.55pt] {$\up$};
		\draw (2.4, 0) node[above=-1.55pt] {$\down$};
		\draw (0,0) -- (0,-0.4);
		\draw (1.2,0) -- (1.2,-0.4);
	  \draw (0.4,0) arc (180:360:0.2);
		\draw (1.6,0) arc (180:360:0.2);
		\draw (2.4,0) -- (2.4,-0.4);	
		\end{scope}
\end{tikzpicture}
\end{figure}
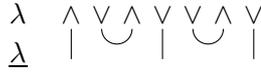
The {\it cap diagram associated to a weight $\la$} is defined as $\overline{\la}:=(\underline{\la})^*$.

In a cup (cap) diagram we number the cups (caps) by the order of their right ending points from left to right. 

For a cup (cap) diagram $a$ we denote by $\nes_a(i)$ for $1 \leq i \leq \# \{ \text{cups} \}$ the number of cups nested in the $i$th cup.

\subsection{The algebra}
The underlying vector space leading to $K_m^n$ has a basis
\begin{align*}
	\left\{(a\la b) \left| \text{for all oriented circle diagrams with } \la \in \La_m^n\right\} \right. .
\end{align*}
We take the grading given by the degree defined above. 

$e_\la$ is defined to be the diagram $\underline{\la}\la \overline{\la}$.
The product of two circle diagrams $a\la b$ and $c \mu d$ is zero except for $b=c^*$. The multiplication of $a \la b$ and $b^* \mu d$ works by the rules of the generalised surgery procedure defined below, with the first diagram drawn below the second and all rays stitched together. An example for the multiplication is given in figure \ref{fig:multi}.
\begin{Def}[{\cite[Section 3 and Theorem 6.1.]{Brun12008}}]
Given two circle diagrams $a \la b$ and $b^* \mu d$, the first one drawn below the second and all corresponding rays stitched together (i.e. one has a diagram with a symmetric middle part) the \textit{generalised surgery procedure} works by the following steps:
\begin{enumerate}
\item Choose a symmetric pair of a cup and a cap in the middle section of the diagram that can be connected without crossings.
\item Check if these cup and cap belong to one circle/line segment and if it applies note down $1$ (anti-clockwise oriented circle), $x$ (clockwise oriented circle) or $y$ (line segment), respectively.
\item If they belong to different circles/line segments note down their kind for each of them ($1$, $x$ or $y$).
\item Delete the orientations from the circles/line segments these cup and cap are belonging to. Cut open the cup and the cap and stitch the loose ends together to form a pair of vertical line segments.
\item Re-orient the obtained circle diagram using the following rules:
\begin{enumerate}
\item If one has cut one circle/line segment into two parts, one uses the rules:
\begin{align*}
&1 \mapsto 1 \otimes x + x \otimes 1, \quad x \mapsto x \otimes x, \quad y \mapsto x \otimes y
\end{align*}
where the first rule means that the diagram goes to a sum of two, with one of the two new circles oriented clockwise and the other anti-clockwise in each of these two diagrams, the rule $x \mapsto x \otimes x$ means that the clockwise oriented circle transforms into two clockwise oriented circles and the rule $y \mapsto x \otimes y$ indicates that the line segment becomes a clockwise oriented circle and a line segment.
\item If one has made one circle/line segment out of two, one has to use these rules:
\begin{align*}
&1 \otimes 1 \mapsto 1, \quad 1\otimes x \mapsto x, \quad x \otimes 1 \mapsto x, \quad x \otimes x \mapsto 0,\\
&1 \otimes y \mapsto y,\quad y \otimes 1 \mapsto y, \quad x \otimes y \mapsto 0, \quad y \otimes x \mapsto 0\\
&y \otimes y \mapsto \begin{cases} y \otimes y & \text{if both rays from one of the lines are oriented $\up$}\\ & \text {and both rays from the other line are oriented $\down$}\\ 0 & \text{otherwise}\end{cases} \end{align*}
For instance, the rule $x \otimes 1 \mapsto x$ here indicates that an anti-clockwise and a clockwise circle transform to one clockwise circle.
\end{enumerate}
\item Iterate the procedure on all summands until there are no cups and caps left in the middle part, then identify the two numberlines.
\end{enumerate}
\end{Def}

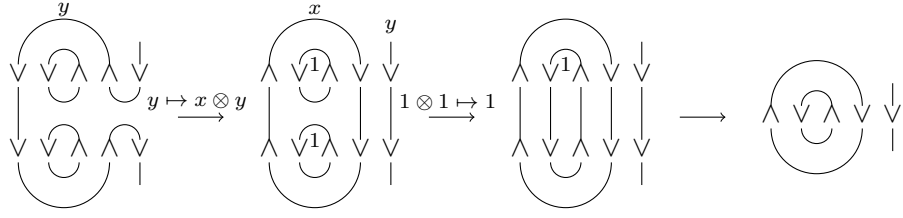
\begin{figure}
\caption{Multiplication of two elements}
	\label{fig:multi}
 \center
		\begin{tikzpicture}
		\begin{scope}

		\draw (0.4,0) node[above=-1.55pt] {$\down$};
		\draw (0.8, 0) node[above=-1.55pt] {$\down$};
		\draw (1.2,0) node[above=-1.55pt] {$\up$};
		\draw (1.6,0) node[above=-1.55pt] {$\up$};
		\draw (2,0) node[above=-1.55pt] {$\down$};
		\draw (0.8,0.3) arc (180:0:0.2);
		\draw (0.4,0.3) arc (180:0:0.6);
		\draw (2.0,0.3) -- (2.0,0.6);
		\draw (1.6,0) arc (180:360:0.2);
		\draw (0.8,0) arc (180:360:0.2);
		\draw (0.4,0) -- (0.4,-0.7);
		\end{scope}
		
				\begin{scope}[yshift=-1cm]
		\draw (0.4,0) node[above=-1.55pt] {$\down$};
		\draw (0.8, 0) node[above=-1.55pt] {$\down$};
		\draw (1.2,0) node[above=-1.55pt] {$\up$};
		\draw (1.6,0) node[above=-1.55pt] {$\up$};
		\draw (2,0) node[above=-1.55pt] {$\down$};
		\draw (0.8,0.3) arc (180:0:0.2);
		\draw (1.6,0.3) arc (180:0:0.2);
		\draw (0.4,0) arc (180:360:0.6);
		\draw (0.8,0) arc (180:360:0.2);
		\draw (2,0) -- (2,-0.3);
		\end{scope}
			\scriptsize
		\draw (1,1.03) node {$y$};		
		\draw (2.75, -0.2) node{$y \mapsto x \otimes y$};
		\normalsize
		\draw[->] (2.5,-0.4)--(3.1,-0.4);
		
		\begin{scope}[xshift=3.3cm]
				\draw (0.4,0) node[above=-1.55pt] {$\up$};
		\draw (0.8, 0) node[above=-1.55pt] {$\down$};
		\draw (1.2,0) node[above=-1.55pt] {$\up$};
		\draw (1.6,0) node[above=-1.55pt] {$\down$};
		\draw (2,0) node[above=-1.55pt] {$\down$};
		\draw (0.8,0.3) arc (180:0:0.2);
		\draw (0.4,0.3) arc (180:0:0.6);
		\draw (2.0,0.3) -- (2.0,0.6);
		\draw (0.8,0) arc (180:360:0.2);
		\draw (0.4,0) -- (0.4,-0.7);
		\draw (1.6,0) -- (1.6,-0.7);
		\draw (2,0) -- (2,-0.7);

				\begin{scope}[yshift=-1cm]
		\draw (0.4,0) node[above=-1.55pt] {$\up$};
		\draw (0.8, 0) node[above=-1.55pt] {$\down$};
		\draw (1.2,0) node[above=-1.55pt] {$\up$};
		\draw (1.6,0) node[above=-1.55pt] {$\down$};
		\draw (2,0) node[above=-1.55pt] {$\down$};
		\draw (0.8,0.3) arc (180:0:0.2);	
		\draw (0.4,0) arc (180:360:0.6);
		\draw (0.8,0) arc (180:360:0.2);
		\draw (2,0) -- (2,-0.3);
				\end{scope}
				
				\scriptsize
		\draw (1,1.03) node {$x$};	
		\draw (2,0.8) node {$y$};		
		\draw (1,0.3) node {$1$};
		\draw (1,-0.7) node {$1$};		
		\draw (2.75, -0.2) node{$1 \otimes 1 \mapsto 1$};
		\normalsize
		\draw[->] (2.5,-0.4)--(3.1,-0.4);
		
		\end{scope}

		\begin{scope}[xshift=6.6cm]
				\draw (0.4,0) node[above=-1.55pt] {$\up$};
		\draw (0.8, 0) node[above=-1.55pt] {$\down$};
		\draw (1.2,0) node[above=-1.55pt] {$\up$};
		\draw (1.6,0) node[above=-1.55pt] {$\down$};
		\draw (2,0) node[above=-1.55pt] {$\down$};
		\draw (0.8,0.3) arc (180:0:0.2);
		\draw (0.4,0.3) arc (180:0:0.6);
		\draw (2.0,0.3) -- (2.0,0.6);
		\draw (0.4,0) -- (0.4,-0.7);
		\draw (0.8,0) -- (0.8,-0.7);
		\draw (1.2,0) -- (1.2,-0.7);
		\draw (1.6,0) -- (1.6,-0.7);
		\draw (2,0) -- (2,-0.7);

				\begin{scope}[yshift=-1cm]
		\draw (0.4,0) node[above=-1.55pt] {$\up$};
		\draw (0.8, 0) node[above=-1.55pt] {$\down$};
		\draw (1.2,0) node[above=-1.55pt] {$\up$};
		\draw (1.6,0) node[above=-1.55pt] {$\down$};
		\draw (2,0) node[above=-1.55pt] {$\down$};
		\draw (0.4,0) arc (180:360:0.6);
		\draw (0.8,0) arc (180:360:0.2);
		\draw (2,0) -- (2,-0.3);
				\end{scope}
		\scriptsize	
		\draw (1,0.3) node {$1$};

		\normalsize
		\draw[->] (2.5,-0.4)--(3.1,-0.4);
				
		\end{scope}
		
		\begin{scope}[yshift=-0.55cm, xshift=9.9cm]
		\draw (0.4,0) node[above=-1.55pt] {$\up$};
		\draw (0.8, 0) node[above=-1.55pt] {$\down$};
		\draw (1.2,0) node[above=-1.55pt] {$\up$};
		\draw (1.6,0) node[above=-1.55pt] {$\down$};
		\draw (2,0) node[above=-1.55pt] {$\down$};
		\draw (0.8,0.3) arc (180:0:0.2);
		\draw (0.4,0.3) arc (180:0:0.6);
		\draw (2,0.3) -- (2,0.6);
		\draw (0.4,0) arc (180:360:0.6);
		\draw (0.8,0) arc (180:360:0.2);
		\draw (2,0) -- (2,-0.3);		
			\end{scope}
			\end{tikzpicture}
\end{figure}

The vectors $\left\{ e_\alpha | \alpha \in \La_m^n \right\} $ form a complete set of mutual orthogonal idempotents in $K_m^n$. We get
\begin{align*}
K_m^n= \bigoplus_{\alpha, \beta \in \La_m^n} e_\alpha K_m^n e_\beta
\end{align*}
and the summand $e_\alpha K_m^n e_\beta$ has a basis 
\begin{align*}
\left\{(\und{\alpha} \la \ove{\beta}) \left| \lambda \in \La_m^n \text{ the diagram is oriented} \right\} \right. .
\end{align*}
\subsection{Modules}
Following \cite[Section 5]{Brun12008}, we look at graded unital left $K_m^n$-modules $M$.
As mentioned already there are different types of important modules which we describe now in more detail:
\begin{itemize}
	\item The simple modules $L(\la)$ with $\la \in \La_m^n$. \\
	$x \in K_m^n$ operates on $L(\la)$ by 1 if $x=e_\la$ and by 0 otherwise. Note that these are onedimensional modules.\\
	By shifting the degree one gets all isomorphism classes of simple graded $K_m^n$-modules.
	\item The projective covers of the simple modules $L(\la)$ denoted by $P(\la):=K_m^n e_\la$, which have a basis
	\begin{align}
	\left\{ (\und{\alpha}\mu\ove{\la}) \left| \text{ for all } \alpha, \mu \in \La_m^n \text{ such that the diagram is oriented}\right\} \right. .
	\end{align}
	By shifting the degree one gets a full set of indecomposable projective modules.
	\item The cell or standard modules $M(\mu)$ with basis
		\begin{align*}
	\left\{
(c \mu | \:\big|\:
\text{for all oriented cup diagrams $c \mu$}\right\}
		\end{align*}
		and $(a\la b)(c\mu |)=(a\mu |)$ or 0 depending on the elements.
\end{itemize}
\begin{Remark}
Under the equivalence from Corollary \ref{Cor:equcat} these modules correspond to simples, projectives and Verma modules in the principal block of $O^\p$.
\end{Remark}
\subsection{q-decomposition numbers}
We have the following theorems about cell module filtrations of projectives and Jordan-Hoelder filtrations of cell modules, which say that $K_m^n$ is quasi-hereditary in the sense of Cline, Parshall and Scott \cite{Clin1988}.
\begin{Theorem}[{\cite[Theorem 5.1]{Brun12008}}]
\label{qh1}
For $\la \in \La_m^n$,
enumerate the elements of the set
$\{\mu \in \La_m^n\:|\:\und{\la}\mu \text{ is oriented}\}$
as $\mu_1,\mu_2,\dots,\mu_n = \la$
so that $\mu_i > \mu_j$ implies $i < j$.
Let $M(0) := \{0\}$ and for $i=1,\dots,n$ define
$M(i)$ to be the subspace of $P(\la)$
generated by $M(i-1)$ and the vectors
$$
\left\{
(c \mu_i \overline{\la} ) \:\big|\:
\text{for all oriented cup diagrams $c \mu_i$}\right\}.
$$
Then
$$
\{0\} = M(0) \subset M(1) \subset\cdots\subset M(n) = P(\la)
$$
is a filtration of $P(\la)$ as a $K_m^n$-module such that
$$
M(i) / M(i-1) \cong M(\mu_i) \langle \deg(\mu_i
\overline{\la})\rangle
$$
for each $i=1,\dots,n$.
\end{Theorem}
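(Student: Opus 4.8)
The plan is to realize the filtration directly on the diagram basis of $P(\la)=K_m^n e_\la$ and to identify each subquotient with a shifted cell module. Recall that $P(\la)$ has basis the oriented circle diagrams $(\und{\alpha}\mu\ove{\la})$ with $\alpha,\mu \in \La_m^n$, the cap part $\ove{\la}$ being fixed. First I would organise this basis by its \emph{middle weight} $\mu$: for $(\und{\alpha}\mu\ove{\la})$ to be oriented one needs both $\und{\alpha}\mu$ and $\mu\ove{\la}$ oriented, and since $\ove{\la}=(\und{\la})^*$ the latter condition is exactly that $\und{\la}\mu$ is an oriented cup diagram. Hence the middle weights that occur are precisely the index set $\{\mu : \und{\la}\mu \text{ oriented}\}$, and for each fixed $\mu$ the cup diagrams $\und{\alpha}$ with $\und{\alpha}\mu$ oriented are in bijection with the basis vectors $(\und{\alpha}\mu|)$ of the cell module $M(\mu)$. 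By construction $\mu=\la$ is the unique middle weight of degree $0$ on the cup side, and reorienting any cup of $\und{\la}$ swaps a $\down$ to the right, so every other admissible $\mu$ satisfies $\mu>\la$ in the Bruhat order; this is why $\la$ receives the last index $\mu_n=\la$ in the chosen ordering.

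Next I would set $M(i)$ to be the span of all basis vectors whose middle weight lies in $\{\mu_1,\dots,\mu_i\}$, which agrees with the generators named in the statement. The \textbf{crucial and hardest step} is to check that each $M(i)$ is a $K_m^n$-submodule. Concretely, for $x=(a\nu b)\in K_m^n$ one must show that $x\cdot(\und{\alpha}\mu_j\ove{\la})$ is again a combination of basis diagrams whose middle weights occur no later in the list than $\mu_j$. The product is computed by the generalised surgery procedure applied to the symmetric middle section, and the content of the step is that this procedure never yields a middle weight strictly below (in Bruhat order) the one we started from: the reorientation rules either fix a weight or move $\down$'s to the right, i.e.\ increase it. Since the ordering was chosen so that a larger weight gets a smaller index, the statement ``the middle weight does not decrease'' translates into ``the index does not increase'', which is exactly the submodule property. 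I expect verifying this monotonicity of surgery on middle weights to be the main obstacle, as it requires a careful case analysis of the relabelling rules.

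Granting this, the remaining steps are formal. The quotient $M(i)/M(i-1)$ has as basis the images of the diagrams $(\und{\alpha}\mu_i\ove{\la})$ with $\und{\alpha}\mu_i$ oriented, and I would define a map to $M(\mu_i)$ by $(\und{\alpha}\mu_i\ove{\la})\mapsto(\und{\alpha}\mu_i|)$, which is a bijection on bases by the identification above. To see it is $K_m^n$-linear I would compare the two actions: applying $x$ and reducing modulo $M(i-1)$ annihilates every resulting diagram of middle weight strictly above $\mu_i$ (these lie in $M(i-1)$), so only terms of middle weight exactly $\mu_i$ survive, and on these the surgery collapses to the cell-module rule $(a\la b)(c\mu|)=(a\mu|)$ or $0$. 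Finally I would read off the grading: since $\deg(\und{\alpha}\mu_i\ove{\la})=\deg(\und{\alpha}\mu_i)+\deg(\mu_i\ove{\la})$ while $\deg(\und{\alpha}\mu_i|)=\deg(\und{\alpha}\mu_i)$, the isomorphism raises all degrees by the constant $\deg(\mu_i\ove{\la})$, giving the stated internal shift $\langle\deg(\mu_i\ove{\la})\rangle$. Assembling the chain $M(0)\subset\cdots\subset M(n)$ then produces the desired cell filtration.
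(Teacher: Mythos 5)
First, a remark on the ground rules: the thesis never proves this statement --- it is quoted from \cite[Theorem 5.1]{Brun12008} and used as a black box --- so there is no internal proof to compare against, and your proposal has to be measured against Brundan--Stroppel's own argument. In outline you reproduce that argument correctly: sorting the diagram basis of $P(\la)=K_m^n e_\la$ by the middle weight, identifying the occurring middle weights with $\{\mu : \und{\la}\mu \text{ oriented}\}$ and $\la$ as their unique minimum, reducing the submodule property to an upper-triangularity statement for the surgery product, and extracting the shift $\langle\deg(\mu_i\ove{\la})\rangle$ from additivity of the degree is exactly the right architecture. Two points need sharpening even at this structural level: since the theorem is asserted for \emph{every} enumeration refining the order, you must show that each middle weight $\gamma$ occurring in $x\cdot(\und{\alpha}\mu_j\ove{\la})$ satisfies $\gamma\geq\mu_j$, not merely that $\gamma$ is ``not strictly below'' $\mu_j$ (a weight incomparable to $\mu_j$ may legitimately be enumerated after it); and the $K_m^n$-linearity of your map $M(i)/M(i-1)\to M(\mu_i)$ needs, beyond the annihilation of higher terms, that the coefficients of the terms with middle weight exactly $\mu_i$ do not depend on the fixed cap diagram $\ove{\la}$.

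The genuine gap is the justification you offer for the key lemma, which is false as stated: it is not true that the reorientation rules ``either fix a weight or move $\down$'s to the right''. Take an anti-clockwise circle crossing the number lines at positions $1,2,3,4$ with labels $\down\up\down\up$, and cut it by a surgery at positions $3,4$ so that one piece meets positions $\{2,3\}$ and is nested inside the other piece, which meets positions $\{1,4\}$ (this configuration genuinely occurs in products against $P(\la)$ already in $K_2^2$). The orientation inherited from the original circle is \emph{clockwise} on the inner piece and anti-clockwise on the outer one; the rule $1\mapsto 1\otimes x+x\otimes 1$ therefore produces one summand with unchanged labels and one summand in which \emph{both} pieces are reversed, and on the inner piece this reversal moves the $\down$ from position $3$ to position $2$, i.e.\ to the \emph{left}. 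The resulting weight $\up\down\up\down$ is still $\geq \down\up\down\up$, but only because the leftward move on the inner piece is dominated by a longer rightward move on the outer piece. Proving this net inequality in general, together with the cap-independence of the equal-weight coefficients, is exactly the case analysis that occupies Brundan--Stroppel's proof and is where essentially all the content of the theorem sits; since your proposal rests the crucial step on the circle-by-circle statement, which fails, the theorem remains unproved as written. What is needed is a global comparison of orientations before and after each surgery step, or equivalently an appeal to the graded cellular structure of $K_m^n$ established in \cite{Brun12008}.
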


\begin{Theorem}[{\cite[Theorem 5.2]{Brun12008}}]\label{qh2}
For $\mu \in \La_m^n$, let $N(j)$ be the submodule of $M(\mu)$ spanned by all
graded pieces of degree $\geq j$. Then
$$
M(\mu) = N(0) \supseteq N(1) \supseteq N(2) \supseteq \cdots
$$
is a
filtration of $M(\mu)$ as a $K_m^n$-module such that
$$
N(j) / N(j+1) \cong \bigoplus_{\substack{\la  \subset \mu\text{\,with}\\
\deg(\underline{\la} \mu) = j}}
  L(\la) \langle j \rangle
$$
for each $j \geq 0$. Moreover, we have that $N(j) = 0$ for $j \gg 0$, i.e.
$M(\mu)$ is finite dimensional.
\end{Theorem}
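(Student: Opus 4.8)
The plan is to treat the whole statement as a statement about the grading on $M(\mu)$: the filtration will be forced by non-negativity of the grading, and only the identification of the subquotients requires genuine work. First I would record that $M(\mu)$ is a graded $K_m^n$-module whose homogeneous basis is $\{(c\mu|\ :\ c\mu \text{ oriented}\}$, with $(c\mu|$ placed in internal degree $\deg(c\mu)$, and that $K_m^n$ is non-negatively graded because the generalised surgery multiplication is additive on degrees (cf. \cite{Brun12008}). Writing $M(\mu)_d$ for the degree-$d$ part, the subspace $N(j)=\bigoplus_{d\ge j} M(\mu)_d$ is then automatically a submodule: for homogeneous $a\la b\in K_m^n$ one has $\deg(a\la b)\ge 0$, and the explicit rule $(a\la b)(c\mu|=(a\mu|$ (or $0$) respects the grading, so $(a\la b)\cdot M(\mu)_d\subseteq \bigoplus_{d'\ge d} M(\mu)_{d'}$. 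This produces the chain $M(\mu)=N(0)\supseteq N(1)\supseteq\cdots$ at once and identifies the subquotient $N(j)/N(j+1)$ with the single graded piece $M(\mu)_j$, on which every element of positive degree acts as $0$.

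Next I would analyse $M(\mu)_j$ as a module over the degree-zero subalgebra $(K_m^n)_0$. A circle diagram $a\nu b$ has degree $\deg(a\nu)+\deg(\nu b)$, so it is homogeneous of degree $0$ exactly when $a=\und{\nu}$ and $b=\ove{\nu}$, i.e. when it equals $e_\nu=\und{\nu}\nu\ove{\nu}$. Hence $(K_m^n)_0=\bigoplus_{\nu\in\La_m^n}\C\,e_\nu$ is a product of copies of $\C$, so it is semisimple with the $L(\nu)$ (up to shift) as its simple modules. Since the radical of $K_m^n$ acts by $0$ on $N(j)/N(j+1)=M(\mu)_j$, this subquotient is a semisimple $(K_m^n)_0$-module and therefore a direct sum of copies of the $L(\nu)$.

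To pin down which simples occur, I would compute the idempotent action on the basis. Because the $e_\nu$ are orthogonal idempotents summing to $1$ and $e_\nu=\und\nu\,\nu\,\ove\nu$ glues its cap $\ove\nu$ onto the cup $c$ of $(c\mu|$, the explicit action rule gives $e_\nu(c\mu|=(c\mu|$ when $c=\und\nu$ and $e_\nu(c\mu|=0$ otherwise; equivalently $M(\mu)=\bigoplus_\nu e_\nu M(\mu)$, where $e_\nu M(\mu)$ is spanned by the single vector $(\und\nu\mu|$ when $\und\nu\mu$ is oriented (that is, when $\nu\subset\mu$) and is zero otherwise. Thus a basis of $M(\mu)_j$ is $\{(\und\la\mu|\ :\ \la\subset\mu,\ \deg(\und\la\mu)=j\}$, each vector spanning a copy of $L(\la)$ concentrated in internal degree $j$, i.e. of $L(\la)\langle j\rangle$. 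Summing over these $\la$ yields the asserted formula for $N(j)/N(j+1)$. Finiteness is then immediate: $\La_m^n$ is finite, so there are finitely many cup diagrams and $M(\mu)$ is finite dimensional; since an oriented cup diagram on $m+n$ vertices has at most $n$ cups and hence degree at most $n$, we get $N(j)=0$ for $j>n$.

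The filtration itself is essentially free once the grading is in place, so the \emph{main obstacle} is the two facts that make it work: the compatibility of the surgery multiplication with the grading, and the precise computation of $e_\nu(c\mu|$. Both require unwinding the generalised surgery procedure and checking that stacking $\ove\nu$ onto $c$ closes into a single circle contributing a clean factor (so the product is again a basis vector, not $0$) exactly when $c=\und\nu$, while otherwise producing either strictly higher degree or $0$. Keeping careful track of that degree bookkeeping, rather than any conceptual difficulty, is where the effort lies.
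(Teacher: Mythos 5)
The paper does not actually prove this statement: Theorem \ref{qh2} is imported verbatim from Brundan--Stroppel \cite[Theorem 5.2]{Brun12008} and used as a black box, so there is no in-paper argument to measure your proposal against. Judged on its own merits, your proof is correct, and it is in substance the argument of the cited source. Non-negativity of the grading on $K_m^n$ plus gradedness of the cell-module action makes each $N(j)$ a submodule and identifies $N(j)/N(j+1)$ with the graded piece $M(\mu)_j$; the observation that the only degree-zero oriented circle diagrams are the idempotents shows $(K_m^n)_0\cong\bigoplus_{\nu}\C e_\nu$ is semisimple, so the subquotient is a direct sum of one-dimensional modules; and the computation $e_\nu(\und{\la}\mu|=\delta_{\nu\la}(\und{\la}\mu|$ identifies these as $L(\la)\langle j\rangle$ for $\la\subset\mu$ with $\deg(\und{\la}\mu)=j$. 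The two inputs you defer --- degree-additivity of the surgery multiplication and the idempotent action --- are exactly the graded (cellular) structure established in \cite{Brun12008}, which this thesis itself only cites, so leaning on them is legitimate here, and the finiteness claim is indeed trivial in the $\La_m^n$ setting (degree is bounded by $n$, and $\La_m^n$ is finite).

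Two small refinements. First, the appeal to the radical is unnecessary and, as stated, slightly circular: you do not need to know that $(K_m^n)_{>0}$ equals the radical (which would require noting it is nilpotent); all you use is that positive-degree elements annihilate $N(j)/N(j+1)$, so the action factors through $(K_m^n)_0$. Second, the coefficient-one claim $e_\nu(\und{\nu}\mu|=(\und{\nu}\mu|$ needs no surgery bookkeeping at all: since $\sum_\nu e_\nu=1$ in $K_m^n$ and $e_\nu(c\mu|=0$ unless $c=\und{\nu}$ by the matching condition, summing over $\nu$ forces the single surviving term to equal $(c\mu|$. With that shortcut, the only genuinely deferred fact is the gradedness of the multiplication.
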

By the BGG reprocity \cite[Theorem 9.8(f)]{Hump08} the two numbers
$d_{\la,\mu}^i(q):=[M(\mu):L(\la)\langle i\rangle]$ and $[P(\la):M(\mu)\langle i \rangle]$ are equal.

If we define the polynomial  
\begin{equation*} 
d_{\la,\mu}(q):=\sum d_{\la,\mu}^i(q) \cdot q^i =\left\{
\begin{array}{ll}
q^{\deg(\underline{\la} \mu)} &\text{if $\und{\la}\mu$ is oriented,}\\
0&\text{otherwise}
\end{array}\right.
\end{equation*} we get the \textit{q-decomposition matrix} $D$ which encodes the multiplicities in the above filtrations and is defined as 
\begin{equation}\label{decmat}
D_{\La_m^n}(q) = (d_{\la,\mu}(q))_{\la,\mu \in \La_m^n}.
\end{equation}
So in terms of the Grothendieck group, the theorems above tell us
\begin{align}
[M(\mu)] &= \sum_{\la \in \La_m^n} d_{\la,\mu}(q) [L(\la)],\label{MinL}\\
[P(\la)] &= \sum_{\mu \in \La_m^n} d_{\la,\mu}(q) [M(\mu)]\label{PinM}.
\end{align}
Note that there are no higher multiplicities by the theorems. This multiplicity freeness is a general phenomenon of symmetric hermitian pairs (see \cite[Theorem 1.1]{Boe90}), here reproved and illustrated nicely in terms of diagrams.

\begin{Lemma}
The radical filtration of the projective modules and the cell modules agrees with their grading filtration.

Moreover, in the case of the cell modules the filtration also coincides with the socle filtration, i.e. they are rigid.
\end{Lemma}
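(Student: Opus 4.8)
The plan is to play the radical, socle and grading filtrations against one another, exploiting that $K_m^n$ is positively graded with semisimple degree-zero part and that it is Koszul. The one structural fact that drives everything is that $(K_m^n)_0$ is spanned by the mutually orthogonal idempotents $e_\la$, hence is semisimple, so that $\operatorname{rad}(K_m^n)=(K_m^n)_{>0}$. Consequently, for an arbitrary graded module $M$ the subquotient $M_{\geq j}/M_{\geq j+1}\cong M_j$ is annihilated by the radical and is therefore semisimple, so the grading filtration $(M_{\geq j})_j$ is a descending filtration with semisimple layers. Since the radical filtration is the fastest descending and the socle filtration the fastest ascending filtration with semisimple layers, this forces the two ``free'' inclusions
\[
\operatorname{rad}^j M \subseteq M_{\geq j}, \qquad \operatorname{soc}^i M \supseteq M_{\geq \ell+1-i},
\]
where $\ell$ denotes the top nonzero degree of $M$.

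Next I would upgrade the first inclusion to an equality for $P(\la)$ and $M(\mu)$. By Remark \ref{grading} the algebra $K_m^n$ is Koszul, hence quadratic, hence generated in degrees $0$ and $1$; therefore $(\operatorname{rad} K_m^n)^j=(K_m^n)_{\geq j}$. Both $P(\la)=K_m^n e_\la$ and the cell module $M(\mu)$ are cyclic and generated in degree $0$, the generator of $M(\mu)$ being the degree-zero vector $(\und{\mu}\mu|)$. For a module generated in degree $0$ one computes $M_{\geq j}=(K_m^n)_{\geq j}M_0=(\operatorname{rad}K_m^n)^jM=\operatorname{rad}^jM$, so the radical filtration coincides with the grading filtration in both cases. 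In particular the radical Loewy length of $M(\mu)$ equals $\ell+1$.

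The hard part will be the reverse of the second inclusion, i.e. $\operatorname{soc}^iM(\mu)\subseteq M(\mu)_{\geq \ell+1-i}$, equivalently that the socle of $M(\mu)$ (and of each truncation $M(\mu)/M(\mu)_{\geq k}$) is concentrated in the top degree. This genuinely needs more than positivity and quadraticity: even for a multiplicity-free module whose radical filtration equals its grading filtration the socle filtration can be strictly larger, since a simple sitting in an intermediate degree with nothing beneath it would lie in the socle, so some linearity input is unavoidable. To supply it I would invoke the graded duality $\circledast$ fixing the graded simples; it interchanges the radical and socle filtrations, so $M(\mu)$ has $\operatorname{soc}=\text{grading}$ precisely when the costandard module $\nabla(\mu)=M(\mu)^{\circledast}$ has its radical filtration equal to its grading filtration. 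The latter says that $\nabla(\mu)$ is a linear (Koszul) module, which holds because the principal block is standard Koszul, standard and costandard modules in parabolic category $\O$ admitting linear projective resolutions. Dualizing back yields $\operatorname{soc}^iM(\mu)=M(\mu)_{\geq \ell+1-i}=\operatorname{rad}^{\ell+1-i}M(\mu)$, which together with the previous step shows $M(\mu)$ is rigid.

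If one prefers to avoid citing standard Koszulity, the same conclusion can be reached inductively: since $K_m^n$ is Koszul, $\Ext^1$ between graded simples is concentrated in internal degree $1$, so any composition factor $L(\la)$ occurring in a non-top degree of a truncation must admit a nonsplit extension by a factor one degree higher and hence cannot lie in the socle; peeling off the top graded piece and iterating then identifies the socle filtration with the grading filtration. I expect the bookkeeping here, namely verifying that every intermediate factor really is pushed up by a nonzero degree-one extension inside $M(\mu)$, to be the main technical obstacle, which is exactly why the duality route above is cleaner.
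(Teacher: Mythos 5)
Your first half is correct, and it is in substance the paper's own argument made self-contained: the paper likewise combines Koszulity of $K_m^n$ (\cite[Corollary 5.7]{Brun22008}) with the simple heads provided by Theorems \ref{qh1} and \ref{qh2}, but then quotes \cite[Prop.~2.4.1]{Beil1996}, whereas you derive $\operatorname{rad}^j M = M_{\geq j}$ directly from $(\operatorname{rad} K_m^n)^j=(K_m^n)_{\geq j}$ (the algebra being generated in degrees $0$ and $1$ because it is quadratic) together with generation of $P(\la)$ and $M(\mu)$ in degree $0$. That part stands, and is arguably cleaner than the citation.

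The rigidity half, however, has a genuine gap, and neither of your routes closes it. Route A rests on a false premise: standard Koszulity gives standard modules linear \emph{projective} resolutions and costandard modules linear \emph{injective} coresolutions; costandard modules do not in general admit linear projective resolutions. Already for $K_1^1$ (take $N=1$ in Section \ref{sec:k1}; this is the principal block of $\O(\fr{sl}_2)$) one has $P(0)=M(0)=[L(0)/L(1)\langle 1\rangle]$ and $P(1)=[L(1)/L(0)\langle 1\rangle/L(1)\langle 2\rangle]$, and the costandard module $\nabla(0)=M(0)^{\circledast}$ has minimal projective resolution $0\to P(0)\langle 3\rangle \to P(1)\langle 2\rangle \to P(1)\to \nabla(0)\to 0$, whose first syzygy is generated in degree $2$, so it is not linear. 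Moreover, what your duality reduction actually requires is only that $\nabla(\mu)$ be generated in its lowest degree; but that statement is precisely the graded dual of ``$\operatorname{soc} M(\mu)$ is concentrated in the top degree'', i.e.\ the very assertion being proved, so once the false citation is removed Route A is circular. Route B fails for a structural reason, not bookkeeping: Koszulity of the algebra plus $\operatorname{rad}={}$grading does not imply rigidity. For the path algebra of the quiver $a\to b$, $a\to c$, $c\to d$ (a Koszul algebra), the projective $P(a)$ is generated in degree $0$ with radical filtration equal to the grading filtration, yet $\operatorname{soc}P(a)=L(b)\langle 1\rangle\oplus L(d)\langle 2\rangle$ contains a factor strictly below the top degree: a composition factor in a middle degree need not extend anything above it \emph{inside the module}, even though $\Ext^1$ between simples is concentrated in internal degree $1$. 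So rigidity of the cell modules genuinely requires input specific to $K_m^n$ (the explicit structure of their socles), which is why the paper at this point simply cites \cite[Corollary 6.7]{Brun22008} rather than deducing it from Koszulity.
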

\begin{proof}
By Theorem \ref{qh1} and Theorem \ref{qh2} the projective and cell modules have simple heads. By \cite[Corollary 5.7.]{Brun22008} the algebra $K_m^n$ is Koszul. Using \cite[Prop 2.4.1.]{Beil1996} the two gradings agree.

By \cite[Corollary 6.7.]{Brun22008} the cell modules are rigid, i.e. the filtration also coincides with the socle filtration.
\end{proof}

Now we analyse the structure given by the filtration more detailly and work out some technical Lemmas.
\begin{Lemma}\label{Le:highdegrsimp}
The highest degree where a simple can occur in $P(\mu)$ is $2n$.
\end{Lemma}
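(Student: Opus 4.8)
The plan is to read off the degrees in which simple composition factors of $P(\mu)$ appear directly from the diagrammatic basis, and then to use the preceding Lemma to translate between the grading and the radical filtration. First I would recall that $P(\mu)=K_m^n e_\mu$ carries the explicit homogeneous basis consisting of all oriented circle diagrams $(\und{\alpha}\lambda\ove{\mu})$ with $\alpha,\lambda\in\La_m^n$. The degree of such a diagram is $\deg(\und{\alpha}\lambda)+\deg(\lambda\ove{\mu})$, and by the degree convention recalled above each summand counts the oriented cups (resp.\ caps) it contains, hence is at most $n$ (there are at most $n$ cups/caps, each covering one of the $n$ vertices labelled $\up$). Therefore every basis diagram, and so every nonzero homogeneous component of $P(\mu)$, is concentrated in degrees between $0$ and $2n$.

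Next I would invoke the preceding Lemma, which states that the grading filtration of $P(\mu)$ coincides with its radical filtration. Consequently a simple module occurs as a composition factor in degree $d$ precisely when the degree-$d$ graded piece of $P(\mu)$ is nonzero, and by the previous paragraph this forces $d\le 2n$, giving the asserted bound. Alternatively, the same conclusion drops out of the Grothendieck-group identities \eqref{MinL} and \eqref{PinM}: expanding $[P(\mu)]=\sum_{\nu,\lambda} d_{\mu,\nu}(q)\,d_{\lambda,\nu}(q)\,[L(\lambda)]$ and inserting $d_{\lambda,\nu}(q)=q^{\deg(\und{\lambda}\nu)}$ shows that $L(\lambda)$ can only appear attached to a power $q^{\deg(\und{\mu}\nu)+\deg(\und{\lambda}\nu)}$, whose exponent is again at most $2n$ since each cup-diagram degree is bounded by $n$.

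Finally I would verify that $2n$ is genuinely attained, so that the bound is sharp: it suffices to exhibit a weight $\mu$ whose associated cup diagram $\und{\mu}$ has the maximal number $n$ of cups, together with a weight $\nu$ orienting all of them, so that $\deg(\und{\mu}\nu)=n$; taking $\lambda=\mu$ in the expansion above then contributes $q^{2n}$ to the coefficient of $[L(\mu)]$. I do not expect the inequality itself to be the difficulty, since it is just the additivity of the degree over the cup part and the cap part combined with the bound $\deg(a\lambda)\le n$; the one point requiring care is the passage from the internal grading to actual composition factors, and this is exactly what the preceding Lemma provides. Once that identification is in place, the statement reduces to a counting argument about oriented cups and caps.
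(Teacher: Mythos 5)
Your proof is correct, but your primary argument takes a genuinely different route from the paper's. The paper never examines the basis of $P(\mu)$ directly: it first bounds the degrees of simples in the cell module $M(\mu)$ by $n$ using $d_{\la,\mu}(q)=q^{\deg(\und{\la}\mu)}$ together with \eqref{MinL}, then bounds the internal shifts of cell modules inside $P(\la)$ by $n$ using \eqref{PinM}, and adds the two bounds. That is precisely your ``alternative'' argument in the second paragraph, so you have reproduced the paper's proof as a fallback. Your main route --- observing that every basis vector $(\und{\alpha}\la\ove{\mu})$ of $K_m^n e_\mu$ has degree $\deg(\und{\alpha}\la)+\deg(\la\ove{\mu})\le n+n$ --- is more elementary: it bypasses the quasi-hereditary filtration theorems entirely and bounds the whole grading support of $P(\mu)$, from which the bound on composition-factor degrees is immediate, since each graded simple is one-dimensional and concentrated in a single degree. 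For that same reason your appeal to the preceding lemma (radical filtration $=$ grading filtration) is superfluous: no statement about radical layers is needed to conclude that no composition factor can sit above the top nonzero degree of the module, so you can simply drop that step. Two further remarks. First, the inequality $\deg(a\la)\le n$ (at most $n$ cups or caps) rests on the standing convention $n\le m$, the same convention the paper invokes when stating this bound. Second, your closing paragraph on sharpness goes beyond the paper: its proof, and the only later use of the lemma (Lemma \ref{Le:geshifmaps}), require nothing more than the upper bound. Your attainment sketch is sound --- choose $\mu$ with $\und{\mu}$ having $n$ cups and $\nu$ reversing all of them, so $d_{\mu,\nu}(q)^2=q^{2n}$ contributes to $[L(\mu)]$, with no cancellation possible since all coefficients are non-negative --- but note that this exhibits attainment only for such special $\mu$, not for every $\mu$ (for the dominant weight, for instance, the maximum is strictly smaller).
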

\begin{proof}
Since $\deg(\underline{\la} \mu)\leq n$, one obtains that the highest degree of $M(\mu)$, where a simple could occur, is $n$. Putting the two formulas \eqref{MinL} and \eqref{PinM} together, we obtain the the stated result.
\end{proof}
We also look at the \textit{q-Cartan matrix}
\begin{equation}\label{cartanmat}
C_{\La_m^n}(q) = (c_{\la,\mu}(q))_{\la,\mu \in\La_m^n}
\end{equation}
where
\begin{equation*}
c_{\la,\mu}(q) := \sum_{j \in \Z} q^j \dim \hom_{K_{\La_m^n}}(P(\la),P(\mu))_j \in \Z((q)).
\end{equation*}
We obtain the following results
\begin{Lemma}\label{Le:geshifmaps}
In $K_m^n$ we have $c_{\la,\mu}(q)=0$ for $q>2n$, i.e.
$$\hom_{K_{\La_m^n}}(P(\la)\langle i \rangle ,P(\mu))_0=0$$ unless $0 \leq i \leq 2n$.
\end{Lemma}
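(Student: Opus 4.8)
The plan is to reduce the graded $\hom$-space to the combinatorially explicit description of $K_m^n$ via circle diagrams, where the grading is a count of oriented cups and caps and is therefore manifestly bounded. Recall that by definition $P(\la)=K_m^n e_\la$. For any graded left $K_m^n$-module $N$, a graded homomorphism $f\colon K_m^n e_\la\to N$ is determined by the single value $f(e_\la)$, which lies in $e_\la N$ because $f(e_\la)=f(e_\la e_\la)=e_\la f(e_\la)$; conversely every element of $e_\la N$ arises this way. This gives a degree-preserving isomorphism $\hom_{K_m^n}(P(\la),N)\cong e_\la N$, and taking $N=P(\mu)=K_m^n e_\mu$ yields
$$\hom_{K_m^n}(P(\la),P(\mu))\cong e_\la K_m^n e_\mu$$
as graded vector spaces. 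Since the diagram $\und{\la}\la\ove{\la}$ has degree $0$, the generator $e_\la$ sits in degree $0$ of $P(\la)$, so a homomorphism raising internal degree by $i$ corresponds exactly to an element of $(e_\la K_m^n e_\mu)_i$; unravelling the shift $\langle i\rangle$ of Notation \ref{Not:sign}, this is precisely $\hom_{K_m^n}(P(\la)\langle i\rangle,P(\mu))_0$.

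Next I would invoke the explicit basis of $e_\la K_m^n e_\mu$ recorded above, namely the oriented circle diagrams $\und{\la}\,\nu\,\ove{\mu}$ with $\nu\in\La_m^n$. By definition the degree of such a diagram is $\deg(\und{\la}\,\nu)+\deg(\nu\,\ove{\mu})$, the sum of the numbers of oriented cups and oriented caps. Both summands are nonnegative, and as noted in Section \ref{basic} each is at most $n$, since a cup (resp.\ cap) diagram in $K_m^n$ contains at most $n$ cups (resp.\ caps). Hence every basis element lies in degree between $0$ and $2n$, so $(e_\la K_m^n e_\mu)_i=0$ unless $0\le i\le 2n$. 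Combined with the identification of the previous paragraph this gives $\hom_{K_m^n}(P(\la)\langle i\rangle,P(\mu))_0=0$ outside that range, and therefore $c_{\la,\mu}(q)$ is a polynomial supported in degrees $0,\dots,2n$, which is the assertion.

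The argument contains no genuine obstacle; the only point requiring care is the grading bookkeeping of the first step, i.e.\ verifying that the isomorphism $\hom_{K_m^n}(K_m^n e_\la,N)\cong e_\la N$ is strictly degree-preserving (harmless here because $\deg e_\la=0$) and that the sign-free shift conventions of Notation \ref{Not:sign} correctly convert a degree-$i$ map into a degree-$0$ map into an unshifted target. As an alternative that sidesteps the diagrams, one could argue directly from Lemma \ref{Le:highdegrsimp}: the grading on $K_m^n$ is nonnegative and $P(\mu)$ is concentrated in degrees $0$ through $2n$ (all its composition factors occur there), while the generator $e_\la$ has degree $0$; hence a degree-$i$ homomorphism out of $P(\la)$ sends $e_\la$ into $P(\mu)_i=0$ whenever $i>2n$ and must vanish.
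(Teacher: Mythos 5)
Your proposal is correct, but your main argument takes a genuinely different route from the paper's. The paper argues module-theoretically: a nonzero degree-zero map $P(\la)\langle i\rangle\to P(\mu)$ must send the head of $P(\la)\langle i\rangle$ to a simple composition factor sitting in degree $i$ of $P(\mu)$, and then invokes Lemma \ref{Le:highdegrsimp} (no simple occurs in $P(\mu)$ above degree $2n$), which itself rests on the cell-module filtrations and the decomposition-number formulas \eqref{MinL} and \eqref{PinM}. You instead identify $\hom_{K_m^n}(P(\la),P(\mu))\cong e_\la K_m^n e_\mu$ via $f\mapsto f(e_\la)$ — the same identification the paper records later in Section \ref{End} — and read the bound directly off the diagram basis: every oriented circle diagram $\und{\la}\,\nu\,\ove{\mu}$ has degree $\deg(\und{\la}\nu)+\deg(\nu\ove{\mu})$ with each summand between $0$ and $n$. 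This is more elementary (it bypasses the quasi-hereditary structure theorems entirely) and gives more, namely that the coefficients of $c_{\la,\mu}(q)$ count oriented circle diagrams of each degree; what it costs is that it is tied to the explicit diagrammatics of $K_m^n$, whereas the paper's argument only uses formal properties of graded projectives and where simples can sit, so it transports to any algebra for which an analogue of Lemma \ref{Le:highdegrsimp} is known. Your closing alternative argument (generator $e_\la$ in degree $0$ landing in $P(\mu)_i=0$ for $i>2n$) is essentially the paper's own proof, just phrased via the generator rather than the head. Your degree bookkeeping — $e_\la$ sits in degree $i$ of $P(\la)\langle i\rangle$, so a degree-zero map lands in $(e_\la K_m^n e_\mu)_i$ — is exactly right and is the only point where care was needed.
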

\begin{proof}
We know that a nonzero morphism $f \in \hom_{K_{\La_m^n}}(P(\la)\langle i \rangle ,P(\mu))_0$ maps the head of $P(\la)\langle i \rangle$ to a simple sitting in degree $i$ in $P(\mu)$. As we proved in Lemma \ref{Le:highdegrsimp} this can only exist for $0 \leq i \leq 2n$.
\end{proof}
The following provides lower and upper bounds for the decomposition numbers.
\begin{Lemma}\label{Le:condmaps}
In $K_m^n$ we have $d_{\la,\mu}=0$ unless $$0 \leq l(\la)-l(\mu) \leq n+2\sum_i \nes_{\und{\la}}(i) \leq n^2.$$
In particular we get $c_{\la, \mu}=0$ unless $$l(\la)-l(\mu) \leq n+2\sum_i \nes_{\und{\la}}(i) \leq n^2.$$
\end{Lemma}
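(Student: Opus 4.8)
The plan is to reduce everything to a combinatorial computation with the cup diagram $\und{\la}$, since by the definition of $d_{\la,\mu}(q)$ we have $d_{\la,\mu}\neq 0$ exactly when $\und{\la}\mu$ is an oriented cup diagram. First I would record a normal form: if $\und{\la}\mu$ is oriented, then $\mu$ is obtained from $\la$ by reversing the orientation of a subset $S$ of the cups of $\und{\la}$, while $\mu$ agrees with $\la$ on every ray. The point is that each cup carries one $\up$ and one $\down$ in any admissible orientation, so $\la$ and $\mu$ place the same number of $\up$'s and $\down$'s on the ray vertices; since orientedness forbids a $\down$-ray to the left of an $\up$-ray, both ray labellings are forced to be all $\up$'s followed by all $\down$'s, hence equal. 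Thus the $\mu$ with $d_{\la,\mu}\neq 0$ are parametrised by subsets $S$ of the cups of $\und{\la}$.

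Next I would compute $l(\la)-l(\mu)$ using \eqref{diffl}. Reversing a single cup with endpoints $p<q$ replaces the $\down$ at $p$ and the $\up$ at $q$ by an $\up$ and a $\down$; hence for each index $i$ with $p\le i<q$ the weight $\mu$ has exactly one fewer $\down$ among the vertices $\le i$ than $\la$, and the counts agree for all other $i$. By the definition of $\ell_i$ this contributes $+1$ for the $q-p$ indices $i\in\{p,\dots,q-1\}$, so a single reversed cup adds $q-p$ to $\sum_i\ell_i(\la,\mu)=l(\la)-l(\mu)$. Since cups do not cross, the vertices strictly between $p$ and $q$ are precisely the $2\,\nes_{\und{\la}}(i)$ endpoints of the cups nested inside it, so $q-p=2\,\nes_{\und{\la}}(i)+1$. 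As $\ell_i$ only counts $\down$'s, the contributions of distinct reversed cups add, giving
\[
l(\la)-l(\mu)=\sum_{i\in S}\bigl(1+2\,\nes_{\und{\la}}(i)\bigr).
\]
This is $\ge 0$, and enlarging $S$ to the set of all cups yields $l(\la)-l(\mu)\le n+2\sum_i\nes_{\und{\la}}(i)$, since $\und{\la}$ has at most $n$ cups in $K_m^n$.

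The last inequality $n+2\sum_i\nes_{\und{\la}}(i)\le n^2$ is then elementary: with at most $n$ cups the total nesting number is largest for a fully nested configuration, where the $j$-th innermost cup contains $j-1$ others, so $\sum_i\nes_{\und{\la}}(i)\le\binom{n}{2}$ and $n+2\binom{n}{2}=n^2$. This settles the bounds on $d_{\la,\mu}$. For the $c_{\la,\mu}$ statement I would pass through the cell filtration of Theorem \ref{qh1}: since $P(\la)$ is projective, $c_{\la,\mu}(q)$ is the graded multiplicity of $L(\la)$ in $P(\mu)$, which by the cell filtration and BGG reciprocity equals $\sum_\nu d_{\mu,\nu}(q)\,d_{\la,\nu}(q)$ up to grading bookkeeping. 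Hence $c_{\la,\mu}\neq 0$ forces some $\nu$ with $d_{\la,\nu}\neq 0$ and $d_{\mu,\nu}\neq 0$, and then
\[
l(\la)-l(\mu)=\bigl(l(\la)-l(\nu)\bigr)-\bigl(l(\mu)-l(\nu)\bigr)\le l(\la)-l(\nu)\le n+2\sum_i\nes_{\und{\la}}(i)\le n^2,
\]
using $l(\mu)-l(\nu)\ge 0$ from the decomposition bound already proved.

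The main obstacle is the length computation in the second step: one must be certain that the per-cup contributions really add with no cross terms and that $q-p$ is governed exactly by the nesting number. Organising the count through \eqref{diffl} (counting $\down$'s up to each vertex) rather than through inversions is what makes the additivity transparent and avoids a tedious case analysis, and the ray-forcing observation of the first step is what guarantees that no contributions beyond the reversed cups appear.
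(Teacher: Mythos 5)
Your proof is correct and follows essentially the same route as the paper's: parametrise the $\mu$ with $\und{\la}\mu$ oriented by the set of reversed cups, observe that reversing the $i$th cup changes the length by $1+2\nes_{\und{\la}}(i)$, maximise $\sum_i \nes_{\und{\la}}(i)$ via the fully nested configuration to get the bound $n^2$, and deduce the $c_{\la,\mu}$ statement from the existence of a common $\nu$ with $d_{\la,\nu}\neq 0$ and $d_{\mu,\nu}\neq 0$. The only (welcome) refinements are that you obtain the lower bound $0\le l(\la)-l(\mu)$ directly from your exact formula $l(\la)-l(\mu)=\sum_{i\in S}\bigl(1+2\nes_{\und{\la}}(i)\bigr)$ computed through \eqref{diffl}, where the paper instead cites the Bruhat-order comparison from Brundan--Stroppel, and that you make explicit the ray-forcing normal form and the per-cup additivity, which the paper treats as obvious.
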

\begin{proof}
Assume $d_{\la,\mu}(q)\neq 0$ for some $q$. This means that $\und{\la}\mu$ is oriented. By \cite[Lemma 2.3.]{Brun12008} it follows that $\la \leq \mu$ in the Bruhat ordering, which leads to $l(\la) \geq l(\mu)$. \\
Now we want to find $\la$ and $\mu$ such that $l(\la)-l(\mu)$ is maximal and $\und{\la} \mu$ is oriented. Assume $\la$ is fixed. We have to look for the weight $\mu$ with the smallest length such that $\und{\la}\mu$ is oriented. This is obviously obtained if all $\up$'s and $\down$'s on the end of a cup in $\la$ are interchanged. Since a $\up$ on the $i$th cup has been moved $1+2 \nes_{\und{\la}}(i)$ positions to the right, the length is changed by $\sum_i(2 \nes_{\und{\la}}(i)+1)$.\\ Therefore, we obtain
$$0 \leq l(\la)-l(\mu) \leq n+2\sum_i \nes_{\und{\la}}(i).$$
Since $\sum_i \nes_{a}(i)$ for any cup diagram becomes the biggest, if all cups are nested (and then the first cup contains no other, the second one and so on), in that case we obtain 
$$2\sum_i \nes_{a}(i)=2\sum_{i=1}^n (i-1)=(n-1)n$$ and therefore the inequality is shown.

For $c_{\la, \mu}$ one has to look for a simple $L(\la)$ occurring in $P(\mu)$, especially occurring in some $M(\nu)$, i.e. $d_{\la,\nu}\neq 0$ and $d_{\mu,\nu}\neq 0$. This yields
\begin{align*}
l(\la)-l(\nu) \leq n+2\sum_i \nes_{\und{\la}}(i) 
\end{align*} and $0 \leq l(\mu)-l(\nu)$ and therefore
\begin{equation*}
l(\la)-l(\mu) \leq l(\la)-l(\nu) \leq n+2\sum_i \nes_{\und{\la}}(i).\qedhere
\end{equation*} 
\end{proof}
\begin{Remark}
Later on in Section \ref{ch:spec} we will deduce and use stronger inequalities which however require an explicit and detailed knowledge of the structure of Vermas and projectives and are obtained by combining the information one has about the shifts with those about the decomposition numbers.
\end{Remark}
\section{$\End(\bigoplus P(\la))$}\label{End}
Taking the above description for projective modules, we see that a minimal projective generator of $K_{\La_m^n}-\Mod$ is $\bigoplus P(\la)\cong K_{\La_m^n}$.
Any endomorphism is given by multiplication with an element of the algebra. Writing down the $\hom$-spaces (cf. \cite[equation (5.9)]{Brun12008}) we get:
$$
\hom_{K_{\La_m^n}}(P(\la), P(\mu)) = \hom_{K_{\La_m^n}} (K_{\La_m^n} e_\la, K_{\La_m^n} e_\mu)= e_\la
K_{\La_m^n} e_\mu
$$
and $e_\la K_{\La_m^n} e_\mu$ has basis
$$\left\{(\underline{\la}\nu\overline{\mu})\:\big|\:
\nu \in \La_m^n \text{ such that }\und{\la}\nu\ove{\mu} \text{ is an oriented circle diagram}\right\}.$$
We are now interested in the degree 1 component of $\hom_{K_{\La_m^n}}(P(\la), P(\mu))$, i.e. we look for elements $\nu$ s.t.
$\deg(\und{\la}\nu\ove{\mu})=1$:\\
Since then $1=\deg(\und{\la}\nu\ove{\mu})=\deg(\und{\la}\nu)+\deg(\nu\ove{\mu})$, one summand has to be $0$ and the other one has to be $1$.
\begin{enumerate}
	\item $\deg(\und{\la}\nu)=0$, i.e. $\la=\nu$, so we look for an oriented cap diagram $\la\ove{\mu}$ of degree $1$. It exists iff $\la>\mu$ and $\mu= \la.w$ with $w$ changing the $\up$ and $\down$ (in this ordering) at the end of a cup into a $\down$ and $\up$.
	
	\item $\deg(\nu \ove{\mu})=0$, i.e. $\mu=\nu$, so we look for an oriented cup diagram $\und{\la}\mu$ of degree 1. It exists iff $\mu>\la$ and $\la=  \mu.w$ with $w$ changing the $\down$ and $\up$ at the end of a cap.
\end{enumerate}
So we get $\dim(\hom_{K_{\La_m^n}}(P(\la), P(\mu))_1)\leq 1$. \\
Our first goal is to write down explicitly the morphisms corresponding to the multiplication by $\und{\la}\nu\ove{\mu}$  in terms of their action on basis vectors. In this way we determine all relations between compositions of degree $1$ morphisms. (Note that this determines the algebra completely, since it is Koszul \cite[Theorem 5.6.]{Brun22008}, in particular quadratic by \cite[Corollary 2.3.3.]{Beil1996}).

\section{Projective functors}

\subsection{Crossingless matchings and $K_{\La \Ga}^t$}
In \cite{Brun22008} the new category of geometric bimodules for $K_\La$ is defined. We will cite some general results and then only work in detail with two specific examples. For the general construction look in \cite[chapter 2 and 3]{Brun22008}.
\begin{Def}
For $\La=\La_m^n$ and $\Ga=\La_{m'}^{n'}$, an \emph{oriented $\La\Ga$-matching} is a diagram $\la t \mu$ with $\la \in \La$, $\mu \in \Ga$ such that 
\begin{itemize}
\item $t$ is obtained as cup diagram drawn above a cap diagram with connected rays
\item All cups, caps and rays in $t$ are oriented.
\end{itemize}

\end{Def}

\begin{Example}[The $\La_m^n\La_{m-1}^{n-1}$-matching $\la t_i \la'$]\label{Ex:ti}
There is an oriented $\La_m^n\La_{m-1}^{n-1}$-matching $\la t_i \la'$ with
$\la'$ obtained from $\la$ by deleting the $i$th and $i+1$st vertex and $t_i$ matching the $i$th and $i+1$st vertex by a cap. An example is given in figure \ref{fig:lat3la'}.
\begin{figure}
\caption{$\La_3^2\La_2^1$-matching $\la t_3 \la'$}
	\label{fig:lat3la'}
 \center
 \begin{tikzpicture}
{\begin{scope}
		\draw (0.6, 0) node[above=-1.55pt] {$\down$};
		\draw (1.2,0) node[above=-1.55pt] {$\up$};
		\draw (1.8,0) node[above=-1.55pt] {$\down$};

		\draw (0.6,0) -- (0.4,-0.7);
		\draw (1.2,0) -- (0.8,-0.7);
		\draw (1.8,0) -- (2,-0.7);

				\begin{scope}[yshift=-1cm]
						\draw (1.2,0.3) arc (180:0:0.2);
				\draw (0.4,0) node[above=-1.55pt] {$\down$};
		\draw (0.8, 0) node[above=-1.55pt] {$\up$};
		\draw (1.2,0) node[above=-1.55pt] {$\down$};
		\draw (1.6,0) node[above=-1.55pt] {$\up$};
		\draw (2,0) node[above=-1.55pt] {$\down$};
				\end{scope}
				\end{scope}}
\end{tikzpicture}
\end{figure}

\begin{figure}
\caption{$\La_2^1\La_3^2$-matching $\la' t_3^* \la$}
	\label{fig:lat'la}
 \center
 \begin{tikzpicture}
{\begin{scope}
				\draw (0.4,0) node[above=-1.55pt] {$\down$};
		\draw (0.8, 0) node[above=-1.55pt] {$\up$};
		\draw (1.2,0) node[above=-1.55pt] {$\down$};
		\draw (1.6,0) node[above=-1.55pt] {$\up$};
		\draw (2,0) node[above=-1.55pt] {$\down$};
		\draw (1.2,0) arc (180:360:0.2);
		\draw (0.4,0) -- (0.6,-0.7);
		\draw (0.8,0) -- (1.2,-0.7);
		\draw (2,0) -- (1.8,-0.7);

				\begin{scope}[yshift=-1cm]
		\draw (0.6, 0) node[above=-1.55pt] {$\down$};
		\draw (1.2,0) node[above=-1.55pt] {$\up$};
		\draw (1.8,0) node[above=-1.55pt] {$\down$};
				\end{scope}
				\end{scope}}
\end{tikzpicture}
\end{figure}				
Similarly one defines the $\La_{m-1}^{n-1}\La_m^n$-matching $\la' t_i^* \la$ (figure \ref{fig:lat'la}).
\end{Example}
\begin{Def}
An \emph{oriented $\La\Ga$-circle diagram} is a diagram obtained by gluing a cup diagram $a \in \La$ and a cap diagram $b \in \Ga$ below and on top of an oriented $\La\Ga$-matching such that everything is oriented.

The set of these diagrams is denoted by $K_{\La\Ga}^t$. Similarly to $K_\La$ one defines a bimodule structure via a surgery procedure from the top and bottom on this space (cf. \cite[chapter 3]{Brun22008}). Therefore, one also obtains a graded structure.

The \emph{upper reduction} of a diagram $t\la b$ means the oriented cap diagram obtained by removing the upper number line and all upper circles and lines, i.e. those that do not cross the bottom number line. An example is given in figure \ref{fig:upred}.

\end{Def}

\begin{figure}
\caption{The upper reduction of $t\la b$}
	\label{fig:upred}
 \center
		\begin{tikzpicture}
		\begin{scope}

		\draw (0.4,0) node[above=-1.55pt] {$\down$};
		\draw (0.8, 0) node[above=-1.55pt] {$\down$};
		\draw (1.2,0) node[above=-1.55pt] {$\up$};
		\draw (1.6,0) node[above=-1.55pt] {$\up$};
		\draw (2,0) node[above=-1.55pt] {$\down$};
		\draw (0.8,0.3) arc (180:0:0.2);
		\draw (0.4,0.3) arc (180:0:0.6);
		\draw (2.0,0.3) -- (2.0,0.6);
		\draw (0.8,0) arc (180:360:0.2);
		\draw (0.4,0) -- (0.8,-0.7);
		\draw (1.6,0) -- (1.2,-0.7);
		\draw (2,0) -- (1.6,-0.7);
		
		\end{scope}
		
				\begin{scope}[yshift=-1cm]
		\draw (0.8,0) node[above=-1.55pt] {$\down$};
		\draw (1.2, 0) node[above=-1.55pt] {$\up$};
		\draw (1.6,0) node[above=-1.55pt] {$\down$};
		\end{scope}
		\draw[->] (2.5,-0.4)--(3.1,-0.4);

		\begin{scope}[yshift=-0.55cm, xshift=3.3cm]
		\draw (0.4,0) node[above=-1.55pt] {$\down$};
		\draw (0.8, 0) node[above=-1.55pt] {$\up$};
		\draw (1.2,0) node[above=-1.55pt] {$\down$};
		\draw (0.4,0.3) arc (180:0:0.2);
		\draw (1.2,0.3) -- (1.2,0.6);
			\end{scope}
			\end{tikzpicture}
\end{figure}
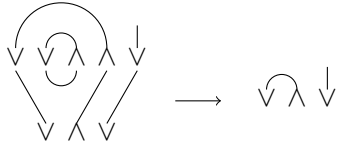

\subsection{Projective functors}
We want to study functors obtained by tensoring with geometric bimodules. 
In \cite[Chapter 4]{Brun22008} the functor
\begin{equation}\label{pfun}
G_{\La\Ga}^t := K_{\La\Ga}^t \langle -\caps(t)\rangle \otimes_{K_\Ga} ?
:\mod{K_\Ga} \rightarrow \mod{K_\La}
\end{equation} 
between graded module categories is defined.\\
\begin{Remark}\rm{
In the special case of $t=t_i$ defined above and $\La=\La_m^n$ and $\Ga=\La_{m-1}^{n-1}$ the functor $G_{\La\Ga}^t$ is a functor well-known in Lie Theory. It is obtained as the composition of Jantzen's translation functors $\psi_{\alpha_i}: \O(m,n,0) \to \O(m,n,i)$ (tensoring with a finite dimensional module and projecting to another block, cf. \cite[chapter 10]{Enri1987}, \cite[Chapter 2]{Jan79}) and the Enright-Shelton equivalence (cf. \cite[Prop. 11.2]{Enri1987}) $\O(m,n,i) \to \O(m-1,n-1,0)$. \\
The functor in the case $t=t_i^*$ defined above and $\La=\La_{m-1}^{n-1}$ and $\Ga=\La_m^n$ is the composition of the Enright-Shelton equivalence $\O(m-1,n-1,0) \to \O(m,n,i)$ with the translation functor $\phi_{\alpha_i}: \O(m,n,i) \to \O(m,n,0)$.\\
Having in mind this translation to classical Lie theory in these examples, the following theorems can be understood as a generalization of structure theorems for the special projective functors.}
\end{Remark}

First we give a theorem which justifies the name ``projective functor'', since it sends projective modules to projectives:

\begin{Theorem}[{\cite[Theorem 4.2]{Brun22008}}]
\label{pf}
Let $t$ be a proper $\La\Ga$-matching and $\gamma \in \Ga$.
\begin{itemize}
\item[\rm(i)]
We have that
$G^t_{\La\Ga} P(\ga)
\cong K_{\La\Ga}^t e_\ga \langle -\caps(t)\rangle$
as left $K_\La$-modules.
\item[\rm(ii)]
The module $G^t_{\La\Ga} P(\ga)$ is non-zero if and only if the rays of each
 upper line in $t \ga \overline{\ga}$
are oriented so that one is $\up$ and one is $\down$.
\item[\rm(iii)]
Assuming the condition from (ii) is satisfied,
define $\la \in \La$ by declaring that $\overline{\la}$ is
the upper reduction of $t \overline{\gamma}$,
and let $n$ be the number of upper circles
removed in the reduction process.
Then
$$
G_{\La \Ga}^t P(\ga) \cong
P(\la) \otimes R^{\otimes n} \langle
\cups(t)-\caps(t)\rangle.
$$
as graded left $K_\La$-modules (where the $K_\La$-action on
$P(\la) \otimes R^{\otimes n}$ comes from its action on the
first tensor factor and $R=\C[x]/(x^2)$).
\end{itemize}
\end{Theorem}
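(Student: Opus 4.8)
The plan is to obtain part (i) formally from the definition of the functor, and then to reduce parts (ii) and (iii) to a combinatorial analysis of the diagram basis of $K_{\La\Ga}^t e_\ga$, while tracking the grading. For (i) I would invoke the general identity $N \otimes_A Ae \cong Ne$, $n \otimes xe \mapsto nxe$, valid for any right $A$-module $N$ and idempotent $e \in A$. Applying this with $A = K_\Ga$, $e = e_\ga$ and $N = K_{\La\Ga}^t \langle -\caps(t)\rangle$, and using $P(\ga) = K_\Ga e_\ga$, yields $G_{\La\Ga}^t P(\ga) \cong K_{\La\Ga}^t e_\ga \langle -\caps(t)\rangle$ as graded left $K_\La$-modules, the left action being the one already carried by the bimodule $K_{\La\Ga}^t$ (surgery on the $\La$-side, which sits at the bottom of the diagrams).

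For (ii) and (iii) the first step is to pin down a basis of $K_{\La\Ga}^t e_\ga$. Exactly as in the cellular description of $K_\La e_\la$, right multiplication by $e_\ga = \underline{\ga}\,\ga\,\overline{\ga}$ selects those oriented $\La\Ga$-circle diagrams whose $\Ga$-side (top) weight is $\ga$ and whose top cap diagram is $\overline{\ga}$; a basis is then obtained by gluing an arbitrary $\La$-side cup diagram together with a compatible $\La$-side weight to the fixed part $t\,\ga\,\overline{\ga}$ in all ways that orient the whole picture. I would then split the connected components of $t\,\ga\,\overline{\ga}$ into those that meet the $\La$-number line and the upper components (circles and lines) that do not. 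Each upper circle admits precisely two orientations, and the surgery rules $1 \mapsto 1\otimes x + x \otimes 1$, $x \mapsto x \otimes x$ together with the merge rules identify this two-dimensional space with $R = \C[x]/(x^2)$, with $x$ the clockwise orientation; this produces the tensor factor $R^{\otimes n}$, where $n$ is the number of upper circles. For an upper line the two rays running to infinity must be orientable subject to the ``no $\down\up$ rays'' constraint in the definition of oriented cup/cap diagrams (and the $y \otimes y$ surgery rule), and a short case-check shows this is possible exactly when the line receives one $\up$ and one $\down$ ray. Hence the diagram admits an orientation at all, i.e.\ $K_{\La\Ga}^t e_\ga \neq 0$, precisely under the criterion of (ii).

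Granting this condition, the components crossing the $\La$-number line, after deleting the $n$ upper circles, reduce on that line to exactly the cap diagram $\overline{\la}$, by the definition of the upper reduction and of $\la$. Letting the $\La$-side cup diagram and weight vary reproduces the basis $\{\underline{\alpha}\,\mu\,\overline{\la}\}$ of $P(\la) = K_\La e_\la$, so as a graded vector space $K_{\La\Ga}^t e_\ga \cong P(\la) \otimes R^{\otimes n}$ up to an overall shift. The decisive point is then to verify that the left $K_\La$-action, given by surgery on the $\La$-side, is local: any such surgery can only involve the components meeting the $\La$-line, where it reproduces the multiplication of $K_\La$ on $P(\la)$, and it never touches the upper circles, so the action factors through the first tensor factor. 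This gives the module isomorphism $G_{\La\Ga}^t P(\ga) \cong P(\la) \otimes R^{\otimes n}$ with $K_\La$ acting on $P(\la)$. The grading is finally a bookkeeping computation: combining the functor's shift $\langle -\caps(t)\rangle$ with the degree contribution of the oriented cups carried by the chosen generator of each summand yields the stated shift $\langle \cups(t)-\caps(t)\rangle$.

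The step I expect to be the main obstacle is precisely this locality-and-grading claim for the left action. One must check, against the full surgery table (including signs and the $y \otimes y$ rule), that every admissible surgery on a basis diagram decomposes cleanly as an operation on the $P(\la)$-factor tensored with the identity on $R^{\otimes n}$, and that the degrees match on the nose, so that the result is genuinely an isomorphism of graded $K_\La$-modules rather than only of graded vector spaces. Everything else is either the formal argument of (i) or a finite verification against the definition of oriented diagrams and the surgery rules.
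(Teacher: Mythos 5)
You should first be aware that the thesis contains no proof of this theorem: it is imported verbatim from \cite[Theorem 4.2]{Brun22008}, so there is no internal argument to compare against. Your sketch is, in essence, a reconstruction of the proof given in that reference, and it is correct. Part (i) is exactly the standard isomorphism $N\otimes_A Ae\cong Ne$ applied to $P(\ga)=K_\Ga e_\ga$. For (ii) and (iii), the diagram basis of $K_{\La\Ga}^t e_\ga$ consists of the oriented diagrams $a\al t\ga\ove{\ga}$; the orientations of the upper components are forced by $\ga$ (so in (ii) one is verifying a condition on $t\ga\ove{\ga}$ itself, not making choices --- your phrasing slightly blurs this, but your conclusion is the right one), each upper circle contributes the two-dimensional space identified with $R$, and the components crossing the bottom number line reduce to $\ove{\la}$, giving the vector-space bijection with $P(\la)\otimes R^{\otimes n}$ up to shift. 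The locality claim you single out as the main obstacle is indeed the substance of the argument in \cite{Brun22008}: surgeries coming from left multiplication by $K_\La$ are performed along the bottom number line and never reach the upper circles, so they act only through the $P(\la)$-factor, and tracking degrees through the bijection gives $K_{\La\Ga}^t e_\ga\cong P(\la)\otimes R^{\otimes n}\langle\cups(t)\rangle$, hence the stated shift $\langle\cups(t)-\caps(t)\rangle$ after composing with (i). In short: the proposal is correct and follows the same route as the original source the thesis cites.
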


Brundan and Stroppel also prove that projective functors preserve the category of modules with graded cell module filtration. More precisely
\begin{Theorem}[{\cite[Theorem 4.5(i)]{Brun22008}}]
\label{vf}
Let $t$ be a proper $\La \Ga$-matching and $\ga \in \Ga$.
The $K_\La$-module
$G^t_{\La\Ga} M(\ga)$
has a filtration
$$
\{0\} = N(0) \subset N(1) \subset \cdots \subset N(n)
= G^t_{\La\Ga}M(\ga)
$$
such that
$N(i) / N(i-1) \cong M(\mu_i) \langle \deg(\mu_i t \ga)  - \caps(t)\rangle$
for each $i$.
Here $\mu_1,\dots,\mu_n$ denote the elements of the set
$\{\mu \in \La\:|\:\mu t \ga \text{ is oriented}\}$ ordered so that
$\mu_i > \mu_j$ implies $i < j$.
\end{Theorem}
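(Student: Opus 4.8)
\emph{Proof proposal.} The plan is to compute $G^t_{\La\Ga} M(\ga) = K_{\La\Ga}^t \langle -\caps(t)\rangle \otimes_{K_\Ga} M(\ga)$ explicitly from the diagram bases and read the filtration off directly. First I would produce a basis of the tensor product. Since the cell module $M(\ga)$ is cyclic, generated over $K_\Ga$ by its unique degree-$0$ vector $(\und\ga\ga|$ (the associated cup diagram glued to $\ga$), every pure tensor $x \otimes v$ can be rewritten as $x' \otimes (\und\ga\ga|$ with $x' \in K_{\La\Ga}^t$; hence $G^t_{\La\Ga}M(\ga)$ is spanned by $\{x e_\ga \otimes (\und\ga\ga| : x \in K_{\La\Ga}^t\}$, a quotient of $K_{\La\Ga}^t e_\ga$. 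Using the graded basis of the bimodule $K_{\La\Ga}^t$ by oriented $\La\Ga$-circle diagrams, I would check that a basis of $G^t_{\La\Ga}M(\ga)$ is given by the oriented diagrams obtained by gluing a $\La$-cup diagram $c$ below the matching $t$ carrying $\ga$ on its lower line; spanning is the reduction just described, and linear independence follows from the cellular basis theorem for $K_{\La\Ga}^t$. Each such basis vector has a well-defined upper weight $\mu \in \La$, namely the weight read on the top numberline, and the vectors of upper weight $\mu$ are in bijection with the basis $\{(c\mu| : c\mu \text{ oriented}\}$ of $M(\mu)$, so $\mu$ ranges exactly over $\{\mu : \mu t \ga \text{ oriented}\}$.

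Second, I would build the filtration by grouping according to this upper weight. Ordering the relevant weights as $\mu_1 > \mu_2 > \cdots > \mu_n$ (decreasing Bruhat order, as in the statement), set $N(i)$ to be the span of all basis vectors of upper weight among $\mu_1, \dots, \mu_i$, i.e. of upper weight $\geq \mu_i$. The central point is that each $N(i)$ is a $K_\La$-submodule. This reduces to a lemma on the generalised surgery procedure: when the left $K_\La$-action merges or splits circles in the top part of a diagram, every term it produces has upper weight $\geq$ the original one; equivalently the action matrix is upper-triangular with respect to the Bruhat order. I would isolate and prove this triangularity by the same case analysis on the surgery moves ($1 \mapsto 1\otimes x + x \otimes 1$, $x\otimes 1 \mapsto x$, and so on) that underlies the cellularity of $K_\La$ itself.

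Third, I would identify $N(i)/N(i-1)$ with a shifted cell module. On the subquotient only the terms of upper weight exactly $\mu_i$ survive, and sending such a basis vector to the corresponding $(c\mu_i|$ gives a vector-space isomorphism onto $M(\mu_i)$; comparing the surviving part of the surgery action with the cell-module action $(a\la b)(c\mu|) = (a\mu|)$ or $0$ shows this is an isomorphism of $K_\La$-modules. The grading shift is pure bookkeeping: the vector corresponding to $(c\mu_i|$ sits in internal degree $\deg(c\mu_i) + \deg(\mu_i t \ga) - \caps(t)$, which is the degree of $(c\mu_i|$ in $M(\mu_i)$ shifted up by $\deg(\mu_i t\ga) - \caps(t)$, giving exactly $M(\mu_i)\langle \deg(\mu_i t\ga) - \caps(t)\rangle$. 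The chosen ordering $\mu_1 > \cdots > \mu_n$ is what makes the larger-weight standards into submodules, consistent with the second step.

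The step I expect to be the main obstacle is the triangularity in the second paragraph: showing that the left surgery action never produces a term of strictly smaller upper weight, while simultaneously tracking the internal grading (and the global $\langle -\caps(t)\rangle$ shift) through each move. As an alternative that sidesteps part of this, one could first establish that $K_{\La\Ga}^t$ is projective as a right $K_\Ga$-module, so that $G^t_{\La\Ga}$ is exact; applying it to a projective presentation of $M(\ga)$ and using Theorem \ref{pf}(iii) (which exhibits $G^t_{\La\Ga}P(\ga)$ as a sum of $P(\la)\otimes R^{\otimes n}$, hence cell-filtered) yields existence of a cell filtration, after which the precise subquotients are forced by a graded Grothendieck-group computation together with the fact that filtration multiplicities of cell modules are determined by the graded character. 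Either way the grading bookkeeping is unavoidable.
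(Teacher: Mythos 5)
First, a point of context: the paper you are being compared against does not actually prove this statement. Theorem \ref{vf} is imported verbatim from Brundan--Stroppel \cite[Theorem 4.5(i)]{Brun22008} and used as a black box (it enters the thesis only through the construction of projective resolutions in Theorem \ref{Th:constrproj}). So there is no internal proof to compare with; the relevant comparison is with the proof in the cited source, and your primary route --- realise $G^t_{\La\Ga}M(\ga)$ as a quotient of $K_{\La\Ga}^t e_\ga$, exhibit a basis of oriented ``half diagrams'' $c\mu t\ga$, filter by the weight on the top number line, and prove triangularity of the surgery action with respect to the Bruhat order --- is the strategy of the proof given there, mimicking the cell filtration of projectives (Theorem \ref{qh1}). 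Your bookkeeping of the shift $\deg(\mu_i t\ga)-\caps(t)$ and of the ordering (larger weights at the bottom of the filtration) is also correct.

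There is, however, a genuine gap in your first step as stated. Linear independence of the proposed basis does \emph{not} ``follow from the cellular basis theorem for $K_{\La\Ga}^t$'': that theorem gives a basis of the bimodule, hence of $K_{\La\Ga}^t e_\ga$, but $G^t_{\La\Ga}M(\ga)$ is a proper quotient of $K_{\La\Ga}^t e_\ga$, namely by the image of $K_{\La\Ga}^t\otimes N$ where $N$ is the kernel of $P(\ga)\twoheadrightarrow M(\ga)$, and images of linearly independent vectors in a quotient need not remain independent. Identifying this kernel --- equivalently, producing a \emph{lower} bound on the graded dimension of the quotient --- is precisely where the real content lies, and it has to be done by an explicit computation with the surgery relations or via the right $K_\Ga$-module structure of the bimodule; spanning alone only gives the upper bound. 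Your fallback route has a matching gap at its last step: applying the exact functor $G^t_{\La\Ga}$ to a projective presentation of $M(\ga)$ only exhibits $G^t_{\La\Ga}M(\ga)$ as a cokernel of a map between cell-filtered modules (using Theorem \ref{pf}(iii)), and such cokernels need not be cell-filtered. So ``yields existence of a cell filtration'' requires an additional homological argument (for instance an $\Ext^1$-vanishing criterion for $\Delta$-filtrations over a quasi-hereditary algebra, fed by an adjunction between $G^t_{\La\Ga}$ and $G^{t^*}_{\Ga\La}$); only once existence is secured does the graded Grothendieck-group computation pin down the subquotients.
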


\subsection{Construction of linear projective resolutions of cell modules}\label{sec:linprojres}
In this part we give an explicit way to construct projective resolutions of cell modules in $K_m^n-\mod$ using tools from the proof of \cite[Theorem 5.3]{Brun22008}. This construction works by an interesting simultaneous induction varying the underlying algebra and the highest weights. We assume that we know the resolutions in $K_{m-1}^{n-1}-\mod$. Note that for $K_{m}^0$ and $K_0^n$ we only have one indecomposable module, which is projective, simple and cell module at once. We want to compute the projective resolution of $M(\la)$, assuming that we also know the projective resolutions for $M(\mu$) with $\mu > \la$. \\
We start with a general definition.
\begin{Def}[{\cite[(5.6)]{Brun22008}}]
For an index $0\leq i<m+n-1$ define
\begin{equation}\label{laij}
\La_{i,i+1}^{\!\vee \wedge}
:= \left\{\nu\in \La_m^n\:\Bigg|\:
\begin{array}{l}
\text{the $i$th vertex of $\nu$ is labelled $\down$}\\
\text{the $(i+1)$st vertex of $\nu$ is labelled $\up$}
\end{array}
\right\}.
\end{equation}
\end{Def}
Now we are able to compute linear projective resolutions of cell modules.
\begin{Theorem}\label{Th:constrproj}
Each cell module $M(\la)$ in $K_m^n$ has a linear projective resolution which can be constructed inductively from knowing the resolutions in $K_{m-1}^{n-1}$ and those of $M(\mu)$ for $\mu > \la \in \La_m^n$.
\end{Theorem}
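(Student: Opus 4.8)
The plan is to set up a nested induction exactly as the statement suggests: an outer induction that lowers the algebra from $K_m^n$ to $K_{m-1}^{n-1}$, and, for a fixed $K_m^n$, an inner induction running downward through the Bruhat order on $\La_m^n$. The base cases are cheap. For $K_m^0$ and $K_0^n$ there is a single indecomposable module, which is simultaneously projective and a cell module, so it is its own length-zero (trivially linear) resolution; and inside any $K_m^n$ the maximal weight is $\la_0=\up^n\down^m$, for which $\und{\la_0}$ consists only of rays, whence $\{\nu\mid \und{\la_0}\nu\text{ oriented}\}=\{\la_0\}$ and so $P(\la_0)=M(\la_0)$ by Theorem \ref{qh1}; again the resolution is trivial. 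For the inductive step I fix a non-maximal $\la$, so that $\la$ carries a $\down$ immediately followed by an $\up$ at some adjacent pair of vertices $i,i+1$.

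The engine of the step is the elementary projective functor $G:=G^{t_i}_{\La_m^n\La_{m-1}^{n-1}}:\mod K_{m-1}^{n-1}\to\mod K_m^n$ attached to the matching $t_i$ of Example \ref{Ex:ti}. Let $\ga\in\La_{m-1}^{n-1}$ be obtained from $\la$ by deleting vertices $i,i+1$. Because $t_i$ has a single cap (at $i,i+1$) and otherwise rays, the set $\{\nu\in\La_m^n\mid \nu\,t_i\,\ga\text{ is oriented}\}$ consists of exactly the two weights agreeing with $\ga$ off $\{i,i+1\}$ and carrying $\up\down$ or $\down\up$ there; call them $\mu$ and $\la$ respectively. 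Since moving a $\down$ to the right raises a weight, $\mu>\la$. Theorem \ref{vf} then equips $GM(\ga)$ with a two-step cell filtration whose submodule is $M(\mu)$ and whose quotient is $M(\la)$, both up to internal shifts coming from the cap degrees. First I would pin down these shifts: after normalizing so that the quotient is the unshifted $M(\la)$, the single adjacent transposition distinguishing $\mu$ from $\la$ flips the orientation of the cap over $\{i,i+1\}$ and hence changes its degree by exactly one, producing the short exact sequence
$$0\to M(\mu)\langle 1\rangle\to GM(\ga)\to M(\la)\to 0$$
with $\mu>\la$, which is precisely the shape required by Lemma \ref{Le:Coneconstr}.

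It remains to feed Lemma \ref{Le:Coneconstr} finite linear projective resolutions of the submodule and the middle term. For the submodule $M(\mu)$ with $\mu>\la$ this is exactly the inner induction hypothesis. For the middle term $GM(\ga)$ I would apply $G$ to the linear projective resolution of $M(\ga)$, which exists by the outer induction hypothesis. Here I use that $G$ is exact (up to the Enright--Shelton equivalence it is a translation functor, cf. the Remark preceding Theorem \ref{pf}) and that, by Theorem \ref{pf}(iii), for the elementary matching $t_i$ no upper circle is removed, so $G$ carries each indecomposable projective to a single internally shifted indecomposable projective $P(\nu)\langle\cups(t_i)-\caps(t_i)\rangle$. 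Consequently $G$ sends a resolution whose $k$-th term is generated in degree $k$ to one whose $k$-th term is generated in degree $k+(\cups(t_i)-\caps(t_i))$, i.e.\ to a linear resolution after the overall normalizing shift. With both outer terms resolved, the cone construction of Lemma \ref{Le:Coneconstr} assembles a finite linear projective resolution of $M(\la)$, completing the step; finiteness holds because cell modules are finite dimensional (Theorem \ref{qh2}) and $K_m^n$ has finite global dimension.

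The main obstacle I anticipate is the shift bookkeeping in the middle paragraph: verifying that the two cell subquotients of $GM(\ga)$ really appear with relative internal shift exactly $\langle 1\rangle$ (so that Lemma \ref{Le:Coneconstr} applies verbatim and linearity is preserved), and confirming the claim that the elementary matching $t_i$ contributes no $R$-factor, so that $G$ preserves linearity. Both reduce to degree computations with oriented cup/cap diagrams via Theorems \ref{pf} and \ref{vf}, but they must be carried out carefully, since a wrong shift would destroy either exactness of the short exact sequence or linearity of the resulting resolution.
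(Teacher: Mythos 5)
Your proposal is correct and follows essentially the same route as the paper's proof: the same double induction with the same base cases, the same elementary matching $t_i$ and functor $G^{t_i}_{\La\Ga}$, the short exact sequence supplied by Theorem \ref{vf}, exactness of $G^{t_i}_{\La\Ga}$ together with Theorem \ref{pf}(iii)/equation \eqref{prback} (no circles removed, shift $\cups(t_i)-\caps(t_i)=-1$) to transport linearity of $P_\bullet(\la')$, and finally the cone construction of Lemma \ref{Le:Coneconstr}. The only discrepancy is the shift normalization you yourself flag: the paper writes the sequence as $0\to M(\la'')\langle 1\rangle \to G^{t_i}_{\La\Ga}M(\la')\langle 1\rangle \to M(\la)\to 0$, i.e.\ the middle term also carries the shift $\langle 1\rangle$, which is precisely the bookkeeping your normalization resolves.
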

\begin{proof}
To start the induction, first note that for the dominant weight in $K_m^n$ we have $M(\la_0)=P(\la_0)$ and hence a projective resolution is given by $0 \to P(\la_0) \to M(\la_0) \to 0$. We also know that in $K_m^0-\mod$ and $K_0^n-\mod$ there only exists one cell module which is a simple projective cell module, so there we know the resolution for this module and hence for all cell modules in $K_m^0-\mod$ and $K_0^n-\mod$, respectively. So we may assume that for a fixed $\la \in K_m^n$ we already have resolutions for all cell modules in $K_{m-1}^{n-1}-\mod$ and for all $M(\mu)$ with $\mu > \la$.

Set $\La=\La_m^n$ and $\Ga=\La_{m-1}^{n-1}$. For $\la \in \La_{i,i+1}^{\!\vee \wedge}$ let $\la' \in \Ga$ be the weight obtained from $\la$ by deleting the $i$th and $(i+1)$st vertex.\\
 Let $t_{i}$ be the $\La \Ga$-matching given in Example \ref{Ex:ti} with a cap joining the $i$th and $(i+1)$st position and lines on the others. By the definition of $\ove{\la}$ it equals the upper reduction of $t_i \ove{\la'}$ and no circles are removed, hence 
\begin{equation}\label{prback}
G^{t_i}_{\La\Ga} P(\la')  \cong P(\la)\langle-1\rangle
\end{equation}
 by Theorem \ref{pf}.\\
Define $\la''$ to be the weight one gets by interchanging the $i$th and $(i+1)$st vertex in $\la$ (i.e. $\la''=\la.s_i$), then Theorem \ref{vf} provides a short exact sequence
\begin{equation}\label{step2}
\xymatrix{0 \ar[r] &M(\la'')\langle 1 \rangle
\ar[r]^{f}
&G^t_{\La\Ga} M(\la')\langle 1 \rangle
\ar[r] & M(\la)
\ar[r]& 0}.
\end{equation}
Now by our induction hypothesis, we already have constructed linear projective resolutions $P_\bullet(\la'')$ of $M(\la'')$ for $\la'' \in \La$ and $P_\bullet(\la')$ of $M(\la')$ with $\la' \in \Ga$. Denote the differential maps by $d_\bullet(\la'')$ and $d_\bullet(\la')$.\\
Applying the exact functor $G^{t_i}_{\La\Ga}$ to the second complex, we get a resolution of $G^{t_i}_{\La\Ga}
M(\la')$ by the complex $G^{t_i}_{\La\Ga}P_\bullet(\la')$ with differentials $G^{t_i}_{\La\Ga}d_\bullet(\la')$.
By (\ref{prback}) and the hypothesis that we already have a smaller linear projective resolution, we know that all $G^t_{\La\Ga} P_{k}(\la')\langle 1 \rangle$ are sums of projective modules with the head sitting in degree $k$.
 Now we are in the situation of Lemma \ref{Le:Coneconstr} and can apply the lemma to the complexes $P_\bullet(\la'') \to M(\la'')$ and $G^{t_i}_{\La\Ga}P_\bullet(\la') \langle 1 \rangle \to G^{t_i}_{\La\Ga}
M(\la')\langle 1 \rangle$.\\
By this a projective resolution of $M(\la)$ is given by the cone 
$$C(f_\bullet)=G^{t_i}_{\La\Ga}P_\bullet(\la') \langle 1 \rangle \oplus P_\bullet(\la'')\langle 1 \rangle [1].$$ Putting $d_0(\la'')=0$, for $n \geq 1$ we have the differentials
$${d}_n(\la)=\left( \begin{matrix} -d_{n-1}(\la'')&0\\f_{n-1}&G^t_{\La\Ga}(d_n(\la')) \end{matrix}\right)$$
and the map $d(\la): P_0(\la) \to M(\la)$ equals to $P_0(\la) \to G^{t_i}_{\Ga\La}M(\la') \to M(\la)$.
\end{proof}
\begin{Cor}\label{Cor:dif}
For two cell modules $M(\mu)$ and $M(\la)$ in $K_m^n-\mod$ and their projective resolutions $P_\bullet(\mu)$ and $P_\bullet(\la)$ constructed as in Theorem \ref{Th:constrproj}, we have
\begin{align*}
\hom^k(P_\bullet(\la), P_\bullet(\mu) \langle j \rangle)=0 \text{ unless } 0 \leq k-j \leq 2n.
\end{align*}
\end{Cor}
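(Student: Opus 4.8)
The plan is to reduce the whole statement to the single-degree bound of Lemma \ref{Le:geshifmaps} by exploiting the linearity of the resolutions produced by Theorem \ref{Th:constrproj}. First I would unwind the definition of the $\hom$-complex from Section \ref{sec:Exthom}: its degree-$k$ component is a product $\prod_p \hom_{K_m^n}(P_p(\la), P_{p'}(\mu)\langle j\rangle)_0$ over all $p$, where the target index $p'$ is determined by $p$ and $k$ through the homological indexing, and where each factor consists of genuine (degree-preserving) module homomorphisms. Thus it suffices to show that every such factor vanishes unless $0 \le k-j \le 2n$; the vanishing of the entire component then follows by taking products.

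Next I would feed in linearity. By Theorem \ref{Th:constrproj} both $P_\bullet(\la)$ and $P_\bullet(\mu)$ are linear, so $P_p(\la)$ decomposes as a direct sum of indecomposable projectives each generated in degree $p$, that is, of the form $P(\nu)\langle p\rangle$, and likewise $P_{p'}(\mu)\langle j\rangle$ is a sum of modules $P(\nu')\langle p'+j\rangle$ whose heads sit in degree $p'+j$. Since $\hom$ commutes with finite direct sums, each factor splits as a finite sum of spaces of the shape $\hom_{K_m^n}(P(\nu)\langle p\rangle, P(\nu')\langle p'+j\rangle)_0$, so the problem is now entirely about degree-$0$ maps between shifted indecomposable projectives.

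The core computation is to rewrite each of these spaces, via the standard identity $\hom(M\langle a\rangle, N\langle b\rangle)_0 \cong \hom(M,N)_{a-b}$, as $\hom_{K_m^n}(P(\nu),P(\nu'))_d$ with $d$ equal to the difference of the generation degrees of source and target; threading this through the shift conventions of Notation \ref{Not:sign} and the indexing of Section \ref{sec:Exthom} should produce $d=k-j$. Lemma \ref{Le:geshifmaps} (equivalently, the fact from Lemma \ref{Le:highdegrsimp} that the highest degree in which a simple can occur in a projective is $2n$) then forces $\hom_{K_m^n}(P(\nu),P(\nu'))_d = 0$ unless $0 \le d \le 2n$, which is precisely $0 \le k-j \le 2n$.

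The main obstacle I anticipate is purely the degree bookkeeping: correctly composing the homological shift $k$ (together with the $[\ ]_\hom$ versus $[\ ]$ sign conventions of Notation \ref{Not:sign}) with the internal grading shift $j$ and the linearity degree $p$, so that the reduction lands on exactly $d=k-j$ rather than $k+j$ or $j-k$. Once the shifts are aligned, everything else is a formal consequence of linearity together with Lemma \ref{Le:geshifmaps}, and no new structural input about $K_m^n$ is required.
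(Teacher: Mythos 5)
Your proposal is correct and follows essentially the same route as the paper: both use the linearity of the resolutions from Theorem \ref{Th:constrproj} to reduce each component of $\hom^k(P_\bullet(\la), P_\bullet(\mu)\langle j \rangle)$ to degree-zero maps between shifted indecomposable projectives, identify the resulting shift as $k-j$, and conclude by Lemma \ref{Le:geshifmaps}. The only difference is that you spell out the direct-sum decomposition and the shift identity more explicitly, while the paper states the degree of the heads ($i-k+j$) directly; the bookkeeping you flag as the main risk indeed resolves to $k-j$ exactly as you predict.
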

\begin{proof}
Since $P_\bullet(\mu)$ and $P_\bullet(\la)$ are linear projective resolutions, we know that the head of the $i$th component of $P_\bullet(\la)\langle j \rangle [k]$ sits in degree $i-k+j$. Therefore, an element of $\hom^k(P_\bullet(\la), P_\bullet(\mu) \langle j \rangle)$ is a morphism from a projective $P(\nu)$ in the $i$th component of $P_\bullet(\la)$ to a projective $P(\nu')\langle j-k \rangle$, i.e. a morphism in 
$$\hom_{K_\La}(P(\nu)\langle k-j \rangle ,P(\nu'))_0$$ which is only unequal zero for $0 \leq k-j \leq 2n$ by Lemma \ref{Le:geshifmaps}.
\end{proof}

\subsection{The combinatorial Kazhdan-Lusztig polynomials}
In this section we recall the definition of the combinatorial Kazhdan-Lusztig polynomials which describe the terms occurring in the projective resolution of cell modules.\\
We first give the recursive definition stated in \cite[Lemma 5.2.]{Brun22008}, going back originally to work of Lascoux and Schuetzenberger \cite{Lasc1981}.
\begin{Def}\label{recurse}
Let $p_{\la,\mu}(q) \in \Z[q]$ be the polynomials defined
by the following recursion formulas:
\begin{itemize}
\item[(i)]
If $\la = \mu$ then $p_{\la,\mu}(q) = 1$
and if $\la \not \leq \mu$ then $p_{\la,\mu}(q) = 0$.
\item[(ii)]
If $\la < \mu$, pick some index $i$
such that $\la \in \La_{i,i+1}^{\!\vee\wedge}$.
Then
\begin{equation*}
p_{\la,\mu}(q) =
\left\{
\begin{array}{ll}
p_{\la',\mu'}(q) + q p_{\la'',\mu}(q)&\text{if $\mu \in \La_{i,i+1}^{\!\vee\wedge}$,}\\
q p_{\la'',\mu}(q)&\text{otherwise,}
\end{array}\right.
\end{equation*}
where $\la'$ and $\mu'$ denote the weights in $\La_{m-1}^{n-1}$
obtained from $\la$
and $\mu$ by deleting vertices $i$ and $i+1$ (as above),
and $\la''$ is the weight obtained from $\la$ by interchanging the labels
on vertices $i$ and $i+1$.
\end{itemize}
\end{Def}
For easier calculation we also restate the direct construction of $p_{\la,\mu}$ given in \cite[chapter 5]{Brun22008}.
First of all, set $p_{\la,\mu}=0$ if $\la \not\le \mu$. 
A \emph{labelled cap diagram} C is a cap diagram whose unbounded chambers are labelled by zero and given two chambers separated by a cap, the label in the inside chamber is greater than or equal to the label in the outside chamber.

Denote by $D(\la,\mu)$ all labelled cap diagrams obtained by labelling the chambers of $\ove{\mu}$ in such a way that the label inside every inner cap (a cap containing no smaller one) is less than or equal to $l_i(\la,\mu)$ (defined in \eqref{diffl}), where i is the index of the vertex of the cap labelled by $\down$. Now define $\left|C\right|$ to be the sum of all labels in C. The polynomials are given by
\begin{equation*}
p_{\la,\mu}(q) := q^{l(\la)-l(\mu)}\sum_{C \in D(\la,\mu)} q^{-2|C|}.
\end{equation*}
In \cite[Lemma 5.2.]{Brun22008} it is shown that both definitions agree.
\begin{Example}
For $\la$ and $\mu$ the possibilities for the labelled cap diagrams $C \in D(\la, \mu)$ are presented in figure \ref{fig:kaz}. The possible values for $C$ therefore are $0$ and $1$. Since $(\la)-l(\mu)=4$, we get $p_{\la,\mu}(q)=q^{4}+q^{2}$.
		\begin{figure}
		 \center
		\caption{the labelled cap diagram}
				\label{fig:kaz}
		\begin{tikzpicture}[scale=1]
		\draw (-0.2,0) node[above=-1.55pt] {$\mu$};

		\draw (0.4,0) node[above=-1.55pt] {$\down$};
		\draw (0.8, 0) node[above=-1.55pt] {$\up$};
		\draw (1.2,0) node[above=-1.55pt] {$\down$};
		\draw (1.6,0) node[above=-1.55pt] {$\down$};
		\draw (2,0) node[above=-1.55pt] {$\up$};
		\draw (2.4, 0) node[above=-1.55pt] {$\down$}; 
		\draw (0.8,0.3) arc (0:180:0.2);
		\draw (1.2, 0.3) -- (1.2, 0.6);
		\draw (2,0.3) arc (0:180:0.2);
		\draw (2.4, 0.3) -- (2.4, 0.6);
		\begin{scope}[yshift=-0.4cm]
		\draw (-0.2,0) node[above=-1.55pt] {$l_i$};
		\draw (0.4,0) node[above=-1.55pt] {$0$};
		\draw (1.6,0) node[above=-1.55pt] {$1$};
		\end{scope}
		\begin{scope}[yshift=0.15cm]
		\scriptsize	
		\draw (0.6,0) node[above=-1.55pt] {$0$};
		\tiny
		\draw (1.8,0) node[above=-1.55pt] {$0/1$};	
		\end{scope}	
		\begin{scope}[yshift=-0.8 cm]
		\draw (-0.2,0) node[above=-1.55pt] {$\la$};
		\draw (0.4,0) node[above=-1.55pt] {$\down$};
		\draw (0.8,0) node[above=-1.55pt] {$\down$};
		\draw (1.2, 0) node[above=-1.55pt] {$\down$};
		\draw (1.6,0) node[above=-1.55pt] {$\down$};
		\draw (2.0,0) node[above=-1.55pt] {$\up$};
		\draw (2.4,0) node[above=-1.55pt] {$\up$};
		\end{scope}
		\end{tikzpicture}
\end{figure}
\end{Example}
Now we have the tools to write down the terms occurring in the projective resolution constructed in \ref{sec:linprojres}.
\begin{Theorem}[{\cite[Theorem 5.3]{Brun22008}}]\label{sumocc}
For $\la \in \La_m^n$, the resolution of $M(\la)$ constructed in \ref{sec:linprojres}
$$
\cdots
\stackrel{d_1}{\longrightarrow}
P_{1}(\la)
\stackrel{d_0}{\longrightarrow} P_{0}(\la)
\stackrel{\eps}{\longrightarrow} M(\la) \longrightarrow 0
$$
consists of $P_0(\la) := P(\la)$ and
$P_i(\la) := \bigoplus_{\mu \in \La_m^n}
p_{\la,\mu}^{(i)} P(\mu)\langle i \rangle$
for $i \geq 0$.
\end{Theorem}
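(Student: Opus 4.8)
The plan is to induct exactly along the construction of Theorem~\ref{Th:constrproj}, reading off the summands of the cone and matching them term by term against the recursion for $p_{\la,\mu}(q)$ in Definition~\ref{recurse}. Throughout, write $p_{\la,\mu}^{(i)}$ for the coefficient of $q^i$ in $p_{\la,\mu}(q)$; the claim is that $P_k(\la)\cong\bigoplus_{\mu\in\La_m^n}p_{\la,\mu}^{(k)}\,P(\mu)\langle k\rangle$ for every $k\geq 0$. The induction runs simultaneously on the rank (via $K_{m-1}^{n-1}$) and downward in the Bruhat order, precisely the two hypotheses under which Theorem~\ref{Th:constrproj} produces the resolution.

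For the base cases I take the Bruhat-maximal weight $\la_0$, where $M(\la_0)=P(\la_0)$ forces $P_0=P(\la_0)$ and $P_k=0$ for $k>0$; since $\la_0\leq\mu$ only for $\mu=\la_0$, Definition~\ref{recurse}(i) gives $p_{\la_0,\mu}(q)=\delta_{\la_0,\mu}$, in agreement, and the degenerate algebras $K_m^0$, $K_0^n$ are identical. For the inductive step I fix an index $i$ with $\la\in\La_{i,i+1}^{\!\vee\wedge}$, used both in the construction and in the recursion, and set $\La=\La_m^n$, $\Ga=\La_{m-1}^{n-1}$. By Theorem~\ref{Th:constrproj} the resolution is the cone $G^{t_i}_{\La\Ga}P_\bullet(\la')\langle 1\rangle\oplus P_\bullet(\la'')\langle 1\rangle[1]$, so in homological degree $k$ one has $P_k(\la)=G^{t_i}_{\La\Ga}P_k(\la')\langle 1\rangle\oplus P_{k-1}(\la'')\langle 1\rangle$. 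Inserting the induction hypothesis for $\la''>\la$ rewrites the second summand as $\bigoplus_\nu p_{\la'',\nu}^{(k-1)}\,P(\nu)\langle k\rangle$, and since $p_{\la'',\nu}^{(k-1)}$ is the $q^k$-coefficient of $q\,p_{\la'',\nu}(q)$, this summand is exactly the $q\,p_{\la'',\mu}(q)$-contribution of Definition~\ref{recurse}(ii).

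The heart of the argument, and the step I expect to be the main obstacle, is the computation of $G^{t_i}_{\La\Ga}P(\nu')$ for an \emph{arbitrary} $\nu'\in\Ga$ (the construction only guaranteed this for $\nu'=\la'$, in \eqref{prback}). I claim $G^{t_i}_{\La\Ga}P(\nu')\cong P(\nu)\langle -1\rangle$, where $\nu\in\La_{i,i+1}^{\!\vee\wedge}$ is obtained from $\nu'$ by inserting a $\down\up$-pair at positions $i,i+1$. This is extracted from Theorem~\ref{pf}(iii): for $t=t_i$ one has $\cups(t_i)-\caps(t_i)=-1$, so the shift is $\langle -1\rangle$, and the content of the claim is that the orientation condition \ref{pf}(ii) always holds (so the image is nonzero) and that no circles are removed in the upper reduction of $t_i\ove{\nu'}$ (so the $R^{\otimes\,\cdot}$-factor is trivial and the result is a single indecomposable projective). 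Concretely, the lone cap of $t_i$ closes off directly at positions $i,i+1$ rather than joining a cap of $\ove{\nu'}$ into a loop, which both yields $\nu\in\La_{i,i+1}^{\!\vee\wedge}$ and prevents circle formation; equivalently, $G^{t_i}_{\La\Ga}$ is translation off the wall and restricts to a bijection between the indecomposable projectives of $K_{m-1}^{n-1}$ and those of $K_m^n$ indexed by $\La_{i,i+1}^{\!\vee\wedge}$. Granting this, the first summand becomes $\bigoplus_{\nu'}p_{\la',\nu'}^{(k)}\,P(\nu)\langle k\rangle$, contributing $p_{\la',\nu'}^{(k)}$ to the coefficient of $P(\nu)\langle k\rangle$ precisely when $\nu\in\La_{i,i+1}^{\!\vee\wedge}$ (with $\nu'$ the deletion of vertices $i,i+1$ from $\nu$), and nothing otherwise.

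It then remains to add the two contributions: the coefficient of $P(\nu)\langle k\rangle$ in $P_k(\la)$ equals $p_{\la',\nu'}^{(k)}+p_{\la'',\nu}^{(k-1)}$ when $\nu\in\La_{i,i+1}^{\!\vee\wedge}$ and $p_{\la'',\nu}^{(k-1)}$ otherwise. Comparing with the two cases of Definition~\ref{recurse}(ii) and extracting the coefficient of $q^k$ identifies both with $p_{\la,\nu}^{(k)}$, which closes the induction. As a sanity check, in degree $k=0$ one has $P_{-1}(\la'')=0$ and $P_0(\la')=P(\la')$, so $P_0(\la)=G^{t_i}_{\La\Ga}P(\la')\langle 1\rangle=P(\la)$ by \eqref{prback}, matching $P_0(\la)=P(\la)$.
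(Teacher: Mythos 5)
The paper offers nothing to compare your argument against: Theorem \ref{sumocc} is stated with the attribution \cite[Theorem 5.3]{Brun22008} and no proof is given in the thesis at all --- indeed Section \ref{sec:linprojres} is explicitly described as extracted ``using tools from the proof of'' that very result. So your proposal is not an alternative to an internal proof; it is a reconstruction, inside the thesis's own framework, of the Brundan--Stroppel argument, and as such it is sound and hinges on exactly the right point. The identification $G^{t_i}_{\La\Ga}P(\nu')\cong P(\nu)\langle -1\rangle$ for an \emph{arbitrary} $\nu'\in\La_{m-1}^{n-1}$ is the one fact not literally recorded in the paper: \eqref{prback} states it only for the pair $(\la,\la')$, while the proof of Theorem \ref{Th:constrproj} silently uses the general case when it asserts that every $G^{t_i}_{\La\Ga}P_k(\la')\langle 1\rangle$ is a sum of projectives with head in degree $k$. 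Your justification is the correct one: $\und{\nu}$ equals $\und{\nu'}$ with one extra small cup at $(i,i+1)$, hence $\ove{\nu}$ is the upper reduction of $t_i\ove{\nu'}$; every component of $t_i\ove{\nu'}$ meets the bottom number line, so no circles are removed and condition (ii) of Theorem \ref{pf} holds vacuously; and the shift $\langle\cups(t_i)-\caps(t_i)\rangle$ depends only on $t_i$, so \eqref{prback} already fixes it to be $\langle -1\rangle$. After that, matching the cone $G^{t_i}_{\La\Ga}P_k(\la')\langle 1\rangle\oplus P_{k-1}(\la'')\langle 1\rangle$ against Definition \ref{recurse}(ii), coefficient by coefficient and with the same index $i$ on both sides, is exactly the intended induction; what your route buys is a self-contained proof within the thesis, where the paper instead imports the statement wholesale.

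One step does need to be added before the induction literally closes. Definition \ref{recurse}(ii) computes $p_{\la,\mu}$ only for $\mu>\la$, whereas the cone produces a coefficient for \emph{every} $\mu\in\La_m^n$; you must also check that this coefficient agrees with case (i) of the definition when $\mu\not>\la$. For the $\la''$-contribution this is immediate: $\la<\la''$, so $\la''\leq\mu$ would force $\la<\mu$, whence $p^{(k-1)}_{\la'',\mu}=0$ whenever $\mu\not>\la$. For the $\la'$-contribution (present when $\mu\in\La_{i,i+1}^{\vee\wedge}$) you need that $\la'\leq\mu'$ implies $\la\leq\mu$, i.e.\ that deleting the common $\down\up$ pair at vertices $i,i+1$ is compatible with the Bruhat order; granted this, $p^{(k)}_{\la',\mu'}=0$ for $\mu\not\geq\la$, $\mu\neq\la$, and for $\mu=\la$ one gets $p^{(k)}_{\la',\la'}+p^{(k-1)}_{\la'',\la}=\delta_{k,0}=p^{(k)}_{\la,\la}$ as required. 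These are two-line checks, but the order-compatibility of deletion is a genuine combinatorial fact and should be stated rather than absorbed into ``comparing with the two cases.''
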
				
Using the above definition we can already make some explicit restrictions on terms occurring in the resolution.
\begin{Lemma}\label{Le:termprojres}
If a projective module $P(\nu)$ occurs as a direct summand in $P_i(\la)$ with $P_\bullet(\la)$ being the projective resolution constructed above, one has
\begin{align*}
l(\la)-i-(n^2-n-2\sum_i \nes_{\und{\nu}}(i)) \leq l(\nu) \leq l(\la) -i
\end{align*}
\end{Lemma}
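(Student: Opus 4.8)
The plan is to read the homological degree $i$ and the constraint on $l(\nu)$ directly off the combinatorial formula for the Kazhdan--Lusztig polynomials. By Theorem \ref{sumocc}, $P(\nu)$ occurs as a summand of $P_i(\la)$ precisely when the coefficient of $q^i$ in $p_{\la,\nu}(q)$ is nonzero, so it suffices to determine which powers of $q$ can appear in $p_{\la,\nu}(q)$. For this I would use the closed expression $p_{\la,\nu}(q)=q^{l(\la)-l(\nu)}\sum_{C\in D(\la,\nu)}q^{-2|C|}$ recalled above, rather than the recursion, since the recursion changes both $n$ and the second weight and so interacts badly with $\nes_{\und{\nu}}$.

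First note that nonvanishing of $p_{\la,\nu}$ forces $\la\le\nu$, hence $l(\la)\ge l(\nu)$. The all-zero labelling always lies in $D(\la,\nu)$ once $\la\le\nu$, and it is the unique labelling with $|C|=0$, so the top term $q^{l(\la)-l(\nu)}$ occurs with coefficient $1$ and there is no cancellation. Consequently the exponents occurring in $p_{\la,\nu}$ are exactly the integers $l(\la)-l(\nu)-2|C|$ with $C\in D(\la,\nu)$ and $|C|\ge 0$. Writing $i=l(\la)-l(\nu)-2|C|$ for the relevant labelling, the inequality $|C|\ge 0$ immediately yields the upper bound $l(\nu)\le l(\la)-i$.

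The remaining, and genuinely harder, part is the lower bound, which reduces to the purely combinatorial estimate
\[
\max_{C\in D(\la,\nu)}|C|\;\le\;\binom{n}{2}-\sum_j\nes_{\und{\nu}}(j)\;=\;\tfrac12\bigl(n^2-n-2\textstyle\sum_j\nes_{\und{\nu}}(j)\bigr),
\]
since then $l(\nu)=l(\la)-i-2|C|\ge l(\la)-i-(n^2-n-2\sum_j\nes_{\und{\nu}}(j))$. To prove the estimate I would exploit the two defining constraints on a labelled cap diagram on $\ove{\nu}=(\und{\nu})^*$: the labels are nonnegative and weakly increase as one passes inward across a cap, and the chamber inside an innermost cap at $(i,i+1)$ is bounded by $l_i(\la,\nu)$. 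Since $\nu_i=\down$ for such a cap, the second formula for the relative length gives $l_i(\la,\nu)=\#\{j\le i:\nu_j=\up\}-\#\{j\le i:\la_j=\up\}\le\#\{j\le i:\nu_j=\up\}$, a bound that no longer involves $\la$; the monotonicity of the labels then propagates this bound to every chamber lying outside that cap. Summing these local bounds over the nesting forest of $\und{\nu}$ (equivalently of its mirror $\ove{\nu}$, whose nesting numbers agree) and comparing with the count $2\sum_j\nes_a(j)\le n(n-1)$ established in the proof of Lemma \ref{Le:condmaps} (with equality for the totally nested diagram) should deliver the displayed inequality.

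I expect the main obstacle to be precisely this final counting step: converting the vertex-counts $\#\{j\le i:\nu_j=\up\}$ together with the inward monotonicity into the clean nesting-corrected bound $\binom{n}{2}-\sum_j\nes_{\und{\nu}}(j)$. The cleanest route is probably an induction on the number of caps, peeling off an outermost cap and verifying that its removal decreases both sides of the estimate compatibly; the delicate point is bookkeeping how $\#\{j\le i:\nu_j=\up\}$ distributes among caps nested inside a given cap versus sibling caps, which is exactly where the correction term $-\sum_j\nes_{\und{\nu}}(j)$ is produced.
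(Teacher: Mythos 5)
Your reduction is exactly the paper's: you use the closed formula $p_{\la,\nu}(q)=q^{l(\la)-l(\nu)}\sum_{C\in D(\la,\nu)}q^{-2|C|}$, observe that all contributions are nonnegative so $P(\nu)$ occurring in $P_i(\la)$ means $i=l(\la)-l(\nu)-2|C|$ for some labelled cap diagram $C$, get the upper bound from $|C|\geq 0$, and reduce the lower bound to the estimate $\max_{C\in D(\la,\nu)}|C|\leq\binom{n}{2}-\sum_j\nes_{\und{\nu}}(j)$. Everything up to that point is correct, and you even identify the right ingredients ($l_i(\la,\nu)\leq\#\{k\leq i:\nu_k=\up\}$ plus outward monotonicity of labels). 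But the proposal then stops: the estimate itself, which you rightly call the hard part, is accompanied only by a strategy (induction on the number of caps) that you do not carry out and whose bookkeeping you yourself flag as unresolved. As it stands this is a genuine gap, since the entire content of the lemma beyond the trivial upper bound lives in precisely that estimate.

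The gap is closed by a direct per-cap count, with no induction; this is what the paper does. Fix a cap of $\ove{\nu}$, say the one ending at the $j$th $\up$ and numbered $k_j$ in the right-endpoint ordering, with left endpoint at position $i$ (a $\down$ of $\nu$). Since no rays lie inside a cap, every $\up$ strictly inside this cap is the right endpoint of one of the $\nes_{\und{\nu}}(k_j)$ caps nested in it, so the number of $\up$'s at positions $\leq i$ is exactly $(j-1)-\nes_{\und{\nu}}(k_j)$. Moreover the label of the chamber directly inside this cap is bounded by that count: positions $i+1,i+2,\dots$ must each be the \emph{left} endpoint of a cap nested one level deeper (a right endpoint would cross, a ray cannot occur), giving a chain of nested caps with consecutive left endpoints ending at an inner cap with left endpoint $i'$; monotonicity and the defining constraint give that the label is $\leq l_{i'}(\la,\nu)\leq\#\{k\leq i':\nu_k=\up\}=\#\{k\leq i:\nu_k=\up\}$, the last equality because all intermediate positions are $\down$'s. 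This dissolves your ``distribution'' worry: the propagation loses nothing precisely because the intervening vertices carry no $\up$'s. Since $|C|$ is the sum over caps of the label of the chamber directly inside, summing yields
\begin{equation*}
|C|\;\leq\;\sum_{j}\bigl((j-1)-\nes_{\und{\nu}}(k_j)\bigr)\;\leq\;\sum_{j=1}^{n}(j-1)-\sum_{i}\nes_{\und{\nu}}(i)\;=\;\frac{n(n-1)}{2}-\sum_{i}\nes_{\und{\nu}}(i),
\end{equation*}
which is exactly the estimate your argument needed.
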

\begin{proof}
To get $p_{\la,\nu}^{(i)}$ we have to compute the possible $C \in D(\la,\nu)$. Look on the cap belonging to the $j$th $\up$ occurring in $\ove{\nu}$, which is numbered by $k_j$ if the caps are counted by their endpoints, and this cap having starting point $i$. For the label of this cap we get
\begin{align*} 
l_i(\la, \nu)=&\{k|\:k \leq i
\text{ and vertex $k$ of $\nu$ is labelled $\up$}\}\\
 &-\{k|\:k \leq i \text{ and vertex $k$ of $\la$ is labelled $\up$}\} \\
&\leq \{k|\:k \leq i
\text{ and vertex $k$ of $\nu$ is labelled $\up$}\} \\
\end{align*}
So we have to count the number of $\up$'s to the left edge of the cap. There are $j-1-\nes_{\und{\nu}}(k_j)$ $\up$'s to the left, i.e. all which are left to the $j$th $\up$ (at the end of the cap) without those lying inside the cap.

For this we get 
\begin{align*}
0 \leq |C| &\leq \sum_{\substack{j \in \{ 1,\ldots n\}\\ \text{ a cap ending on the $j$th $\up$}}}{(j-1-\nes_{\und{\nu}}(k_j))}\\
&\leq\frac{n(n-1)}{2}-\sum_i \nes_{\und{\nu}}(i).
\end{align*}
If a term occurs in the resolution, one has $p_{\la,\nu}^{(i)} >0$, i.e. there is a $C$ such that
$i=l(\la)-l(\nu)-2|C|$. Taking the upper and lower bound for $C$ obtained before, one gets
$$l(\la)-i-(n^2-n-2\sum_i \nes_{\und{\nu}}(i)) \leq l(\nu) \leq l(\la) -i$$
and the claim of the Lemma follows.
\end{proof}										
\subsection{Possible maps between projective resolutions}
Using what we have obtained so far about our projective resolutions, we deduce the following result:
\begin{Lemma}\label{Le:mapprojres}
For $\la$, $\mu \in K_m^n$ we have
\begin{align}
\hom^k(P_\bullet(\la), P_\bullet(\mu))=0 \text{ unless } l(\la) \leq l(\mu)+n^2+k \label{inequ}
\end{align}
\end{Lemma}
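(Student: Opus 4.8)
The plan is to reduce a nonzero class in $\hom^k(P_\bullet(\la),P_\bullet(\mu))$ to a single nonzero homogeneous map between indecomposable projective summands, and then to telescope the three length estimates already available so that the homological degree and all the nestedness data cancel, leaving exactly $n^2+k$.

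First I would unwind the definitions exactly as in the proof of Corollary \ref{Cor:dif}. A nonzero element of $\hom^k(P_\bullet(\la),P_\bullet(\mu))$ must contain, for some homological degree $i$, a nonzero degree-zero module map between a summand $P(\nu)\langle i\rangle$ of $P_i(\la)$ and a summand $P(\nu')\langle i-k\rangle$ of $P_{i-k}(\mu)$, where the summands are those recorded by Theorem \ref{sumocc}. Unshifting both factors, this is the same as a nonzero homogeneous map $P(\nu)\to P(\nu')$ of internal degree $k$; in particular $c_{\nu,\nu'}\neq 0$, and necessarily $i\geq k\geq 0$.

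Then I would write $l(\la)-l(\mu)$ as the telescoping sum $[l(\la)-l(\nu)]+[l(\nu)-l(\nu')]+[l(\nu')-l(\mu)]$ and bound each bracket from above using the three inputs. The lower bound of Lemma \ref{Le:termprojres}, applied to $P(\nu)$ occurring in $P_i(\la)$, gives $l(\la)-l(\nu)\leq i+n^2-n-2\sum_j\nes_{\und{\nu}}(j)$. The fact that $c_{\nu,\nu'}\neq 0$, together with Lemma \ref{Le:condmaps}, gives $l(\nu)-l(\nu')\leq n+2\sum_j\nes_{\und{\nu}}(j)$. The upper bound of Lemma \ref{Le:termprojres}, applied to $P(\nu')$ occurring in $P_{i-k}(\mu)$, gives $l(\nu')-l(\mu)\leq k-i$. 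Adding the three inequalities, the homological degree $i$, the constant $n$, and the nestedness sums $2\sum_j\nes_{\und{\nu}}(j)$ all cancel, and I am left with $l(\la)-l(\mu)\leq n^2+k$, which is the claim.

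The crucial point---and the only place where care is needed---is the bookkeeping that forces this cancellation: both applications of Lemma \ref{Le:termprojres} and the application of Lemma \ref{Le:condmaps} must refer to the same cup diagram $\und{\nu}$ of the source of the internal map, and the homological shift must be tracked so that the source lives in $P_i(\la)$ while the target lives in $P_{i-k}(\mu)$. I expect no genuine difficulty beyond these grading and indexing conventions; once the single-map reduction of the first step is in place, the estimate is a pure telescoping.
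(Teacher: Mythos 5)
Your proposal is correct and follows essentially the same route as the paper's own proof: both reduce a degree-$k$ chain map to a nonzero morphism between summands $P(\nu)$ of $P_i(\la)$ and $P(\nu')$ of $P_{i-k}(\mu)$, then combine the two bounds of Lemma \ref{Le:termprojres} with the bound of Lemma \ref{Le:condmaps} (all referred to the same cup diagram $\und{\nu}$) so that $i$, $n$, and the nestedness sums cancel. Your telescoping presentation is only a cosmetic rearrangement of the paper's chain of inequalities.
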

\begin{proof}
A map between $P_\bullet(\la)$ and $P_\bullet(\mu)[k]$ is in each component a morphism between projectives. Including the shift we look for morphisms between projectives $P(\nu)$ occurring in $P_i(\la)$ and projectives $P(\nu')$ in $P_{i-k}(\mu)$. 
By Lemma \ref{Le:termprojres} we know
$$l(\la)-i-\left(n^2-n-2\sum_i \nes_{\und{\nu}}(i)\right) \leq l(\nu)$$
and
$$l(\nu') \leq l(\mu)-(i-k).$$
Therefore, we have:
\begin{equation}\label{1stnu}
l(\la)-i-\left( n^2-n-2\sum_i \nes_{\und{\nu}}(i)\right)-\left(l(\mu)-(i-k)\right)\leq l(\nu)-l(\nu')
\end{equation}
Since we have a morphism between these projectives we get from Lemma \ref{Le:condmaps}
\begin{equation}\label{2ndnu}
l(\nu)-l(\nu') \leq n+ 2\sum_i \nes_{\und{\nu}}(i).\end{equation}
Combining the two inequalities \eqref{1stnu} and \eqref{2ndnu}, we obtain
\begin{equation*}
l(\la)-i-\left( n^2-n-2\sum_i \nes_{\und{\nu}}(i)\right)-\left(l(\mu)-(i-k)\right) \leq n+2\sum_i \nes_{\und{\nu}}(i)\end{equation*}
which gives
\begin{equation*}l(\la) \leq l(\mu)+n^2+k. \qedhere \end{equation*}
\end{proof}															
\begin{Remark}

In Lemma \ref{Le:posschmaps} we will deduce this Lemma again in a special case by also incorporating the grading shifts. The results in Section \ref{sec:mapproj} imply amongst other things that the inequality in \eqref{inequ} is sharp.
\end{Remark}

\chapter{The $\Ext$-algebra of $\bigoplus M(\la)$}\label{ch:Ext}																			
In this section we introduce a tool which is not necessarily needed for the computations below but which simplifies them. Knowing the dimension of the $\Ext$-spaces will 
be quite convenient later. Namely when we construct explicitly extensions it will 
save us from showing that we have constructed enough of them.
														
In \cite{Shel1988}, Shelton computes the dimensions of these spaces in the hermitian symmetric cases, to which our case belongs to. In general, there is no explicit formula, not even a candidate.

\section[Shelton's dimension formulas]{The proof of the dimension formulas by Shelton and results about embeddings}
\subsection{Notation}
For the sake of better readability we are comparing Shelton's notation to ours. As we do, he takes $\p$ to be a parabolic subalgebra in a complex semisimple Lie algebra $\fr{g}$. $\fr{u}$ is the nilradical of $\p$ and $\fr{m}$ its complement, i.e. $\p=\fr{u}\oplus\fr{m}$. In our notation $\fr{m}$ is denoted by $\fr{l}$. $W$ is the Weyl group of $\fr{g}$ and $W_\fr{m}$ the Weyl group of $\fr{m}$. This is notated by $W_\fr{l}$ in our notation. The set of shortest elements $W^\fr{l}$ Shelton denotes by $W^{\fr{m}}$. $\omega_0$ is the longest element in $W$ and $\omega_{\fr{m}}$ the longest element in $W_{\fr{m}}$.\\
For a fixed antidominant integral weight $\la \in \fr{h}^*$ the parabolic Verma module of highest weight $\omega_{\fr{m}}y\la-\rho$ is denoted by $N_y$. If we take instead of $\la$ antidominant a dominant weight $\la_0=\omega_0 \la$ then $N_y=M(\omega_{\fr{m}}y \omega_0 \cdot \la_0)$ in our notation. The labelling sets agree by the following observation:

\begin{Lemma}
For arbitrary $y, x \in W$ the following holds:
\begin{enumerate}
	\item $\omega_{\fr{m}}y \omega_0 \in W^{\fr{m}} \Leftrightarrow y \in W^{\fr{m}}$
	\item $\omega_{\fr{m}}y \omega_0 <\omega_{\fr{m}}x \omega_0 \Leftrightarrow y > x$ in the Bruhat order
\end{enumerate}
\end{Lemma}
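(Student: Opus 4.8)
The plan is to argue entirely through the root-theoretic description of minimal coset representatives, namely $w\in W^{\fr{m}}\Leftrightarrow w^{-1}(\alpha)\in\Phi^+$ for every $\alpha\in I$ (equivalently $\ell(s_\alpha w)>\ell(w)$), combined with two standard facts about longest elements: $\omega_0$ reverses the sign of every root, i.e. $\omega_0(\delta)\in\Phi^+\Leftrightarrow \delta\in\Phi^-$, and $\omega_{\fr{m}}$ permutes $I$ up to sign, i.e. $\omega_{\fr{m}}(\alpha)=-\alpha'$ for a bijection $\alpha\mapsto\alpha'$ of $I$. For part (1) I would put $w=\omega_{\fr{m}}y\omega_0$ and use $\omega_0^2=\omega_{\fr{m}}^2=\id$ to get $w^{-1}=\omega_0 y^{-1}\omega_{\fr{m}}$, so that for $\alpha\in I$ one computes $w^{-1}(\alpha)=-\omega_0\big(y^{-1}(\alpha')\big)$. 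Since $\omega_0$ flips signs, $w^{-1}(\alpha)\in\Phi^+\Leftrightarrow y^{-1}(\alpha')\in\Phi^+$; and as $\alpha\mapsto\alpha'$ runs over all of $I$, the condition ``$w^{-1}(\alpha)>0$ for all $\alpha\in I$'' is equivalent to ``$y^{-1}(\beta)>0$ for all $\beta\in I$''. This gives $\omega_{\fr{m}}y\omega_0\in W^{\fr{m}}\Leftrightarrow y\in W^{\fr{m}}$.

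For part (2) the first observation is that $\phi(y):=\omega_{\fr{m}}y\omega_0$ is an involution of $W$, which by part (1) restricts to a bijection of $W^{\fr{m}}$; it is on this labelling set $W^{\fr{m}}$ that the statement is needed and where I would prove it. I would factor $\phi$ into its two multiplications. Right multiplication by the longest element $\omega_0$ is an anti-automorphism of the Bruhat order on all of $W$, $a<b\Leftrightarrow a\omega_0>b\omega_0$, and since $(\omega_{\fr{m}}y\omega_0)\omega_0=\omega_{\fr{m}}y$ this already yields $\omega_{\fr{m}}y\omega_0<\omega_{\fr{m}}x\omega_0\Leftrightarrow \omega_{\fr{m}}y>\omega_{\fr{m}}x$. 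It then remains to show that left multiplication by $\omega_{\fr{m}}$ \emph{preserves} the Bruhat order on $W^{\fr{m}}$: for $y,x\in W^{\fr{m}}$, $\omega_{\fr{m}}y>\omega_{\fr{m}}x\Leftrightarrow y>x$.

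This order-preservation is the technical heart. I would prove it by fixing a reduced word $\omega_{\fr{m}}=s_{\beta_1}\cdots s_{\beta_k}$ with $\beta_i\in I$ and prepending the reflections one at a time. Because $y\in W^{\fr{m}}$ and $\omega_{\fr{m}}\in W_{\fr{m}}$, lengths add, $\ell(\omega_{\fr{m}}y)=\ell(\omega_{\fr{m}})+\ell(y)$, so the concatenation of reduced words for $\omega_{\fr{m}}$ and for $y$ is reduced; hence every suffix $s_{\beta_j}\cdots s_{\beta_k}\,y$ is reduced and prepending $s_{\beta_j}$ strictly increases length, and likewise for $x$. Thus at each stage both $a=s_{\beta_{j+1}}\cdots s_{\beta_k}y$ and $b=s_{\beta_{j+1}}\cdots s_{\beta_k}x$ have $s_{\beta_j}$ as a left ascent, so the standard lifting (or ``Z'') property of the Bruhat order --- if $\ell(sa)>\ell(a)$ and $\ell(sb)>\ell(b)$ for a simple reflection $s$, then $a\le b\Leftrightarrow sa\le sb$ --- lets me pass from $a\le b$ to $s_{\beta_j}a\le s_{\beta_j}b$. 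Iterating from $j=k$ down to $j=1$, with the base case using that $s_{\beta_k}$ is a left ascent of $y,x$ precisely because $y,x\in W^{\fr{m}}$, gives $y\le x\Leftrightarrow \omega_{\fr{m}}y\le\omega_{\fr{m}}x$, and strictness is preserved at every step.

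The main obstacle is exactly this sub-lemma: left multiplication by $\omega_{\fr{m}}$ does \emph{not} respect the Bruhat order on all of $W$ (prepending a reflection that is a descent can reverse comparisons), so the argument genuinely depends on restricting to $W^{\fr{m}}$, where the reducedness of $\omega_{\fr{m}}y$ forces every $\beta_i$ to be an ascent throughout the prepending process and thereby keeps the Z-property applicable at each step. Part (1), by contrast, I expect to be routine once the sign behaviour of $\omega_0$ and $\omega_{\fr{m}}$ is invoked, and its role is precisely the book-keeping guaranteeing that $\phi$ preserves the labelling set, so that the natural and correct form of part (2) is the one phrased over $W^{\fr{m}}$ --- the set indexing the modules $N_y$.
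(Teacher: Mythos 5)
Your proof is correct, and it follows the same overall skeleton as the paper's for part (2) --- namely, factor the map $y\mapsto\omega_{\fr{m}}y\omega_0$ into right multiplication by $\omega_0$, which reverses the Bruhat order on all of $W$, and left multiplication by $\omega_{\fr{m}}$, which preserves it on $W^{\fr{m}}$ --- and, like the paper, you in fact prove (2) only for $x,y\in W^{\fr{m}}$ (the paper's proof also opens with ``For $x,y\in W^{\fr{m}}$'' despite the word ``arbitrary'' in the statement; your observation that the claim genuinely fails on all of $W$, e.g.\ already for $W=S_3$ with $\omega_{\fr{m}}=s_1$, $y=s_1s_2$, $x=s_2s_1$, is a point the paper passes over silently). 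The differences are in how the ingredients are established. For (1), the paper works with lengths: it first shows that conjugation $s\mapsto\omega_{\fr{m}}s\omega_{\fr{m}}$ preserves simplicity inside $W_{\fr{m}}$, using $l(\omega_{\fr{m}}w)=l(\omega_{\fr{m}})-l(w)$, and then runs a chain of ascent/descent equivalences; you instead use the root-theoretic criterion $w\in W^{\fr{m}}\Leftrightarrow w^{-1}(\alpha)\in\Phi^+$ for all $\alpha\in I$ together with $\omega_0(\Phi^+)=\Phi^-$ and $\omega_{\fr{m}}(\alpha)=-\alpha'$. These are the same fact in two languages --- the paper's simplicity-preservation statement is exactly the reflection-group shadow of $\omega_{\fr{m}}$ permuting $\pm I$ --- so (1) is essentially parallel. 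The substantive difference is in (2): the paper simply cites as ``basic ingredients'', without proof, that for $x_1<x_2\in W^{\fr{m}}$ and $y\in W_{\fr{m}}$ one has $yx_1<yx_2$, whereas you actually prove this monotonicity, by combining length-additivity of the parabolic factorization ($l(wy)=l(w)+l(y)$ for $w\in W_{\fr{m}}$, $y\in W^{\fr{m}}$) with an induction along a reduced word of $\omega_{\fr{m}}$ using the lifting (Property Z) lemma. Your version is therefore self-contained where the paper outsources, and it isolates precisely where membership in $W^{\fr{m}}$ is used: it is what guarantees every prepended simple reflection is an ascent, keeping the lifting property applicable at each step.
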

\begin{proof}
The basic ingredients of the proof are the knowledge, that in $W$ multiplication with $\omega_0$ changes the order (i.e. $x<y \Leftrightarrow x\omega_0>y\omega_0$), that for $x \in W^{\fr{m}}$ and $y_1 < y_2 \in W_{\fr{m}}$ we have $y_1 x<y_2 x$ and that for $x_1 < x_2 \in W^{\fr{m}}$ and $y \in W_{\fr{m}}$ we have $yx_1 < yx_2$. 
\begin{enumerate}
\item
First we show that for a root $s \in W_{\fr{m}}$ we have: $s$ simple $\Leftrightarrow s'=\omega_{\fr{m}}s\omega_{\fr{m}}$ is simple.\\
Since $W_\fr{m}$ is generated by a subset of simple reflections out of $W$, a reflection lying in $W_\fr{m}$ is simple in $W_\fr{m}$ if and only if it is simple in $W$. So we can use the ordinary length function on $W_\fr{m}$. Since $\omega_{\fr{m}}$ is the longest element in $W_\fr{m}$ it must become shorter by multiplication with another element $w$. We obtain the following equality for the length function:
$$l(w \cdot \omega_{\fr{m}})=l(\omega_{\fr{m}} \cdot w)=l(\omega_{\fr{m}})-l(w)$$
in $W_\fr{m}$ and by this also in $W$.\\
Therefore, we get
$$l(s')=l((\omega_{\fr{m}}s)\omega_{\fr{m}})= l(\omega_{\fr{m}})-(l(\omega_{\fr{m}}s))=l(\omega_{\fr{m}})-(l(\omega_{\fr{m}})-l(s))=l(s).$$
By this we have $l(s)=l(s')$ and $s$ is simple in $W$ if and only if $s'$ is simple in $W$.
But then they are also simple in $W_\fr{m}$.\\
Now we can return to the statement needed to show:
\begin{align*}
y \in W^{\fr{m}} &\Leftrightarrow &sy&>y \: &&\forall \text{ simple } s \in W_{\fr{m}} \\
& \Leftrightarrow &\omega_{\fr{m}}sy&<\omega_{\fr{m}}y \:&&\forall \text{ simple } s \in W_{\fr{m}}\\
& \Leftrightarrow &s'\omega_{\fr{m}}y&<\omega_{\fr{m}}y \: &&\forall \text{ simple } s' \in W_{\fr{m}}\left( \text{take } s'\omega_{\fr{m}}=\omega_{\fr{m}}s \right)\\
& \Leftrightarrow &s'\omega_{\fr{m}}y\omega_0&>\omega_{\fr{m}}y\omega_0 \: &&\forall \text{ simple } s' \in W_{\fr{m}}\\
& \Leftrightarrow &\omega_{\fr{m}}y \omega_0 &\in W^{\fr{m}}
\end{align*} 
\item 
For $x,y \in W^{\fr{m}}$ we have
\begin{align*}
x<y &\Leftrightarrow &\omega_{\fr{m}}x&<\omega_{\fr{m}}y\\
&\Leftrightarrow &\omega_{\fr{m}}x\omega_0&>\omega_{\fr{m}}y\omega_0
\end{align*} 
\end{enumerate}
\end{proof}

Taking $\fr{g}=\fr{sl}_{m+n}$ and $\fr{m}=(\fr{gl}_m \oplus \fr{gl}_n)\cap \fr{sl}_{m+n}$ we get $W=S_{m+n}$ and $W_{\fr{m}}=S_{m} \times S_{n}$. 

We want to compute the bijective map 
\begin{align*}
&&\pi : W^{\fr{m}} & \to W^{\fr{m}}\\
&&y & \mapsto \omega_{\fr{m}} y \omega_0
\end{align*}
\begin{Lemma}
An element $w \in W^{\fr{m}}$ has the form $w=s_{n}\cdot \ldots \cdot s_{i_n} \cdot s_{n-1}\cdot \ldots \cdot s_{i_{n-1}} \cdot \ldots \cdot s_{1}\cdot \ldots \cdot s_{i_1}$ and is send to the element $w'=s_{n}\cdot \ldots \cdot s_{m+n-1-i_1} \cdot s_{n-1}\cdot \ldots \cdot s_{m+n-1-i_{2}} \cdot \ldots \cdot s_{1}\cdot \ldots \cdot s_{m+n-1-i_n}$ by $\pi$.
\end{Lemma}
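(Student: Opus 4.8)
The plan is to \emph{trade the algebra for geometry}: I would show that, read off on the level of $\up/\down$ diagrams, the map $\pi(w)=\omega_{\fr{m}}\,w\,\omega_0$ is nothing but the reflection $j\mapsto m+n-1-j$ of the number line, and then simply read the effect of this reflection on the normal form. This is cleaner than manipulating reduced words directly, and it fits the diagrammatic toolkit already set up in Section~\ref{basic}.

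The first step is to pin down the operation on diagrams. Using the natural (position-permuting) right action of $S_{m+n}$ on the colourings of the vertices $0,\dots,m+n-1$ that is depicted in Figure~\ref{fig:permuting}, the weight attached to $w$ is $\la_0.w$. Here $\omega_0$ acts as the full reversal of the number line, while $\omega_{\fr{m}}$, being the longest element of the stabiliser $S_n\times S_m$ of $\la_0$, fixes $\la_0$, i.e. $\la_0.\omega_{\fr{m}}=\la_0$. Since $\omega_{\fr{m}}\,w\,\omega_0\in W^{\fr{m}}$ by part (1) of the previous Lemma, it has a normal form, and its diagram is computed by
$$\la_0.(\omega_{\fr{m}}\,w\,\omega_0)=(\la_0.\omega_{\fr{m}}).(w\,\omega_0)=\la_0.(w\,\omega_0)=(\la_0.w).\omega_0,$$
so that the diagram of $\pi(w)$ is obtained from that of $w$ by reversing the number line.

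The second step is to identify the normal-form parameters with $\up$-positions. By the description of the shortest coset representatives in Section~\ref{basic}, the block $s_k s_{k+1}\cdots s_{i_k}$ carries the $k$th $\up$ (initially at vertex $k-1$) rightward to vertex $i_k$, and because $i_n>\cdots>i_1$ these moves do not interfere; hence $\{i_1<\cdots<i_n\}$ is exactly the increasing list of vertices labelled $\up$ in $\la_0.w$. Reversal sends this set to $\{\,m+n-1-i_n<\cdots<m+n-1-i_1\,\}$, i.e. the $k$th $\up$ of the reversed diagram sits at $m+n-1-i_{n+1-k}$. Reading this back through the same correspondence yields
$$w'=s_n\cdots s_{m+n-1-i_1}\cdot s_{n-1}\cdots s_{m+n-1-i_2}\cdots s_1\cdots s_{m+n-1-i_n},$$
which is precisely the asserted form.

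The part that most needs care is the bookkeeping of conventions, so I would state explicitly which side $\omega_{\fr{m}}$ and $\omega_0$ act on and verify $\la_0.\omega_{\fr{m}}=\la_0$ and that $\cdot\,\omega_0$ is the reversal; this is what makes the two-sided multiplication collapse in the display above. I would also record the degenerate cases (an empty block exactly when $i_k=k-1$, which is forced by $i_1\ge 0$) and check that $w'$ is a \emph{valid} normal form, i.e. $0\le m+n-1-i_n<\cdots<m+n-1-i_1\le m+n-1$; these inequalities follow at once from $0\le i_1<\cdots<i_n\le m+n-1$. A purely word-combinatorial alternative is available, rewriting the reduced word via $\omega_0 s_i\omega_0=s_{m+n-i}$ together with the conjugation action of $\omega_{\fr{m}}$ on the simple reflections of $W_{\fr{m}}$, but this forces a tedious reordering of the resulting word back into normal form, which is why I would favour the reflection-of-diagrams argument.
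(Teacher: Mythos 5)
Your proof is correct, but it takes a genuinely different route from the paper's. The paper argues by induction on $l(w)$: writing $y=xs_k$, it reduces the inductive step to the identity $s_k\omega_0=\omega_0 s_{m+n-k}$, which it verifies by an explicit two-line computation with permutations, and then appends the resulting simple reflection to the normal form of $\pi(x)$. You instead transport everything through the weight dictionary: since the weights carry a right action of $S_{m+n}$ (as in Figure \ref{fig:permuting}), with $\la_0.\omega_{\fr{m}}=\la_0$ because $\omega_{\fr{m}}$ lies in the stabiliser of $\la_0$, and with $\omega_0$ acting as reversal of the number line, the triple product collapses to $\la_0.(\omega_{\fr{m}}w\omega_0)=(\la_0.w).\omega_0$; the claim then follows from the observation that the parameters $i_1<\cdots<i_n$ of the normal form are exactly the positions of the $\up$'s in $\la_0.w$, combined with part (1) of the preceding lemma (so $\pi(w)\in W^{\fr{m}}$) and the injectivity of $w\mapsto\la_0.w$ on $W^{\fr{m}}$ from Section \ref{basic}. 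Both arguments take the existence of the normal form itself as external input (the paper cites Stroppel's Appendix A; you read it off from Section \ref{basic}), so you are on equal footing there. What the paper's induction buys is self-containedness at the level of reduced words, needing only the conjugation identity for $\omega_0$; what your argument buys is brevity and a conceptual explanation of the formula --- $\pi$ is the left-right mirror of the diagram, which is precisely why the $\up$ at position $i_k$ lands at position $m+n-1-i_k$ and the blocks are read in reverse order. The verifications you flag (the right-action axiom, $\la_0.\omega_{\fr{m}}=\la_0$, $\omega_0$ acting as reversal, and the validity $0\leq m+n-1-i_n<\cdots<m+n-1-i_1\leq m+n-1$ of the resulting normal form) are exactly the right ones, and all of them hold in the paper's conventions, so there is no gap.
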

\begin{proof}
For the first statement just look at \cite[Appendix A]{Stro2005}.

The proof goes by induction on the length of $w$. We start with $w=e$, i.e. $i_j=j-1$ and see that $\pi(e)=\omega_{\fr{m}} \omega_0$ is the longest element in $W^{\fr{m}}$. For $w'$ from above we get $w'=s_{n}\cdot \ldots \cdot s_{m+n-1} \cdot s_{n-1}\cdot \ldots \cdot s_{m+n-2} \cdot \ldots \cdot s_{1}\cdot \ldots \cdot s_{m+n-n}$, so the hypothesis holds.

Assuming that the Lemma is true for all elements of length $l$, we have to show that it holds for an element of length $l+1$. Take $y=x s_k=s_{n}\cdot \ldots \cdot s_{i_n} \cdot \ldots \cdot s_{l}\cdot \ldots \cdot s_{i_{j}+1} \cdot \ldots \cdot s_{1}\cdot \ldots \cdot s_{i_1} \in W^{\fr{m}}$ with $l(x)=l$ and $l(y)=l+1$ and $i_j+1=k$. We want to show that $\pi(y)=s_{n}\cdot \ldots \cdot s_{m+n-1-i_1} \cdot \ldots \cdot s_{n-l+1}\cdot \ldots \cdot s_{m+n-1-{i_{j}+1}} \cdot \ldots \cdot s_{1}\cdot \ldots \cdot s_{m+n-1-i_n}$ which is the same as $\pi(x)s_{m+n-1-k+1}$.

We see that $\pi(y)=\omega_{\fr{m}}xs_k\omega_0=\omega_{\fr{m}}x\omega_0s'$. So we only have to show that $s_k \omega_0=\omega_0 s_{m+n-k}$. \\
For this, we write $\omega_0$ and $s_k$ in the matrix notation. We get
\begin{align*}
s_k \omega_0=
&\left(
\begin{matrix} 1 & 2& \ldots& k & k+1 & \ldots & m+n\\ 
							1& 2 &\ldots& k+1& k& \ldots & m 
\end{matrix}
\right)
\\
\circ &\left(
\begin{matrix} 1 & 2& \ldots& m+n-k & m+n-k+1 & \ldots & m+n\\ 
							m+n& m+n-1 &\ldots& k+1& k& \ldots & 1 
\end{matrix}
\right)\\
=&\left(
\begin{matrix} 1 & 2& \ldots& m+n-k & m+n-k+1 & \ldots & m+n\\ 
							m+n& m+n-1 &\ldots& k& k+1& \ldots & 1 
\end{matrix}
\right)\\
=&\left(
\begin{matrix} 1 & 2& \ldots& m+n-k & m+n-k+1 & \ldots & m+n\\ 
							m+n& m+n-1 &\ldots& k+1& k& \ldots & 1 
\end{matrix}
\right)\\
\circ &\left(
\begin{matrix} 1 & 2& \ldots& m+n-k &m+n-k+1 & \ldots & m+n\\ 
							1& 2 &\ldots& m+n-k+1& m+n-k& \ldots & m 
\end{matrix}
\right)
\\
=&\omega_0 s_{m+n-k}
\end{align*}
\end{proof}
This finished the explanation of the passage from the Shelton notation to ours.

\subsection{Dimension formulas}
\begin{Notation}
For $x,y \in W^{\fr{m}}$ write $E^k(x,y)=\dim \Ext^k(M(x.\la_0), M(y.\la_0))$.
\end{Notation}
Using our notation \cite[Theorem 1.3]{Shel1988} translates into the following statement:
\begin{Theorem}[Dimension of $\Ext$-groups]\label{Th:dim}
Take $\fr{g}$ and $\p$ as above. Let $x,y \in W^{\fr{l}}$ and let $s$  be a simple reflection with $x>xs$ and $xs \in W^{\fr{l}}$. We can compute  $E^k(x,y)$ by the formulas:
\begin{align}
1. \ &E^k(x,y)=&&0  &&\forall \ k \text{ unless } y<x \\
2. \ &E^k(x,x)=&&\begin{cases} 1 &\text{  for } k=0\\
																			 0 &\text{ otherwise} \end{cases} \\
\intertext{For $y<x$ we get in addition the following recursion formulas:}
3.\ & E^k(x,y) =&&E^k(xs,ys) &&\text{ if }y>ys \text{ and }ys \in W^{\fr{l}} \label{ca:numcas1}\\ 
4.\ & E^k(x,y) =&&E^{k-1}(xs,y) &&\text{ if }ys \notin W^{\fr{l}}\label{ca:numcas2} \\
5.\ & E^k(x,y) =&&E^{k-1}(xs,y) +E^k(xs,y) &&\text{ if }ys>y\text{ but }xs \not> ys \label{ca:numcas3}\\
6.\ &E^k(x,y) =&&E^{k-1}(xs,y) - E^{k+1}(xs,y)\notag \\ 
	&&&+E^k(xs,ys)
	&&\text{ if }x>xs>ys>y	\label{ca:numcas4}
	 \end{align}

\end{Theorem}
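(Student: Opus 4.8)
The tool I would build the proof around is the \emph{wall-crossing functor} $\theta_s$ on the principal block $\O^{\p}_0$, obtained (in the notation of the Remark following Theorem \ref{vf}) as the composite $\phi_{\al}\psi_{\al}$ of translating a weight onto the $s$-wall and back. The three properties I need are that $\theta_s$ is exact, that it is biadjoint to itself, so that
\[
\Ext^k(\theta_s A, B)\cong \Ext^k(A,\theta_s B)
\]
for all $A,B\in\O^{\p}_0$ and all $k$, and that on parabolic Verma modules it produces short exact sequences with Verma subquotients. Concretely, whenever $w,ws\in W^{\fr{l}}$ with $w<ws$ one has $\theta_s M(ws.\la_0)\cong\theta_s M(w.\la_0)$ together with a short exact sequence $0\to M(ws.\la_0)\to\theta_s M(w.\la_0)\to M(w.\la_0)\to 0$, while the degenerate cases where $ws\notin W^{\fr{l}}$ produce instead a single Verma layer carrying a cohomological degree shift. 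Establishing these sequences in the parabolic setting (as opposed to the regular category $\O$) is the point where the embedding and $\hom$-space information -- the Delorme--Schmid type statement advertised in the introduction -- has to be fed in, and I would isolate this as a separate lemma.

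Assuming these sequences, the skeleton of the argument is a double long-exact-sequence chase. Since $x>xs$ and $xs\in W^{\fr{l}}$, I apply the sequence above with $w=xs$ to get $0\to M(x.\la_0)\to\theta_s M(xs.\la_0)\to M(xs.\la_0)\to 0$, and feed it into $\Ext^{\bullet}(-,M(y.\la_0))$. This expresses $E^k(x,y)$ through the groups $\Ext^k(M(xs.\la_0),M(y.\la_0))$ and $\Ext^k(\theta_s M(xs.\la_0),M(y.\la_0))$, and the latter is rewritten by adjunction as $\Ext^k(M(xs.\la_0),\theta_s M(y.\la_0))$. The behaviour of $\theta_s M(y.\la_0)$ depends precisely on the position of $y$ relative to the $s$-wall, and the four combinatorial hypotheses in cases 3--6 are exactly the four distinct shapes this module can take. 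Cases 1 and 2 are independent of all this: they are the exceptional-sequence property $\Ext^k(M(\la),M(\mu))=0$ unless $\la\le\mu$ together with $\hom(M(\la),M(\la))=\C$, both recalled in the introduction.

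For each of cases 3--6 I would then substitute the matching short exact sequence for $\theta_s M(y.\la_0)$ into $\Ext^{\bullet}(M(xs.\la_0),-)$ and compare the two long exact sequences. The vanishing coming from case 1 (many of the groups $\Ext^{\bullet}(M(xs.\la_0),M(\nu.\la_0))$ are zero because the relevant Bruhat inequality fails) collapses most of the terms, and in cases 3, 4 and 5 the surviving portion of the sequence splits into short pieces, yielding directly the single-term identity $E^k(x,y)=E^k(xs,ys)$, the shifted identity $E^k(x,y)=E^{k-1}(xs,y)$, and the two-term sum $E^k(x,y)=E^{k-1}(xs,y)+E^k(xs,y)$.

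The hard case, and the one I expect to cost the most work, is case 6 ($x>xs>ys>y$), where the alternating combination $E^{k-1}(xs,y)-E^{k+1}(xs,y)+E^k(xs,ys)$ shows that the relevant long exact sequence does \emph{not} break into short exact pieces: a genuine connecting homomorphism survives and contributes with a sign. To pin this down I would track the connecting maps explicitly rather than argue by pure vanishing -- either by producing an auxiliary complex whose Euler characteristic gives the alternating sum, or by an extra vanishing and dimension input that identifies the image of the connecting map. A secondary obstacle, logically prior to all of this, is the careful verification that in the degenerate direction $ys\notin W^{\fr{l}}$ the functor $\theta_s$ really does produce the clean degree shift used in case 4, since the would-be Verma layer is then labelled by a non-$\p$-dominant weight. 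As a consistency check I would verify the same recursion directly against the explicit linear resolutions of Theorem \ref{sumocc}, whose terms are governed by the Kazhdan--Lusztig polynomials $p_{\la,\mu}$ of Definition \ref{recurse}; the case split in that recursion (deletion of vertices $i,i+1$ versus interchange of their labels) mirrors cases 3--6 and gives an independent route to the same formulas.
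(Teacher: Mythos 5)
Before comparing arguments, note what the paper's own ``proof'' of Theorem \ref{Th:dim} actually is: nothing is proved there at all. The theorem is quoted from \cite[Theorem 1.3]{Shel1988}, and the only work done in the thesis is the notational dictionary of the preceding subsection (the two lemmas about $\pi(y)=\omega_{\fr{m}}y\omega_0$, showing that it preserves $W^{\fr{m}}$ and reverses the Bruhat order), which converts Shelton's antidominant indexing into the dominant one used here. So your proposal is by construction a different route from the paper: what you are sketching is essentially Shelton's original argument, which does indeed run on wall-crossing functors, their self-adjointness, and interlocking long exact sequences. As a plan it is the right plan; the question is whether the sketch survives scrutiny, and there it has two errors and one genuine hole.

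The errors. First, your short exact sequence is oriented backwards for the conventions in force here. With $\la_0$ dominant (it is the maximal weight of the block, and $M(\la_0)=P(\la_0)$), the Verma module of the \emph{shorter} element is the submodule: for $x>xs$ the correct sequence is $0\to M(\la_0.xs)\to\theta_sM(\la_0.xs)\to M(\la_0.x)\to 0$; check this on $\fr{sl}_2$, where it reads $0\to M(0)\to P(s\cdot 0)\to M(s\cdot 0)\to 0$. This is not cosmetic, because the entire content of the theorem is degree bookkeeping: with your orientation the connecting homomorphisms shift the wrong way, and in case 4 you would derive $E^k(x,y)=E^{k+1}(xs,y)$ instead of $E^{k-1}(xs,y)$. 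Second, your picture of the degenerate case $ys\notin W^{\fr{l}}$ (``a single Verma layer carrying a cohomological degree shift'') is wrong. In that case $ys=ty$ for a simple reflection $t\in W_{\fr{l}}$, so $s$ fixes the diagrammatic weight $\la_0.y$, i.e. vertices $i$ and $i+1$ carry equal symbols; the cap of the matching joining these two vertices then admits no orientation, so already translation to the wall kills the module: $\theta_sM(\la_0.y)=0$. This is exactly the degenerate instance of Theorem \ref{vf}, where the indexing set of the filtration is empty. The clean shift of case 4 is produced not by a shifted Verma layer but by the connecting isomorphism in the long exact sequence once the middle terms vanish.

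The hole is case 6, and to a lesser extent cases 3 and 5. Even in case 5, ``the surviving portion splits into short pieces'' is not automatic from vanishing alone: you need the restriction map $\Ext^k(\theta_sM(\la_0.xs),M(\la_0.y))\to\Ext^k(M(\la_0.xs),M(\la_0.y))$ to vanish, which follows from the standard---but necessary to state---fact that the unit--counit composite $M\to\theta_sM\to M$ is zero on Verma modules. In case 6 no such soft argument exists: the sequence genuinely fails to split, the sign in $E^{k-1}(xs,y)-E^{k+1}(xs,y)+E^k(xs,ys)$ records a surviving connecting map, and determining its rank is precisely the inductive heart of Shelton's proof; ``produce an auxiliary complex or an extra vanishing input'' is a placeholder where that argument has to go. As it stands, your proposal is a correct outline of the known proof with its bookkeeping reversed and its hardest step deferred; to turn it into a proof you must either carry out Shelton's induction for case 6 or do what the thesis does and simply cite him.
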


\section{Some $\Ext$-vanishing}
Later on we need some information about the $\Ext$-spaces in general and do not want to compute their dimensions in all cases. Therefore, we reprove the Delorme-Schmid Theorem (cf. \cite{Delo77}, \cite{schm81}) in our situation:
\begin{Lemma} \label{Le:Extrest}
For $\la, \mu \in \La_m^n$ we have
\begin{align*}
\Ext^k(M(\la),M(\mu))=0 \ \forall \ k >l(\la) - l(\mu).
\end{align*}
\end{Lemma}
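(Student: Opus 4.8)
The plan is to read off the vanishing directly from the explicit \emph{linear} projective resolution of $M(\la)$ furnished by Theorem \ref{sumocc}, rather than constructing the extension classes by hand. Computing the derived functor via this resolution, one has $\Ext^k(M(\la),M(\mu)) = H^k(\hom(P_\bullet(\la),M(\mu)))$ as recalled in Section \ref{sec:Exthom}. Hence it suffices to prove the stronger statement that the cochain group $\hom(P_k(\la),M(\mu))$ itself already vanishes whenever $k > l(\la)-l(\mu)$; the cohomology at position $k$ then vanishes a fortiori.

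By Theorem \ref{sumocc} we have $P_k(\la) = \bigoplus_{\nu \in \La_m^n} p_{\la,\nu}^{(k)} P(\nu)\langle k\rangle$, so $\hom(P_k(\la),M(\mu))$ can be nonzero only if there is a weight $\nu$ for which \emph{both} $p_{\la,\nu}^{(k)} \neq 0$ (so that $P(\nu)$ genuinely occurs in homological degree $k$) and $\hom(P(\nu),M(\mu)) \neq 0$. I would then bound $l(\nu)$ from two sides. First, by the upper estimate of Lemma \ref{Le:termprojres} --- equivalently, since the exponent $i$ of a monomial $q^i$ of $p_{\la,\nu}(q)=q^{l(\la)-l(\nu)}\sum_C q^{-2|C|}$ contributing to $p_{\la,\nu}^{(k)}$ satisfies $k = l(\la)-l(\nu)-2|C|$ with $|C|\ge 0$ --- occurrence of $P(\nu)$ in $P_k(\la)$ forces $l(\nu) \le l(\la)-k$, i.e. $k \le l(\la)-l(\nu)$.

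Second, since $P(\nu)$ is the projective cover of $L(\nu)$ one has $\dim\hom(P(\nu),M(\mu)) = [M(\mu):L(\nu)]$, so $\hom(P(\nu),M(\mu))\neq 0$ forces $L(\nu)$ to be a composition factor of $M(\mu)$. By \eqref{MinL} this requires $d_{\nu,\mu}(q)\neq 0$, i.e. $\und{\nu}\mu$ is oriented, and by \cite[Lemma 2.3]{Brun12008} this gives $\nu \le \mu$ in the Bruhat order, hence $l(\nu) \ge l(\mu)$ (recall from the proof of Lemma \ref{Le:condmaps} that in our conventions Bruhat-smaller weights have the larger length). Combining the two bounds yields $k \le l(\la)-l(\nu) \le l(\la)-l(\mu)$, which contradicts $k > l(\la)-l(\mu)$. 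Therefore $\hom(P_k(\la),M(\mu))=0$ in that range, and the Lemma follows.

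The argument is short precisely because the real work has been front-loaded into the construction of the linear resolution (Theorem \ref{Th:constrproj}, Theorem \ref{sumocc}) and the length estimates of Lemma \ref{Le:termprojres}. The only points that genuinely require care are bookkeeping: getting the direction of the Bruhat-order/length correspondence correct, confirming the elementary identity $\dim\hom(P(\nu),M(\mu))=[M(\mu):L(\nu)]$, and noting that the internal shift $\langle k\rangle$ does not affect vanishing of the underlying ungraded hom-space. I do not anticipate a genuine obstacle; the substantive content lives entirely in results already established above.
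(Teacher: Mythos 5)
Your proof is correct, and it takes a genuinely different route from the paper's. Both arguments ultimately rest on the same two numerical inputs: the upper bound $l(\nu)\leq l(\la)-k$ for a summand $P(\nu)$ of $P_k(\la)$ (Lemma \ref{Le:termprojres}, equivalently the shape of $p_{\la,\nu}(q)$), and the fact that $d_{\nu,\mu}\neq 0$ forces $l(\nu)\geq l(\mu)$ (Lemma \ref{Le:condmaps}). Where you differ is in the model used for $\Ext$. The paper works with $H^k(\hom(P_\bullet(\la),P_\bullet(\mu)))$, i.e.\ chain maps between the two resolutions modulo homotopy, since that is the framework used throughout for the algebra structure; consequently it must show that a given chain map $P_\bullet(\la)\to P_\bullet(\mu)[k]$ is nullhomotopic, which requires truncating the complex, observing that the composite $P(\nu)\to P(\mu)\to M(\mu)$ vanishes, and invoking the comparison theorem (two lifts of the same map agree up to homotopy) to manufacture the homotopy. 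You instead use the plain derived-functor model $H^k(\hom(P_\bullet(\la),M(\mu)))$ and prove the stronger statement that the cochain group $\hom(P_k(\la),M(\mu))$ is already zero in the relevant degrees, via $\dim\hom(P(\nu),M(\mu))=[M(\mu):L(\nu)]$ (valid here since $\End(L(\nu))=\C$) together with \eqref{MinL}; no homotopy argument is needed at all. Your version is shorter and more elementary for the pure vanishing statement, at the cost of leaving the two-resolution picture that the paper needs elsewhere for products of extension classes; the paper's version, while longer, stays inside that uniform framework. Both correctly handle the sign of the length/Bruhat correspondence ($\nu\leq\mu$ gives $l(\nu)\geq l(\mu)$ in these conventions), which is the one place a careless reading would break either argument.
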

\begin{proof}
Assume we have a chain map $P_\bullet(\la) \to P_\bullet(\mu)[k]$ with $k >l(\la) - l(\mu)$. We have to show that it is homotopic to zero.\\
On the $k$th component we would have a map $f_k:P_k(\la) \to P_0(\mu)=P(\mu)$. For $P(\nu)$ occurring as a direct summand in $P_k(\la)$ we know by Lemma \ref{Le:termprojres} that $$l(\nu) \leq l(\la) -k <l(\la)-(l(\la)-l(\mu))=l(\mu).$$ 
Hence by Lemma \ref{Le:condmaps} $L(\nu)$ does not occur in $M(\mu)$ and therefore the composition $P(\nu) \to P(\mu) \to M(\mu)$ is zero. We denote by $P^T_\bullet(\la)$ the truncated complex with 
\begin{align*}
P^T_i(\la) = \begin{cases} 0 & i<0\\ P_{i+k}(\la) &i \geq 0 \end{cases}
\end{align*}
which is a projective resolution of $\operatorname{im} d_k$. The truncation of the map $f_\bullet$ delivers a map $$\widetilde{f}_\bullet: P^T_\bullet(\la) \to P_\bullet(\mu)$$
Therefore, we are in the situation
\begin{displaymath}
\xymatrix{ 0 \ar[r] &\cdots \ar[r] \ar[d] &P^T_0(\la)\ar[d]^{\widetilde{f}_0} \ar[r] & \operatorname{im} d_k \ar[r] \ar[d]^{0} \ar[r] &0\\
 0 \ar[r] &\cdots \ar[r]  &P(\mu) \ar[r] & M(\mu) \ar[r]&0}
\end{displaymath}
and $\widetilde{f}$ is a lift of the zero map. Since the zero map between the complexes is also a lift of the zero map and two lifts of a map are equal up to homotopy (cf. \cite[Theorem III.1.3]{Gelf88}) the map $\widetilde{f}$ is nullhomotopic by a homotopy $H:P_\bullet^T(\la) \to P_\bullet(\mu)[-1]$. This extends to a homotopy $H:P_\bullet(\la) \to P_\bullet(\mu)[-1]$ by defining it to be zero on the other terms. Therefore, it is obvious that the map $f$ becomes nullhomotopic which proves the Lemma.
\end{proof}
\begin{Remark}
The result of Lemma \ref{Le:Extrest} could also be deduced from Shelton's formulas or from the explicit formulas Biagioli computes in \cite[Theorem 3.4.]{Biag2004}.
\end{Remark}

\chapter{Special cases}\label{ch:spec}
Now we want to compute the $\Ext$-algebra in two cases. The first one (Section \ref{sec:k1}), when $n=1$ and $m=N$, is easy to handle but it gives a good idea of what we have to do. This algebra also becomes important while studying Floer cohomology (cf. \cite{khovanov2002quivers}). In the second case (Section \ref{sec:Mod2}) we take $n=2$ and $m=N-1$. 
\section{Modules in $K_{N}^1-\mod$}\label{sec:k1}

We label our reflections in $W^\fr{l}$ by $s_1 \cdots s_j$ and denote the corresponding weight by $(j)=\la_0.s_1s_2\dots s_j$.
\subsection*{q-decomposition numbers in $K_{N}^1-\mod$}
The first step is to compute the q-decomposition numbers and write down projectives in terms of Vermas (Table \ref{proverm1}) and Vermas in terms of simple modules (Table \ref{vermsimp1}). Since there is at most one cup, the decomposition numbers are either $0, 1$ or $q$. Combining the two tables, we write down projectives in terms of simples (Table \ref{prosimp1}).

\begin{table}[ht]
\footnotesize
\caption{\label{proverm1}$\mu$ such that $d_{\la,\mu}=\_$}
	\centering		\begin{tabular}{|lc|l|l|}
		\toprule
		$\la=(j)$ &  &$1$&$q$ \\ \hline
		$j\neq 0$ &
		\begin{tikzpicture}[scale=1]
		\draw (0,0) node[above=-1.55pt] {$\cdots$};
		\draw (0.4,0) node[above=-1.55pt] {$\down$};
		\draw (0.8, 0) node[above=-1.55pt] {$\up$};
		\draw (1.2,0) node[above=-1.55pt] {$\cdots$};
		\draw (0.4,0) arc (180:360:0.2);
		\end{tikzpicture}
														
		 &$(j)$&$(j-1)$\\ \hline

		$j=0$ &
		\begin{tikzpicture}[scale=1]
		\draw (0,0) node[above=-1.55pt] {$\up$};
		\draw (0.4,0) node[above=-1.55pt] {$\cdots$};
		\draw (0.0, -0.5)--(0.0,0);
		\end{tikzpicture}
														
		 & $(0)$ & \\ \hline
		 
		\end{tabular}
\end{table}

\begin{table}
\footnotesize
\caption{\label{vermsimp1}$\la$ such that $d_{\la,\mu}=\_$}
	\centering		\begin{tabular}{|lc|l|l|}
		\toprule
		$\mu=(j)$ &  &$1$&$q$ \\ \hline
		$j <N$ &
		\begin{tikzpicture}
		\draw (0,0) node[above=-1.55pt] {$\cdots$};
		\draw (0.4,0) node[above=-1.55pt] {$\down$};
		\draw (0.8, 0) node[above=-1.55pt] {$\up$};
		\draw (1.2,0) node[above=-1.55pt] {$\down$};
		\draw (1.6,0) node[above=-1.55pt] {$\cdots$};		
				\end{tikzpicture}
														
		 &$(j)$&$(j+1)$ \\ \hline
		 
		$j=N$&
		\begin{tikzpicture}
		\draw (0,0) node[above=-1.55pt] {$\cdots$};
		\draw (0.4,0) node[above=-1.55pt] {$\down$};
		\draw (0.8, 0) node[above=-1.55pt] {$\up$};
				\end{tikzpicture}
														
		 &$(N)$& \\ \hline

		\end{tabular}
\end{table}

\begin{table}
\footnotesize
\caption{\label{prosimp1} Filtration of projective module $P(\la)$ by simple modules, same colour belonging to the same Verma module}
	\centering		\begin{tabular}{|l|c|}
		\toprule
		$\la=(j)$ & $P(j)$\\
		\hline
		$\begin{array}{l} j \neq 0 \\ j \neq N\end{array}$&
		$\begin{array}{l}
\hspace*{1cm}\blu L(j)\\
\blu L(j+1)   \re L(j-1) \\
\hspace*{1cm} \re L(j) 
\end{array}$\\ \hline

		$j=0$&
		$\begin{array}{l}
\blu L(0)\\
\blu L(1)   
\end{array}$\\ \hline

		$j=N$&
		$\begin{array}{l}
\blu L(N)\\
 \re L(N-1) \\
\re L(N) 
\end{array}$\\ \hline

		\end{tabular}
\end{table}

\subsection*{Morphisms between projective modules}\label{sec:mapproj1}
Recalling from Section \ref{End} the results about degree one morphisms, we compute their compositions. For this, take $\la$ and $\mu$ as in Section \ref{End}, so we have $\la>\mu$ or $\mu>\la$. In the first case the morphism $P(\la) \to P(\mu)$ is given by the vector $\und{\la}\la\ove{\mu}$, in the second case by $\und{\la}\mu\ove{\mu}$. Composing such maps just means to multiply the vectors (viewed as elements in the algebra).
From Table \ref{prosimp1} we see that for $\nu \neq \la$ there is no degree two morphism $P(\la) \to P(\nu)$ and so each composition $P(\la) \to P(\mu) \to P(\nu)$ is zero.\\
Be careful that one has to read the diagrams form bottom to top, so multiplying $a\la b$ with $b^*\mu c$, a is the lowest cap diagram, above one has $\la$, then $b$ then $b^*$ on top, etc. 
\begin{table}
\footnotesize
\caption{\label{tab:mapprojdownup1}$P(\la) \to P(\mu) \to P(\la)$ with $\la >\mu$}
	\centering		\begin{tabular}{|m{1.7cm}|m{1.5cm}|m{5.3cm}| m{2cm}|}
		\toprule
		 
				$\la$ & $\mu$ &$\und{\la}\la\ove{\mu} \cdot \und{\mu}\la\ove{\la}=\und{\la}\gamma\ove{\la}$&$\gamma$\\ \hline
		$\begin{array}{l} (j)\\j \geq 1\end{array}$ &$(j+1)$&
		\vspace{0.4cm}
		\begin{tikzpicture}
		\begin{scope}
		\draw (0,0) node[above=-1.55pt] {$\cdots$};
		\draw (0.4,0) node[above=-1.55pt] {$\down$};
		\draw (0.8, 0) node[above=-1.55pt] {$\up$};
		\draw (1.2,0) node[above=-1.55pt] {$\down$};
		\draw (1.6,0) node[above=-1.55pt] {$\cdots$};
  	\draw (0.4,0.3) arc (180:0:0.2);
		\draw (1.2,0.3) -- (1.2,0.6);
		\draw (0.8,0) arc (180:360:0.2);
		\draw (0.4,0) -- (0.4,-0.3);
		\end{scope}
		
				\begin{scope}[yshift=-1cm]
		\draw (0,0) node[above=-1.55pt] {$\cdots$};
		\draw (0.4,0) node[above=-1.55pt] {$\down$};
		\draw (0.8, 0) node[above=-1.55pt] {$\up$};
		\draw (1.2,0) node[above=-1.55pt] {$\down$};
		\draw (1.6,0) node[above=-1.55pt] {$\cdots$};
		\draw (0.8,0.3) arc (180:0:0.2);
		\draw (0.4,0.3) -- (0.4,0.6);
		\draw (0.4,0) arc (180:360:0.2);
		\draw (1.2,0) -- (1.2,-0.3);
		\end{scope}
		
		\draw[->] (2,-0.4)--(2.3,-0.4);
		
		\begin{scope}[yshift=-0.55cm, xshift=2.9cm]
		\draw (0,0) node[above=-1.55pt] {$\cdots$};
		\draw (0.4,0) node[above=-1.55pt] {$\up$};
		\draw (0.8, 0) node[above=-1.55pt] {$\down$};
		\draw (1.2,0) node[above=-1.55pt] {$\down$};
		\draw (1.6,0) node[above=-1.55pt] {$\cdots$};
		\draw (0.4,0.3) arc (180:0:0.2);
		\draw (1.2,0.3) -- (1.2,0.6);
		\draw (0.4,0) arc (180:360:0.2);
		\draw (1.2,0) -- (1.2,-0.3);
		\end{scope}
				\end{tikzpicture}
					 &$(j-1)$\\ \hline

		$(0)$ &$(1)$&
		\vspace{0.4cm}
		\begin{tikzpicture}
		\begin{scope}
		\draw (0,0) node[above=-1.55pt] {$\up$};
		\draw (0.4,0) node[above=-1.55pt] {$\down$};
		\draw (0.8, 0) node[above=-1.55pt] {$\cdots$};
		\draw (0,0.3) -- (0,0.6);
		\draw (0.4,0.3) -- (0.4,0.6);
		\draw (0,0) arc (180:360:0.2);
		\end{scope}
		
				\begin{scope}[yshift=-1cm]
		\draw (0,0) node[above=-1.55pt] {$\up$};
		\draw (0.4,0) node[above=-1.55pt] {$\down$};
		\draw (0.8, 0) node[above=-1.55pt] {$\cdots$};
		\draw (0,0.3) arc(180:0:0.2);
		\draw (0,0) -- (0,-0.3);
		\draw (0.4,0) -- (0.4,-0.3);
				\end{scope}
		
		\draw[->] (2,-0.4)--(2.3,-0.4);
		
		\begin{scope}[yshift=-0.55cm, xshift=2.9cm]
		\normalsize
		\draw (0,0) node[above=-1.55pt] {$0$};
		\end{scope}
				\end{tikzpicture}

		 &\hfill\\ \hline	 		 
		\end{tabular}
\end{table}

\begin{table}
\footnotesize
\caption{\label{tab:mapprojupdown1}$P(\la) \to P(\mu) \to P(\la)$ with $\la <\mu$}
	\centering		\begin{tabular}{|m{1.7cm}|m{1.5cm}|m{5.3cm}| m{2cm}|}
		\toprule
	 		$\la$ & $\mu$ &$\und{\la}\la\ove{\mu} \cdot \und{\mu}\la\ove{\la}=\und{\la}\gamma\ove{\la}$&$\gamma$\\ \hline
		$\begin{array}{l} (j)\\j > 1\end{array}$ &$(j-1)$&
		\vspace{0.4cm}
		\begin{tikzpicture}
		\begin{scope}
		\draw (0,0) node[above=-1.55pt] {$\cdots$};
		\draw (0.4,0) node[above=-1.55pt] {$\down$};
		\draw (0.8, 0) node[above=-1.55pt] {$\up$};
		\draw (1.2,0) node[above=-1.55pt] {$\down$};
		\draw (1.6,0) node[above=-1.55pt] {$\cdots$};
  	\draw (0.8,0.3) arc (180:0:0.2);
		\draw (0.4,0.3) -- (0.4,0.6);
		\draw (0.4,0) arc (180:360:0.2);
		\draw (1.2,0) -- (1.2,-0.3);
		\end{scope}
		
				\begin{scope}[yshift=-1cm]
		\draw (0,0) node[above=-1.55pt] {$\cdots$};
		\draw (0.4,0) node[above=-1.55pt] {$\down$};
		\draw (0.8, 0) node[above=-1.55pt] {$\up$};
		\draw (1.2,0) node[above=-1.55pt] {$\down$};
		\draw (1.6,0) node[above=-1.55pt] {$\cdots$};
		\draw (0.4,0.3) arc (180:0:0.2);
		\draw (1.2,0.3) -- (1.2,0.6);
		\draw (0.8,0) arc (180:360:0.2);
		\draw (0.4,0) -- (0.4,-0.3);
		\end{scope}
		
		\draw[->] (2,-0.4)--(2.3,-0.4);
		
		\begin{scope}[yshift=-0.55cm, xshift=2.9cm]
		\draw (0,0) node[above=-1.55pt] {$\cdots$};
		\draw (0.4,0) node[above=-1.55pt] {$\down$};
		\draw (0.8, 0) node[above=-1.55pt] {$\up$};
		\draw (1.2,0) node[above=-1.55pt] {$\down$};
		\draw (1.6,0) node[above=-1.55pt] {$\cdots$};
		\draw (0.8,0.3) arc (180:0:0.2);
		\draw (0.4,0.3) -- (0.4,0.6);
		\draw (0.8,0) arc (180:360:0.2);
		\draw (0.4,0) -- (0.4,-0.3);
		\end{scope}
				\end{tikzpicture}
					 &$(j-1)$\\ \hline

		$(1)$ &$(0)$&
		\vspace{0.4cm}
		\begin{tikzpicture}
		\begin{scope}
		\draw (0,0) node[above=-1.55pt] {$\up$};
		\draw (0.4,0) node[above=-1.55pt] {$\down$};
		\draw (0.8, 0) node[above=-1.55pt] {$\cdots$};
		\draw (0,0) -- (0,-0.3);
		\draw (0.4,0) -- (0.4,-0.3);
		\draw (0,0.3) arc (180:0:0.2);
		\end{scope}
		
				\begin{scope}[yshift=-1cm]
		\draw (0,0) node[above=-1.55pt] {$\up$};
		\draw (0.4,0) node[above=-1.55pt] {$\down$};
		\draw (0.8, 0) node[above=-1.55pt] {$\cdots$};
		\draw (0,0) arc(180:360:0.2);
		\draw (0,0.3) -- (0,0.6);
		\draw (0.4,0.3) -- (0.4,0.6);
				\end{scope}
		
		\draw[->] (2,-0.4)--(2.3,-0.4);
		
		\begin{scope}[yshift=-0.55cm, xshift=2.9cm]
		\draw (0,0) node[above=-1.55pt] {$\up$};
		\draw (0.4,0) node[above=-1.55pt] {$\down$};
		\draw (0.8, 0) node[above=-1.55pt] {$\cdots$};
		\draw (0,0) arc(180:360:0.2);
		\draw (0,0.3) arc (180:0:0.2);
				\end{scope}
				\end{tikzpicture}

		 &$(0)$\\ \hline
		\end{tabular}
\end{table}
\normalsize
From now on denote by $P(\la) \to P(\mu)$ the degree one morphism computed in Tables \ref{tab:lamulaupm} - \ref{tab:lamunuspecdnm} (if it is nonzero). Note that these morphisms are only unique up to scalar hence we have made a choice.
\subsection*{The quiver of $\End(P)$}\label{sec:quiver1}

As it might be helpful for later work, we want to summarise the results in the quiver of $\End(P)$. 

For an introduction into the topic of quivers see \cite{Ausl97}. In our situation it is enough to interpret an arrow in the quiver as a degree one morphism between the corresponding projectives. Composing two arrows corresponds to the composition of the maps in $\End(P)$. The relations belong to the relations in $\End(P)$. For ease of presentation we denote the starting point of the relation by a bullet. By \cite[Proposition 1.15]{Ausl97} the opposed quiver is the one belonging to the $\Ext$-algebra of the simples.

\begin{Theorem}
The algebra $\End(P)$ is given as the path algebra of the quiver
\begin{displaymath}
\xymatrix{(0)\ar@/^/[r] &(1) \ar@/^/[r] \ar@/^/[l] &(2) \ar@/^/[r] \ar@/^/[l] &\hspace{0.5cm}\cdots \hspace{0.5cm}\ar@/^/[r] \ar@/^/[l]&(N-1) \ar@/^/[r] \ar@/^/[l] &(N)  \ar@/^/[l]} \end{displaymath}
with relations
\begin{enumerate}
\item $ \xymatrix{\hfill \ar@/^/[r]  &\bullet  \ar@/^/[l] \ar @{} [r] |{=}&\bullet \ar@/^/[r]  &\hfill  \ar@/^/[l]}$
\item $\xymatrix{\bullet \ar@/^/[r] &\hfill  \ar@/^/[l] }=0$ for the map starting in $(0)$
\item $\xymatrix{\bullet \ar@/^/[r] &\hfill  \ar@/^/[r] & \hfill }=0$
\item $\xymatrix{\hfill &\hfill  \ar@/^/[l] & \ar@/^/[l] \bullet }=0$ 
\end{enumerate}
\end{Theorem}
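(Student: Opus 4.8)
The plan is to prove the identification by comparing two finite-dimensional algebras: the path algebra $\C Q/I$ of the stated quiver $Q$ modulo the ideal $I$ generated by the relations (1)--(4), and $\End(P)=\End(\bigoplus_j P(j))\cong K_N^1$. First I would record the quiver. Its vertices are the indecomposable projectives $(0),\dots,(N)$, and its arrows form a basis of the degree-one part of the radical: by the computation in Section~\ref{End} the space $\hom_{K_N^1}(P(\la),P(\mu))_1$ is at most one-dimensional and is nonzero exactly when $\mu$ is obtained from $\la$ by changing the two labels at the end of a single cup or cap, i.e. precisely for the adjacent pairs $\{(j),(j+1)\}$, in both directions. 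This yields exactly the double $A_{N+1}$-quiver drawn in the statement. Writing $a_j$ for the rightward arrow $(j)\to(j+1)$ and $b_j$ for the leftward arrow $(j+1)\to(j)$, I fix one nonzero degree-one morphism for each (the choice already made in Tables~\ref{tab:lamulaupm}--\ref{tab:lamunuspecdnm}) and let $\Phi\colon \C Q \to \End(P)$ send vertices to the idempotents $\Id_j$ and arrows to these maps.

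Next I would check that $\Phi$ is surjective and annihilates $I$. Surjectivity holds because $K_N^1$ is Koszul, hence generated in degrees $0$ and $1$, so the images of the arrows generate the radical. That the four relations hold is exactly the content of the explicit diagram calculations already performed: Tables~\ref{tab:mapprojdownup1} and~\ref{tab:mapprojupdown1} show that at an interior vertex $(j)$ the two loops $(j)\to(j+1)\to(j)$ and $(j)\to(j-1)\to(j)$ both equal the single degree-two element $\und{(j)}(j-1)\ove{(j)}$, which is relation~(1), while the same tables give $a_0b_0=0$ at the left end, i.e. relation~(2); relations~(3) and~(4) amount to the vanishing of every composite $P(\la)\to P(\mu)\to P(\nu)$ with $\nu\neq\la$, which holds since Table~\ref{prosimp1} shows $P(\nu)$ admits no degree-two map from $P(\la)$ for $\nu\neq\la$. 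Hence $\Phi$ descends to a surjection $\ove\Phi\colon \C Q/I \twoheadrightarrow \End(P)$.

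It then remains to see that $\ove\Phi$ is injective, for which I would compare dimensions. On the target side, reading composition factors off Table~\ref{prosimp1} and using $\dim\hom(P(i),P(j))=[P(j):L(i)]$ gives $\dim\End(P)=4N+1$, namely $N+1$ idempotents, $2N$ adjacency maps, and one degree-two loop at each of $(1),\dots,(N)$. On the source side I would produce a spanning set of $\C Q/I$ of the same size: relations~(3)--(4) force every monomial of length $\geq 2$ to alternate between rightward and leftward arrows; relation~(1) identifies the two alternating length-two loops at each interior vertex and relation~(2) kills the loop at $(0)$; and any path of length $\geq 3$ can, after a single application of relation~(1), be rewritten to contain a forbidden subword $a_{i-1}a_i$ or $b_{i+1}b_i$, hence vanishes by~(3) or~(4). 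This leaves only the $N+1$ trivial paths, the $2N$ arrows, and the $N$ surviving loops, so $\dim \C Q/I \leq 4N+1$; together with the surjection this forces equality and makes $\ove\Phi$ an isomorphism.

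The main obstacle I expect is this last step, the \emph{completeness} of the relations. Verifying that (1)--(4) hold is mechanical given the tables, but confirming that they generate \emph{all} relations needs the careful reduction argument that every length-$\geq 3$ path dies, together with an honest dimension count for $K_N^1$; the endpoints $(0)$ and $(N)$ behave asymmetrically (no loop survives at $(0)$, whereas one does at $(N)$), so these cases must be tracked separately throughout. Alternatively one could bypass the path reduction using Koszulity: since $K_N^1$ is quadratic it is determined by its quiver and its space of quadratic relations, so it would suffice to check that (1)--(4) are quadratic and span the correct relation space.
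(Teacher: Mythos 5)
Your proposal is correct, but your treatment of the key completeness step differs genuinely from the paper's. The paper states this theorem without a formal proof: it is offered as a summary of computations already done, namely the classification of degree-one morphisms in Section \ref{End} (which yields the quiver) and the compositions recorded in Tables \ref{tab:mapprojdownup1} and \ref{tab:mapprojupdown1} together with the observation from Table \ref{prosimp1} that every composite $P(\la)\to P(\mu)\to P(\nu)$ with $\nu\neq\la$ vanishes (which yield the relations); the fact that these quadratic relations determine the algebra \emph{completely} is delegated to the parenthetical remark in Section \ref{End} that $K_N^1$ is Koszul by \cite[Theorem 5.6.]{Brun22008}, hence quadratic, so the quiver together with the full space of quadratic relations pins down the algebra. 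You verify the same quiver and the same relations from the same tables, but then prove completeness by hand: you compute $\dim\End(P)=4N+1$ from the composition factors in Table \ref{prosimp1}, and bound $\dim \C Q/I\leq 4N+1$ by a normal-form argument (alternation forced by relations (3)--(4), identification or killing of the loops by (1)--(2), and vanishing of all paths of length at least three --- note only that the length-three paths passing through the loop at $(0)$ die by (2) rather than by (3) or (4), a case your setup does cover). As for what each approach buys: yours is elementary and self-contained --- it needs no Koszulity input for completeness, and even your appeal to Koszulity for surjectivity is avoidable, since the algebra is concentrated in degrees at most $2$ and the tables exhibit every degree-two basis element as a product of two arrows --- and it produces an explicit basis of $\End(P)$ along the way; the paper's route is shorter and scales better to the $n=2$ situation (where the analogous quiver theorem is likewise stated with completeness resting on quadraticity), since one never has to do path combinatorics or dimension bookkeeping, at the price of invoking Brundan--Stroppel's Koszulity theorem as external input.
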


\subsection*{Terms occurring in the resolutions}
Now we compute the combinatorial Kazhdan-Luztig polynomials and therefore determine the terms of the resolution.
We only have to look for $\mu \geq \la$. For $\mu=(s)$ and $\la=(j)$ we obtain:
$$\begin{tikzpicture}[scale=1]
		\draw (-0.6,0) node[above=-1.55pt] {$\mu$};
		\draw (0,0) node[above=-1.55pt] {$\cdots$};
		\draw (0.4,0) node[above=-1.55pt] {$\down$};
		\draw (0.8, 0) node[above=-1.55pt] {$\up$};
		\draw (1.2,0) node[above=-1.55pt] {$\cdots$};
		\draw (0.8,0.3) arc (0:180:0.2);
		\begin{scope}[yshift=-0.4cm]
		\draw (-0.6,0) node[above=-1.55pt] {$\ell_i$};
		\draw (0.4,0) node[above=-1.55pt] {$0$};
		\end{scope}
		\begin{scope}[yshift=0.15cm]
		\scriptsize	
		\draw (0.6,0) node[above=-1.55pt] {$0$};
		\end{scope}	
		\begin{scope}[yshift=-0.8 cm]
		\draw (-0.6,0) node[above=-1.55pt] {$\la$};
		\draw (0,0) node[above=-1.55pt] {$\cdots$};
		\draw (0.4,0) node[above=-1.55pt] {$\down$};
		\draw (0.8,0) node[above=-1.55pt] {$\cdots$};
		\draw (1.2, 0) node[above=-1.55pt] {$\up$};
		\draw (1.8,0) node[above=-1.55pt] {$\cdots$};
		\end{scope}
		\end{tikzpicture}$$
		and therefore we have $p_{\la,\mu}=q^{j-s}$.
		
		Next we compute the terms occurring on the $i$th position of the resolution of $M(\la)$ with $\la=(j)$ fixed. By Theorem \ref{sumocc} we have
		$$P_i=\bigoplus_{\mu \in \La_N^1} p_{\la,\mu}^{(i)} P(\mu)\langle i \rangle.$$
		  By the above computation these are the projectives $P(s)$ with $j-s=i$. Therefore, the only term occurring is $P(j-i)$.

\subsection*{The differentials in the projective resolution}
For the computation of the differentials of the resolutions in $K_N^{1}-\mod$ we first have to remember those in $K_{N-1}^0-\mod$. Since this category only has one simple module $L=V=P$ which is a cell module and projective at once, the resolution of $V=P$ is $0 \to P \to 0$.

\begin{Theorem}\label{Th:diffproj1}
The chain complex 
$$0 \to P(0) \to P(1) \to \cdots \to P(n)\to 0$$ is a projective resolution of $M(n)$ in $K_N^1-\mod$.
Using the construction from Section \ref{sec:linprojres} one can choose the sign of the differential $\d_{n-k}(n):P(k) \to P(k+1)$ to be $(-1)^{n+k+1}$.
\end{Theorem}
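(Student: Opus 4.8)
The first assertion requires almost no new work. The paragraph ``Terms occurring in the resolutions'' shows $p_{(n),(s)}(q)=q^{n-s}$, so Theorem~\ref{sumocc} gives $P_i((n))=P((n-i))\langle i\rangle$ for $0\le i\le n$ and zero otherwise; thus the resolution built in Theorem~\ref{Th:constrproj} has exactly the shape $0\to P(0)\to P(1)\to\cdots\to P(n)\to M(n)\to0$ with $P_i((n))=P((n-i))$, and it is a projective resolution by that theorem. The whole content therefore sits in the signs of the differentials, and the plan is to read them off by induction on $n$, following the inductive construction of Section~\ref{sec:linprojres}.

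For the inductive step I fix $n\ge1$ and note that $(n)$ carries a $\down$ at vertex $n-1$ and an $\up$ at vertex $n$; this is the unique such adjacency, so $(n)\in\La_{n-1,n}^{\!\vee\wedge}$ and the construction runs with $i=n-1$. Deleting vertices $n-1,n$ sends $(n)$ to the unique weight $\la'$ of $K_{N-1}^0$, and swapping them gives $\la''=(n-1)$. Since $M(\la')$ is the only, hence projective, module of $K_{N-1}^0$, its resolution is $0\to P(\la')\to0$, and by \eqref{prback} the complex $G^{t_{n-1}}_{\La\Ga}P_\bullet(\la')\langle1\rangle$ is $P((n))$ concentrated in homological degree $0$. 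Feeding the short exact sequence \eqref{step2} into Lemma~\ref{Le:Coneconstr} with $L=M((n-1))$ and $M=G^{t_{n-1}}_{\La\Ga}M(\la')\langle1\rangle$ then realizes $P_\bullet((n))$ as the cone.

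The key simplification is that this $M$-resolution is concentrated in degree $0$: in the cone differentials of Lemma~\ref{Le:Coneconstr} every entry $d^M_k$ vanishes for $k\ge1$, and for $k\ge2$ the entry $f_{k-1}$ maps into $P^M_{k-1}=0$. Hence the cone differentials collapse to $d_1((n))=f_0\colon P((n-1))\to P((n))$ and $d_j((n))=-\,d_{j-1}((n-1))$ for $j\ge2$, where $f_0$ is the map induced on projective covers by the inclusion in \eqref{step2}. Each space $\hom_{K_N^1}(P((s-1)),P((s)))_1$ is one-dimensional, so I normalize the chosen degree-one morphism $P((s-1))\to P((s))$ to be the bottom map $f_0$ of the resolution of $M((s))$; then $d_1((n))$ is $+1$ times the canonical map. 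Unwinding the recursion gives $d_j((n))=(-1)^{j-1}f_0^{(n-j+1)}$, i.e. $(-1)^{j+1}$ times the canonical map $P((n-j))\to P((n-j+1))$. Substituting $j=n-k$, so that the source is $P(k)$ and the target $P(k+1)$, turns the sign into $(-1)^{n-k+1}=(-1)^{n+k+1}$, which is the claim. The cases $n=0$ (empty set of differentials) and $n=1$ (single differential $d_1((1))=f_0$ of sign $(-1)^{1+0+1}=+1$) start the induction.

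The one delicate point --- and the \textbf{main obstacle} --- is the base sign, namely that the bottom map $f_0$ may be taken as $+1$ rather than $-1$ times the fixed degree-one morphism. Because the relevant $\hom$-space is one-dimensional and $f_0$ is nonzero (the complex being exact), $f_0$ is automatically $\pm1$ times that morphism, so this reduces to pinning down a single global scalar; it is legitimate precisely because the same map $f_0^{(s)}\colon P((s-1))\to P((s))$ is reused as the canonical generator in every larger resolution, making the normalization consistent across all $n$. The alternating factor $(-1)^{n+k+1}$ is then produced entirely by the $-1$ in the upper-left corner of the cone matrix, iterated along the recursion.
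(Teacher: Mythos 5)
Your proof is correct and follows essentially the same route as the paper: the shape of the resolution is read off from the Kazhdan--Lusztig computation, and the differentials come from the inductive cone construction of Section \ref{sec:linprojres} with the matching $t_{n-1}$, where the resolution of $G^{t_{n-1}}_{\La\Ga}M(\la')$ is concentrated in homological degree zero. The only difference is one of emphasis: the paper justifies the sign claim by observing that, since each component is a single indecomposable projective with one-dimensional degree-one hom spaces, any sign assignment is realizable, whereas you propagate the cone's $-1$ through the recursion $d_j((n))=-d_{j-1}((n-1))$ (after normalizing $f_0$ to the canonical generator) to land exactly on $(-1)^{n+k+1}$ --- a more explicit but equivalent justification.
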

\begin{proof}
In the previous section we  have already seen that we get a resolution like the one in the theorem and since there is only one term in each component, we can choose any sign. Now we just recall the construction to point out, that it is the same projective resolution as it is defined in Section \ref{sec:linprojres}.
Using the construction from Section \ref{sec:linprojres} and the fact that $M(0)$ is projective, inductively we have to take the cone of
$$
\begin{CD}
&0
&\rightarrow &P(0)&\rightarrow\cdots\rightarrow&P(n-1)&\rightarrow&0\\
&&&&&@VVf_0V\\
&&&&0 \rightarrow&G^{t_{n-1}}_{\La\Ga}P&\rightarrow& 0
\end{CD}.
$$
Because $f_0$ is the lift of a non-nullhomotopic morphism, it is itself nonzero.\\
Using equation \eqref{prback} we know that $G^{t_{n-1}}_{\La\Ga}P=P(n)$. Therefore, we get the stated result.
\end{proof}

\subsection*{Dimensions in the $\Ext$-algebra}\label{sec:dimexp1}
Using the formulas \eqref{ca:numcas1}-\eqref{ca:numcas4} from Theorem \ref{Th:dim}, we compute the dimensions of the $\Ext(M(\la),M(\mu))$-spaces for $\la \leq \mu$ in our situation. This is not necessary (later it is not hard to see that we have written down all possible elements) but in bigger situations it is a very helpful tool which we illustrate in this example.
\begin{align}
&&&E^k((j),(j))&=&\begin{cases} 1 &k=0\\ 0 &\text{else}\end{cases}\\
&&&E^k((j),(j-1))&\stackrel{(\ref{ca:numcas3})}{=}&E^{k-1}((j-1),(j-1))\notag\\
&&&&&+E^{k}((j-1),(j-1))\notag\\
&&&&=&\begin{cases} 1 &k \in \{0,1\}\\0 &\text{else}\end{cases}\\
&l<j-1:&& E^k((j),(l))&\stackrel{(\ref{ca:numcas2})}{=}&E^{k-1}((j-1),(l))\notag\\
&&&&\stackrel{Ind}{=}&\begin{cases} 1 &k \in \{j-l-1,j-l\}\\ 0 &\text{else}\end{cases}\end{align}
\begin{Lemma}
The algebra is $(N+1)^2$-dimensional.
\end{Lemma}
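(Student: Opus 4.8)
The plan is to read the total dimension directly off the $\Ext$-dimensions computed just above, since ``the algebra'' here is
$$\Ext(M,M) = \bigoplus_{0 \le j,\, l \le N} \Ext^\bullet(M(j), M(l)),$$
the sum running over the $N+1$ weights $(0),\dots,(N)$ of $K_N^1$ and over all homological degrees. Thus the whole statement reduces to summing the numbers $E^k((j),(l))$ over all $j$, $l$ and $k$.

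First I would record what the preceding computation gives for the total dimension of each graded pair $\bigoplus_k \Ext^k(M(j),M(l))$. By the vanishing already established, only pairs with $l \le j$ contribute. For $l=j$ the formula gives $E^0=1$ and $E^k=0$ for $k>0$, so the contribution is $1$. For $l<j$ the formula $E^k((j),(l))=1$ for $k \in \{j-l-1,\, j-l\}$ and $0$ otherwise yields exactly two nonzero one-dimensional pieces (note $j-l-1 \ge 0$ and $j-l \ge 1$ are distinct), so the contribution is $2$.

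Then I would carry out the summation. Fixing $j$ and summing over $l$ gives $1+2j$ (one diagonal term of size $1$ together with $j$ strictly-lower terms each of size $2$), and summing over $j$ yields
$$\dim_\C \Ext(M,M) = \sum_{j=0}^N (1+2j) = (N+1) + 2\cdot\frac{N(N+1)}{2} = (N+1)+N(N+1) = (N+1)^2.$$
Equivalently, there are $N+1$ diagonal pairs each of total dimension $1$ and $\binom{N+1}{2}$ off-diagonal pairs each of total dimension $2$, giving $(N+1) + 2\binom{N+1}{2} = (N+1)^2$.

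There is no real obstacle here: the content lies entirely in the dimension formulas already derived from Theorem \ref{Th:dim}, and this lemma is a bookkeeping consequence. The only point requiring a little care is to count each unordered pair $\{j,l\}$ with $j \ne l$ in exactly the one direction in which the $\Ext$-groups do not vanish, and to confirm that the two homological degrees appearing there are genuinely distinct, so that the pairwise total is $2$ rather than $1$.
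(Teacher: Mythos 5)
Your proof is correct and follows essentially the same route as the paper: count the $N+1$ diagonal (idempotent) contributions of dimension $1$, observe from the dimension formulas that each pair $l<j$ contributes $2$ (in the distinct degrees $j-l-1$ and $j-l$), and sum to get $(N+1)+N(N+1)=(N+1)^2$. Nothing is missing.
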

\begin{proof}
By the above computations we have $N+1$ idempotents and for each $i$ with $1 \leq i \leq N$ we have
$$\dim \bigoplus_{j < i} Ext^*(M(i),M(j))=2i.$$
Therefore we compute
\begin{align*}
\dim \bigoplus_{i, j} Ext^*(M(i),M(j))\\
&=N+1+\sum_{i=1}^{N}{2i}\\
&=N+1+N(N+1)\\
&=(N+1)^2. \qedhere
\end{align*}
\end{proof}
\subsection*{The elements}
In $Ext^1((j),(j))$ we have the idempotent $Id_{(j)}$.

We have to determine a nonzero element in $\Ext^1(P(j),P(j-1))$. 
\begin{Prop}
There is an element $\Id^{(j)}_{(j-1)} \in \Ext^1(P(j),P(j-1))$ with
\begin{align}
\Id^{(j)}_{(j-1)}: P_\bullet(j) &\to P_\bullet(j-1)[1]\langle 1\rangle\\
P(s) &\to P(s)
\end{align}
\end{Prop}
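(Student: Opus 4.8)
The plan is to write $\Id^{(j)}_{(j-1)}$ down explicitly as the chain map that is the identity in every homological degree, and then to check that it is a well-defined cycle which is not a boundary. First I would match the two complexes term by term. By Theorem \ref{sumocc} together with the computation of the Kazhdan--Lusztig polynomials above, the linear resolution $P_\bullet(j)$ has $P_i(j)=P(j-i)\langle i\rangle$, while the shifted target $P_\bullet(j-1)[1]\langle 1\rangle$ has in homological degree $i$ the module $P_{i-1}(j-1)\langle 1\rangle=P(j-i)\langle i\rangle$, and $0$ in degree $0$. Hence source and target agree in each degree $1\leq i\leq j$, and I set $f_i:=\id_{P(j-i)\langle i\rangle}$ for $1\leq i\leq j$ and $f_0:=0$. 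This is exactly the prescription ``$P(s)\to P(s)$'', and the boundary square at $i=1$ is automatic since the target is $0$ in degree $0$.

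The key step is to verify that $f$ is a chain map, i.e.\ that every square commutes; with $f_i=f_{i-1}=\id$ this reduces to checking that the differential $P_i(j)\to P_{i-1}(j)$ and the differential $(P_\bullet(j-1)[1]\langle 1\rangle)_i\to(P_\bullet(j-1)[1]\langle 1\rangle)_{i-1}$ agree as maps $P(j-i)\langle i\rangle\to P(j-i+1)\langle i-1\rangle$. By the section on $\End(\bigoplus P(\la))$ (Section \ref{End}) the degree one morphism $P(j-i)\to P(j-i+1)$ is unique up to a scalar, so both differentials are this one fixed morphism up to sign, and the whole matter comes down to the signs fixed in Theorem \ref{Th:diffproj1}. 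On the source side the differential in degree $i$ carries the sign $(-1)^{j+(j-i)+1}=(-1)^{i+1}$. On the target side the differential of $P_\bullet(j-1)$ in position $i-1$ has sign $(-1)^{(j-1)+(j-i)+1}=(-1)^{i}$, and the $[1]$-shift multiplies it by $-1$, again producing $(-1)^{i+1}$, while the internal shift $\langle 1\rangle$ does not affect signs. Thus the two differentials coincide and all squares commute. This is precisely the point at which the sign convention of Theorem \ref{Th:diffproj1} was arranged to make the identity a chain map, and this sign bookkeeping is the main (though routine) obstacle.

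Finally I would show that the cycle $f$ is not null-homotopic, so that it represents a nonzero class. Since $f$ is homogeneous of internal degree $0$ and the differentials preserve the internal grading, any homotopy may be taken internally homogeneous of degree $0$ as well; its components would be degree $0$ graded maps $h_i\colon P_i(j)=P(j-i)\langle i\rangle\to(P_\bullet(j-1)[1]\langle 1\rangle)_{i+1}=P(j-i-1)\langle i+1\rangle$. Such an $h_i$ is induced by a morphism $P(j-i)\to P(j-i-1)$ of internal degree $-1$, and since $K_N^1$ is non-negatively graded with $\hom(P(a),P(b))_0=0$ for $a\neq b$, every such morphism vanishes. Hence the only internally homogeneous degree $0$ homotopy is $h=0$, so $f=dh+hd=0$ is impossible and $f$ is not a boundary. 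As $f\neq 0$ is a cycle, it defines a nonzero element $\Id^{(j)}_{(j-1)}\in\Ext^1(M(j),M(j-1))=H^1(\hom(P_\bullet(j),P_\bullet(j-1)))$; since this space is one-dimensional by the computation $E^1((j),(j-1))=1$, the element is in fact a generator, which completes the argument.
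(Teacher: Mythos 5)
Your proposal is correct and follows essentially the same route as the paper's proof: the term-by-term identity chain map, a sign check against Theorem \ref{Th:diffproj1} (you verify commutativity after the sign-changing shift $[1]$, while the paper equivalently verifies anticommutativity using the $[\;]_{\hom}$-shift of Notation \ref{Not:sign}), and exclusion of a nullhomotopy on degree grounds. Your direct argument that any homotopy component would be an internal-degree $-1$ morphism between projectives, hence zero by non-negativity of the grading, is exactly the special case ($0 \nleq 0-1$) of Corollary \ref{Cor:dif} that the paper cites.
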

\begin{proof}
Viewing $\Id^{(j)}_{(j-1)}$ as a map to the complex $P_\bullet(j-1)[1{]_{\hom}}\langle 1\rangle$ with the $[ \quad {]_{\hom}}$-shift defined in Notation \ref{Not:sign}, we have to check that it is an anticommutative chain map, since the degree of the element is one.
We are in the following situation:
\begin{displaymath}
\xymatrix{ 0 \ar[r] &P(0) \ar[r]^{(-1)^{j+1}} \ar[d] &P(1)\ar[d] \ar[r]^{(-1)^{j}} & \cdots \ar[r]^{1} & P(j-2) \ar[d] \ar[r]^{-1} &P(j-1) \ar[d] \ar[r]^{1}&P(j)\ar[d] \ar[r] &0\\
 0 \ar[r] &P(0) \ar[r]^{(-1)^{j}}  &P(1) \ar[r]^{(-1)^{j+1}} & \cdots \ar[r]^{-1} & P(j-2)  \ar[r]^{1} &P(j-1) \ar[r]&0\ar[r]&0}
 \end{displaymath}
Therefore the map is an anticommutative chain map, as it has to be.

The map is not nullhomotopic since otherwise there would exist a map 
$H \in \hom^0(P_\bullet(j),P_\bullet(j-1)\langle 1 \rangle)$. This contradicts Corollary \ref{Cor:dif} since 
$0 \nleq 0-1$.
\end{proof}
\begin{Prop}
There is an element $F^{(j)}_{(j-1)} \in \Ext^0(P(j),P(j-1))$ with
\begin{align}
F^{(j)}_{(j-1)}: P_\bullet(j) &\to P_\bullet(j-1)\langle -1\rangle\\
P(s) &\to P(s-1)
\end{align} 
\end{Prop}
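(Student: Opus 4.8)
The plan is to exhibit $F^{(j)}_{(j-1)}$ as an explicit degree-$0$ chain map between the two linear resolutions from Theorem \ref{Th:diffproj1}, then to verify that it really is a chain map (a cycle in $\hom(P_\bullet(j),P_\bullet(j-1)\langle-1\rangle)$) and that it is not null-homotopic. First I would fix notation for the components. By the computation of the terms of the resolution one has $P_i(j)=P(j-i)$ and $P_i(j-1)=P(j-1-i)$, and by Theorem \ref{Th:diffproj1} the differential in homological degree $i$ of either complex is $(-1)^{i+1}$ times the degree-one ``up'' morphism $a_s\colon P(s)\to P(s+1)$. I set $F_i\colon P(j-i)\to P(j-1-i)$ to be the degree-one ``down'' morphism $b_s\colon P(s)\to P(s-1)$ with $s=j-i$. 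Since each $b_s$ raises the internal degree by one, viewing the whole family as a degree-$0$ map forces the shift $\langle-1\rangle$ on the target, which is exactly the grading appearing in the statement. Note that $F_j=0$ automatically, its target $P(-1)$ being zero.

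Next I would check commutativity of every square $F_{i-1}\circ d_i(j)=d_i(j-1)\circ F_i$. Because both resolutions carry the same sign $(-1)^{i+1}$ in homological degree $i$, this common factor cancels and the interior square reduces to the identity
\[
b_{s+1}\circ a_s \;=\; a_{s-1}\circ b_s \qquad (s=j-i\geq 1)
\]
of endomorphisms of $P(s)$. This is precisely the ``up--down equals down--up'' relation: both sides equal the diagram $\und{(s)}(s-1)\ove{(s)}$ by Tables \ref{tab:mapprojdownup1} and \ref{tab:mapprojupdown1} (relation 1 of the quiver of $\End(P)$). The only remaining case is the top square at $i=j$, where $F_j=0$ kills the right-hand side while the left-hand side is $b_1\circ a_0=0$ by the relation for the map starting in $(0)$ (relation 2, recorded as the $\la=(0)$ row of Table \ref{tab:mapprojdownup1}). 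Hence $F$ commutes with the differentials and is a genuine degree-$0$ chain map, so it defines a class in $\Ext^0$.

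Finally I would argue $[F]\neq 0$. A null-homotopy would be a degree-$(-1)$ element of $\hom(P_\bullet(j),P_\bullet(j-1)\langle-1\rangle)$, i.e. a family of maps $P_p(j)\to P_{p-1}(j-1)\langle-1\rangle$. Since $P_p(j)=P(j-p)=P_{p-1}(j-1)$, each such map lies in $\hom(P(j-p),P(j-p))_1$, which vanishes because a nonzero degree-one morphism between indecomposable projectives requires distinct weights (Section \ref{End}). Thus there are no nonzero homotopies at all, and since the bottom component $F_0=b_j$ is the nonzero degree-one morphism $P(j)\to P(j-1)$, the cycle $F$ represents a nonzero class; this matches $\dim\Ext^0(M(j),M(j-1))=E^0((j),(j-1))=1$.

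The main obstacle I anticipate is entirely the sign bookkeeping: one must confirm that the signs $(-1)^{i+1}$ on the differentials of $P_\bullet(j)$ and $P_\bullet(j-1)$ coincide in each homological degree, so that they cancel and leave a genuinely commutative (rather than anticommutative) square, as befits a degree-$0$ element. Once this is settled, the only combinatorial input needed is the pair of composition identities already tabulated in Tables \ref{tab:mapprojdownup1}--\ref{tab:mapprojupdown1}, namely $b_{s+1}\circ a_s=a_{s-1}\circ b_s$ and $b_1\circ a_0=0$.
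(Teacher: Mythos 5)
Your construction and verification of the chain map itself is correct and essentially identical to the paper's argument: the components are the standard degree-one morphisms $P(s)\to P(s-1)$, the sign $(-1)^{i+1}$ is common to both differentials in each homological degree, and the squares commute by the relations of $\End(P)$ (up--down equals down--up, together with $P(0)\to P(1)\to P(0)=0$), which is exactly what the paper's diagram encodes.

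The non-nullhomotopy step, however, contains a genuine error. A null-homotopy of a degree-zero chain map between chain complexes (whose differentials \emph{lower} homological degree) must \emph{raise} the homological index: its components are $H_p\colon P_p(j)\to P_{p+1}(j-1)\langle -1\rangle$, not $P_p(j)\to P_{p-1}(j-1)\langle -1\rangle$ as you wrote. (This is also the convention implicit in the proof of Lemma \ref{Le:mapprojres}: an element of $\hom^{k}$ has components $P_i(\la)\to P_{i-k}(\mu)$, so a homotopy of a degree-zero cycle sits in $\hom^{-1}$ with components $P_i\to P_{i+1}$.) Moreover, even granting your direction, the internal-degree bookkeeping is off by one: since $P_p(j)=P(j-p)\langle p\rangle$ and $P_{p-1}(j-1)\langle -1\rangle=P(j-p)\langle p-2\rangle$, a degree-zero component would lie in $\hom(P(j-p),P(j-p))_2$, not in $\hom(P(j-p),P(j-p))_1$; and $\hom(P(s),P(s))_2$ is \emph{not} zero --- it is one-dimensional, spanned by the head-to-socle map (visible in Table \ref{prosimp1}, where $L(s)$ occurs in degree $2$ of $P(s)$). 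So for the space you actually identified, the claimed vanishing fails, and ``there are no nonzero homotopies at all'' is unjustified as argued.

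The conclusion is salvageable by the same kind of componentwise analysis once the direction is corrected: the true components are degree-zero maps $P(j-p)\langle p\rangle\to P(j-p-2)\langle p\rangle$, i.e.\ elements of $\hom(P(j-p),P(j-p-2))_0$, and this space vanishes because a degree-zero morphism of indecomposable projectives must send head to head while $L(j-p)\not\cong L(j-p-2)$. Alternatively, one can argue as the paper does and quote Lemma \ref{Le:mapprojres} with $k=-1$: a homotopy would force $j\leq (j-1)+1^2-1$, which is false. Either repair closes the gap; as written, your argument does not.
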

\begin{proof}
Writing down the map, we obtain:
\begin{displaymath}
\xymatrix{ P(0) \ar[r]^{(-1)^{j+1}} \ar[d] &P(1)\ar[d] \ar[r]^{(-1)^{j}} & \cdots \ar[r]^{1} & P(j-2) \ar[d] \ar[r]^{-1} &P(j-1) \ar[d] \ar[r]^{1}&P(j)\ar[d] \ar[r] &0\\
 0\ar[r] &P(0) \ar[r]^{(-1)^{j}}  &P(1) \ar[r]^{(-1)^{j+1}} & \cdots \ar[r]^{-1} & P(j-2)  \ar[r]^{1} &P(j-1) \ar[r]&0}
 \end{displaymath}
All squares in the diagram commute by the relations of $\End(P)$ (cf. Section \ref{sec:quiver1}).
Therefore, it becomes a commutative chain map, as it has to be, since it is of degree $0$.

The map is not nullhomotopic since otherwise there must exist a map $H:P_\bullet(j) \to P_\bullet(j-1)[-1]\langle 1 \rangle$. 
This map cannot exist by Lemma \ref{Le:mapprojres} as $j \nleq (j-1)+1^2-1$.
\end{proof}
The other elements are generated by these two types:
\begin{Prop}\label{Prop:IdF}
For $j \geq l$ we have maps
\begin{align}
\Id^{(j)}_{(l)} &\in \Ext^{j-l}(P(j),P(l)) \notag\\
\Id^{(j)}_{(l)}: P_\bullet(j) &\to P_\bullet(l)[j-l]\langle j-l\rangle\\
P(s) &\to P(s) \ \forall s \leq l
\end{align}
and for $j>l$ maps
\begin{align}
F^{(j)}_{(l)} &\in \Ext^{j-l-1}(P(j),P(l)) \notag\\
F^{(j)}_{(l)}: P_\bullet(j) &\to P_\bullet(l)[j-l-1]\langle j-l-2\rangle\\
P(s) &\to P(s-1) \ \forall s \leq l+1
\end{align}
\end{Prop}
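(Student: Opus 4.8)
The plan is to build both families by composing the two degree-one generators constructed in the two preceding propositions, namely $\Id^{(j)}_{(j-1)} \in \Ext^1(P(j),P(j-1))$ and $F^{(j)}_{(j-1)} \in \Ext^0(P(j),P(j-1))$, using that the multiplication in the $\Ext$-algebra is composition of chain maps (Section \ref{sec:Exthom}). Since the $\hom$-complex $\hom(P_\bullet,P_\bullet)$ is a differential graded algebra under composition, a product of cycles is again a cycle; hence each composite is automatically a chain map and represents an $\Ext$-class, and I do not need to re-verify (anti)commutativity by hand. The only genuine content is then to track the action on the one-dimensional components $P(s)$, to add up the shifts, and to check non-vanishing.

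First I would set $\Id^{(j)}_{(l)} := \Id^{(j)}_{(j-1)} \cdot \Id^{(j-1)}_{(j-2)} \cdots \Id^{(l+1)}_{(l)}$, a product of $j-l$ degree-one maps. The homological shifts $[1]$ and internal shifts $\langle 1\rangle$ of the factors add to $[j-l]\langle j-l\rangle$, giving a class in $\Ext^{j-l}$. On components each factor is the identity on $P(s)$ wherever $P(s)$ occurs, so the composite sends $P(s)\to P(s)$; this is nonzero exactly when $P(s)$ occurs in every intermediate resolution $P_\bullet(a)$ with $l\le a\le j$, i.e. when $s\le l$, which is precisely the stated description. Then I would set $F^{(j)}_{(l)} := F^{(j)}_{(j-1)}\cdot \Id^{(j-1)}_{(l)}$. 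Here the shifts add to $[j-l-1]\langle j-l-2\rangle$ and the degree is $j-l-1$; on $P(s)$ the composite is the chosen degree-one morphism $P(s)\to P(s-1)$ followed by the identity on $P(s-1)$, which is nonzero exactly when $s-1\le l$, i.e. $s\le l+1$, matching the claim. (One could equally use $\Id^{(j)}_{(l+1)}\cdot F^{(l+1)}_{(l)}$, which yields the same shifts.)

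It remains to show each composite is nonzero in $\Ext$, i.e. not nullhomotopic. As each composite is visibly nonzero as a chain map, it suffices to show the relevant homotopy space vanishes. For $\Id^{(j)}_{(l)}$ a homotopy would lie in $\hom^{j-l-1}(P_\bullet(j),P_\bullet(l)\langle j-l\rangle)$, and since $(j-l-1)-(j-l)=-1<0$ this space is zero by Corollary \ref{Cor:dif} (exactly as in the base case, where one used $0\nleq 0-1$). For $F^{(j)}_{(l)}$ a homotopy would lie in $\hom^{j-l-2}(P_\bullet(j),P_\bullet(l))$, which by Lemma \ref{Le:mapprojres} vanishes unless $l((j))=j \le l((l))+1+(j-l-2)=j-1$; this fails, so no homotopy exists.

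The main obstacle is purely the bookkeeping of which vanishing statement to invoke in each case. For the $\Id$-family one must use Corollary \ref{Cor:dif}, which tracks the internal grading shift, because Lemma \ref{Le:mapprojres} alone gives only $j\le j$ and does not rule out a homotopy; for the $F$-family it is conversely Lemma \ref{Le:mapprojres} that produces the strict inequality $j\le j-1$ and forces the homotopy space to vanish. Everything else — well-definedness of the composites with matching shifts and the action on each $P(s)$ — is routine.
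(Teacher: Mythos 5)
Your proposal is correct and follows essentially the same route as the paper: both express $\Id^{(j)}_{(l)}$ and $F^{(j)}_{(l)}$ as composites of the degree-one generators (your $F^{(j)}_{(j-1)}\cdot\Id^{(j-1)}_{(l)}$ versus the paper's $\Id^{(j)}_{(l+1)}\cdot F^{(l+1)}_{(l)}$ give the same map, as you note), and both rule out homotopies via Corollary \ref{Cor:dif} for the $\Id$-family and Lemma \ref{Le:mapprojres} for the $F$-family, with the same inequalities $0 \nleq -1$ and $j \nleq j-1$. Your closing observation about why each family needs its particular vanishing statement is accurate and matches the paper's choices exactly.
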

\begin{proof}
By the definition of the maps, we see that
$$\Id^{(j)}_{(l)}=\Id^{(j)}_{(j-1)}  \cdots  \Id^{(l+2)}_{(l+1)} \cdot \Id^{(l+1)}_{(l)}$$
and
$$F^{(j)}_{(l)}=\Id^{(j)}_{(j-1)} \cdots \Id^{(l+2)}_{(l+1)} \cdot F^{(l+1)}_{(l)},$$
so they are elements in the correct dimensions of the $\Ext$-algebra.

We only have to show that they are unequal to zero:

For $\Id^{(j)}_{(l)}$ 
a homotopy would be a map $H \in \hom^{j-l-1}(P_\bullet(j),P_\bullet(l)\langle j-l \rangle)$. This cannot exist by Corollary \ref{Cor:dif} since $0 \nleq (j-l-1)-(j-l)$.
For $F^{(j)}_{(l)}$ a homotopy would be a map $H \in \hom^{j-l-2}(P_\bullet(j),P_\bullet(l)\langle j-l-2 \rangle)$
which cannot exist by Lemma \ref{Le:mapprojres} since $j \nleq l+1^2+(j-l-2)$.
\end{proof}
Now we have found all elements in $\Ext(\bigoplus M(\la),\bigoplus M(\la))$ which can be checked via the dimension formulas.

\subsection*{Multiplications}
The following theorem describes the algebra structure completely:
\begin{Theorem}\label{Th:multi1}
We have the following multiplication rules:
\begin{enumerate}
\item $\Id^{(j)}_{(l)} \cdot \Id^{(l)}_{(m)}= \Id^{(j)}_{(m)}$
\item $\Id^{(j)}_{(l)} \cdot F^{(l)}_{(m)}= F^{(j)}_{(m)}$
\item $F^{(j)}_{(l)} \cdot \Id^{(l)}_{(m)}= F^{(j)}_{(m)}$
\item $F^{(j)}_{(l)} \cdot F^{(l)}_{(m)}= 0$
\end{enumerate}
\end{Theorem}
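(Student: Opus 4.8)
The plan is to read off every product directly at the level of chain maps, exploiting that each homological component $P_i((j)) = P(j-i)$ of the resolutions (Theorem~\ref{sumocc} and the preceding computation) consists of a \emph{single} indecomposable projective. Recall that the product in $\Ext$ is induced by composition of chain maps, written from left to right, so that $(\al\cdot\be) = \be\circ\al$, and that this composition is computed componentwise: if $\al\colon P_\bullet(j)\to P_\bullet(l)[\,\cdot\,]$ and $\be\colon P_\bullet(l)\to P_\bullet(m)[\,\cdot\,]$ are the representing chain maps, then the component of $\al\cdot\be$ on the summand $P(s)$ is obtained by applying $\al$ and then $\be$ to that summand. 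Since each $P_i$ is indecomposable, every such component is a single morphism of projectives, and the whole computation reduces to composing the explicit component maps recorded in Proposition~\ref{Prop:IdF}.

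For the first three identities I would compute these componentwise composites and check that they agree \emph{on the nose} with the representing chain map of the claimed right-hand side. By definition $\Id^{(j)}_{(l)}$ acts as the identity $P(s)\to P(s)$ and $F^{(j)}_{(l)}$ acts as the fixed degree-one morphism $P(s)\to P(s-1)$ on each surviving summand. Hence $\Id^{(j)}_{(l)}\cdot\Id^{(l)}_{(m)}$ sends $P(s)\mapsto P(s)\mapsto P(s)$, i.e.\ it is the identity on every surviving summand and therefore equals $\Id^{(j)}_{(m)}$; likewise $\Id^{(j)}_{(l)}\cdot F^{(l)}_{(m)}$ sends $P(s)\mapsto P(s)\mapsto P(s-1)$ and $F^{(j)}_{(l)}\cdot\Id^{(l)}_{(m)}$ sends $P(s)\mapsto P(s-1)\mapsto P(s-1)$, both equal to $F^{(j)}_{(m)}$. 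In each case the homological shifts add correctly ($(j-l)+(l-m)=j-m$, resp.\ $(j-l)+(l-m-1)=j-m-1$) and the internal shifts add to $\langle j-m\rangle$, resp.\ $\langle j-m-2\rangle$, matching the target map, so the two chain maps coincide literally and a fortiori define the same class. As the right-hand sides are the non-null-homotopic cycles of Proposition~\ref{Prop:IdF}, the products are exactly these nonzero elements; no sign correction arises because composing identities and single $F$-morphisms introduces none.

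For the last identity the cleanest route is degree counting rather than an explicit homotopy, and this is the one genuinely interesting case. The product $F^{(j)}_{(l)}\cdot F^{(l)}_{(m)}$ lies in $\Ext^{(j-l-1)+(l-m-1)}(M(j),M(m))=\Ext^{j-m-2}(M(j),M(m))$, and since both factors require $j>l>m$ we have $m\le j-2$, so $j-m-2$ is a nonnegative integer. On the other hand, the dimension formulas of Theorem~\ref{Th:dim} give $\Ext^k(M(j),M(m))\neq 0$ only for $k\in\{j-m-1,\,j-m\}$; as $j-m-2$ is not in this set, the whole group vanishes and the product is forced to be zero. The main obstacle is precisely here: at the chain level the composite sends $P(s)\mapsto P(s-2)$ and need not vanish termwise, so one really needs either the vanishing of this ambient $\Ext$-group or an explicit null-homotopy. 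I would present the degree argument, which fits the dimension bookkeeping of this chapter; alternatively, one may observe that each component $P(s)\to P(s-1)\to P(s-2)$ is killed by the relations of $\End(P)$ read off from the quiver, which shows that the composite is in fact already zero as a chain map.
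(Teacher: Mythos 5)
Your proposal is correct and follows essentially the same route as the paper: the first three identities are immediate componentwise from the definitions in Proposition~\ref{Prop:IdF}, and the vanishing of $F^{(j)}_{(l)}\cdot F^{(l)}_{(m)}$ is deduced from the dimension formulas, which show $\Ext^{j-m-2}(M(j),M(m))=0$. Your closing observation that the composite $P(s)\to P(s-1)\to P(s-2)$ already vanishes at the chain level (so your initial worry that it ``need not vanish termwise'' is moot) is exactly the remark the paper records for later use.
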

\begin{proof}
The first three equations are trivial by the definition and the proof of Proposition \ref{Prop:IdF}. As there is no element of degree $2$ by the dimension formulas, the last map must be zero.

For later computations notice that it is already zero as a map in $\hom(P_\bullet,P_\bullet)$ since the map is $P(s) \to P(s-1) \to P(s-2)$ which is zero by the computations above.
\end{proof}
\begin{Remark}\label{Re:nonhom}
Note that all equalities in Theorem \ref{Th:multi1} are already equalities in $\hom(P_\bullet,P_\bullet)$ and not just up to homotopy.
\end{Remark}
\begin{Theorem}\label{mainalg1}
The algebra $\Ext(\bigoplus M(\la),\bigoplus M(\la))$ for $\la \in K_N^1$ is given by the path algebra of the quiver
\begin{displaymath}\xymatrix{
\cdots  \ar@[cyan]@/^/[r] \ar@[black]@/_/[r]& P_\bullet(n+1) \ar@[cyan]@/^/[r] \ar@[black]@/_/[r]& P_\bullet(n)   
 \ar@[cyan]@/^/[r] \ar@[black]@/_/[r]&P_\bullet(n-1)     \ar@/_/[r]  \ar@[cyan]@/^/[r]& \cdots  }
\end{displaymath}
\normalsize
\color{black}
with relations
\begin{enumerate}
\item \begin{displaymath} \xymatrix{ \bullet \ar@[cyan]@/^/[r] &\color{black}\bullet \ar@[cyan]@/^/[r] &\color{black} \bullet}\color{black} =0 \end{displaymath}
\item \begin{displaymath} \xymatrix{ \bullet \ar@[cyan]@/^/[r] &\color{black}\bullet\ar@[black] @{} [d] |{\color{black}=} \ar@[black]@/_/[r] &\color{black} \bullet \color{black}\\
 \bullet \ar@[black]@/_/[r] &\color{black}\bullet\ar@[cyan]@/^/[r] &\color{black} \bullet}\color{black}\end{displaymath}
\end{enumerate}
\color{black}
\end{Theorem}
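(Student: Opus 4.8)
The plan is to realise the $\Ext$-algebra as the path algebra modulo relations by building an explicit algebra homomorphism and then closing the argument with a dimension count. Write $E := \Ext(\bigoplus_\la M(\la), \bigoplus_\la M(\la))$, and let $A$ be the path algebra of the displayed quiver -- with vertices $P_\bullet(0),\dots,P_\bullet(N)$ and, for each $1\le j\le N$, one cyan and one black arrow $P_\bullet(j)\to P_\bullet(j-1)$ -- modulo the two stated relations. First I would fix the identification of generators forced by the relations: since the only vanishing product in Theorem~\ref{Th:multi1} is $F\cdot F$, the cyan arrow $P_\bullet(j)\to P_\bullet(j-1)$ must be sent to $F^{(j)}_{(j-1)}\in\Ext^0$, the black arrow to $\Id^{(j)}_{(j-1)}\in\Ext^1$, and the vertex idempotent at $(j)$ to $\Id^{(j)}_{(j)}$. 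Since multiplication in $E$ is composition of chain maps, this extends uniquely to a homomorphism $\Phi$ from the free path algebra to $E$ sending a path to the composite of the corresponding maps.

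Next I would check that $\Phi$ factors through $A$, i.e. that the two relations already hold in $E$. Relation~(1), that two consecutive cyan arrows compose to zero, is exactly $F^{(j)}_{(j-1)}\cdot F^{(j-1)}_{(j-2)}=0$, which is rule~(4) of Theorem~\ref{Th:multi1}. Relation~(2), cyan-then-black equals black-then-cyan, unwinds to $F^{(j)}_{(j-1)}\cdot \Id^{(j-1)}_{(j-2)}=\Id^{(j)}_{(j-1)}\cdot F^{(j-1)}_{(j-2)}$; by rules~(3) and~(2) of Theorem~\ref{Th:multi1} both sides equal $F^{(j)}_{(j-2)}$. Hence $\Phi$ descends to $\bar\Phi\colon A\to E$. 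Surjectivity follows from Proposition~\ref{Prop:IdF}: every $\Id^{(j)}_{(l)}$ is the image of the all-black path from $(j)$ to $(l)$ and every $F^{(j)}_{(l)}$ the image of a path with a single cyan arrow, and these elements together with the idempotents span $E$.

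Finally I would establish injectivity by counting dimensions, which is the real content. Relation~(1) forces every nonzero monomial of $A$ to contain at most one cyan arrow, and relation~(2) makes the position of that single cyan arrow along the path irrelevant; hence the nonzero paths from $(j)$ to $(l)$ reduce to the all-black path (present when $j\ge l$) and the one-cyan path (present when $j>l$). This yields a spanning set of $A$ of size $(N+1)+2\binom{N+1}{2}=(N+1)^2$, so $\dim A\le (N+1)^2$. Since $\dim E=(N+1)^2$ by the computation of Section~\ref{sec:dimexp1} and $\bar\Phi$ is surjective, $\bar\Phi$ is an isomorphism.

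The main obstacle is precisely this last step: one must argue that relations~(1) and~(2) already cut the free path algebra down to the correct size, i.e. that no further relations are needed. I expect the cleanest way to handle it is exactly the numerical match above, which is why the preparatory results -- the single-term description of the resolutions, the multiplication table of Theorem~\ref{Th:multi1}, and the $\Ext$-dimension formulas -- are all assembled at this point; the exact equality $\dim A=\dim E$ is what rules out hidden relations without any further combinatorial bookkeeping.
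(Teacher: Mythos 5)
Your proof is correct and follows essentially the same route the paper intends: Theorem \ref{mainalg1} is stated there as a direct consequence of Proposition \ref{Prop:IdF} (the explicit spanning elements), Theorem \ref{Th:multi1} (the multiplication table, which is exactly your two relations), and the $(N+1)^2$ dimension count of Section \ref{sec:dimexp1}. Your explicit homomorphism from the path algebra together with the check that relations (1) and (2) cut it down to exactly $(N+1)^2$ dimensions is precisely the assembly step the paper leaves implicit, carried out with the same ingredients.
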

\color{black}
\newpage
\section{Modules in $K_{N-1}^2-\mod$}\label{sec:Mod2}
In the second case we take $n=2$ and $m=N-1$. Again, we index the weights by the permutation used to obtain them from the maximal weight in the block. A shortest representation is of the form $s_2 \cdot \dots s_k \cdot s_1 \cdot \dots \cdot s_l$ with $0\leq l < k \leq N$. So the weight we get has a $\up$ at the $l$th position and another at the $k$th position (starting to count with position zero). We denote the weight $\lambda=\la_{0}. s_2 \cdot \ldots \cdot s_k \cdot s_1 \cdot \ldots \cdot s_l $ by $(k|l)$. 

\subsection{q-decomposition numbers}
Similar to the first case, we compute the q-decomposition numbers and write down projectives in terms of Vermas (Table \ref{proverm}) and Vermas in terms of simples (Table \ref{vermsimp}). Since there are at most two cups, the decomposition numbers are either $0, 1, q$ or $q^2$. Combining the two tables, we write down projectives in terms of simples (Table \ref{prosimp}).

\begin{table}[ht]

\caption{\label{proverm}$\mu$ such that $d_{\la,\mu}=\_$}
\footnotesize
			\begin{tabular}{|lc|l|l|l|}
		\toprule
		$\la=(m|j)$ &  &$1$&$q$ &$q^2$\\ \hline
		$\begin{array}{l} j\neq m-1\\j\neq 0\end{array}$ &
		\begin{tikzpicture}[scale=1]
		\draw (0,0) node[above=-1.55pt] {$\cdots$};
		\draw (0.4,0) node[above=-1.55pt] {$\down$};
		\draw (0.8, 0) node[above=-1.55pt] {$\up$};
		\draw (1.2,0) node[above=-1.55pt] {$\cdots$};
		\draw (1.6,0) node[above=-1.55pt] {$\down$};
		\draw (2,0) node[above=-1.55pt] {$\up$};
		\draw (2.4,0) node[above=-1.55pt] {$\cdots$};
		\draw (0.4,0) arc (180:360:0.2);
		\draw (1.6,0) arc (180:360:0.2);
		\end{tikzpicture}
														
		 &$(m|j)$&$\begin{array}{l} (m-1|j)\\(m|j-1)\end{array}$&$(m-1|j-1)$\\ \hline
		 
		 $\begin{array}{l} j=m-1\\ j\neq 0\\ j\neq 1 \end{array}$ &
		\begin{tikzpicture}[scale=1]
		\draw (0,0) node[above=-1.55pt] {$\cdots$};
		\draw (0.4,0) node[above=-1.55pt] {$\down$};
		\draw (0.8, 0) node[above=-1.55pt] {$\down$};
		\draw (1.2,0) node[above=-1.55pt] {$\up$};
		\draw (1.6,0) node[above=-1.55pt] {$\up$};
		\draw (2,0) node[above=-1.55pt] {$\cdots$};
		\draw (0.4,0) arc (180:360:0.6);
		\draw (0.8,0) arc (180:360:0.2);
		\end{tikzpicture}
														
		 & $(m|m-1)$ &$\begin{array}{l}(m-1|m-3)\\(m|m-2)\end{array}$	&$(m-2|m-3)$\\ \hline
		 
		 $\begin{array}{l} j=1\\ m=2 \end{array}$ &
		\begin{tikzpicture}[scale=1]
		\draw (0,0) node[above=-1.55pt] {$\down$};
		\draw (0.4,0) node[above=-1.55pt] {$\up$};
		\draw (0.8, 0) node[above=-1.55pt] {$\up$};
		\draw (1.2,0) node[above=-1.55pt] {$\cdots$};
		\draw (0,0) arc (180:360:0.2);
		\draw (0.8, -0.5)--(0.8,0);
		\end{tikzpicture}
														
		 & $(2|1)$ &$(2|0)$ 	&\\ \hline
		 
		$\begin{array}{l} j=0\\ m=1 \end{array}$ &
		\begin{tikzpicture}[scale=1]
		\draw (0,0) node[above=-1.55pt] {$\up$};
		\draw (0.4,0) node[above=-1.55pt] {$\up$};
		\draw (0.8, 0) node[above=-1.55pt] {$\down$};
		\draw (1.2,0) node[above=-1.55pt] {$\cdots$};
		\draw (0, -0.5)--(0,0);
		\draw (0.4, -0.5)--(0.4,0);
		\draw (0.8, -0.5)--(0.8,0);
		\end{tikzpicture}
														
		 & $(1|0)$ &  	&\\ \hline
		 
		$\begin{array}{l} j=0\\ m\neq1 \end{array}$ &
		\begin{tikzpicture}[scale=1]
		\draw (0,0) node[above=-1.55pt] {$\up$};
		\draw (0.4,0) node[above=-1.55pt] {$\cdots$};
		\draw (0.8, 0) node[above=-1.55pt] {$\down$};
		\draw (1.2,0) node[above=-1.55pt] {$\up$};
		\draw (1.6,0) node[above=-1.55pt] {$\cdots$};
		\draw (0.0, -0.5)--(0.0,0);
		\draw (0.8,0) arc (180:360:0.2);
		\end{tikzpicture}
														
		 & $(m|0)$ &$(m-1|0)$  	&\\ \hline
		 
		\end{tabular}
\end{table}

\begin{table}
\caption{\label{vermsimp}$\la$ such that $d_{\la,\mu}=\_$}
\footnotesize
	\begin{tabular}{|m{1.9cm}m{3.9cm}|m{2cm}|m{2.2cm}| m{1.8cm}|}
		\toprule
		$\mu=(m|j)$ &  &$1$&$q$& $q^2$\\ \hline
		$\begin{array}{l} j\neq m-1\\j\neq m-2 \\m \neq N\end{array}$ &
		\begin{tikzpicture}
		\draw (0,0) node[above=-1.55pt] {$\cdots$};
		\draw (0.4,0) node[above=-1.55pt] {$\down$};
		\draw (0.8, 0) node[above=-1.55pt] {$\up$};
		\draw (1.2,0) node[above=-1.55pt] {$\down$};
		\draw (1.6,0) node[above=-1.55pt] {$\cdots$};
		\draw (2,0) node[above=-1.55pt] {$\down$};
		\draw (2.4, 0) node[above=-1.55pt] {$\up$};
		\draw (2.8,0) node[above=-1.55pt] {$\down$};
		\draw (3.2,0) node[above=-1.55pt] {$\cdots$};
				\end{tikzpicture}
														
		 &$(m|j)$&$\begin{array}{l}(m+1|j)\\(m|j+1)\end{array}$ &$(m+1|j+1)$\\ \hline
		 
		 $\begin{array}{l} j=m-1\\ m<N-1 \end{array}$ &
		\begin{tikzpicture}
		\draw (0,0) node[above=-1.55pt] {$\cdots$};
		\draw (0.4,0) node[above=-1.55pt] {$\up$};
		\draw (0.8, 0) node[above=-1.55pt] {$\up$};
		\draw (1.2,0) node[above=-1.55pt] {$\down$};
		\draw (1.6,0) node[above=-1.55pt] {$\down$};
		\draw (2,0) node[above=-1.55pt] {$\cdots$};
		\end{tikzpicture}
														
		 & $(m|m-1)$ &$(m+1|m-1)$ 	&$(m+2|m+1)$\\ \hline
		 
		 $\begin{array}{l} j=m-2\\ m\neq N \end{array}$ &
		\begin{tikzpicture}
		\draw (0,0) node[above=-1.55pt] {$\cdots$};
		\draw (0.4,0) node[above=-1.55pt] {$\up$};
		\draw (0.8, 0) node[above=-1.55pt] {$\down$};
		\draw (1.2,0) node[above=-1.55pt] {$\up$};
		\draw (1.6,0) node[above=-1.55pt] {$\down$};
		\draw (2,0) node[above=-1.55pt] {$\cdots$};			
		\end{tikzpicture}
														
		 & $(m|m-2)$ &$\begin{array}{l}(m+1|m-2)\\(m|m-1)\\(m+1|m)\end{array}$&$(m+1|m-1)$\\ \hline
		 
		$\begin{array}{l} j=m-1\\ m=N-1 \end{array}$ &
		\begin{tikzpicture}
		\draw (0,0) node[above=-1.55pt] {$\cdots$};
		\draw (0.4,0) node[above=-1.55pt] {$\down$};
		\draw (0.8, 0) node[above=-1.55pt] {$\up$};
		\draw (1.2,0) node[above=-1.55pt] {$\up$};
		\draw (1.6,0) node[above=-1.55pt] {$\down$};
		\end{tikzpicture}
														
		 & $(N-1|N-2)$ &$(N|N-2)$ 	&\\ \hline
		 
		 $\begin{array}{l} j=m-1\\ m=N \end{array}$ &
		\begin{tikzpicture}
		\draw (0,0) node[above=-1.55pt] {$\cdots$};
		\draw (0.4,0) node[above=-1.55pt] {$\down$};
		\draw (0.8, 0) node[above=-1.55pt] {$\up$};
		\draw (1.2,0) node[above=-1.55pt] {$\up$};
		\end{tikzpicture}
														
		 & $(N|N-1)$ &  	&\\ \hline
		 
		$\begin{array}{l} j\neq m-1\\j\neq m-2 \\m = N\end{array}$ &
		\begin{tikzpicture}		\draw (0,0) node[above=-1.55pt] {$\cdots$};
		\draw (0.4,0) node[above=-1.55pt] {$\down$};
		\draw (0.8, 0) node[above=-1.55pt] {$\up$};
		\draw (1.2,0) node[above=-1.55pt] {$\down$};
		\draw (1.6,0) node[above=-1.55pt] {$\cdots$};
		\draw (2,0) node[above=-1.55pt] {$\down$};
		\draw (2.4, 0) node[above=-1.55pt] {$\up$};
		\end{tikzpicture}
														
		 &$(N|j)$&$(N|j+1)$ &\\ \hline

		 $\begin{array}{l} j=m-2\\ m= N \end{array}$ &
		\begin{tikzpicture}
		\draw (0,0) node[above=-1.55pt] {$\cdots$};
		\draw (0.4,0) node[above=-1.55pt] {$\up$};
		\draw (0.8, 0) node[above=-1.55pt] {$\down$};
		\draw (1.2,0) node[above=-1.55pt] {$\up$};
		\end{tikzpicture}
														
		 & $(N|N-2)$ &$(N|N-1)$ &\\ \hline
		 
		\end{tabular}
\end{table}
\begin{landscape}
\scriptsize
\begin{longtable}{|l|l|}
\caption{{\captionlabelfont{\label{prosimp}}}\normalsize Filtration of projective module $P(\la)$ by simple modules, same colour belonging to the same Verma module}

\\

\toprule
$\la=(m|j)$ & $P(m|j)$\\
\hline
$\begin{array}{l}j < m-3\\j\neq 0\\m\neq N\end{array}$&
$\begin{array}{l}
\hspace*{1cm}\blu L(m|j)\\
\blu L(m+1|j) L(m|j+1) \hspace{0.2cm} \re L(m-1 |j) \hspace{3.5cm} \ora L(m|j-1)\\
\blu L(m+1|j+1)  \re L(m|j) L(m-1|j+1)  \gre L(m-1|j-1) \ora L(m+1|j-1) L(m|j)\\
\hspace*{3cm} \re L(m|j+1) \hspace{0.2cm}\gre L(m|j-1) L(m-1|j) \hspace{0.3cm}\ora L(m+1|j)\\
\hspace*{6cm}\gre L(m|j)
\end{array}$\\ \hline

$\begin{array}{l}j=m-3\\j\neq 0\\m\neq N\end{array}$&

$\begin{array}{l}
\hspace*{1cm}\blu L(m|m-3)\\
\blu L(m+1|m-3) L(m|m-2) \hspace{0.1cm} \re L(m-1 |m-3) \hspace{4cm} \ora L(m|m-4)\\
\blu L(m+1|m-2)  \re L(m|m-3) L(m-1|m-2) L(m|m-1) \gre L(m-1|m-4) \ora L(m+1|m-4) L(m|m-3)\\
\hspace*{4cm} \re L(m|m-2) \hspace{0.5cm}\gre L(m|m-4) L(m-1|m-3) \hspace{0.5cm}\ora L(m+1|m-3)\\
\hspace*{7cm}\gre L(m|m-3)
\end{array}$\\ \hline

$\begin{array}{l}j=m-2\\j\neq 0\\m\neq N\end{array}$&

$\begin{array}{l}
\hspace*{1cm}\blu L(m|m-2)\\
\blu L(m+1|m-2) L(m|m-1) \hspace{0.1cm} \re L(m-1 |m-2) \hspace{2cm} \ora L(m|m-3)\\
\blu L(m+1|m-1)\hspace{0.5cm}  \re L(m|m-2) \hspace{1cm} \gre L(m-1|m-3) \ora L(m+1|m-3) L(m|m-2)\\
\hspace*{2cm} \re L(m+1|m) \gre L(m|m-3) L(m-1|m-2) L(m|m-1)\ora L(m+1|m-2)\\
\hspace*{5.5cm}\gre L(m|m-2)
\end{array}$\\ \hline

$\begin{array}{l}j=m-1\\j\neq 0\\m< N-1\end{array}$&

$\begin{array}{l}
\blu L(m|m-1)\\
\blu L(m+1|m-1) \hspace{3cm} \re L(m |m-2) \hspace{4cm} \ora L(m-1|m-3)\\
\blu L(m+2|m+1)  \re L(m+1|m-2) L(m|m-1) L(m+1|m) \gre L(m-2|m-3) \ora L(m|m-3) L(m-1|m-2) L(m|m-1)\\
\hspace*{4cm} \re L(m+1|m-1) \hspace{1.1cm}\gre L(m-1|m-3) \ora L(m|m-2)\\
\hspace*{7.4cm}\gre L(m|m-1)
\end{array}$\\ \hline

$\begin{array}{l}j < m-3\\j\neq 0\\m= N\end{array}$&
$\begin{array}{l}
\blu L(N|j)\\
\blu L(N|j+1) \hspace{0.2cm} \re L(N-1 |j) \hspace{2cm} \ora L(N|j-1)\\
\hspace*{0.2cm}  \re L(N|j) L(N-1|j+1)  \gre L(N-1|j-1) \ora L(N|j)\\
\hspace*{1cm} \re L(N|j+1) \hspace{0.5cm}\gre L(N|j-1) L(N-1|j) \\
\hspace*{3cm}\gre L(N|j)
\end{array}$\\ \hline

$\begin{array}{l}j < m-2\\j=0\\m= N\end{array}$&
$\begin{array}{l}
\blu L(N|0)\\
\blu L(N|1) \hspace{0.2cm} \re L(N-1 |0) \\
\hspace*{1cm}  \re L(N|0) L(N-1|1)\\
\hspace*{1.5cm} \re L(N|1)
\end{array}$\\ \hline

$\begin{array}{l}j=m-3\\j\neq 0\\m= N\end{array}$&

$\begin{array}{l}
\blu L(N|N-3)\\
\blu L(N|N-2) \hspace{0.1cm} \re L(N-1 |N-3) \hspace{3.5cm} \ora L(N|N-4)\\
\re L(N|N-3) L(N-1|N-2) L(N|N-1) \gre L(N-1|N-4) \ora L(N|N-3)\\
\hspace*{2cm} \re L(N|N-2) \hspace{0.5cm}\gre L(N|N-4) L(N-1|N-3) \\
\hspace*{5cm}\gre L(N|N-3)
\end{array}$\\ \hline

$\begin{array}{l}j=m-2\\j\neq 0\\m= N\end{array}$&

$\begin{array}{l}
\blu L(N|N-2)\\
\blu L(N|N-1) \re L(N-1 |N-2) \hspace{2cm} \ora L(N|N-3)\\
\hspace*{2cm}  \re L(N|N-2) \gre L(N-1|N-3) \ora L(N|N-2)\\
\hspace*{2cm} \gre L(N|N-3) L(N-1|N-2) L(N|N-1)\\
\hspace*{3.5cm}\gre L(N|N-2)
\end{array}$\\ \hline

$\begin{array}{l}j=m-1\\j\neq 0\\m=N\end{array}$&

$\begin{array}{l}
\blu L(N|N-1)\\
\re L(N |N-2) \hspace{4cm} \ora L(N-1|N-3)\\
\re L(N|N-1) \gre L(N-2|N-3) \ora L(N|N-3) L(N-1|N-2) L(N|N-1)\\
\hspace*{1.5cm} \gre L(N-1|N-3)\hspace{1.5cm} \ora L(N|N-2)\\
\hspace*{2cm}\gre L(N|N-1)
\end{array}$\\ \hline

$\begin{array}{l}j=m-1\\j\neq 0\\m= N-1\end{array}$&

$\begin{array}{l}
\blu L(N-1|N-2)\\
\blu L(N|N-2) \hspace{1cm} \re L(N-1 |N-3) \hspace{5cm} \ora L(N-2|N-4)\\
\re L(N|N-3) L(N-1|N-2) L(N|N-1) \gre L(N-3|N-4) \ora L(N-1|N-4) L(N-2|N-3) L(N-1|N-2)\\
\hspace*{1.5cm} \re L(N|N-2) \hspace{2cm}\gre L(N-2|N-4) \hspace{2cm}\ora L(N-1|N-3)\\
\hspace*{5cm}\gre L(N-1|N-2)
\end{array}$\\ \hline

$\begin{array}{l}j < m-3\\j= 0\\m\neq N\end{array}$&
$\begin{array}{l}
\hspace*{1cm}\blu L(m|0)\\
\blu L(m+1|0) L(m|1) \hspace{0.2cm} \re L(m-1 |0) \\
\hspace*{1cm}\blu L(m+1|1)  \re L(m|0) L(m-1|1) \\
\hspace*{3cm} \re L(m|1) \\
\end{array}$\\ \hline

$\begin{array}{l}j=0\\m=3\\m\neq N\end{array}$&
$\begin{array}{l}
\hspace*{0.5cm}\blu L(3|0)\\
\blu L(4|0) L(3|1) \hspace{0.5cm} \re L(2 |0) \\
\hspace*{0.5cm}\blu L(4|1)  \re L(3|0) L(2|1) L(3|2) \\
\hspace*{2cm} \re L(3|1) \\
\end{array}$\\ \hline

$\begin{array}{l}j=0\\m=2\\m\neq N\end{array}$&
$\begin{array}{l}
\hspace*{1.5cm}\blu L(2|0)\\
\blu L(3|0) L(2|1) L(3|2) \hspace{0.5cm} \re L(1 |0) \\
\hspace*{1cm}\blu L(3|1)  \hspace{1.3cm}\re L(2|0)  \\
\hspace*{3.1cm} \re L(3|2) \\
\end{array}$\\ \hline

$\begin{array}{l}j=0\\m=1\\m<N-1\end{array}$&
$\begin{array}{l}
\blu L(1|0)\\
\blu L(2|0) \\
\blu L(3|2)
\end{array}$\\ \hline
$\begin{array}{l}j=1\\m=2\\m\neq N-1\end{array}$&
$\begin{array}{l}
\blu L(2|1)\\
\blu L(3|1) \hspace{1cm}  \re L(2|0)  \\
\blu L(4|3)\re L(3 |0) L(2|1) L(3|2) \\
\hspace*{2cm} \re L(3|1) \\
\end{array}$\\ \hline
\end{longtable}
\end{landscape}

\normalsize

\subsection{Morphisms between projective modules}\label{sec:mapproj}
Using Table (\ref{prosimp}) we determine some general restrictions on morphisms between two projective modules. Note that this sharpens the conditions obtained in Lemma \ref{Le:condmaps}.
\begin{Lemma}\label{Le:possmaps}
The following conditions are necessary for the existence of degree zero morphisms $P(s|t)\langle i \rangle \to P(k|l)$:
\begin{enumerate}[label={$i=$\arabic*:}]
\setcounter{enumi}{-1}
\item $s+t \leq k+l+1$
\item $s+t \leq k+l+2$ or $(s|t)=(a+1|a)$ and $(k|l)=(a-1|a-2)$
\item $s+t \leq k+l+1$ or $(s|t)=(a+1|a)$ and $(k|l)=(a|a-2)$
\item $(s|t)=(k|l)$
\end{enumerate}
\end{Lemma}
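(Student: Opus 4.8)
The plan is to translate the existence of a degree-zero map $P(s|t)\langle i\rangle \to P(k|l)$ into a statement about a single radical layer of the target, and then to read off the answer from the explicit Loewy structure recorded in Table \ref{prosimp}. Concretely, since $P(s|t)=K_{N-1}^2\,e_{(s|t)}$ is projective one has $\hom(P(s|t)\langle i\rangle, P(k|l))_0 \cong (e_{(s|t)}P(k|l))_i$, and because $L(s|t)$ is one-dimensional this space has dimension equal to the graded multiplicity $[P(k|l):L(s|t)\langle i\rangle]$. By the Lemma identifying the radical filtration with the grading filtration, that multiplicity counts the copies of $L(s|t)$ sitting in the $i$-th radical layer of $P(k|l)$. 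Hence a nonzero map exists if and only if $L(s|t)$ appears in layer $i$ of $P(k|l)$, and the whole statement reduces to inspecting, layer by layer, which weights $(s|t)$ can occur and what constraint this places on $s+t$.

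First I would set up the bookkeeping intrinsically, so that the table only has to confirm the outcome rather than be trusted blindly. Combining the filtration Theorems \ref{qh1} and \ref{qh2} with BGG reciprocity gives $[P(k|l):L(s|t)\langle i\rangle]\neq 0$ exactly when there is a weight $\mu$ with both $\und{(k|l)}\mu$ and $\und{(s|t)}\mu$ oriented and $\deg(\und{(k|l)}\mu)+\deg(\und{(s|t)}\mu)=i$. Using $l((k|l))=k+l-1$ together with the description of an oriented diagram $\und{\la}\mu$ as a sequence of cup-flips, where flipping a cup of nesting $\nes$ lowers the length by $1+2\,\nes$ (as in the proof of Lemma \ref{Le:condmaps}), I would write $l((s|t))=l(\mu)+\sum(1+2\,\nes)\le l((k|l))+\sum(1+2\,\nes)$. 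For $n=2$ at most two cups are involved and each nesting is $0$ or $1$, so these sums are small and, layer by layer, reproduce the generic inequalities $s+t\le k+l+1$, $s+t\le k+l+2$, and so on. The only way to beat a generic bound is to flip a cup that nests another, and a short case analysis shows this forces exactly the two nested configurations $(s|t)=(a+1|a)$ with $(k|l)=(a-1|a-2)$ and $(s|t)=(a+1|a)$ with $(k|l)=(a|a-2)$; in the top layer only $L(k|l)$ survives, giving $(s|t)=(k|l)$.

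Finally I would confirm each conclusion against Table \ref{prosimp}, which already displays the Loewy layers of every $P(k|l)$ and in which the exceptional simples $L(a+1|a)$ are precisely the extremal entries of the $M(k|l)$-part in the rows for $j=m-1$ and $j=m-2$, while the socle row shows only $L(k|l)$. The main obstacle is organizational rather than conceptual: Table \ref{prosimp} splits into many boundary cases according to whether $j\in\{0,m-3,m-2,m-1\}$ and whether $m\in\{N-1,N\}$ or is small, and for each such case one must check that no simple exceeds the stated bound except the two nested exceptions. I expect the delicate points to be (i) matching the nesting analysis to the truncated diagrams at the edge of the block, where a cup degenerates into a ray and the candidate exceptional simple simply disappears, and (ii) keeping the correspondence between the radical-layer index and the grading shift $i$ consistent throughout, since an off-by-one there would shift every inequality.
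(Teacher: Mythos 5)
You are right that the problem reduces to graded composition multiplicities: since $P(s|t)=K_{N-1}^2\,e_{(s|t)}$ and all simples are one-dimensional, $\dim\hom(P(s|t)\langle i\rangle,P(k|l))_0=[P(k|l):L(s|t)\langle i\rangle]$, which is exactly how the paper argues in Lemma \ref{Le:geshifmaps}; the paper's own proof of the present lemma is then the single line ``Obvious by Table (\ref{prosimp})''. Your plan to re-derive these multiplicities from Theorems \ref{qh1} and \ref{qh2} and the cup-flip length formula, rather than trusting the table, is therefore a genuinely different and in principle more reliable route (incidentally, BGG reciprocity is not needed for it: the two filtration theorems already give all composition factors of $P(k|l)$).

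The gap is in the step you assert but do not carry out: that the case analysis ``forces exactly the two nested configurations'' appearing in the lemma. It does not. Do layer $i=1$ carefully: a factor $L(s|t)\langle 1\rangle$ of $P(k|l)$ arises from a weight $\mu$ with $\deg(\und{(s|t)}\mu)+\deg(\und{(k|l)}\mu)=1$, and in the subcase $\mu=(k|l)$, $\deg(\und{(s|t)}\mu)=1$, the flipped cup of $\und{(s|t)}$ may be the \emph{outer} cup of the nested pair in $\und{(a+1|a)}$ (endpoints $a-2$ and $a+1$). Then $(s|t)=(a+1|a)$, $(k|l)=(a|a-2)$, the diagram $\und{(a+1|a)}(a|a-2)$ is oriented of degree $1$, and $s+t=k+l+3$. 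So $d_{(a+1|a),(a|a-2)}=q$ --- the paper's own Table \ref{vermsimp} (row $j=m-2$, entry $(m+1|m)$ in the $q$-column) records the same --- hence $L(a+1|a)$ occurs in degree $1$ of $M(a|a-2)$ and therefore of $P(a|a-2)$, producing a nonzero degree-zero map $P(a+1|a)\langle 1\rangle\to P(a|a-2)$ that violates the first item of the lemma, where no exception is allowed (under either resolution of the shifted labels). For $a=2$ this factor is even visible in Table \ref{prosimp} itself: the row $j=0$, $m=2$ shows $L(3|2)$ in degree $1$ of $P(2|0)$, whereas the generic row $j=m-2$ omits the corresponding $L(m+1|m)$. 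So your method, executed correctly, does not confirm the statement: it shows that the $i=1$ item needs the same exception as the $i=3$ item (equivalently, that the generic $j=m-2$ row of Table \ref{prosimp} is missing a degree-one factor), and your final step of ``confirming against Table \ref{prosimp}'' would either fail at that row or silently inherit the omission. The off-by-one worry you raise in point (ii) is also real --- the printed labels $i=0,\dots,3$ have to be matched with the shifts $1,\dots,4$ as the lemma is applied in the proof of Lemma \ref{Le:posschmaps} --- but it is secondary to this missing exceptional case.
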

\begin{proof}
Obvious by Table (\ref{prosimp}).
\end{proof}
Similarly to the above computations for $K_N^{1}-\mod$, we determine degree $1$ morphisms between projectives. If we can cut both diagrams simultaneously into two parts (i.e. without cutting a cup or a cap), we can multiply independently. Especially if one part of the diagram is an oriented circle diagram of degree zero and we multiply it with itself, it stays unchanged. 

In some cases one can use Table (\ref{prosimp}) to see that $L(\la)$ does not appear in the degree 2 part of $P(\nu)$, so each morphism $P(\la)\to P(\mu) \to P(\nu)$ must be zero.
As before we read diagrams from bottom to top.
\input{mapprojtables.tex} 
Again, we denote by $P(\la) \to P(\mu)$ the degree one morphism computed in Tables \ref{tab:lamulaupm} - \ref{tab:lamunuspecdnm} (if it is nonzero) and call it the \emph{standard degree one morphism}. Note that since the morphisms are only unique up to scalar, we have made a choice.
\FloatBarrier		 
\subsection{The quiver of $\End(P)$}

For ease of presentation we summarise the relations from the tables and write down the quiver of $\End(P)$.
\begin{Theorem}
The algebra $\End(P)$ is given as the path algebra of the quiver (middle part, for $n>m+3, m\geq 1$)

\begin{displaymath}\xymatrix{
& \cdots \ar@/^/[d] & \cdots \ar@/^/[d] & \cdots \ar@/^/[d] &
 \\
\cdots \ar@/^/[r] & P(n+1|m+1)\ar@/^/[d] \ar@/^/[l]  \ar@/^/[u] \ar@/^/[r] & P(n|m+1)\ar@/^/[d] \ar@/^/[l]  \ar@/^/[u] \ar@/^/[r] &P(n-1|m+1)  \ar@/^/[d] \ar@/^/[l]  \ar@/^/[u] \ar@/^/[r]& \cdots \ar@/^/[l] \\
\cdots \ar@/^/[r] & P(n+1|m) \ar@/^/[d] \ar@/^/[l]  \ar@/^/[u] \ar@/^/[r] & P(n|m) \ar@/^/[d] \ar@/^/[l]  \ar@/^/[u] \ar@/^/[r] &P(n-1|m) \ar@/^/[d] \ar@/^/[l]  \ar@/^/[u] \ar@/^/[r] &\cdots \ar@/^/[l] \\
\cdots \ar@/^/[r] &P(n+1|m-1) \ar@/^/[d] \ar@/^/[l]  \ar@/^/[u] \ar@/^/[r] & P(n|m-1) \ar@/^/[d] \ar@/^/[l]  \ar@/^/[u] \ar@/^/[r] &P(n-1|m-1) \ar@/^/[d] \ar@/^/[l]  \ar@/^/[u] \ar@/^/[r] & \cdots \ar@/^/[l]\\
& \cdots \ar@/^/[u]& \cdots \ar@/^/[u]& \cdots \ar@/^/[u] & }\end{displaymath}
At the corners the quiver is given by
\begin{displaymath}
\xymatrix{
\dots \ar@/^/[r]& P(m|m-1)\ar@/^/[l] \ar@/^/[d] \ar@/^/ [rdd]& &\\
\cdots \ar@/^/[r] &P(m|m-2)\ar@/^/[r] \ar@/^/[l] \ar@/^/[d] \ar@/^/[u] & P(m-1|m-2) \ar@/^/[d] \ar@/^/[l] &\\
\cdots \ar@/^/[r] &P(m|m-3) \ar@/^/[r] \ar@/^/[l] \ar@/^/[d] \ar@/^/[u] & P(m-1|m-3) \ar@/^/[l] \ar@/^/[r] \ar@/^/[d] \ar@/^/[u] \ar@/^/[uul] & P(m-2|m-3)\ar@/^/[d] \ar@/^/[l]\\
& \cdots\ar@/^/[u] & \dots\ar@/^/[u] & \cdots\ar@/^/[u]}\end{displaymath}
with relations (in case that both sides of the relation exist)
\begin{multicols}{2}
    \raggedcolumns 
\begin{enumerate}
\item \begin{displaymath}
\xymatrix{\bullet \ar@/^/[r]& \bullet \ar@/^/[r]& \bullet}=0\end{displaymath}  always
\item \begin{tabular*}{0.3\textwidth}[]{m{0.1\textwidth}p{0.2\textwidth}}
     \begin{displaymath}\xymatrix{\bullet \ar@/^/[d]\\ \bullet \ar@/^/[d]\\ \bullet}=0\end{displaymath} & for the first arrow starting in $P(n|m)$ with $n>m+1$
    \end{tabular*}
\item \begin{displaymath}\xymatrix{\bullet & \bullet \ar@/^/[l]& \bullet \ar@/^/[l]}=0\end{displaymath} always
\item  \begin{tabular*}{0.3\textwidth}[]{m{0.1\textwidth}p{0.2\textwidth}} \begin{displaymath}\xymatrix{\bullet \\ \bullet \ar@/^/[u]\\ \bullet \ar@/^/[u]}=0\end{displaymath} & for the first arrow starting in $P(n|m)$ with $n>m+3$\end{tabular*}
\item \begin{displaymath}\xymatrix{\ar @{} [drrr] |{=} \bullet \ar@/^/[r] & \bullet & & \bullet\\ \bullet \ar@/^/[u] &&\bullet \ar@/^/[r] &\bullet \ar@/^/[u]}\end{displaymath} always
\item \begin{displaymath}\xymatrix{\ar @{} [drrr] |{=} \bullet \ar@/^/[d] & \bullet \ar@/^/[l] && \bullet\ar@/^/[d]  \\ \bullet&& \bullet &\bullet \ar@/^/[l]}\end{displaymath} always
\item \begin{displaymath}\xymatrix{\ar @{} [drrr] |{=} \bullet \ar@/^/[r] & \bullet \ar@/^/[d]  & \bullet \ar@/^/[d] &  \\ &\bullet &\bullet \ar@/^/[r]& \bullet}\end{displaymath} always
\item \begin{displaymath}\xymatrix{\ar @{} [drrr] |{=} \bullet & \bullet \ar@/^/[l] & \bullet & \\ &\bullet \ar@/^/[u]  & \bullet \ar@/^/[u]  &\bullet \ar@/^/[l]}\end{displaymath} always
\item \begin{displaymath}\xymatrix{\hfill \ar@/^/[r] & \bullet \ar@/^/[l]}=\xymatrix{\bullet \ar@/^/[r] & \hfill \ar@/^/[l]}\end{displaymath} always
\item \begin{tabular*}{0.3\textwidth}[]{m{0.15\textwidth}p{0.15\textwidth}} \begin{displaymath}\xymatrix{\ar @{} [ddr] |{=} \hfill \ar@/^/[d] & \\ \bullet \ar@/^/[u] &\bullet \ar@/^/[d]\\ &\hfill \ar@/^/[u]}\end{displaymath} & for the arrows starting in $P(n|m)$ with $n >m-2$\end{tabular*}
\setcounter{enumglobal}{\value{enumi}}
\end{enumerate}
\end{multicols}
These are all cases occuring in the middle of the quiver, i.e. in the upper diagram. 
We also have to look for those at the corner part.
\begin{multicols}{2}
    \raggedcolumns 
\begin{enumerate}
\setcounter{enumi}{\value{enumglobal}}
\item \begin{displaymath}\xymatrix{\ar @{} [ddr] |{=} \bullet \ar@/^/[d]&\bullet \ar@/^/[rdd]&\\\bullet \ar@/^/[d] & &\\\bullet &\bullet &\bullet \ar@/^/[l]}\end{displaymath}  with the arrows starting in $P(m|m-1)$
\item  \begin{displaymath}\xymatrix{\ar @{} [ddr] |{=} \bullet &\bullet &\\ \bullet \ar@/^/[u] & &\\\bullet \ar@/^/[u] &\bullet \ar@/^/[r] &\bullet \ar@/^/[uul]}\end{displaymath} with the arrows starting in $P(m|m-3)$
\item \begin{displaymath}\xymatrix{\ar @{} [ddr] |{=} \hfill \ar@/^/[d] & &&\\ \bullet \ar@/^/[u] &\bullet \ar @{} [r] |{+} \ar@/^/[d]&\bullet  \ar@/^/[r]&\hfill \ar@/^/[l]\\ &\hfill \ar@/^/[u]}\end{displaymath} starting in $P(m|m-2)$
\item \begin{displaymath}\xymatrix{\hfill \ar@/^/[r]&\bullet \ar@/^/[l]&\bullet \ar@/^/[rdd]&\\&  \hfill \ar @{} [r] |{=} &&\\& & &\hfill \ar@/^/[uul]}\end{displaymath} with the arrows starting in $P(m|m-1)$
\item  \begin{displaymath}\xymatrix{\hfill \ar@/^/[rdd]&&\\&   &\\&\bullet \ar@/^/[uul]  \ar @{} [r] |{=} &\bullet \ar@/^/[d]\\&&\hfill \ar@/^/[u]}\end{displaymath} with the arrows starting in $P(m|m-2)$
\item \begin{displaymath}\xymatrix{\ar @{} [drrr] |{=} \bullet &  & \bullet & \\ &\bullet \ar@/^/[d]  & \bullet \ar@/^/[u]  &\bullet \ar@/^/[l]\\ &\bullet \ar@/^/[uul] }\end{displaymath} starting in $P(m|m-1)$
\item  \begin{displaymath}\xymatrix{\ar @{} [drrr] |{=} \bullet \ar@/^/[ddr]&  &  & \\ \bullet \ar@/^/[u]  && \bullet \ar@/^/[d]  &\\ &\bullet  &\bullet \ar@/^/[r]&\bullet}\end{displaymath} starting in $P(m|m-2)$
\item   \begin{displaymath}\xymatrix{\ar @{} [drrr] |{=} \bullet  \ar@/_/[d] &  &  & \\ \bullet  && \bullet   &\\ &\bullet\ar@/^/[uul]  &\bullet \ar@/^/[u]&\bullet \ar@/^/[l]}\end{displaymath} starting in $P(m|m-2)$
\item \begin{displaymath}\xymatrix{ \bullet  &\bullet \ar@/^/[l] & \\  && \\ &&\bullet\ar@/^/[uul]}=0\end{displaymath}  
\item \begin{displaymath}\xymatrix{ \bullet  & \\  & \\ &\bullet\ar@/^/[uul]\\ &\bullet\ar@/^/[u] }=0\end{displaymath}
\item \begin{displaymath}\xymatrix{ \bullet \ar@/^/[r]&\bullet  \ar@/^/[ddr]& \\  && \\ &&\bullet}=0\end{displaymath}
\item \begin{displaymath}\xymatrix{ \bullet\ar@/^/[ddr]  & \\  & \\ &\bullet\ar@/^/[d]\\ &\bullet }=0\end{displaymath}
\item   \begin{displaymath}\xymatrix{\ar @{} [drrr] |{=} \bullet \ar@/^/[ddr] &  & \bullet \ar@/^/[d] & \\ &\bullet   & \bullet \ar@/^/[r] &\bullet \\ &\bullet\ar@/_/[u]  }\end{displaymath} starting in $P(m|m-2)$
\setcounter{enumglobal}{\value{enumi}}
\end{enumerate}
\end{multicols}
\begin{enumerate}
\setcounter{enumi}{\value{enumglobal}}
\item  A few extra relations at the lower bound of the quiver (which can easily be seen in the tables).
\end{enumerate}
\end{Theorem}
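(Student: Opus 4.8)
The plan is to exploit that $\End(P)\cong K_{N-1}^2$ is Koszul, hence quadratic by \cite[Corollary 2.3.3]{Beil1996}: the whole algebra is then determined by its quiver (a basis of the degree-one part, equivalently of $\operatorname{rad}/\operatorname{rad}^2$) together with the space of quadratic relations sitting inside the paths of length two. So the argument splits into two tasks — identifying the arrows, and computing every length-two composition to extract the relations — and no computation beyond degree two is needed.

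For the arrows I would invoke Section \ref{End}: there $\dim\hom_{K}(P(\la),P(\mu))_1\le 1$, and a nonzero degree-one map exists precisely when $\mu$ is obtained from $\la$ by a single admissible swap of an adjacent $\up\,\down$ or $\down\,\up$ at the end of a cup or cap. Reading this off in the $(k|l)$-coordinates of Section \ref{sec:Mod2} gives exactly the four neighbours drawn in the quiver: two horizontal arrows moving the first $\up$ and two vertical arrows moving the second $\up$, in both directions, with the appropriate arrows suppressed at the boundary (small $m$, $j\in\{0,m-1,m-2\}$, $m=N$). I would fix once and for all the \emph{standard degree-one morphism} $P(\la)\to P(\mu)$ to be the basis vector $\und{\la}\la\ove{\mu}$ when $\la>\mu$ and $\und{\la}\mu\ove{\mu}$ when $\la<\mu$, so that composing arrows means multiplying circle diagrams; this is the choice of scalar flagged in the statement.

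The relations then come from a single exhaustive computation: for every composable pair $P(\la)\to P(\mu)\to P(\nu)$, multiply the two circle diagrams by the generalised surgery procedure and record the output $\und{\la}\gamma\ove{\nu}$ or $0$. This is precisely the content of Tables \ref{tab:lamulaupm}--\ref{tab:lamunuspecdnm}, organised by the Bruhat position of $\mu$ relative to $\la,\nu$ (the four cases $\la\gtrless\mu\gtrless\nu$) and by which of the two $\up$'s is swapped first. From the outputs one reads off the two kinds of relations directly: a \emph{zero relation} whenever a length-two entry is $0$, and a \emph{commutativity relation} whenever two distinct length-two paths from $\la$ to $\nu$ produce the same nonzero diagram $\und{\la}\gamma\ove{\nu}$. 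Collecting these, and treating the corner and lower-boundary rows of the tables as separate small cases, reproduces the listed relations; the qualifier ``in case that both sides exist'' simply records that at the boundary one leg of a square may be absent.

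The main obstacle is \textbf{completeness}, i.e.\ showing the listed relations generate \emph{all} quadratic relations rather than a proper subset. Quadraticity localises the problem to degree two, and I would close it by a dimension count: for each target $P(\nu)$ the number of independent length-two paths into $P(\nu)$ modulo relations must equal the multiplicity of the head $L(\nu)$ appearing two steps up, which is read off the radical/grading filtration in Table \ref{prosimp} (equivalently bounded by the Cartan data of Lemma \ref{Le:geshifmaps}). Matching these two counts for every $\nu$ certifies that nothing has been missed. The genuinely laborious — though routine — part is the sign-free bookkeeping of the surgery outputs in the many boundary rows (the corner quiver and the ``few extra relations'' at the lower bound), where arrows vanish and squares degenerate; since we work inside $\End(P)$ in the module category, no homological signs intervene, so this is purely a matter of organising the diagram calculus carefully.
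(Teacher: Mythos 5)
Your proposal follows essentially the same route as the paper: the paper notes in Section \ref{End} that Koszulity (hence quadraticity, by \cite[Corollary 2.3.3.]{Beil1996}) reduces the whole algebra to the degree-one morphisms and their pairwise compositions, classifies the arrows there, and then states the theorem as a direct summary of the exhaustive surgery computations in Tables \ref{tab:lamulaupm}--\ref{tab:lamunuspecdnm}, without a separate formal proof. Your extra completeness step (the dimension count against Table \ref{prosimp}) is a harmless cross-check but is not logically needed: once every composable pair of standard degree-one morphisms has been multiplied out to an explicit basis diagram or to zero, that data already determines the full degree-two multiplication map, and hence its kernel, i.e.\ all quadratic relations --- which is exactly the implicit exhaustiveness the paper relies on.
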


\subsection{Terms occurring in the projective resolutions of Vermas}
To determine the terms occurring in the projective resolution of Vermas we first compute the combinatorial Kazhdan-Luztig polynomials in Table \ref{table:combkaz}.

\begin{sidewaystable}[ht]
\caption{Kazhdan-Lustig polynomials\label{table:combkaz}}
	\centering		\begin{tabular}{|l|l|l|l|l|}
		\toprule
		$\mu=(s|t)$ &$\la=(n|m)$  & &$|C|$ &$p_{\la,\mu}$\\ \hline
		$\begin{array}{l} t\neq s-1\\t\neq 0\end{array}$ &
		$t\leq m < s$
		&\begin{tikzpicture}[scale=1]
		\draw (-0.6,0) node[above=-1.55pt] {$\mu$};
		\draw (0,0) node[above=-1.55pt] {$\cdots$};
		\draw (0.4,0) node[above=-1.55pt] {$\down$};
		\draw (0.8, 0) node[above=-1.55pt] {$\up$};
		\draw (1.2,0) node[above=-1.55pt] {$\cdots$};
		\draw (1.6,0) node[above=-1.55pt] {$\down$};
		\draw (2,0) node[above=-1.55pt] {$\up$};
		\draw (2.4,0) node[above=-1.55pt] {$\cdots$};
		\draw (0.8,0.3) arc (0:180:0.2);
		\draw (2,0.3) arc (0:180:0.2);
		\begin{scope}[yshift=-0.4cm]
		\draw (-0.6,0) node[above=-1.55pt] {$l_i$};
		\draw (0.4,0) node[above=-1.55pt] {$0$};
		\draw (1.6,0) node[above=-1.55pt] {$0$};
		\end{scope}
		\begin{scope}[yshift=0.15cm]
		\scriptsize	
		\draw (0.6,0) node[above=-1.55pt] {$0$};
		\draw (1.8,0) node[above=-1.55pt] {$0$};	
		\end{scope}	
		\begin{scope}[yshift=-0.8 cm]
		\draw (-0.6,0) node[above=-1.55pt] {$\la$};
		\draw (0,0) node[above=-1.55pt] {$\cdots$};
		\draw (0.4,0) node[above=-1.55pt] {$\down$};
		\draw (0.8,0) node[above=-1.55pt] {$\cdots$};
		\draw (1.2, 0) node[above=-1.55pt] {$\up$};
		\draw (1.8,0) node[above=-1.55pt] {$\cdots$};
		\draw (2.2,0) node[above=-1.55pt] {$\up$};
		\draw (2.6,0) node[above=-1.55pt] {$\cdots$};
		\end{scope}
		\end{tikzpicture}
		& $\left\{ 0 \right\}$
		& $q^{(m+n)-(s+t)}$\\ \hline

		$\begin{array}{l} t\neq s-1\\t\neq 0\end{array}$ &
		$s\leq m$
		&\begin{tikzpicture}[scale=1]
		\draw (-0.6,0) node[above=-1.55pt] {$\mu$};
		\draw (0,0) node[above=-1.55pt] {$\cdots$};
		\draw (0.4,0) node[above=-1.55pt] {$\down$};
		\draw (0.8, 0) node[above=-1.55pt] {$\up$};
		\draw (1.2,0) node[above=-1.55pt] {$\cdots$};
		\draw (1.6,0) node[above=-1.55pt] {$\down$};
		\draw (2,0) node[above=-1.55pt] {$\up$};
		\draw (2.4,0) node[above=-1.55pt] {$\cdots$};
		\draw (0.8,0.3) arc (0:180:0.2);
		\draw (2,0.3) arc (0:180:0.2);
		\begin{scope}[yshift=-0.4cm]
		\draw (-0.6,0) node[above=-1.55pt] {$l_i$};
		\draw (0.4,0) node[above=-1.55pt] {$0$};
		\draw (1.6,0) node[above=-1.55pt] {$1$};
		\end{scope}
		\begin{scope}[yshift=0.15cm]
		\scriptsize	
		\draw (0.6,0) node[above=-1.55pt] {$0$};
		\tiny
		\draw (1.8,0) node[above=-1.55pt] {$0/1$};	
		\end{scope}	
		\begin{scope}[yshift=-0.8 cm]
		\draw (-0.6,0) node[above=-1.55pt] {$\la$};
		\draw (0,0) node[above=-1.55pt] {$\cdots$};
		\draw (0.4,0) node[above=-1.55pt] {$\down$};
		\draw (0.8,0) node[above=-1.55pt] {$\down$};
		\draw (1.2, 0) node[above=-1.55pt] {$\cdots$};
		\draw (1.6,0) node[above=-1.55pt] {$\down$};
		\draw (2,0) node[above=-1.55pt] {$\cdots$};
		\draw (2.4,0) node[above=-1.55pt] {$\up$};
		\draw (2.8,0) node[above=-1.55pt] {$\cdots$};
		\draw (3.2,0) node[above=-1.55pt] {$\up$};
		\end{scope}
		\end{tikzpicture}
		& $\left\{ 0,1 \right\}$
		& $q^{(m+n)-(s+t)}+q^{(m+n)-(s+t)-2}$\\ \hline
		
		$\begin{array}{l} t=0\\s\neq 1\end{array}$ &
		$ m < s$
		&\begin{tikzpicture}[scale=1]
		\draw (-0.6,0) node[above=-1.55pt] {$\mu$};
		\draw (0.0, 0) node[above=-1.55pt] {$\up$};
		\draw (0.4,0) node[above=-1.55pt] {$\cdots$};
		\draw (0.8,0) node[above=-1.55pt] {$\down$};
		\draw (1.2,0) node[above=-1.55pt] {$\up$};
		\draw (1.6,0) node[above=-1.55pt] {$\cdots$};
		\draw (0,0.3) --(0,0.7);
		\draw (1.2,0.3) arc (0:180:0.2);
		\begin{scope}[yshift=-0.4cm]
		\draw (-0.6,0) node[above=-1.55pt] {$l_i$};
		\draw (0.8,0) node[above=-1.55pt] {$0$};
		\end{scope}
		\begin{scope}[yshift=0.15cm]
		\scriptsize	
		\draw (1,0) node[above=-1.55pt] {$0$};
		\end{scope}	
		\begin{scope}[yshift=-0.8 cm]
		\draw (-0.6,0) node[above=-1.55pt] {$\la$};
		\draw (0.2,0) node[above=-1.55pt] {$\cdots$};
		\draw (0.6,0) node[above=-1.55pt] {$\up$};
		\draw (1,0) node[above=-1.55pt] {$\cdots$};
		\draw (1.4, 0) node[above=-1.55pt] {$\up$};
		\draw (1.8,0) node[above=-1.55pt] {$\cdots$};
				\end{scope}
		\end{tikzpicture}
		& $\left\{ 0 \right\}$
		& $q^{(m+n)-(s+t)}$\\ \hline

		$\begin{array}{l} t=0\\s\neq 1\end{array}$ &
		$s \leq m$
		&\begin{tikzpicture}[scale=1]
		\draw (-0.6,0) node[above=-1.55pt] {$\mu$};
		\draw (0.0, 0) node[above=-1.55pt] {$\up$};
		\draw (0.4,0) node[above=-1.55pt] {$\cdots$};
		\draw (0.8,0) node[above=-1.55pt] {$\down$};
		\draw (1.2,0) node[above=-1.55pt] {$\up$};
		\draw (1.6,0) node[above=-1.55pt] {$\cdots$};
		\draw (0,0.3) --(0,0.7);
		\draw (1.2,0.3) arc (0:180:0.2);
		\begin{scope}[yshift=-0.4cm]
		\draw (-0.6,0) node[above=-1.55pt] {$l_i$};
		\draw (0.8,0) node[above=-1.55pt] {$1$};
		\end{scope}		
		\begin{scope}[yshift=0.15cm]
		\tiny	
		\draw (1,0) node[above=-1.55pt] {$0/1$};
		\end{scope}	
		\begin{scope}[yshift=-0.8 cm]
		\draw (-0.6,0) node[above=-1.55pt] {$\la$};
		\draw (0,0) node[above=-1.55pt] {$\down$};
		\draw (0.4,0) node[above=-1.55pt] {$\cdots$};
		\draw (0.8,0) node[above=-1.55pt] {$\down$};
		\draw (1.2, 0) node[above=-1.55pt] {$\cdots$};
		\draw (1.6,0) node[above=-1.55pt] {$\up$};
		\draw (2,0) node[above=-1.55pt] {$\cdots$};
		\draw (2.4,0) node[above=-1.55pt] {$\up$};
		\end{scope}
		\end{tikzpicture}
		& $\left\{ 0,1 \right\}$
		& $q^{(m+n)-(s+t)}+q^{(m+n)-(s+t)-2}$\\ \hline

		$t=s-1$
		&all
		&\begin{tikzpicture}[scale=1]
		\draw (-0.6,0) node[above=-1.55pt] {$\mu$};
		\draw (0.0, 0) node[above=-1.55pt] {$\dots$};
		\draw (0.4,0) node[above=-1.55pt] {$\up$};
		\draw (0.8,0) node[above=-1.55pt] {$\up$};
		\draw (1.2,0) node[above=-1.55pt] {$\down$};
		\draw (1.6,0) node[above=-1.55pt] {$\down$};
		\draw (2,0) node[above=-1.55pt] {$\cdots$};
		\draw (1.2,0.3) arc (0:180:0.2);
		\draw (1.6,0.3) arc (0:180:0.6);
		\begin{scope}[yshift=-0.4cm]
		\draw (-0.6,0) node[above=-1.55pt] {$l_i$};
		\draw (0.8,0) node[above=-1.55pt] {$0$};
		\end{scope}		
		\begin{scope}[yshift=0.2cm]
		\tiny	
		\draw (1,0) node[above=-1.55pt] {$0$};
		\draw (0.6,0) node[above=-1.55pt] {$0$};
		\end{scope}	
		\begin{scope}[yshift=-0.8 cm]
		\draw (-0.6,0) node[above=-1.55pt] {$\la$};
		\draw (0,0) node[above=-1.55pt] {$\cdots$};
		\draw (0.4,0) node[above=-1.55pt] {$\down$};
		\draw (0.8,0) node[above=-1.55pt] {$\down$};
		\draw (1.2, 0) node[above=-1.55pt] {$\cdots$};
		\draw (1.6,0) node[above=-1.55pt] {$\up$};
		\draw (2,0) node[above=-1.55pt] {$\cdots$};
		\draw (2.4,0) node[above=-1.55pt] {$\up$};
		\end{scope}
		\end{tikzpicture}
		& $\left\{ 0\right\}$
		& $q^{(m+n)-(s+t)}$\\	\hline
		\end{tabular}

\end{sidewaystable}

For the terms occurring at the $i$th position of the resolution of $M(\la)$ with $\la=(n|m)$ fixed we get by Theorem \ref{sumocc} $$P_i=\bigoplus_{\mu \in \La_m^n} p_{\la,\mu}^{(i)} P(\mu)\langle i \rangle.$$
Using Table \ref{table:combkaz} we see, that all $\mu=(s|t)$ with $\mu \geq \la$ (i.e. $s\leq n, t\leq m$) and $m+n-(s+t)=i$ occur. These terms we call \emph{A-terms}. So the A-term part of $P_i$ is 
\begin{align*}
\bigoplus_{\substack{s+t+i=m+n\\t\leq m\\s\leq n}}P(s|t).
\end{align*}
There are also other terms occurring, the so-called \emph{B-terms}, for which $s\leq n, l\leq m$, $s \leq m$ and $s\neq t+1$ with $m+n-(s+t)-2=i$.
The B-term part of $P_i$ is
\begin{equation*}
\bigoplus_{\substack{s+t+i+2=m+n\\s \neq t+1\\s\leq m}}P(s|t)
\end{equation*}
From now on, for a fixed $\la=(m|n)$ we denote by $P(s|t)_A$ the projective module $P(s|t)$ occurring in the A-term part and by $P(s|t)_B$ the one occurring in the B-term part.

\FloatBarrier
\subsection{The differentials in the projective resolution}
For the computation of the differentials in the resolutions in $K_{N-1}^2-\mod$ we use the resolutions in $K^1_{N-2}-\mod$ which we know from Theorem \ref{Th:diffproj1}. Inductively we prove:

\begin{Theorem}\label{Th:diff}
All possible degree one maps between $P_{i+1}$ and $P_i$ occur as differentials. Because they are unique up to scalar, the maps can be chosen the following way:
\begin{description}
\item[Maps between the A-terms:]\hfill
\begin{itemize}
	\item[i)] $P(s|t)_A \to P(s+1|t)_A \hspace{2cm} (-1)^{n+m+s+t+1}$
	
	\item [ii)]$ P(s|t)_A \to P(s|t+1)_A \hspace{2cm}  (-1)^{m+t+1}$
\end{itemize}
\item[Maps between the B-terms:]\hfill
\begin{itemize}
	\item[iii)] $P(s|t)_B \to P(s+1|t)_B \hspace{2cm} (-1)^{m+s+1}$
	
	\item [iv)]$ P(s|t)_B \to P(s|t+1)_B \hspace{2cm}  (-1)^{n+m+s+t+1}$
\end{itemize}
\item[Maps from A-terms to B-terms:]\hfill
\begin{itemize}
	\item[v)] $P(s|t)_A \to P(s-1|t)_B \hspace{2cm} (-1)^{(s+t+1)(n+s)+n+m+1}$
	
	\item [vi)]$ P(s|t)_A \to P(s|t-1)_B \hspace{2cm}  (-1)^{(s+t+1)(n+s)+m+s}$
\end{itemize}
\item[Maps from B-terms to A-terms:]\hfill
\begin{itemize}
	\item[vii)] $P(s|s-2)_B \to P(s+1|s)_A \hspace{1.25cm} (-1)^{n+m+1}$
\end{itemize}
\end{description}
\end{Theorem}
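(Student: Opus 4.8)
The plan is to prove Theorem \ref{Th:diff} by the same simultaneous induction used in Theorem \ref{Th:constrproj}, carrying the explicit sign assertions through the cone construction of Lemma \ref{Le:Coneconstr}. The base cases are the dominant weight, where $M(\la_0)=P(\la_0)$, and the algebras $K_{N-1}^0$, $K_0^2$ together with the $K^1_{N-2}$-resolutions whose signs are already pinned down in Theorem \ref{Th:diffproj1}. For the inductive step I fix $\la=(n|m)\in\La_{i,i+1}^{\!\vee\wedge}$, set $\la'$ (delete vertices $i,i+1$), $\la''=\la.s_i$, and invoke the short exact sequence \eqref{step2}. By the induction hypothesis the resolutions $P_\bullet(\la'')$ and $G^{t_i}_{\La\Ga}P_\bullet(\la')\langle 1\rangle$ are known with their signs, and the differential of the cone is
\begin{equation*}
d_n(\la)=\left(\begin{matrix}-d_{n-1}(\la'')&0\\f_{n-1}&G^t_{\La\Ga}(d_n(\la'))\end{matrix}\right).
\end{equation*}
So the task reduces to identifying, term by term, which summands $P(s|t)$ in $P_\bullet(\la)$ come from the $\la''$-block versus the $G^{t_i}_{\La\Ga}\la'$-block, and reading off the induced sign.

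First I would set up the bookkeeping that matches the combinatorial A-term/B-term decomposition of $P_i(\la)$ (from Table \ref{table:combkaz}) to the two blocks of the cone: the terms $G^t_{\La\Ga}P_k(\la')\langle 1\rangle$ account for one family and $P_{k-1}(\la'')\langle 1\rangle[1]$ for the other. Because $\deg$-one maps between projectives are unique up to scalar (Section \ref{End}), each of the seven map types i)--vii) is determined by a single sign, and the cone formula propagates these: the $-d_{n-1}(\la'')$ entry flips the sign of every map inherited from the $\la''$-resolution (explaining the parity shifts between the A-to-A rule ii) and the B-to-B rule iv), and between i) and iii)), while $G^t_{\La\Ga}(d_n(\la'))$ transports the $K_{m-1}^{n-1}$-signs through the exact projective functor without change. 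The genuinely new data in each step are the components of the connecting map $f_{n-1}$ induced by $f$ in \eqref{step2}; these produce precisely the A-to-B maps v), vi) and the exceptional B-to-A map vii), and their signs must be computed from how $f$ acts on the highest weight vector together with the grading shift $\langle 1\rangle$ recorded in \eqref{step2} and \eqref{prback}.

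The main obstacle is fixing these $f_{n-1}$-signs coherently: the lift $f_\bullet$ of $f$ is only defined up to homotopy, so I must make a deliberate choice of chain-level representative and then verify that the chain-map (anti)commutativity conditions force exactly the stated exponents. Concretely, the hard part is checking the squares involving $f_{n-1}$: one must confirm that with the asserted signs every square in
\begin{equation*}
\xymatrix{P_k^{L}\langle 1\rangle\ar[r]^{d^L_k}\ar[d]_{f_k}&P_{k-1}^{L}\langle 1\rangle\ar[d]^{f_{k-1}}\\ P_k^{M}\ar[r]^{d^M_k}&P_{k-1}^{M}}
\end{equation*}
commutes, using the explicit degree-one multiplication rules recorded in Tables \ref{tab:lamulaupm}--\ref{tab:lamunuspecdnm}. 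This is where the parity factor $(-1)^{(s+t+1)(n+s)}$ in v) and vi) originates, and pinning it down requires carefully tracking how the vertex positions $(s,t)$ enter the number of sign changes under the surgery procedure.

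Once the sign assignment is shown consistent on every local configuration of the quiver, surjectivity of the differential (hence exactness) is inherited automatically from Lemma \ref{Le:Coneconstr} and Theorem \ref{Th:constrproj}, so no separate acyclicity argument is needed. I would organize the verification by the seven map types, handling the generic middle-of-the-quiver cases first and then the corner cases flagged in the quiver description, and I expect the bulk of the work — though routine — to be the case analysis distinguishing whether the first or the second $\up$ is being swapped, since that is what determines which of i)/ii), iii)/iv), v)/vi) applies.
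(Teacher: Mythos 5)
Your overall route is the paper's: the same simultaneous induction, the same cone construction from Lemma \ref{Le:Coneconstr}, and the same strategy of pinning down the connecting-map signs by forcing (anti)commutativity of the squares against the multiplication tables of Section \ref{sec:mapproj}, anchored at the $K^1_{N-2}$-signs of Theorem \ref{Th:diffproj1}. However, your bookkeeping of which maps arise from which block of the cone is wrong, and this is exactly the bookkeeping the proof hangs on. With the choice made in the paper (swap at the cup $m-1,m$, so $\la''=(n|m-1)$ and $\la'=(n-2)\in\La_{N-2}^1$ in your notation), the functor block $G^{t_{m-1}}_{\La\Ga}P_\bullet(\la')\langle 1\rangle$ consists of the A-terms $P(s|m)_A$ with second index $m$ \emph{together with} the B-terms $P(m|t)_B$ with first index $m$; the $\la''$-block consists of everything else. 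In particular both blocks contain A-terms and B-terms, so your claim that one block accounts for one family and the other block for the other is false. More importantly, the new instance of vii), namely $P(m|m-2)_B\to P(m+1|m)_A$, has both source and target inside the functor block, so it cannot be a component of $f_\bullet$ (which maps the $\la''$-block into the functor block); it is simply the image under $G^{t_{m-1}}_{\La\Ga}$ of the $K^1$-differential $P(m-2)\to P(m-1)$, and its sign $(-1)^{n+m+1}$ falls out of $G^{t_{m-1}}_{\La\Ga}d_j(\la')=(-1)^{n+j+1}d$ with no work; the same is true of the new instances of i) and iv).

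Conversely, the genuinely new data carried by the connecting map are not only v) and vi) but also the new instances of ii) ($P(s|m-1)_A\to P(s|m)_A$) and iii) ($P(m-1|t)_B\to P(m|t)_B$): these are exactly the maps crossing from the $\la''$-block into the functor block. In your scheme they are unaccounted for --- they are not inherited, so the global sign flip does not produce them, and they are not internal to the functor block, so they are not ``transported without change.'' This matters because these are among the maps that need the hard argument: the paper fixes the top component of $f_\bullet$ to be the standard morphism $P(n|m-1)_A\to P(n|m)_A$ and then determines all remaining components --- the signs in v), vi) and the fact that ii), iii) occur with sign $+1$ --- by a downward induction on $t$ using the relations of Section \ref{sec:mapproj}, with explicit base cases $t=m-2$ and $t=m-3$ where a cancellation of two composites is what closes the square. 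If you run your case analysis with vii) on the ``hard'' list and ii), iii) absent from it, you will both chase a connecting component that does not exist and leave the actual connecting components unverified.
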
	
\begin{proof}
Recall from paragraph \ref{sec:linprojres} the way we construct the projective resolutions which is done inductively. In our situation we have $\La=\La_{N-1}^2$ and $\Ga=\La_{N-2}^1$.

For $m=0$ we first construct the resolutions fixing $i=n-1$. 

We know that $0 \to P(1|0)\to M(1|0) \to 0$ is exact (the morphism is an isomorphism), so we may assume $n \geq 2$. $\la''=(0)$, so we get the resolution  $0 \to G^{t_0}_{\La\Ga}P(0) \to G^{t_0}_{\La\Ga}M(0) \to 0$ 
and by  equation \eqref{prback} we know $G^{t_0}_{\La\Ga}P(0)=P(n|0)$. 

By the structure of the projective resolution we also know that each term of the resolution of $M(n|0)$ equals some $P(s|0)_A$, so we only get differentials of type i). Using the cone construction all differentials $P(s|0) \to P(s+1|0)$ with $s+1<n$ equal those in the resolution of $M(n-1)$ multiplied by $(-1)$. Taking the formulas in i) we see that changing $n-1$ to $n$ multiplies the formula by $(-1)$. 

The map $P(n-1|0)_A \to P(n|0)_A$ by construction has sign $1$, which equals $(-1)^{n+0+n-1+0+1}$. So we have shown the theorem for all Vermas of the form $M(n|0)$.
\\
For $m>0$ we fix $i-1=m$ and construct the resolution for $\la'=(n|m-1)$ and $\la''=(n-2)$.\\
 We already know the projective resolution of $M(n-2) \in K^1_{N-2}-\mod$ which gives us the resolution of $G_{\La\Ga}^{t_{m-1}}M(n-2)$
\begin{equation}
0 \rightarrow G^{t_{m-1}}_{\La\Ga}P(0) \rightarrow G^{t_{m-1}}_{\La\Ga}P(1)\rightarrow \cdots \rightarrow G^{t_{m-1}}_{\La\Ga}P(n-2) \rightarrow
G^{t_{m-1}}_{\La\Ga} M(n-2)\rightarrow 0
\end{equation}
By equation (\ref{prback}) we know, that $G^{t_{m-1}}_{\La\Ga}P(j)$ equals the projective module one gets by putting $\down \up$ on the $(m-1)$st and $m$th position. So we obtain
\begin{equation}
G^{t_{m-1}}_{\La\Ga}P(j)=\begin{cases} P(m|j) &\text{for }j<m-1\\ P(j+2|m) &\text{for } j\geq m-1\end{cases}
\end{equation}
Following the cone construction for the differentials we get
\begin{align}
G^{t_{m-1}}_{\La\Ga}P(j) \stackrel{G^{t_{m-1}}_{\La\Ga}d_j(\la')}{\longrightarrow} G^{t_{m-1}}_{\La\Ga}P(j+1)
G^{t_{m-1}}_{\La\Ga}d_j(\la')=(-1)^{n+j+1}d \label{eq:diff}
\end{align}
 where $d$ is the map between the two projective modules defined in Section \ref{sec:mapproj}. Using the cone construction one first observes that all maps which do not end in $G^{t_{m-1}}_{\La\Ga}P_\bullet(n-2)$ are the differential maps from $P_\bullet(n|m-1)$ multiplied with $(-1)$. One easily checks that by changing $(m-1)$ to $m$ all maps in the theorem become multiplied by $(-1)$. Of course, by induction all these maps occur.
 
For the maps between A-terms we just have to look for those ending in A-terms belonging to $G^{t_{m-1}}_{\La\Ga}P_\bullet(n-2)$, i.e. those going to a projective module $P(s|m)_A$.

(i) We have to look at $P(s|m)_A \to P(s+1|m)_A$. This map occurs in the resolution $G^{t_{m-1}}_{\La\Ga}P_\bullet(n-2)$ as the map $G^{t_{m-1}}_{\La\Ga}P(s-2) \to G^{t_{m-1}}_{\La\Ga}P(s-1)$ and by \eqref{eq:diff} the sign is $(-1)^{n+s+1}=(-1)^{n+s+m+m+1}$ so (i) holds.

(ii) In this case we have to determine the map $P(s|m-1)_A \to P(s|m)_A$ which we get from lifting the chain map $M(n|m-1) \stackrel{f}{\rightarrow} G^{t_{m-1}}_{\La\Ga}M(n-2)$. In the construction we take $f$ such that the map $P(n|m-1)_A \to G^{t_{m-1}}_{\La\Ga}P(n-2)=P(n|m)_A$ is the morphism obtained by the multiplication in the algebra (cf. Section \ref{sec:mapproj}). Now we check that all the other maps appear and the signs are equal to $(-1)^{m+m-1+1}=1$.
Given the map $P(s+1|m-1)_A \to P(s+1|m)_A$ with $s\geq m+1$ we get a map 
$$P(s|m-1)_A \stackrel{(-1)^{n+s+1}}{\longrightarrow}P(s+1|m-1)_A \to P(s+1|m)_A.$$There is no other morphism from $P(s|m-1) \to P_{n-s-1} \to P(s+1|m)$ and by the relations obtained in Section \ref{sec:mapproj} we know that the above map equals the map 
$$P(s|m-1)_A\to P(s|m)_A \stackrel{(-1)^{n+s+1}}{\longrightarrow} P(s+1|m)_A.$$
Thus, the map $P(s|m-1)_A \to P(s|m)_A$ must occur and has sign $1$.
\\
For the B-terms all terms without $P(m|t)_B$ already exist in $P_\bullet(n|m-1)$. We only have to look for those maps going to $P(m|t)_B$.

iv) The map $P(m|t)_B \to P(m|t+1)_B$ comes from the map $G^{t_{m-1}}_{\La\Ga}P(t) \to G^{t_{m-1}}_{\La\Ga}P(t+1)$ so it has the sign $(-1)^{n+t+1}=(-1)^{n+t+m+m+1}$.

vii) We just have to look for the map $P(m|m-2)_B \to P(m+1|m)_A$ which comes from the map $G^{t_{m-1}}_{\La\Ga}P(m-2) \to G^{t_{m-1}}_{\La\Ga}P(m-1)$ and therefore has sign $(-1)^{n+m+1}$.
\\
Now we are left to show iii), v) and vi) for those maps ending in $P(m|t)_B$. Those come from the lift of the chain map $f$ (see above). The terms from $P_{n-t-1}(n|m-1)$ possibly mapping to $P(m|t)_B$ are $P(m+1|t)_A$, $P(m|t+1)_A$ and $P(m-1|t)_B$ (this one only exists for $t \neq m-2$).

First check the assertion for $t=m-2$ and for $t=m-3$.

\begin{tikzpicture}
\matrix [matrix of math nodes, column sep=1cm]
{
|(a1)| P(m+1|m-3)_A &|(b1)| P(m+1|m-2)_A &|(d)| P(m+1|m-1)_A \\
|(a2)| P(m|m-2)_A & |(b2)| P(m|m-1)_A \\
|(a3)| P(m-1|m-3)_B &  \\
 \hfill&
 \\
  \hfill&
 \\
 |(c1)| P(m|m-3)_B & |(c2)| P(m|m-2)_B& |(c3)| P(m+1|m)_A\\
};
\begin{scope}[every node/.style={font=\scriptsize }]
\draw [->] (b1) -- node[above]  {$1$} (d);
\draw [->] (b2) -- node[below] {$(-1)^{n+m+1}$} (d);
\draw [->] (c2) -- node[above]  {$(-1)^{n+m+1}$} (c3);
\draw [->] (a1) -- node[above]  {$-1$} (b1);
\draw [->] (a2) -- node [above=-3pt]{$(-1)^{n+m}$} (b1);
\draw [->] (a2) -- node[fill=white,inner sep=2pt] {$1$} (b2);
\draw [->] (a3) -- node[below] {$(-1)^{n+m}$} (b2);
\draw [->] (d) -- node[left] {$1$} (c3);
\draw [->] (a1.west) to [bend right=60 ] node [fill=white,inner sep=2pt]{$d_1$} (c1.west);
\draw [->] (a2.west) to [bend right=40] node[fill=white,inner sep=2pt,near end] {$d_2$} (c1.north west);
\draw [->] (a3) to  node[fill=white,inner sep=2pt] {$d_3$} (c1);
\draw [->] (b2) to  node[fill=white,inner sep=2pt] {$x_2$} (c2);
\draw [->] (b1.-17) to [bend left=20] node [fill=white,inner sep=2pt]{$x_1$} (c2.north east);
\draw [->] (c1) to  node [above]{$(-1)^{n+m}$} (c2);
\end{scope}
\end{tikzpicture}

In the right part of the diagrams, the squares commute by Section \ref{sec:mapproj} and we get the signs
$$x_1=(-1)^{n+m+1}=(-1)^{(m+1+m-2+1)(n+m-2)+n+m+1}$$ and  $$x_2=1=(-1)^{(m+m-1+1)(n+m-1)+m+m}.$$
Now we look at the left square. The part belonging to $P(m+1|m-3)$ commutes and the sign is $$d_1=(-1)^{1+n+m+1+n+m}=1=(-1)^{(m+1+m-3+1)(n+m+1)+n+m+1}.$$ 
By Section \ref{sec:mapproj} the morphism 
$$P(m|m-2) \to P(m|m-1) \to P(m|m-2)$$
 occurring with the sign $1$ equals the sum of the morphisms 
 $$P(m|m-2) \to P(m+1|m-2) \to P(m|m-2)$$ and $$P(m|m-2) \to P(m|m-3) \to P(m|m-2).$$ The first one occurs with sign $(-1)^{n+m+n+m+1}=-1$, so it cancels with the other morphism. The second one yields the existence and the sign of $d_2$ which is $$d_2=(-1)^{n+m}=(-1)^{(m+1+m-3+1)(n+m+1)+m+m+1}.$$
The square including $P(m-1|m-3)_B$ commutes and yields $$d_3=1=(-1)^{m+m-1+1}.$$
For the last step we check the signs and existence for arbitrary $t$, assuming we know them for $t+1$. The situation is the following:
\begin{center}
\begin{tikzpicture}
\matrix [matrix of math nodes,row sep=0.5cm, column sep=2cm]
{
|(a1)| P(m+1|t)_A &|(b1)| P(m+1|t+1)_A \\
|(a2)| P(m|t+1)_A & |(b2)| P(m|t+2)_A \\
|(a3)| P(m-1|t)_B & |(b3)| P(m-1|t+1)_B \\
 \hfill&
 \\
  \hfill&
 \\
 |(c1)| P(m|t)_B & |(c2)| P(m|t+1)_B\\
};
\begin{scope}[every node/.style={font=\scriptsize }]
\draw [->] (a1) -- node[above]  {$(-1)^{m+t}$} (b1);
\draw [->] (a2) -- node [above=-3pt]{$(-1)^{n+t+1}$} (b1);
\draw [->] (a2) -- node[fill=white,inner sep=2pt] {$(-1)^{m+t+1}$} (b2);
\draw [->] (a2) -- node [below=-4pt]{$(-1)^{(m+t)(n+m)+n+m}$} (b3);
\draw [->] (a3) -- node[below] {$(-1)^{n+t+1}$} (b3);
\draw [->] (a1.west) to [bend right=60 ] node [fill=white,inner sep=2pt]{$d_1$} (c1.west);
\draw [->] (a2.west) to [bend right=40] node[fill=white,inner sep=2pt,near end] {$d_2$} (c1.north west);
\draw [->] (a3) to  node[fill=white,inner sep=2pt] {$d_3$} (c1);
\draw [->] (b3) to  node[fill=white,inner sep=2pt] {$x_3$} (c2);
\draw [->] (b2.-17) to [bend left=20] node [fill=white,inner sep=2pt, near end]{$x_1$} (c2.north east);
\draw [->] (b1.-17) to [bend left=60] node [right]{$x_2$} (c2.east);
\draw [->] (c1) to  node [above]{$(-1)^{n+t+1}$} (c2);
\end{scope}
\end{tikzpicture}
\end{center}
By induction we get the signs $x_1=(-1)^{(m+t)(n+m+1)}$, $x_2=(-1)^{(m+t+1)(n+m)}$ and $x_3=1$.
The maps $P(m|t+1) \to P(m|t+2) \to P(m|t+1)$ (occurring with sign $(-1)^{n+t+1+(m+t)(n+m+1)}$) and $P(m|t+1) \to P(m|t) \to P(m|t+1)$ (sign $(-1)^{(m+t)(n+m)+n+m}$) are the same and appear with opposite signs, so their sum is zero.

The square including $P(m+1|t)_A$ commutes and yields \begin{align*} p_1&=(-1)^{m+t+(m+t)(n+m+1)+n+t+1}\\&=(-1)^{(m+1+t)(n+m+1)}\\&=(-1)^{(m+1+t+1)(n+m+1)+n+m+1}\end{align*}
The square including $P(m|t+1)_A$ commutes and yields \begin{align*}p_2&=(-1)^{m+t+1+(m+t+1)(n+m)+n+t+1}\\&=(-1)^{(m+t)(n+m)}\\&=(-1)^{(m+t+1+1)(n+m)+m+m}\end{align*}
The square including $P(m-1|t)_B$ commutes and yields 
\begin{align*}p_3&=(-1)^{n+t+1+n+t+1}\\&=1\\&=(-1)^{m+m-1+1}\end{align*}
So we have proved iii), v), vi).
\end{proof}
\begin{Remark}\label{Re:choices}
Note that the way we have chosen the smaller weight $\la'$ in the construction involves a choice. For $\la=(n|m)$ we have chosen $\la'=(n|m-1)$ if $m >0$. Therefore, by the way of the construction we get a map $P_\bullet(n|m) \to P_\bullet(n|m-1)$. Anyway, we could have chosen $\la'=(n-1|m)$ if $n-1>m$. This possibility delivers us a nonzero map $M(n|m) \to M(n-1|m)$ in the derived category and therefore a nonzero map $P_\bullet(n|m) \to P_\bullet(n-1|m)$.

\textbf{
From now on we fix the projective resolutions constructed in Theorem \ref{Th:diff}.}
\end{Remark}
Combining our knowledge about these projective resolutions with Lemma \ref{Le:possmaps}, similar to Lemma \ref{Le:mapprojres} we obtain the following lemma:
\begin{Lemma}\label{Le:posschmaps}
$\hom^k(P_\bullet(a|b), P_\bullet(c|d)\langle j\rangle)_0={0}$ unless we are in one of the below cases:
\begin{enumerate}
\item \label{poss0} $k-j=0$ and $a+b \leq c+d+k+2$
\item \label{poss1}$k-j=1$ and $a+b \leq c+d+k+3$
\item \label{poss2} $k-j=2$ and $a+b \leq c+d+k+4$
\item \label{poss3} $k-j=3$ and $a+b \leq c+d+k+3$
\item \label{poss4} $k-j=4$ and $a+b \leq c+d+k+2$
\end{enumerate}
\end{Lemma}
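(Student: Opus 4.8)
The plan is to mirror the proof of Lemma \ref{Le:mapprojres}, replacing the coarse length bounds of Lemmas \ref{Le:termprojres} and \ref{Le:condmaps} by the explicit A-term/B-term description of the resolutions together with the refined map bounds of Lemma \ref{Le:possmaps}. First I would unwind the definitions: a nonzero element of $\hom^k(P_\bullet(a|b),P_\bullet(c|d)\langle j\rangle)_0$ has, in some homological degree $r$, a nonzero degree-$0$ component $P_r(a|b)\to P_{r-k}(c|d)\langle j\rangle$. Since both resolutions are linear, the summands of $P_r(a|b)$ have the form $P(s|t)\langle r\rangle$ and those of $P_{r-k}(c|d)\langle j\rangle$ the form $P(s'|t')\langle r-k+j\rangle$; hence the component restricts to a nonzero morphism in $\hom(P(s|t)\langle r\rangle,P(s'|t')\langle r-k+j\rangle)_0=\hom(P(s|t)\langle k-j\rangle,P(s'|t'))_0$. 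Everything thus reduces to a single degree-$0$ map $P(s|t)\langle i'\rangle\to P(s'|t')$ with $i':=k-j$, to which Lemma \ref{Le:possmaps} applies. By Corollary \ref{Cor:dif} (equivalently Lemma \ref{Le:geshifmaps}) such a map forces $0\leq i'\leq 2n=4$, which is precisely why the statement splits into the five cases $k-j=0,\dots,4$.

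Next I would record the two numerical inputs. From the Kazhdan--Lusztig computation (the A-term/B-term discussion following Table \ref{table:combkaz}) the summand $P(s|t)$ of $P_r(a|b)$ satisfies $s+t=a+b-r$ if it is an A-term and $s+t=a+b-r-2$ if it is a B-term, and symmetrically $s'+t'=c+d-(r-k)$ or $c+d-(r-k)-2$ for the summand $P(s'|t')$ of $P_{r-k}(c|d)$. Eliminating $r$ gives
\begin{equation*}
a+b=(s+t)+r+\eps_1,\qquad c+d=(s'+t')+(r-k)+\eps_2,\qquad \eps_1,\eps_2\in\{0,2\},
\end{equation*}
so that $(a+b)-(c+d)-k=\big((s+t)-(s'+t')\big)+\eps_1-\eps_2$. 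It therefore remains to bound $(s+t)-(s'+t')$ in each case $i'=k-j$ and to control the correction $\eps_1-\eps_2$. For $i'=0$ the map is a genuine degree-$0$ morphism of projectives, which sends the head of $P(s|t)$ into the degree-$0$ part $L(s'|t')$ of $P(s'|t')$; hence $(s|t)=(s'|t')$ and $(s+t)-(s'+t')=0$, yielding $a+b\leq c+d+k+2$. For $i'=1,2,3$ I would feed in the corresponding inequalities of Lemma \ref{Le:possmaps}, and for $i'=4$ use that a nonzero degree-$4$ map must send the head of $P(s|t)$ to a simple in the top internal degree $2n=4$ of $P(s'|t')$, which by the structure Table \ref{prosimp} severely restricts $(s|t)$ relative to $(s'|t')$.

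The delicate point --- and the main obstacle --- is that the naive worst-case combination (a B-term source with $\eps_1=2$, an A-term target with $\eps_2=0$, and the maximal value of $(s+t)-(s'+t')$ allowed by Lemma \ref{Le:possmaps}) would overshoot the stated constants. The resolution is that these extremes are mutually exclusive. Concretely, the special configurations in Lemma \ref{Le:possmaps} that permit a large drop $(s+t)-(s'+t')$ all have a source of the shape $(p+1|p)$, i.e. with $s=t+1$, and such a weight can never be a B-term because the B-term condition explicitly requires $s\neq t+1$; this forces $\eps_1=0$ exactly when the degree drop is largest. One must therefore run through the five values of $i'=k-j$, in each case pairing the admissible $(s+t)-(s'+t')$ from Lemma \ref{Le:possmaps} with the compatible values of $\eps_1,\eps_2$ (and, where needed, cross-checking against the explicit morphism Tables \ref{tab:lamulaupm}--\ref{tab:lamunuspecdnm} to exclude an incompatible B-source/A-target pairing), and verify that $(a+b)-(c+d)-k$ never exceeds $2,3,4,3,2$ respectively.

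I expect the genuinely new work to be entirely this bookkeeping of which A/B-term and special-case combinations can actually co-occur; the reduction in the first paragraph and the identity in the second are routine, and once the incompatible extremes have been ruled out the five inequalities drop out directly from the displayed identity. A useful sanity check along the way is that the bound for $i'=0$ is sharp (it is saturated by a B-source/A-target pair of equal weight), which confirms that the A/B mechanism, rather than Lemma \ref{Le:possmaps} alone, is what controls the leading constant.
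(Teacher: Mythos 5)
Your proposal is correct and takes essentially the same route as the paper's own proof: reduce to degree-zero morphisms between summands of the linear resolutions, track the A-term/B-term corrections $\eps_1,\eps_2\in\{0,2\}$, apply Lemma \ref{Le:possmaps} for each value of $k-j$, and resolve the apparent overshoot by noting that the special configurations with large degree drop have source of shape $(p+1|p)$ and hence can never be B-terms --- which is exactly the paper's remark that $P(s|s-1)$ cannot occur as a B-term. The only difference is organizational: the paper treats B-source/A-target first (yielding the stated bounds with no special cases available) and then verifies that the A-source special cases, where $\eps_1-\eps_2=0$, stay within the same bounds, whereas you phrase the identical exclusion as the extremes being mutually incompatible.
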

\begin{proof}
Assume we have a map $L:P_\bullet(a|b) \to P_\bullet(c|d) \langle j \rangle [k]$, i.e. in each component a morphism
$P_i(c|d) \to P_{i-k}(c|d) \langle j \rangle$. Since our resolutions are linear, we look for morphisms
$P(s|t)\langle i \rangle \to P(s'|t')\langle i-k+j \rangle$ fulfilling the following conditions:
\begin{enumerate}
\item $s+t=a+b-i$ (A-terms) or $s+t=a+b-i-2$ and $s\neq t+1$ (B-terms)
\item $s'+t'=c+d-(i-k)$ (A-terms) or $s'+t'=c+d-(i-k)-2$ and $s\neq t+1$ (B-terms)
\end{enumerate}
Using Lemma \ref{Le:possmaps} one notices that there are only maps for $0 \leq k-j \leq 4$ and that there cannot be any if $a+b$ and $c+d$ differ too much. 

Start with maps from B-terms of $P_\bullet(a|b)$ to A-terms of $P_\bullet(c|d)$, i.e. we look for maps $P(s|t) \to P(s'|t')$ with $s+t=a+b-i-2$ and $s'+t'=c+d-(i-k)$. Note that the two special cases in Lemma \ref{Le:possmaps} cannot occur, since $P(s|s-1)$ cannot occur as a B-term. Therefore from Lemma \ref{Le:possmaps} we get as a condition
\begin{enumerate}
\item $k-j=0$ and $a+b-i-2 \leq c+d-(i-k)$
\item $k-j=1$ and $a+b-i-2 \leq c+d-(i-k)+1$
\item $k-j=2$ and $a+b-i-2 \leq c+d-(i-k)+2$
\item $k-j=3$ and $a+b-i-2 \leq c+d-(i-k)+1$
\item $k-j=4$ and $a+b-i-2 \leq c+d-(i-k)$
\end{enumerate}
Now we look for maps mapping A-terms to A-terms. Doing the same as in the previous case, one observes that the conditions become stricter (since the left side of the inequality becomes $a+b-i$). One only has to verify that for the two special cases occurring in Lemma \ref{Le:posschmaps} with $t=s-1$ and $k-j=2$ or $k-j=3$ the conditions stay unchanged. In these cases one obtains $(s'|t')=(s-2|s-3)$ or $(s'|t')=(s-1|s-3)$, respectively. Putting in $s+t=a+b-i$ and $s'+t'=c+d-(i-k)$ the above conditions in the needed cases stay unchanged.

Analogously to the previous arguments one checks that mapping to a B-term makes the conditions stricter, which proves the lemma.
\end{proof}
\begin{Cor}\label{Cor:posschmaps}
If we denote $a+b-(c+d)=s$, we can rewrite the conditions in Lemma \ref{Le:posschmaps} for a morphism
$$f \in \hom^{s+k}(P_\bullet(a|b), P_\bullet(c|d)\langle s+j\rangle)_0$$
as follows:
\begin{enumerate}
\item \label{Cposs0} $k-j=0$ and $-2 \leq k$
\item \label{Cposs1}$k-j=1$ and $-3 \leq k$
\item \label{Cposs2} $k-j=2$ and $-4 \leq k$
\item \label{Cposs3} $k-j=3$ and $-3 \leq k$
\item \label{Cposs4} $k-j=4$ and $-2 \leq k$
\end{enumerate}
\end{Cor}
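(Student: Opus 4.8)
The statement to prove is Corollary \ref{Cor:posschmaps}, which is a direct reformulation of Lemma \ref{Le:posschmaps} under the substitution $s = a+b-(c+d)$. The plan is to treat this as a purely bookkeeping exercise: the mathematical content is entirely contained in the already-established Lemma \ref{Le:posschmaps}, and the corollary merely repackages the five cases in terms of a shifted cohomological degree and a shifted internal grading. So the whole proof is about correctly tracking how the two shift parameters transform.

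First I would set up the dictionary between the two formulations. In Lemma \ref{Le:posschmaps} we consider a morphism in $\hom^{k'}(P_\bullet(a|b), P_\bullet(c|d)\langle j'\rangle)_0$ with cohomological degree $k'$ and internal shift $j'$. In the corollary we rename the cohomological degree as $s+k$ and the internal shift as $s+j$, where $s = a+b-(c+d)$. Thus the identification is $k' = s+k$ and $j' = s+j$. The crucial observation is that the difference is shift-invariant: $k' - j' = (s+k)-(s+j) = k-j$. Hence each of the five case-labels, which in Lemma \ref{Le:posschmaps} is indexed precisely by the value of $k'-j' \in \{0,1,2,3,4\}$, maps bijectively onto the corresponding case here indexed by $k-j \in \{0,1,2,3,4\}$, and I would note this correspondence explicitly.

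Next I would translate each of the five inequalities. In case $\ell$ of Lemma \ref{Le:posschmaps} the bound has the shape $a+b \leq c+d + k' + c_\ell$ for an offset $c_\ell$ (namely $c_\ell = 2,3,4,3,2$ for $\ell=0,\dots,4$). Substituting $k' = s+k$ and then subtracting $c+d$ from both sides gives $a+b-(c+d) \leq s + k + c_\ell$, i.e. $s \leq s + k + c_\ell$, which simplifies to $0 \leq k + c_\ell$, that is $-c_\ell \leq k$. Reading off the offsets yields exactly $-2 \leq k$, $-3 \leq k$, $-4 \leq k$, $-3 \leq k$, $-2 \leq k$ in the respective cases \ref{Cposs0}--\ref{Cposs4}, which is the claimed list. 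Since the substitution is a single invertible affine change of variables in $(k',j')$, the equivalence is exact and no information is lost in either direction.

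I do not expect any genuine obstacle here, since there is no new geometry or combinatorics to prove. The only thing requiring care is the sign and offset bookkeeping: one must verify that the constant $s=a+b-(c+d)$ really cancels on both sides of every inequality so that the surviving condition involves only $k$ (and $j$ through $k-j$), and not $a,b,c,d$ individually. I would therefore present the proof in one or two sentences invoking Lemma \ref{Le:posschmaps} and recording the substitution, then simply state that carrying it through the five inequalities produces the asserted bounds.

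\begin{proof}
This is a direct reformulation of Lemma \ref{Le:posschmaps}. Writing $s=a+b-(c+d)$, a morphism in $\hom^{s+k}(P_\bullet(a|b),P_\bullet(c|d)\langle s+j\rangle)_0$ corresponds in the notation of Lemma \ref{Le:posschmaps} to a morphism with cohomological degree $k'=s+k$ and internal shift $j'=s+j$, so that $k'-j'=k-j$; hence case $\ell$ of the lemma ($k'-j'=\ell$) matches case $\ell$ here ($k-j=\ell$). In case $\ell$ the lemma's condition reads $a+b\leq c+d+k'+c_\ell$ with offset $c_\ell\in\{2,3,4,3,2\}$ for $\ell=0,\dots,4$. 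Substituting $k'=s+k$ and subtracting $c+d$ gives $s\leq s+k+c_\ell$, i.e.\ $-c_\ell\leq k$. Reading off the offsets yields $-2\leq k$, $-3\leq k$, $-4\leq k$, $-3\leq k$, $-2\leq k$ in cases \ref{Cposs0}--\ref{Cposs4} respectively, as claimed.
\end{proof}
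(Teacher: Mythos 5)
Your proof is correct and follows essentially the same route as the paper: substitute the shifted degree $s+k$ into each inequality $a+b \leq c+d+(\text{degree})+c_\ell$ of Lemma \ref{Le:posschmaps}, use $s=a+b-(c+d)$ to cancel, and read off $-c_\ell \leq k$. The only cosmetic difference is that you also record explicitly that $k'-j'=k-j$, which the paper leaves implicit.
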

\begin{proof}
We only have to check that the inequalities for $k$ encode the same information as those above. Therefore, we write
\begin{align*}
&&a+b &\leq c+d+s+k+i\\
\Leftrightarrow &&0 &\leq -s+s+k+i\\
\Leftrightarrow &&-i &\leq k
\end{align*}
\end{proof}

\subsection{Dimensions of the $\Ext$-algebra}\label{sec:dimexp}
Using the formulas \eqref{ca:numcas1}-\eqref{ca:numcas4} from Theorem \ref{Th:dim} again, we now compute the dimensions of the $\Ext$-spaces in our situation. 
\small{\allowdisplaybreaks{
\begin{align}
&&&E^k((n|m),(n|m))&=&\begin{cases} 1 &k=0\\ 0 &\text{else}\end{cases}\\
&&&E^k((n|m),(n-1|m))&\stackrel{(\ref{ca:numcas3})}{=}&E^{k-1}((n-1|m),(n-1|m))\notag\\
&&&&&+E^{k}((n-1|m),(n-1|m))\notag\\
&&&&=&\begin{cases} 1 &k \in \{0,1\}\\0 &\text{else}\end{cases}\\
&j<n-1:&& E^k((n|m),(j|m))&\stackrel{(\ref{ca:numcas2})}{=}&E^{k-1}((n-1|m),(j|m))\notag\\
&&&&\stackrel{Ind}{=}&\begin{cases} 1 &k \in \{n-j-1,n-j\}\\ 0 &\text{else}\end{cases}\\
&&&E^k((n|m),(n|m-1))&\stackrel{(\ref{ca:numcas3})}{=}&\begin{cases} 1 &k \in \{0,1\}\\0 &\text{else}\end{cases}\\
&j<m-1:&& E^k((n|m),(n|j))&\stackrel{(\ref{ca:numcas2})}{=}&E^{k-1}((n|m-1),(n|j))\notag\\
&&&&\stackrel{Ind}{=}&\begin{cases} 1 &k \in \{m-j-1,m-j\} \\ 0 &\text{else}\end{cases}\\
&m<n-1:&&E^k((n|m),(n-1|m-1))&\stackrel{(\ref{ca:numcas3})}{=}&E^{k-1}((n-1|m),(n-1|m-1))\notag\\
&&&&&+E^{k}((n-1|m),(n-1|m-1))\notag\\
&&&&=&\begin{cases} 1 &k=2\\2 &k=1 \\1 &k=0\\0 &\text{else}\end{cases}\\
&\begin{array}{l}j<l-1\\l<n\\m<l\\j<m\end{array}:&&E^k((n|m),(l|j))&\stackrel{(\ref{ca:numcas2})}{=}&E^{k-(m-j)+1}((n|j+1),(l|j))\notag\\
&&&&\stackrel{(\ref{ca:numcas2})}{=}&E^{k-(m-j)-(n-l)+2}((l+1|j+1),(l|j))\notag\\
&&&&\stackrel{Ind}{=}&\begin{cases} 1 &k=n+m-(l+j)\\2 &k=n+m-(l+j)-1 \\1 &k=n+m-(l+j)-2\\0 &\text{else}\end{cases}\label{ca:klk}\\
&&&E^k((n|m),(l|l-1))&\stackrel{(\ref{ca:numcas3})}{=}&E^{k-(m-l)}((n|l),(l|l-1))\notag\\
&&&&\stackrel{(\ref{ca:numcas3})}{=}&E^{k-(m-l)+1}((n|l-1),(l|l-1))\notag\\
&&&&\stackrel{Ind}{=}&\begin{cases} 1 &k =(n+m)-2l+1\\1 &k=(n+m)-2l \\ 0 &\text{else}\end{cases} \label{ca:-1}\\
&j<m-1:&& E^k((n|m),(n|j))&\stackrel{(\ref{ca:numcas2})}{=}&E^{k-1}((n|m-1),(n|j))\notag\\
&&&&\stackrel{Ind}{=}&\begin{cases} 1 &k \in \{m-j-1,m-j\} \\ 0 &\text{else}\end{cases}\\
&m<n-1:&&E^k((n|m),(n-1|m-1))&\stackrel{(\ref{ca:numcas3})}{=}&E^{k-1}((n-1|m),(n-1|m-1))\notag\\
&&&&&+E^{k}((n-1|m),(n-1|m-1))\notag\\
&&&&=&\begin{cases} 1 &k=2\\2 &k=1 \\1 &k=0\\0 &\text{else}\end{cases}\\
&&&E^k((n|n-1),(n-1|n-2))&\stackrel{(\ref{ca:numcas2})}{=}&E^{k-1}((n|n-2),(n-1|n-2))\notag\\
&&&&=&\begin{cases} 1 &k \in \{1,2\} \\ 0 &\text{else}\end{cases}\\
&m<n-1:&&E^k((n|m),(m|m-1))&\stackrel{(\ref{ca:numcas2})}{=}&E^{k-(n-m)+1}((m+1|m),(m|m-1))\notag\\
&&&&=&\begin{cases} 1 &k \in \{n-m,n-m+1\} \\ 0 &\text{else}\end{cases}\\
&&&E^k((n|m),(m|j))&\stackrel{(\ref{ca:numcas1})}{=}&E^{k}((n|j+1),(j+1|j))\notag\\
&&&&=&\begin{cases} 1 &k \in \{n-j-1,n-j\} \\ 0 &\text{else}\end{cases} \label{ca:eqk}\\
&&&E^k((n|n-1),(n-2|m))&\stackrel{(\ref{ca:numcas4})}{=}&E^{k-1}((n|n-2),(n-2|m))\notag\\
&&&&&-E^{k+1}((n|n-2),(n-2|m))\notag\\
&&&&&+E^{k}((n|n-2),(n-1|m))\notag\\
&&&&=&\begin{cases} 1-0+0=1 &k= n-m+1\\ 1-0+0=1 &k= n-m \\ 0+1-1=0 &k= n-m-1\\0+2-1=1 &k= n-m-2\\0-0+1=1 &k= n-m-3\\0 &\text{else}\end{cases}
\end{align}
\begin{align}
&\begin{array}{l}i\neq j\\l<m\end{array}:&&E^k((n|m),(l|j))&\stackrel{(\ref{ca:numcas2})}{=}&E^{k-(m-(l+1))-(n-(l+2))}((l+2|l+1),(l|j))\notag\\
&&&&=&\begin{cases} 1 &k= (n+m)-(l+j)\\ 1 &k=(n+m)-(l+j)-1  \\ 0 &k= (n+m)-(l+j)-2\\1 &k= (n+m)-(l+j)-3\\1 &k= (n+m)-(l+j)-4\\0 &\text{else}\end{cases}
\end{align}}}
Note that the dimensions are all at most two.

\subsection{The explicit elements in the $\Ext$-algebra}\label{sec:elements2}
\subsubsection{Elements generating the algebra}
The next step is to determine elements in the $\Ext$-spaces explicitly and to show, that they are not homotopic to zero or (if the dimension of the space is greater one) not homotopic to the other element. We are working in two steps. First we determine a few elements which are the generators of the $\Ext$-algebra. To get all elements we compute the multiplication rules. Afterwards we will use the dimension formulas from Section $\ref{sec:dimexp}$  to check that we  generated enough elements in the Ext-algebra.

As already mentioned in Section \ref{sec:Exthom}, for verifying that a map $f:P_\bullet \to Q_\bullet[k]$ is a map between chain complexes (and therefore a cycle in the $\Ext$-algebra), we check if the map $f:P_\bullet \to Q_\bullet[k]_{\hom}$ is commutative or anticommutative for $k$ even or odd, respectively. Here the $[ \quad {]_{\hom}}$-shift denotes the shift without changing the differential defined in Notation \ref{Not:sign}.

The first elements we are looking for are the so-called Identities already mentioned in Remark \ref{Re:choices}.

\begin{Theorem}\label{Th:ElemId}
The following chain maps determine nonzero elements in the $\Ext$-algebra:
\begin{align}
\Id^{(n|m)}_{(n|m-1)}:& \quad P_\bullet(n|m) \to P_\bullet(n|m-1)\langle 1 \rangle[1] \\
&\begin{cases}P(s|t)_A \to P(s|t)_A \\ P(s|t)_B \to P(s|t)_B \end{cases}  \\
\Id^{(n|m)}_{(n-1|m)}:&\quad P_\bullet(n|m) \to P_\bullet(n-1|m)\langle 1 \rangle[1] \\
&\begin{cases} P(s|t)_A \to (-1)^{m+t} P(s|t)_A  \\  P(s|t)_B \to (-1)^{m+s} P(s|t)_B \end{cases}  \end{align}	
\end{Theorem}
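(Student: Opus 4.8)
The plan is to verify, for each of the two claimed chain maps, two things: that it is genuinely a chain map (i.e. a cycle in $\hom(P_\bullet, P_\bullet)$ after the appropriate shift), and that it is not a boundary (not nullhomotopic). Since both maps sit in degree $1$, by the sign convention from Notation \ref{Not:sign} I would check that each map, regarded as a map into the $[\ ]_\hom$-shifted complex $P_\bullet(?)\langle 1\rangle[1]_\hom$, is \emph{anticommutative} with the differentials. The entire computation reduces to checking the relevant squares built from the differentials classified in Theorem \ref{Th:diff} together with the component maps of $\Id^{(n|m)}_{(n|m-1)}$ and $\Id^{(n|m)}_{(n-1|m)}$.

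First I would treat $\Id^{(n|m)}_{(n|m-1)}$. On the A-terms the map sends $P(s|t)_A \to P(s|t)_A$ by a standard degree one morphism, and on the B-terms $P(s|t)_B \to P(s|t)_B$, both with coefficient $1$. For each differential arrow in $P_\bullet(n|m)$ (types i)--vii) of Theorem \ref{Th:diff}) I would write down the square formed with the identity components and the corresponding differential arrow in $P_\bullet(n|m-1)$, and check anticommutativity, i.e. that the two composites around the square differ by the sign $(-1)$ dictated by the odd shift. Because the differentials of $P_\bullet(n|m)$ and $P_\bullet(n|m-1)$ carry explicit signs depending on $m$ versus $m-1$, and the identity map simply matches same-labelled projectives, the key observation is that replacing $m$ by $m-1$ in the sign formulas of Theorem \ref{Th:diff} flips each relevant differential sign by $(-1)$; this is exactly what was already exploited inside the proof of Theorem \ref{Th:diff}. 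That sign flip is precisely what makes the square \emph{anti}commute rather than commute, so the map is a cycle.

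Next I would treat $\Id^{(n|m)}_{(n-1|m)}$, whose components carry the extra signs $(-1)^{m+t}$ on $P(s|t)_A$ and $(-1)^{m+s}$ on $P(s|t)_B$. Here the verification is more delicate because the sign is not constant across the terms; for each square I would track how the coefficient changes as $(s,t)$ moves along a differential arrow and confirm that the product of the outgoing-map coefficient, the target differential sign, the incoming-map coefficient, and the source differential sign equals $-1$ in every case. Using the explicit formulas in Theorem \ref{Th:diff} and the composition relations recorded in Section \ref{sec:mapproj} (in particular the additivity relations where a composite splits as a sum of two composites, of which one pair cancels), this is a finite sign bookkeeping over the arrow types i)--vii). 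I expect this sign-matching to be the main obstacle: the A-to-B and B-to-A transition maps v), vi), vii) have the most intricate sign exponents, and the coefficient jumps in $(-1)^{m+t}$ and $(-1)^{m+s}$ must be reconciled with them exactly.

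Finally, to show neither map is nullhomotopic I would argue that a homotopy would be a degree $0$ map $H \in \hom^0(P_\bullet(n|m), P_\bullet(n|m{-}1)\langle 1\rangle)$ (respectively into $P_\bullet(n-1|m)\langle 1\rangle$), i.e. an element of $\hom^{0}$ with internal shift $1$, and invoke Corollary \ref{Cor:dif} or Lemma \ref{Le:posschmaps}. Writing the target as $\langle s+j\rangle[s+k]$ in the notation of Corollary \ref{Cor:posschmaps}, the required homotopy would force a value of $k-j$ and a bound on $k$ that fall outside the admissible ranges listed there, exactly as in the proofs of the corresponding Propositions for $K_N^1$. Hence no such $H$ exists and both elements are nonzero in $\Ext^1$, completing the proof.
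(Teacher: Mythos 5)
Your proposal is correct and follows the paper's own proof essentially verbatim: anticommutativity is verified against the seven differential types of Theorem \ref{Th:diff} (for $\Id^{(n|m)}_{(n|m-1)}$ this reduces to the observation that every sign in Theorem \ref{Th:diff} flips when $m$ is replaced by $m-1$, which is how the paper argues from the cone construction; for $\Id^{(n|m)}_{(n-1|m)}$ it is the case-by-case sign bookkeeping carried out in Appendix \ref{Ap:Elem}), and non-nullhomotopy follows because a homotopy would be an element of $\hom^0(P_\bullet(n|m), P_\bullet(\la')\langle 1 \rangle)$, which is excluded by Lemma \ref{Le:posschmaps} since $k-j=-1$ falls outside the admissible range. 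Two minor inaccuracies that do not affect the argument: the components $P(s|t)\to P(s|t)$ are scalar multiples of identity morphisms rather than \emph{standard degree one morphisms}, and the composition relations of Section \ref{sec:mapproj} (splitting of composites into cancelling sums) are not needed for these two maps, only plain sign comparison of squares between identically labelled projectives.
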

\begin{proof}
The first morphism pops out of the construction of $P(n|m)$ for $m \geq 1$. All signs in $P_\bullet(n|m-1)$ are opposite to those in $P_\bullet(n|m)$, so we get an anticommuting map as required since its degree in the $\hom$-algebra is 1 (so it is a cycle). 

For the second morphism we check that it anticommutes with all seven cases in Theorem~\ref{Th:diff}. This is done in the Appendix \ref{Ap:Elem}. By these computations $\Id^{(n|m)}_{(n-1|m)}$ is a cycle in the $\hom$-algebra.

Both elements cannot be nullhomotopic, because a homotopy would be a map
$$H: P_\bullet(n|m) \to P_\bullet(\la')\langle 1 \rangle \text{ with } \la'=(n|m-1) \text { or } \la'=(n-1|m)$$ 
which cannot exist by Lemma \ref{Le:posschmaps}.
\end{proof}

\begin{Theorem}\label{Th:elemF2}
There are two non-nullhomotopic degree zero maps:
\begin{align}
F^{(n|m)}_{(n|m-1)}:&\quad P_\bullet(n|m) \to P_\bullet(n|m-1)\langle -1 \rangle \notag \\
& \begin{cases} P(s|t)_A \to P(s|t-1)_A & t\leq m\\
								P(s+2|s)_A \to (-1)^{n+s}  P(s+1|s)_A  &s \leq m-1\\
								P(s+1|s)_A \to (-1)^{n+s} P(s|s-2)_B  &s \leq m-1\\
								P(s|t)_B \to P(s-1|t)_B &s\leq m\\
								P(s|s-2)_B \to P(s|s-1)_A&s\leq m
\end{cases}\\
\newline
\widetilde {F}^{(n|m)}_{(n-1|m)}:& \quad P_\bullet(n|m) \to P_\bullet(n-1|m) \langle -1 \rangle \notag \\
& \begin{cases} P(s|t)_A \to P(s-1|t)_A & s\leq n\\
								P(s|t)_B \to P(s|t-1)_B &t\leq m\\
								P(s+1|s)_A \to P(s|s-2)_B&s\leq m
\end{cases}
\end{align}
\end{Theorem}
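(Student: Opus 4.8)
The plan is to establish Theorem \ref{Th:elemF2} in two stages for each of the two candidate maps $F^{(n|m)}_{(n|m-1)}$ and $\widetilde F^{(n|m)}_{(n-1|m)}$: first verify that the prescribed collection of degree-one morphisms between individual projective summands assembles into a genuine chain map (a cycle in the $\hom$-algebra), and then show that this cycle is not a boundary, i.e.\ not nullhomotopic. Since each map has degree $0$ in the $\hom$-algebra, being a cycle means being an honest \emph{commutative} chain map (using the $[\ ]_{\hom}$-shift convention of Notation \ref{Not:sign}), so the verification amounts to checking that every square formed by a component of $F$ and a differential $d_\bullet$ of the two resolutions commutes up to the stated signs.

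For the commutativity check I would proceed case-by-case against the seven families of differentials catalogued in Theorem \ref{Th:diff} (maps i)--vii) among A- and B-terms). For each composable pair one writes down the two ways around the square: $F$ followed by a differential of $P_\bullet(n|m-1)$ (resp.\ $P_\bullet(n-1|m)$), versus a differential of $P_\bullet(n|m)$ followed by $F$. Both legs are compositions of standard degree-one morphisms between projectives, so they are computed by multiplying the corresponding circle diagrams using the surgery procedure, and the relevant relations are exactly those recorded in the quiver of $\End(P)$ and in Tables \ref{tab:lamulaupm}--\ref{tab:lamunuspecdnm}. The signs are then read off by combining the explicit differential signs from Theorem \ref{Th:diff} with the component signs defining $F$. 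The genuinely delicate squares are the ones where $F$ crosses between A-terms and B-terms---the lines $P(s+1|s)_A \to P(s|s-2)_B$ and $P(s|s-2)_B \to P(s|s-1)_A$ in the first map, and $P(s+1|s)_A \to P(s|s-2)_B$ in the second---because there the degree-two relations of type (5)--(8) and the special corner relations (11)--(18) in $\End(P)$ come into play, and because a single A-term square can receive contributions from \emph{two} differentials (e.g.\ the cancellation phenomenon already seen in the proof of Theorem \ref{Th:diff}, where $P(m|t+1)\to P(m|t+2)\to P(m|t+1)$ and $P(m|t+1)\to P(m|t)\to P(m|t+1)$ cancel). I expect this sign-bookkeeping across the A/B interface to be the main obstacle, and like the analogous computations for the Identities in Theorem \ref{Th:ElemId} it is best relegated to the Appendix; the strategy is to reuse the commutativity relations established in Section \ref{sec:mapproj} to rewrite one leg as the other rather than computing both surgeries independently.

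The non-nullhomotopy step is comparatively short and purely homological. A homotopy for $F^{(n|m)}_{(n|m-1)}$ would be a map
\begin{equation*}
H \in \hom^{-1}(P_\bullet(n|m), P_\bullet(n|m-1)\langle -1 \rangle),
\end{equation*}
and for $\widetilde F^{(n|m)}_{(n-1|m)}$ a map $H \in \hom^{-1}(P_\bullet(n|m), P_\bullet(n-1|m)\langle -1\rangle)$. In both cases the relevant degree shift is $s = (a+b)-(c+d) = 1$ together with cohomological degree $k=-1$ and internal shift $j=-1$, so $k-j=0$ with $k=-1$. Corollary \ref{Cor:posschmaps} forbids this: case \ref{Cposs0} requires $k \geq -2$ but \emph{together} with the sharper A/B analysis of Lemma \ref{Le:posschmaps} one checks that a degree $-1$ map in the $k-j=0$ regime with these weights cannot exist. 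Thus no homotopy is available and the cycles represent nonzero classes. Alternatively, and as a cross-check, one can invoke the dimension formulas of Section \ref{sec:dimexp}: the spaces $\Ext^{0}$ in the relevant bidegrees are one-dimensional, so a nonzero cycle is automatically a nonzero class. Combining the chain-map verification with this vanishing of the homotopy space completes the proof.
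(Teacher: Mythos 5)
Your first stage (verifying that each prescribed family of component maps is a commuting chain map, case by case over the possible source terms, using the relations of $\End(P)$ from Section \ref{sec:mapproj} and the signs from Theorem \ref{Th:diff}) is exactly the paper's approach; the paper likewise relegates the seven cases \ref{FA})--\ref{FBs3}) for $F$ and the five cases for $\widetilde{F}$ to Appendix \ref{Ap:Elem}. The problem is your second stage.

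The shift-counting argument you give for non-nullhomotopy fails precisely for these two maps. A homotopy for $F^{(n|m)}_{(n|m-1)}$ is an element of $\hom^{-1}(P_\bullet(n|m),\,P_\bullet(n|m-1)\langle -1\rangle)$, so in Lemma \ref{Le:posschmaps} one has $k=j=-1$, hence $k-j=0$, and case \ref{poss0} requires $n+m \leq (n+m-1)+(-1)+2 = n+m$ --- an inequality that holds (with equality). Equivalently, in the normalized notation of Corollary \ref{Cor:posschmaps} the homotopy has $s=1$, $k=-2$, $j=-2$ (note your $k=-1$, $j=-1$ conflates the absolute degree with the corollary's $k$, which is defined by degree $=s+k$), and case \ref{Cposs0} demands $-2\leq k$, which is again satisfied. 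The same computation goes through for $\widetilde{F}^{(n|m)}_{(n-1|m)}$. So neither the lemma nor the corollary rules out the homotopy; your phrase ``together with the sharper A/B analysis one checks'' is exactly where the real content lies, and no such check is supplied. This is why the paper does \emph{not} argue this way: in the proof of Theorem \ref{Th:elemF2} it defers to the proofs of Theorems \ref{Th:elf} and \ref{Th:ftil}, where a genuinely different argument is used --- since both resolutions are linear, any homotopy must send heads of projectives to heads, hence be a scalar multiple of the identity on each component; one then isolates the two possibly nonzero components $x$ and $y$ around the leading term, shows $x=0$ by comparing weight sums of the terms in the resolutions, and shows that a nonzero $y$ would force a map $P(k|l)_B\to P(k|l)_A$ leading to a contradiction via a second square. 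Your fallback ``cross-check'' via the dimension formulas of Section \ref{sec:dimexp} is also not valid: knowing $\dim\Ext^0 = 1$ tells you a nonzero class exists, not that \emph{your} particular cycle represents it --- a cycle in a one-dimensional $\Ext$-space can perfectly well be a boundary. (The paper uses the dimension count only in the opposite directions: to confirm completeness of a list of independent non-nullhomotopic elements, or to force certain elements to be nullhomotopic.)
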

\begin{proof} 
First we prove the assertion for $F^{(n|m)}_{(n|m-1)}$.
We need to check that these maps are commuting chain maps. In other words we need to verify that all diagrams of the form
\begin{equation}\xymatrix{ V \ar[r]\ar[d] &W\ar[d]\\
X \ar[r] &Y}
\end{equation}
with $V$ all possible terms occurring in $P_\bullet(n|m)$ commute.
The cases to consider are the following:
\begin{enumerate}[label={F\arabic*})]
\item \label{FA} $V=P(s|t)_A$ and $s>t+3$
\item \label{FAs3}  $V=P(s+3|s)_A$
\item \label{FAs2}  $V=P(s+2|s)_A$
\item \label{FAs1}  $V=P(s+1|s)_A$
\item \label{FB} $V=P(s|t)_B$ and $s>t+3$
\item \label{FBs2}  $V=P(s|s-2)_B$
\item \label{FBs3}  $V=P(s|s-3)_B$
\end{enumerate}
which are treated in detail in Appendix \ref{Ap:Elem}.

Hence, we have verified that $F^{(n|m)}_{(n|m-1)}$ is a cycle. We are left to show that it is not nullhomotopic. A homotopy would be a morphism 
$$H: P_\bullet(n|m) \to P_\bullet(n|m-1)\langle -1 \rangle [-1]. $$ 
There is no difficulty to prove that this map cannot exist. In Theorem \ref{Th:elf} we will define a family of maps to which the above map belongs and proof that they are not nullhomotopic. Since the proof in this special case is contained in the more general case, we only refer to the proof of Theorem \ref{Th:elf}.

Now consider the map $\widetilde {F}^{(n|m)}_{(n-1|m)}$, where we have less special cases to deal with. In fact we only have to check \begin{equation}\xymatrix{ V \ar[r]\ar[d] &W\ar[d]\\
X \ar[r] &Y}
\end{equation}
with V being one of the following modules:
\begin{enumerate}[label={$\widetilde {\text{F}}$\arabic*})]
\item \label{FtilA} $V=P(s|t)_A$ and $s>t+2$
\item \label{FtilAs2}  $V=P(s+2|s)_A$
\item \label{FtilAs1}  $V=P(s+1|s)_A$
\item \label{FtilB} $V=P(s|t)_B$ and $s>t+2$
\item \label{FtilBs2}  $V=P(s|s-2)_B$
\end{enumerate}
As it is computed in Appendix \ref{Ap:Elem}, the map $\widetilde{F}^{(n|m)}_{(n-1|m)}$ is a cycle. To prove that the element is not nullhomotopic we refer to the proof of Theorem \ref{Th:ftil}.
 \end{proof}
 
\begin{Theorem}\label{Th:elemG}
The following map defines a non-trivial element in \break $\Ext^1(M(m+1|m),M(m-1|m-2))$:
\begin{align}
G^{(m+1|m)}_{(m-1|m-2)}:& \quad P_\bullet(m+1|m) \to P_\bullet(m-1|m-2)[1]\\
& \begin{cases} P(s|t)_A \to 0 & t \neq s-1\\
								P(s|s-1)_A \to (-1) P(s-1|s-3)_A  \\
								P(s|t)_B \to (-1)^{(m+t)(m+s+1)+s+t} P(s-1|t)_A\\ 
								\ \ \ +(-1)^{(m+t)(m+s)+s+t} P(s|t-1)_A &s<m\\
								P(m|t)_B \to  P(m-1|t)_A
								\end{cases}
\end{align}
\end{Theorem}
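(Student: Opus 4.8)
The plan is to follow exactly the two-step pattern already used for the identity maps in Theorem \ref{Th:ElemId} and for the maps $F$ in Theorem \ref{Th:elemF2}: first verify that $G := G^{(m+1|m)}_{(m-1|m-2)}$ is a cycle in the complex $\hom(P_\bullet(m+1|m), P_\bullet(m-1|m-2))$, and then show that it cannot be a boundary. Since $G$ has homological degree $1$, being a cycle means (in the shift $[\,]_{\hom}$ of Notation \ref{Not:sign}) that it \emph{anti}commutes with all differentials; concretely I would check, for every indecomposable summand $V$ occurring in $P_\bullet(m+1|m)$, that the square formed by $G$, the differential out of $V$ given by Theorem \ref{Th:diff}, and the corresponding differential in $P_\bullet(m-1|m-2)$ anticommutes.

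First I would verify the cycle condition, treating the summands according to the four cases of the definition: the $A$-terms $P(s|t)_A$ with $t\neq s-1$ (which are sent to zero), the $A$-terms $P(s|s-1)_A$, the generic $B$-terms $P(s|t)_B$ with $s<m$, and the boundary $B$-terms $P(m|t)_B$. For each such $V$ one composes $G$ with the seven possible types of differential listed in Theorem \ref{Th:diff} and compares with the image under the standard degree-one morphisms of $\End(P)$, whose pairwise compositions are recorded in Tables \ref{tab:lamulaupm}--\ref{tab:lamunuspecdnm}. The delicate point, and the main obstacle, is the $B$-term case: there $G$ produces a sum of two $A$-terms, so two different composition paths feed into each target, and one must check that the signs $(-1)^{(m+t)(m+s+1)+s+t}$ and $(-1)^{(m+t)(m+s)+s+t}$ are exactly those for which the unwanted contributions cancel while the genuine ones survive. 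As with the $F$-maps, these sign computations are lengthy, so I would relegate the full case-by-case check to Appendix \ref{Ap:Elem}.

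Finally I would prove non-triviality, which is in fact immediate once the cycle property is in hand. A nullhomotopy of $G$ would be an element of $\hom^0(P_\bullet(m+1|m), P_\bullet(m-1|m-2))$. Here $a+b = 2m+1$ and $c+d = 2m-3$, so the inequality in case \ref{poss0} of Lemma \ref{Le:posschmaps} would require $2m+1 \le (2m-3)+2$, which is false; hence this $\hom^0$ vanishes and there are no boundaries in homological degree $1$. Since $G$ is manifestly nonzero (for instance on the summands $P(m|t)_B$ it is the identity-type map to $P(m-1|t)_A$), it therefore represents a nonzero class in $\Ext^1(M(m+1|m), M(m-1|m-2))$. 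One may cross-check consistency with the dimension computation of Section \ref{sec:dimexp}, which yields $E^1((m+1|m),(m-1|m-2))=1$, so that $G$ in fact spans this $\Ext$-space.
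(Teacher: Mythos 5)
Your proposal is correct and follows essentially the same route as the paper's proof: a case-by-case verification that $G$ anticommutes with the differentials of Theorem \ref{Th:diff} (with the tedious sign checks relegated to Appendix \ref{Ap:Elem}), followed by the observation that a nullhomotopy would have to live in $\hom^0(P_\bullet(m+1|m), P_\bullet(m-1|m-2))$, which vanishes by case \ref{poss0} of Lemma \ref{Le:posschmaps} since $m+1+m \nleq m-1+m-2+2$. The only cosmetic differences are your grouping of the cycle-check cases by the clauses of the definition rather than by the summand types $P(s|t)_A$, $P(s+2|s)_A$, $P(s+1|s)_A$, $P(s|t)_B$, $P(s|s-2)_B$, and the optional cross-check against the dimension count of Section \ref{sec:dimexp}, which the paper defers to a later corollary.
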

\begin{proof}
Similar to the above computations, we check that the map anticommutes. Here we have to check the cases:
\begin{enumerate}[label={$G$\arabic*})]
\item \label{GA} $V=P(s|t)_A$ and $s>t+2$
\item \label{GAs2}  $V=P(s+2|s)_A$
\item \label{GAs1}  $V=P(s+1|s)_A$
\item \label{GB} $V=P(s|t)_B$ and $s>t+2$
\item \label{Gs2}  $V=P(s|s-2)_B$
\end{enumerate}
what is done in Appendix \ref{Ap:Elem}.
Since the diagrams in all cases anticommute one only has to show that the map is not nullhomotopic.
A homotopy would be a morphism
$$H: P_\bullet(m+1|m) \to P_\bullet(m-1|m-2) $$ 
which cannot exist by Lemma \ref{Le:posschmaps} case \ref{poss0}, since $m+1+m \nleq m-1+m-2+2$.

\end{proof}
\begin{Theorem}\label{Th:ElemK}
The following assignment defines a non-trivial element in \break $\Ext^0(M(m+1|m),M(m-1|m-2))$:
\begin{align}
K^{(m+1|m)}_{(m-1|m-2)}:& \quad P_\bullet(m+1|m) \to P_\bullet(m-1|m-2)\langle -2 \rangle \\
& \begin{cases} P(s|t)_A \to 0 & t \neq s-1\\
								P(s|s-1)_A \to (-1)^{m+s+1} P(s-2|s-3)_A  \\
								P(s|t)_B \to (-1)^{(m+s)(m+t+1)} P(s-1|t-1)_A\\ 
								\end{cases}
\end{align}
where the degree two maps are chosen to be any nonzero composition of a pair of standard degree one morphisms. 
\end{Theorem}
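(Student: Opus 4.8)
The plan is to follow the two-step template already used for Theorems~\ref{Th:elemF2} and~\ref{Th:elemG}: first exhibit $K^{(m+1|m)}_{(m-1|m-2)}$ as a cycle in the $\hom$-algebra, and then show it is not a boundary. Since the element lies in $\Ext^0$ its homological degree is even, so being a cycle amounts to being a genuinely \emph{commuting} chain map $P_\bullet(m+1|m)\to P_\bullet(m-1|m-2)\langle -2\rangle$, with no shift signs to track. In each homological component the prescribed map is a degree-two morphism between projectives, realised (as the statement permits) as a composition of two standard degree-one morphisms from Section~\ref{sec:mapproj}; these are exactly the ``big shift'' maps recorded in Tables~\ref{tab:lamunuspecunm}, \ref{tab:lamunuspecnm} and~\ref{tab:lamunuspecdnm}. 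A preliminary point I would record is that each such composition is nonzero and, up to the scalar already fixed in the formula, independent of the intermediate projective, so that the assignment is well defined on the chosen representatives.

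For the chain-map verification I would inspect, exactly as in the proof of Theorem~\ref{Th:elemG}, every square
\[
\xymatrix{ V \ar[r]\ar[d] & W \ar[d] \\ X \ar[r] & Y }
\]
according to the type of the source term $V$ appearing in $P_\bullet(m+1|m)$. The cases to treat are $V=P(s|t)_A$ with $s>t+2$, the near-diagonal terms $V=P(s+1|s)_A$ and $V=P(s+2|s)_A$, the generic $B$-terms $V=P(s|t)_B$ with $s>t+2$, and the special term $V=P(s|s-2)_B$. Whenever $V=P(s|t)_A$ with $t\neq s-1$ the map kills $V$, so commutativity reduces to the vanishing of a length-two path out of $W$, which is one of the quadratic relations of $\End(P)$; for the diagonal $A$-terms and for the $B$-terms one combines the explicit differentials from Theorem~\ref{Th:diff} with the multiplication rules of Section~\ref{sec:mapproj} and checks that the signs $(-1)^{(m+s)(m+t+1)}$ and $(-1)^{m+s+1}$ propagate consistently. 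These computations are routine but delicate and would be relegated to Appendix~\ref{Ap:Elem}, alongside the analogous verifications for $G$, $F$ and $\widetilde F$.

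To see that $K^{(m+1|m)}_{(m-1|m-2)}$ is not nullhomotopic, note that a homotopy would be a morphism $H\colon P_\bullet(m+1|m)\to P_\bullet(m-1|m-2)\langle -2\rangle[-1]$, that is an element of $\hom^{-1}(P_\bullet(m+1|m),P_\bullet(m-1|m-2)\langle -2\rangle)_0$. Applying Lemma~\ref{Le:posschmaps} with $(a|b)=(m+1|m)$, $(c|d)=(m-1|m-2)$, $k=-1$ and $j=-2$ puts us in case~\ref{poss1} ($k-j=1$), whose nonvanishing requires $a+b\le c+d+k+3=2m-1$; since $a+b=2m+1$ this fails, the $\hom$-space is zero, and no homotopy exists. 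This step is immediate.

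The main obstacle is the sign bookkeeping in the chain-map step: because each component of $K$ is a length-two path, commutativity of a single square expands into an identity among several such paths, which must be collapsed using the quadratic relations of $\End(P)$ while keeping the exponents $(m+s)(m+t+1)$ and $m+s+1$ coherent across the inductively constructed resolution of Theorem~\ref{Th:diff}. The second, subtler issue is to confirm that the freedom in ``any nonzero composition of a pair of standard degree-one morphisms'' affects neither the commutativity of the squares nor the value of the resulting cohomology class; I would settle this by checking that at each occurring pair of projectives the distinct intermediate factorisations agree up to the scalar already built into the formula, so that the choice is immaterial.
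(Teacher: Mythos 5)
Your proposal is correct and follows essentially the same route as the paper: well-definedness from $\dim\hom(P(\la),P(\mu))_i \le 1$ (so any nonzero composition of standard degree-one maps represents the same map up to the fixed scalar), a case-by-case verification that $K$ is a commuting chain map with the computations deferred to the appendix, and non-nullhomotopy via Lemma~\ref{Le:posschmaps}, case~\ref{poss1}, with exactly the paper's numerical check $2m+1 \nleq 2m-1$. The only imprecision is your treatment of the generic $A$-terms $P(s|t)_A$, $t\neq s-1$: what must vanish there is the degree-three composite out of $V$ (differential followed by a component of $K$), which the paper disposes of in one stroke using the $i=3$ case of Lemma~\ref{Le:possmaps} (a nonzero degree-three morphism forces equal weights), rather than the ``vanishing of a length-two path out of $W$'' or a quadratic relation of $\End(P)$ as you phrase it --- but since those composites do vanish, this mis-attribution does not affect the argument.
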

\begin{proof}
Note that the map is well-defined since for $\la \neq \mu$ we have \\
$\hom(P(\la),P(\mu))_i \leq 1$ and so there is (up to scalar) only one possible choice for the maps above. There are no scalars occurring in the compositions of standard maps in \ref{sec:mapproj}, so one may choose any composition of standard maps.

Again, we have to check that the map is a commutative chain map and for it to check that diagrams of the form \begin{equation}\xymatrix{ V \ar[r]\ar[d] &W\ar[d]\\
X \ar[r] &Y}
\end{equation} commute.

 We simplify the situation by some general computations. Assume we are given a composition  where \begin{displaymath} \xymatrix{ P(s|t)_A \ar[rr]^{differential} &&\text{B-terms} \ar[rr]^{K} &&Y=\text{A-Terms}}
\end{displaymath}
and the terms $P(k|l)_A$ in $Y$ satisfy $k+l=s+t-3$ (by the assignments given in the definition of $K$). The problem reduces to the question whether a degree 3 morphism between projectives exists.
By Lemma \ref{Le:possmaps} this is only possible for $s=t+1$ (treated below). Otherwise the composition must be zero. Since the map $K$ maps $P(s|t)_A$ with $s>t+1$ to zero, the diagram always commutes for $V=P(s|t)_A$ and $s>t+1$.

Another general idea is to determine the degree $2$ maps as composition of degree one maps.
Writing down all other possible cases one obtains the below possibilities for the starting point:
\begin{enumerate}[label={$K$\arabic*})]
\item \label{KAs1}  $V=P(s+1|s)_A$
\item \label{KB} $V=P(s|t)_B$ and $s>t+2$
\item \label{Ks2}  $V=P(s|s-2)_B$
\end{enumerate}
The computations are carried out in  Appendix \ref{Ap:Elem}.

We are left to show that the map is not nullhomotopic. This holds since a homotopy would be a map
$$H: P_\bullet(m+1|m) \to P_\bullet(m-1|m-2)\langle -2 \rangle[-1] $$ 
which cannot exist by Lemma \ref{Le:posschmaps} case \ref{poss1}, since $m+1+m \nleq m-1+m-2+3-1$.

\end{proof}

\subsubsection{Elements obtained as products of generators}
It remains to define a bunch of other elements which one introduces in Theorems \ref{Th:elid}-\ref{Th:elj} as products (up to sign) from the above ones.
\begin{Theorem}\label{Th:elid}
For all $m,n,k,l \in \{0, \ldots N\}$ with $m <n$, $l <k$ and $l \leq m, k \leq n$ consider the product of generators from Theorem \ref{Th:ElemId}
\begin{equation}
\Id^{(n|m)}_{(n|m-1)}  \cdots \Id^{(n|m-1)}_{(n|m-2)} \cdot
\Id^{(n|l+1)}_{(n|l)} \cdot \Id^{(n|l)}_{(n-1|l)}  \cdots \Id^{(k+1|l)}_{(k|l)} \end{equation}
where $\Id^{(a|b)}_{(a|b)} $ denotes the identity morphism on the complex.
Then the following holds:
\begin{enumerate}
\item These maps are exactly given by
\begin{align}\label{al:defId}
\Id^{(n|m)}_{(k|l)}:& \quad P_\bullet(n|m) \to P_\bullet(k|l)\langle (n+m)-(k+l) \rangle[(n+m)-(k+l)]\nonumber \\
&\begin{cases}P(s|t)_A \to P(s|t)_A &(-1)^{(n+k)(l+t)} \\ P(s|t)_B \to P(s|t)_B &(-1)^{(n+k)(l+s)}\end{cases}.  \end{align}	

\item They are not nullhomotopic, hence they define nonzero elements in the $\Ext$-space $\Ext^{(n+m)-(k+l)}(M(n|m),M(k|l))$.
\end{enumerate}
\end{Theorem}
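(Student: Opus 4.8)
The plan is to prove the two assertions separately: (1) by an explicit bookkeeping of signs and summands along the composition, and (2) by a degree count using Lemma \ref{Le:posschmaps} (equivalently Corollary \ref{Cor:posschmaps}). For (1), I would start from the two generator types established in Theorem \ref{Th:ElemId}: each vertical factor $\Id^{(n|j)}_{(n|j-1)}$ acts by the identity (sign $+1$) on every $A$- and $B$-summand, while each horizontal factor $\Id^{(a|l)}_{(a-1|l)}$ multiplies an $A$-summand $P(s|t)$ by $(-1)^{l+t}$ and a $B$-summand $P(s|t)$ by $(-1)^{l+s}$, the exponent using the second coordinate $l$ of the source weight. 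The first genuine point to check is that the $A$/$B$-classification of a fixed summand $P(s|t)$ is preserved along the whole chain of intermediate complexes: writing the $A$-term condition in $P_\bullet(a|b)$ as $s+t+i=a+b$ in homological degree $i$, the homological shift $[1]$ carried by each generator exactly compensates the drop of $a+b$ by one, so $P(s|t)$ remains an $A$-term, and likewise a $B$-term (the inequality $s\le l$ being untouched by the horizontal moves). Since on the relevant summand each generator is a scalar multiple of $\id_{P(s|t)}$, the composite is again a scalar multiple of $\id_{P(s|t)}$ and the scalars simply multiply.

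With that in place, the sign is a counting exercise. There are $m-l$ vertical and $n-k$ horizontal factors, so on an $A$-summand the total scalar is $1^{\,m-l}\cdot\bigl((-1)^{l+t}\bigr)^{n-k}=(-1)^{(n-k)(l+t)}$, and on a $B$-summand it is $(-1)^{(n-k)(l+s)}$ (empty products, occurring when $m=l$ or $n=k$, give $+1$, consistent with the stated formula). Since $(n+k)-(n-k)=2k$ is even, $(-1)^{(n-k)(l+t)}=(-1)^{(n+k)(l+t)}$ and similarly for the $B$-terms, which is precisely (\ref{al:defId}). Finally the composite is a cycle, since it is a product of cycles in the differential graded algebra $\hom(P_\bullet,P_\bullet)$.

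For (2), a nullhomotopy of $\Id^{(n|m)}_{(k|l)}$ would be a map $H$ into the same shifted complex $P_\bullet(k|l)\langle (n+m)-(k+l)\rangle$ but of homological degree one lower, i.e. $H\in\hom^{(n+m)-(k+l)-1}\bigl(P_\bullet(n|m),\,P_\bullet(k|l)\langle (n+m)-(k+l)\rangle\bigr)$. Setting $s=(n+m)-(k+l)$, this $H$ has internal shift parameter equal to $s$ and homological degree $s-1$; in the notation of Corollary \ref{Cor:posschmaps} this reads $\kappa=-1$, $\jmath=0$, so $\kappa-\jmath=-1$, which lies outside the admissible range $\{0,1,2,3,4\}$. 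Hence the relevant $\hom$-space vanishes, no nonzero homotopy exists, and $\Id^{(n|m)}_{(k|l)}$ defines a nonzero class in $\Ext^{(n+m)-(k+l)}(M(n|m),M(k|l))$.

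The sign arithmetic and degree count are routine; the one step needing real care is the verification that the $A$- and $B$-labels of each summand $P(s|t)$ stay consistent as it passes through the intermediate complexes, since those labels are defined relative to the ambient complex and its homological degree. Once this consistency is secured, both the multiplicativity of the scalars in (1) and the vanishing argument in (2) follow immediately.
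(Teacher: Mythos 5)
Your proposal is correct and follows essentially the same route as the paper: part (1) by multiplying out the signs of the generators from Theorem \ref{Th:ElemId} (the paper states this computation without detail; you carry it out, correctly reducing $(-1)^{(n-k)(l+t)}$ to $(-1)^{(n+k)(l+t)}$ and noting the A/B-term bookkeeping across the intermediate complexes), and part (2) by observing that a nullhomotopy would live in $\hom^{(n+m)-(k+l)-1}\bigl(P_\bullet(n|m),P_\bullet(k|l)\langle (n+m)-(k+l)\rangle\bigr)$, which vanishes by Lemma \ref{Le:posschmaps} since the difference of homological degree and internal shift equals $-1$, outside the admissible range. No gaps.
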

\begin{proof}
By computing the signs in the product $$\Id^{(n|m)}_{(n|m-1)}  \cdots \Id^{(n|m-1)}_{(n|m-2)} \cdot
\Id^{(n|l+1)}_{(n|l)} \cdot \Id^{(n|l)}_{(n-1|l)}  \cdots \Id^{(k+1|l)}_{(k|l)} $$
one gets the appropriate sign in formula \eqref{al:defId}. 

It remains to show that the element is not nullhomotopic. If it was, we would have a map
$$H: P_\bullet(n|m) \to P_\bullet(k|l)\langle (n+m)-(k+l) \rangle[(n+m)-(k+l)-1]$$ which cannot exist by Lemma \ref{Le:posschmaps}.
\end{proof}
\begin{Theorem}\label{Th:elf}
For all $m,n,k,l \in \{0, \ldots N\}$ with $m <n$, $l+1 <k$ and $l < m, k \leq n$ consider the map
\begin{equation}
\Id^{(n|m)}_{(k|l+1)}  \cdot F^{(k|l+1)}_{(k|l)}.
\end{equation}
Again the following holds:
\begin{enumerate}
\item The map is exactly given by
\begin{align}\label{al:comF}\nonumber
F^{(n|m)}_{(k|l)}:& \quad P_\bullet(n|m) \to P_\bullet(k|l)\langle (n+m)-(k+l)-2 \rangle[(n+m)-(k+l)-1]\\
& \begin{cases} P(s|t)_A \to P(s|t-1)_A \:(-1)^{(n+k)(l+t+1)} & t\leq l\\
								P(s+2|s)_A \to P(s+1|s)_A  \:(-1)^{(n+k)(l+s+1)+k+s} &s \leq l\\
								P(s+1|s)_A \to P(s|s-2)_B \:(-1)^{(n+k)(l+s+1)+k+s} &s \leq l\\
								P(s|t)_B \to P(s-1|t)_B \:(-1)^{(n+k)(l+s+1)}&s\leq l+1\\
								P(s|s-2)_B \to P(s|s-1)_A\:(-1)^{(n+k)(l+s+1)}&s\leq l+1
\end{cases}
\end{align}
\item The map defines a nonzero element in $\Ext^{(n+m)-(k+l)-1}(M(n|m),M(k|l))$.
\end{enumerate}
\end{Theorem}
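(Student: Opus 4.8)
The plan is to treat the two assertions separately. For part (1) I would compute the product $\Id^{(n|m)}_{(k|l+1)}\cdot F^{(k|l+1)}_{(k|l)}$ directly, using that multiplication in $\hom(P_\bullet,P_\bullet)$ is composition of chain maps read from left to right (Section \ref{sec:Exthom}). Since $\Id^{(n|m)}_{(k|l+1)}$ from Theorem \ref{Th:elid} acts diagonally --- sending a summand $P(s|t)_A$ (resp.\ $P(s|t)_B$) of $P_\bullet(n|m)$ to the same summand of $P_\bullet(k|l+1)$ with scalar $(-1)^{(n+k)(l+1+t)}$ (resp.\ $(-1)^{(n+k)(l+1+s)}$) --- the composite sends each summand exactly where the basic generator $F^{(k|l+1)}_{(k|l)}$ of Theorem \ref{Th:elemF2} sends it, multiplied by that scalar. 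I would then run through the five families of arrows (A-to-A, A-to-B, B-to-B, B-to-A, and the corner specializations $s=t+1$) and check that the two exponents add up to the exponent recorded in \eqref{al:comF}; for instance on $P(s+2|s)_A$ the sign is $(-1)^{(n+k)(l+1+s)}\cdot(-1)^{k+s}=(-1)^{(n+k)(l+s+1)+k+s}$, precisely as claimed. The only delicate points are the boundary values ($t=l+1$, $s=t+1$) at which a summand changes its $A/B$-type between the two resolutions; those I would verify by hand against the tables of Section \ref{sec:mapproj}.

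Once part (1) is settled, $F^{(n|m)}_{(k|l)}$ is a composition of two cycles and is therefore itself a cycle, so it represents a class in $\Ext^{(n+m)-(k+l)-1}(M(n|m),M(k|l))$, and the homological and internal grading shifts add up as stated by construction.

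For part (2), non-nullhomotopy, I would first note that Corollary \ref{Cor:posschmaps} does \emph{not} by itself suffice: a putative homotopy $H\colon P_\bullet(n|m)\to P_\bullet(k|l)\langle(n+m)-(k+l)-2\rangle[(n+m)-(k+l)-2]$ falls into case \ref{Cposs0} with $k-j=0$ and $k=-2$, which sits exactly on the boundary and is hence not excluded. So I would argue directly. The structural observation is that such an $H$ has internal shift equal to its homological degree, so on each summand it is a \emph{degree-zero} morphism $P(\nu)\to P(\nu')$; by Koszulity (the degree-$0$ part of $\hom(P(\nu),P(\nu'))$ is nonzero only for $\nu=\nu'$) this forces $H$ to be diagonal. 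I would then use the augmentation $\eps\colon P(k|l)\to M(k|l)$ of Theorem \ref{sumocc}: the bottom component of $F^{(n|m)}_{(k|l)}$ contains the standard degree-one morphism $P(k|l+1)\to P(k|l)=P_0(k|l)$, and $\eps$ composed with it is nonzero because $L(k|l+1)$ is a genuine degree-one composition factor of $M(k|l)$ (read off from Table \ref{vermsimp} and \eqref{MinL}, matching $d_{(k|l+1),(k|l)}=q$). Since $\eps\circ d=0$ on $P_\bullet(k|l)$, a diagonal $H$ cannot produce this component after composition with $\eps$, which gives the contradiction.

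The main obstacle I anticipate is controlling the remaining contribution $\eps\circ H\circ d$ of the diagonal homotopy in the case where $P(k|l)$ reappears as a $B$-term of $P_\bullet(n|m)$ (which happens when $k\le m$): there one must check that this contribution lands in a different graded (equivalently Jordan--Hölder) layer of $M(k|l)$ than the $L(k|l+1)$-component produced by $F$, so that no cancellation occurs. This bookkeeping, rather than any conceptual step, is the heart of the matter, and it is exactly the computation deferred to here from Theorem \ref{Th:elemF2} and from the base case $n=k$, $m=l+1$.
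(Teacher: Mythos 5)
Your part (1) and your structural reduction of part (2) (equal internal and homological shift forces any homotopy $H$ to be diagonal, i.e.\ head-to-head) coincide with the paper's proof, and for $k>m$ your argument does close: there $P(k|l)$ cannot occur as a B-term of $P_\bullet(n|m)$, so a diagonal $H$ has no component at all landing in $P_0(k|l)$, and $f=0$ follows. The genuine gap is the case $k\le m$, and it is not bookkeeping: the check you propose (that $\eps\circ H\circ d$ lands in a different Jordan--H\"older layer of $M(k|l)$ than $\eps\circ f$) is false. The component of $H\circ d$ on $P(k|l+1)_A$ factors as the differential $P(k|l+1)_A\to P(k|l)_B$ (type (vi) of Theorem \ref{Th:diff}) followed by a diagonal component $\lambda\cdot\id\colon P(k|l)_B\to P_0(k|l)$ of $H$; since $\dim\hom(P(k|l+1),P(k|l))_1\le 1$, this composite is a scalar multiple of $f$ itself, hence $\eps\circ H\circ d$ hits exactly the same layer $L(k|l+1)\langle 1\rangle$ of $M(k|l)$ as $\eps\circ f$, and the equation $f=dx+Hd$ is solvable on this single summand by choosing $\lambda$ suitably. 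The paper's own Lemma \ref{Le:homotopyF} makes this concrete: for $m\ge k$ there exists a diagonal homotopy with components $P(s|t)_B\to P(s|t)_A$ whose boundary is $F^{(n|m)}_{(k|l)}-(-1)^{n+l}\widetilde F^{(n|m)}_{(k|l)}$, so such a boundary does contain the component $P(k|l+1)_A\to P(k|l)_A$. No argument that only looks at the summand $P(k|l+1)_A$ (with or without the augmentation $\eps$) can therefore succeed.

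The missing idea is the paper's second step: test the homotopy equation also on the companion summand $P(k+1|l)_A$. There $F$ has \emph{no} component into $P_0(k|l)$ (its only assignment, $P(k+1|l)_A\to P(k+1|l-1)_A$, has no target in $P_0(k|l)$), whereas a nonzero diagonal component $y\colon P(k|l)_B\to P(k|l)_A$ of $H$ would produce, via the differential $P(k+1|l)_A\to P(k|l)_B$ (type (v) of Theorem \ref{Th:diff}), a nonzero contribution $y\circ d$; in your framework this is detected by $\eps$ as well, since $d_{(k+1|l),(k|l)}=q$ puts $L(k+1|l)$ in degree one of $M(k|l)$. As the corresponding component $x'$ vanishes by your degree argument, this forces $y=0$, and only then does the equation on $P(k|l+1)_A$ give the contradiction $f=0$. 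The point your proposal misses is precisely this asymmetry --- $F$ is supported on $P(k|l+1)_A$ while $\widetilde F$ is supported on $P(k+1|l)_A$ --- which is what distinguishes $F$ from a boundary (and from $\pm\widetilde F$) when $k\le m$, and which can only be seen by comparing the two summands simultaneously.
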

\begin{proof}
It is obvious that the product $\Id^{(n|m)}_{(k|l+1)}  \cdot F^{(k|l+1)}_{(k|l)}$ provides the map in equation \eqref{al:comF}.

To show that $F$ is not nullhomotopic we consider a homotopy 
$$H: P_\bullet(n|m) \to P_\bullet(k|l)\langle (n+m)-(k+l) \rangle[(n+m)-(k+l)].$$ Since both resolutions are linear, $H$ must map the head of a projective to the head of another, so up to scalar it must be the identity on these objects. Therefore, we focus on the first map occurring and show that $H$ cannot exist:
\begin{displaymath}
\xymatrix{\cdots \ar[r] &P(k|l+1)_A \ar[d]^f \ar[r] \ar[ld]^x &Y\ar[ld]^y\\
X \ar[r] & P(k|l)_A \langle -1 \rangle \ar[r] & \cdots}
\end{displaymath}
We have to find maps $x$ and $y$ such that $yd+dx=f$.

\begin{enumerate}
\item We look at $x$ and show that is has to be zero. Since the map has to be the identity on objects  up to a scalar factor it has to map $P(k|l+1)_A$ to $P(k|l+1)$ in $X$. This does not exist since for all $P(s|t)$ occuring in $X$ we have $s+t=k+l-1$ or $s+t=k+l-3$. Therefore, $x$ is the zero map.
\item If the map $y$ exists, it sends $P(k|l)$ to $P(k|l)_A$. This projective occurs as a B-Term in the needed position of the upper resolution for $k\leq m <n$, so we would have $P(k|l)_B \to P(k|l)_A$. Assume this map exists, then we would also get a nonzero map $P(k+1|l)_A \to P(k|l)_B \to P(k|l)_A$. Therefore, we are in the situation
\begin{displaymath}
\xymatrix{\cdots \ar[r] &P(k+1|l)_A  \ar[r] \ar[ld]^{x'} &Y\ar[ld]^{y+y'}\\
X \ar[r] & P(k|l)_A \langle -1 \rangle \ar[r] & \cdots}
\end{displaymath}
and have to find $x'$ and $y'$ such that $dx'+(y+y')d=0$. $y'$ has to be zero, since as mentioned before, the only possible map from $Y$ to $P(k|l)$ is $y$. $x'$ also has to be zero by arguments similar to the above ones. This leads to a contradiction, since $yd \neq 0$.
\end{enumerate}
So we have shown that $H$ cannot exist and the theorem follows.
\end{proof}

\begin{Theorem}\label{Th:ftil}
For all $m,n,k,l \in \{0, \ldots N\}$ with $m <n$, $l <k$ and $l \leq m, k <n$ consider the product of generators
\begin{equation}\Id^{(n|m)}_{(k+1|l)}  \cdot \widetilde{F}^{(k+1|l)}_{(k|l)}. \end{equation}

Then the following holds:
\begin{enumerate}
\item These maps are exactly given by
\begin{align}\nonumber \widetilde{F}^{(n|m)}_{(k|l)}:& \quad P_\bullet(n|m) \to P_\bullet(k|l) \langle (n+m)-(k+l)-2 \rangle [(n+m)-(k+l)-1]\\
& \begin{cases} P(s|t)_A \to P(s-1|t)_A:\ (-1)^{(n+k+1)(l+t)} & s\leq k+1\\
								P(s|t)_B \to P(s|t-1)_B:\ (-1)^{(n+k+1)(l+s)} &t\leq l\\
								P(s+1|s)_A \to P(s|s-2)_B:\ (-1)^{(n+k+1)(l+s)}&s\leq l
\end{cases}.  
\end{align}	
\item Again they are not nullhomotopic, hence they define nonzero elements in the space $\Ext^{(n+m)-(k+l)-1}(M(n|m),M(k|l))$.
\end{enumerate}
\end{Theorem}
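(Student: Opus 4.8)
The plan is to follow the two-step pattern of Theorem~\ref{Th:elf}, proving the explicit formula first and the non-vanishing second. For part~(1), I would simply evaluate the product $\Id^{(n|m)}_{(k+1|l)} \cdot \widetilde{F}^{(k+1|l)}_{(k|l)}$ using the explicit descriptions of its two factors from Theorem~\ref{Th:elid} and Theorem~\ref{Th:elemF2}. Since multiplication is composition of chain maps read from left to right, the behaviour on objects is governed entirely by the generator $\widetilde{F}^{(k+1|l)}_{(k|l)}$, which sends $P(s|t)_A \mapsto P(s-1|t)_A$, $P(s|t)_B \mapsto P(s|t-1)_B$ and $P(s+1|s)_A \mapsto P(s|s-2)_B$, all with coefficient $1$, while the signs come entirely from $\Id^{(n|m)}_{(k+1|l)}$, which rescales an A-term $P(s|t)_A$ by $(-1)^{(n+k+1)(l+t)}$ and a B-term $P(s|t)_B$ by $(-1)^{(n+k+1)(l+s)}$. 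I would check that A-terms map to A-terms and B-terms to B-terms under the identity (the homological shift $(n+m)-(k+1+l)$ and the equal internal shift make this compatible), so that the two maps compose on the graded pieces, and that the homological degree $(n+m)-(k+l)-1$ and internal shift $(n+m)-(k+l)-2$ of the product add up correctly. Collecting the signs, and noting that on $P(s+1|s)_A$ one has $t=s$ so $l+t=l+s$, reproduces the three cases of the claimed formula.

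For part~(2) I would show the class is nonzero by ruling out a nullhomotopy, again mimicking Theorem~\ref{Th:elf}. It is worth pointing out first that the cheap argument via Corollary~\ref{Cor:posschmaps} is \emph{not} available here: a nullhomotopy $H$ has homological degree one less than $\widetilde{F}^{(n|m)}_{(k|l)}$ and falls into the case $k-j=0$, where the inequality $-2 \le k$ is tight and therefore satisfied, so the degree bound alone cannot exclude $H$. Instead I would exploit that both $P_\bullet(n|m)$ and $P_\bullet(k|l)$ are linear, so any homotopy must send the head of an indecomposable projective to the head of another and hence be, up to a scalar, the identity on the relevant summands. Localising at the leading nonzero component $P(k+1|l)_A \to P(k|l)_A$ of $\widetilde{F}^{(n|m)}_{(k|l)}$ (the summand whose target sits in homological degree $0$ of $P_\bullet(k|l)$), I would try to solve $yd+dx=f$ for the two candidate homotopy pieces $x$ and $y$ and show that each is forced to vanish: $x$ because the projective $P(k+1|l)$ does not reappear in the neighbouring component of the source resolution, and $y$ because every admissible target is excluded for analogous structural reasons read off from Theorem~\ref{Th:diff} and the A/B-term bookkeeping. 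This leaves $yd+dx=0\neq f$, a contradiction, so $\widetilde{F}^{(n|m)}_{(k|l)}$ represents a nonzero element of $\Ext^{(n+m)-(k+l)-1}(M(n|m),M(k|l))$.

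The main obstacle is this non-vanishing in part~(2). Because the generic degree estimate does not rule out a homotopy, the argument is forced down to the explicit local structure of the two resolutions at their top, where one must distinguish A-terms from B-terms and track precisely which projectives can appear in adjacent homological degrees. Carrying out the case analysis for the candidate components $x$ and $y$ without omitting a configuration, using the differentials of Theorem~\ref{Th:diff} together with the morphism tables, is where the genuine work lies; the sign computation in part~(1) is, by contrast, routine bookkeeping.
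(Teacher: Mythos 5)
Your part (1) and the overall shape of part (2) match the paper: the formula is obtained by composing the two generators (the paper treats this as immediate), and since the degree bound of Corollary \ref{Cor:posschmaps} is tight for the would-be homotopy (the case $k-j=0$ with $-2\le k$ holds with equality), the paper likewise abandons the cheap estimate and runs the structural "heads-to-heads" argument, localised at the leading component $P(k+1|l)_A\to P(k|l)_A$. Your $x=0$ step is the paper's (though note $x$ lands in the \emph{target} resolution: the summands of $X=P_1(k|l)$ have $s+t=k+l-1$ or $k+l-3$, never $k+l+1$, which is why no copy of $P(k+1|l)$ is available there).

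The genuine gap is your claim that $y$ is also "forced to vanish" by admissibility bookkeeping alone. That is false exactly in the range the theorem must cover. The component $Y$ of $P_\bullet(n|m)$ feeding into $y$ contains B-terms $P(s|t)_B$ with $s+t=k+l$, $s\le m$, $s\ne t+1$; so whenever $k\le m$ and $k\ne l+1$ (both allowed by the hypotheses $m<n$, $l<k\le m$, and this is precisely the regime where the later Corollary shows $F^{(n|m)}_{(k|l)}\simeq\pm\widetilde{F}^{(n|m)}_{(k|l)}$), the projective $P(k|l)_B$ genuinely occurs in $Y$ and admits a nonzero degree-zero head-to-head map onto $P(k|l)_A$. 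Your planned conclusion "$yd+dx=0\ne f$" therefore does not follow from exclusion. The paper closes this case by a second, different observation: since $k\ne l+1$ the A-term $P(k|l+1)_A$ also sits in the source component carrying $f$, the differential of type (vi) gives a nonzero map $P(k|l+1)_A\to P(k|l)_B$, while $\widetilde{F}$ \emph{vanishes} on $P(k|l+1)_A$ (its nominal image $P(k-1|l+1)_A$ is not an A-term of $P_\bullet(k|l)$ because $l+1>l$) and $x$ vanishes there for the same counting reason as before. Hence a nonzero $y$ would make $yd\ne 0$ on a component where $f=0$ with nothing to cancel it, forcing $y=0$; only then does the contradiction $f\ne 0=yd+dx$ at the leading component $P(k+1|l)_A$ go through. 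Without this extra step your argument fails for all $k\le m$ with $k>l+1$.
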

\begin{proof}
Again we only have to verify that the map is not nullhomotopic. Since this is similar to the arguments in the previous proof we only sketch the argument:

Assuming we have
$$H: P_\bullet(n|m) \to P_\bullet(k|l)\langle (n+m)-(k+l) \rangle[(n+m)-(k+l)],$$ we are in the situation:
\begin{displaymath}
\xymatrix{\cdots \ar[r] &P(k|l+1)_A \ar[d]^f \ar[r] \ar[ld]^x &Y\ar[ld]^y\\
X \ar[r] & P(k|l)_A \langle -1 \rangle \ar[r] & \cdots}
\end{displaymath}
and have to find maps $x$ and $y$ such that $yd+dx=f$.
\begin{enumerate}
\item $x$ has to be the zero map (same reason as above).
\item If the map $y$ is nonzero, it sends $P(k|l)$ to $P(k|l)_A$. If $P(k|l)_B$ occurs as a B-term then $k \neq l+1$, so we would also get a nonzero map $P(k|l+1)_A \to P(k|l)_B \to P(k|l)_B$. This yields a contradiction.
\end{enumerate}
Hence, the theorem is proved.
\end{proof}

\begin{Theorem}\label{Th:elg}
For all $m,n,k,l \in \{0, \ldots N\}$ with $m <n$, $l <k$ and $k<m$ consider the product
\begin{equation}\Id^{(n|m)}_{(k+2|k+1)} \cdot G^{(k+2|k+1)}_{(k|k-1)}\cdot \Id^{(k|k-1)}_{(k|l)}. \end{equation}

Then the following holds:
\begin{enumerate}
\item These maps are exactly given by
\begin{align}\nonumber
G^{(n|m)}_{(k|l)}:&\quad  P_\bullet(n|m) \to P_\bullet(k|l)\langle (n+m)-(k+l)-4 \rangle [(n+m)-(k+l)-3]\\
& \begin{cases} P(s|t)_A \to 0 & t \neq s-1\\
								P(s|s-1)_A \to (-1)^{(n+k)(k+s)+1} P(s-1|s-3)_A &s-1\leq k \\
								P(s|t)_B \to (-1)^{(k+s+1)(n+t)} P(s-1|t)_A \\ 
								\ \ \ +(-1)^{(k+s+1)(n+t+1)+s+t} P(s|t-1)_A &s<k+1\\
								P(k+1|t)_B \to  P(k|t)_A
								\end{cases}
\end{align}
\item They define nonzero elements in $\Ext^{(n+m)-(k+l)-3}(M(n|m),M(k|l))$.
\end{enumerate}
\end{Theorem}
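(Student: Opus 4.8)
The plan is to handle the two assertions separately: the explicit formula in part~(1) is a direct computation of the triple composite, while the non-triviality in part~(2) follows immediately from the shift bounds already established. First I would note that $G^{(n|m)}_{(k|l)}$ is \emph{defined} as the product $\Id^{(n|m)}_{(k+2|k+1)} \cdot G^{(k+2|k+1)}_{(k|k-1)} \cdot \Id^{(k|k-1)}_{(k|l)}$ of three cycles in $\hom(P_\bullet,P_\bullet)$. Since each factor has been shown to be a cycle (Theorems~\ref{Th:elid} and~\ref{Th:elemG}) and the product of cycles is again a cycle, the composite is automatically a chain map; in contrast to the generators, there is no chain-map property left to verify. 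I would first record that all three factors are defined: under the hypotheses $l<k<m<n$ the intermediate weights $(k+2|k+1)$ and $(k|k-1)$ satisfy the constraints of Theorems~\ref{Th:elid} and~\ref{Th:elemG}, and $G^{(k+2|k+1)}_{(k|k-1)}$ is precisely the generator $G^{(m'+1|m')}_{(m'-1|m'-2)}$ for $m'=k+1$.

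For part~(1) I would compose the three maps term by term, using the left-to-right convention $(\alpha\cdot\beta)(x)=\beta(\alpha(x))$. Both $\Id$ factors act as the (signed) identity on each summand $P(s|t)_A$ and $P(s|t)_B$, so the substantive assignment comes from the middle factor, and the only real work is collecting the three signs. Starting from $P(s|s-1)_A$, the first $\Id$ contributes $(-1)^{(n+k+2)(k+s)}$, the generator contributes $-1$ and sends it to $P(s-1|s-3)_A$, and the last $\Id$ contributes $+1$ (its exponent $(k+k)(l+t)$ is even); using $(-1)^{(n+k+2)(k+s)}=(-1)^{(n+k)(k+s)}$ recovers the stated sign $(-1)^{(n+k)(k+s)+1}$. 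For a B-term $P(s|t)_B$ with $s<k+1$, the two images $P(s-1|t)_A$ and $P(s|t-1)_A$ produced by the generator acquire the prefactor $(-1)^{(n+k+2)(k+1+s)}$; reducing the resulting exponents modulo $2$ (discarding even summands such as $2(k+s)$ and $2k+2$) yields exactly $(-1)^{(k+s+1)(n+t)}$ and $(-1)^{(k+s+1)(n+t+1)+s+t}$, while for $s=k+1$ all three signs are trivially $+1$. These are routine mod-$2$ checks, and I would simply assert that they reproduce the displayed formula.

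For part~(2) I would exclude a nullhomotopy directly from the shift bounds. A homotopy for a cycle of cohomological degree $(n+m)-(k+l)-3$ and internal shift $\langle (n+m)-(k+l)-4\rangle$ would be a map $H\in\hom^{(n+m)-(k+l)-4}\bigl(P_\bullet(n|m),P_\bullet(k|l)\langle (n+m)-(k+l)-4\rangle\bigr)$. Writing $s=(n+m)-(k+l)$ and applying Corollary~\ref{Cor:posschmaps} with both cohomological and internal degree equal to $s-4$, one lands in the regime $k-j=0$, which by case~\ref{Cposs0} requires $-2\le k=-4$, a contradiction. Hence $H=0$, so the nonzero cycle $G^{(n|m)}_{(k|l)}$ cannot be a boundary and defines a nonzero class in $\Ext^{(n+m)-(k+l)-3}(M(n|m),M(k|l))$. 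I expect part~(1) to be the only genuinely laborious step, precisely because the three-fold sign bookkeeping must be kept consistent across all the case distinctions ($t\neq s-1$, $t=s-1$, generic B-terms, and the boundary case $s=k+1$); part~(2) is immediate from the already-proved inequalities.
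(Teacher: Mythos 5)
Your proposal is correct and follows essentially the same route as the paper: part (1) by explicitly composing the three factors and reducing the sign exponents mod 2 (the paper states this computation in one line; your expansion of the $(n+k+2)(k+s)$-type exponents is the same bookkeeping, and it does reproduce the stated signs), and part (2) by the identical homotopy-exclusion argument via Corollary \ref{Cor:posschmaps}, case \ref{Cposs0}, with cohomological and internal shift both $(n+m)-(k+l)-4$, giving the contradiction $-2 \nleq -4$.
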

\begin{proof}
The signs are obtained by writing down the product 
$$\Id^{(n|m)}_{(k+2|k+1)} \cdot G^{(k+2|k+1)}_{(k|k-1)}\cdot \Id^{(k|k-1)}_{(k|l)}$$
explicitly.

Let us assume that $G$ is nullhomotopic, i.e. there is a map
$$H: P_\bullet(n|m) \to P_\bullet(k|l)\langle (n+m)-(k+l)-4 \rangle[(n+m)-(k+l)-4].$$ 
It cannot exist by Corollary \ref{Cor:posschmaps} case \ref{Cposs0}, since $-2 \nleq -4$.
\end{proof}

\begin{Theorem}\label{Th:elk}
For all $m,n,k,l \in \{0, \ldots N\}$ with $m <n$, $l <k$ and $k<m$ consider the product
\begin{equation}\Id^{(n|m)}_{(k+2|k+1)} \cdot K^{(k+2|k+1)}_{(k|k-1)}\cdot \Id^{(k|k-1)}_{(k|l)}. \end{equation}

Then the following holds:
\begin{enumerate}
\item These maps are exactly given by
\begin{align}\nonumber
K^{(n|m)}_{(k|l)}:& \quad P_\bullet(n|m) \to P_\bullet(k|l)\langle (n+m)-(k+l)-6 \rangle [(n+m)-(k+l)-4]\\
& \begin{cases} P(s|t)_A \to 0 & t \neq s-1\\
								P(s|s-1)_A \to (-1)^{(n+k+1)(k+s)} P(s-2|s-3)_A  & s-1 \leq k\\
								P(s|t)_B \to (-1)^{(n+t)(k+s+1)} P(s-1|t-1)_A & s-1 \leq k\\ 
								\end{cases}
\end{align}
\item Again, they define nonzero elements in $\Ext^{(n+m)-(k+l)-4}(M(n|m),M(k|l))$.
\end{enumerate}
\end{Theorem}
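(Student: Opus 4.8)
The plan is to imitate the two-step strategy already used for Theorems \ref{Th:elid}--\ref{Th:elg}. Since $K^{(n|m)}_{(k|l)}$ is \emph{defined} as the product $\Id^{(n|m)}_{(k+2|k+1)} \cdot K^{(k+2|k+1)}_{(k|k-1)}\cdot \Id^{(k|k-1)}_{(k|l)}$ of three maps that are already known to be cycles (from Theorems \ref{Th:ElemId}, \ref{Th:elid} and \ref{Th:ElemK}), it is automatically a cycle in $\hom(P_\bullet,P_\bullet)$. So part (1) reduces to a bookkeeping computation of shifts and signs, and part (2) to a degree argument showing non-nullhomotopy.

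For part (1), I would first add up the shifts of the three factors: the internal grading contributes $\big((n+m)-(2k+3)\big) + (-2) + (k-1-l) = (n+m)-(k+l)-6$ and the homological degree contributes $\big((n+m)-(2k+3)\big) + 0 + (k-1-l) = (n+m)-(k+l)-4$, matching the stated target. For the signs, the middle factor is the generator of Theorem \ref{Th:ElemK} specialised to $m=k+1$, giving $P(s|s-1)_A \mapsto (-1)^{k+s}P(s-2|s-3)_A$ and $P(s|t)_B \mapsto (-1)^{(k+s+1)(k+t)}P(s-1|t-1)_A$ (using $(-1)^2=1$), and annihilating every A-term with $t\neq s-1$. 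The right-hand identity $\Id^{(k|k-1)}_{(k|l)}$ contributes no sign, since by \eqref{al:defId} its exponents are multiples of $(-1)^{2k}=1$, while the left-hand identity $\Id^{(n|m)}_{(k+2|k+1)}$ contributes $(-1)^{(n+k)(k+1+t)}$ on A-terms and $(-1)^{(n+k)(k+1+s)}$ on B-terms. Composing (multiplication read left to right) gives on $P(s|s-1)_A$ the sign $(-1)^{(n+k)(k+s)+(k+s)} = (-1)^{(n+k+1)(k+s)}$ and on $P(s|t)_B$ the sign $(-1)^{(n+k)(k+1+s)+(k+s+1)(k+t)}$, which I would simplify by factoring: $(n+k)(k+s+1)+(k+t)(k+s+1) = (k+s+1)(n+2k+t) \equiv (k+s+1)(n+t) \pmod 2$. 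These agree with the claimed formula.

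The step I expect to be the main obstacle is exactly this sign reduction together with keeping the A-term/B-term designations straight as they travel through the identities: the two identity factors preserve the A/B labelling but move the ambient resolution, whereas the middle factor $K$ is precisely the map that converts B-terms into A-terms and contracts the staircase $P(s|s-1)_A\to P(s-2|s-3)_A$, so the domains must be aligned carefully. One also has to check that the hypotheses $m<n$, $l<k$, $k<m$ ensure all three generators are defined on the relevant ranges (so that the staircase weight $(k+2|k+1)$ occurs and the diagonal descent $(k|k-1)\to(k|l)$ is available), which is the source of the constraint $s-1\le k$ in the target formula.

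For part (2), a nullhomotopy would be a map
$$
H: P_\bullet(n|m) \to P_\bullet(k|l)\big\langle (n+m)-(k+l)-6\big\rangle\big[(n+m)-(k+l)-5\big].
$$
Setting $s:=(n+m)-(k+l)$, this lies in $\hom^{s-5}\big(P_\bullet(n|m),P_\bullet(k|l)\langle s-6\rangle\big)_0$; comparing with Corollary \ref{Cor:posschmaps}, the homological offset is $-5$ and the grading offset is $-6$, so their difference is $1$ and case \ref{Cposs1} applies, which requires the homological offset to be at least $-3$. Since $-3 \nleq -5$, no such $H$ exists, so $K^{(n|m)}_{(k|l)}$ is not nullhomotopic. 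This final step is entirely analogous to the non-nullhomotopy arguments in Theorems \ref{Th:elf}--\ref{Th:elg} and presents no difficulty once the degree is read off correctly.
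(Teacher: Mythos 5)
Your proposal is correct and takes essentially the same route as the paper: part (1) is the direct computation of shifts and signs from the triple product (which the paper compresses into ``writing down the product one obtains the signs'' and which you carry out correctly, including the reductions $(-1)^{(n+k)(k+s)+(k+s)}=(-1)^{(n+k+1)(k+s)}$ and $(k+s+1)(n+2k+t)\equiv(k+s+1)(n+t)\pmod 2$). Part (2) is the paper's identical argument: a nullhomotopy would lie in $\hom^{s-5}\bigl(P_\bullet(n|m),P_\bullet(k|l)\langle s-6\rangle\bigr)_0$ with $s=(n+m)-(k+l)$, which vanishes by Corollary \ref{Cor:posschmaps}, case \ref{Cposs1}, since $-3 \nleq -5$.
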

\begin{proof}
Writing down the product
$$\Id^{(n|m)}_{(k+2|k+1)} \cdot K^{(k+2|k+1)}_{(k|k-1)}\cdot \Id^{(k|k-1)}_{(k|l)}$$one obtains the signs.

Assume that $K$ is nullhomotopic, i.e. there is a map
$$H: P_\bullet(n|m) \to P_\bullet(k|l)\langle (n+m)-(k+l)-6 \rangle[(n+m)-(k+l)-5].$$ This cannot exist by Corollary \ref{Cor:posschmaps} case \ref{Cposs1}, since $-3 \nleq -5$.
\end{proof}

\begin{Remark}
Note that the maps from Theorems \ref{Th:elid}-\ref{Th:elk} in particular contain the maps constructed already in Theorems \ref{Th:ElemId}-\ref{Th:ElemK}.
\end{Remark}

The last family of elements we define is special in some sense. First, this class only appears as products of at least two non-idempotent elements. Second, we determine maps which seem to belong to this class but later on we will show that they are nullhomotopic. For a benefit of notation, we do not exclude these maps in the definition, but we have to be careful later on.
\begin{Theorem}\label{Th:elj}
For all $m,n,k,l \in \{0, \ldots N\}$ with $m <n$, $l <k$ and $k<n, l<m$ consider the product
\begin{equation}F^{(n|m)}_{(k+1|l)} \cdot \widetilde{F}^{(k+1|l)}_{(k|l)}. \end{equation}
The following holds:
\begin{enumerate}
\item These maps are exactly given by
\begin{align}\nonumber
J^{(n|m)}_{(k|l)}:& \quad P_\bullet(n|m) \to P_\bullet(k|l)\langle (n+m)-(k+l)-4 \rangle [(n+m)-(k+l)-2]\\
& \begin{cases}
								P(s|t)_A \to (-1)^{(n+k+1)(l+t+1)} P(s-1|t-1)_A  & s-1 \leq k, t-1 \leq l\\
								P(s|t)_B \to (-1)^{(n+k+1)(l+s+1)} P(s-1|t-1)_B & s-1 \leq l\\ 
								\end{cases}
\end{align}
\item For \textbf{$m<k$} they define nonzero elements in $\Ext^{(n+m)-(k+l)-4}(M(n|m),M(k|l))$.
\end{enumerate}
\end{Theorem}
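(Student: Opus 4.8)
The plan is to treat the two assertions separately, using that $J^{(n|m)}_{(k|l)}$ is by definition the composite $F^{(n|m)}_{(k+1|l)}\cdot\widetilde{F}^{(k+1|l)}_{(k|l)}$ of two maps already shown to be cycles in Theorems \ref{Th:elf} and \ref{Th:ftil}. Hence $J$ is automatically a cycle and only its explicit form and its non-triviality remain to be checked. The computational simplification I would record first is that every sign of the second factor $\widetilde{F}^{(k+1|l)}_{(k|l)}$ equals $+1$: in the formula of Theorem \ref{Th:ftil} the sign exponents all carry the factor $(n'+k+1)$ with $n'=k+1$, and $(k+1)+k+1=2k+2$ is even. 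Consequently the composite simply inherits the signs of $F^{(n|m)}_{(k+1|l)}$.

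For part (1) I would compose term by term, recalling that multiplication in $\hom(P_\bullet,P_\bullet)$ is read from left to right. For a generic A-term, $F^{(n|m)}_{(k+1|l)}$ sends $P(s|t)_A\mapsto P(s|t-1)_A$ with sign $(-1)^{(n+k+1)(l+t+1)}$ and then $\widetilde{F}^{(k+1|l)}_{(k|l)}$ sends $P(s|t-1)_A\mapsto P(s-1|t-1)_A$ with sign $+1$, giving exactly the first line of $J$; for a generic B-term the unique route $P(s|t)_B\mapsto P(s-1|t)_B\mapsto P(s-1|t-1)_B$ produces the sign $(-1)^{(n+k+1)(l+s+1)}$, i.e. the second line of $J$. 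The near-diagonal terms ($P(s{+}2|s)_A$, $P(s{+}1|s)_A$ and the special B-term $P(s|s{-}2)_B$) must be tracked through the crossing cases A$\to$B and B$\to$A of the two factors; this is routine but is the one place where care is needed, and I would relegate it to the same appendix style as the verifications for $F$, $\widetilde F$, $G$, $K$.

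For part (2) I would first note that the cohomological shift of the map is $[(n+m)-(k+l)-2]$, so the class lives in $\Ext^{(n+m)-(k+l)-2}$; writing $\sigma=(n+m)-(k+l)$, the dimension computation in Section \ref{sec:dimexp}, formula \eqref{ca:klk} (whose hypotheses $j<l-1$, $l<n$, $m<l$, $j<m$ become $l<k-1$, $k<n$, $m<k$, $l<m$ in the present notation), gives $\dim\Ext^{\sigma-2}(M(n|m),M(k|l))=1$. The proper tool, however, is a leading-component homotopy argument valid under the stated hypotheses. Since $l<m$ and $k<n$, the term $P(k+1|l+1)_A$ occurs in position $\sigma-2$ of $P_\bullet(n|m)$, and by part (1) it is sent to $P(k|l)_A=P_0(k|l)$ with coefficient $+1$. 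Assuming a homotopy $H$ of cohomological degree $\sigma-3$ and internal shift $\langle\sigma-4\rangle$ with $J=d_{k|l}H\pm Hd_{n|m}$, I project the homotopy relation onto $\hom(P(k+1|l+1)_A,P_0(k|l))$. The $d_{k|l}H$ contribution factors through $P_1(k|l)$ and forces a nonzero element of $\hom(P(k+1|l+1)\langle1\rangle,P(\nu))_0$ for a summand $\nu$ of $P_1(k|l)$; these all satisfy $s_\nu+t_\nu\le k+l-1$, so Lemma \ref{Le:possmaps} (case $i=1$, its exceptional subcase being excluded by $l<k$, with a direct inspection of $P_1(k|l)$ in the boundary case $l=k-1$) makes this contribution vanish.

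The $Hd$ contribution is where the hypothesis $m<k$ is decisive, and it is the main obstacle of the whole argument. By Theorem \ref{Th:diff} the differential of $P_\bullet(n|m)$ sends $P(k+1|l+1)_A$ to the A-terms $P(k+2|l+1)_A$, $P(k+1|l+2)_A$ and to the B-terms $P(k|l+1)_B$, $P(k+1|l)_B$ in position $\sigma-3$. For $H d_{n|m}$ to reach the head of $P(k|l)=P_0(k|l)$, the map $H$ would have to give a nonzero element of $\hom(P(\nu)\langle1\rangle,P(k|l))_0$ from one of these summands; Lemma \ref{Le:possmaps} kills the two A-summands (both have $s+t=k+l+3$), so only the B-summands could contribute. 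But a B-term of $P_\bullet(n|m)$ must have first coordinate $\le m$, and $m<k<k+1$ means neither $P(k|l+1)_B$ nor $P(k+1|l)_B$ occurs at all. Thus both contributions vanish, the leading coefficient of $J$ would be $0$, a contradiction, and $J$ is not nullhomotopic. Precisely this B-term, which is present as soon as $m\ge k$, is what permits a homotopy in the excluded range and accounts for the warning preceding the statement.
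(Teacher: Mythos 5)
Your proof is correct and follows essentially the same route as the paper's: part (1) by composing the assignments of $F^{(n|m)}_{(k+1|l)}$ and $\widetilde{F}^{(k+1|l)}_{(k|l)}$ term by term (with the crossing/near-diagonal cases checked separately), and part (2) by projecting a hypothetical homotopy onto the leading component $P(k+1|l+1)_A \to P(k|l)_A$, killing the contribution through $P_\bullet(k|l)$ via Lemma \ref{Le:possmaps} and the contribution through $P_\bullet(n|m)$ via the observation that the B-terms $P(k+1|l)_B$, $P(k|l+1)_B$ cannot occur once $m<k$ — exactly the paper's argument. Your placement of the class in $\Ext^{(n+m)-(k+l)-2}$, matching the homological shift $[(n+m)-(k+l)-2]$ and the dimension count of Section \ref{sec:dimexp}, is also the correct reading; the exponent $-4$ in the theorem's statement is a slip.
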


\begin{proof}
The maps are well-defined since we know that for $\la \neq \mu$ we have $\hom(P(\la),P(\mu))_i \leq 1$ and by this there is (up to scalar) only one possible choice for the degree two maps above.

For the first assertion we check all compositions occuring in the product of the maps $F$ and $\widetilde{F}$, i.e. analyse all compositions of assignments in the definitions of $F$ and $\widetilde{F}$, respectively.

First, we have
\begin{displaymath} \xymatrix{ P(s|t)_A \ar[rr]^{(-1)^{(n+k+1)(l+t+1)}} &&P(s|t-1)_A  \ar[rr] &&P(s-1|t-1)_A}
\end{displaymath}
where the second assignment is the only possible one, since $P(s|t-1) \neq P(s|s-1)$.

Secondly, we compute
\begin{displaymath} \xymatrix{ P(s+2|s)_A \ar[rr] &&P(s+1|s)_A  \ar[rr] &&P(s|s-2)_A}
\end{displaymath}
which equals zero by Lemma \ref{Le:possmaps}.

Next, we see
\begin{displaymath} \xymatrix{ P(s+1|s)_A \ar[rr] &&P(s-2|s)_B  \ar[rr] &&P(s|s-3)_B}
\end{displaymath}
equals zero, too. 

Turning to the B-terms, we have
\begin{displaymath} \xymatrix{ P(s|t)_B \ar[rr]^{(-1)^{(n+k+1)(l+s+1)}} &&P(s-1|t)_B  \ar[rr] &&P(s-1|t-1)_B}
\end{displaymath}
as asserted.

Last we consider
\begin{displaymath} \xymatrix{ P(s|s-2)_B \ar[rr]^{(-1)^{(n+k+1)(l+s+1)}} &&P(s|s-1)_A  \ar[rr] &&P(s-1|s-3)_B}
\end{displaymath}
which is contained in the second case.

Now we turn to the question whether $J$ is nullhomotopic or not. In the theorem it is only claimed that it is not for $m<k$. Therefore, assume $m<k$ and assume we have a homotopy 
$$H: P_\bullet(n|m) \to P_\bullet(k|l)\langle (n+m)-(k+l)-4 \rangle[(n+m)-(k+l)-3]$$
which leads us to the situation reflected in the diagram:
\begin{displaymath}
\xymatrix{\cdots \ar[r] &P(k+1|l+1)_A \ar[d]^f \ar[r] \ar[ld]^x &Y\ar[ld]^y\\
X \ar[r] & P(k|l)_A \langle -2 \rangle \ar[r] & \cdots}
\end{displaymath}
\begin{enumerate}
\item $x$ has to be the zero map since for all projectives in $X$ (note that their heads sit in degree 1 higher than the ones of $P(k|l)$ i.e. we look for maps from $P(k+1|l+1) \to P(s|t)\langle -1 \rangle$) we have $s+t\leq k+l-1$ and by Lemma \ref{Le:possmaps} a map could just exist if $k+1+l+1 \leq s+t+1$.
\item If the map $y$ is nonzero, it is a map $Y\langle 1 \rangle \to P(k|l) \langle -1 \rangle$. One has to care about the matter that in $Y$ the degree of the heads of the projectives is one higher than the one in $P(k+1|l+1)$, so we look for morphisms $P(s|t) \to P(k|l) \langle -1 \rangle$. By Lemma \ref{Le:possmaps} this may only exist if $s+t \leq k+l+1$. All A-terms in $Y$ have to fulfil $s+t=k+l+3$, therefore, the map must start in a B-term. Since we assume $f=yd$ $y$ must start in a B-term such that the differential from $P(k+1|l+1)$ maps to this term and therefore the only possibilities are $P(k+1|l)_B$ or $P(k|l+1)_B$. But we assumed $k>m$ and these B-terms cannot occur (remember that for B-terms $P(s|t)_B$ we have $s \leq m$). By this $y$ has to be zero.
\end{enumerate}
Summing up the above computations, we have shown that there is no such homotopy $H$ and therefore the theorem is proved.
\end{proof}
\begin{Lemma}
For $m<k$ the maps $F^{(n|m)}_{(k|l)}$ and $\pm \widetilde{F}^{(n|m)}_{(k|l)}$ are not homotopic.
\end{Lemma}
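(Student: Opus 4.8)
The plan is to deduce the lemma from the fact that, under the standing hypotheses on $(n,m,k,l)$ that make both maps defined (so in particular $l<m<k<n$) together with $m<k$, the two cycles $F^{(n|m)}_{(k|l)}$ and $\widetilde{F}^{(n|m)}_{(k|l)}$ lie in the \emph{same} space $\Ext^{(n+m)-(k+l)-1}(M(n|m),M(k|l))$, which by the dimension computation of Section~\ref{sec:dimexp} — specifically the case \eqref{ca:klk}, read with the identification (source $(n|m)$, target $(k|l)$, and our hypothesis $m<k$ playing the role of the condition $m<l$ there) — is exactly two-dimensional in this degree. Since each of $F^{(n|m)}_{(k|l)}$ and $\widetilde{F}^{(n|m)}_{(k|l)}$ is already a nonzero class (Theorems~\ref{Th:elf} and~\ref{Th:ftil}), it suffices to prove that they are \emph{linearly independent}; then neither $F^{(n|m)}_{(k|l)}-\widetilde{F}^{(n|m)}_{(k|l)}$ nor $F^{(n|m)}_{(k|l)}+\widetilde{F}^{(n|m)}_{(k|l)}$ can be nullhomotopic, which is the assertion. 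So the real content is to show that a relation $a\,F^{(n|m)}_{(k|l)}+b\,\widetilde{F}^{(n|m)}_{(k|l)}\sim 0$ forces $a=b=0$.

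First I would isolate the feature distinguishing the two maps on the chain level. Comparing \eqref{al:comF} with the formula of Theorem~\ref{Th:ftil}, on every $A$-term the map $F^{(n|m)}_{(k|l)}$ sends $P(s|t)_A\mapsto P(s|t-1)_A$ (it lowers the cap coordinate) while $\widetilde{F}^{(n|m)}_{(k|l)}$ sends $P(s|t)_A\mapsto P(s-1|t)_A$ (it lowers the cup coordinate). Because $(s,t-1)\neq(s-1,t)$, for a fixed $A$-term these are nonzero maps into \emph{two different} indecomposable summands of one and the same component of $P_\bullet(k|l)$. I would then fix one $A$-term $P(s_0|t_0)_A$ of $P_\bullet(n|m)$ on which both rules apply (for instance $t_0=l$ and $s_0$ minimal with $t_0<s_0\le k+1$, so that it sits in homological degree $(n+m)-(k+l)-1$ and both images are heads of the two distinct summands $P(s_0|t_0-1)_A$ and $P(s_0-1|t_0)_A$).

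Next, assuming $a\,F^{(n|m)}_{(k|l)}+b\,\widetilde{F}^{(n|m)}_{(k|l)}=Hd+dH$ for a homotopy $H$ of the shape occurring in the non-nullhomotopy parts of Theorems~\ref{Th:elf} and~\ref{Th:ftil}, I would run the same obstruction analysis as there (and as in Theorem~\ref{Th:elj}). Since the resolutions are linear, Lemma~\ref{Le:posschmaps} (equivalently Corollary~\ref{Cor:posschmaps}) restricts $H$ to its degree-preserving components, which can only carry heads to heads. Reading off the coefficient of the head of $P(s_0|t_0-1)_A$ and, \emph{independently}, that of the head of $P(s_0-1|t_0)_A$ in the displayed equation, I would show that on each of these two distinct summands the contribution of $Hd+dH$ is forced to vanish: the candidate homotopy pieces $x$ (into the component one homological step lower) and $y$ (out of the component one step higher) do not exist, exactly as in those proofs, because the only projectives that could mediate them are $B$-terms $P(s|t)_B$, and the hypothesis $m<k$ together with the constraint $s\le m$ for $B$-terms rules these out. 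Hence the coefficient of the first summand yields $a=0$ and that of the second yields $b=0$, giving linear independence and, with the two-dimensionality above, the lemma.

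The hard part will be this last step: verifying that the putative homotopy genuinely cannot contribute to either of the two distinguishing summands. This is purely an application of the $\hom$-degree bounds of Lemmas~\ref{Le:possmaps} and~\ref{Le:posschmaps} combined with the explicit absence of the obstructing $B$-terms when $m<k$ — the same delicate point that makes $J^{(n|m)}_{(k|l)}$ non-nullhomotopic for $m<k$ in Theorem~\ref{Th:elj}. I would therefore organise the argument by reusing, summand by summand, the two homotopy-exclusion computations already carried out there, rather than re-deriving the degree inequalities from scratch.
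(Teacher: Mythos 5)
Your overall skeleton is sound: reducing the lemma to linear independence of $F^{(n|m)}_{(k|l)}$ and $\widetilde{F}^{(n|m)}_{(k|l)}$ in the two-dimensional space given by \eqref{ca:klk} is a valid reformulation, and the engine you invoke (degree-zero homotopies between linear resolutions must send heads to heads, plus the constraint $s\le m$ on B-terms of $P_\bullet(n|m)$) is exactly the one the paper uses. But the anchor summand you fix does not exist as described, and this breaks the argument. Your two requirements on $(s_0,t_0)$ are incompatible: with $t_0=l$ and $s_0$ minimal subject to $t_0<s_0\le k+1$ you get $s_0=l+1$, and on $P(l+1|l)_A$ both $F$ and $\widetilde{F}$ act by their \emph{special} rules, sending it to the \emph{same} summand $P(l|l-2)_B$ --- there are no two distinct targets to separate coefficients. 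If instead you enforce the homological-degree condition $s_0+t_0=k+l+1$, then $s_0=k+1$, and $F$ vanishes on $P(k+1|l)_A$ altogether, because its would-be image $P(k+1|l-1)$ has first index $k+1>k$ and hence occurs in $P_\bullet(k|l)$ neither as an A-term nor as a B-term. The structural fact you are missing is that near the bottom the two maps are supported on \emph{different} source summands: $\widetilde{F}$ on $P(k+1|l)_A$ and $F$ on $P(k|l+1)_A$, both mapping into the single summand $P(k|l)_A=P_0(k|l)$. The paper's proof works precisely in this two-sources/one-target configuration, where the components $x$ and $y$ of a putative homotopy are both excluded at once ($P(k+1|l)$, $P(k|l+1)$ do not occur in $P_1(k|l)$, and $P(k|l)$ could only occur in $P_\bullet(n|m)$ as a B-term, impossible since $k>m$), killing both coefficients simultaneously.

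Even if you repair the anchor to an interior summand on which both maps are nonzero with distinct A-term targets (so $1\le t_0\le l$ and $t_0+1<s_0\le k$), your claim that \emph{both} coefficients die ``by the same mechanism'' is unjustified and in fact fails in a boundary case. The $Hd$-contribution into the $\widetilde{F}$-direction $P(s_0|t_0)_A\to P(s_0-1|t_0)_A$ is mediated by the summand $P(s_0-1|t_0)_B$ of $P_\bullet(n|m)$, reached from $P(s_0|t_0)_A$ by differential rule (v) of Theorem \ref{Th:diff}; since B-terms only require $s\le m$, this mediator is excluded only when $s_0\ge m+2$. Admissible sources satisfy $s_0\le k$, so when $k=m+1$ no choice of $s_0$ rules it out, and the $\widetilde{F}$-coefficient cannot be read off this way. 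This is not a phantom obstruction: the explicit homotopy of Lemma \ref{Le:homotopyF} (valid for $m\ge k$) is supported on exactly such B-term-to-A-term components, so homotopies of this shape genuinely produce contributions on interior A-summands. The gap is repairable in two ways: either read off only the $F$-coefficient at $s_0=k\ge m+1$ (its mediator $P(k|t_0-1)_B$ would need $k\le m$), concluding $a=0$, and then deduce $b=0$ from $b\,\widetilde{F}\simeq 0$ together with Theorem \ref{Th:ftil}; or abandon the single-source viewpoint and argue at the bottom component, which is the paper's route.
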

\begin{proof}
Assume we have a homotopy between these two maps,
$$H: P_\bullet(n|m) \to P_\bullet(k|l)\langle (n+m)-(k+l) \rangle[(n+m)-(k+l)].$$ 
Similar to the above computations, we notice that the head of a projective has to be mapped to the head of another projective (so $H$ has to map $P(s|t)$ to $P(s|t)$) and we are in the situation
\begin{displaymath}
\xymatrix{\cdots \ar[r] &{\begin{matrix} P(k+1|l)_A\\P(k|l+1)_A \end{matrix}}\ar[d]^{f-f'} \ar[r] \ar[ld]^x &Y\ar[ld]^y\\
X \ar[r] & P(k|l)_A \langle -1 \rangle \ar[r] & \cdots}
\end{displaymath}
with $f: P(k+1|l) \to P(k|l)$ and $f': P(k|l+1) \to P(k|l)$.
We have to find maps $x$ and $y$ such that $yd+dx=f \pm f'$.\\
\begin{enumerate}
\item $x$ has to be the zero map (same reason as in the proof of Theorem \ref{Th:elf}).
\item If the map $y$ is nonzero, it has to send the $P(k|l)$ to $P(k|l)_A$. But if $P(k|l)$ occured as a summand of $Y$, by Lemma \ref{Le:possmaps} it would be a B-term. However, since $m<k$, $P(k|l)$ cannot occur as a B-term in $P_\bullet(m|n)$.
\end{enumerate}
Hence, there is no such homotopy and the lemma follows.
\end{proof}
\subsubsection{Comparison with the expected dimensions of the $\Ext$-spaces}
Comparing our results to the dimension list we computed in Section \ref{sec:dimexp} one can deduce two important results.
\begin{Cor}
\begin{enumerate}
\item The objects $\Id^{(n|m)}_{(k|l)}$, $F^{(n|m)}_{(k|l)}$, $\widetilde{F}^{(n|m)}_{(k|l)}$, $G^{(n|m)}_{(k|l)}$, $K^{(n|m)}_{(k|l)}$ and $J^{(n|m)}_{(k|l)}$, with $n,m,k,l$ fulfilling the assumptions from the above theorems, form a basis of $\Ext(\bigoplus M(\la),\bigoplus M(\la))$.
\item For $k\leq m$ the element $J^{(n|m)}_{(k|l)}$ is nullhomotopic and $F^{(n|m)}_{(k|l)}$ is homotopic to a multiple of $\widetilde{F}^{(n|m)}_{(k|l)}$.
\end{enumerate}
\end{Cor}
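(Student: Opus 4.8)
The plan is to prove both parts simultaneously by a bidegree count against the dimension formulas of Section \ref{sec:dimexp}. First I would use that the $\Ext$-algebra splits as $\bigoplus \Ext^\bullet(M(n|m),M(k|l))$ over all source/target pairs (the $\Id$'s being orthogonal idempotents), so it suffices to work in one such summand at a time. Each constructed element is homogeneous, and by Theorems \ref{Th:elid}--\ref{Th:elj} its cohomological degree is determined by $r:=(n+m)-(k+l)$: namely $\Id^{(n|m)}_{(k|l)}$ sits in degree $r$, both $F^{(n|m)}_{(k|l)}$ and $\widetilde F^{(n|m)}_{(k|l)}$ in degree $r-1$, $J^{(n|m)}_{(k|l)}$ in degree $r-2$, $G^{(n|m)}_{(k|l)}$ in degree $r-3$, and $K^{(n|m)}_{(k|l)}$ in degree $r-4$. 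The crucial observation is that inside a fixed summand these degrees are pairwise distinct except that $F$ and $\widetilde F$ share degree $r-1$; hence linear independence of the whole family reduces to independence of the single pair $\{F,\widetilde F\}$ in each summand, and the rest is a matter of matching the number of elements in each cohomological degree with the corresponding dimension.

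Next I would split into two regimes according to the position of the target's first index $k$ relative to the source's second index $m$. For $k>m$ the relevant formula \eqref{ca:klk} gives dimensions $1,2,1$ in degrees $r,r-1,r-2$ and $0$ otherwise; the elements actually present are exactly $\Id$ (degree $r$), the pair $F,\widetilde F$ (degree $r-1$), and $J$ (degree $r-2$), with $G,K$ absent since they require $k<m$. The dimension $2$ in degree $r-1$ is accounted for by $F$ and $\widetilde F$, which are linearly independent by the preceding Lemma (valid precisely for $m<k$), while $J\neq 0$ by Theorem \ref{Th:elj}; so in this regime the listed elements form a basis.

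For $k\le m$ I would read off the dimensions from \eqref{ca:eqk} (when $k=m$) and from the last formula of Section \ref{sec:dimexp} (when $k<m$): these give dimension $1$ in degrees $r,r-1$, dimension $0$ in degree $r-2$, and dimension $1$ in degrees $r-3,r-4$ exactly when $k<m$ (where $G,K$ exist). Comparing with the elements present forces Part 2 at once: in degree $r-2$ the space is zero although $J^{(n|m)}_{(k|l)}$ lives there, so $J$ must be nullhomotopic; and in degree $r-1$ the space is only one-dimensional although both $F^{(n|m)}_{(k|l)}$ and $\widetilde F^{(n|m)}_{(k|l)}$ live there and are nonzero (Theorems \ref{Th:elf}, \ref{Th:ftil}), so $F$ must be a nonzero scalar multiple of $\widetilde F$ modulo homotopy. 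Deleting the redundant $J$ and identifying $F$ with $\widetilde F$, the surviving elements $\Id,F,G,K$ occupy the degrees $r,r-1,r-3,r-4$ one apiece, matching the dimensions, and hence again form a basis.

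The main obstacle I anticipate is the exhaustive bookkeeping rather than any single hard step: the dimension list in Section \ref{sec:dimexp} is a long case distinction (with degenerate configurations such as $t=s-1$ or $t=0$, and the boundary cases $k=m$, $l+1=k$, $k=n$), and I would have to verify for each such case that the admissible quadruples from Theorems \ref{Th:elid}--\ref{Th:elj} produce exactly the right multiplicity in every cohomological degree and that no two distinct surviving elements collide in bidegree. The only genuinely structural inputs are the nonvanishing of each element (already established) and the independence of the pair $\{F,\widetilde F\}$ for $m<k$; everything else, including the vanishing of $J$ and the proportionality $F\sim\widetilde F$ for $k\le m$, is forced by the numerical comparison.
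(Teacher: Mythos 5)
Your proposal is correct and takes essentially the same route as the paper: both parts are established by matching the constructed elements (together with their already-proved nonvanishing, and the linear independence of $F$ and $\widetilde{F}$ for $m<k$) against the dimension list of Section \ref{sec:dimexp}, with part 2 forced by the vanishing of the space in degree $(n+m)-(k+l)-2$ and the one-dimensionality of the space in degree $(n+m)-(k+l)-1$ when $k\le m$. One remark: your placement of $J^{(n|m)}_{(k|l)}$ in cohomological degree $(n+m)-(k+l)-2$ is the internally consistent reading (it matches the homological shift in its definition and the count $1,2,1$ in the regime $k>m$), whereas the paper's theorem and proof write $(n+m)-(k+l)-4$ at this point, which is a typo your bookkeeping silently corrects.
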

\begin{proof}
The first assertion is checked by comparing the list in Section \ref{sec:dimexp} with the elements defined above (and using the fact that they are not nullhomotopic in the occurring cases).

To see that $J^{(n|m)}_{(k|l)}\simeq 0$ observe that $E^{(n+m)-(k+l)-4}((m|n),(k|l))$ is zero for $k \leq m$ by the list in Section \ref{sec:dimexp}.
Similarly, $E^{(n+m)-(k+l)-1}((m|n),(k|l))$ is onedimensional for $k \leq m$. Since neither $F_{(k|l)}^{(n|m)} \simeq 0$ nor $F^{(n|m)}_{(k|l)}\simeq 0$, a linear combination of them has to be homotopic to zero. By this the assertion of the corollary holds.
\end{proof}
\begin{Remark}
In Section \ref{sec:homotopies} we will determine these homotopies explicitly.
\end{Remark}

\subsection{The algebra structure}\label{sec:multipl}
Now we compute the algebra structure. Therefore, we determine all possible products of the elements from above. For later use, we also compute those which are nullhomotopic. Therefore, we have to define two more families of elements in $\hom(P_\bullet, P_\bullet)$ vanishing in the $\Ext$-algebra.
\begin{Def}
For $b<m-1$, $b+2<a$, $m<n$ define the map
\begin{align}\nonumber
A^{(n|m)}_{(a|b)}:&\quad P_\bullet(n|m) \to P_\bullet(a|b)\langle (n+m)-(a+b)-4 \rangle [(n+m)-(a+b)-2]\\
& \begin{cases}
								P(s+1|s)_A \to (-1)^{(n+a)(b+s)} P(s+1|s-2)_A  \\
								P(s+2|s)_A \to (-1)^{(n+a)(b+s)+a+s} P(s+1|s-1)_A  \\
								P(s+1|s-2)_B \to (-1)^{(n+a)(b+s+1)} P(s|s-1)_A  \\
								P(s|s-2)_B \to (-1)^{(n+a)(b+s)+a+s+1} P(s-1|s-3)_B  \\
								+ (-1)^{(n+a)(b+s)} P(s|s-2)_A \\ 
								\end{cases}								
\end{align}
with the map from $P(s|s-2)_B$ to $P(s|s-2)_A$ being chosen as the composition 
$$P(s|s-2) \to P(s|s-1) \to P(s|s-2)$$
and the map 
\begin{align}\nonumber
B^{(n|m)}_{(a|b)}:&\quad P_\bullet(n|m) \to P_\bullet(a|b)\langle (n+m)-(a+b)-6 \rangle [(n+m)-(a+b)-3]\\
& \begin{cases}
								P(s+1|s)_A \to (-1)^{(n+a+1)(b+s)} P(s|s-2)_A  \\
								P(s+1|s-1)_B \to (-1)^{(n+a+1)(b+s+1)} P(s|s-1)_A  \\
								\end{cases}
\end{align}
\end{Def}
\begin{Lemma}\label{Le:B}
 The maps $A^{(n|m)}_{(a|b)}$ and $B^{(n|m)}_{(a|b)}$ are nullhomotopic.
\end{Lemma}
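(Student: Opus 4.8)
The plan is to prove that $[A^{(n|m)}_{(a|b)}]$ and $[B^{(n|m)}_{(a|b)}]$ vanish in $\Ext$, arguing entirely inside the bigraded $\Ext$-algebra whose complete basis is recorded in Theorems \ref{Th:elid}--\ref{Th:elj} together with the dimension list of Section \ref{sec:dimexp}. First I would confirm that $A$ and $B$ are genuine cycles in $\hom(P_\bullet(n|m),P_\bullet(a|b))$, i.e. (anti)commuting chain maps of the prescribed parity, by running through the finitely many source terms against the differentials of Theorem \ref{Th:diff}, exactly as in the proofs of Theorems \ref{Th:elemG} and \ref{Th:ElemK}; this is routine sign bookkeeping which I would relegate to the appendix. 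A cycle is nullhomotopic if and only if its class is zero, so it then remains to locate the bidegrees of $A$ and $B$ and compare them with the basis.

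For this comparison set $r(f)=(\text{homological degree of }f)-(\text{internal shift of }f)$. Reading off the six families, every basis element satisfies $r\in\{0,1,2\}$: $r=0$ for $\Id$, $r=1$ for $F,\widetilde F,G$, and $r=2$ for $K,J$. Now $B^{(n|m)}_{(a|b)}$ has homological degree $(n+m)-(a+b)-3$ and internal shift $(n+m)-(a+b)-6$, so $r(B)=3$. Since $\Ext(M(n|m),M(a|b))$ is supported only in bidegrees with $r\le 2$, its graded piece in the bidegree of $B$ is zero, whence $[B]=0$. Consistently, Corollary \ref{Cor:posschmaps} (case \ref{Cposs2}) shows that a homotopy of the required bidegree is not obstructed.

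The element $A^{(n|m)}_{(a|b)}$ has $r(A)=2$, so this shortcut fails; for fixed source and target the only basis element of the same bidegree is $J^{(n|m)}_{(a|b)}$ of Theorem \ref{Th:elj}. Here I would detect the class through its lowest component, using $\Ext^{h}(M(n|m),M(a|b))=H^{h}\big(\hom(P_\bullet(n|m),M(a|b))\big)$ with $h=(n+m)-(a+b)-2$: the class of a chain map $f$ is represented by $\eps\circ f_{h}\colon P_{h}(n|m)\to M(a|b)$, where $\eps$ is the augmentation. Inspecting the defining formula of $A$, all of its targets are near-diagonal, and one checks against the standing hypothesis $b+2<a$ that none of them equals the degree-zero term $P(a|b)$ unless $b=a-3$; moreover the only source term that could hit the head in that case, $P(a|a-1)_A$, is a legitimate $A$-term of $P_\bullet(n|m)$ only when $a\le m+1$. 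Combined with the dimension list, which forces a zero-dimensional $\Ext$-space in this bidegree once $a\le m$ (the case with $l<m$ and non-diagonal target), this disposes of every configuration except the thin family $a=m+1$, $b=m-2$, where $A$ genuinely meets the head and the ambient $\Ext$-space is one-dimensional.

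The main obstacle is therefore exactly this residual case: there I cannot avoid showing that $\eps\circ A_h$ is a coboundary, equivalently that $A$ differs from a multiple of $J^{(n|m)}_{(m+1|m-2)}$ only by $dH\pm Hd$. I would resolve it by writing down an explicit homotopy $H$, whose existence is permitted in bidegree by Corollary \ref{Cor:posschmaps} (case \ref{Cposs1}), in the same spirit as the explicit homotopies of Section \ref{sec:homotopies}; the verification is then a finite but tedious comparison of signs on the near-diagonal terms. A uniform alternative that bypasses the case analysis is to construct nullhomotopies for both $A$ and $B$ directly, which the degree counts of Corollary \ref{Cor:posschmaps} again show to be unobstructed; I expect the sign control in that verification to be the only genuine difficulty.
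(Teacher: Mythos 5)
Your treatment of $B^{(n|m)}_{(a|b)}$ is complete and correct: every basis class $\Id, F, \widetilde{F}, G, K, J$ has (homological degree)$-$(internal shift) at most $2$, while $B$ has difference $3$, so the bigraded piece in which $[B]$ lives is zero. This is the paper's own argument — its proof of Lemma \ref{Le:B} is precisely the appeal to the dimension computations of Section \ref{sec:dimexp} — except that your version is actually the more careful one: for $a<m$ the \emph{ungraded} space $\Ext^{(n+m)-(a+b)-3}(M(n|m),M(a|b))$ is one-dimensional, spanned by $G^{(n|m)}_{(a|b)}$, so the internal-degree refinement you introduce via $r$ is genuinely needed to conclude $[B]=0$. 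Your reduction for $A^{(n|m)}_{(a|b)}$ is likewise sound: detecting the class by $\eps\circ A_h$, checking that no target of $A$ can be the bottom term $P(a|b)$ unless $b=a-3$, and noting that the only source that could then hit the head, $P(a|a-1)_A$, exists in $P_\bullet(n|m)$ only for $a\le m+1$, correctly disposes of everything except the family $a=m+1$, $b=m-2$ (the cases $a\le m$ being covered by the dimension list, exactly as in the paper).

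The residual case, however, is precisely where your text stops being a proof, and it cannot be closed the way you suggest. In that bidegree the ambient $\Ext$-piece is one-dimensional, spanned by $[J^{(n|m)}_{(m+1|m-2)}]\neq 0$ (Theorem \ref{Th:elj}, the case $m<k$), so no dimension or degree count can decide whether $[A]$ is zero or a nonzero multiple of $[J]$. In particular, Corollary \ref{Cor:posschmaps} (case \ref{Cposs1}) only says that the hom-space in which a homotopy would live is not forced to vanish; it does not produce a homotopy — if "unobstructed in bidegree" sufficed, the identical count would also "prove" $J^{(n|m)}_{(m+1|m-2)}$ nullhomotopic, which is false. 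So the sentence "I would resolve it by writing down an explicit homotopy $H$ \dots a finite but tedious comparison of signs" is not a harmless deferral: in this range it is the entire mathematical content of the lemma, and the same objection defeats your proposed "uniform alternative". What is required is exactly what the paper supplies later, namely the explicit homotopy $H(A)^{(n|m)}_{(k|l)}$ of Lemma \ref{Le:homotopyA}, verified term by term in Appendix \ref{Ap:Homot} (with $H(B)=H(A)\cdot\widetilde{F}$ handling $B$ in Lemma \ref{Le:homotopyB}). Until you exhibit and verify such a homotopy — or, equivalently, compute $\eps\circ A_h$ on $P(m+1|m)_A$ and show it is a coboundary in $\hom(P_\bullet(n|m),M(m+1|m-2))$ — the case $a=m+1$, $b=m-2$ remains unproven.
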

\begin{proof}
The assertion follows immediately from the fact that the dimension of the corresponding $\Ext$-spaces is zero by the computations in Section \ref{sec:dimexp}.
\end{proof}

\begin{Theorem}\label{Th:Multtable}
In the algebra $\Ext(\bigoplus M(\la),\bigoplus M(\la))$ we get the products listed in Table \ref{tab:Multelem}.
\renewcommand\arraystretch{1.6}
\begin{sidewaystable}
\caption{\label{tab:Multelem} Product $x \cdot y$ of the elements $x$ and $y$}
\begin{tabular}{|c!{\vrule width 1.3pt}c|c|c|c|c|c|}
		\toprule \rule[-0.6cm]{0cm}{1.2cm}
		$x\backslash y$& $\Id^{(k|l)}_{(a|b)}$ & $F^{(k|l)}_{(a|b)}$ & $\widetilde{F}^{(k|l)}_{(a|b)}$ & $G^{(k|l)}_{(a|b)}$ & $K^{(k|l)}_{(a|b)}$ & $J^{(k|l)}_{(a|b)}$ \\ \noalign{\hrule height 1.2pt} \rule[-1cm]{0cm}{2cm}
		$\Id^{(n|m)}_{(k|l)}$& $\begin{matrix}(-1)^{(n+k)(l+b)} \\ \Id^{(n|m)}_{(a|b)}\end{matrix}$ & $\begin{array}{c}(-1)^{(n+k)(l+b+1)}\\ F^{(n|m)}_{(a|b)}\end{array}$ & $\begin{matrix}(-1)^{(n+k)(l+b)}\\ \widetilde{F}^{(n|m)}_{(a|b)}\end{matrix}$ & $\begin{matrix}(-1)^{(n+k)(l+a+1)}\\ G^{(n|m)}_{(a|b)}\end{matrix}$ & $\begin{matrix}(-1)^{(n+k)(l+a+1)}\\ K^{(n|m)}_{(a|b)}\end{matrix}$ & $\begin{matrix}(-1)^{(n+k)(l+b+1)} \\J^{(n|m)}_{(a|b)}\stackrel{a \leq m}{\simeq} 0\end{matrix}$\\ \hline \rule[-1cm]{0cm}{2cm}
  $F^{(n|m)}_{(k|l)}$& $\begin{matrix}(-1)^{(n+k)(l+b)} \\F^{(n|m)}_{(a|b)}\end{matrix}$ & $\begin{matrix} (-1)^{(n+k)(l+b+1)}\\ A^{(n|m)}_{(a|b)} \simeq 0\end{matrix}$ &$\begin{matrix}(-1)^{(n+k)(b+l)} \\J^{(n|m)}_{(a|b)} \stackrel{a \leq m}{\simeq} 0\end{matrix}$ & $\begin{matrix}(-1)^{(n+k)(l+a)+a+k+1}\\ K^{(n|m)}_{(a|b)}\end{matrix}$ &$0$& $\begin{matrix} (-1)^{(n+k)(l+b+1)} \\B^{(n|m)}_{(a|b)}\simeq 0\end{matrix}$\\ \hline \rule[-1cm]{0cm}{2cm}
    $\widetilde{F}^{(n|m)}_{(k|l)}$& $\begin{matrix}(-1)^{(n+k+1)(l+b)}\\ \widetilde{F}^{(n|m)}_{(a|b)}\end{matrix}$ &$\begin{matrix}(-1)^{(n+k+1)(b+l+1)} \\J^{(n|m)}_{(a|b)} \stackrel{a \leq m}{\simeq} 0\end{matrix}$ 
    &$0$ &$\begin{matrix}(-1)^{(n+k+1)(l+a+1)}\\ K^{(n|m)}_{(a|b)}\end{matrix}$ &$0$&$0$\\ \hline \rule[-1cm]{0cm}{2cm}
      $G^{(n|m)}_{(k|l)}$& $\begin{matrix}(-1)^{(a+k)(b+n)} \\G^{(n|m)}_{(a|b)}\end{matrix}$&$\begin{matrix}(-1)^{(a+k)(b+n+1)}\\ K^{(n|m)}_{(a|b)}\end{matrix}$&$\begin{matrix}(-1)^{(a+k+1)(b+n)+a+n}\\ K^{(n|m)}_{(a|b)}\end{matrix}$&$0$&$0$&$0$ \\ \hline \rule[-1cm]{0cm}{2cm}
        $K^{(n|m)}_{(k|l)}$& $\begin{matrix}(-1)^{(a+k)(b+n+1)} \\K^{(n|m)}_{(a|b)}\end{matrix}$ &$0$&$0$&$0$&$0$&$0$\\ \hline \rule[-1cm]{0cm}{2cm}
          $J^{(n|m)}_{(k|l)}$& $\begin{array}{c}(-1)^{(n+k+1)(b+l)} \\J^{(n|m)}_{(a|b)} 
         \stackrel{a \leq m}{\simeq} 0\end{array}$ & $\begin{matrix} (-1)^{(n+k+1)(l+b+1)} \\B^{(n|m)}_{(a|b)}\simeq 0 \end{matrix}$ &$0$&$0$&$0$&$0$\\ \hline
\end{tabular}		

\end{sidewaystable}
\renewcommand\arraystretch{1}

\normalsize
Moreover, $$B^{(n|m)}_{(a|b)}=A^{(n|m)}_{(a+1|b)}\cdot \widetilde{F}^{(a+1|b)}_{(a|b)}.$$
\end{Theorem}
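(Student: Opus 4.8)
The statement to prove is Theorem~\ref{Th:Multtable}, which records the complete multiplication table (Table~\ref{tab:Multelem}) for all thirty-six products $x\cdot y$ among the six families of generators $\Id$, $F$, $\widetilde F$, $G$, $K$, $J$, together with the supplementary factorization $B^{(n|m)}_{(a|b)}=A^{(n|m)}_{(a+1|b)}\cdot\widetilde F^{(a+1|b)}_{(a|b)}$. Since the multiplication in $\Ext$ is induced by composition of chain maps read left to right (Section~\ref{sec:Exthom}), the plan is to compute each product as an honest composition of the explicit chain maps given in Theorems~\ref{Th:elid}--\ref{Th:elj}, component by component on the projective summands $P(s|t)_A$ and $P(s|t)_B$, then identify the result with the named element or with $0$. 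I would organize the computation so that each of the six rows is treated as a block, exploiting that the idempotent-type row ($\Id^{(n|m)}_{(k|l)}\cdot y$) is essentially bookkeeping: since $\Id^{(n|m)}_{(k|l)}$ acts as a signed identity on each summand, composing it with any $y$ reproduces $y$ up to the sign $(-1)^{(n+k)(l+\ast)}$ coming from formula~\eqref{al:defId}, where $\ast$ is the appropriate lower index ($b$, $a+1$, etc.) dictated by which summand survives.

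\textbf{Key steps in order.} First I would record, for each generator, exactly which summands it sends where and with what sign, reading off Theorems~\ref{Th:elid}--\ref{Th:elj}; this is the data from which every entry is assembled. Second, I would compute the $\Id$-row by the observation just mentioned, matching the surviving-summand indices against the definitions to pin down the signs in Table~\ref{tab:Multelem}. Third, for the genuinely interesting products I would track the composite on a single generic summand; for instance $F^{(n|m)}_{(k|l)}\cdot F^{(k|l)}_{(a|b)}$ produces a degree-shifting map whose component $P(s+1|s)_A\to P(s|s-2)_B\to\cdots$ must be compared against the definition of $A^{(n|m)}_{(a|b)}$, and I would check that the composite agrees summand-by-summand with $A^{(n|m)}_{(a|b)}$ up to the tabulated sign, whence it is nullhomotopic by Lemma~\ref{Le:B}. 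The products landing in $K$ (e.g.\ $F\cdot G$, $G\cdot F$, $G\cdot\widetilde F$) and in $B$ (e.g.\ $F\cdot J$, $J\cdot F$) are handled the same way, matching against Theorem~\ref{Th:elk} and the definition of $B$ respectively. Fourth, the entries equal to $0$ split into two kinds: those that vanish already in $\hom(P_\bullet,P_\bullet)$ because the relevant composition of standard degree-one morphisms is zero (visible from Lemma~\ref{Le:possmaps}, since e.g.\ consecutive swaps force a nonexistent degree-$\geq 3$ map between projectives), and those that vanish in $\Ext$ for dimension reasons via the list in Section~\ref{sec:dimexp}; I would justify each zero by the cheaper of these two arguments. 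Finally, for the ``Moreover'' identity I would compose $A^{(n|m)}_{(a+1|b)}$ with the degree-zero generator $\widetilde F^{(a+1|b)}_{(a|b)}$ on each summand and verify it reproduces the two nonzero assignments defining $B^{(n|m)}_{(a|b)}$ with matching signs, again invoking $\hom(P(\la),P(\mu))_i\leq 1$ so that the degree-two composites appearing are the unique standard ones.

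\textbf{Main obstacle.} The hard part will be the relentless sign bookkeeping: every generator carries signs of the form $(-1)^{(n+k)(l+t)}$ or worse depending on whether the summand is an $A$- or $B$-term, and in compositions these exponents must be added and simplified against the target generator's signs, where cross-terms like $(n+a)(b+s)+a+s$ arise. The genuine subtlety, rather than mere arithmetic, is that in several products the composite a priori lands in a summand admitting two parallel routes (for instance a $B$-term reachable both directly and through an intermediate $A$-term), so I must verify that the two contributions combine correctly rather than cancel unexpectedly; this is exactly the phenomenon already seen in the proof of Theorem~\ref{Th:diff}, where maps through $P(m|t+1)\to P(m|t+2)\to P(m|t+1)$ and $P(m|t+1)\to P(m|t)\to P(m|t+1)$ cancel. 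I expect the entries $F\cdot F=A\simeq 0$ and $F\cdot J=B\simeq 0$ to demand the most care, since there the naming of the composite as $A$ or $B$ presupposes precisely the signed summand-by-summand match, and only after establishing that match can Lemma~\ref{Le:B} be applied to conclude the entry is homotopic to zero. For the remaining zero entries I would lean on the dimension list of Section~\ref{sec:dimexp}, which caps every $\Ext$-space at dimension two and thereby forecloses products that would otherwise have to produce a non-existent basis element.
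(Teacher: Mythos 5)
Your proposal is correct and follows the same basic strategy as the paper's Appendix~\ref{Ap:Mult}: compute each product summand-by-summand on the $P(s|t)_A$- and $P(s|t)_B$-components, match the composite against the definitions of $\Id$, $F$, $\widetilde F$, $G$, $K$, $J$, $A$, $B$, and only then invoke Lemma~\ref{Le:B} (resp.\ the dimension list) for the $\simeq 0$ conclusions --- you correctly isolate that ordering as the crux for $F\cdot F=A$ and $F\cdot J=B$. Two tactical differences are worth noting. First, the paper exploits associativity far more aggressively than you propose: it factors $F^{(k|l)}_{(a|b)}=\Id^{(k|l)}_{(a|b+1)}\cdot F^{(a|b+1)}_{(a|b)}$, $J=F\cdot\widetilde F$, etc., so that most of the $\Id$-row, the $F\cdot\widetilde F$ and $J$-row entries, and the ``moreover'' identity $B^{(n|m)}_{(a|b)}=A^{(n|m)}_{(a+1|b)}\cdot\widetilde F^{(a+1|b)}_{(a|b)}$ reduce to already-computed products plus a sign bookkeeping mod $2$, avoiding fresh summand computations; your plan to verify the ``moreover'' identity directly on summands works but duplicates effort. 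Second, for the genuinely zero entries the paper does not use the $\Ext$-dimension list of Section~\ref{sec:dimexp} at all: it shows the entire graded $\hom$-space of chain maps vanishes via Corollary~\ref{Cor:posschmaps} (e.g.\ $F\cdot K$ lands in $\hom^{(n+m)-(a+b)-5}(P_\bullet(n|m),P_\bullet(a|b)\langle(n+m)-(a+b)-8\rangle)$, empty since $-3\nleq-5$). This is a strictly stronger conclusion --- vanishing on the nose in $\hom(P_\bullet,P_\bullet)$, not merely in cohomology --- and it is exactly what the paper needs later when it sets $Q(x\cdot y)=0$ for these entries in Table~\ref{tab:MultelemQ}; your dimension-list fallback proves the theorem as stated but would not support that later use. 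One caution on your dichotomy: for $\widetilde F\cdot\widetilde F$ and $J\cdot\widetilde F$ the product sits in the same (co)homological and internal degree as $J^{(n|m)}_{(a|b)}$, which is nonzero when $a>m$, so the dimension argument is unavailable there and only your first mechanism (chain-level vanishing of the degree-two composites via Lemma~\ref{Le:possmaps}) applies --- your hedge ``the cheaper of the two arguments'' covers this, but the split is forced, not a choice.
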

\begin{proof}  The proof is an easy direct calculation based on the previous theorems which can be found in Appendix \ref{Ap:Mult}.
\end{proof}

\subsection{The quiver of $\Ext(\bigoplus M(\la), \bigoplus M(\la))$}
Determining the quiver we obtain
\begin{Theorem}\label{mainalg2}
The algebra $\Ext(\bigoplus M(\la),\bigoplus M(\la))$ is given as the path algebra of the quiver
\\$n>m+2$:
\begin{displaymath}\xymatrix{
&\cdots \ar@[cyan]@/^/[d] \ar@[black]@/_/[d]& {  \cdots} \ar@[cyan]@/^/[d] \ar@[black]@/_/[d]& \cdots \ar@[cyan]@/^/[d] \ar@[black]@/_/[d] &
 \\
\cdots  \ar@[cyan]@/^/[r] \ar@[black]@/_/[r]& P_\bullet(n+1|m+1)\ar@[cyan]@/^/[d] \ar@[black]@/_/[d]    \ar@[cyan]@/^/[r] \ar@[black]@/_/[r]& P_\bullet(n|m+1)\ar@[cyan]@/^/[d] \ar@[black]@/_/[d]   
 \ar@[cyan]@/^/[r] \ar@[black]@/_/[r]&P_\bullet(n-1|m+1)   \ar@[cyan]@/^/[d] \ar@[black]@/_/[d]  \ar@/_/[r]  \ar@[cyan]@/^/[r]& \cdots  \\
\cdots  \ar@[cyan]@/^/[r] \ar@[black]@/_/[r]& P_\bullet(n+1|m)  \ar@[cyan]@/^/[d] \ar@[black]@/_/[d]     \ar@[cyan]@/^/[r] \ar@[black]@/_/[r]& P_\bullet(n|m)  \ar@[cyan]@/^/[d] \ar@[black]@/_/[d]     \ar@[cyan]@/^/[r] \ar@[black]@/_/[r]&P_\bullet(n-1|m)  \ar@[cyan]@/^/[d] \ar@[black]@/_/[d]     \ar@[cyan]@/^/[r] \ar@[black]@/_/[r]&\cdots  \\
\cdots  \ar@[cyan]@/^/[r] \ar@[black]@/_/[r]&P_\bullet(n+1|m-1)  \ar@[cyan]@/^/[d] \ar@[black]@/_/[d]     \ar@[cyan]@/^/[r] \ar@[black]@/_/[r]& P_\bullet(n|m-1)  \ar@[cyan]@/^/[d] \ar@[black]@/_/[d]     \ar@[cyan]@/^/[r] \ar@[black]@/_/[r]&P_\bullet(n-1|m-1)  \ar@[cyan]@/^/[d] \ar@[black]@/_/[d]     \ar@[cyan]@/^/[r] \ar@[black]@/_/[r]& \cdots \\
& \cdots  & \cdots  & \cdots   & }\end{displaymath}
and in the other cases:
\begin{displaymath}
\xymatrix{
\dots  \ar@[cyan]@/^/[r] \ar@[black]@/_/[r]& P_\bullet(m|m-1) \ar@[cyan]@/^/[d] \ar@[black]@/_/[d] \ar@[red]@/^3pc/ [rrdd] \ar@[green]@<1ex>@/^3pc/ [rrdd]& &\\
\cdots   \ar@[cyan]@/^/[r] \ar@[black]@/_/[r]&P_\bullet(m|m-2)  \ar@[cyan]@/^/[r] \ar@[black]@/_/[r] \ar@[cyan]@/^/[d] \ar@[black]@/_/[d]  & P_\bullet(m-1|m-2) \ar@[cyan]@/^/[d] \ar@[black]@/_/[d]  &\\
\cdots   \ar@[cyan]@/^/[r] \ar@[black]@/_/[r]&P_\bullet(m|m-3)   \ar@[cyan]@/^/[r] \ar@[black]@/_/[r] \ar@[cyan]@/^/[d] \ar@[black]@/_/[d]  & P_\bullet(m-1|m-3)    \ar@[cyan]@/^/[r] \ar@[black]@/_/[r]\ar@[cyan]@/^/[d] \ar@[black]@/_/[d]  & P_\bullet(m-2|m-3)\ar@[cyan]@/^/[d] \ar@[black]@/_/[d] \\
& \cdots & \dots & \cdots }\end{displaymath}

\color{black}
with relations given in Table \ref{tab:Multelem}.
\end{Theorem}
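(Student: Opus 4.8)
The plan is to exhibit $\Ext(\bigoplus M(\la),\bigoplus M(\la))$ as a quotient of a free path algebra, assembling everything out of the explicit generators and the multiplication table produced in the previous sections. First I would fix the arrows of the quiver: these are exactly the cohomological ``length-one'' generators between neighbouring vertices, namely the single-step identities $\Id^{(n|m)}_{(n-1|m)}$ and $\Id^{(n|m)}_{(n|m-1)}$ (the cyan arrows, living in $\Ext^1$), the degree-zero maps $F^{(k|l+1)}_{(k|l)}$ and $\widetilde{F}^{(k+1|l)}_{(k|l)}$ (the black arrows), together with the two corner maps $G^{(m+1|m)}_{(m-1|m-2)}$ and $K^{(m+1|m)}_{(m-1|m-2)}$ from Theorems \ref{Th:elemG} and \ref{Th:ElemK} (the red and green arrows). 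I would then read off the two quiver shapes (the middle part for $n>m+2$ and the corner part) directly from the source and target of each generator, using that $\Id^{(\cdot)}_{(\cdot)}$ lowers $n$ or $m$ while raising the cohomological degree, whereas $F$ and $\widetilde F$ preserve the cohomological count but shift the internal grading.

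Second, I would prove that these arrows generate the whole algebra. This is immediate from Theorems \ref{Th:elid}--\ref{Th:elj}: every basis element of $\Ext(\bigoplus M(\la),\bigoplus M(\la))$, as listed in the Corollary following Section \ref{sec:dimexp}, is by its very definition a product of the generators above. For instance $\Id^{(n|m)}_{(k|l)}$ is a product of single steps, $G^{(n|m)}_{(k|l)}=\Id\cdot G\cdot \Id$, $K^{(n|m)}_{(k|l)}=\Id\cdot K\cdot \Id$, and $J^{(n|m)}_{(k|l)}=F^{(n|m)}_{(k+1|l)}\cdot \widetilde F^{(k+1|l)}_{(k|l)}$. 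Hence the canonical map from the path algebra of the quiver onto the $\Ext$-algebra is surjective.

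Third, I would verify that every relation recorded in Table \ref{tab:Multelem} holds; this is precisely the content of Theorem \ref{Th:Multtable}, which simultaneously pins down all the signs. The relations split into the vanishing products (such as $F\cdot K=0$, $G\cdot G=0$, and the nullhomotopic $A$- and $B$-type products) and the commuting-square identities equating two distinct paths. These generate the defining ideal $I$ of the intended quotient.

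Finally, the heart of the argument is a dimension count showing that these relations are complete. Reading Table \ref{tab:Multelem} as a rewriting system, I would argue that any path in the quiver reduces, modulo $I$, to a scalar multiple of one of the monomials $\Id,F,\widetilde F,G,K,J$ indexed as in the Corollary, so the quotient path algebra is spanned by exactly these monomials; comparing with the dimensions computed in Section \ref{sec:dimexp}, together with the linear independence of the monomials asserted in that same Corollary, forces the surjection to be an isomorphism. I expect this last step to be the main obstacle: it is essentially a confluence/normal-form check, and its bookkeeping is made delicate by the many boundary cases (the inequalities $k\le m$, the corner vertices, and the degeneracies $J^{(n|m)}_{(k|l)}\simeq 0$ and $F^{(n|m)}_{(k|l)}\simeq \pm\widetilde F^{(n|m)}_{(k|l)}$ for $k\le m$) and by the need to keep all signs consistent with the multiplication table throughout the reduction.
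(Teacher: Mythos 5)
Your proposal is correct and follows essentially the same route as the paper: the paper gives no separate proof of Theorem \ref{mainalg2}, since it is exactly the repackaging of the basis of generators from Theorems \ref{Th:ElemId}--\ref{Th:elj}, the dimension count of Section \ref{sec:dimexp} (which forces the spanning set to be a basis), and the complete multiplication table of Theorem \ref{Th:Multtable}. Your added normal-form/confluence step in the final paragraph is a reasonable way to make explicit the completeness of the relations, which the paper leaves implicit in the fact that Table \ref{tab:Multelem} records the products of \emph{all} pairs of basis elements, not merely of the generating arrows.
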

\subsection{Homotopies}\label{sec:homotopies}
In this section we explicitly determine the before-mentioned homotopies, i.e. find boundaries to elements becoming zero. Recall that by $\d_\hom$ we denote the differential on the $\hom$-algebra defined in Section \ref{sec:Exthom}.
\begin{Notation}
Given a nullhomotopic chain map $$C_{(a|b)}^{(n|m)} \in \hom^{k}(P_\bullet(n|m),P_\bullet(a|b)\langle j \rangle)$$ we denote by
$H(C)_{(a|b)}^{(n|m)}$ the homotopy, i.e.
$$H(C)_{(a|b)}^{(n|m)} \in \hom^{k-1}(P_\bullet(n|m),P_\bullet(a|b)\langle j \rangle)$$
and
$$C=d _\hom(H(C)_{(a|b)}^{(n|m)}).$$
\end{Notation}

\begin{Lemma}\label{Le:homotopyF}
For $m \geq k$ we have
\begin{align}
F^{(n|m)}_{(k|l)} - (-1)^{n+l}\widetilde{F}^{(n|m)}_{(k|l)} =\d_\hom(H(F-(-1)^{n+l}\widetilde{F})^{(n|m)}_{(k|l)})
\end{align}
with 
\begin{align*}
H(F-(-1)^{n+l}\widetilde{F})^{(n|m)}_{(k|l)}=\begin{cases} P(s|t)_A \to 0 \\P(s|t)_B \to (-1)^{(s+t)(k+s)+(n+k+1)(l+s+1)+n+m+1}P(s|t)_A \end{cases}
\end{align*}
\end{Lemma}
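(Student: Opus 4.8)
The plan is to verify the displayed formula by a direct computation of the differential $\d_\hom$ on the $\hom$-algebra, in exactly the spirit of the cycle-checks carried out for $F$, $\widetilde{F}$, $G$, $K$ in Appendix \ref{Ap:Elem}. Recall from Section \ref{sec:Exthom} that $\d_\hom(f) = d \circ f-(-1)^{\deg f} f \circ d$, where the first $d$ is the differential of the target resolution $P_\bullet(k|l)$ and the second is that of the source resolution $P_\bullet(n|m)$, both given by Theorem \ref{Th:diff} (with $(n,m)$ replaced by $(k,l)$ on the target). Writing $H := H(F-(-1)^{n+l}\widetilde{F})^{(n|m)}_{(k|l)}$, this element has homological degree $(n+m)-(k+l)-2$ and the same internal shift $(n+m)-(k+l)-2$ as $F$ and $\widetilde F$, so $\d_\hom(H)$ has homological degree $e := (n+m)-(k+l)-1$, matching $F-(-1)^{n+l}\widetilde F$; thus the identity is well-typed and its sign factor is $(-1)^{\deg H}=(-1)^{e-1}$. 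Since $H$ kills every $A$-term and sends each $B$-term $P(s|t)_B$ to $P(s|t)_A$, the computation splits cleanly according to whether the source summand of $P_\bullet(n|m)$ is an $A$- or a $B$-term.

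First I would evaluate $\d_\hom(H)$ on the $A$-summands $P(s|t)_A$. Because $H$ annihilates $A$-terms, $d\circ H=0$ there and only $-(-1)^{e-1}\,H\circ d_{(n|m)}$ survives; the differential out of $P(s|t)_A$ hits $A$-terms (killed by $H$) and the $B$-terms $P(s-1|t)_B$ and $P(s|t-1)_B$ (maps v and vi of Theorem \ref{Th:diff}), which $H$ then returns to the $A$-terms $P(s-1|t)_A$ and $P(s|t-1)_A$. So $\d_\hom(H)$ produces only $A$-terms on an $A$-source, which is consistent because the $A$-to-$B$ contributions $P(s+1|s)_A\to P(s|s-2)_B$ of $F$ and of $(-1)^{n+l}\widetilde F$ cancel identically: their exponents $(n+k)(l+s+1)+k+s$ and $n+l+(n+k+1)(l+s)$ both reduce mod $2$ to $(n+k)(l+s)+n+s$. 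It then remains to check that the two surviving $A$-to-$A$ maps reproduce the $F$-row $P(s|t)_A\to P(s|t-1)_A$ and the $-(-1)^{n+l}\widetilde F$-row $P(s|t)_A\to P(s-1|t)_A$, together with the degenerate rows involving $P(s+2|s)_A$ and $P(s+1|s)_A$.

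Next I would evaluate $\d_\hom(H)$ on the $B$-summands $P(s|t)_B$, where both pieces contribute. In $d\circ H$ the map $H$ sends $P(s|t)_B\to P(s|t)_A$, after which $d_{(k|l)}$ emits the $B$-terms $P(s-1|t)_B$, $P(s|t-1)_B$ (maps v, vi) and the $A$-terms $P(s+1|t)_A$, $P(s|t+1)_A$ (maps i, ii); in $-(-1)^{e-1}H\circ d_{(n|m)}$ the differential $d_{(n|m)}$ emits the $B$-terms $P(s+1|t)_B$, $P(s|t+1)_B$ (maps iii, iv), which $H$ carries to $P(s+1|t)_A$, $P(s|t+1)_A$, while the boundary map vii to $P(s+1|s)_A$ is killed by $H$. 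The two $B$-to-$B$ terms from $d\circ H$ are to match $F$'s $P(s|t)_B\to P(s-1|t)_B$ and $-(-1)^{n+l}\widetilde F$'s $P(s|t)_B\to P(s|t-1)_B$, while the $A$-valued contributions from $d\circ H$ (maps i, ii) must cancel in pairs against those from $H\circ d$ (maps iii, iv) — this is precisely the homotopy cancellation. The genuine $B$-to-$A$ map $P(s|s-2)_B\to P(s|s-1)_A$ of $F$ then arises only in the boundary case $t=s-2$, as $P(s|s-2)_B\to P(s|s-2)_A\to P(s|s-1)_A$ via map ii on the target.

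The hard part will be the sign bookkeeping: $H$, $F$, $\widetilde F$ and the seven differentials of Theorem \ref{Th:diff} each carry an exponent that is a polynomial in the parities of $n,m,k,l,s,t$, and the verification reduces to a long list of modulo-$2$ exponent identities. The summand $(s+t)(k+s)$ in the exponent of $H$ is engineered exactly so that the pairwise cancellations go through on $A$- and $B$-sources simultaneously, and the matching of leading $A$-to-$A$ terms is what forces the scalar to be $(-1)^{n+l}$. A second point requiring care is the collapse of the general formulas along the diagonals $t=s-1$ and $t=s-2$, where $P(s|s-1)$ never occurs as a $B$-term and several rows degenerate, so those rows must be inspected individually. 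Since the dimension computations of Section \ref{sec:dimexp} already show that $F^{(n|m)}_{(k|l)}$ is homotopic to a multiple of $\widetilde F^{(n|m)}_{(k|l)}$ for $m\geq k$ (the relevant $\Ext$-space being one-dimensional, cf.\ also Lemma \ref{Le:possmaps}), the verification is guaranteed to succeed; its only content is to pin down the explicit homotopy $H$ above.
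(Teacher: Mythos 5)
Your proposal is correct in substance but takes a genuinely different route from the paper. You verify $\d_\hom(H)=F^{(n|m)}_{(k|l)}-(-1)^{n+l}\widetilde{F}^{(n|m)}_{(k|l)}$ directly for arbitrary $(n|m)$ with $m\geq k$, splitting the check according to whether the source summand is an A- or a B-term; your structural analysis is right: the A-to-B rows of $F$ and $(-1)^{n+l}\widetilde{F}$ cancel because both exponents reduce to $(n+k)(l+s)+n+s$ modulo $2$, the A-valued terms of $d\circ H$ on a B-source cancel pairwise against those of $H\circ d$ coming from differentials iii) and iv) of Theorem \ref{Th:diff}, and the row $P(s|s-2)_B\to P(s|s-1)_A$ of $F$ is recovered exactly in the boundary case $t=s-2$, where the pairing partner $P(s|s-1)_B$ cannot exist. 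The paper instead performs this kind of diagram check only in the extreme case $(n|m)=(k+1|k)$ (carried out in Appendix \ref{Ap:Homot}) and then bootstraps: it observes that $\Id^{(n|m)}_{(k+1|k)}\cdot H(F-(-1)^{k+l+1}\widetilde{F})^{(k+1|k)}_{(k|l)}$ equals $H(F-(-1)^{n+l}\widetilde{F})^{(n|m)}_{(k|l)}$ up to an explicit sign, applies the Leibniz rule for $\d_\hom$ together with $\d_\hom(\Id^{(n|m)}_{(k+1|k)})=0$, and then uses the already-established products $\Id\cdot F$ and $\Id\cdot\widetilde{F}$ from Theorem \ref{Th:Multtable} to convert the base case into the general one. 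The paper's reduction buys a much smaller case analysis, since for $(n|m)=(k+1|k)$ one has $m=k$, so all B-terms satisfy $s\leq k$ and the index-range subtleties collapse; your route is self-contained (it never invokes the multiplication table) at the price of dragging general-index sign polynomials through every diagram and of handling the extra B-terms with $k<s\leq m$, on which $H$ must be read as zero because the target A-term $P(s|t)$ is absent from $P_\bullet(k|l)$. One caveat on your closing remark: the dimension count of Section \ref{sec:dimexp} guarantees only that \emph{some} homotopy between $F$ and \emph{some} multiple of $\widetilde{F}$ exists; it certifies neither the scalar $(-1)^{n+l}$ nor the particular $H$ of the lemma, so the case-by-case verification you outline is genuinely needed, as you yourself acknowledge.
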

\begin{proof}
We first check the property for $H(F-(-1)^{k+l+1}\widetilde{F})^{(k+1|k)}_{(k|l)}$.
As in the previous proofs, we just have to check diagrams, which is done in Appendix \ref{Ap:Homot}.

Now one observes that
\begin{align*}
&\Id^{(n|m)}_{(k+1|k)} \cdot H(F-(-1)^{k+l+1}\widetilde{F})^{(k+1|k)}_{(k|l)}\\
=&(-1)^{(n+k+1)(k+l+1)+n+m+1} H(F-(-1)^{n+l}\widetilde{F})^{(n|m)}_{(k|l)}
\end{align*}
and
\begin{align*}
&\d_\hom(H(F-(-1)^{n+l}\widetilde{F})^{(n|m)}_{(k|l)}\\
=&(-1)^{(n+k+1)(k+l+1)+n+m+1} \d_\hom(\Id^{(n|m)}_{(k+1|k)} \cdot H(F-(-1)^{k+l+1}\widetilde{F})^{(k+1|k)}_{(k|l)})\\
=&(-1)^{(n+k+1)(k+l+1)+n+m+1} \d_\hom(\Id^{(n|m)}_{(k+1|k)}) \cdot H(F-(-1)^{k+l+1}\widetilde{F})^{(k+1|k)}_{(k|l)}\\
&+(-1)^{(n+k+1)(k+l+1)} \Id^{(n|m)}_{(k+1|k)} \cdot \d_\hom(H(F-(-1)^{k+l+1}\widetilde{F})^{(k+1|k)}_{(k|l)})\\
=&(-1)^{(n+k+1)(k+l+1)} \Id^{(n|m)}_{(k+1|k)} \cdot \d_\hom(H(F-(-1)^{k+l+1}\widetilde{F})^{(k+1|k)}_{(k|l)})\\
=&(-1)^{(n+k+1)(k+l+1)}\Id^{(n|m)}_{(k+1|k)} \cdot (F^{(k+1|k)}_{(k|l)} - (-1)^{k+l+1}\widetilde{F}^{(k+1|k)}_{(k|l)})\\
=&F^{(n|m)}_{(k|l)} - (-1)^{n+l}\widetilde{F}^{(n|m)}_{(k|l)}
\end{align*}
using the fact that $\d_\hom(\Id^{(n|m)}_{(k+1|k)})=0$. 
Therefore, the assertion follows.
\end{proof}
\begin{Lemma}\label{Le:homotopyJ}
For $k \leq m$, $m<n$, $l<k$ we get a homotopy such that\begin{align}
J^{(n|m)}_{(k|l)} =\d_\hom(H(J)^{(n|m)}_{(k|l)})
\end{align}
with 
\begin{align}
H(J)^{(n|m)}_{(k|l)}=\begin{cases} P(s|t)_A \to 0 &s>t+1\\ P(s+1|s)_A \to (-1)^{(n+k)(l+s+1)}P(s|s-2)_A \\P(s|t)_B \to (-1)^{(n+k)(l+t)+(s+t+1)(n+s)}P(s|t-1)_A\end{cases}
\end{align}
\end{Lemma}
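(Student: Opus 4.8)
The plan is to imitate the proof of Lemma \ref{Le:homotopyF} step for step: first I would establish the claimed identity in the single base case $(n|m)=(k+1|k)$ by a direct diagram computation, and then propagate it to arbitrary $(n|m)$ with $k\le m<n$ by composing with the identity morphism $\Id^{(n|m)}_{(k+1|k)}$ and exploiting that $\d_\hom$ is a derivation. Observe that the bare existence of a homotopy is not in question: it already follows from the dimension computation in Section \ref{sec:dimexp}, which shows that the relevant $\Ext$-space vanishes once $k\le m$. The point of the lemma is to pin down the \emph{explicit} representative $H(J)$, which will be needed when we analyse the $A_\infty$-structure.

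For the base case I would check directly that $\d_\hom(H(J)^{(k+1|k)}_{(k|l)})=J^{(k+1|k)}_{(k|l)}$. Since $H(J)^{(k+1|k)}_{(k|l)}$ is a morphism of homological degree $p=k-l-2$, one has $\d_\hom(H)=d\circ H-(-1)^{p}H\circ d$, so the verification amounts to computing, term by term in the resolution $P_\bullet(k+1|k)$, the two composites $d\circ H$ and $H\circ d$ and comparing their sum with the prescribed assignment of $J$. The terms that actually contribute are the A-terms $P(s+1|s)_A$ and the B-terms $P(s|t)_B$ (the A-terms $P(s|t)_A$ with $s>t+1$ are killed by $H$), and for each of these one inserts the explicit differentials of Theorem \ref{Th:diff} together with the standard degree-one morphisms of Section \ref{sec:mapproj}, discarding the compositions forced to vanish by Lemma \ref{Le:possmaps}. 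As with the earlier verifications for $F$, $\widetilde F$, $G$ and $K$, this is a finite but tedious sign-chase which I would relegate to the Appendix.

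The propagation step is then purely formal. Writing
\begin{align*}
H(J)^{(n|m)}_{(k|l)}=\pm\,\Id^{(n|m)}_{(k+1|k)}\cdot H(J)^{(k+1|k)}_{(k|l)},
\end{align*}
with the global sign read off from the product rules of Theorem \ref{Th:Multtable}, I apply $\d_\hom$ and use that the identity is a cycle, $\d_\hom(\Id^{(n|m)}_{(k+1|k)})=0$. The Leibniz rule then yields
\begin{align*}
\d_\hom(H(J)^{(n|m)}_{(k|l)})=\pm\,\Id^{(n|m)}_{(k+1|k)}\cdot\d_\hom(H(J)^{(k+1|k)}_{(k|l)})=\pm\,\Id^{(n|m)}_{(k+1|k)}\cdot J^{(k+1|k)}_{(k|l)},
\end{align*}
and a final appeal to the $\Id\cdot J$ entry of Theorem \ref{Th:Multtable} identifies the right-hand side with $J^{(n|m)}_{(k|l)}$, once the signs are chosen consistently.

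The main obstacle will be the sign bookkeeping in the base case: one must arrange the exponents $(n+k)(l+s+1)$ and $(n+k)(l+t)+(s+t+1)(n+s)$ in the definition of $H(J)$ so that $d\circ H$ and $H\circ d$ \emph{add} to exactly $J$, rather than producing a spurious factor or a partial cancellation. A subsidiary point is well-definedness of $H(J)$ itself: because $\dim\hom(P(\la),P(\mu))_i\le 1$ for $\la\neq\mu$, the degree-two composites implicit in the formula for $H(J)$ are fixed only up to scalar, so I would check, exactly as in the definitions of $K$ and $J$, that the chosen compositions of standard morphisms are compatible with the relations recorded in Section \ref{sec:mapproj}.
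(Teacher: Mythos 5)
Your proposal is correct, but it takes a genuinely different route from the paper. The paper proves Lemma \ref{Le:homotopyJ} by brute force: in Appendix \ref{Ap:Homot} it checks, for \emph{general} $(n|m)$, all five families of diagrams ($V=P(s|t)_A$ with $s>t+2$, $V=P(s+2|s)_A$, $V=P(s+1|s)_A$, $V=P(s|t)_B$ with $s>t+2$, and $V=P(s|s-2)_B$), inserting the differentials of Theorem \ref{Th:diff} and the standard morphisms of Section \ref{sec:mapproj}; no reduction to a base case is made. You instead verify only $(n|m)=(k+1|k)$ and propagate via the factorization $H(J)^{(n|m)}_{(k|l)}=(-1)^{(n+k+1)(k+l+1)}\,\Id^{(n|m)}_{(k+1|k)}\cdot H(J)^{(k+1|k)}_{(k|l)}$, the Leibniz rule, $\d_\hom(\Id^{(n|m)}_{(k+1|k)})=0$, and the chain-level identity $\Id^{(n|m)}_{(k+1|k)}\cdot J^{(k+1|k)}_{(k|l)}=(-1)^{(n+k+1)(k+l+1)}J^{(n|m)}_{(k|l)}$ from Theorem \ref{Th:Multtable} --- exactly the strategy the paper itself uses for Lemmas \ref{Le:homotopyF} and \ref{Le:homotopyB}. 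This is sound: the hypotheses of the lemma do contain the base case ($m=k$, $n=k+1$), the products in Theorem \ref{Th:Multtable} are genuine equalities in $\hom(P_\bullet,P_\bullet)$ and not merely up to homotopy, and the proportionality constant $(-1)^{(n+k+1)(k+l+1)}$ relating $\Id^{(n|m)}_{(k+1|k)}\cdot H(J)^{(k+1|k)}_{(k|l)}$ to $H(J)^{(n|m)}_{(k|l)}$ comes out the same on the $P(s+1|s)_A$-terms as on the $P(s|t)_B$-terms (this compatibility is nowhere in the tables and must be computed by hand, but it is a two-line check). What your route buys is a much smaller diagram chase, since in the base case $n+k+1=2k+2$ is even and $n+m=2k+1$ is odd, so most signs collapse; what the paper's route buys is that the stated sign formula for $H(J)^{(n|m)}_{(k|l)}$ is verified on the nose, independently of the consistency of the propagation signs.

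One concrete warning about the sign bookkeeping you rightly flag as the main obstacle: do not copy the Leibniz signs verbatim from the paper's own computation in Lemma \ref{Le:homotopyF}. There the factor $(-1)^{n+m+1}$ is placed on the term $\Id\cdot\d_\hom(H)$, and that placement is compensated by the summand $n+m+1$ occurring in the defining sign of $H(F-(-1)^{n+l}\widetilde{F})^{(n|m)}_{(k|l)}$. The formula for $H(J)^{(n|m)}_{(k|l)}$ in the present lemma contains no $m$-dependent summand at all, so inserting the same factor $(-1)^{n+m+1}$ would leave you with $(-1)^{n+m+1}J^{(n|m)}_{(k|l)}$ instead of $J^{(n|m)}_{(k|l)}$. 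With the paper's stated conventions $(\al\cdot\be)(x)=\be(\al(x))$ and $\d_\hom(f)=d\circ f-(-1)^{|f|}f\circ d$, the Leibniz rule in left-to-right notation reads $\d_\hom(f\cdot g)=f\cdot\d_\hom(g)+(-1)^{|g|}\d_\hom(f)\cdot g$, so the surviving term $\Id^{(n|m)}_{(k+1|k)}\cdot\d_\hom\bigl(H(J)^{(k+1|k)}_{(k|l)}\bigr)$ carries coefficient $+1$; then the two factors of $(-1)^{(n+k+1)(k+l+1)}$ cancel and the propagation yields exactly $J^{(n|m)}_{(k|l)}$, as required.
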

\begin{proof}
The computations are done in Appendix \ref{Ap:Homot}.
\end{proof}

\begin{Lemma}\label{Le:homotopyA}
For the map $A^{(n|m)}_{(k|l)}$ defined above we have
\begin{align}
A^{(n|m)}_{(k|l)} =\d_\hom(H(A)^{(n|m)}_{(k|l)})
\end{align}
with 
\begin{align}
H(A)^{(n|m)}_{(k|l)}=\begin{cases} P(s|t)_A \to 0 &s>t+1\\ P(s|t)_B \to 0 &s>t+2\\P(s+1|s)_A \to (-1)^{(n+k)(l+s+1)+n+l+1}P(s|s-2)_A \\P(s+2|s)_B \to (-1)^{(n+k)(l+s)+k+l}P(s+1|s)_A\end{cases}
\end{align}
\end{Lemma}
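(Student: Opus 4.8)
The goal is to verify the identity $A^{(n|m)}_{(k|l)} = \d_\hom(H(A)^{(n|m)}_{(k|l)})$, where $\d_\hom$ is the differential on the $\hom$-algebra from Section \ref{sec:Exthom}, acting on a homogeneous map $f$ of homological degree $p$ by $\d_\hom(f) = d \circ f - (-1)^p f \circ d$. Since $H(A)^{(n|m)}_{(k|l)}$ sits in homological degree $(n+m)-(k+l)-3$, one below $A^{(n|m)}_{(k|l)}$, this identity is exactly the assertion that $H(A)$ is a nullhomotopy for $A$. Existence of \emph{some} such homotopy is already guaranteed by Lemma \ref{Le:B}, since the relevant $\Ext$-group vanishes; the content of the present statement is that the explicit formula does the job, and I would establish this by a direct termwise computation.

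The plan is to evaluate $d \circ H(A) - (-1)^p H(A) \circ d$ on each indecomposable summand $P(s|t)$ occurring in the linear resolution $P_\bullet(n|m)$ and to match the result against the prescribed value of $A^{(n|m)}_{(k|l)}$. Because $H(A)$ is supported only on the near-diagonal terms $P(s+1|s)_A$ and the B-terms $P(s+2|s)_B$ (it vanishes on all $P(s|t)_A$ with $s>t+1$ and all $P(s|t)_B$ with $s>t+2$), and because the differentials of $P_\bullet(n|m)$ are the explicit degree-one maps of Theorem \ref{Th:diff}, the verification collapses to a short list of local diagrams around these diagonal terms. I would organize it by the type of source summand --- a generic A-term, the near-diagonal A-terms $P(s+1|s)_A$ and $P(s+2|s)_A$, and the near-diagonal B-terms $P(s|s-2)_B$ and $P(s+1|s-2)_B$ --- and in each case trace the two contributions $d \circ H$ and $H \circ d$ through the known differentials, reading off the composite degree-one and degree-two morphisms from the multiplication rules of Section \ref{sec:mapproj}.

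Two reductions would shorten the work. First, the $(-1)^{(n+k)(\cdots)}$ prefactors appearing in the formula for $H(A)$ are precisely the signs contributed by multiplication with an identity morphism of the type described in Theorem \ref{Th:elid}; this suggests that $H(A)^{(n|m)}_{(k|l)}$ factors through such an identity, so that, exactly as in the proof of Lemma \ref{Le:homotopyF}, one can verify only a base case directly and then propagate to general $(n|m)$ using that $\d_\hom$ is a derivation and that $\d_\hom$ annihilates identity morphisms. Second, the relations among standard morphisms from Section \ref{sec:mapproj} --- especially the commuting squares and the identity expressing a down--up composition as a sum of two paths --- are what force the various cross-terms to recombine into, in particular, the two-step composition $P(s|s-2) \to P(s|s-1) \to P(s|s-2)$ that defines $A$ on $P(s|s-2)_B$.

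The hard part will be the sign bookkeeping: one must track simultaneously the signs built into $H(A)$, the signs attached to the differentials in Theorem \ref{Th:diff}, and the global $-(-1)^p$ coming from $\d_\hom$, and check that in each local diagram they combine to reproduce the signs prescribed for $A^{(n|m)}_{(k|l)}$ exactly, not off by a sign. The most delicate spots are the B-to-A contributions, where the convention ``choose any nonzero composition of two standard degree-one morphisms'' must be shown consistent with what $d \circ H - (-1)^p H \circ d$ actually produces, and where some contributions are expected to cancel in pairs; these genuinely require the explicit morphism relations and composition tables rather than any formal argument. The detailed diagram checks are carried out in Appendix \ref{Ap:Homot}.
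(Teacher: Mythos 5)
Your proposal is correct and is essentially the paper's proof: the paper verifies the identity by the same direct termwise check, running through the source summands (generic A- and B-terms, where all maps vanish, and the near-diagonal terms $P(s+3|s)_A$, $P(s+2|s)_A$, $P(s+1|s)_A$, $P(s|s-3)_B$, $P(s|s-2)_B$), using the differentials of Theorem \ref{Th:diff} and the composition relations of Section \ref{sec:mapproj}, with the diagram computations carried out in Appendix \ref{Ap:Homot} --- including the pairwise cancellations and the composite $P(s|s-2)\to P(s|s-1)\to P(s|s-2)$ that you flag as the delicate spots. The only difference is organizational: for this lemma the paper computes directly at general $(n|m)$, $(k|l)$ and does not use your optional base-case-plus-identity-propagation shortcut (that device is used instead in Lemmas \ref{Le:homotopyF} and \ref{Le:homotopyB}).
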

\begin{proof}
The computations are done in Appendix \ref{Ap:Homot}.
\end{proof}
\begin{Lemma}\label{Le:homotopyB}
For the map $B^{(n|m)}_{(k|l)}$ defined above we have
\begin{align}
B^{(n|m)}_{(k|l)} =\d_\hom(H(B)^{(n|m)}_{(k|l)})
\end{align}
with 
\begin{align}
H(B)^{(n|m)}_{(k|l)}=\begin{cases} P(s|t)_A \to 0 &s>t+1\\ P(s|t)_B \to 0 \\P(s+1|s)_A \to (-1)^{(n+k+1)(l+s+1)+n+l+1}P(s-1|s-2)_A \end{cases}
\end{align}
\end{Lemma}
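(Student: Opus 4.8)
The plan is to verify directly that $\d_\hom(H(B)^{(n|m)}_{(k|l)}) = B^{(n|m)}_{(k|l)}$, following the same scheme used for Lemmas \ref{Le:homotopyA} and \ref{Le:homotopyJ}. Recall from Section \ref{sec:Exthom} that on the $\hom$-complex the differential of a homogeneous map $H$ of homological degree $p$ is $\d_\hom(H) = d \circ H - (-1)^{p} H \circ d$, computed in the $[\ ]_\hom$-convention so that the internal differentials are exactly those of Theorem \ref{Th:diff}. The candidate $H(B)^{(n|m)}_{(k|l)}$ is supported only on the diagonal $A$-terms, sending $P(s+1|s)_A \mapsto (-1)^{(n+k+1)(l+s+1)+n+l+1} P(s-1|s-2)_A$ and killing everything else, so the verification reduces to finitely many term-types.

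First I would determine which source terms $V$ of $P_\bullet(n|m)$ can contribute to $\d_\hom(H(B))(V)$. The summand $d \circ H(B)$ is nonzero only on the diagonal terms $P(s+1|s)_A$, where I post-compose the image $P(s-1|s-2)_A$ with the outgoing differentials of $P_\bullet(k|l)$ listed in Theorem \ref{Th:diff}; the summand $H(B)\circ d$ is nonzero only on those $V$ admitting a differential into some diagonal term $P(s'+1|s')_A$, which by Theorem \ref{Th:diff} means $V$ is an $A$-term $P(s|s-1)_A$ (via type ii), a diagonal $A$-term one step lower (via type i), or a $B$-term $P(s|s-2)_B$ (via type vii). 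For each such $V$ I would add the two contributions, using the composition relations between standard degree-one morphisms from Section \ref{sec:mapproj}, and check that the total equals the corresponding value of $B^{(n|m)}_{(k|l)}$, namely $P(s+1|s)_A \to P(s|s-2)_A$ and $P(s+1|s-1)_B \to P(s|s-1)_A$, while vanishing on all remaining terms.

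As a guiding cross-check I would exploit the factorization $B^{(n|m)}_{(a|b)} = A^{(n|m)}_{(a+1|b)} \cdot \widetilde{F}^{(a+1|b)}_{(a|b)}$ from Theorem \ref{Th:Multtable}: since $A^{(n|m)}_{(a+1|b)} = \d_\hom(H(A)^{(n|m)}_{(a+1|b)})$ by Lemma \ref{Le:homotopyA} and $\widetilde{F}^{(a+1|b)}_{(a|b)}$ is a cycle, the graded Leibniz rule gives $\d_\hom(H(A)^{(n|m)}_{(a+1|b)} \cdot \widetilde{F}^{(a+1|b)}_{(a|b)}) = B^{(n|m)}_{(a|b)}$, so this product is already a homotopy for $B$. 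On the diagonal $A$-terms it agrees with the stated $H(B)$ because the relevant sign $(-1)^{(n+k+1)(l+t)}$ of $\widetilde{F}^{(a+1|b)}_{(a|b)}$ is trivial there; the two homotopies differ only by a $B$-to-$B$ piece $P(s+2|s)_B \mapsto P(s|s-2)_B$. The main obstacle is precisely this sign and cancellation bookkeeping: I must confirm that the two diagonal contributions of $d\circ H(B)$ and $H(B)\circ d$ combine with the correct signs to reproduce $B$ exactly (not merely something homotopic to it), and that the $B$-to-$B$ discrepancy between the stated $H(B)$ and the product $H(A)^{(n|m)}_{(a+1|b)}\cdot\widetilde{F}^{(a+1|b)}_{(a|b)}$ is itself $\d_\hom$-closed, so that dropping it preserves the homotopy property.
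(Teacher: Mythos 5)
Your proposal is sound, and what you present as a ``guiding cross-check'' is in fact the paper's entire proof: the paper verifies the identity $H(B)^{(n|m)}_{(k|l)}=H(A)^{(n|m)}_{(k+1|l)}\cdot \widetilde{F}^{(k+1|l)}_{(k|l)}$ and then runs exactly the Leibniz-rule computation you sketch, using $\d_\hom(\widetilde{F}^{(k+1|l)}_{(k|l)})=0$, Lemma \ref{Le:homotopyA}, and the relation $B^{(n|m)}_{(a|b)}=A^{(n|m)}_{(a+1|b)}\cdot\widetilde{F}^{(a+1|b)}_{(a|b)}$ from Theorem \ref{Th:Multtable}. Your primary route (term-by-term verification of $\d_\hom(H(B))=B$ in the style of the appendix proofs of Lemmas \ref{Le:homotopyA} and \ref{Le:homotopyJ}) is also viable but strictly heavier: besides matching the two components of $B$, you must kill the spurious components $P(s+1|s)_A\to P(s-1|s-3)_B$ (from $d\circ H(B)$ via the type vi differential) and $P(s+1|s-1)_A\to P(s-1|s-2)_A$ (from $H(B)\circ d$ via type ii), which vanish only because the corresponding graded $\hom$-spaces between these projectives are zero; also note that your type i source would have to be $P(s|s)_A$, which does not exist, so that case drops out.

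The one point to correct is your assertion that the product $H(A)^{(n|m)}_{(k+1|l)}\cdot\widetilde{F}^{(k+1|l)}_{(k|l)}$ differs from the stated $H(B)$ by a nonzero $B$-to-$B$ piece $P(s+2|s)_B\mapsto P(s|s-2)_B$. That piece is identically zero: it is the composite of the degree-one morphisms $P(s+2|s)\to P(s+1|s)$ and $P(s+1|s)\to P(s|s-2)$, and the whole space $\hom(P(s+2|s),P(s|s-2))$ vanishes, since the sets of weights $\nu$ with $d_{(s+2|s),\nu}\neq 0$ and with $d_{(s|s-2),\nu}\neq 0$ are disjoint (equivalently, the degree-two case of Lemma \ref{Le:possmaps} excludes such a map: $2s+2\nleq 2s$ and the source is not of the required form $(a+1|a)$ with target $(a-1|a-2)$). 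Hence the product equals the stated $H(B)$ on the nose, the ``closedness of the discrepancy'' step you flag as the main obstacle is vacuous, and this vanishing is precisely the content of the paper's phrase ``one checks that $H(B)^{(n|m)}_{(k|l)}=H(A)^{(n|m)}_{(k+1|l)}\cdot\widetilde{F}^{(k+1|l)}_{(k|l)}$''. With that observation in hand, the Leibniz argument alone finishes the lemma and your direct verification becomes unnecessary.
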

\begin{proof}
One checks that 
\begin{equation*}H(B)^{(n|m)}_{(k|l)}=H(A)^{(n|m)}_{(k+1|l)}\cdot \widetilde{F}^{(k+1|l)}_{(k|l)} \end{equation*}
This map is a homotopy for $B$ since
\begin{align*}
&\d_\hom(H(B)^{(n|m)}_{(k|l)})\\
=&\d_\hom(H(A)^{(n|m)}_{(k+1|l)}\cdot \widetilde{F}^{(k+1|l)}_{(k|l)} )\\
=&\d_\hom(H(A)^{(n|m)}_{(k+1|l)}) \cdot \widetilde{F}^{(k+1|k)}_{(k|l)}\\
&+(-1)^{n+m+k+l+1} H(A)^{(n|m)}_{(k+1|l)} \cdot \d_\hom(\widetilde{F}^{(k+1|k)}_{(k|l)})\\
=&A^{(n|m)}_{(k+1|l)} \cdot \widetilde{F}^{(k+1|k)}_{(k|l)}\\
=&B^{(n|m)}_{(k|l)}
\end{align*}
since $\d_\hom(\widetilde{F}^{(k+1|k)}_{(k|l)})=0$. Therefore, the lemma is proved.
\end{proof}

\chapter{Excursion on the Koszul-Duality}\label{ch:Koszul}
In the previous section we explicitly determined elements of the $\Ext$-algebra of Verma modules. Some of them have a very natural and simple interpretation when we apply Koszul duality. To explain this we first recall some results of the ordinary category $\O$ and their connection to $O^\p$.
\section{Grading on $\O$}
Recall the notation from Section \ref{sec:Cat}.  
In particular $P^\O(\la)$ denotes the indecomposable module which surjects onto the Verma module $M^\O(\la)$ of highest weight $\la$. Let $\la$ be maximal in its dot-ordering and $\O_\la$ the corresponding block. Then $P_\la^\O=\bigoplus_{\mu \in W\cdot \la} P^\O(\la)$
is a minimal projective generator for $\O_\la$. \\
In \cite{Beil1996} it is shown that $A(\la)=\End(P_\la)$ can be equipped with a positive $\Z$-grading	which turns it into a Koszul-algebra. For basics on Koszul-algebras we refer to (\cite{Beil1996}, \cite{Mazo2009}, \cite{Poli05}). The $\Z$-grading on $A(\la)$ entitles us to consider the corresponding category 
$A(\la)-\gmod$ of graded finite dimensional $A(\la)$-modules. Then we can consider a graded version $\O_\la^\Z$ of $\O_\la$ by considering $A_\la-\gmod$ with the forgetful functor $f:A_\la-\gmod \to A_\la-\mod$ which forgets the grading, cf. \cite{Stro03}. This construction is similar to the one of  the graded version of $\O^\p$ in Remark \ref{grading}.
\section{Koszul duality}
Fix $\p$ a parabolic subalgebra with Levi component $\fr{l}$ and take $\la$ such that the stabilizer under the dot-action of W equals $W_\fr{l}$. Let $\omega_0$ be the longest element in $W$.
Then the following Koszul-duality theorem holds:
\begin{Theorem}[{\cite[Theorem 3.11.1.]{Beil1996}}]\label{Th:koszul}
There exists a contravariant equivalence of triangulated categories over $\C$
$$Kd: \D^{b}(\O_\la^\Z)\longrightarrow \D^b(\O^{\p \Z})$$
such that
\begin{enumerate}
\item the graded modules $P^\O(x\cdot \la),\ M^\O(x\cdot \la)$	and $L^\O(x\cdot \la)$ are sent to the modules $P(x^{-1}\omega_0 \cdot \la), M(x^{-1}\omega_0 \cdot \la)$ and $L(x^{-1}\omega_0 \cdot \la)$, respectively
\item the internal grading shift becomes a diagonal homological-internal grading shift such that
$Kd(M\langle n\rangle)=(Kd(M))\langle n \rangle[n]$ for all $M$.
\end{enumerate}
\end{Theorem}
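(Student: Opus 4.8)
The statement is quoted from Beilinson--Ginzburg--Soergel, so the honest route is to cite \cite{Beil1996}; what follows is the reconstruction I would give if I had to build the equivalence from the material already assembled here. The whole theorem is an instance of \emph{parabolic--singular Koszul duality}: the block $\O_\la$ is singular with stabiliser exactly $W_\fr{l}$, the block $\O^\p$ is regular with Levi $\fr{l}$, and the claim is that the two graded blocks are governed by mutually Koszul-dual algebras. Accordingly the plan is to pass everything to graded module categories, to identify one algebra as the Koszul dual of the other, and then to transport the general Koszul-duality functor and read off its action on the distinguished objects.

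First I would fix the algebras. Let $A=\End_{\O_\la}(P_\la)$ be the endomorphism algebra of the minimal projective generator, equipped with the positive $\Z$-grading of \cite{Beil1996} recalled above, so that $\O_\la^\Z\simeq A\text{-}\gmod$ with $A$ Koszul. On the other side $\O^{\p\Z}\simeq K_m^n\text{-}\gmod$ (Corollary \ref{Cor:equcat} together with Remark \ref{grading}), and $K_m^n$ is Koszul by \cite[Corollary 5.7]{Brun22008}. The crucial identification to establish is that $K_m^n$ is the Koszul dual $A^!=E(A)=\Ext^\bullet_A(A_0,A_0)$ of $A$; equivalently, that $\Ext^\bullet_{\O_\la}(\bigoplus L,\bigoplus L)$ is isomorphic, as a graded algebra, to $\End$ of the projective generator of $\O^\p$. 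This is the representation-theoretic heart, and it is exactly where the index substitution $x\mapsto x^{-1}\omega_0$ is forced: the $\Ext$-algebra of the simples reverses the Bruhat order, and the longest element $\omega_0$ enters through the standard duality $d$ on each block. I would prove this identification via the Kazhdan--Lusztig combinatorics controlling both sides, matching the graded Cartan/decomposition data of $A^!$ with that of $K_m^n$, or, more structurally, through translation functors and the Soergel-module description.

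With the quadratic duality $A^!\cong K_m^n$ in hand, I would invoke the general Koszul-duality equivalence $K:\D^b(A\text{-}\gmod)\to\D^b(A^!\text{-}\gmod)$, which intertwines the internal shift with a diagonal homological--internal shift; this yields at once the formula $Kd(M\langle n\rangle)=(Kd(M))\langle n\rangle[n]$ of part (2). The remaining task is to compute $K$ on objects using the standard dictionary combined with the internal duality $d$: the functor $K$ exchanges standard and costandard modules, and composing with $d$ (which swaps $\Delta\leftrightarrow\nabla$ and realises the simple-permuting symmetry $L(x\cdot\la)\mapsto L(\omega_0 x\cdot\la)$) turns the equivalence contravariant and sends $M^\O(x\cdot\la)\mapsto M(x^{-1}\omega_0\cdot\la)$. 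Projectives and simples are then matched by the same bookkeeping, since $K$ interchanges indecomposable projectives and simples up to shift, so that after the duality twist projectives go to projectives and simples to simples as stated in part (1).

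The main obstacle is the middle step: establishing the graded isomorphism $A^!\cong K_m^n$ and, simultaneously, pinning down the precise relabelling $x\mapsto x^{-1}\omega_0$. Koszulity of each algebra and the formal existence of the duality functor are either cited or standard; what demands genuine work is verifying that the quadratic dual of the singular algebra is \emph{this} parabolic algebra with \emph{this} indexing, and then checking that the object-level assignments survive the passage to the bounded derived category as an honest triangulated (contravariant) equivalence rather than merely an agreement of combinatorial shadows.
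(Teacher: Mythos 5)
You should first be aware that the paper does not prove this statement at all: it is imported, with notation translated, from \cite[Theorem 3.11.1.]{Beil1996}, and no argument for it appears anywhere in the thesis. So your opening move --- cite BGS --- is precisely the paper's entire ``proof'', and everything after that in your proposal is material the paper never attempts. As a reconstruction of the BGS argument, your outline has the right skeleton: Koszulity of the singular block algebra $A=\End(P_\la)$, Koszulity of the parabolic algebra, identification of the latter with the Koszul dual $A^{!}=\Ext^{\bullet}_{A}(A_0,A_0)$ (the parabolic--singular duality, which is, as you say, the real content), then the abstract Koszul duality equivalence with its mixing of internal and homological degrees, and finally the tracking of the distinguished objects.

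The genuine gap is in that last tracking step, and it cannot be repaired, because part (1) of the statement as printed is not what can come out of such an argument. Two concrete errors: the BGG duality $d$ fixes every simple, $dL(\mu)\cong L(\mu)$ (it preserves characters), so it does not realise $L(x\cdot\la)\mapsto L(\omega_0 x\cdot\la)$; and since any variant of the Koszul duality functor exchanges the family of simples with the family of projectives (or injectives) --- this is forced by $K(A_0)=A^{!}$ --- composing with $d$, which fixes simples and swaps projectives with injectives, can never turn that exchange into a functor preserving both families. Indeed, no contravariant triangulated functor with property (2) can send projectives to projectives: if $Kd(P^\O(x))$ and $Kd(P^\O(y))$ were again projective modules $P(x')$, $P(y')$ concentrated in homological degree $0$, then for every $k\neq 0$,
\begin{align*}
\hom\bigl(P^\O(x),P^\O(y)\langle k\rangle\bigr)
&\cong \hom_{\D^b(\O^{\p\Z})}\bigl(P(y')\langle k\rangle[k],\,P(x')\bigr)\\
&\cong \Ext^{-k}\bigl(P(y'),P(x')\langle -k\rangle\bigr)=0,
\end{align*}
the first isomorphism by contravariance and property (2), the vanishing because negative $\Ext$-groups vanish and positive $\Ext$-groups out of a projective vanish. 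This would force the graded algebra $\End(\bigoplus P^\O)$ to be concentrated in degree zero, i.e.\ semisimple, which is false. What Koszul duality actually gives interchanges the two families --- simples go to projectives and projectives to simples, up to the conventional shifts --- while Verma modules go to Verma modules; it is only the Verma-to-Verma part together with (2) and contravariance that the thesis uses in the corollary that follows, which is why the mis-paired part (1) is harmless downstream. Your proposal, by trying to manufacture the statement exactly as printed, inherits an impossible task at precisely the step you wave through with ``the same bookkeeping''.
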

\begin{Remark}
Since $Kd$ is contravariant and triangulated we have in particular $Kd(M[n])=Kd(M)[-n]$.
\end{Remark}

\section{Proof of a graded version of Verma's Theorem}
As a corollary of the Koszul-duality we deduce a graded version of Verma's Theorem (c.f. \cite{Hump08}) in our special case where $\fr{g}=\fr{gl}_{m+n}$ and working in the principal block of $\O^\p$. (Note that the Koszulity for these blocks was established by elementary arguments in \cite[Section 5]{Brun22008}).
\begin{Cor}
Let $\la \in W \cdot \la_0$ and $W_\la=W_\fr{l}$. Given $0<i<m+n$ such that $\mu=s_i \cdot \la < \la$. Then there exists a graded
embedding $M^\O(\mu)\langle 1 \rangle \subset M^\O(\la)$.
\end{Cor}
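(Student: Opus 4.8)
The plan is to transport the desired embedding to the Koszul dual parabolic category, where it becomes a statement about a single extension group between parabolic Verma modules, and then to pin down the internal grading shift using the Delorme--Schmid vanishing of Lemma \ref{Le:Extrest}.

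First I would record the ungraded input. By the classical Verma theorem (\cite{Hump08}), since $s_i$ is simple and $\mu=s_i\cdot\la<\la$, there is a nonzero homomorphism $M^\O(\mu)\to M^\O(\la)$, the space $\hom_\O(M^\O(\mu),M^\O(\la))$ is one-dimensional, and every nonzero such map is injective. It therefore suffices to determine the unique internal degree $j$ with $\hom_{\O_\la^\Z}(M^\O(\mu)\langle j\rangle,M^\O(\la))\neq 0$ and to show $j=1$. As the embedding is proper its image avoids the top graded layer of $M^\O(\la)$, so $j\geq 1$ is automatic; the content of the corollary is the opposite bound.

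Next I would apply the contravariant equivalence $Kd$ of Theorem \ref{Th:koszul}. Writing $\la=x\cdot\nu$ and $\mu=y\cdot\nu$ with $\nu$ the fixed weight of stabiliser $W_\fr{l}$ and $x,y$ minimal coset representatives, set $\la'=x^{-1}\omega_0\cdot\nu$ and $\mu'=y^{-1}\omega_0\cdot\nu$, so that $Kd\,M^\O(\la)=M(\la')$ and $Kd\,M^\O(\mu)=M(\mu')$. Using contravariance together with $Kd(M\langle j\rangle)=Kd(M)\langle j\rangle[j]$, a degree-zero morphism $M^\O(\mu)\langle j\rangle\to M^\O(\la)$ corresponds to an element of
\[
\hom_{\D^b(\O^{\p\Z})}\big(M(\la'),\,M(\mu')\langle j\rangle[j]\big)=\Ext^j_{\O^{\p\Z}}\big(M(\la'),M(\mu')\langle j\rangle\big).
\]
In particular, a nonzero morphism $M^\O(\mu)\langle j\rangle\to M^\O(\la)$ forces $\Ext^j(M(\la'),M(\mu'))\neq 0$. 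Then I would compute the relevant lengths: because $\mu<\la$ with $s_i$ simple, $x$ and $y$ form a covering pair in the Bruhat order with $l(y)=l(x)+1$, and the order-reversing relabeling $w\mapsto w^{-1}\omega_0$ (via $l(w^{-1}\omega_0)=l(\omega_0)-l(w)$, as in the Shelton-notation analysis of Chapter \ref{ch:Ext}) yields $l(\la')-l(\mu')=1$. By Lemma \ref{Le:Extrest} we get $\Ext^j(M(\la'),M(\mu'))=0$ for $j>l(\la')-l(\mu')=1$, so the nonzero class can only occur for $j\leq 1$. Combined with $j\geq 1$ this gives $j=1$, and the injective classical map lifts to a graded embedding $M^\O(\mu)\langle 1\rangle\subset M^\O(\la)$, injectivity being preserved by the forgetful functor.

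The step I expect to be the main obstacle is the combinatorial bookkeeping in the length computation: one must verify carefully, through the relabeling $w\mapsto w^{-1}\omega_0$ and the dot-action conventions, that the simple-reflection relation $\mu=s_i\cdot\la<\la$ produces a covering pair of length difference exactly one on the parabolic side (so that Delorme--Schmid leaves room only for $\Ext^1$), and that the shift compatibility $Kd(M\langle j\rangle)=Kd(M)\langle j\rangle[j]$ matches the internal shift to the homological degree with the correct sign. Once these order- and grading-conventions are aligned, the remainder is formal.
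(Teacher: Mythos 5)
Your proposal is correct, but it runs in essentially the opposite direction from the paper's argument, and the two proofs have genuinely different inputs. The paper never invokes the classical (ungraded) Verma embedding theorem for existence: instead it uses the inductive cone construction of linear projective resolutions (Section \ref{sec:linprojres}, Theorem \ref{Th:constrproj}), which exhibits $P_\bullet(\la_0.x^{-1})$ as a cone containing $P_\bullet(\la_0.x^{-1}s_i)\langle 1\rangle[1]$ as a summand, hence gives an explicit surjective chain map $P_\bullet(\la_0.x^{-1}) \to P_\bullet(\la_0.x^{-1}s_i)\langle 1\rangle[1]$. Read in $\D^b(\O^{\p \Z})$ this is already the desired class in $\Ext^1$ with internal shift exactly $1$; applying $Kd^{-1}$ (Theorem \ref{Th:koszul}) and the $H^0$-equivalence produces a nonzero morphism $M^\O(\mu)\langle 1\rangle \to M^\O(\la)$ in $\O_\la$, and injectivity of nonzero morphisms between Vermas finishes the proof. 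You instead take the classical embedding in $\O$ as given, note that its graded lift is homogeneous of some degree $j$, and pin down $j=1$ by transporting across $Kd$: the upper bound $j\le 1$ from Delorme--Schmid (Lemma \ref{Le:Extrest}) once the length difference on the parabolic side is shown to be $1$, and the lower bound $j\ge 1$ from properness. What your route buys is independence from the explicit resolution construction (you use only the vanishing lemma as a black box, though its proof in the paper rests on Lemma \ref{Le:termprojres}) and a clean reduction of all grading questions to one integer $j$; what the paper's route buys is that existence and the shift come out simultaneously, with no Bruhat-order bookkeeping at all, because the covering relation is hard-wired into the cone decomposition.

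Two steps of yours deserve to be written out, since they are exactly where conventions can silently go wrong, though both do go through. First, $j\ge 1$: the cleanest justification is that the graded Verma module, viewed over the Koszul algebra $A$ with $A_0$ semisimple and basic, is cyclic and generated in degree $0$, so its degree-zero piece is the single line $\C v$ with $v=e_\la v$; a degree-zero morphism from $M^\O(\mu)$ sends the generator $u=e_\mu u$ into $\C v$, and $e_\mu e_\la=0$ forces the map to vanish. Second, the length computation: you need Deodhar's lemma to see that $s_i x$ is again a minimal coset representative (the alternative $s_i x = x t$ with $t\in W_\fr{l}$ would give $\mu=\la$), so $x$ and $s_i x$ form a covering pair of minimal representatives with length difference one; then the order- and length-reversing relabeling, as in the Shelton-notation lemmas of Chapter \ref{ch:Ext}, carries covering pairs to covering pairs on the parabolic side, which is what feeds Lemma \ref{Le:Extrest}.
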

\begin{proof}

Write $\la=x \cdot \la^-$ and $\mu=s_i x \cdot \la^-$ with $s_ix > x$ and $\la^-=w_0 \cdot \la_0$.
Using the construction of the projective resolution of a Verma module in section \ref{sec:linprojres} we get that 
$$P_\bullet(\la_0.x^{-1})=P_\bullet(\la_0.x^{-1}s_i)\langle 1 \rangle [1] \oplus P_\bullet(\la_0.(x^{-1})'')$$ the cone of a chain map.
Therefore, we obtain a surjective map 
$$P_\bullet(\la_0.x^{-1}) \to P_\bullet(\la_0.x^{-1}s_i)\langle 1 \rangle [1].$$ Reading this map as a map in the derived category, it is equal to the map 
$$M(\la_0.x^{-1}) \to M(\la_0.x^{-1}s_i)\langle 1 \rangle[1]$$ in $\D^b(\O^{\p \Z})$. Now we take the inverse of the Koszul-duality map $Kd$ and get a morphism 
$$Kd^{-1}(M(\la_0.x^{-1}s_i)\langle 1 \rangle[1]) \to Kd^{-1}(M(\la_0.x^{-1}))$$
which is by Theorem \ref{Th:koszul} a nonzero morphism
$$M^\O(s_ix \cdot \la^- )\langle 1 \rangle \to M^\O( x \cdot \la^-)$$
in $\D^b(\O_\la^\Z)$. Since the functor $Q:\O_\la^\Z \to \D^b(\O_\la^\Z)$ yields an equivalence on $H^0$-complexes (especially on those chains having just one component sitting in zero, cf. \cite[Proposition III.5.2.]{Gelf88}) we get a nonzero morphism 
$$M^\O(s_ix \cdot \la^- )\langle 1 \rangle \to M^\O( x \cdot \la^-)$$ in $\O_\la$.

Since any morphism between Verma modules in $\O$ is injective \cite[Theorem 4.2]{Hump08}, the theorem is proved.
\end{proof}

\begin{Remark}
Using Theorem \ref{Th:equcat} and generalizing Section \ref{sec:linprojres} similar to the proof of \cite[Theorem 5.3.]{Brun22008} for general integral weights, the above proof goes through for all integral weights $\la$.
\end{Remark}

\part{$A_\infty$-structure}
\chapter{$A_\infty$-algebras}\label{ch:Ainf}
$A_\infty$-algebras are a generalization of associative algebras. Some historical and topological motivation and basic material can be found in \cite{Kell2001}. A very detailed exposition with most of the proofs is provided in \cite{lefe2003}.
\section{Definitions}
\begin{Def}
Let $k$ be a field. An \emph{$A_\infty$-algebra over $k$} is a $\Z$-graded vector space
$$A=\bigoplus_{p \in \Z} A^p$$
endowed with a family of graded $k$-linear maps
$$m_n: A^{\otimes n} \to A, \ n \geq 1$$
of degree $2-n$ satisfying the following Stasheff identities:
\begin{align}\label{al:Ainfid}
 \sum (-1)^{r+st} m_{r+t+1}(\Id^{\otimes r} \otimes m_s \otimes \Id^{\otimes t}) =0
\end{align}
where for fixed $n$ the sum runs over all decompositions $n=r+s+t$ with $s\geq 1$, and $r,t \geq 0$.
\end{Def}
\begin{Example}
For $n=1$ the sum in \eqref{al:Ainfid} has only one summand and the identity becomes $m_1m_1=0$. Hence the degree $-1$ map $m_1$ is a differential on $A$.

The map $m_2$ is a degree zero map and plays the role of a not necessarily associative multiplication. Therefore, we write down the next relations:
$$m_1m_2=m_2(m_1 \otimes \Id + \Id \otimes m_1),$$ so $m_1$ is a graded derivation with respect to our multiplication $m_2$.

The third relation expresses the non-associativity of $m_2$ since equation \eqref{al:Ainfid} results in 
$$m_2(\Id \otimes m_2-m_2 \otimes \Id)=m_1m_3+m_3(m_1\otimes \Id \otimes \Id + \Id \otimes m_1 \otimes \Id + \Id \otimes \Id \otimes m_1)$$
for  $n=3$.
If $m_1$ or $m_3$ is zero, then $m_2$ is associative.

In general, one may interpret $m_3$ as a chain homotopy up to which $m_2$ is associative.
\end{Example}
Following Keller, we use the so-called Koszul sign convention for tensor products 
$$(f \otimes g)(x \otimes y)=(-1)^{|g||x|}f(x) \otimes g(y)$$
for $x$ and $y$ homogeneous elements. Note that only the parity of $x$ becomes important.
\begin{Def}
Let $A$ and $B$ be two $A_\infty$-algebras. A \emph{morphism of $A_{\infty}$-algebras} $f:A \to B$ is a family 
$$f_n: A^{\otimes n} \to B$$
of graded $k$-linear maps of degree $1-n$ such that
$$\sum{(-1)^{r+st}f_{r+t+1}(\Id^{\otimes r} \otimes m_s \otimes \Id^{\otimes t})}= \sum{(-1)^w m_q(f_{i_1} \otimes \dots \otimes f_{i_q})}$$
for all $n \geq 1$ with the first sum running over all decompositions $n=r+s+t$ and the second sum running over all $1 \leq 
q \leq n$ and all decompositions $n=i_1+ \dots +i_q$ with all $i_s\geq 1$ and the sign on the right-hand side is given by
$$w=(q-1)(i_1-1)+(q-2)(i_2-1)+ \dots +2(i_{q-2}-1)+(i_{q-1}-1).$$
\end{Def}
A morphism $f$ is a \emph{quasi-isomorphism} if $f_1$ is a quasi-isomorphism. It is \emph{strict} if $f_i=0$ for all $i \neq 1$.
\section{Minimal models}
Our ultimative goal in the whole chapter is to define an $A_\infty$-structure on the $\Ext$-algebra $\Ext (\oplus M(\la), \oplus M(\la))$ from Section \ref{sec:Exthom}. The first step hereby is to introduce an $A_\infty$-structure on the cohomology of an $A_\infty$-algebra (the so-called minimal model). Then we explain how the Ext-algebra $E=\Ext (\oplus M(\la), \oplus M(\la))$ can be viewed as the cohomology of an $A_\infty$-algebra, namely the $\hom$-algebra introduced in Section \ref{sec:Exthom}. Later we shortly review the advantages of these extra information.
\subsection{The existence and construction of a minimal model}\label{sec:minmod}
\begin{Theorem}[\cite{Kade79}]
Let $A$ be an $A_\infty$-algebra and $H^*(A)$ its cohomology. Then there is an $A_\infty$-structure on $H^*(A)$ such that $m_1=0$ and $m_2$ is induced by the multiplication on $A$, and there is a quasi-isomorphism of $A_\infty$-algebras $H^*(A) \to A$ lifting the identity of $H^*(A)$.

Moreover, this structure is unique up to isomorphism of $A_\infty$-algebras.
\end{Theorem}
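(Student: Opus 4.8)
The plan is to prove existence by an explicit construction via a homotopy retraction together with tree-summation formulas (the approach of Merkulov recalled in the introduction), and to prove uniqueness by invertibility of $A_\infty$-quasi-isomorphisms whose linear component is an isomorphism. The whole argument is the \emph{homological perturbation lemma}, organized so that the combinatorics of planar trees produces precisely the Stasheff identities \eqref{al:Ainfid}.

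First I would fix a homotopy retract of $(A,m_1)$ onto its cohomology. Since we work over the field $\C$, the cohomology $H^*(A)$ with respect to the differential $m_1$ is a direct summand of $A$: choosing representatives of cohomology classes and a complement gives chain maps $p\colon A \to H^*(A)$ and $i\colon H^*(A)\to A$, where $H^*(A)$ carries zero differential, with $p\circ i=\Id$ and a $\C$-linear homotopy $G\colon A\to A$ of degree $-1$ satisfying $\Id_A - i\circ p = m_1\circ G + G\circ m_1$. By the standard modification of $G$ one arranges the side conditions $G\circ G=0$, $G\circ i=0$ and $p\circ G=0$, turning this data into a special deformation retract. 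I would then define the transferred structure by $m_1=0$, by $m_2 = p\circ m_2^A\circ(i\otimes i)$, and for $n\geq 3$ by Merkulov's recursion: $m_n$ is a signed sum over planar rooted trees with $n$ leaves, the leaves decorated by $i$, the internal edges by $G$, the internal vertices by the operations $m_k^A$ of $A$, and the root by $p$, the signs being dictated by the Koszul convention recalled above. In the case relevant to this thesis, where $A$ is the honest dg-algebra $\hom(P_\bullet,P_\bullet)$ of Section \ref{sec:Exthom} (so $m_1^A$ is the differential and $m_2^A$ the associative composition), only $m_2^A$ occurs at the vertices and the trees are binary. Simultaneously I would define $f_1=i$ and, for $n\geq 2$, components $f_n\colon H^*(A)^{\otimes n}\to A$ by the same tree sums but with the root decorated by $G$ instead of $p$.

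Next I would verify, by induction on $n$, that the $m_n$ satisfy the Stasheff identities \eqref{al:Ainfid} and that the $f_n$ satisfy the morphism identities. The inductive step rests on resolving each internal edge through the relation $m_1\circ G + G\circ m_1 = \Id - i\circ p$: the term $m_1 G$ (or $G m_1$) contributes the boundary pieces that telescope neighbouring tree sums, while the $i\circ p$ term is killed by the side conditions $p\circ G=0$ and $G\circ i=0$, so that exactly the required identity survives. Since $f_1=i$ induces the identity on cohomology, the resulting morphism $f\colon H^*(A)\to A$ is automatically a quasi-isomorphism, and $m_2$ is by construction the map induced by the multiplication of $A$. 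I expect this verification, and not the conceptual structure, to be the main obstacle: the bookkeeping of the Koszul signs, matching the prescribed signs $(-1)^{r+st}$ in \eqref{al:Ainfid} against those generated by grafting trees, is exactly the kind of delicate and tedious computation flagged in the introduction.

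Finally, for uniqueness I would argue that any two such minimal models $(H^*(A),m_\bullet)$ and $(H^*(A),m_\bullet')$ are related by an $A_\infty$-isomorphism. Composing the quasi-isomorphism of one model into $A$ with a homotopy inverse of the quasi-isomorphism of the other yields an $A_\infty$-quasi-isomorphism $g$ between the two models whose linear part $g_1$ induces the identity on cohomology. As both structures have $m_1=0$, the map $g_1$ is a chain map between complexes with zero differential inducing the identity in cohomology, hence $g_1$ is itself an isomorphism. I would then invoke the standard lemma that an $A_\infty$-morphism with invertible linear component admits a two-sided inverse $A_\infty$-morphism, built by solving the morphism identities for the components of the inverse one degree at a time. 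Thus $g$ is an $A_\infty$-isomorphism, which gives uniqueness up to isomorphism of $A_\infty$-algebras and completes the proof.
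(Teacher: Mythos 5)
Your proposal is correct, and for the existence half it is essentially the route the paper itself takes: the paper does not prove Kadeishvili's theorem but recalls Merkulov's construction in Proposition \ref{Pr:lambdan}, and your tree-sum formulas are exactly the unwound form of the recursion \eqref{eq:lambda} there (an internal edge decorated by $G$ corresponds to one application of $Q$ to a product of two smaller $\la$'s, and the formal convention $Q\la_1=-\Id$ plays the role of your leaf decoration by $i$). Your side conditions $G\circ G=0$, $G\circ i=0$, $p\circ G=0$ are automatically satisfied by the paper's choice of $Q$ in Section \ref{sec:Q}, where $Q$ vanishes on $H\oplus L$ and maps $B$ into $L$, so the two setups agree. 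Where you go beyond the paper is twofold. First, you treat a general $A_\infty$-algebra $A$, with vertices decorated by all the $m_k^A$, whereas the paper explicitly restricts to the dg case $A=\hom(P_\bullet,P_\bullet)$ and delegates even that verification to Merkulov and Kontsevich--Soibelman. Second, you supply a proof of the uniqueness statement, via invertibility of $A_\infty$-morphisms whose linear component is an isomorphism; the paper leaves this entirely to the citation \cite{Kade79}. One caveat on that last step: your construction of the comparison morphism $g$ invokes a homotopy inverse of an $A_\infty$-quasi-isomorphism, which exists over a field but is itself a nontrivial theorem (see \cite{lefe2003}), so it should be cited or proved rather than treated as an obvious step; alternatively one can avoid it by composing the two structure quasi-isomorphisms only after noting that any quasi-isomorphism between minimal $A_\infty$-algebras is already an isomorphism.
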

The proof goes by construction and over the time several different approaches have been worked out. We follow here Merkulov's construction \cite{Merk99}. In our situation we can restrict ourself to the situation of a differential graded algebra. We do not need the full generality of Merkulov's setup.
\begin{Prop}\label{Pr:lambdan}
Take $(A,d)$ a differential graded algebra with grading shift $[\quad]$. Let $B \subset A$ be a subvectorspace of $A$ and $\Pi: A \to B$ a projection commuting with $d$. Assume that we are given a homotopy $Q: A \to A[-1]$ such that 
\begin{equation} \label{condQ} 1-\Pi=dQ+Qd. \end{equation} 

Define $\la_n: A^{\otimes n} \to A$ for $n \geq 2$ by 
$$\la_2(a_1,a_2):=a_1 \cdot a_2$$
and recursively, (setting formally $Q\la_1=-\Id$) for $n \geq 3$
\begin{equation}\label{eq:lambda}\begin{split}
&\la_n(a_1,\ldots,a_n)\\
&= - \sum_{\substack{k+l=n\\k,l \geq 1}}{(-1)^{k+(l-1)(|a_1|+\dots+|a_k|)}Q(\la_k(a_1, \ldots, a_k))  \cdot Q(\la_l(a_{k+1}, \ldots , a_n))}
\end{split}.\end{equation}
Then the maps $m_1=d$ and $m_n=\Pi(\la_n)$ define an $A_\infty$-structure for a minimal model on $B$.
\end{Prop}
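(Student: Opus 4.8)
The goal is to verify that $m_1 = d$ and $m_n = \Pi\lambda_n$ satisfy the Stasheff identities \eqref{al:Ainfid}. The plan is to first prove, by strong induction on $n$, a single \emph{master identity} for the maps $\lambda_n$ living on all of $A$, and then to collapse it onto $B$ by applying $\Pi$. Throughout I would use only the hypotheses of the proposition: $\Pi^2=\Pi$, $\Pi d = d\Pi$, and the homotopy relation \eqref{condQ}, namely $1-\Pi = dQ+Qd$. I also record one convenient observation: if one sets $\lambda_1 := d$ and keeps the formal rule $Q\lambda_1 = -\Id$, then the single summand $k=l=1$ in \eqref{eq:lambda} already evaluates to $a_1\cdot a_2$, so \eqref{eq:lambda} may be read uniformly for all $n\geq 2$ with $\lambda_2$ the product $\mu$.

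Writing $\delta$ for the differential induced by $d$ on $\hom(A^{\otimes n},A)$ (with the Koszul signs dictated by the paper's shift convention), the identity I would establish is
$$\delta\lambda_n \;+\; \sum_{\substack{r+s+t=n\\ 2\le s\le n-1}} (-1)^{r+st}\,\lambda_{r+1+t}\big(\Id^{\otimes r}\otimes m_s\otimes\Id^{\otimes t}\big)\;=\;0 \qquad (\star)$$
for all $n\geq 2$. This is the transferred analogue of \eqref{al:Ainfid}: the outermost operation is the un-projected $\lambda$, while every inserted block is the already-projected $m_s$.

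For the inductive step I would expand $\delta\lambda_n$ by applying the graded Leibniz rule for $d$ to each factor $\mu(Q\lambda_k\otimes Q\lambda_l)$ of \eqref{eq:lambda}, and rewrite every occurrence of $dQ$ via \eqref{condQ} as $dQ\lambda_k = \lambda_k - m_k - Q(d\lambda_k)$. The $-m_k$ pieces feed slot by slot into the recursions of the insertion maps $\lambda_{r+1+t}(\cdots\otimes m_s\otimes\cdots)$ appearing in $(\star)$ — for instance $\mu(m_k\otimes Q\lambda_l)$ is precisely the leading term in the recursion of $\lambda_{1+l}(m_k,a_{k+1},\dots,a_n)$ — while the $-Q(d\lambda_k)$ pieces are reorganized using the inductive hypothesis $(\star)$ at indices $<n$. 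The base cases are direct: $n=1$ is $m_1m_1=d^2=0$; $n=2$ says $d$ is a derivation of $m_2=\Pi(\mu)$, which follows from $\Pi d=d\Pi$ together with the Leibniz rule in $A$; and $n=3$ uses in addition the associativity of $\mu$.

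Finally I would apply $\Pi$ to $(\star)$. Since $\Pi d=d\Pi$ and $\Pi\lambda_k=m_k$, the term $\Pi\,\delta\lambda_n$ supplies exactly the summand $m_1m_n=dm_n$ (the case $s=n$) and the $n$ summands $m_n(\Id^{\otimes r}\otimes m_1\otimes\Id^{\otimes t})$ (the cases $s=1$), whereas $\Pi$ applied to the insertion terms turns each $\lambda_{r+1+t}$ into $m_{r+1+t}$, producing the summands with $2\le s\le n-1$; adding these recovers \eqref{al:Ainfid}. The main obstacle is twofold: tracking all Koszul signs so that the coefficient $(-1)^{r+st}$ emerges correctly after the reorganization, and justifying that the non-multiplicative $\Pi$ nonetheless transports $(\star)$ to the Stasheff relation — this works only because in $(\star)$ every block inserted into the outer $\lambda$ is shielded by a $Q$, so that \eqref{condQ} converts the would-be products into projected insertions. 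This is exactly the sign bookkeeping flagged as tedious in the introduction, so in a full write-up I would first fix suspension conventions (passing to maps $A[1]^{\otimes n}\to A[1]$ of degree $+1$) to make the signs mechanical.
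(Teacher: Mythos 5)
Your proposal is essentially correct, but you and the paper do very different things: the paper does not prove this proposition at all. Its proof consists of a citation — the construction ``is precisely Merkulov's'' \cite{Merk99}, and the passage from his setting to the present graded setting is delegated to Kontsevich--Soibelman \cite[chapter 6.4]{Kont2001}, with the remark that the Koszul signs depend only on parity. What you wrote is a reconstruction of the argument that lives inside those references: the master identity for the unprojected $\lambda_n$ with projected insertions $m_s$, $2\le s\le n-1$, proved by induction via the substitution $dQ\lambda_k=\lambda_k-m_k-Q(d\lambda_k)$, and then collapsed to the Stasheff relations \eqref{al:Ainfid} by applying $\Pi$, using $\Pi d=d\Pi$ and $\Pi\lambda_k=m_k$. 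This is the standard homotopy-transfer proof, and your final projection step is sound — indeed easier than you suggest: $\Pi$ sits outside every term of your identity $(\star)$ and is linear, so $\Pi\circ(\lambda_j\circ F)=m_j\circ F$ for any insertion $F$; no ``shielding by $Q$'' is needed at that stage. Your route buys a self-contained verification in the paper's own sign conventions, which is exactly what the paper forgoes; the paper's route buys brevity and sidesteps the sign bookkeeping entirely via the parity observation.

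One step of your induction is genuinely under-argued, though it does not invalidate the approach. The substitution $dQ\lambda_k=\lambda_k-m_k-Q(d\lambda_k)$ produces three families of terms, and you account for only two: the $-m_k$ terms (absorbed into the insertion maps) and the $-Q(d\lambda_k)$ terms (handled by the inductive hypothesis). The leftover ``identity'' terms $\lambda_k\cdot Q\lambda_l$ and $\pm\,Q\lambda_k\cdot\lambda_l$ belong to neither family; they cancel against one another only after expanding the outer $\lambda_k$ (resp.\ $\lambda_l$) one more step through the recursion and invoking associativity, $(Q\lambda_a\cdot Q\lambda_b)\cdot Q\lambda_c=Q\lambda_a\cdot(Q\lambda_b\cdot Q\lambda_c)$, so that the two resulting sums over decompositions $a+b+c=n$ cancel term by term. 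This is the $n\ge 4$ generalization of the associativity you invoke only in the base case $n=3$, and it is the combinatorial heart of the induction: without it the master identity does not close, independently of how the suspension conventions fix the signs.
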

\begin{proof}
The proof is precisely Merkulov's, except that he only works with a $\Z_2$-grading. 
As the signs only depend on the parity we do not need to worry about them and can use \cite[chapter 6.4]{Kont2001}.
There, Kontsevich and Soibelman show that (up to the above mentioned signs) Merkulov's approach also works in the graded algebra setting. The Proposition follows. 
\end{proof}
\begin{Remark}
Note that as in the definition, the maps $m_n$ are of degree $2-n$ and therefore the $\la_n$ are of degree $2-n$, too.
\end{Remark}
As a consequence we have the following vanishing results:
\begin{Cor}\label{Cor:disaplambda}
\begin{enumerate}
\item If \begin{equation*}Q(a_1 \cdot a_2)=Q(\la_2(a_1,a_2))=0 \ \forall \ a_1, a_2 \in A\end{equation*}
then $\la_n=0$ and $m_n=0 \ \forall n \geq 3.$
\item If \begin{equation*} Q(\la_3(a_1,a_2, a_3))=0 \text{ and } Q(a_1 \cdot a_2)\cdot Q(a_3 \cdot a_4)=0\ \forall \ a_1, a_2, a_3, a_4 \in A\end{equation*}
then $\la_n=0$ and $m_n=0 \ \forall n \geq 4.$
\end{enumerate}
\end{Cor}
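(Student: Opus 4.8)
The plan is to prove both statements by strong induction on $n$, exploiting the single structural feature of the recursion \eqref{eq:lambda}: for every $n \geq 3$ each summand defining $\la_n(a_1,\dots,a_n)$ is a product
$$Q(\la_k(a_1,\dots,a_k))\cdot Q(\la_l(a_{k+1},\dots,a_n))$$
with $k+l=n$ and $k,l\geq 1$, where by the stated convention $Q\la_1=-\Id$ and where $\la_2(a,b)=a\cdot b$. Since $m_n=\Pi(\la_n)$, it suffices in each case to show that $\la_n$ itself vanishes; the projection $\Pi$ then kills it automatically, and the sign prefactors $(-1)^{k+(l-1)(|a_1|+\dots+|a_k|)}$ are irrelevant because we will show each individual \emph{factor} of each summand is zero. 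The whole argument is thus a bookkeeping of which index pairs $(k,l)$ can occur and why one of the two factors is always forced to vanish.

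For part (1), I would first check the base case $n=3$, whose only decompositions are $(k,l)=(1,2)$ and $(2,1)$: the first gives $(-a_1)\cdot Q(a_2\cdot a_3)$, which vanishes by the hypothesis $Q(\la_2(a_2,a_3))=Q(a_2\cdot a_3)=0$, and the second gives $Q(a_1\cdot a_2)\cdot(-a_3)$, which vanishes for the same reason; hence $\la_3=0$. For the inductive step with $n\geq 4$, assume $\la_j=0$ for $3\leq j<n$ and inspect a decomposition $(k,l)$ with $k+l=n$. If $k=1$ then $l=n-1\geq 3$, so $\la_l=0$ and the factor $Q(\la_l)$ vanishes (symmetrically if $l=1$); if $k=2$ or $l=2$ the relevant factor is $Q(\la_2(\,\cdot\,,\cdot\,))=Q(\,\cdot\,)=0$ by hypothesis; and if $k,l\geq 3$ then both lie strictly between $2$ and $n$, so $\la_k=\la_l=0$ by induction. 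Every summand therefore vanishes, giving $\la_n=0$ and $m_n=0$.

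Part (2) runs along identical lines but with the threshold raised by one, and it is here that the two hypotheses must be used in tandem, so this is the step I would flag as the (mild) crux. For the base case $n=4$ the decompositions are $(1,3),(3,1),(2,2)$: the outer two each contain the factor $Q(\la_3(\dots))=0$, while the middle one is exactly $Q(a_1\cdot a_2)\cdot Q(a_3\cdot a_4)$, which vanishes by the product hypothesis; hence $\la_4=0$. For the inductive step with $n\geq 5$, assume $\la_j=0$ for $4\leq j<n$. The decisive observation is that the delicate pairing $(2,2)$ — the only one genuinely requiring the product hypothesis — can no longer occur once $n\geq 5$, since $2+2=4<n$; the product hypothesis is thus only needed at the base case. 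Every remaining decomposition either has an index in $\{1,2,3\}$ or has both indices $\geq 4$: a factor $Q(\la_1)=-\Id$ is always paired with $Q(\la_{n-1})$ where $n-1\geq 4$, hence $\la_{n-1}=0$; a factor $Q(\la_2)$ is paired with $\la_{n-2}$ where $n-2\geq 3$, so $Q(\la_{n-2})$ vanishes (if $n-2=3$ because $Q\la_3=0$, if $n-2\geq 4$ because $\la_{n-2}=0$); a factor $Q(\la_3)=0$ annihilates its summand outright; and if both indices are $\geq 4$ both $\la$'s vanish by induction. Thus $\la_n=0$ and $m_n=0$, completing the induction.
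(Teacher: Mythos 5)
Your proof is correct and follows essentially the same route as the paper: every summand of the recursion \eqref{eq:lambda} contains a factor that vanishes, so $\la_n=0$ and hence $m_n=\Pi(\la_n)=0$. The only difference is one of completeness — the paper compresses the induction into a single sentence and dismisses part (2) with ``exactly the same argument,'' whereas you spell out the strong induction explicitly; in particular, your observation that in part (2) the factor $Q(\la_2)$ itself need not vanish, and that the delicate $(2,2)$ decomposition occurs only at the base case $n=4$ (precisely where the product hypothesis is invoked), makes rigorous exactly the point the paper leaves implicit.
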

\begin{proof}
\begin{enumerate}
\item Writing down the explicit formula for $\la_3$ we obtain \begin{equation*}
\la_3(a_1,a_2,a_3)=(-1)^{\deg a_1+1}a_1 \cdot Q(a_2\cdot a_3)+Q(a_1 \cdot a_2)\cdot a_3.
\end{equation*} Since both summands are zero by assumption, $\la_3$ equals zero, too. The same holds for the higher $\la_n$, since all summands in \eqref{eq:lambda} vanish.
\item This part follows by exactly the same argument.
\end{enumerate}
\end{proof}
\subsection{The choice of the map $Q$}\label{sec:Q}
Choosing $Q$ in a clever way simplifies computations. Note that since the minimal model only is unique up to quasi-isomorphism, our result will depend on this choice. Our choices and approach resemble the one in \cite[chapter 2]{Lu2009}.

To define $Q$, we first divide the degree $n$ part $A^n$ of $A$ into three subspaces, for this, denote by $Z^n$ the cocycles of $A$ and by $B^n$ the coboundaries. As we work over a field, we can find subspaces $H^n$ and $L^n$ such that
$Z^n= B^n \oplus H^n$ and 
\begin{equation} \label{Identif} A^n=B^n \oplus H^n \oplus L^n.\end{equation}

We identify the $n$th cohomology group $H^n(A)$ via \eqref{Identif} with $H^n$. We want to apply Proposition \ref{Pr:lambdan} with the choice of a subspace $B=H^*(A)$ and the projection $\Pi$ being the projection on the direct summand $H^*$. 

We choose the map $Q$ as follows:
\begin{enumerate}
\item When restricted to $Z^n$ by equation \eqref{condQ} and the condition that $d|_{Z^n}$ equals to zero, the map $Q$ has to satisfy the relation
$$1-\Pi=dQ.$$
In particular, $dQ|_{H}$ has to be zero. We choose $Q|_H=0$.
\item On $B^n$ the map $\Pi$ is zero, and therefore the map $Q|_B$ has to satisfy $1=dQ$, i.e. $Q$ has to be a preimage of $d$. We want to choose this preimage as small as possible i.e. with no non-trivial terms from $Z^n$ (they would anyway be annihilated by $d$). Since $d$ is injective on $L$, we can choose $Q|_B=(d|_L)^{-1}$. 
\item  We briefly outline how to determine $Q$ restricted to $L$ (although it won't play any role in our computations later on). From \eqref{condQ} we get the restriction $$1=Qd+dQ.$$
As $d(a) \in B \text{ for all } a \in A$ we see that $Qd|_{L}=(d|_L)^{-1}d|_L=1$, so we can define $Q|_{L}=0$.
\end{enumerate}
 
In practice, it might be difficult or a lot of work to write down the direct sum decomposition from \eqref{Identif} on elements. One has to define a choice for the large rest $L$. If one wants to show that $Q(a)=0$ for $a \in A^n$ it can be helpful to show that this element cannot contain a boundary. 

\subsection{Advantages of the $A_\infty$-structure}
An $A_\infty$-algebra carries more information than an ordinary algebra. Keller points this out in \cite[Problem 2]{Kell2001}. For instance it can be seen as follows:
 Let $B$ be an associative $k$-algebra and $M_1, \ldots, M_n$ a collection $M$ of $B$-modules.
Let $\filt(M)$ be the full subcategory of $B-\mod$  of modules having subquotients from the same isomorphism classes as the $M_i$'s. In the literature, the objects in $\filt(M)$ are often called {\it modules admitting an $M$-filtration} and the category $\filt(M)$ is {\it the category of M-filtered modules}. The question is:\\

Is $\filt(M)$ determined by $\Ext(\bigoplus M_i, \bigoplus M_i)$?\\
 
There are some well-known classic results answering the question. Given some tilting restrictions on the modules $M_i$ one can use classical tilting theory. If $A$ is a Koszul-algebra and the $M_i$ are all simple modules concentrated in degree zero, the Koszul dual of $A$ is the $\Ext$-algebra of the direct sum of these modules. A proof is given in \cite[Prop. 17]{Mazo2009}. In this case we can reconstruct the algebra $A$ from $\Ext(\bigoplus M_i, \bigoplus M_i)$.

In general, this result does \textbf{not} hold. In \cite[Section 2.1]{Kell2002} Keller gives an example where for different algebras $B$ one gets the same algebra $\Ext(M,M)$ but different subcategories $\filt(M)$.

Therefore, for solving the problem it is not enough to regard $\Ext(\bigoplus M_i, \bigoplus M_i)$ only as an algebra. In \cite[section 5]{Kell2001} Keller points out that we can reconstruct $\filt(M)$ if we also use the $A_\infty$-structure. 

\chapter{The $A_\infty$-structure on $\Ext(\bigoplus M, \bigoplus M)$}\label{ch:expAinf}
In this section we return to the space $\Ext(\bigoplus M(\la), \bigoplus M(\la))$ which we already determined as an algebra in Part I of the thesis. The construction of a minimal model from Section \ref{sec:minmod} applies to our situation if we choose $A=\hom(P_\bullet, P_\bullet)$ and $E=\Ext(\bigoplus M(\la), \bigoplus M(\la))=H^*(A)$. 

Recall that $M(\la)$ are the parabolic Verma modules in the principal block of $\O^\p$ with $\fr{g}=\fr{gl}_{n+m}$ and $\p$ the parabolic subalgebra belonging to the Levi component $\fr{gl}_m \oplus \fr{gl}_n$. Moreover, these modules can be viewed as the cell modules in $K_m^n-\mod$ via the equivalence in Corollary \ref{Cor:equcat}.

\section{General results}
In the following we give an upper bound for the $l$ with $m_l \neq 0$. 

Already in the case $n=2$ we will show that not all $m_l$ for $l>2$ vanish and therefore our specific model provides interesting examples of $A_\infty$-algebras with higher multiplications.

We start by stating the following Lemma generalizing the fact that the multiplication of two morphisms only exists if they lie in appropriate $\hom$-spaces:
\begin{Lemma}\label{Le:Laainf}
Let $a_i$, $1 \leq i \leq l$ be homogeneous elements of degree $k_i$ in \\$\Ext(\oplus M(\la), \oplus M(\la))$ of the form
$$a_i \in \Ext^{k_i}(M(\mu_i),M(\nu_i)) \ 1 \leq i \leq l.$$
Then we have \begin{equation*}
\la_l(a_1,...,a_l)=0 \text{ unless } \nu_i =\mu_{i+1} \ \forall \ 1 \leq i \leq l-1.\end{equation*}
 If $\la_l(a_1,...,a_l) \neq 0$, i.e. the last condition holds, we have
\begin{equation*}\la_l(a_1,...,a_l) \in \hom^{\Sigma k_i+2-l}(P_\bullet(\mu_1),P_\bullet(\nu_l)).\end{equation*}
\end{Lemma}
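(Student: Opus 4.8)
The plan is to prove both assertions simultaneously by induction on $l$, exploiting the block decomposition of the differential graded algebra $A=\hom(P_\bullet,P_\bullet)$. The structural observation underlying everything is that, since $P_\bullet=\bigoplus_\la P_\bullet(\la)$, the algebra $A$ splits as
$$A=\bigoplus_{\mu,\nu}\hom(P_\bullet(\mu),P_\bullet(\nu)),$$
and this decomposition is compatible with every operation entering the recursion \eqref{eq:lambda}. Indeed, the differential $d$ preserves each summand (it is $f\mapsto df-(-1)^pfd$), the multiplication is composition of chain maps and hence sends $\hom(P_\bullet(\mu),P_\bullet(\nu))\otimes\hom(P_\bullet(\mu'),P_\bullet(\nu'))$ to zero unless $\nu=\mu'$ (cf. Section \ref{sec:Exthom}), and the homotopy $Q$ can be chosen to preserve each summand. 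The last point is the one I would check with care: the displayed splitting is a decomposition into $d$-subcomplexes, so the splittings $A^n=B^n\oplus H^n\oplus L^n$ of Section \ref{sec:Q} may be carried out block by block; the resulting $Q$, built from $(d|_L)^{-1}$ and zero on $H\oplus L$, is then block-diagonal and lowers homological degree by one.

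With this in hand I would run the induction. The base case $l=2$ is immediate: $\lambda_2(a_1,a_2)=a_1\cdot a_2$ is the composition of the cocycles representing $a_1$ and $a_2$, which vanishes unless $\nu_1=\mu_2$ and otherwise lies in $\hom^{k_1+k_2}(P_\bullet(\mu_1),P_\bullet(\nu_2))$; since $k_1+k_2=k_1+k_2+2-2$, this is the asserted degree. For the inductive step I feed the hypothesis into each summand of \eqref{eq:lambda}. Renaming the summation indices $k,l$ of that formula to $k,t$ to avoid collision with the number $l$ of arguments, a term for the split $l=k+t$ is, up to sign, $Q(\lambda_k(a_1,\ldots,a_k))\cdot Q(\lambda_t(a_{k+1},\ldots,a_l))$, where the boundary cases $k=1$ or $t=1$ use the convention $Q\lambda_1=-\Id$, so that $Q(\lambda_1(a_i))=-a_i$ represents an element of the block $(\mu_i,\nu_i)$ of degree $k_i$. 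By induction $\lambda_k(a_1,\ldots,a_k)$ vanishes unless $\nu_i=\mu_{i+1}$ for $1\le i\le k-1$, in which case it lies in the block $(\mu_1,\nu_k)$; likewise $\lambda_t(a_{k+1},\ldots,a_l)$ vanishes unless $\nu_i=\mu_{i+1}$ for $k+1\le i\le l-1$, in which case it lies in the block $(\mu_{k+1},\nu_l)$. Since $Q$ preserves blocks, the composition of the two factors is zero unless additionally $\nu_k=\mu_{k+1}$. Hence a summand can be nonzero only when the full matching condition $\nu_i=\mu_{i+1}$ holds for all $1\le i\le l-1$; if it fails at some index, then for every split either a factor already vanishes or the composition does, so $\lambda_l=0$.

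Finally I would track the block and the degree when the matching condition holds. Every nonzero summand is then a composition $P_\bullet(\mu_1)\to P_\bullet(\nu_k)=P_\bullet(\mu_{k+1})\to P_\bullet(\nu_l)$, so it lands in the block $(\mu_1,\nu_l)$. For the degree, $Q$ lowers homological degree by one, so the two factors have degrees $\bigl(\sum_{i=1}^{k}k_i+2-k\bigr)-1$ and $\bigl(\sum_{i=k+1}^{l}k_i+2-t\bigr)-1$; composition adds these, giving $\sum_{i=1}^{l}k_i+2-(k+t)=\sum_i k_i+2-l$, independent of the split. Thus the whole sum is homogeneous of this degree and lies in $\hom^{\sum_i k_i+2-l}(P_\bullet(\mu_1),P_\bullet(\nu_l))$, which completes the induction. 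The only genuine subtlety I anticipate is justifying the block-diagonality of $Q$; once that is in place, the signs of \eqref{eq:lambda} are irrelevant, since they affect neither the vanishing nor the block nor the degree.
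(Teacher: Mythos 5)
Your proof is correct and follows essentially the same route as the paper's: induction on $l$ via the recursion \eqref{eq:lambda}, with the vanishing and degree claims propagating through the factors $Q(\la_k)\cdot Q(\la_t)$ because composition respects the block decomposition and $Q$ lowers homological degree by one. The only difference is that you explicitly justify the block-diagonality of $Q$ (by performing the splitting $A^n=B^n\oplus H^n\oplus L^n$ of Section \ref{sec:Q} blockwise), a point the paper uses implicitly when it asserts that $Q(\la_2(a_i,a_{i+1}))$ stays in $\hom^{k_i+k_{i+1}-1}(P_\bullet(\mu_i),P_\bullet(\nu_i))$; making this explicit is a genuine (if small) improvement in rigor, not a change of method.
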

\begin{proof}
The proof goes by induction on $l$. For all neighboured pairs $i, i+1$ we have 
$$\la_2(a_i, a_{i+1})=a_i \cdot a_{i+1}=0 \text{ unless } \mu_{i+1} = \nu_i.$$
Especially 
\begin{equation} Q(\la_2(a_i, a_{i+1}))=0 \text{ unless }\mu_{i+1} = \nu_i.\label{eq:Q1}\end{equation}

By the construction the map $Q$ yields a preimage for the differential, therefore
\begin{equation}\label{eq:Q2}Q(\la_2(a_i,a_{i+1})) \in \hom^{k_i+k_{i+1}-1}(P_\bullet(\mu_i),P_\bullet(\nu_i)). \end{equation} This yields a second assumption used in the induction step below.

Now for the induction assume that $\forall s <l$ we know:
\begin{enumerate}
\item $Q(\la_s(a_i,...,a_{i+s-1}))=0$ unless $\nu_t = \mu_{t+1} \ \forall \ i \leq t \leq i+s-2$ 
\item $Q(\la_s(a_i,...,a_{i+s-1})) \in \hom^{\Sigma _{r=i}^{i+s-1} k_i+2-s-1}(P_\bullet(\mu_i),P_\bullet(\nu_i))$
\end{enumerate}
For $n=2$ the conditions are fulfilled by \eqref{eq:Q1} and \eqref{eq:Q2}.

We compute
$$\la_l= - \sum_{\substack{k+s=l\\k,l \geq 1}}{(-1)^{k+(s-1)(|a_1|+\dots+|a_k|)}Q(\la_k(a_1, \ldots, a_k))  \cdot Q(\la_s(a_{k+1}, \ldots , a_l))}.$$
Therefore, if $\la_l \neq 0$, at least one summand has to be nonzero. Assume this is the summand belonging to $k$ and $s=l-k$, i.e.
$$Q(\la_k(a_1, \ldots, a_k)) \cdot Q(\la_s(a_{k+1}, \ldots , a_l)) \neq 0.$$
From the assumption we know that 
$$\nu_t=\mu_{t+1} \ \forall \ 1 \leq t \leq k-1 \text{ and } k+1 \leq t \leq l-1$$
and the maps being in $$\hom^{\Sigma _{r=1}^{k} k_i+2-k-1}(P_\bullet(\mu_1),P_\bullet(\nu_k))$$
and
$$\hom^{\Sigma _{r=k+1}^{l} k_i+2-s-1}(P_\bullet(\mu_{k+1}),P_\bullet(\nu_{l})),$$ respectively.
So the composition is zero unless $\nu_{k}=\mu_{k+1}$ and 
$$\la_l(a_1,...,a_l) \in \hom^{\Sigma k_i+2-l}(P_\bullet(\mu_1),P_\bullet(\nu_l)).$$
We also obtain $Q(\la_l(a_1,...,a_{l})) \in \hom^{\Sigma _{r=1}^{l} k_i+2-l-1}(P_\bullet(\mu_i),P_\bullet(\nu_i))$ as we assumed for the induction and so we have proved the Lemma.
\end{proof}
\begin{Theorem}[General Vanishing Theorem]\label{Th:genvanish}
Taking the above construction for a minimal model on $E=\Ext(\oplus M(\la),\oplus M(\la))$ with $M(\la) \in K_m^n-\mod$, the $A_\infty$-structure on $E$ satisfies
\begin{align*} m_l=0 \text{ for all } l >n^2+2.
\end{align*}
\end{Theorem}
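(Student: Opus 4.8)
The plan is to establish the contrapositive: I will show that whenever $m_l(a_1,\dots,a_l)\neq 0$ for some homogeneous elements, one necessarily has $l\leq n^2+2$. Since $m_l=\Pi\circ\la_l$ and $\Pi$ is a projection, it suffices to prove that $\la_l(a_1,\dots,a_l)=0$ for all inputs once $l>n^2+2$. Using the multilinearity of $\la_l$ together with the decomposition $E=\bigoplus_{\mu,\nu}\Ext^*(M(\mu),M(\nu))$, I may assume $a_i\in\Ext^{k_i}(M(\mu_i),M(\nu_i))$, and I may assume each $a_i\neq 0$ (otherwise $\la_l$ vanishes trivially). Then Lemma \ref{Le:Laainf} supplies the two facts I will exploit: the composability condition $\nu_i=\mu_{i+1}$ for $1\leq i\leq l-1$, and the membership
\[
\la_l(a_1,\dots,a_l)\in\hom^{K+2-l}\big(P_\bullet(\mu_1),P_\bullet(\nu_l)\big),\qquad K:=\textstyle\sum_i k_i .
\]

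Next I would extract two competing inequalities. On the one hand, since this $\hom$-space is nonzero, Lemma \ref{Le:mapprojres} applied in homological degree $K+2-l$ forces
\[
l(\mu_1)\leq l(\nu_l)+n^2+(K+2-l).
\]
On the other hand, each $a_i$ is a nonzero class in $\Ext^{k_i}(M(\mu_i),M(\nu_i))$, so Lemma \ref{Le:Extrest} gives $k_i\leq l(\mu_i)-l(\nu_i)$. Summing over $i$ and using the composability condition $\nu_i=\mu_{i+1}$, the right-hand side telescopes,
\[
K=\sum_{i=1}^l k_i\;\leq\;\sum_{i=1}^l\big(l(\mu_i)-l(\nu_i)\big)=l(\mu_1)-l(\nu_l),
\]
so that $K-\big(l(\mu_1)-l(\nu_l)\big)\leq 0$.

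Finally I would combine the two estimates. Rearranging the bound from Lemma \ref{Le:mapprojres} yields
\[
l\;\leq\;n^2+2+K-\big(l(\mu_1)-l(\nu_l)\big)\;\leq\;n^2+2,
\]
the last inequality by the telescoping estimate. Hence $\la_l(a_1,\dots,a_l)=0$ whenever $l>n^2+2$, and therefore $m_l=0$ for all such $l$. I expect the genuine difficulty to lie not in this final assembly but in the two lemmas it rests on: Lemma \ref{Le:Laainf} already does the work of tracking how the Merkulov recursion \eqref{eq:lambda} keeps all intermediate $Q\la$-terms inside controlled $\hom$-spaces and how their homological degrees add up, while Lemma \ref{Le:mapprojres} packages the $n^2$-bound for morphisms between the linear projective resolutions. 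Within the present argument the only point one must get exactly right is the telescoping cancellation, where the accumulated degree $K$ precisely offsets the length drop $l(\mu_1)-l(\nu_l)$, so that the threshold $n^2+2$ emerges with no slack.
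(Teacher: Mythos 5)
Your proposal is correct and follows essentially the same route as the paper's own proof: both use Lemma \ref{Le:Laainf} to place $\la_l$ in $\hom^{\Sigma k_i+2-l}(P_\bullet(\mu_1),P_\bullet(\nu_l))$, Lemma \ref{Le:Extrest} to bound each $k_i$ by the length drop, and Lemma \ref{Le:mapprojres} to force the contradiction, with the telescoping sum delivering $l\leq n^2+2$. The only difference is cosmetic: the paper writes $k_i=l(\mu_i)-l(\mu_{i+1})-d_i$ with nonnegative defects $d_i$ and concludes $\sum d_i\leq n^2+2-l$, whereas you carry the same information as the inequality $K\leq l(\mu_1)-l(\nu_l)$.
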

\begin{proof}We prove that the degree $2-l$ map $\la_l$ becomes zero for $l >n^2+2$.

Since $\la_l$ is linear, we show the assertion on nonzero homogeneous elements and therefore by Lemma \ref{Le:Laainf} we can take 
$$a_i \in \Ext^{k_i}(M(\mu_i),M(\mu_{i+1})) \ 1 \leq i \leq l.$$ 
By Lemma \ref{Le:Extrest} there are $d_i \geq 0$ such that
\begin{align*} k_i=l(\mu_i)-l(\mu_{i+1})-d_i \end{align*} and therefore 
\begin{align} \label{eq:ki}\sum_{i=1}^l{k_i}=l(\mu_1)-l(\mu_{l+1})-\sum_{i=1}^l d_i.\end{align}
From Lemma \ref{Le:Laainf} we know that
$$\la_l(a_1,...,a_l) \in \hom^{\Sigma k_i+2-l}(P_\bullet(\mu_1),P_\bullet(\nu_l)).$$
Assume $\la_l \neq 0$, so, by Lemma \ref{Le:mapprojres} about the morphisms between our chosen projective resolutions, we know that 
\begin{equation} \label{eq:k2}l(\mu_1) \leq l(\mu_{l+1})+n^2 +\sum k_i+2-l.\end{equation}
Combining \ref{eq:ki} and \ref{eq:k2}, we have
\begin{align*}
&l(\mu_1) \leq l(\mu_{l+1})+l(\mu_1)-l(\mu_{l+1})-\sum_{i=1}^l {d_i}+2-l +n^2,\\
\intertext{which is equivalent to}
 &\sum_{i=1}^l {d_i} \leq n^2+2 -l.
\end{align*}
However, since $\sum_{i=1}^l {d_i} \geq 0$, we obtain  $0\leq n^2+2-l$ or equivalently $l\leq n^2+2$. This provides the asserted upper bound. 
\end{proof}

\section{Explicit computation of the structure for $K_N^{1}$ and $K_{N-1}^2$}
In the previous section we established general vanishing results for the higher multiplications. In this section we study small examples in detail. Again we work in the two cases for $n=1$ and $n=2$ using results already obtained in Chapter \ref{ch:spec}.
\subsection{$\Ext(\bigoplus M(\la), \bigoplus M(\la)) $ for $M(\la) \in K_N^1-\mod$}\label{sec:Exp1}
The first result in this situation is the following:
\begin{Theorem}[1st vanishing Theorem] \label{Th:1stvanish}
For $\p$ the parabolic subalgebra belonging to $\fr{l}=\fr{gl}_1\oplus \fr{gl}_N$ the algebra
$\Ext(\oplus M(\la), \oplus M(\la))$ is formal, i.e. we have a minimal model such that $m_n =0$ for all $n \geq 3$.
\end{Theorem}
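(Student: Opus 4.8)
The plan is to apply Corollary \ref{Cor:disaplambda}(1), which reduces formality to the single statement that $Q(a_1 \cdot a_2) = Q(\lambda_2(a_1,a_2)) = 0$ for all $a_1, a_2 \in E$. If this holds, then all $\lambda_n$ and hence all $m_n = \Pi(\lambda_n)$ vanish for $n \geq 3$ simultaneously, which is exactly the assertion. (As a sanity check, Theorem \ref{Th:genvanish} with $n=1$ already forces $m_l = 0$ for $l > 3$, so only $m_3$ could survive; but routing the argument through Corollary \ref{Cor:disaplambda} disposes of all higher multiplications at once and is cleaner.)

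First I would fix, once and for all, the explicit chain-level representatives $\Id^{(j)}_{(l)}$ and $F^{(j)}_{(l)}$ constructed in Section \ref{sec:k1} as representatives of a basis of $E = H^*(A)$, where $A = \hom(P_\bullet, P_\bullet)$. These are genuine (anti)commuting chain maps, hence cocycles, and by the dimension count in Section \ref{sec:k1} their classes form a basis of $E$. I would therefore take the complement $H$ in the decomposition \eqref{Identif} to be precisely the span of these representatives, so that by the choice made in Section \ref{sec:Q} we have $Q|_H = 0$.

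Next, by bilinearity of $\lambda_2$ it suffices to evaluate $Q(a_1 \cdot a_2)$ on pairs of these basis elements. Here the decisive input is Theorem \ref{Th:multi1} together with Remark \ref{Re:nonhom}: the four products $\Id \cdot \Id$, $\Id \cdot F$, $F \cdot \Id$, $F \cdot F$ equal, \emph{already in} $\hom(P_\bullet, P_\bullet)$ and not merely up to homotopy, one of the chosen representatives $\Id^{(j)}_{(m)}$ or $F^{(j)}_{(m)}$, or else the zero map. Consequently, in every case the chain-level product $\lambda_2(a_1, a_2) = a_1 \cdot a_2$ lies in $H$ (or is zero), and since $Q|_H = 0$ we obtain $Q(\lambda_2(a_1, a_2)) = 0$.

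With $Q(\lambda_2(a_1,a_2)) = 0$ established for all $a_1, a_2$, Corollary \ref{Cor:disaplambda}(1) yields $\lambda_n = 0$ and hence $m_n = 0$ for all $n \geq 3$, proving formality. The one point demanding care — and the genuine crux — is the multiplicative closure of the chosen representatives recorded in Remark \ref{Re:nonhom}: it is exactly what guarantees that the product of two harmonic representatives stays inside $H$ with no coboundary component on which $Q$ could act nontrivially. Since that closure was already verified in Part I, the theorem follows essentially by assembly, and I would not expect any further computation beyond citing these results.
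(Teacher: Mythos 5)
Your proof is correct and takes essentially the same route as the paper: both arguments rest on Remark \ref{Re:nonhom} (the chain-level multiplicative closure of the representatives $\Id^{(j)}_{(l)}$, $F^{(j)}_{(l)}$ recorded in Theorem \ref{Th:multi1}), the choice $Q|_H = 0$ from Section \ref{sec:Q}, and Corollary \ref{Cor:disaplambda}. Your explicit choice of $H$ as the span of these chain-level representatives, and the reduction to basis elements by bilinearity, merely spell out what the paper's proof leaves implicit.
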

\begin{proof}
Recall from Remark \ref{Re:nonhom} that in this specific case all multiplications in the algebra $\Ext(\oplus M(\la), \oplus M(\la))$ are already obtained by the multiplications of the elements in $\hom(P_\bullet, P_\bullet)$. Therefore, for all elements $a_1, a_2 \in \Ext(\oplus M(\la), \oplus M(\la))=H^*(\hom(P_\bullet, P_\bullet))$ identified with the subspace $H^*$ via the decomposition from \eqref{Identif}, the product $a_1 \cdot a_2$ also lies in the subspace $H^*$ and has no boundary component in $B^*$. Since we have chosen $Q|_H=0$, we obtain
$$Q(a_1 \cdot a_2)=0.$$
Therefore, we apply Corollary \ref{Cor:disaplambda} and get
$$m_n = 0 \ \forall \ n \geq 3$$
and the theorem is proved.
\end{proof}
\subsection{$\Ext(\bigoplus M(\la), \bigoplus M(\la)) $ for $M(\la) \in K_{N-1}^2-\mod$}\label{sec:Exp2}
The case of $n=2$ and $m=N-1$, which we deal with in this section, turns out to be more interesting than the case studied in Section \ref{sec:Exp1}, since we have higher multiplications not disappearing. In contrast to the previous example this phenomenon is possible, since from Section \ref{sec:homotopies} we know that some multiplications in $\hom(P_\bullet, P_\bullet)$ are only homotopic to their product in the $\Ext$-algebra. 
Therefore, those elements also consist of a boundary part lying in the direct summand $B$ in the decomposition from \eqref{Identif}. As explained in Section \ref{sec:Q} the construction requires a choice of a {\it minimal} preimage of the differential for the boundary part. It is obvious that the homotopies chosen in Section \ref{sec:homotopies} are preimages. Since neither they, nor their linear combinations
 are contained the kernel of $d_\hom$, they lie in the $L$-part constructed in Section \ref{sec:Q} and therefore they are minimal (in the above sense). Hence we have constructed $Q(\la_2)$ as the homotopies from Section \ref{sec:homotopies} and determine $Q(x \cdot y)$ using Table \ref{tab:Multelem}. This is done in Table \ref{tab:MultelemQ}.
\begin{sidewaystable}
\caption{\label{tab:MultelemQ} $Q(x \cdot y)$ with $x$ and $y \in \Ext(\oplus M(\la), \oplus M(\la))$}
\begin{tabular}{|c!{\vrule width 1.3pt}c|c|c|c|c|c|}
		\toprule \rule[-0.6cm]{0cm}{1.2cm}
		$x\backslash y$& $\Id^{(k|l)}_{(a|b)}$ & $F^{(k|l)}_{(a|b)}$ & $\widetilde{F}^{(k|l)}_{(a|b)}$ & $G^{(k|l)}_{(a|b)}$ & $K^{(k|l)}_{(a|b)}$ & $J^{(k|l)}_{(a|b)}$ \\ \noalign{\hrule height 1.2pt} \rule[-1cm]{0cm}{2cm}
		$\Id^{(n|m)}_{(k|l)}$& $0$ & $0$ & $\begin{matrix}a \leq m\\(-1)^{(n+k)(l+b)+n+b+1}\\ H(F-(-1)^{n+l}\widetilde{F})^{(n|m)}_{(a|b)}\end{matrix}$ & $0$ & $0$ & $\begin{matrix}a\leq m\\(-1)^{(n+k)(l+b+1)} \\H(J)^{(n|m)}_{(a|b)}\end{matrix}$\\ \hline \rule[-1cm]{0cm}{2cm}
  $F^{(n|m)}_{(k|l)}$& $0$ & $\begin{matrix} (-1)^{(n+k)(l+b+1)}\\ H(A)^{(n|m)}_{(a|b)}\end{matrix}$ &$\begin{matrix}a \leq m\\(-1)^{(n+k)(b+l)} \\H(J)^{(n|m)}_{(a|b)} \end{matrix}$ & $0$ &$0$& $\begin{matrix} (-1)^{(n+k)(l+b+1)} \\H(B)^{(n|m)}_{(a|b)}\end{matrix}$\\ \hline \rule[-1cm]{0cm}{2cm}
    $\widetilde{F}^{(n|m)}_{(k|l)}$& $\begin{matrix}a \leq m\\(-1)^{(n+k+1)(l+b)+n+b+1}\\ H(F-(-1)^{n+l}\widetilde{F})^{(n|m)}_{(a|b)}\end{matrix}$ &$\begin{matrix}a \leq m\\(-1)^{(n+k+1)(b+l+1)} \\H(J)^{(n|m)}_{(a|b)} \end{matrix}$ 
    &$0$ &$0$ &$0$&$0$\\ \hline \rule[-1cm]{0cm}{2cm}
      $G^{(n|m)}_{(k|l)}$& $0$&$0$&$0$&$0$&$0$&$0$ \\ \hline \rule[-1cm]{0cm}{2cm}
        $K^{(n|m)}_{(k|l)}$& $0$ &$0$&$0$&$0$&$0$&$0$\\ \hline \rule[-1cm]{0cm}{2cm}
          $J^{(n|m)}_{(k|l)}$& $\begin{array}{c}a\leq m \\(-1)^{(n+k+1)(b+l)} \\H(J)^{(n|m)}_{(a|b)} 
        \end{array}$ & $\begin{matrix} (-1)^{(n+k+1)(l+b+1)} \\H(B)^{(n|m)}_{(a|b)}\end{matrix}$ &$0$&$0$&$0$&$0$\\ \hline
\end{tabular}		

\end{sidewaystable}
\subsubsection{The procedure to determine the higher multiplications}\label{Proc}
We will now work in three steps:
\begin{enumerate}
\item Compute $m_3$.
\item Show that $Q(\la_3)=0$.
\item Show that $Q(\la_2)\cdot Q(\la_2)=0$.
\end{enumerate}
Summing up these steps we conclude that there are no higher multiplications.

\begin{Remark}\label{Re:corexp}
Recall from Corollary \ref{Cor:posschmaps} that the space $\hom^{s+k}(P_\bullet(m|n),\ P_\bullet(a|b)\langle s+j\rangle)_0$ is zero except when certain conditions (depending on $k$ and $j$ only) are satisfied. In particular for 
\begin{align*}
f_1 &\in \hom^{s_1+k_1}(P_\bullet(m|n), P_\bullet(a|b)\langle s_1+j_1\rangle)_0\\
f_2 &\in \hom^{s_2+k_2}(P_\bullet(a|b), P_\bullet(c|d)\langle s_2+j_2\rangle)_0\\
f'_1 &\in \hom^{s'_1+k_1}(P_\bullet(m|n), P_\bullet(a'|b')\langle s'_1+j_1\rangle)_0\\
f'_2 &\in \hom^{s'_2+k_2}(P_\bullet(a'|b'), P_\bullet(c|d)\langle s'_2+j_2\rangle)_0\\
\end{align*}
the products $f_1 \cdot f_2$ and $f'_1 \cdot f'_2$ are elements in 
$$\hom^{s+k}(P_\bullet(m|n), P_\bullet(a|b)\langle s+j\rangle)_0$$
with $s=m+n-(c+d)$, $k=k_1+k_2$, $j=j_1+j_2$. This simplifies the calculations, since if one of the two compositions has to vanish by the restrictions of the corollary, the other has to vanish, too. \end{Remark}
\subsubsection{The multiplication $m_3$}
After this outline of the arguments we turn to the first lemma:
\begin{Theorem}\label{Th:la3}
There are non-vanishing $m_3$ and we have $Q(\la_3)=0$.
\end{Theorem}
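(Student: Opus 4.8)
The plan is to evaluate the Merkulov formula for $\la_3$ directly on the basis of generators, using the multiplication Table \ref{tab:Multelem} together with the table of $Q$-values (Table \ref{tab:MultelemQ}). Recall from the proof of Corollary \ref{Cor:disaplambda} that
$$\la_3(a_1,a_2,a_3)=(-1)^{\deg a_1+1}a_1\cdot Q(a_2\cdot a_3)+Q(a_1\cdot a_2)\cdot a_3,$$
and $m_3=\Pi(\la_3)$. By trilinearity and Lemma \ref{Le:Laainf} it suffices to test $\la_3$ on triples of the generators $\Id,F,\widetilde{F},G,K,J$ whose consecutive sources and targets match. If both $Q(a_1\cdot a_2)$ and $Q(a_2\cdot a_3)$ vanish, then $\la_3=0$; hence only those few triples for which an adjacent product has a nonzero $Q$-value contribute, and these can be read off from Table \ref{tab:MultelemQ}. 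For such triples the relevant $Q$-values are, up to sign, the explicit homotopies $H(F-(-1)^{n+l}\widetilde{F})$, $H(J)$, $H(A)$ and $H(B)$ of Lemmas \ref{Le:homotopyF}--\ref{Le:homotopyB}.

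For the non-vanishing I would exploit the Massey-product nature of $m_3$: I look for a triple with $a_1\cdot a_2\simeq 0$ and $a_2\cdot a_3\simeq 0$ in $\Ext$, so that $\la_3$ is assembled from genuine homotopies rather than collapsing. A convenient candidate is a nested triple of $F$'s, since Table \ref{tab:Multelem} gives $F\cdot F=\pm A\simeq 0$ with $Q(F\cdot F)=\pm H(A)$ (the pattern $F,\widetilde{F},F$, using $F\cdot\widetilde{F}=\pm J\simeq 0$ and $\widetilde{F}\cdot F=\pm J\simeq 0$, works equally well). I would then compose the generator with the homotopy component-wise on the linear projective resolutions, using the explicit differentials of Theorem \ref{Th:diff} and the formula of Lemma \ref{Le:homotopyA}, and project the result to $H^*$; for suitable weights this yields a nonzero generator, so $m_3\neq 0$.

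For the vanishing $Q(\la_3)=0$ I use that $Q$ is supported on coboundaries with $Q|_{H}=Q|_{L}=0$, so that $Q(\la_3)=0$ is equivalent to $\la_3$ carrying no coboundary component in the decomposition \eqref{Identif}. For each contributing triple I would compute the two summands $a_1\cdot Q(a_2\cdot a_3)$ and $Q(a_1\cdot a_2)\cdot a_3$ explicitly as maps in $\hom(P_\bullet,P_\bullet)$, again from the homotopy formulas, and determine their $H\oplus B\oplus L$ parts. The degree and weight restrictions of Corollary \ref{Cor:posschmaps} (equivalently Lemma \ref{Le:posschmaps}) rule out almost all index configurations, leaving a short explicit list; in each surviving case the resulting map is a combination of a class in $H$ (which contributes to $m_3$) and a minimal preimage in $L$, with vanishing $B$-part, whence $Q(\la_3)=0$.

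The main obstacle will be the bookkeeping rather than any conceptual difficulty: there are numerous composition cases, each carrying the intricate signs accumulated in Tables \ref{tab:Multelem}--\ref{tab:MultelemQ} and in the homotopy Lemmas, and one must confirm in every case that no coboundary summand survives, i.e.\ that $\la_3$—although not itself a cocycle—never acquires a $B$-component. I expect to keep in the main text only the structural reduction above, the single explicit triple witnessing $m_3\neq 0$, and the remark that Corollary \ref{Cor:posschmaps} collapses the analysis to finitely many tractable configurations, deferring the remaining routine sign verifications to an appendix.
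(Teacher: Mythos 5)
Your overall framework — the Merkulov formula for $\la_3$, the reduction to triples of generators via Lemma \ref{Le:Laainf}, and the use of Tables \ref{tab:Multelem} and \ref{tab:MultelemQ} — matches the paper's proof, but your leading witness for $m_3\neq 0$ fails. For the triple $(F,F,F)$ one has $Q(F\cdot F)=\pm H(A)$, and by Table \ref{tab:multhom} both $F^{(n|m)}_{(k|l)}\cdot H(A)^{(k|l)}_{(a|b)}$ and $H(A)^{(n|m)}_{(k|l)}\cdot F^{(k|l)}_{(a|b)}$ equal $\pm L^{(n|m)}_{(a|b)}\pm H(B)^{(n|m)}_{(a|b)}$; both of these project to zero under $\Pi$, so $m_3(F,F,F)=0$ — exactly what the corresponding row of Table \ref{tab:mult3} records. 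Your hedged alternative $(F,\widetilde F,F)$ does work, but only in the specific configuration $a\le m$ (so that the adjacent product is nullhomotopic with $Q$-value $\pm H(J)$ rather than the non-trivial class $\pm J$) \emph{and} $c\le l$ (so that the composition with $H(J)$ still contains the class $\pm K^{(n|m)}_{(c|d)}$); with $a\le m$ but $c>l$ the row of Table \ref{tab:mult3} again gives zero. So the non-vanishing statement needs this particular weight configuration, not a generic nested triple.

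For $Q(\la_3)=0$ your route is conceptually admissible but both heavier and incomplete as stated. You reduce the claim to ``$\la_3$ has no $B$-component,'' which is equivalent to it but is precisely what must be proved; checking it case by case requires knowing the $B\oplus H\oplus L$ decomposition of \emph{every} map occurring in the products, in particular of the extra element $L^{(n|m)}_{(a|b)}$, whose cocycle/coboundary status is established nowhere in the paper and is not obvious. The paper sidesteps this entirely: by Lemma \ref{Le:Laainf} and the shift bookkeeping of Table \ref{tab:kj}, every contribution to $\la_3$ lies in a space $\hom^{s+k}(P_\bullet(n|m),P_\bullet(a|b)\langle s+j\rangle)_0$ for one of finitely many pairs $(j,k)$; since $Q$ preserves the internal shift and lowers homological degree by one, $Q(\la_3)$ lies in $\hom^{s+k-1}(P_\bullet(n|m),P_\bullet(a|b)\langle s+j\rangle)_0$, and Corollary \ref{Cor:posschmaps} shows this space vanishes for every pair $(j,k-1)$ that can occur. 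This uniform degree argument gives $Q(\la_3)=0$ with no decomposition analysis at all; you should replace your $B$-component check by it, and reserve the explicit component-wise computations for the place they are genuinely needed, namely the evaluation of $m_3=\Pi(\la_3)$ itself.
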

\begin{proof}
From Proposition \ref{Pr:lambdan} we know that
\begin{equation}\label{eq:la3}
\la_3(a_1,a_2,a_3)=(-1)^{\deg a_1+1}a_1 \cdot Q(a_2\cdot a_3)+Q(a_1 \cdot a_2)\cdot a_3
\end{equation}
and
\begin{equation}\label{eq:m3}
m_3(a_1,a_2,a_3)=\Pi(\la_3(a_1,a_2,a_3)).
\end{equation}
We first determine the possible $j$ and $k$ introduced in section \ref{Proc} in case
$$a_3 \in \hom^{s_1+k_1}(P_\bullet(m_1|n_1), P_\bullet(a_1|b_1)\langle s_1+j_1\rangle)_0$$
and $$Q(a_1 \cdot a_2)\in \hom^{s_2+k_2}(P_\bullet(m_2|n_2) P_\bullet(a_2|b_2)\langle s_2+j_2\rangle)_0.$$
Then their compositions $a_3 \cdot Q(a_1\cdot a_2)$ and $Q(a_1 \cdot a_2)\cdot a_3$ are in 
$$\hom^{s+(k_1+k_2)}(P_\bullet(m|n), P_\bullet(a|b)\langle s+(j_1+j_2)\rangle)_0$$
with $s=m+n-(a+b)$ and $(m|n)$ and $(a|b)$ appropriate. This is done in Table \ref{tab:kj}. We use Remark \ref{Re:corexp} and therefore only have to write down the composition in one order.
Assuming $$\la_3 \in \hom^{s+k}(P_\bullet(m|n), P_\bullet(a|b)\langle s+j\rangle)_0$$
we would have 
$$Q(\la_3) \in \hom^{s+k-1}(P_\bullet(m|n), P_\bullet(a|b)\langle s+j\rangle)_0.$$
Checking all cases from Table \ref{tab:kj} together with Corollary \ref{Cor:posschmaps} we obtain that $Q(\la_3)$ must be zero in all cases, since for all possible pairs $(j,k-1)$ there exists no nonzero element in $\hom^{s+k-1}(P_\bullet(m|n), P_\bullet(a|b)\langle s+j\rangle)_0$. Hence the second part of the theorem follows.

To compute $m_3$ using \eqref{eq:m3} we apply $\Pi$ to the two summands from \eqref{eq:la3} i.e. we compute
 $\Pi(a_1 \cdot Q(a_2 \cdot a_3))$ and  $\Pi(Q(a_1 \cdot a_2) \cdot a_3)$. The result is presented as a linear combination of the elements 
\begin{align*}
\{ &\Id^{(n,m)}_{(a|b)}, F^{(n,m)}_{(a|b)}, \widetilde{F}^{(n,m)}_{(a|b)}, G^{(n,m)}_{(a|b)}, K^{(n,m)}_{(a|b)}, J^{(n,m)}_{(a|b)},\\ &H(A)^{(n,m)}_{(a|b)}, H(B)^{(n,m)}_{(a|b)}, H(J)^{(n,m)}_{(a|b)}, H(F-\widetilde{F})^{(n,m)}_{(a|b)} \}
\end{align*}
from $H$, $B$ and $L$ introduced in Sections \ref{sec:elements2}-\ref{sec:homotopies} plus the additional element:
\begin{align}
L^{(n|m)}_{(a|b)}:& \quad P_\bullet(n|m) \to P_\bullet(a|b)\langle (n+m)-(a+b)-6 \rangle [(n+m)-(a+b)-3]\nonumber\\
& 
								P(s|s-2)_B \to P(s-1|s-3)_A 
\end{align}
It is easy to see that the resulting set forms a linearly independent set of elements in $\hom(P_\bullet, P_\bullet)$.

Using Table \ref{tab:kj} together with Corollary \ref{Cor:posschmaps} we restrict the computations to those cases where a nonzero element $\la_3 \in \hom^{s+k}(P_\bullet(m|n), P_\bullet(a|b)\langle s+j\rangle)_0$ can exist. The possible combinations for $a_1$ and $Q(a_2 \cdot a_3)$, and $Q(a_1 \cdot a_2)$ and $a_3$ respectively are listed in Table \ref{tab:multhom}. There their product is determined, too. We wrote $\pm$ where the signs were not important for further computations.

Using the results from Table \ref{tab:MultelemQ} and the results of the computations in Table \ref{tab:multhom} we determine all triples $(a_1,a_2,a_3)$ with $a_i \in \Ext(\oplus M(\la),\oplus M(\la))$ such that at least one of the terms $\Pi(a_1 \cdot Q(a_2 \cdot a_3))$ or $\Pi(Q(a_1 \cdot a_2)\cdot a_3)$ is unequal to zero. The result is listed in Table \ref{tab:mult3}. We ignore from now on all signs except for the single special case where both terms are nonzero. There we obtain $(s_1+(-1)^{n+m+k+l+1}s_2)=\pm (1+(-1)^{n+m+k+d+b+a})$. If one wants to know the signs in the other cases, one only has to multiply those in Table \ref{tab:MultelemQ} with the corresponding ones in Table \ref{tab:multhom}. 

Therefore, we have computed all $m_3$ and the non-vanishing of some of them is shown.
\end{proof}
\begin{table}[htb]
\caption{\label{tab:kj} $j$ and $k$ for the compositions $x \cdot y$ and $y \cdot x$}
\begin{tabular}{|c!{\vrule width 1.3pt}c|c|c|c|c|c|}
		\toprule \rule[-0.6cm]{0cm}{1.2cm}
	$x\backslash y$	& $H(F-\widetilde{F})^{(n_2|m_2)}_{(a_2|b_2)}$ & $H(J)^{(n_2|m_2)}_{(a_2|b_2)}$ & $H(A)^{(n_2|m_2)}_{(a_2|b_2)}$ & $H(B)^{(n_2|m_2)}_{(a_2|b_2)}$ \\
		& $j_2=-2$& $j_2=-4$& $j_2=-4$&$j_2=-6$\\
		& $k_2=-2$&$k_2=-3$&$k_2=-3$&$k_2=-4$\\
		 \noalign{\hrule height 1.2pt} 
		$\Id^{(n_1|m_1)}_{(a_1|b_1)}$&&&& \\
		$j_1=0$&$j=-2$&$j=-4$&$j=-4$&$j=-6$\\
		$k_1=0$&$k=-2$&$k=-3$&$k=-3$&$k=-4$\\ \hline 
    $F^{(n_1|m_1)}_{(a_1|b_1)}$&&&& \\
		$j_1=-2$&$j=-4$&$j=-6$&$j=-6$&$j=-8$\\
		$k_1=-1$&$k=-3$&$k=-4$&$k=-4$&$k=-5$\\ \hline 
	  $\widetilde{F}^{(n_1|m_1)}_{(a_1|b_1)}$&&&& \\
		$j_1=-2$&$j=-4$&$j=-6$&$j=-6$&$j=-8$\\
		$k_1=-1$&$k=-3$&$k=-4$&$k=-4$&$k=-5$\\ \hline 
		$G^{(n_1|m_1)}_{(a_1|b_1)}$&&&& \\
		$j_1=-4$&$j=-6$&$j=-8$&$j=-8$&$j=-10$\\
		$k_1=-3$&$k=-5$&$k=-6$&$k=-6$&$k=-7$\\ \hline 
    $K^{(n_1|m_1)}_{(a_1|b_1)}$&&&& \\
		$j_1=-6$&$j=-8$&$j=-10$&$j=-10$&$j=-12$\\
		$k_1=-4$&$k=-6$&$k=-7$&$k=-7$&$k=-8$\\ \hline 
    $J^{(n_1|m_1)}_{(a_1|b_1)}$&&&& \\
		$j_1=-4$&$j=-6$&$j=-8$&$j=-8$&$j=-10$\\
		$k_1=-2$&$k=-4$&$k=-7$&$k=-7$&$k=-8$\\ \hline

	\end{tabular}		

\end{table}
\begin{table} 
\caption{\label{tab:multhom} Possible products for $a_1$ and $Q(a_2 \cdot a_3)$, and $Q(a_1 \cdot a_2)$ and $a_3$, respectively}
\begin{tabular}{|c|c!{\vrule width 1.3pt}l|}
		\toprule \rule[-0.6cm]{0cm}{1.2cm}
		$F^{(n|m)}_{(k|l)} $& $H(F-\widetilde{F})^{(k|l)}_{(a|b)}$ & $\begin{array}{l}(-1)^{(n+a)(a+b)+(l+b)(n+k+1)} G^{(n|m)}_{(a|b)}\\+(-1)^{(l+b)(n+k+1)} H(J)^{(n|m)}_{(a|b)}\\+(-1)^{(l+b)(n+k+1)+n+b} H(A)^{(n|m)}_{(a|b)} \end{array}$\\\hline
		$F^{(n|m)}_{(k|l)} $& $H(A)^{(k|l)}_{(a|b)}$ & $\begin{array}{l}\pm L^{(n|m)}_{(a|b)}\\ + \pm  H(B)^{(n|m)}_{(a|b)} \end{array}$\\\hline
		$F^{(n|m)}_{(k|l)} $& $H(J)^{(k|l)}_{(a|b)}$ & $\begin{array}{l}(-1)^{(l+a)n+(l+b+1)k+ab} K^{(n|m)}_{(a|b)}\\+ \pm  H(B)^{(n|m)}_{(a|b)} \end{array}$\\\hline		
$H(F-\widetilde{F})^{(n|m)}_{(k|l)} $& $F^{(k|l)}_{(a|b)}$ & $\begin{array}{l}(-1)^{(l+b+1)n+m+a} H(J)^{(n|m)}_{(a|b)}\\+(-1)^{(l+b)n+m+(k+1)l+bk+b+a+1} H(A)^{(n|m)}_{(a|b)} \end{array}$\\\hline
		$H(A)^{(n|m)}_{(k|l)} $& $F^{(k|l)}_{(a|b)}$ & $\begin{array}{l}\pm L^{(n|m)}_{(a|b)}\\ + \pm  H(B)^{(n|m)}_{(a|b)} \end{array}$\\\hline
		$H(J)^{(n|m)}_{(k|l)} $& $F^{(k|l)}_{(a|b)}$ & $\pm H(B)^{(n|m)}_{(a|b)}$\\\hline		
		$\widetilde{F}^{(n|m)}_{(k|l)} $& $H(F-\widetilde{F})^{(k|l)}_{(a|b)}$ & $\pm H(J)^{(n|m)}_{(a|b)}$
		\\\hline
		$\widetilde{F}^{(n|m)}_{(k|l)} $& $H(A)^{(k|l)}_{(a|b)}$ & $0$\\\hline
		$\widetilde{F}^{(n|m)}_{(k|l)} $& $H(J)^{(k|l)}_{(a|b)}$ & $0$\\\hline		
$H(F-\widetilde{F})^{(n|m)}_{(k|l)} $& $\widetilde{F}^{(k|l)}_{(a|b)}$ & $\begin{array}{l}(-1)^{(l+a+1)n+m+(k+1)l+(b+1)k+
(a+1)b}( G^{(n|m)}_{(a|b)}\\+(-1)^{(b+a)(n+a)} H(J)^{(n|m)}_{(a|b)}) \end{array}$\\\hline
		$H(A)^{(n|m)}_{(k|l)} $& $\widetilde{F}^{(k|l)}_{(a|b)}$ & $\pm H(B)^{(n|m)}_{(a|b)}$\\\hline
		$H(J)^{(n|m)}_{(k|l)} $& $\widetilde{F}^{(k|l)}_{(a|b)}$ & $(-1)^{(l+a)n+kl+(a+k+1)(b+1)}K^{(n|m)}_{(a|b)}$\\\hline		
		$J^{(n|m)}_{(k|l)} $& $H(F-\widetilde{F})^{(k|l)}_{(a|b)}$ & $\begin{array}{l}(-1)^{(l+a)n+kl+(b+1)(a+k+1)} K^{(n|m)}_{(a|b)}\\+\pm H(B)^{(n|m)}_{(a|b)}\end{array}$
		\\\hline
				$H(F-\widetilde{F})^{(n|m)}_{(k|l)} $& $J^{(k|l)}_{(a|b)}$ & $\begin{array}{l}(-1)^{(l+a+1)n++m+(k+1)l+bk+(b+1)(a+1)}\\ K^{(n|m)}_{(a|b)}\\+\pm H(B)^{(n|m)}_{(a|b)}\end{array}$	\\ \hline	
\end{tabular}
\end{table}
\begin{landscape}
\begin{longtable}{|c|c|c|c|c|c!{\vrule width 1.3pt}c|}
\caption{ The multiplication $m_3(a_1,a_2,a_3)$, all other cases yield zero} \label{tab:mult3} \\
		\toprule \rule[-0.6cm]{0cm}{1.2cm}
		&$a_1$ & $a_2$& $a_3$& $\Pi(a_1 Q(a_2,a_3))$& $\Pi(Q(a_1,a_2)a_3)$ &$m_3$\\ \hline
	$a \leq m$ &$\Id^{(n|m)}_{(k|l)}$ & $\widetilde{F}^{(k|l)}_{(a|b)}$ & $F^{(a|b)}_{(c|d)}$ &$0$&$0$& $0$ \\ \hline
	$a \leq m$ &$\Id^{(n|m)}_{(k|l)}$ & $\widetilde{F}^{(k|l)}_{(a|b)}$ & $F^{(a|b)}_{(c|d)}$ &$0$ & $\pm G^{(n|m)}_{(c|d)}$& $\pm G^{(n|m)}_{(c|d)}$ \\ \hline
	$a \leq m$ &$\Id^{(n|m)}_{(k|l)}$ & $\widetilde{F}^{(k|l)}_{(a|b)}$ & $J^{(a|b)}_{(c|d)}$ &$0$ &$\pm K^{(n|m)}_{(c|d)}$& $\pm K^{(n|m)}_{(c|d)}$ \\ \hline
	$a \leq m$ &$\Id^{(n|m)}_{(k|l)}$ & $J^{(k|l)}_{(a|b)}$ & $F^{(a|b)}_{(c|d)}$ & &$0$ & $0$ $0$ \\ \hline
	$a \leq m$ &$\Id^{(n|m)}_{(k|l)}$ & $J^{(k|l)}_{(a|b)}$ & $\widetilde{F}^{(a|b)}_{(c|d)}$ &$0$ &$\pm K^{(n|m)}_{(c|d)}$ & $\pm K^{(n|m)}_{(c|d)}$ \\ \hline
	&$F^{(n|m)}_{(k|l)}$ & $F^{(k|l)}_{(a|b)}$ & $F^{(a|b)}_{(c|d)}$ &$0$&$0$& $0$ \\ \hline
	$c > l$ &$F^{(n|m)}_{(k|l)}$ & $F^{(k|l)}_{(a|b)}$ & $\widetilde{F}^{(a|b)}_{(c|d)}$ &$0$&$0$& $0$ \\ \hline
	$c  \leq l$ &$F^{(n|m)}_{(k|l)}$ & $F^{(k|l)}_{(a|b)}$ & $\widetilde{F}^{(a|b)}_{(c|d)}$ &$0$&$ \pm K^{(n|m)}_{(c|d)}$ & $\pm K^{(n|m)}_{(c|d)}$ \\ \hline
	$a \leq m$ &$F^{(n|m)}_{(k|l)}$ & $\widetilde{F}^{(k|l)}_{(a|b)}$ & $F^{(a|b)}_{(c|d)}$ &$0$ &$0$&  $0$ \\ \hline
	$\begin{matrix}a \leq m\\ c \leq l\end{matrix}$ &$F^{(n|m)}_{(k|l)}$ & $\widetilde{F}^{(k|l)}_{(a|b)}$ & $F^{(a|b)}_{(c|d)}$ &$0$ & $\pm K^{(n|m)}_{(c|d)}$& $\pm K^{(n|m)}_{(c|d)}$ \\ \hline
	&$F^{(n|m)}_{(k|l)}$ & $\widetilde{F}^{(k|l)}_{(a|b)}$ & $\widetilde{F}^{(a|b)}_{(c|d)}$& $0$ &$\pm K^{(n|m)}_{(c|d)}$ & $\pm K^{(n|m)}_{(c|d)}$ \\ \hline
	$a \leq m$ &$\widetilde{F}^{(n|m)}_{(k|l)}$ & $\Id^{(k|l)}_{(a|b)}$ & $F^{(a|b)}_{(c|d)}$ &$0$&$0$& $0$ \\ \hline
	$\begin{matrix}a \leq m\\ c>l \end{matrix}$ &$\widetilde{F}^{(n|m)}_{(k|l)}$ & $\Id^{(k|l)}_{(a|b)}$ & $F^{(a|b)}_{(c|d)}$ &$0$ & $\pm G^{(n|m)}_{(c|d)}$& $\pm G^{(n|m)}_{(c|d)}$ \\ \hline
		$\begin{matrix}a \leq m\\ c\leq l \end{matrix}$ &$\widetilde{F}^{(n|m)}_{(k|l)}$ & $\Id^{(k|l)}_{(a|b)}$ & $F^{(a|b)}_{(c|d)}$ &$0$ & $\pm G^{(n|m)}_{(c|d)}$& $\pm G^{(n|m)}_{(c|d)}$ \\ \hline
				$\begin{matrix}a > m\\ c> l \end{matrix}$ &$\widetilde{F}^{(n|m)}_{(k|l)}$ & $\Id^{(k|l)}_{(a|b)}$ & $F^{(a|b)}_{(c|d)}$ &$0$ & $\pm 0$& $0$ \\ \hline				
	$a \leq m$ &$\widetilde{F}^{(n|m)}_{(k|l)}$ & $\Id^{(k|l)}_{(a|b)}$ & $J^{(a|b)}_{(c|d)}$ &$0$ &$\pm K^{(n|m)}_{(c|d)}$& $\pm K^{(n|m)}_{(c|d)}$ \\ \hline
	$a \leq m$ &$\widetilde{F}^{(n|m)}_{(k|l)}$ & $F^{(k|l)}_{(a|b)}$ & $F^{(a|b)}_{(c|d)}$& $0$ &$0$ &$0$\\ \hline	
	$a > m$ &$\widetilde{F}^{(n|m)}_{(k|l)}$ & $F^{(k|l)}_{(a|b)}$ & $F^{(a|b)}_{(c|d)}$& $0$ &$0$ &$0$\\ \hline	
	$\begin{matrix} a \leq m \\ c>l\end{matrix}$ &$\widetilde{F}^{(n|m)}_{(k|l)}$ & $F^{(k|l)}_{(a|b)}$ & $\widetilde{F}^{(a|b)}_{(c|d)}$& $0$ &$\pm K^{(n|m)}_{(c|d)}$ & $\pm K^{(n|m)}_{(c|d)}$ \\ \hline
	$\begin{matrix} a \leq m \\ c\leq l\end{matrix}$ &$\widetilde{F}^{(n|m)}_{(k|l)}$ & $F^{(k|l)}_{(a|b)}$ & $\widetilde{F}^{(a|b)}_{(c|d)}$& $0$ &$\pm K^{(n|m)}_{(c|d)}$ & $\pm K^{(n|m)}_{(c|d)}$ \\ \hline
	$\begin{matrix} a>m \\ c\leq l\end{matrix}$ &$\widetilde{F}^{(n|m)}_{(k|l)}$ & $F^{(k|l)}_{(a|b)}$ & $\widetilde{F}^{(a|b)}_{(c|d)}$& $0$ &$0$ & $0$ \\ \hline
$a \leq m$ &$J^{(n|m)}_{(k|l)}$ & $\Id^{(k|l)}_{(a|b)}$ & $F^{(a|b)}_{(c|d)}$& $0$ &$0$ &$0$\\ \hline	
	$\begin{matrix} a \leq m \\ c>l\end{matrix}$ &$J^{(n|m)}_{(k|l)}$ & $\Id^{(k|l)}_{(a|b)}$ & $\widetilde{F}^{(a|b)}_{(c|d)}$& $0$ &$\pm K^{(n|m)}_{(c|d)}$ & $\pm K^{(n|m)}_{(c|d)}$ \\ \hline
	$\begin{matrix} a \leq m \\ c \leq l\end{matrix}$ &$J^{(n|m)}_{(k|l)}$ & $\Id^{(k|l)}_{(a|b)}$ & $\widetilde{F}^{(a|b)}_{(c|d)}$& $s_1 K^{(n|m)}_{(c|d)}$ &$s_2 K^{(n|m)}_{(c|d)}$ & $(s_1 +(-1)^{n+m+k+l+1} s_2) K^{(n|m)}_{(c|d)}$ \\ \hline
		$\begin{matrix} a > m \\ c \leq l\end{matrix}$ &$J^{(n|m)}_{(k|l)}$ & $\Id^{(k|l)}_{(a|b)}$ & $\widetilde{F}^{(a|b)}_{(c|d)}$& &$\pm K^{(n|m)}_{(c|d)}$ $0$ & $\pm K^{(n|m)}_{(c|d)}$ \\ \hline
$c \leq l$ &$F^{(n|m)}_{(k|l)}$ & $\Id^{(k|l)}_{(a|b)}$ & $\widetilde{F}^{(a|b)}_{(c|d)}$& $\pm G^{(n|m)}_{(c|d)}$ &$0$ &$\pm G^{(n|m)}_{(c|d)}$\\ \hline	
$c \leq l$ &$F^{(n|m)}_{(k|l)}$ & $\Id^{(k|l)}_{(a|b)}$ & $J^{(a|b)}_{(c|d)}$& $\pm K^{(n|m)}_{(c|d)}$ &$0$ &$\pm K^{(n|m)}_{(c|d)}$\\ \hline	
$c \leq l$ &$F^{(n|m)}_{(k|l)}$ & $\widetilde{F}^{(k|l)}_{(a|b)}$ & $\Id^{(a|b)}_{(c|d)}$& $\pm G^{(n|m)}_{(c|d)}$ &$0$ &$\pm G^{(n|m)}_{(c|d)}$\\ \hline	
$c \leq l$ &$\widetilde{F}^{(n|m)}_{(k|l)}$ & $\widetilde{F}^{(k|l)}_{(a|b)}$ & $\Id^{(a|b)}_{(c|d)}$& $0$ &$0$ &$0$\\ \hline
$c \leq l$ &$J^{(n|m)}_{(k|l)}$ & $\widetilde{F}^{(k|l)}_{(a|b)}$ & $\Id^{(a|b)}_{(c|d)}$& $\pm K^{(n|m)}_{(c|d)}$ &$0$ &$\pm K^{(n|m)}_{(c|d)}$\\ \hline
$c \leq l$ &$\widetilde{F}^{(n|m)}_{(k|l)}$ & $\widetilde{F}^{(k|l)}_{(a|b)}$ & $F^{(a|b)}_{(c|d)}$& $0$&$0$ &$0$\\ \hline	
$c \leq l$ &$F^{(n|m)}_{(k|l)}$ & $J^{(k|l)}_{(a|b)}$ & $\Id^{(a|b)}_{(c|d)}$& $\pm K^{(n|m)}_{(c|d)}$&$0$ &$\pm K^{(n|m)}_{(c|d)}$\\ \hline	
$c \leq l$ &$\widetilde{F}^{(n|m)}_{(k|l)}$ & $J^{(k|l)}_{(a|b)}$ & $\Id^{(a|b)}_{(c|d)}$& $0$&$0$ &$0$\\ \hline	
	\end{longtable}
	\end{landscape}

\subsubsection{More vanishing results}
Before we are able to prove our main theorem, it is necessary to work out the third step introduced in Section \ref{Proc}:
\begin{Lemma}\label{la2zero}
$$Q(\la_2(a_1, a_2))\cdot Q(\la_2(a_3, a_4))=0 \ \forall \ a_i \ \in \Ext(\bigoplus M(\la),\bigoplus M(\la)).$$
\end{Lemma}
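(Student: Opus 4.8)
The plan is to reduce the statement, via Table \ref{tab:MultelemQ}, to a purely degree-theoretic check: $Q$ applied to a product of two elements always lands in a tiny list of maps whose bidegrees are pinned down, and the composition of any two such maps can be ruled out on degree grounds alone by Corollary \ref{Cor:posschmaps}.

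First I would use bilinearity. Since composition in $\hom(P_\bullet,P_\bullet)$ and the assignment $(a_1,a_2)\mapsto Q(\la_2(a_1,a_2))=Q(a_1\cdot a_2)$ are both bilinear, it suffices to verify the claim when each $a_i$ runs over the basis of $\Ext(\bigoplus M(\la),\bigoplus M(\la))$ from the corollary following Theorem \ref{Th:elj}. For such basis elements Theorem \ref{Th:Multtable} describes $a_1\cdot a_2$, and Table \ref{tab:MultelemQ} shows that $Q(a_1\cdot a_2)$ is either zero or a scalar multiple of exactly one of the four homotopy maps $H(F-\widetilde{F})$, $H(J)$, $H(A)$, $H(B)$; the same holds for $Q(a_3\cdot a_4)$. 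Hence the whole lemma reduces to checking that the composite of any two of these four homotopy maps vanishes (the composite being automatically zero anyway whenever the target block of the first does not match the source block of the second).

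Next I would record, for each of the four homotopy maps, the pair $(j,k)$ in the parametrization $f\in\hom^{s+k}(P_\bullet,P_\bullet\langle s+j\rangle)_0$ of Corollary \ref{Cor:posschmaps}, reading it off directly from Table \ref{tab:kj}: one has $(j,k)=(-2,-2)$ for $H(F-\widetilde{F})$, $(j,k)=(-4,-3)$ for both $H(J)$ and $H(A)$, and $(j,k)=(-6,-4)$ for $H(B)$. By Remark \ref{Re:corexp}, when the composite of two such maps is defined it lies in the hom-space with parameters $(j_1+j_2,\,k_1+k_2)$. I would then run through the ten unordered combinations and compute $(k-j,\,k)$ for the composite, comparing with the five admissible cases of Corollary \ref{Cor:posschmaps}. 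In every case $k-j$ remains in $\{0,1,2,3,4\}$ but $k$ drops strictly below the relevant threshold: for instance $H(F-\widetilde{F})$ composed with itself gives $(k-j,k)=(0,-4)$, which fails case \eqref{Cposs0} since $-4<-2$, while $H(B)$ composed with itself gives $(k-j,k)=(4,-8)$, failing case \eqref{Cposs4} since $-8<-2$, and the intermediate pairs $(H(F-\widetilde{F}),H(J))$, $(H(J),H(J))$, $(H(J),H(B))$ give $(1,-5)$, $(2,-6)$, $(3,-7)$, each again strictly too small. Thus the target hom-space is zero in all ten cases, every product of homotopy maps vanishes, and the lemma follows.

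I do not expect a genuine obstacle here: the argument is a short, finite verification once Table \ref{tab:kj} and the additive convention of Remark \ref{Re:corexp} are invoked. The only points requiring care are the bookkeeping of signs in the additive degree convention (which is irrelevant to vanishing) and confirming that every nonzero entry of Table \ref{tab:MultelemQ} is a \emph{single} homotopy map rather than a genuine sum, so that the reduction to pairwise composites in the first step is legitimate.
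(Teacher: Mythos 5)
Your proof is correct and follows essentially the same route as the paper: the paper likewise reduces to the four homotopy maps $H(F-\widetilde{F})$, $H(J)$, $H(A)$, $H(B)$ via Table \ref{tab:MultelemQ}, tabulates the shift parameters $(j,k)$ of their pairwise composites (its Table \ref{tab:kj2}, with exactly the values you computed), and invokes Corollary \ref{Cor:posschmaps} to conclude every such composite lands in a vanishing $\hom$-space. Your explicit bilinearity reduction and the check that each entry of Table \ref{tab:MultelemQ} is a single homotopy are left implicit in the paper, but they are the same argument.
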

\begin{proof}
Again we argue by the $\hom$-spaces and their dimension. Similar to Table \ref{tab:kj} in Table \ref{tab:kj2} we determine the shifts of the product of $$Q(\la_2(a_1, a_2)) \in \hom^{s+k_1}(P_\bullet(m|n), P_\bullet(a|b)\langle s+j_1\rangle)_0$$
and
$$Q(\la_2(a_3, a_4)) \in \hom^{s+k_2}(P_\bullet(m|n), P_\bullet(a|b)\langle s+j_2\rangle)_0.$$
Applying Corollary \ref{Cor:posschmaps} the lemma follows.
\begin{table}
\caption{\label{tab:kj2} $j$ and $k$ for the compositions of two homotopies}
\begin{tabular}{|c!{\vrule width 1.3pt}c|c|c|c|c|c|}
		\toprule \rule[-0.6cm]{0cm}{1.2cm}
		& $H(F-\widetilde{F})^{(n_2|m_2)}_{(a_2|b_2)}$ & $H(J)^{(n_2|m_2)}_{(a_2|b_2)}$ & $H(A)^{(n_2|m_2)}_{(a_2|b_2)}$ & $H(B)^{(n_2|m_2)}_{(a_2|b_2)}$ \\
		& $j_2=-2$& $j_2=-4$& $j_2=-4$&$j_2=-6$\\
		& $k_2=-2$&$k_2=-3$&$k_2=-3$&$k_2=-4$\\
		 \noalign{\hrule height 1.2pt} 
		$H(F-\widetilde{F})^{(n_1|m_1)}_{(a_1|b_1)}$&&&& \\
		$j_1=-2$&$j=-4$&$j=-6$&$j=-6$&$j=-8$\\
		$k_1=-2$&$k=-4$&$k=-5$&$k=-5$&$k=-6$\\ \hline 
    $H(J)^{(n_1|m_1)}_{(a_1|b_1)}$&&&& \\
		$j_1=-4$&$j=-6$&$j=-8$&$j=-8$&$j=-10$\\
		$k_1=-3$&$k=-5$&$k=-6$&$k=-6$&$k=-7$\\ \hline 
	  $H(A)^{(n_1|m_1)}_{(a_1|b_1)}$&&&& \\
		$j_1=-4$&$j=-6$&$j=-8$&$j=-8$&$j=-10$\\
		$k_1=-3$&$k=-5$&$k=-6$&$k=-6$&$k=-7$\\ \hline 
		$H(B)^{(n_1|m_1)}_{(a_1|b_1)}$&&&& \\
		$j_1=-6$&$j=-8$&$j=-10$&$j=-10$&$j=-12$\\
		$k_1=-4$&$k=-6$&$k=-7$&$k=-7$&$k=-8$\\ \hline 
	
	\end{tabular}		

\end{table}
\end{proof}

\subsubsection{Vanishing of higher multiplications}
The following theorem is the main result in this section:
\begin{Theorem}[2nd Vanishing Theorem]
\label{Th:2ndvanish}
For $\p$ the parabolic subalgebra belonging to $\fr{l}=\fr{gl}_2\oplus \fr{gl}_{N-1}$ and
$\Ext(\oplus M(\la), \oplus M(\la))$ the $A_\infty$-algebra constructed as a minimal model we have 
$$m_n=0 \ \forall n \geq 4.$$
\end{Theorem}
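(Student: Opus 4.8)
The plan is to obtain the statement as an essentially immediate consequence of Corollary \ref{Cor:disaplambda}(2), once two vanishing inputs are in place. Recall from Proposition \ref{Pr:lambdan} that the minimal model we have fixed is built from $m_1=d$ and $m_n=\Pi(\la_n)$, where the $\la_n$ are produced by the recursion \eqref{eq:lambda} out of the multiplication $\la_2(a_1,a_2)=a_1\cdot a_2$ and the homotopy $Q$ chosen in Section \ref{sec:Q}. Hence it suffices to prove $\la_n=0$ for all $n\geq 4$, and Corollary \ref{Cor:disaplambda}(2) reduces precisely this to the two conditions (i) $Q(\la_3(a_1,a_2,a_3))=0$ for all $a_i$, and (ii) $Q(a_1\cdot a_2)\cdot Q(a_3\cdot a_4)=0$ for all $a_i$.

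First I would quote Theorem \ref{Th:la3}, which already establishes (i): although some $m_3$ are nonzero, the homotopy applied to $\la_3$ vanishes. Next I would quote Lemma \ref{la2zero}, which is exactly condition (ii), since $\la_2(a_i,a_j)=a_i\cdot a_j$ and $Q(\la_2)$ is $Q$ of a product. With (i) and (ii) in hand, Corollary \ref{Cor:disaplambda}(2) gives $\la_n=0$, whence $m_n=\Pi(\la_n)=0$ for all $n\geq 4$, which is the assertion in the case $n=2$.

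The substance of the argument therefore lies entirely in the two cited inputs, and the main obstacle is the degree bookkeeping behind them rather than the final assembly. Both Theorem \ref{Th:la3} and Lemma \ref{la2zero} rely on Corollary \ref{Cor:posschmaps} (via Lemma \ref{Le:posschmaps}): for $n=2$ a nonzero degree-zero morphism $P_\bullet(a|b)\to P_\bullet(c|d)\langle s+j\rangle[s+k]$ can exist only when $0\leq k-j\leq 4$, together with a lower bound on $k$ depending on $k-j$ (namely $k\geq -2,-3,-4,-3,-2$ as $k-j$ runs through $0,\dots,4$). The homotopies representing the various $Q(\la_2)$, namely the maps $H(F-(-1)^{n+l}\widetilde{F})$, $H(J)$, $H(A)$, $H(B)$ computed in Section \ref{sec:homotopies} and collected in Table \ref{tab:MultelemQ}, each carry a definite pair $(j,k)$. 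One then has to verify, case by case as in Tables \ref{tab:kj} and \ref{tab:kj2}, that every composite $Q(\la_2)\cdot Q(\la_2)$ and every composite entering $Q(\la_3)$ produces a pair $(j,k)$ falling outside the admissible window of Corollary \ref{Cor:posschmaps}, so that its target $\hom$-space vanishes and the corresponding $Q$-value must be zero. This is exactly the point at which the explicit projective resolutions, the basis of the $\Ext$-algebra, and the homotopies worked out in Part I become indispensable.
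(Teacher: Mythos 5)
Your proposal is correct and follows exactly the paper's own argument: the paper's proof likewise consists of citing Theorem \ref{Th:la3} for $Q(\la_3)=0$, Lemma \ref{la2zero} for $Q(\la_2)\cdot Q(\la_2)=0$, and then invoking Corollary \ref{Cor:disaplambda} to conclude $m_n=0$ for all $n\geq 4$. Your additional remarks on the degree bookkeeping via Corollary \ref{Cor:posschmaps} accurately describe where the real work in those two cited inputs lies, but they do not change the fact that the assembly is identical to the paper's.
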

\begin{proof}
By Lemma \ref{Th:la3} we know that $Q(\la_3)=0$ and by Lemma \ref{la2zero} $Q(\la_2)Q(\la_2)=0$. Using Corollary \ref{Cor:disaplambda} we obtain the result.
\end{proof}

\section{Ideas how to proceed}
In the previous section we proved that there is a minimal model with non-vanishing higher multiplications but this does not answer the question if the algebra is formal. To show that the algebra is not formal, we have to prove that no model exists such that $m_n=0$ for all $n \geq 3$. 

As a tool we use Hochschild cohomology. Given a dg-Algebra $A$ one can compute its Hochschild cohomology by using the $A_\infty$-structure on a minimal model of $A$ (cf. \cite[Lemma B.4.1]{lefe2003} and \cite{kade1988}). Especially for a model with $m_n=0$ for $n \geq 3$ one obtains that the Hochschild cohomology is trivial. Consequently, if we can prove that the Hochschild cohomology of $A$ is not trivial, there cannot exist a minimal model with $m_n=0$ for all $n \geq 3$ and therefore $A$ cannot be formal.

Assume that we have found a minimal model on $H^*(A)$ with $m_n=0$ for $3 \leq n \leq p-1$. Then the multiplication $m_p$ defines a cocycle for the Hochschild cohomology of $A$ by the construction in \cite[Lemma B.4.1]{lefe2003}. If we can prove that this class is not trivial, we are done and have shown, that the algebra is not formal. If we cannot, we have to modify our model such that $m_p=0$, too and have to analyse if $m_{p+1}$ vanishes.

A detailed discussion of this topic would go beyond the scope of this thesis. Therefore we can only state the following conjecture:
\begin{Conjecture}
In general the algebra $\Ext(\bigoplus M(\la), \bigoplus M(\la))$ is not formal.
\end{Conjecture}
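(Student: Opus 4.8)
The plan is to detect non-formality through the obstruction theory for $A_\infty$-structures, exactly as sketched in the text preceding the conjecture. Recall that $E=\Ext(\bigoplus M(\la),\bigoplus M(\la))$ is the cohomology of the differential graded algebra $A=\hom(P_\bullet,P_\bullet)$, so that formality of $A$ means precisely the existence of a minimal model on $E$ with $m_k=0$ for all $k\geq 3$. The successive higher products of any minimal model are controlled by the Hochschild cohomology of $E$: if a model satisfies $m_k=0$ for $3\leq k\leq p-1$, then $m_p$ is a Hochschild cocycle and defines a class $[m_p]\in\operatorname{HH}^{p,2-p}(E)$ (Hochschild degree $p$, internal degree $2-p$), and one is allowed to pass to another model replacing $m_p$ by $m_p-\partial_{\mathrm{Hoch}}(\phi)$ for a cochain $\phi$ of the appropriate degree (cf. \cite[Lemma B.4.1]{lefe2003}). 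The crucial consequence is the \emph{one-sided} implication we actually need for the conjecture: a single non-zero class $[m_p]$ forces $A$ to be non-formal. Thus it suffices to exhibit one choice of parameters $(m,n)$ and one $p$ for which the obstruction class does not vanish.

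First I would treat the smallest genuinely interesting case $n=2$, where Theorem \ref{Th:2ndvanish} and the computation in Theorem \ref{Th:la3} already supply an explicit non-vanishing $m_3$ together with $m_k=0$ for $k\geq 4$. Since $m_2$-only data gives no obstruction below degree $3$, the first (and here essentially only) obstruction is the class $[m_3]\in\operatorname{HH}^{3,-1}(E)$. The concrete values of $m_3$ are recorded in Table \ref{tab:mult3}, where the non-zero outputs are multiples of the generators $G^{(n|m)}_{(c|d)}$ and $K^{(n|m)}_{(c|d)}$. I would then feed these into the Hochschild cochain complex of the quiver algebra with relations of Theorem \ref{mainalg2} and show that $m_3$ is not a Hochschild coboundary, i.e. that no degree-$2$ cochain $\phi$ satisfies $\partial_{\mathrm{Hoch}}(\phi)=m_3$. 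Because the algebras $K_m^n$ are Koszul, the Hochschild cohomology of $E$ can be computed from the small Koszul (co)resolution rather than the full bar complex, reducing the coboundary condition in bidegree $(3,-1)$ to a finite linear system indexed by the arrows and relations of the quiver, which keeps the computation tractable and, ideally, uniform in $N$.

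The delicate point, and the main obstacle, is proving that $[m_3]$ is genuinely non-trivial in $\operatorname{HH}^{3,-1}(E)$, rather than merely non-zero as a cochain for the particular contracting homotopy $Q$ fixed in Section \ref{sec:Q}; this amounts to computing $\operatorname{HH}^{3,-1}(E)$ together with the image of $\partial_{\mathrm{Hoch}}$ in that bidegree, and verifying $m_3\notin\im(\partial_{\mathrm{Hoch}})$. This rests on careful bookkeeping of the degree and shift constraints from Lemma \ref{Le:posschmaps} and Corollary \ref{Cor:posschmaps}, which already pin down in which bidegrees maps between the projective resolutions can be non-zero and thus sharply constrain the possible cochains $\phi$. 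Finally, the phrase ``in general'' must be read with care: by Theorem \ref{Th:1stvanish} the case $n=1$ is formal, so the statement can only assert non-formality for a suitable range of parameters, and a complete proof should make this range precise (e.g. $n=2$ with $m$ not too small, where the $G$- and $K$-valued $m_3$ of Table \ref{tab:mult3} genuinely occur). Since for $n=2$ all products of degree $\geq 4$ vanish in our model, the entire non-formality question there collapses to the single assertion that this one Hochschild class is non-zero, which is exactly the hard computation I expect to dominate the argument.
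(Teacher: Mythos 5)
You should first note what the paper actually contains here: this statement is a \emph{Conjecture}, and the thesis explicitly declines to prove it (``A detailed discussion of this topic would go beyond the scope of this thesis''). The paper's own text preceding the conjecture sketches precisely the strategy you describe — obstruction theory via Hochschild cohomology, using \cite[Lemma B.4.1]{lefe2003}: if a minimal model has $m_k=0$ for $3\leq k\leq p-1$, then $m_p$ defines a Hochschild class, and non-triviality of that class rules out formality. So your proposal is not a different route; it is an elaboration of the same plan the author records, with the correct refinements (restricting to $n=2$, where Theorems \ref{Th:la3} and \ref{Th:2ndvanish} give a model with $m_3\neq 0$ and $m_k=0$ for $k\geq 4$, so that the whole question collapses to the single class $[m_3]\in\operatorname{HH}^{3,-1}(E)$; and correctly flagging that non-vanishing of $m_3$ \emph{as a cochain} in one model proves nothing, because minimal models are only unique up to $A_\infty$-isomorphism).

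The genuine gap is that the decisive step is never carried out: you reduce the conjecture to the claim that $[m_3]$ is not a Hochschild coboundary, i.e. that $m_3\notin\im(\partial_{\mathrm{Hoch}})$ in bidegree $(3,-1)$, and then defer this as ``the hard computation I expect to dominate the argument.'' But that computation \emph{is} the conjecture — it is exactly the point at which the paper stops, and it is why the statement remains a conjecture rather than a theorem. Nothing in your proposal (the Koszulity of $K_m^n$, the degree constraints of Lemma \ref{Le:posschmaps} and Corollary \ref{Cor:posschmaps}, the explicit values of $m_3$ in Table \ref{tab:mult3}) is yet assembled into an actual verification that no degree-$2$ Hochschild cochain $\phi$ satisfies $\partial_{\mathrm{Hoch}}(\phi)=m_3$; these are plausible tools, but the linear system they produce has not been set up, let alone shown to be inconsistent. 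As it stands, the proposal is a correct and well-informed research plan, not a proof: it establishes no new mathematical fact beyond what the paper already asserts, and in particular it does not settle the conjecture for any value of $(m,n)$.
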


\FloatBarrier
\clearemptydoublepage
\appendix

\part*{Appendix}
\addcontentsline{toc}{part}{Appendix} 

\chapter{Computations for the proofs in Section \ref{sec:elements2}}\label{Ap:Elem}
\begin{proof}[Computations for the proof of Theorem \ref{Th:ElemId}]\label{Ap:ElemId}
The proof of Theorem \ref{Th:ElemId} requires a verification of the commutativity of the following diagrams:
\begin{multicols}{2}
    \raggedcolumns

\begin{enumerate}
\item[i)] $\xymatrix{P(s|t)_A \ar[rr]^{(-1)^{n+m+s+t+1}} \ar[d]_{m+t} &&P(s+1|t)_A \ar[d]^{m+t} \\P(s|t)_A \ar[rr]^{(-1)^{n+m+s+t}} &&P(s+1|t)_A}$
\item[ii)] $\xymatrix{P(s|t)_A \ar[rr]^{(-1)^{m+t+1}} \ar[d]_{m+t} &&P(s|t+1)_A \ar[d]^{m+t+1} \\P(s|t)_A \ar[rr]^{(-1)^{m+t+1}} &&P(s|t+1)_A}$
\item[iii)] $\xymatrix{P(s|t)_B \ar[rr]^{(-1)^{m+s+1}} \ar[d]_{m+s} &&P(s+1|t)_B \ar[d]^{m+s+1} \\P(s|t)_B \ar[rr]^{(-1)^{m+s+1}} &&P(s+1|t)_B}$
\item[iv)] $\xymatrix{P(s|t)_B \ar[rr]^{(-1)^{n+m+s+t+1}} \ar[d]_{m+s} &&P(s|t+1)_B \ar[d]^{m+s} \\P(s|t)_B \ar[rr]^{(-1)^{n+m+s+t}} &&P(s|t+1)_B}$
\item[v)] $\xymatrix{P(s|t)_A \ar[rr]^{(-1)^{(s+t+1)(n+s)+n+m+1}} \ar[d]_{m+t} &&P(s-1|t)_B \ar[d]^{m+s+1} \\P(s|t)_A \ar[rr]^{(-1)^{(s+t+1)(n+s+1)+n+m}} &&P(s-1|t)_B}$
\item[vi)] $\xymatrix{P(s|t)_A \ar[rr]^{(-1)^{(s+t+1)(n+s)+m+s}} \ar[d]_{m+t} &&P(s|t-1)_B \ar[d]^{m+s} \\P(s|t)_A \ar[rr]^{(-1)^{(s+t+1)(n+s+1)+m+s}} &&P(s|t-1)_B}$
\item[vii)] $\xymatrix{P(s|s-2)_B \ar[rr]^{(-1)^{n+m+1}} \ar[d]_{m+s} &&P(s+1|s)_A \ar[d]^{m+s} \\P(s|s-2)_B \ar[rr]^{(-1)^{n+m}} &&P(s+1|s)_A}$
\end{enumerate}
\end{multicols}
All these diagrams obviously anticommute. 
\end{proof}
\begin{proof}[Computations for the proof of Theorem \ref{Th:elemF2}]\label{Ap:ElemF}
We check that the diagrams $F1$)-$F7$) listed in the proof of Theorem \ref{Th:elemF2} commute.
\begin{itemize}
\item [\ref{FA}] $V=P(s|t)_A$ and $s>t+3$:

We have the following diagram:
\begin{equation}\label{diag:F1}\xymatrix{ P(s|t)_A \ar[r]\ar[d] &{\begin{matrix} P(s+1|t)_A\\P(s|t+1)_A\\P(s-1|t)_B\\P(s|t-1)_B \end{matrix}\ar[d]}\\
P(s|t-1)_A \ar[r] &{\begin{matrix} P(s+1|t-1)_A\\P(s|t)_A\\P(s-1|t-1)_B\\P(s|t-2)_B \\P(s-2|t)_B\end{matrix}}}
\end{equation}
with 
\begin{displaymath} \xymatrix{ P(s|t)_A \ar[rr]^{(-1)^{n+m+s+t+1}} &&P(s+1|t)_A  \ar @{} [d] |{=} \ar[rr] &&P(s+1|t-1)_A\\
P(s|t)_A \ar[rr] &&P(s|t-1)_A \ar[rr]^{(-1)^{n+m+s+t+1}} &&P(s+1|t-1)_A}
\end{displaymath}
for $t>0$ we also have
\begin{displaymath} \xymatrix{ P(s|t)_A \ar[rr]^{(-1)^{m+t+1}} &&P(s|t+1)_A  \ar @{} [d] |{=} \ar[rr] &&P(s|t)_A\\
P(s|t)_A \ar[rr] &&P(s|t-1)_A \ar[rr]^{(-1)^{m+t+1}} &&P(s|t)_A}\end{displaymath}
and for $t=0$ the upper line is zero.\\
For $t+3 \leq s \leq m$ we also have
\begin{displaymath} \xymatrix{ P(s|t)_A \ar[rr]^{(-1)^{(s+t+1)(n+s)+m+s}} &&P(s|t-1)_B  \ar @{} [d] |{=} \ar[rr] &&P(s-1|t-1)_B\\
P(s|t)_A \ar[rr] &&P(s|t-1)_A \ar[rr]^{(-1)^{(s+t)(n+s)+n+m}} &&P(s-1|t-1)_B}\end{displaymath}
and
\begin{displaymath} \xymatrix{ P(s|t)_A \ar[rr] &&P(s|t-1)_B  \ar @{} [d] |{=} \ar[rr] &&P(s|t-2)_B\\
 &&0&&}
\end{displaymath}
 \begin{displaymath} \xymatrix{ P(s|t)_A \ar[rr] &&P(s-1|t)_B  \ar @{} [d] |{=} \ar[rr] &&P(s-2|t)_B\\
 &&0&&}
\end{displaymath}
Therefore diagram \eqref{diag:F1} commutes. \\
\item [\ref{FAs3}]  $V=P(s+3|s)_A$:
\begin{equation}\label{diag:F2}\xymatrix{ P(s+3|s)_A \ar[r]\ar[d] &{\begin{matrix} P(s+4|s)_A\\P(s+3|s+1)_A\\P(s+2|s)_B\\P(s+3|s-1)_B \end{matrix}\ar[d]}\\
P(s+3|s-1)_A \ar[r] &{\begin{matrix} P(s+4|s-1)_A\\P(s+3|s)_A\\P(s+2|s-1)_B\\P(s+3|s-2)_B \end{matrix}}}
\end{equation}
The only part of the diagram which differs from the general case is the part including $P(s+3|s+1)_A$ and $P(s+2|s)_B$. From these two terms, which only exist for $s \leq m-2$, we get apart from those maps we had before, two new maps:
\begin{displaymath} \xymatrix{ P(s+3|s)_A \ar[rr]^{(-1)^{n+m+1}} &&P(s+2|s)_B  \ar @{} [d] |{=} \ar[rr] &&P(s+2|s+1)_A\\
-(P(s+3|s)_A \ar[rr]^{(-1)^{m+s+1}} &&P(s+3|s+1)_A \ar[rr]^{(-1)^{n+s+1}} &&P(s+2|s+1)_A)}\end{displaymath}
So the two maps cancel and the diagram \eqref{diag:F2} commutes.

\item [\ref{FAs2}]  $V=P(s+2|s)_A$:
\begin{enumerate} 
\item $s>0$:

We get the diagram
\begin{equation}\label{diag:F3}\xymatrix{ P(s+2|s)_A \ar[r]\ar[d] &{\begin{matrix} P(s+3|s)_A\\P(s+2|s+1)_A\\P(s+2|s-1)_B \end{matrix}\ar[d]}\\
{\begin{matrix}P(s+2|s-1)_A \\P(s+1|s)_A\end{matrix}} \ar[r] &{\begin{matrix} P(s+3|s-1)_A\\P(s+2|s)_A\\P(s+1|s-1)_B\\P(s+2|s-2)_B \end{matrix}}}
\end{equation}
with
\begin{displaymath} \xymatrix{P(s+2|s)_A \ar[rr]^{(-1)^{n+m+1}} &&P(s+3|s)_A  \ar @{} [d] |{=} \ar[rr] &&P(s+3|s-1)_A\\
P(s+2|s)_A \ar[rr] &&P(s+2|s-1)_A \ar[rr]^{(-1)^{n+m+1}} &&P(s+3|s-1)_A}
\end{displaymath}

\begin{displaymath} \xymatrix{ P(s+2|s)_A \ar[rr]^{(-1)^{m+s+1}} &&P(s+2|s+1)_A  \ar @{} [d] |{=} \ar[rr] &&P(s+2|s)_A\\
P(s+2|s)_A \ar[rr] &&P(s+2|s-1)_A \ar @{} [d] |{+} \ar[rr]^{(-1)^{m+s+1}} &&P(s+2|s)_A\\
P(s+2|s)_A \ar[rr]^{(-1)^{n+s}} &&P(s+1|s)_A  \ar[rr]^{(-1)^{n+m+1}} &&P(s+2|s)_A}\end{displaymath}
and
\begin{displaymath} \xymatrix{ P(s+2|s)_A \ar[rr]^{(-1)^{m+s+1}} &&P(s+2|s+1)_A  \ar @{} [d] |{=} \ar[rr]^{(-1)^{n+s+1}} &&P(s+1|s-1)_B\\
P(s+2|s)_A \ar[rr]^{(-1)^{n+s}} &&P(s+1|s)_A \ar[rr]^{(-1)^{m+s}} &&P(s+1|s-1)_B}\end{displaymath}
and
\begin{displaymath} \xymatrix{ P(s+2|s)_A \ar[rr]^{(-1)^{n+m}} &&P(s+2|s-1)_B  \ar @{} [d] |{=} \ar[rr] &&P(s+1|s-1)_B\\
P(s+2|s)_A \ar[rr] &&P(s+2|s-1)_A \ar[rr]^{(-1)^{n+m}} &&P(s+1|s-1)_B}\end{displaymath}
and
\begin{displaymath} \xymatrix{ P(s+2|s)_A \ar[rr] &&P(s+2|s-1)_A  \ar @{} [d] |{=} \ar[rr] &&P(s+2|s-2)_B\\
 &&0&&}\end{displaymath}
We verified that diagram \eqref{diag:F3} commutes.\\

\item $s=0$:
\begin{equation}\label{diag:F4}\xymatrix{ P(2|0)_A \ar[r]\ar[d] &{\begin{matrix} P(3|0)_A\\P(2|1)_A \end{matrix}\ar[d]}\\
{ P(1|0)_A} \ar[r] &P(2|0)_A}
\end{equation}
with

\begin{displaymath} \xymatrix{ P(2|0)_A \ar[rr]^{(-1)^{m+1}} &&P(2|1)_A  \ar @{} [d] |{=} \ar[rr] &&P(2|0)_A\\
P(2|0)_A \ar[rr]^{(-1)^{n}} &&P(1|0)_A  \ar[rr]^{(-1)^{n+m+1}} &&P(2|0)_A}\end{displaymath}
and \eqref{diag:F4} commutes.\\
\end{enumerate}

\item [\ref{FAs1}]  $V=P(s+1|s)_A$:
\begin{enumerate} 
\item $s>0$:

We get the diagram
\begin{equation}\label{diag:F5}\xymatrix{ P(s+1|s)_A \ar[r]\ar[d] &{\begin{matrix} P(s+2|s)_A\\P(s+1|s-1)_B \end{matrix}\ar[d]}\\
{\begin{matrix}P(s+1|s-1)_A \\P(s|s-2)_B\end{matrix}} \ar[r] &{\begin{matrix} P(s+2|s-1)_A\\P(s+1|s)_A\\P(s+1|s-2)_B \end{matrix}}}
\end{equation}
with
\begin{displaymath} \xymatrix{P(s+1|s)_A \ar[rr]^{(-1)^{n+m}} &&P(s+2|s)_A  \ar @{} [d] |{=} \ar[rr] &&P(s+2|s-1)_A\\
P(s+1|s)_A \ar[rr] &&P(s+1|s-1)_A \ar[rr]^{(-1)^{n+m}} &&P(s+2|s-1)_A}
\end{displaymath}
\begin{displaymath} \xymatrix{P(s+1|s)_A \ar[rr]^{(-1)^{n+m}} &&P(s+2|s)_A  \ar @{} [d] |{=} \ar[rr]^{(-1)^{n+s}} &&P(s+1|s)_A\\
P(s+1|s)_A \ar[rr]^{(-1)^{n+s}} &&P(s|s-2)_B \ar[rr]^{(-1)^{n+m}} &&P(s+1|s)_A}
\end{displaymath}

\begin{displaymath} \xymatrix{ P(s+1|s)_A \ar[rr]^{(-1)^{m+s+1}} &&P(s+1|s-1)_B  \ar @{} [d] |{=} \ar[rr] &&P(s+1|s)_A\\
P(s+1|s)_A \ar[rr] &&P(s+1|s-1)_A \ar[rr]^{(-1)^{m+s+1}} &&P(s+1|s)_A}\end{displaymath}
and
\begin{displaymath} \xymatrix{ P(s+1|s)_A \ar[rr] &&P(s+1|s-1)_A  \ar @{} [d] |{=} \ar[rr]^{(-1)^{n+m+1}} &&P(s+1|s-2)_B\\
-(P(s+1|s)_A \ar[rr]^{(-1)^{n+s}} &&P(s|s-2)_B \ar[rr]^{(-1)^{m+s}} &&P(s+1|s-2)_B)}\end{displaymath}

These computations show that the diagram \eqref{diag:F5} commutes.\\

\item $s=0$:
\begin{equation}\label{diag:F6}\xymatrix{ P(1|0)_A \ar[r]\ar[d] &P(2|0)_A\ar[d]\\
0 \ar[r] &P(1|0)_A}
\end{equation}
This diagram commutes.
\end{enumerate}

\item [\ref{FB}] $V=P(s|t)_B$ and $s>t+3$:

We get the diagram
\begin{equation}\label{diag:F7}\xymatrix{ P(s|t)_B \ar[r]\ar[d] &{\begin{matrix} P(s+1|t)_B\\P(s|t+1)_B \end{matrix}\ar[d]}\\
P(s-1|t)_B \ar[r] &{\begin{matrix} P(s|t)_B\\P(s-1|t+1)_B \end{matrix}}}
\end{equation}
with
\begin{displaymath} \xymatrix{P(s|t)_B \ar[rr]^{(-1)^{n+m+s+t+1}} &&P(s|t+1)_B  \ar @{} [d] |{=} \ar[rr] &&P(s-1|t+1)_B\\
P(s|t)_B \ar[rr] &&P(s-1|t)_B \ar[rr]^{(-1)^{n+m+s+t+1}} &&P(s-1|t+1)_B}
\end{displaymath}
and for $s\leq m-1$
\begin{displaymath} \xymatrix{P(s|t)_B \ar[rr]^{(-1)^{m+s+1}} &&P(s+1|t)_B  \ar @{} [d] |{=} \ar[rr] &&P(s|t)_B\\
P(s|t)_B \ar[rr] &&P(s-1|t)_B \ar[rr]^{(-1)^{m+s+1}} &&P(s|t)_B}
\end{displaymath}
and the diagram \eqref{diag:F7} commutes.

\item [\ref{FBs2}]  $V=P(s|s-2)_B$:

We get the diagram
\begin{equation}\label{diag:F8}\xymatrix{ P(s|s-2)_B \ar[r]\ar[d] &{\begin{matrix} P(s+1|s)_A\\P(s+1|s-2)_B \end{matrix}\ar[d]}\\
P(s|s-1)_A \ar[r] &{\begin{matrix} P(s+1|s-1)_A\\P(s|s-2)_B \end{matrix}}}
\end{equation}
with
\begin{displaymath} \xymatrix{P(s|s-2)_B \ar[rr]^{(-1)^{n+m+1}} &&P(s+1|s)_A  \ar @{} [d] |{=} \ar[rr] &&P(s+1|s-1)_A\\
P(s|s-2)_B \ar[rr] &&P(s|s-1)_A \ar[rr]^{(-1)^{n+m+1}} &&P(s+1|s-1)_A}
\end{displaymath}
and for $s\leq m-1$
\begin{displaymath} \xymatrix{P(s|s-2)_B \ar[rr]^{(-1)^{n+m+1}} &&P(s+1|s)_A  \ar @{} [d] |{+} \ar[rr]^{(-1)^{n+s}} &&P(s|s-2)_B\\
P(s|s-2)_B \ar[rr]^{(-1)^{m+s+1}} &&P(s+1|s-2)_B \ar @{} [d] |{=} \ar[rr] &&P(s|s-2)_B\\
P(s|s-2)_B \ar[rr] &&P(s|s-1)_A \ar[rr]^{(-1)^{m+s+1}} &&P(s|s-2)_B}
\end{displaymath}
The diagram \eqref{diag:F8} commutes.
\item [\ref{FBs3}]  $V=P(s|s-3)_B$:

We get the diagram
\begin{equation}\label{diag:F9}\xymatrix{ P(s|s-3)_B \ar[r]\ar[d] &{\begin{matrix} P(s+1|s-3)_B\\P(s|s-2)_B \end{matrix}\ar[d]}\\
P(s-1|s-3)_B \ar[r] &{\begin{matrix} P(s|s-3)_B\\P(s|s-1)_A \end{matrix}}}
\end{equation}
with
\begin{displaymath} \xymatrix{P(s|s-3)_B \ar[rr]^{(-1)^{n+m}} &&P(s|s-2)_B  \ar @{} [d] |{=} \ar[rr] &&P(s|s-1)_A\\
P(s|s-3)_B \ar[rr] &&P(s-1|s-3)_B \ar[rr]^{(-1)^{n+m}} &&P(s|s-1)_A}
\end{displaymath}
and for $s\leq m-1$
\begin{displaymath} \xymatrix{P(s|s-3)_B \ar[rr]^{(-1)^{m+s+1}} &&P(s+1|s-2)_B \ar @{} [d] |{=} \ar[rr] &&P(s|s-2)_B\\
P(s|s-2)_B \ar[rr] &&P(s|s-1)_A \ar[rr]^{(-1)^{m+s+1}} &&P(s|s-2)_B}
\end{displaymath}
so the diagram \eqref{diag:F9} commutes and all cases for $F$ are checked.

Now we proceed with the computations for the map $\widetilde{F}$:
\end{itemize}
\begin{itemize}
\item [\ref{FtilA}] $V=P(s|t)_A$ and $s>t+2$:

We have the following diagram:
\begin{equation}\label{diag:Ftil1}\xymatrix{ P(s|t)_A \ar[r]\ar[d] &{\begin{matrix} P(s+1|t)_A\\P(s|t+1)_A\\P(s-1|t)_B\\P(s|t-1)_B \end{matrix}\ar[d]}\\
P(s-1|t)_A \ar[r] &{\begin{matrix} P(s-1|t+1)_A\\P(s|t)_A\\P(s-1|t-1)_B\\P(s|t-2)_B \\P(s-2|t)_B\end{matrix}}}
\end{equation}
with 
\begin{displaymath} \xymatrix{ P(s|t)_A \ar[rr]^{(-1)^{m+t+1}} &&P(s|t+1)_A  \ar @{} [d] |{=} \ar[rr] &&P(s-1|t+1)_A\\
P(s|t)_A \ar[rr] &&P(s-1|t)_A \ar[rr]^{(-1)^{m+t+1}} &&P(s-1|t+1)_A}
\end{displaymath}
for $s<n$ we also have
\begin{displaymath} \xymatrix{ P(s|t)_A \ar[rr]^{(-1)^{n+m+s+t+1}} &&P(s+1|t)_A  \ar @{} [d] |{=} \ar[rr] &&P(s|t)_A\\
P(s|t)_A \ar[rr] &&P(s-1|t)_A \ar[rr]^{(-1)^{n+m+s+t+1}} &&P(s|t)_A}\end{displaymath}
\begin{displaymath} \xymatrix{ P(s|t)_A \ar[rr]^{(-1)^{(s+t+1)(n+s)+n+m+1}} &&P(s-1|t)_B  \ar @{} [d] |{=} \ar[rr] &&P(s-1|t-1)_B\\
P(s|t)_A \ar[rr] &&P(s-1|t)_A \ar[rr]^{(-1)^{(s+t)(n+s)+m+s+1}} &&P(s-1|t-1)_B}\end{displaymath}
and
\begin{displaymath} \xymatrix{ P(s|t)_A \ar[rr] &&P(s|t-1)_B  \ar @{} [d] |{=} \ar[rr] &&P(s|t-2)_B\\
 &&0&&}
\end{displaymath}
 \begin{displaymath} \xymatrix{ P(s|t)_A \ar[rr] &&P(s-1|t)_B  \ar @{} [d] |{=} \ar[rr] &&P(s-2|t)_B\\
 &&0&&}
\end{displaymath}
So the diagram \eqref{diag:Ftil1} commutes. \\

\item [\ref{FtilAs2}]  $V=P(s+2|s)_A$:

We get the diagram
\begin{equation}\label{diag:Ftil2}\xymatrix{ P(s+2|s)_A \ar[r]\ar[d] &{\begin{matrix} P(s+3|s)_A\\P(s+2|s+1)_A\\P(s+2|s-1)_B \end{matrix}\ar[d]}\\
P(s+1|s)_A \ar[r] &{\begin{matrix} P(s+2|s)_A\\P(s+1|s-1)_B\\P(s+2|s-2)_B \end{matrix}}}
\end{equation}
with
\begin{displaymath} \xymatrix{P(s+2|s)_A \ar[rr]^{(-1)^{n+m+1}} &&P(s+3|s)_A  \ar @{} [d] |{=} \ar[rr] &&P(s+2|s)_A\\
P(s+2|s)_A \ar[rr] &&P(s+1|s)_A \ar[rr]^{(-1)^{n+m+1}} &&P(s+2|s)_A}
\end{displaymath}

\begin{displaymath} \xymatrix{ P(s+2|s)_A \ar[rr]^{(-1)^{m+s+1}} &&P(s+2|s+1)_A  \ar @{} [d] |{=} \ar[rr] &&P(s+1|s-1)_B\\
P(s+2|s)_A \ar[rr] &&P(s+1|s)_A  \ar[rr]^{(-1)^{m+s+1}} &&P(s+1|s-1)_B}\end{displaymath}
and
\begin{displaymath} \xymatrix{ P(s+2|s)_A \ar[rr] &&P(s+2|s-1)_B  \ar @{} [d] |{=} \ar[rr] &&P(s+2|s-2)_B\\
 &&0&&}
\end{displaymath}
By these computations diagram \eqref{diag:Ftil2} commutes.\\

\item [\ref{FtilAs1}]  $V=P(s+1|s)_A$:

We get the diagram
\begin{equation}\label{diag:Ftil3}\xymatrix{ P(s+1|s)_A \ar[r]\ar[d] &{\begin{matrix} P(s+2|s)_A\\P(s+1|s-1)_B \end{matrix}\ar[d]}\\
P(s|s-2)_B  \ar[r] &{\begin{matrix}P(s+1|s)_A\\P(s+1|s-2)_B \end{matrix}}}
\end{equation}
with
\begin{displaymath} \xymatrix{P(s+1|s)_A \ar[rr]^{(-1)^{n+m}} &&P(s+2|s)_A  \ar @{} [d] |{=} \ar[rr] &&P(s+1|s)_A\\
P(s+1|s)_A \ar[rr] &&P(s|s-2)_B \ar[rr]^{(-1)^{n+m}} &&P(s+1|s)_A}
\end{displaymath}
and
\begin{displaymath} \xymatrix{ P(s+1|s)_A \ar[rr]^{(-1)^{m+s+1}} &&P(s+1|s-1)_B \ar @{} [d] |{=} \ar[rr] &&P(s+1|s-2)_B\\
P(s+1|s)_A \ar[rr] &&P(s|s-2)_B \ar[rr]^{(-1)^{m+s+1}} &&P(s+1|s-2)_B}\end{displaymath}

By these computations diagram \eqref{diag:Ftil3} commutes.\\

\item [\ref{FtilB}] $V=P(s|t)_B$ and $s>t+2$:

\begin{enumerate}
\item $t>0$:

We get the diagram
\begin{equation}\label{diag:Ftil4}\xymatrix{ P(s|t)_B \ar[r]\ar[d] &{\begin{matrix} P(s+1|t)_B\\P(s|t+1)_B \end{matrix}\ar[d]}\\
P(s|t-1)_B \ar[r] &{\begin{matrix} P(s|t)_B\\P(s+1|t-1)_B \end{matrix}}}
\end{equation}
with
\begin{displaymath} \xymatrix{P(s|t)_B \ar[rr]^{(-1)^{n+m+s+t+1}} &&P(s|t+1)_B  \ar @{} [d] |{=} \ar[rr] &&P(s|t)_B\\
P(s|t)_B \ar[rr] &&P(s|t-1)_B \ar[rr]^{(-1)^{n+m+s+t+1}} &&P(s|t)_B}
\end{displaymath}
and for $s\leq m-1$
\begin{displaymath} \xymatrix{P(s|t)_B \ar[rr]^{(-1)^{m+s+1}} &&P(s+1|t)_B  \ar @{} [d] |{=} \ar[rr] &&P(s+1|t-1)_B\\
P(s|t)_B \ar[rr] &&P(s|t-1)_B \ar[rr]^{(-1)^{m+s+1}} &&P(s+1|t-1)_B}
\end{displaymath}
and the diagram \eqref{diag:Ftil4} commutes.
\item $t=0$:

We have the diagram
\begin{equation}\label{diag:Ftil5}\xymatrix{ P(s|0)_B \ar[r]\ar[d] &{\begin{matrix} P(s+1|0)_B\\P(s|1)_B \end{matrix}\ar[d]}\\
0 \ar[r] & P(s|0)_B}
\end{equation}
which commutes since
\begin{displaymath} \xymatrix{ P(s|0)_B \ar[rr] &&P(s|1)_B  \ar @{} [d] |{=} \ar[rr] &&P(s|0)_B\\
 &&0&&}
 \end{displaymath}
\end{enumerate}

\item [\ref{FtilBs2}]  $V=P(s|s-2)_B$:
\begin{enumerate}
\item $s>2$:

We get the diagram
\begin{equation}\label{diag:Ftil6}\xymatrix{ P(s|s-2)_B \ar[r]\ar[d] &{\begin{matrix} P(s+1|s)_A\\P(s+1|s-2)_B \end{matrix}\ar[d]}\\
P(s|s-3)_B \ar[r] &{\begin{matrix} P(s+1|s-1)_A\\P(s|s-2)_B \end{matrix}}}
\end{equation}
with
\begin{displaymath} \xymatrix{P(s|s-2)_B \ar[rr]^{(-1)^{n+m+1}} &&P(s+1|s)_A  \ar @{} [d] |{=} \ar[rr] &&P(s|s-2)_B\\
P(s|s-2)_B \ar[rr] &&P(s|s-3)_B \ar[rr]^{(-1)^{n+m+1}} &&P(s|s-2)_B}
\end{displaymath}

\begin{displaymath} \xymatrix{
P(s|s-2)_B \ar[rr]^{(-1)^{m+s+1}} &&P(s+1|s-2)_B \ar @{} [d] |{=} \ar[rr] &&P(s+1|s-3)_B\\
P(s|s-2)_B \ar[rr] &&P(s|s-3)_B \ar[rr]^{(-1)^{m+s+1}} &&P(s+1|s-3)_B}
\end{displaymath}
The diagram \eqref{diag:Ftil6} commutes.
\item $s=2$:

We get the diagram
\begin{equation}\label{diag:Ftil7}\xymatrix{ P(2|0)_B \ar[r]\ar[d] &{\begin{matrix} P(3|2)_A\\P(3|0)_B \end{matrix}\ar[d]}\\
0 \ar[r] &P(2|0)_B }
\end{equation}
with
\begin{displaymath} \xymatrix{ P(2|0)_B \ar[rr] &&P(3|2)_A  \ar @{} [d] |{=} \ar[rr] &&P(2|0)_B\\
 &&0&&}
\end{displaymath}
\end{enumerate}
\end{itemize}

Therefore, all diagrams commute.
\end{proof}
\begin{proof}[Computations for the proof of Theorem \ref{Th:elemG}]\label{Ap:ElemG}
\begin{itemize}
\item [\ref{GA}] $V=P(s|t)_A$ and $s>t+2$:

We have the following diagram:
\begin{equation}\label{diag:G1}\xymatrix{ P(s|t)_A \ar[r]\ar[d] &{\begin{matrix} P(s-1|t)_B\\P(s|t-1)_B \end{matrix}\ar[d]}\\
0 \ar[r] &{\begin{matrix} P(s|t-2)_A\\P(s-2|t)_A\\P(s-1|t-1)_A \end{matrix}}}
\end{equation}
with
\begin{displaymath} \xymatrix{P(s|t)_A \ar[rr]^{(-1)^{(s+t+1)(m+s+1)}} &&P(s-1|t)_B  \ar @{} [d] |{=} \ar[rrr]^{(-1)^{(m+t)(m+s+1)+s+t+1}} &&&P(s-1|t-1)_A\\
-(P(s|t)_A \ar[rr]^{(-1)^{(s+t+1)(m+s+1)+m+s}} &&P(s|t-1)_A \ar[rrr]^{(-1)^{(m+t+1)(m+s+1)+s+t+1}} &&&P(s-1|t)_A)}
\end{displaymath}
since the upper sign equals $(-1)^{(m+s+1)(m+s+1)+s+t+1}=(-1)^{m+t}$ and the lower $(-1)^{(m+s)(m+s+1)+m+s+1+s+t+1}=(-1)^{m+t}$
\begin{displaymath} \xymatrix{ P(s|t)_A \ar[rr] &&P(s-1|t)_B  \ar @{} [d] |{=} \ar[rr] &&P(s-2|t)_A\\
 &&0&&}
\end{displaymath}
and
\begin{displaymath} \xymatrix{ P(s|t)_A \ar[rr] &&P(s|t-1)_B  \ar @{} [d] |{=} \ar[rr] &&P(s|t-2)_A\\
 &&0&&}
\end{displaymath}
So the diagram \eqref{diag:G1} commutes.
\item [\ref{GAs2}]  $V=P(s|s-2)_A$:

We have the following diagram:
\begin{equation}\label{diag:G2}\xymatrix{ P(s|s-2)_A \ar[r]\ar[d] &{\begin{matrix} P(s|s-1)_A\\P(s|s-3)_B \end{matrix}\ar[d]}\\
0 \ar[r] &{\begin{matrix} P(s-1|s-3)_A\\P(s|s-4)_A \end{matrix}}}
\end{equation}
with
\begin{displaymath} \xymatrix{P(s|s-2)_A \ar[rr]^{(-1)^{m+s+1}} &&P(s|s-1)_A  \ar @{} [d] |{=} \ar[rr]^{-1} &&P(s-1|s-3)_A\\
-(P(s|s-2)_A \ar[rr]^{-1} &&P(s|s-3)_A \ar[rr]^{(-1)^{m+s+1+1}} &&P(s-1|s-3)_A)}
\end{displaymath}
and
\begin{displaymath} \xymatrix{ P(s|s-2)_A \ar[rr] &&P(s|s-3)_B  \ar @{} [d] |{=} \ar[rr] &&P(s|s-4)_A\\
 &&0&&}
\end{displaymath}
So the diagram \eqref{diag:G2} commutes.
\item [\ref{GAs1}]  $V=P(s|s-1)_A$:

\begin{equation}\label{diag:G3}\xymatrix{ P(s|s-1)_A \ar[r]\ar[d] &{\begin{matrix} P(s+1|s-1)_A\\P(s|s-2)_B \end{matrix}\ar[d]}\\
P(s-1|s-3)_A \ar[r] &{\begin{matrix} P(s-1|s-2)_A\\P(s|s-3)_A \end{matrix}}}
\end{equation}
with
\begin{displaymath} \xymatrix{P(s|s-1)_A \ar[rr]^{(-1)^{m+s}} &&P(s|s-2)_B  \ar @{} [d] |{=} \ar[rrr]^{(-1)^{(m+s)(m+s+1)}=-1} &&&P(s-1|s-2)_A\\
-(P(s|s-1)_A \ar[rr]^{-1} &&P(s-1|s-3)_A \ar[rrr]^{(-1)^{m+s}} &&&P(s-1|s-2)_A)}
\end{displaymath}
and
\begin{displaymath} \xymatrix{P(s|s-1)_A \ar[rr]^{(-1)^{m+s}} &&P(s|s-2)_B  \ar @{} [d] |{=} \ar[rr]^{(-1)^{m+s}} &&P(s|s-3)_A\\
P(s|s-1)_A \ar[rr]^{-1} &&P(s-1|s-3)_A \ar[rr]^{1} &&P(s|s-3)_A)}
\end{displaymath}
so the diagram \eqref{diag:G3} anticommutes.
\item [\ref{GB}] $V=P(s|t)_B$ and $s>t+2$:

We are in the situation
\begin{equation}\label{diag:G4}\xymatrix{ P(s|t)_B \ar[r]\ar[d] &{\begin{matrix} P(s|t+1)_B\\P(s+1|t)_B \end{matrix}\ar[d]}\\
{\begin{matrix} P(s-1|t)_A\\P(s|t-1)_A \end{matrix}} \ar[r] &{\begin{matrix} P(s-1|t+1)_A\\P(s|t)_A\\P(s+1|t-1)_A \end{matrix}}}
\end{equation}
with
\begin{displaymath} \xymatrix{P(s|t)_B \ar[rr]^{(-1)^{s+t}} &&P(s|t+1)_B  \ar @{} [d] |{=} \ar[rrr]^{(-1)^{(m+t+1)(m+s+1)+s+t}} &&&P(s-1|t+1)_A\\
P(s|t)_B \ar[rr]^{(-1)^{(m+t)(m+s+1)+s+t+1}} &&P(s-1|t)_A \ar[rrr]^{(-1)^{m+t+1}} &&&P(s-1|t+1)_A)}
\end{displaymath}
which are the onliest terms occurring if $s=m$. For $s<m$ we also have:
\begin{displaymath} \xymatrix{P(s|t)_B \ar[rr]^{(-1)^{m+s+1}} &&P(s+1|t)_B  \ar @{} [d] |{=} \ar[rr]^{(-1)^{(m+t)(m+s)+s+t+1}} &&P(s|t)_A\\
P(s|t)_B \ar[rr]^{(-1)^{(m+t)(m+s+1)+s+t}} &&P(s-1|t)_A \ar[rr]^{(-1)^{s+t+1}} &&P(s|t)_A)}
\end{displaymath}
and for $t>0$ one has
\begin{displaymath} \xymatrix{P(s|t)_B \ar[rr]^{(-1)^{m+s+1}} &&P(s+1|t)_B  \ar @{} [d] |{=} \ar[rr]^{(-1)^{(m+t)(m+s+1)+s+t+1}} &&P(s+1|t-1)_A\\
P(s|t)_B \ar[rr]^{(-1)^{(m+t)(m+s)+s+t}} &&P(s|t-1)_A \ar[rr]^{(-1)^{s+t+1}} &&P(s+1|t-1)_A)}
\end{displaymath}
which does not matter for $t=0$ since the last term does not occur in this case. For $t>0$ we also have
\begin{displaymath} \xymatrix{P(s|t)_B \ar[rr]^{(-1)^{s+t}} &&P(s|t+1)_B  \ar @{} [d] |{=} \ar[rr]^{(-1)^{(m+t+1)(m+s)}} &&P(s|t)_A\\
P(s|t)_B \ar[rr]^{(-1)^{(m+t)(m+s)}} &&P(s|t-1)_A \ar[rr]^{(-1)^{m+t}} &&P(s|t)_A)}
\end{displaymath}
and for $t=0$
\begin{displaymath} \xymatrix{P(s|0)_B \ar[rr] &&P(s|1)_B  \ar @{} [d] |{=} \ar[rr] &&P(s|0)_A\\
&&0&&}
\end{displaymath}
So everything anticommutes.
\item [\ref{Gs2}]  $V=P(s|s-2)_B$:
\begin{enumerate}
\item $s\neq 2$:

We are in the situation
\begin{equation}\label{diag:G5}\xymatrix{ P(s|s-2)_B \ar[r]\ar[d] &{\begin{matrix} P(s+1|s)_A\\P(s+1|s-2)_B \end{matrix}\ar[d]}\\
{\begin{matrix} P(s|s-3)_A\\P(s-1|s-2)_A\end{matrix}}\ar[r] &{\begin{matrix} P(s|s-2)_A\\P(s+1|s-3)_A\end{matrix}}}
\end{equation}
with
\begin{displaymath} \xymatrix{P(s|s-2)_B \ar[rr] &&P(s+1|s)_A  \ar @{} [d] |{=} \ar[rr]^{-1} &&P(s|s-2)_A\\
P(s|s-2)_B \ar[rr]^{(-1)^{m+s}} &&P(s|s-3)_A \ar[rr]^{(-1)^{m+s}} &&P(s|s-2)_A)}
\end{displaymath}
and
\begin{displaymath} \xymatrix{P(s|s-2)_B \ar[rr]^{(-1)^{m+s+1}} &&P(s+1|s-2)_B  \ar @{} [d] |{=} \ar[rr]^{(-1)^{(m+s)(m+s+1)+1}=-1} &&P(s+1|s-3)_A\\
P(s|s-2)_B \ar[rr]^{(-1)^{m+s}} &&P(s|s-3)_A \ar[rr]^{-1} &&P(s+1|s-3)_A)}
\end{displaymath}
and
\begin{displaymath} \xymatrix{P(s|s-2)_B \ar[rr]^{(-1)^{m+s+1}} &&P(s+1|s-2)_B  \ar @{} [d] |{=} \ar[rr]^{(-1)^{m+s+1}} &&P(s|s-2)_A\\
P(s|s-2)_B \ar[rr]^{(-1)^{(m+s)(m+s+1)}=1} &&P(s-1|s-2)_A \ar[rr]^{-1} &&P(s|s-2)_A)}
\end{displaymath}
\item $s=2$:
\begin{equation}\label{diag:G6}\xymatrix{ P(2|0)_B \ar[r]\ar[d] &{\begin{matrix} P(3|2)_A\\P(3|0)_B \end{matrix}\ar[d]}\\
P(1|0)_A\ar[r] &P(2|0)_A}
\end{equation}
with
\begin{displaymath} \xymatrix{P(2|0)_B \ar[rr]^{(-1)^{m+1}} &&P(3|0)_B  \ar @{} [d] |{=} \ar[rr]^{(-1)^{m+1}} &&P(2|0)_A\\
P(2|0)_B \ar[rr]^{1} &&P(1|0)_A \ar[rr]^{-1} &&P(2|0)_A)}
\end{displaymath}
and
\begin{displaymath} \xymatrix{P(2|0)_B \ar[rr] &&P(3|2)_A  \ar @{} [d] |{=} \ar[rr] &&P(2|0)_A\\
&&0&&}
\end{displaymath}
\end{enumerate}
\end{itemize}
\end{proof}

\begin{proof}[Computations for the proof of Theorem \ref{Th:ElemK}]\label{Ap:ElemK}
\begin{itemize}
\item [\ref{KAs1}]  $V=P(s+1|s)_A$:

where we have to compute 
\begin{displaymath} \xymatrix{ P(s+1|s)_A \ar[rr] &&P(s+1|s-1)_B  \ar[rr] &&P(s|s-2)_A
}
\end{displaymath}
and
\begin{displaymath} \xymatrix{ P(s+1|s)_A \ar[rr] &&P(s-1|s-2)_A  \ar[rr] &&P(s|s-2)_A
}
\end{displaymath}
and to check whether any of these maps is zero or not. As in Section \ref{sec:mapproj} we have to multiply the corresponding elements in $K^2_{N-1}$.

In the first case we get

\begin{center}
{\small
		\begin{tikzpicture}
		\begin{scope}
		\draw (0,0) node[above=-1.55pt] {$\cdots$};
		\draw (0.4,0) node[above=-1.55pt] {$\down$};
		\draw (0.8, 0) node[above=-1.55pt] {$\up$};
		\draw (1.2,0) node[above=-1.55pt] {$\down$};
		\draw (1.6,0) node[above=-1.55pt] {$\up$};
		\draw (2,0) node[above=-1.55pt] {$\down$};
		\draw (2.4, 0) node[above=-1.55pt] {$\cdots$};
		\draw (0.4,0.3) arc (180:0:0.2);
		\draw (1.2,0.3) arc (180:0:0.2);
		\draw (2,0.3) -- (2,0.6);
		\draw (0.8,0) arc (180:360:0.2);
		\draw (1.6,0) arc (180:360:0.2);
		\draw (0.4,0) -- (0.4,-0.3);	
		\end{scope}
		
		\begin{scope}[yshift=-1cm]
		\draw (0,0) node[above=-1.55pt] {$\cdots$};
		\draw (0.4,0) node[above=-1.55pt] {$\down$};
		\draw (0.8, 0) node[above=-1.55pt] {$\down$};
		\draw (1.2,0) node[above=-1.55pt] {$\up$};
		\draw (1.6,0) node[above=-1.55pt] {$\down$};
		\draw (2,0) node[above=-1.55pt] {$\up$};
		\draw (2.4, 0) node[above=-1.55pt] {$\cdots$};
		\draw (0.8,0.3) arc (180:0:0.2);
		\draw (0.4,0.3) -- (0.4,0.6);
		\draw (1.6,0.3) arc (180:0:0.2);
		\draw (0.8,0) arc (180:360:0.6);
		\draw (0.4,0) -- (0.4,-0.3);
		\draw (1.2,0) arc (180:360:0.2);
			\end{scope}
		
		\draw[->] (3.5,-0.4)--(3.9,-0.4);
		\begin{scope}[yshift=-0.55cm, xshift=4.2cm]
		\draw (0,0) node[above=-1.55pt] {$\cdots$};
		\draw (0.4,0) node[above=-1.55pt] {$\down$};
		\draw (0.8, 0) node[above=-1.55pt] {$\up$};
		\draw (1.2,0) node[above=-1.55pt] {$\up$};
		\draw (1.6,0) node[above=-1.55pt] {$\down$};
		\draw (2,0) node[above=-1.55pt] {$\down$};
		\draw (2.4, 0) node[above=-1.55pt] {$\cdots$};
		\draw (0.4,0.3) arc (180:0:0.2);
		\draw (1.2,0.3) arc (180:0:0.2);
		\draw (2,0.3) -- (2,0.6);
		\draw (0.8,0) arc (180:360:0.6);
		\draw (0.4,0) -- (0.4,-0.3);
		\draw (1.2,0) arc (180:360:0.2);
											\end{scope}
				\end{tikzpicture}
\normalsize}\\
\end{center}
and for the second morphism\\		
\begin{center}
\small

		\begin{tikzpicture}
		\begin{scope}
		\draw (0,0) node[above=-1.55pt] {$\cdots$};
		\draw (0.4,0) node[above=-1.55pt] {$\down$};
		\draw (0.8, 0) node[above=-1.55pt] {$\up$};
		\draw (1.2,0) node[above=-1.55pt] {$\down$};
		\draw (1.6,0) node[above=-1.55pt] {$\up$};
		\draw (2,0) node[above=-1.55pt] {$\down$};
		\draw (2.4,0) node[above=-1.55pt] {$\cdots$};
		\draw (0.4,0.3) arc (180:0:0.2);
		\draw (1.2,0.3) arc (180:0:0.2);
		\draw (2,0.3) -- (2,0.6);
		\draw (0.4,0) arc (180:360:0.6);
		\draw (0.8,0) arc (180:360:0.2);
		\draw (2,0) -- (2,-0.7);
		\end{scope}
		
				\begin{scope}[yshift=-1.8cm]
				\draw (0,0) node[above=-1.55pt] {$\cdots$};
		\draw (0.4,0) node[above=-1.55pt] {$\down$};
		\draw (0.8, 0) node[above=-1.55pt] {$\up$};
		\draw (1.2,0) node[above=-1.55pt] {$\down$};
		\draw (1.6,0) node[above=-1.55pt] {$\up$};
		\draw (2,0) node[above=-1.55pt] {$\down$};
		\draw (2.4,0) node[above=-1.55pt] {$\cdots$};
		\draw (0.8,0.3) arc (180:0:0.2);
		\draw (0.4,0.3) arc (180:0:0.6);
		\draw (2.0,0.3) -- (2.0,1);
		\draw (1.2,0) arc (180:360:0.2);
		\draw (0.8,0) arc (180:360:0.6);
		\draw (0.4,0) -- (0.4,-0.3);
		
				\end{scope}
		
		\draw[->] (3.5,-0.8)--(3.9,-0.8);
		
		\begin{scope}[yshift=-0.95cm, xshift=4.2cm]
		\draw (0,0) node[above=-1.55pt] {$\cdots$};
		\draw (0.4,0) node[above=-1.55pt] {$\down$};
		\draw (0.8, 0) node[above=-1.55pt] {$\up$};
		\draw (1.2,0) node[above=-1.55pt] {$\up$};
		\draw (1.6,0) node[above=-1.55pt] {$\down$};
		\draw (2,0) node[above=-1.55pt] {$\down$};
		\draw (2.4, 0) node[above=-1.55pt] {$\cdots$};
		\draw (0.4,0.3) arc (180:0:0.2);
		\draw (1.2,0.3) arc (180:0:0.2);
		\draw (2,0.3) -- (2,0.6);
		\draw (0.8,0) arc (180:360:0.6);
		\draw (0.4,0) -- (0.4,-0.3);
		\draw (1.2,0) arc (180:360:0.2);
											\end{scope}
														\end{tikzpicture}
\end{center} \normalsize											
so the two maps commute. We just have to check the signs:\\
\begin{displaymath} \xymatrix{P(s+1|s)_A \ar[rr]^{(-1)^{m+s+1}} &&P(s+1|s-1)_B  \ar @{} [d] |{=} \ar[rr] &&P(s|s-2)_A\\
P(s+1|s)_A \ar[rr]^{(-1)^{m+s}} &&P(s-1|s-2)_A \ar[rr]^{-1} &&P(s|s-2)_A}
\end{displaymath}
\item [\ref{KB}] $V=P(s|t)_B$ and $s>t+2$:

\begin{enumerate}
\item $t>0$:

\begin{equation}\label{diag:K2}\xymatrix{ P(s|t)_B \ar[r]\ar[d] &{\begin{matrix} P(s|t+1)_B\\P(s+1|t)_B \end{matrix}\ar[d]}\\
P(s-1|t-1)_A \ar[r] &{\begin{matrix} P(s-1|t)_A\\P(s|t-1)_A \end{matrix}}}
\end{equation}
We claim
\begin{displaymath} \xymatrix{P(s|t)_B \ar[rr]^{(-1)^{s+t}} &&P(s|t+1)_B  \ar @{} [d] |{=} \ar[rr]^{(-1)^{(m+t)(m+s)}} &&P(s-1|t)_A\\
P(s|t)_B \ar[rr]^{(-1)^{(m+t+1)(m+s)}} &&P(s-1|t-1)_A \ar[rr]^{(-1)^{m+t}} &&P(s-1|t+1)_A)}
\end{displaymath}
To prove this, we have to compute the compositions like we did in the previous case.
The morphism in the first line equals to the product\\
\begin{enumerate}
\item $s>t+3$

\begin{center} \small
		\begin{tikzpicture}
		\begin{scope}
		\draw (0,0) node[above=-1.55pt] {$\cdots$};
		\draw (0.4,0) node[above=-1.55pt] {$\down$};
		\draw (0.8, 0) node[above=-1.55pt] {$\up$};
		\draw (1.2,0) node[above=-1.55pt] {$\down$};
		\draw (1.6,0) node[above=-1.55pt] {$\cdots$};
		\draw (2,0) node[above=-1.55pt] {$\down$};
		\draw (2.4, 0) node[above=-1.55pt] {$\up$};
		\draw (2.8,0) node[above=-1.55pt] {$\down$};
		\draw (3.2,0) node[above=-1.55pt] {$\cdots$};
		\draw (0.4,0.3) arc (180:0:0.2);
		\draw (1.2,0.3) -- (1.2,0.6);
		\draw (2,0.3) arc (180:0:0.2);
		\draw (2.8,0.3) -- (2.8,0.6);
		\draw (0.8,0) arc (180:360:0.2);
		\draw (0.4,0) -- (0.4,-0.3);
		\draw (2.4,0) arc (180:360:0.2);
		\draw (2,0) -- (2,-0.3);
		\end{scope}
		
				\begin{scope}[yshift=-1cm]
		\draw (0,0) node[above=-1.55pt] {$\cdots$};
		\draw (0.4,0) node[above=-1.55pt] {$\down$};
		\draw (0.8, 0) node[above=-1.55pt] {$\up$};
		\draw (1.2,0) node[above=-1.55pt] {$\down$};
		\draw (1.6,0) node[above=-1.55pt] {$\cdots$};
		\draw (2,0) node[above=-1.55pt] {$\down$};
		\draw (2.4, 0) node[above=-1.55pt] {$\down$};
		\draw (2.8,0) node[above=-1.55pt] {$\up$};
		\draw (3.2,0) node[above=-1.55pt] {$\cdots$};
		\draw (0.8,0.3) arc (180:0:0.2);
		\draw (0.4,0.3) -- (0.4,0.6);
		\draw (2.4,0.3) arc (180:0:0.2);
		\draw (2,0.3) -- (2,0.6);
		\draw (0.4,0) arc (180:360:0.2);
		\draw (1.2,0) -- (1.2,-0.3);
		\draw (2.4,0) arc (180:360:0.2);
		\draw (2,0) -- (2,-0.3);		\end{scope}
		
		\draw[->] (3.5,-0.4)--(3.9,-0.4);
		
		\begin{scope}[yshift=-0.55cm, xshift=4.2cm]
		\draw (0,0) node[above=-1.55pt] {$\cdots$};
		\draw (0.4,0) node[above=-1.55pt] {$\up$};
		\draw (0.8, 0) node[above=-1.55pt] {$\down$};
		\draw (1.2,0) node[above=-1.55pt] {$\down$};
		\draw (1.6,0) node[above=-1.55pt] {$\cdots$};
		\draw (2,0) node[above=-1.55pt] {$\down$};
		\draw (2.4, 0) node[above=-1.55pt] {$\up$};
		\draw (2.8,0) node[above=-1.55pt] {$\down$};
		\draw (3.2,0) node[above=-1.55pt] {$\cdots$};
		\draw (0.4,0.3) arc (180:0:0.2);
		\draw (1.2,0.3) -- (1.2,0.6);
		\draw (2,0.3) arc (180:0:0.2);
		\draw (2.8,0.3) -- (2.8,0.6);
				\draw (0.4,0) arc (180:360:0.2);
				\draw (1.2,0) -- (1.2,-0.3);
		\draw (2,0.3) arc (180:0:0.2);
		\draw (2.8,0.3) -- (2.8,0.6);
		\draw (2.4,0) arc (180:360:0.2);
		\draw (2,0) -- (2,-0.3);
		\end{scope}
				\end{tikzpicture}	
\end{center} \normalsize
\item $s=t+3$

\begin{center} \small
		\begin{tikzpicture}
		\begin{scope}
		\draw (0,0) node[above=-1.55pt] {$\cdots$};
		\draw (0.4,0) node[above=-1.55pt] {$\down$};
		\draw (0.8, 0) node[above=-1.55pt] {$\up$};
		\draw (1.2,0) node[above=-1.55pt] {$\down$};
		\draw (1.6,0) node[above=-1.55pt] {$\up$};
		\draw (2,0) node[above=-1.55pt] {$\down$};
		\draw (2.4, 0) node[above=-1.55pt] {$\cdots$};
		\draw (0.4,0.3) arc (180:0:0.2);
		\draw (1.2,0.3) arc (180:0:0.2);
		\draw (2,0.3) -- (2,0.6);
		\draw (0.8,0) arc (180:360:0.2);
		\draw (0.4,0) -- (0.4,-0.3);
		\draw (1.6,0) arc (180:360:0.2);
		\end{scope}
		
				\begin{scope}[yshift=-1cm]
		\draw (0,0) node[above=-1.55pt] {$\cdots$};
		\draw (0.4,0) node[above=-1.55pt] {$\down$};
		\draw (0.8, 0) node[above=-1.55pt] {$\up$};
		\draw (1.2,0) node[above=-1.55pt] {$\down$};
		\draw (1.6,0) node[above=-1.55pt] {$\down$};
		\draw (2,0) node[above=-1.55pt] {$\up$};
		\draw (2.4, 0) node[above=-1.55pt] {$\cdots$};
		\draw (0.8,0.3) arc (180:0:0.2);
		\draw (0.4,0.3) -- (0.4,0.6);
		\draw (1.6,0.3) arc (180:0:0.2);
		\draw (0.4,0) arc (180:360:0.2);
		\draw (1.2,0) -- (1.2,-0.3);
		\draw (1.6,0) arc (180:360:0.2);
		\end{scope}
		
		\draw[->] (3.5,-0.4)--(3.9,-0.4);
		
		\begin{scope}[yshift=-0.55cm, xshift=4.2cm]
		\draw (0,0) node[above=-1.55pt] {$\cdots$};
		\draw (0.4,0) node[above=-1.55pt] {$\up$};
		\draw (0.8, 0) node[above=-1.55pt] {$\down$};
		\draw (1.2,0) node[above=-1.55pt] {$\down$};
		\draw (1.6,0) node[above=-1.55pt] {$\up$};
		\draw (2,0) node[above=-1.55pt] {$\down$};
		\draw (2.4, 0) node[above=-1.55pt] {$\cdots$};
				\draw (0.4,0.3) arc (180:0:0.2);
		\draw (1.2,0.3) arc (180:0:0.2);
		\draw (2,0.3) -- (2,0.6);
		\draw (0.4,0) arc (180:360:0.2);
		\draw (1.2,0) -- (1.2,-0.3);
		\draw (1.6,0) arc (180:360:0.2);
				\end{scope}
				\end{tikzpicture}	
\end{center} \normalsize
\end{enumerate}
and for the morphism in the second line we get\\
\begin{center} \small
		\begin{tikzpicture}
		\begin{scope}
		\draw (0,0) node[above=-1.55pt] {$\cdots$};
		\draw (0.4,0) node[above=-1.55pt] {$\down$};
		\draw (0.8, 0) node[above=-1.55pt] {$\up$};
		\draw (1.2,0) node[above=-1.55pt] {$\down$};
		\draw (1.6,0) node[above=-1.55pt] {$\cdots$};
		\draw (2,0) node[above=-1.55pt] {$\down$};
		\draw (2.4, 0) node[above=-1.55pt] {$\up$};
		\draw (2.8,0) node[above=-1.55pt] {$\down$};
		\draw (3.2,0) node[above=-1.55pt] {$\cdots$};
		\draw (0.8,0.3) arc (180:0:0.2);
		\draw (0.4,0.3) -- (0.4,0.6);
		\draw (2,0.3) arc (180:0:0.2);
		\draw (2.8,0.3) -- (2.8,0.6);
		\draw (0.4,0) arc (180:360:0.2);
		\draw (1.2,0) -- (1.2,-0.3);
		\draw (2,0) arc (180:360:0.2);
		\draw (2.8,0) -- (2.8,-0.3);	
		\end{scope}
		
		\begin{scope}[yshift=-1cm]
		\draw (0,0) node[above=-1.55pt] {$\cdots$};
		\draw (0.4,0) node[above=-1.55pt] {$\down$};
		\draw (0.8, 0) node[above=-1.55pt] {$\up$};
		\draw (1.2,0) node[above=-1.55pt] {$\down$};
		\draw (1.6,0) node[above=-1.55pt] {$\cdots$};
		\draw (2,0) node[above=-1.55pt] {$\down$};
		\draw (2.4, 0) node[above=-1.55pt] {$\up$};
		\draw (2.8,0) node[above=-1.55pt] {$\down$};
		\draw (3.2,0) node[above=-1.55pt] {$\cdots$};
		\draw (0.4,0.3) arc (180:0:0.2);
		\draw (1.2,0.3) -- (1.2,0.6);
		\draw (2,0.3) arc (180:0:0.2);
		\draw (2.8,0.3) -- (2.8,0.6);
		\draw (0.8,0) arc (180:360:0.2);
		\draw (0.4,0) -- (0.4,-0.3);
		\draw (2.4,0) arc (180:360:0.2);
		\draw (2,0) -- (2,-0.3);
			\end{scope}
		
		\draw[->] (3.5,-0.4)--(3.9,-0.4);
		\begin{scope}[yshift=-0.55cm, xshift=4.2cm]
		\draw (0,0) node[above=-1.55pt] {$\cdots$};
		\draw (0.4,0) node[above=-1.55pt] {$\down$};
		\draw (0.8, 0) node[above=-1.55pt] {$\up$};
		\draw (1.2,0) node[above=-1.55pt] {$\down$};
		\draw (1.6,0) node[above=-1.55pt] {$\cdots$};
		\draw (2,0) node[above=-1.55pt] {$\down$};
		\draw (2.4, 0) node[above=-1.55pt] {$\up$};
		\draw (2.8,0) node[above=-1.55pt] {$\down$};
		\draw (3.2,0) node[above=-1.55pt] {$\cdots$};
		\draw (0.8,0.3) arc (180:0:0.2);
		\draw (0.4,0.3) -- (0.4,0.6);
		\draw (2,0.3) arc (180:0:0.2);
		\draw (2.8,0.3) -- (2.8,0.6);
		\draw (0.8,0) arc (180:360:0.2);
		\draw (0.4,0) -- (0.4,-0.3);
		\draw (2.4,0) arc (180:360:0.2);
		\draw (2,0) -- (2,-0.3);
							\end{scope}
				\end{tikzpicture}\\
\end{center} \normalsize
so all these maps are unequal to zero and therefore the diagram \eqref{diag:K2} commutes.
\item $t=0$:

We have the situation
\begin{equation}\label{diag:K3}\xymatrix{ P(s|0)_B \ar[r]\ar[d] &{\begin{matrix} P(s+1|0)_B\\P(s|1)_B \end{matrix}\ar[d]}\\
0\ar[r] &P(s-1|0)_A}
\end{equation}
Changing the images from above to the appropriate case one easily checks that
\begin{displaymath} \xymatrix{P(s|0)_B \ar[rr] &&P(s|1)_B  \ar @{} [d] |{=} \ar[rr] &&P(s-1|0)_A\\
&&0&&}
\end{displaymath} and therefore the diagram \eqref{diag:K3} commutes.
\end{enumerate}
\item [\ref{Ks2}]  $V=P(s|s-2)_B$:

\begin{enumerate}
\item $s\neq 2$:

We are in the situation
\begin{equation}\label{diag:K4}\xymatrix{ P(s|s-2)_B \ar[r]\ar[d] &{\begin{matrix} P(s+1|s)_A\\P(s+1|s-2)_B \end{matrix}\ar[d]}\\
P(s-1|s-3)_A \ar[r] &{\begin{matrix} P(s-1|s-2)_A\\P(s|s-3)_A\end{matrix}}}
\end{equation}
We first show that
\begin{displaymath} \xymatrix{P(s|s-2)_B \ar[rr] &&P(s+1|s)_A  \ar @{} [d] |{=} \ar[rr]^{(-1)^{m+s}} &&P(s-1|s-2)_A\\
P(s|s-2)_B \ar[rr]^{1} &&P(s-1|s-3)_A \ar[rr]^{(-1)^{m+s}} &&P(s-1|s-2)_A)}
\end{displaymath}
We compute the products belonging to the compositions:
\begin{center}
\small
		\begin{tikzpicture}
		\begin{scope}
		\draw (0,0) node[above=-1.55pt] {$\cdots$};
		\draw (0.4,0) node[above=-1.55pt] {$\down$};
		\draw (0.8, 0) node[above=-1.55pt] {$\down$};
		\draw (1.2,0) node[above=-1.55pt] {$\up$};
		\draw (1.6,0) node[above=-1.55pt] {$\up$};
		\draw (2,0) node[above=-1.55pt] {$\down$};
		\draw (2.4,0) node[above=-1.55pt] {$\down$};
		\draw (2.8,0) node[above=-1.55pt] {$\cdots$};
		\draw (0.4,0.3) arc (180:0:0.6);
		\draw (0.8,0.3) arc (180:0:0.2);
		\draw (2,0.3)--(2,0.6);
		\draw (2.4,0.3)--(2.4,0.6);
		\draw (1.6,0) arc (180:360:0.2);
		\draw (1.2,0) arc (180:360:0.6);
		\draw (0.4,0)--(0.4,-0.7);
		\draw (0.8,0)--(0.8,-0.7);
		
		\end{scope}
		
				\begin{scope}[yshift=-1.8cm]
		\draw (0,0) node[above=-1.55pt] {$\cdots$};
		\draw (0.4,0) node[above=-1.55pt] {$\down$};
		\draw (0.8, 0) node[above=-1.55pt] {$\down$};
		\draw (1.2,0) node[above=-1.55pt] {$\up$};
		\draw (1.6,0) node[above=-1.55pt] {$\down$};
		\draw (2,0) node[above=-1.55pt] {$\up$};
		\draw (2.4,0) node[above=-1.55pt] {$\down$};
		\draw (2.8,0) node[above=-1.55pt] {$\cdots$};
		\draw (1.2,0.3) arc (180:0:0.6);
		\draw (1.6,0.3) arc (180:0:0.2);
		\draw (0.4,0.3) -- (0.4,1);
		\draw (0.8,0.3) -- (0.8,1);
		\draw (0.4,0) -- (0.4,-0.3);
		\draw (0.8,0) arc (180:360:0.2);
		\draw (1.6,0) arc (180:360:0.2);
		\draw (2.4,0) -- (2.4,-0.3);
		\end{scope}
		
		\draw[->] (3.1,-0.4)--(3.4,-0.4);
		
		\begin{scope}[yshift=-0.55cm, xshift=3.7cm]
		\draw (0,0) node[above=-1.55pt] {$\cdots$};
		\draw (0.4,0) node[above=-1.55pt] {$\down$};
		\draw (0.8, 0) node[above=-1.55pt] {$\up$};
		\draw (1.2,0) node[above=-1.55pt] {$\down$};
		\draw (1.6,0) node[above=-1.55pt] {$\up$};
		\draw (2,0) node[above=-1.55pt] {$\down$};
		\draw (2.4,0) node[above=-1.55pt] {$\down$};
		\draw (2.8,0) node[above=-1.55pt] {$\cdots$};
		\draw (0.4,0.3) arc (180:0:0.6);
		\draw (0.8,0.3) arc (180:0:0.2);
		\draw (2,0.3)--(2,0.6);
		\draw (2.4,0.3)--(2.4,0.6);		\draw (0.4,0) -- (0.4,-0.3);
		\draw (0.8,0) arc (180:360:0.2);
		\draw (1.6,0) arc (180:360:0.2);
		\draw (2.4,0) -- (2.4,-0.3);		\end{scope}

			\end{tikzpicture}\\
			\normalsize
			\end{center}
and
\begin{center}
\small
		\begin{tikzpicture}
		\begin{scope}
		\draw (0,0) node[above=-1.55pt] {$\cdots$};
		\draw (0.4,0) node[above=-1.55pt] {$\down$};
		\draw (0.8, 0) node[above=-1.55pt] {$\up$};
		\draw (1.2,0) node[above=-1.55pt] {$\down$};
		\draw (1.6,0) node[above=-1.55pt] {$\up$};
		\draw (2,0) node[above=-1.55pt] {$\down$};
		\draw (2.4,0) node[above=-1.55pt] {$\cdots$};
		\draw (0.4,0.3) arc (180:0:0.6);
		\draw (0.8,0.3) arc (180:0:0.2);
		\draw (2,0.3)--(2,0.6);
		\draw (0.4,0) arc (180:360:0.2);
		\draw (1.2,0) arc (180:360:0.2);
		\draw (2,0)--(2,-0.7);
		
		\end{scope}
		\begin{scope}[yshift=-1cm]
		\draw (0,0) node[above=-1.55pt] {$\cdots$};
		\draw (0.4,0) node[above=-1.55pt] {$\down$};
		\draw (0.8, 0) node[above=-1.55pt] {$\up$};
		\draw (1.2,0) node[above=-1.55pt] {$\down$};
		\draw (1.6,0) node[above=-1.55pt] {$\up$};
		\draw (2,0) node[above=-1.55pt] {$\down$};
		\draw (2.4,0) node[above=-1.55pt] {$\cdots$};
		\draw (0.4,0.3) arc (180:0:0.2);
		\draw (1.2,0.3) arc (180:0:0.2);
		\draw (2,0.3)--(2,0.6);
		\draw (0.8,0) arc (180:360:0.2);
		\draw (1.6,0) arc (180:360:0.2);
		\draw (0.4,0)--(0.4,-0.7);	
		\end{scope}
		
		\draw[->] (3.1,-0.4)--(3.4,-0.4);
		
		\begin{scope}[yshift=-0.55cm, xshift=3.7cm]
		\draw (0,0) node[above=-1.55pt] {$\cdots$};
		\draw (0.4,0) node[above=-1.55pt] {$\down$};
		\draw (0.8, 0) node[above=-1.55pt] {$\up$};
		\draw (1.2,0) node[above=-1.55pt] {$\down$};
		\draw (1.6,0) node[above=-1.55pt] {$\up$};
		\draw (2,0) node[above=-1.55pt] {$\down$};
		\draw (2.4,0) node[above=-1.55pt] {$\cdots$};
		\draw (0.4,0.3) arc (180:0:0.6);
		\draw (0.8,0.3) arc (180:0:0.2);
		\draw (2,0.3)--(2,0.6);		
		\draw (0.4,0) -- (0.4,-0.3);
		\draw (0.8,0) arc (180:360:0.2);
		\draw (1.6,0) arc (180:360:0.2);
				\end{scope}
		\end{tikzpicture}
				\normalsize
				\end{center}
Since none of them is zero, the assumption holds.\\
Secondly we have to check that
\begin{displaymath} \xymatrix{P(s|s-2)_B \ar[rr]^{(-1)^{m+s+1}} &&P(s+1|s-2)_B  \ar @{} [d] |{=} \ar[rr]^{(-1)^{m+s+1}} &&P(s|s-3)_A\\
P(s|s-2)_B \ar[rr]^{1} &&P(s-1|s-3)_A \ar[rr]^{1} &&P(s|s-3)_A)}
\end{displaymath}
We compute the products belonging to the compositions:

\begin{center}
\small
		\begin{tikzpicture}
		\begin{scope}
		\draw (0,0) node[above=-1.55pt] {$\cdots$};
		\draw (0.4,0) node[above=-1.55pt] {$\down$};
		\draw (0.8, 0) node[above=-1.55pt] {$\up$};
		\draw (1.2,0) node[above=-1.55pt] {$\down$};
		\draw (1.6,0) node[above=-1.55pt] {$\down$};
		\draw (2,0) node[above=-1.55pt] {$\up$};
		\draw (2.4,0) node[above=-1.55pt] {$\down$};
		\draw (2.8,0) node[above=-1.55pt] {$\cdots$};
		\draw (0.4,0.3) arc (180:0:0.2);
		\draw (2,0.3) arc (180:0:0.2);
		\draw (1.2,0.3)--(1.2,0.6);
		\draw (1.6,0.3)--(1.6,0.6);
		\draw (0.8,0) arc (180:360:0.2);
		\draw (2,0) arc (180:360:0.2);
		\draw (0.4,0)--(0.4,-0.3);
		\draw (1.6,0)--(1.6,-0.3);
		
		\end{scope}

		\begin{scope}[yshift=-1cm]
		\draw (0,0) node[above=-1.55pt] {$\cdots$};
		\draw (0.4,0) node[above=-1.55pt] {$\down$};
		\draw (0.8, 0) node[above=-1.55pt] {$\down$};
		\draw (1.2,0) node[above=-1.55pt] {$\up$};
		\draw (1.6,0) node[above=-1.55pt] {$\down$};
		\draw (2,0) node[above=-1.55pt] {$\up$};
		\draw (2.4,0) node[above=-1.55pt] {$\down$};
		\draw (2.8,0) node[above=-1.55pt] {$\cdots$};
		\draw (0.8,0.3) arc (180:0:0.2);
		\draw (2,0.3) arc (180:0:0.2);
		\draw (1.6,0.3)--(1.6,0.6);
				\draw (0.4,0.3)--(0.4,0.6);
		\draw (0.8,0) arc (180:360:0.2);
		\draw (1.6,0) arc (180:360:0.2);
		\draw (0.4,0)--(0.4,-0.7);	
				\draw (2.4,0)--(2.4,-0.7);
		\end{scope}
		
		\draw[->] (3.1,-0.8)--(3.4,-0.8);
		
		\begin{scope}[yshift=-0.95cm, xshift=3.7cm]
		\draw (0.4,0) node[above=-1.55pt] {$\down$};
		\draw (0.8, 0) node[above=-1.55pt] {$\up$};
		\draw (1.2,0) node[above=-1.55pt] {$\down$};
		\draw (1.6,0) node[above=-1.55pt] {$\down$};
		\draw (2,0) node[above=-1.55pt] {$\up$};
		\draw (2.4,0) node[above=-1.55pt] {$\down$};
		\draw (2.8,0) node[above=-1.55pt] {$\cdots$};
		\draw (0.4,0.3) arc (180:0:0.2);
		\draw (2,0.3) arc (180:0:0.2);
		\draw (1.2,0.3)--(1.2,0.6);
		\draw (1.6,0.3)--(1.6,0.6);
		\draw (0.8,0) arc (180:360:0.2);
		\draw (1.6,0) arc (180:360:0.2);
		\draw (0.4,0)--(0.4,-0.3);	
				\draw (2.4,0)--(2.4,-0.3);
								\end{scope}

			\end{tikzpicture}\\
			\normalsize
			\end{center}
and
\begin{center}
\small
		\begin{tikzpicture}
		\begin{scope}
		\draw (0,0) node[above=-1.55pt] {$\cdots$};
		\draw (0.4,0) node[above=-1.55pt] {$\down$};
		\draw (0.8, 0) node[above=-1.55pt] {$\up$};
		\draw (1.2,0) node[above=-1.55pt] {$\down$};
		\draw (1.6,0) node[above=-1.55pt] {$\up$};
		\draw (2,0) node[above=-1.55pt] {$\down$};
		\draw (2.4,0) node[above=-1.55pt] {$\cdots$};
		\draw (0.4,0.3) arc (180:0:0.2);
		\draw (1.6,0.3) arc (180:0:0.2);
		\draw (1.2,0.3)--(1.2,0.6);
		\draw (0.4,0) arc (180:360:0.2);
		\draw (1.2,0) arc (180:360:0.2);
		\draw (2,0)--(2,-0.3);
		
		\end{scope}
		\begin{scope}[yshift=-1cm]
		\draw (0,0) node[above=-1.55pt] {$\cdots$};
		\draw (0.4,0) node[above=-1.55pt] {$\down$};
		\draw (0.8, 0) node[above=-1.55pt] {$\up$};
		\draw (1.2,0) node[above=-1.55pt] {$\down$};
		\draw (1.6,0) node[above=-1.55pt] {$\up$};
		\draw (2,0) node[above=-1.55pt] {$\down$};
		\draw (2.4,0) node[above=-1.55pt] {$\cdots$};
		\draw (0.4,0.3) arc (180:0:0.2);
		\draw (1.2,0.3) arc (180:0:0.2);
		\draw (2,0.3)--(2,0.6);
		\draw (0.8,0) arc (180:360:0.2);
		\draw (1.6,0) arc (180:360:0.2);
		\draw (0.4,0)--(0.4,-0.3);	
		\end{scope}
		
		\draw[->] (3.1,-0.4)--(3.4,-0.4);
		
		\begin{scope}[yshift=-0.55cm, xshift=3.7cm]
		\draw (0,0) node[above=-1.55pt] {$\cdots$};
		\draw (0.4,0) node[above=-1.55pt] {$\down$};
		\draw (0.8, 0) node[above=-1.55pt] {$\up$};
		\draw (1.2,0) node[above=-1.55pt] {$\down$};
		\draw (1.6,0) node[above=-1.55pt] {$\up$};
		\draw (2,0) node[above=-1.55pt] {$\down$};
		\draw (2.4,0) node[above=-1.55pt] {$\cdots$};
		\draw (0.4,0.3) arc (180:0:0.2);
		\draw (1.6,0.3) arc (180:0:0.2);
		\draw (1.2,0.3)--(1.2,0.6);		
		\draw (0.4,0) -- (0.4,-0.3);
		\draw (0.8,0) arc (180:360:0.2);
		\draw (1.6,0) arc (180:360:0.2);
				\end{scope}
		\end{tikzpicture}
				\normalsize
				\end{center}
Since none of them is zero, the assumption holds.\\
Therefore, the diagram \eqref{diag:K4} commutes.
\item $s=2$:

We are in the situation
\begin{equation}\label{diag:K5}\xymatrix{ P(2|0)_B \ar[r]\ar[d] &{\begin{matrix} P(3|2)_A\\P(3|0)_B \end{matrix}\ar[d]}\\
0 \ar[r] &P(1|0)}
\end{equation}
but by \ref{prosimp} there is no degree 3 morphism $P(2|0) \to P(1|0)$, so the above diagram commutes. \qedhere
\end{enumerate}
\end{itemize}
\end{proof}
\small
\chapter{Computations of the multiplications in Section \ref{sec:multipl}\label{Ap:Mult}}
\begin{proof}[Proof of Theorem \ref{Th:Multtable}]
For computing the products, we have to multiply the maps on each projective occurring. Sometimes it might be helpful to use the associativity of the algebra instead.
\begin{enumerate}
\item Multiplication with $\Id^{(n|m)}_{(k|l)}$ from the left:
\begin{itemize}
\item 
\begin{myalign}
&\Id^{(n|m)}_{(k|l)} \cdot \Id^{(k|l)}_{(a|b)}\\
= 
&\begin{cases}P(s|t)_A &\to (-1)^{(n+k)(l+t)} P(s|t)_A \\ &\to (-1)^{(n+k)(l+t)+(k+a)(b+t)}  P(s|t)_A \\ P(s|t)_B &\to(-1)^{(n+k)(l+s)} P(s|t)_B \\&\to(-1)^{(n+k)(l+s)+(k+a)(b+s)} P(s|t)_B\end{cases} 
\end{myalign}	
To compute the sign of the product compared to the one of $\Id^{(n|m)}_{(a|b)}$ we have to compute the difference of both modulo $2$. We get
\begin{align*}
&(n+k)(l+t)+(k+a)(b+t)-((n+a)(b+t)) &\pmod 2 \\
\equiv &(n+k)(l+t) +(n+k)(b+t) &\pmod 2\\
\equiv &(n+k)(b+l) &\pmod 2
\end{align*}
and similarly by substituting $t$ by $s$
\begin{align*}
&(n+k)(l+s)+(k+a)(b+s)-((n+a)(b+s)) &\pmod 2 \\
\equiv &(n+k)(b+l) &\pmod 2
\end{align*}
This yields to 
$$Id^{(n|m)}_{(k|l)} \cdot \Id^{(k|l)}_{(a|b)}= (-1)^{(n+k)(l+b)} Id^{(n|m)}_{(a|b)}.$$
\begin{align*}
\Id^{(n|m)}_{(k|l)} \cdot F^{(k|l)}_{(a|b)}\\
&=\Id^{(n|m)}_{(k|l)} \cdot \Id^{(k|l)}_{(a|b+1)} \cdot F^{(a|b+1)}_{(a|b)} \\
&=(-1)^{(n+k)(b+l+1)} \Id^{(n|m)}_{(a|b+1)} \cdot F^{(a|b+1)}_{(a|b)} &\text{(associativity)}\\
&=(-1)^{(n+k)(b+l+1)} F^{(n|m)}_{(a|b)}
\end{align*}
\item Compute
\begin{align*}
\Id^{(n|m)}_{(k|l)} \cdot \widetilde F^{(k|l)}_{(a|b)}\\
&=\Id^{(n|m)}_{(k|l)} \cdot \Id^{(k|l)}_{(a+1|b)} \cdot \widetilde F^{(a+1|b)}_{(a|b)} \\
&=(-1)^{(n+k)(b+l)} \Id^{(n|m)}_{(a+1|b)} \cdot \widetilde F^{(a+1|b)}_{(a|b)} \\
&=(-1)^{(n+k)(b+l)} \widetilde F^{(n|m)}_{(a|b)}
\end{align*}
\item Similar,

\begin{align*}
\Id^{(n|m)}_{(k|l)} \cdot G^{(k|l)}_{(a|b)}\\
&=\Id^{(n|m)}_{(k|l)} \cdot \Id^{(k|l)}_{(a+2|a+1)} \cdot G^{(a+2|a+1)}_{(a|b)} \\
&=(-1)^{(n+k)(a+l+1)} \Id^{(n|m)}_{(a+2|a+1)} \cdot G^{(a+2|a+1)}_{(a|b)} \\
&=(-1)^{(n+k)(a+l+1)} G^{(n|m)}_{(a|b)}
\end{align*}
\item and
\begin{align*}
\Id^{(n|m)}_{(k|l)} \cdot K^{(k|l)}_{(a|b)}\\
&=\Id^{(n|m)}_{(k|l)} \cdot \Id^{(k|l)}_{(a+2|a+1)} \cdot K^{(a+2|a+1)}_{(a|b)} \\
&=(-1)^{(n+k)(a+l+1)} \Id^{(n|m)}_{(a+2|a+1)} \cdot K^{(a+2|a+1)}_{(a|b)} \\
&=(-1)^{(n+k)(a+l+1)} K^{(n|m)}_{(a|b)}
\end{align*}
\item Using the formulas computed for $F^{(k|l)}_{(a|b)}$, we get
\begin{align*}
\Id^{(n|m)}_{(k|l)} \cdot J^{(k|l)}_{(a|b)}\\
&=\Id^{(n|m)}_{(k|l)} \cdot F^{(k|l)}_{(a+1|b)} \cdot \widetilde F^{(a+1|b)}_{(a|b)} \\
&=(-1)^{(n+k)(b+l+1)} F^{(n|m)}_{(a+1|b)} \cdot \widetilde F^{(a+1|b)}_{(a|b)} \\
&=(-1)^{(n+k)(b+l+1)} J^{(n|m)}_{(a|b)}
\end{align*}
\end{itemize}
\item Multiplication from the left with $ F^{(n|m)}_{(k|l)}$

For this we mostly will have to write down the products on the projectives:

\begin{itemize}
\item Multiplying with $\Id^{(k|l)}_{(a|b)}$ from the right only adds a summand to the exponent of $(-1)$. Writing down the appropriate signs, we obtain:
\begin{align*}
&F^{(n|m)}_{(k|l)} \cdot \Id^{(k|l)}_{(a|b)}\\
=& \begin{cases} P(s|t)_A \to (-1)^{(n+k)(l+t+1)+(a+k)(b+t+1)} P(s|t-1)_A \\
								P(s+2|s)_A \to(-1)^{(n+k)(l+s+1)+k+s+(a+k)(b+s)} P(s+1|s)_A  \\
								P(s+1|s)_A \to (-1)^{(n+k)(l+s+1)+k+s+(a+k)(b+s)} P(s|s-2)_B  \\
								P(s|t)_B \to (-1)^{(n+k)(l+s+1)+(a+k)(b+s+1)} P(s-1|t)_B \\
								P(s|s-2)_B \to (-1)^{(n+k)(l+s+1)+(a+k)(b+s+1)} P(s|s-1)_A
\end{cases}
\end{align*}
Adding the appropriate signs of $F^{(n|m)}_{(a|b)}$ and computing modulo $2$ one gets (we only compute the first one, it is easy to see that the others yield the same result)
\begin{align*}
&(n+k)(l+t+1)+(a+k)(b+t+1)+(n+a)(b+t+1) \\
\equiv& (n+k)(l+t+1)+(n+k)(b+t+1) \\
\equiv& (n+k)(b+l) \pmod 2
\end{align*}
So we have shown that 
$$F^{(n|m)}_{(k|l)} \cdot \Id^{(k|l)}_{(a|b)} = (-1)^{(n+k)(b+l)} F^{(n|m)}_{(a|b)}.$$
\item Now we can compute $F^{(n|m)}_{(k|l)} \cdot F^{(k|l)}_{(a|b)}$, where we have to take care which of the maps will go to zero and which will not.
We have to look for the following cases:
\begin{enumerate}
\item $P(s|t)_A$ and $s>t+2$

There we get
 \begin{displaymath} \xymatrix{ P(s|t)_A \ar[rr] &&P(s|t-1)_A   \ar @{} [d] |{=} \ar[rr] &&P(s|t-2)_A\\
 &&0&&}
\end{displaymath} for $s \neq t+1$, especially in the given case.
\item $P(s|s+2)_A$

First we get the above composition, but in addition we have
 \begin{displaymath} \xymatrix{ P(s+2|s)_A \ar[rr] &&P(s+1|s)_A   \ar @{} [d] |{=} \ar[rr] &&P(s|s-2)_B\\
 &&0&&}
\end{displaymath}
and 
 \begin{displaymath} \xymatrix{ P(s+2|s)_A \ar[rr]^{(-1)^{(n+k)(l+s+1)+k+s}} &&P(s+1|s)_A  \ar[rr]^{(-1)^{(k+a)(b+s)}} &&P(s+1|s-1)_A}
\end{displaymath}
\item $P(s+1|s)_A$

We have
\begin{displaymath}
\xymatrix{ P(s+1|s)_A \ar[rr]^{(-1)^{(n+k)(l+s+1)}} &&P(s+1|s-1)_A  \ar[rr]^{(-1)^{(a+k)(b+s)}} &&P(s+1|s-2)_A}
\end{displaymath}
and 
\begin{displaymath}
\xymatrix{ P(s+1|s)_A \ar[rr]^{(-1)^{(n+k)(l+s+1)}} &&P(s+1|s-1)_A \ar @{} [d] |{=} \ar[rr]^{(-1)^{(a+k)(b+s)+a+s+1}} &&P(s|s-1)_A\\ -(P(s+1|s)_A \ar[rr]^{(-1)^{(n+k)(l+s+1)+k+s}} &&P(s|s-2)_B \ar[rr]^{(-1)^{(a+k)(b+s+1)}} &&P(s|s-1)_A)}
\end{displaymath}
\item $P(s|t)_B$, $s>t+3$

We have
 \begin{displaymath} \xymatrix{ P(s|t)_B \ar[rr] &&P(s-1|t)_B   \ar @{} [d] |{=} \ar[rr] &&P(s-2|t)_B\\
 &&0&&}
\end{displaymath}
\item $P(s+1|s-2)_B$

\begin{displaymath}
\xymatrix{ P(s+1|s-2)_B \ar[rr]^{(-1)^{(n+k)(l+s)}} &&P(s|s-2)_B  \ar[rr]^{(-1)^{(a+k)(b+s+1)}} &&P(s|s-1)_A}
\end{displaymath}
\item $P(s|s-2)_B$

We have two maps
\begin{displaymath}
\xymatrix{ P(s|s-2)_B \ar[rr]^{(-1)^{(n+k)(l+s+1)}} &&P(s|s-1)_A  \ar[rr]^{(-1)^{(a+k)(b+s)}} &&P(s|s-2)_A}
\end{displaymath}
and
\begin{displaymath}
\xymatrix{ P(s|s-2)_B \ar[rr]^{(-1)^{(n+k)(l+s+1)}} &&P(s|s-1)_A  \ar[rr]^{(-1)^{(a+k)(b+s)+a+s+1}} &&P(s-1|s-3)_B}
\end{displaymath}
\end{enumerate}
Taking all nonzero maps together, we get

\begin{align*}
&F^{(n|m)}_{(k|l)} \cdot F^{(k|l)}_{(a|b)}\\
&= \begin{cases}
								P(s+1|s)_A \to (-1)^{(n+k)(l+s+1)+(a+k)(b+s)} P(s+1|s-2)_A  \\
								P(s+2|s)_A \to (-1)^{(n+k)(l+s+1)+k+s+(a+k)(b+s)} P(s+1|s-1)_A  \\
								P(s+1|s-2)_B \to (-1)^{(n+k)(l+s)+(a+k)(b+s+1)} P(s|s-1)_A  \\
								P(s|s-2)_B \to (-1)^{(n+k)(l+s+1)+(a+k)(b+s)+a+s+1} P(s-1|s-3)_B  \\
								+ (-1)^{(n+k)(l+s+1)+(a+k)(b+s)} P(s|s-2)_A \\ 
								\end{cases}	\\
&=(-1)^{(n+k)(l+b+1)}A^{(n|m)}_{(a|b)}															
\end{align*}
For the last equation one just compares the signs similar to above.
\item To multiply with $\widetilde{F}$ we can use the results from above and get:
\begin{align*}
F^{(n|m)}_{(k|l)} \cdot \widetilde{F}^{(k|l)}_{(a|b)}\\
&=F^{(n|m)}_{(k|l)} \cdot \Id^{(k|l)}_{(a+1|b)} \cdot \widetilde F^{(a+1|b)}_{(a|b)} \\
&=(-1)^{(n+k)(b+l)} F^{(n|m)}_{(a+1|b)} \cdot \widetilde F^{(a+1|b)}_{(a|b)} \\
&=(-1)^{(n+k)(b+l)} J^{(n|m)}_{(a|b)}
\end{align*}
\item For the multiplication with $G^{(k|l)}_{(a|b)}$ we have to consider the following cases
\begin{enumerate}
\item $P(s|t)_A$

We compute
 \begin{displaymath} \xymatrix{ P(s|t)_A \ar[rr] &&P(s|t-1)_A   \ar @{} [d] |{=} \ar[rr] &&0\\
 &&0&&}
 \end{displaymath}
\item $P(s+2|s)_A$

Here we also get 
\begin{displaymath} \xymatrix{ P(s+2|s)_A \ar[rr] &&P(s+1|s)_A   \ar @{} [d] |{=} \ar[rr] &&P(s|s-2)_A\\
 &&0&&}
 \end{displaymath}
\item $P(s+1|s)_A$ 

In addition to the first equation we get
\begin{displaymath}
\xymatrix{ P(s+1|s)_A \ar[rr]^{(-1)^{(n+k)(l+s+1)+k+s}} &&P(s|s-2)_B  \ar[rr]^{(-1)^{(a+s+1)(k+s)}} &&P(s-1|s-2)_B}
\end{displaymath}
since for the second summand we get
\begin{displaymath} \xymatrix{ P(s+1|s)_A \ar[rr] &&P(s|s-2)_B   \ar @{} [d] |{=} \ar[rr] &&P(s|s-3)_A\\
 &&0&&}
 \end{displaymath}
\item $P(s|t)_B s>t+2$

Here we have two summands, for the first we get
\begin{displaymath} \xymatrix{ P(s|t)_B \ar[rr] &&P(s-1|t)_B   \ar @{} [d] |{=} \ar[rr] &&P(s-2|t)_B\\
 &&0&&}
 \end{displaymath}
and for the second
\begin{displaymath}
\xymatrix{ P(s|t)_B \ar[rr]^{(-1)^{(n+k)(l+s+1)}} &&P(s-1|t)_B  \ar[rr]^{(-1)^{(a+s)(k+t+1)+s+t+1}} &&P(s-1|t-1)_B}
\end{displaymath}
\item $P(s|s-2)$
\begin{displaymath}
\xymatrix{ P(s|s-2)_B \ar[rr]^{(-1)^{(n+k)(l+s+1)}} &&P(s|s-1)_A  \ar[rr]^{(-1)^{(a+k)(k+s+1)+1}} &&P(s-1|s-3)_B}
\end{displaymath}
\end{enumerate}
Putting all of them together, we obtain
\begin{align*}
&F^{(n|m)}_{(k|l)} \cdot G^{(k|l)}_{(a|b)}\\
&=  \begin{cases} P(s|t)_A \to 0 s>t+1 \\
								P(s|s-1)_A \to \\(-1)^{(n+k)(l+s)+k+s+1+(a+s)(k+s+1)} P(s-2|s-3)_A  \\
								P(s|t)_B \to \\(-1)^{(n+k)(l+s+1)+(a+s)(k+t+1)+s+t+1} P(s-1|t-1)_A \\ 
								\end{cases}\\
&=(-1)^{(n+k)(a+l)+a+k+1} K^{(n|m)}_{(a|b)}															
\end{align*}
\item  For multiplication with $K^{(k|l)}_{(a|b)}$ we can show that no map can exist, since
$$F^{(n|m)}_{(k|l)} \cdot K^{(k|l)}_{(a|b)}\in \hom^{(n+m)-(a+b)-5}(P_\bullet(n|m),P_\bullet(a|b)\langle (n+m)-(a+b)-8 \rangle)$$
and by Lemma \ref{Cor:posschmaps} this cannot exist, since $-3 \nleq -5.$
So by this the composition is
$$F^{(n|m)}_{(k|l)} \cdot K^{(k|l)}_{(a|b)}=0.$$ 

\item Now we compute $F^{(n|m)}_{(k|l)} \cdot J^{(k|l)}_{(a|b)}$.
\begin{enumerate}
\item $P(s|t)_A$ $s>t+1$

 \begin{displaymath} \xymatrix{ P(s|t)_A \ar[rr] &&P(s|t-1)_A   \ar @{} [d] |{=} \ar[rr] &&P(s-1|t-2)_A\\
 &&0&&}
 \end{displaymath}
\item $P(s+2|s)_A$

Here in addition we have 
 \begin{displaymath} \xymatrix{ P(s+2|s)_A \ar[rr] &&P(s+1|s)_A   \ar @{} [d] |{=} \ar[rr] &&P(s|s-1)_A\\
 &&0&&}
 \end{displaymath}
\item $P(s+1|s)_A$ 

First we have
\begin{displaymath}
\xymatrix{ P(s+1|s)_A \ar[rr]^{(-1)^{(n+k)(l+s+1)}} &&P(s+1|s-1)_A  \ar[rr]^{(-1)^{(a+k+1)(b+s)}} &&P(s|s-2)_A}
\end{displaymath}
and 
 \begin{displaymath} \xymatrix{ P(s+1|s)_A \ar[rr] &&P(s|s-2)_B   \ar @{} [d] |{=} \ar[rr] &&P(s-1|s-3)_B\\
 &&0&&}
 \end{displaymath}
\item $P(s|t)_B$, $s>t+2$
 \begin{displaymath} \xymatrix{ P(s|t)_B \ar[rr] &&P(s-1|t)_B   \ar @{} [d] |{=} \ar[rr] &&P(s-2|t-1)_B\\
 &&0&&}
 \end{displaymath}
\item $P(s+1|s-1)_B$
\begin{displaymath}
\xymatrix{ P(s+1|s-1)_B \ar[rr]^{(-1)^{(n+k)(l+s)}} &&P(s+1|s)_A  \ar[rr]^{(-1)^{(a+k+1)(b+s+1)}} &&P(s|s-1)_A}
\end{displaymath} 
\end{enumerate}
All in all, we get
\begin{align*}
&F^{(n|m)}_{(k|l)} \cdot J^{(k|l)}_{(a|b)}\\
&= \begin{cases}
								P(s+1|s)_A \to (-1)^{(n+k)(l+s+1)+(a+k+1)(b+s)} P(s|s-2)_A  \\
								P(s+1|s-1)_B \to (-1)^{(n+k)(l+s)+(a+k+1)(b+s+1)} P(s|s-1)_A  \\
								\end{cases}\\
&=(-1)^{(n+k)(l+b+1)} B^{(n|m)}_{(a|b)}															
\end{align*}
 \end{itemize}
\item We compute multiplications with $ \widetilde{F}^{(n|m)}_{(k|l)}$:
\begin{itemize} 
\item For multiplication with $\Id^{(k|l)}_{(a|b)}$, one only has to change the appropriate signs, so one computes:
\begin{align*}
&\widetilde{F}^{(n|m)}_{(k|l)} \cdot \Id^{(k|l)}_{(a|b)}\\
=& 
\begin{cases} P(s|t)_A \to(-1)^{(n+k+1)(l+t)+(a+k)(b+t)} P(s-1|t)_A\\
								P(s|t)_B \to(-1)^{(n+k+1)(l+s)+(a+k)(b+s)}  P(s|t-1)_B\\
								P(s+1|s)_A \to(-1)^{(n+k+1)(l+s)+(a+k)(b+s)} P(s|s-2)_B
\end{cases}
\end{align*}
Adding the the signs to those of $\widetilde{F}^{(n|m)}_{(a|b)}$ one obtains in the first case (and the others go similar):
\begin{align*}
&(n+k+1)(l+t)+(a+k)(b+t)+(n+a+1)(b+t) \\
\equiv& (n+k+1)(l+t)+(n+k+1)(b+t) \\
\equiv& (n+k+1)(b+l) \pmod 2
\end{align*}
So we have shown that 
$$\widetilde{F}^{(n|m)}_{(k|l)} \cdot \Id^{(k|l)}_{(a|b)} = (-1)^{(n+k+1)(b+l)} \widetilde{F}^{(n|m)}_{(a|b)}.$$
\item Computing $\widetilde{F}^{(n|m)}_{(k|l)} \cdot F^{(k|l)}_{(a|b)}$ we have to check cases:
\begin{enumerate}
\item $P(s|t)$ $s>t+1$

\begin{displaymath}
\xymatrix{ P(s|t)_A \ar[rr]^{(-1)^{(n+k+1)(l+t)}} &&P(s-1|t)_A  \ar[rr]^{(-1)^{(a+k)(b+t+1)}} &&P(s-1|t-1)_A}
\end{displaymath}
\item $P(s+3|s)$

Here we get in addition
 \begin{displaymath} \xymatrix{ P(s+3|s)_A \ar[rr] &&P(s+2|s)_A   \ar @{} [d] |{=} \ar[rr] &&P(s+1|s)_A\\
 &&0&&}
 \end{displaymath}
\item $P(s+2|s)_A$

Here we get the additional equation
 \begin{displaymath} \xymatrix{ P(s+2|s)_A \ar[rr] &&P(s+1|s)_A   \ar @{} [d] |{=} \ar[rr] &&P(s|s-2)_B\\
 &&0&&}
 \end{displaymath}
\item $P(s+1|s)_A$ 

We have
\begin{displaymath}
\xymatrix{ P(s+1|s)_A \ar[rr]^{(-1)^{(n+k+1)(l+s)}} &&P(s|s-2)_B  \ar[rr]^{(-1)^{(a+k)(b+s+1)}} &&P(s|s-1)_A}
\end{displaymath}
\item $P(s|t)_B$

Here we only have to look at one possible case and obtain
\begin{displaymath}
\xymatrix{ P(s|t)_B \ar[rr]^{(-1)^{(n+k+1)(l+s)}} &&P(s|t-1)_B  \ar[rr]^{(-1)^{(a+k)(b+s+1)}} &&P(s-1|t-1)_B}
\end{displaymath}
\end{enumerate}
Taking the above results together we obtain

\begin{align*}
&\widetilde{F}^{(n|m)}_{(k|l)} \cdot F^{(k|l)}_{(a|b)}\\
=
& \begin{cases}
								P(s|t)_A \to (-1)^{(n+k+1)(l+t)+(a+k)(b+t+1)} P(s-1|t-1)_A \\
								P(s|t)_B \to (-1)^{(n+k+1)(l+s)+(a+k)(b+s+1)} P(s-1|t-1)_B \\ 
								\end{cases}\\
&=(-1)^{(n+k+1)(b+l+1)}	J^{(n|m)}_{(a|b)}							
\end{align*}	
\item To show that $\widetilde{F}^{(n|m)}_{(k|l)} \cdot \widetilde{F}^{(k|l)}_{(a|b)}	=0$ we check:
\begin{enumerate}
\item $P(s|t)_A$, $s>t+2$
		 \begin{displaymath} \xymatrix{ P(s|t)_A \ar[rr] &&P(s-1|t)_A   \ar @{} [d] |{=} \ar[rr] &&P(s-2|t)_A\\
 &&0&&}
 \end{displaymath}
\item $P(s+2|s)_A$
		 \begin{displaymath} \xymatrix{ P(s+2|s)_A \ar[rr] &&P(s+1|s)_A   \ar @{} [d] |{=} \ar[rr] &&P(s|s-2)_B\\
 &&0&&} 				  
  \end{displaymath}  

\item $P(s+1|s)_A$
		 \begin{displaymath} \xymatrix{ P(s+1|s)_A \ar[rr] &&P(s|s-2)_B   \ar @{} [d] |{=} \ar[rr] &&P(s|s-3)_B\\
 &&0&&}
   \end{displaymath} 
\item $P(s|t)_B$
		 \begin{displaymath} \xymatrix{ P(s|t)_B \ar[rr] &&P(s|t-1)_B   \ar @{} [d] |{=} \ar[rr] &&P(s|t-2)_B\\
 &&0&&}
   \end{displaymath} 
 since $s \neq t+1$.
\end{enumerate}
Therefore, the composition must be zero.
 
\item For the multiplication with $G^{(k|l)}_{(a|b)}$ we have to consider the following cases
\begin{enumerate}
\item $P(s|t)_A$, $s>t+2$

We compute
 \begin{displaymath} \xymatrix{ P(s|t)_A \ar[rr] &&P(s-1|t)_A   \ar @{} [d] |{=} \ar[rr] &&0\\
 &&0&&}
 \end{displaymath}
\item $P(s+2|s)_A$

Here obtain
\begin{displaymath} \xymatrix{ P(s+2|s)_A \ar[rr] &&P(s+1|s)_A   \ar @{} [d] |{=} \ar[rr] &&P(s|s-2)_A\\
 &&0&&}
 \end{displaymath}
 
\item $P(s+1|s)_A$ 

Here we have
\begin{displaymath}
\xymatrix{ P(s+1|s)_A \ar[rr]^{(-1)^{(n+k+1)(l+s)}} &&P(s|s-2)_B  \ar[rr]^{(-1)^{(a+s+1)(k+s)}} &&P(s-1|s-2)_A}
\end{displaymath}
since for the second summand we get
\begin{displaymath} \xymatrix{ P(s+1|s)_A \ar[rr] &&P(s|s-2)_B   \ar @{} [d] |{=} \ar[rr] &&P(s|s-3)_A\\
 &&0&&}
 \end{displaymath}
\item $P(s|t)_B$ 

Here we have two summands, for the first we get
\begin{displaymath} \xymatrix{ P(s|t)_B \ar[rr] &&P(s|t-1)_B   \ar @{} [d] |{=} \ar[rr] &&P(s|t-2)_A\\
 &&0&&}
 \end{displaymath}
and for the second
\begin{displaymath}
\xymatrix{ P(s|t)_B \ar[rr]^{(-1)^{(n+k+1)(l+s)}} &&P(s|t-1)_B  \ar[rr]^{(-1)^{(a+s+1)(k+t+1)}} &&P(s-1|t-1)_A}
\end{displaymath}
\end{enumerate}
Putting all of them together, we obtain
\begin{align*}
&F^{(n|m)}_{(k|l)} \cdot G^{(k|l)}_{(a|b)}\\
&=  \begin{cases} P(s|t)_A \to 0 s>t+1 \\
								P(s|s-1)_A \to (-1)^{(n+k+1)(l+s+1)+(a+s)(k+s+1)} P(s-2|s-3)_A  \\
								P(s|t)_B \to (-1)^{(n+k+1)(l+s)+(a+s+1)(k+t+1)} P(s-1|t-1)_A \\ 
								\end{cases}\\
&=(-1)^{(n+k+1)(a+l+1)} K^{(n|m)}_{(a|b)}															
\end{align*}

\item  For multiplication with $K^{(k|l)}_{(a|b)}$ as before we can show that no map can exist, since
$$\widetilde{F}^{(n|m)}_{(k|l)} \cdot K^{(k|l)}_{(a|b)}\in \hom^{(n+m)-(a+b)-5}(P_\bullet(n|m),P_\bullet(a|b)\langle (n+m)-(a+b)-8 \rangle)$$
and by Lemma \ref{Cor:posschmaps} this cannot exist, since $-3 \nleq -5.$
%
%
%

 \item Last we have to show $\widetilde{F}^{(n|m)}_{(k|l)} \cdot J^{(k|l)}_{(a|b)}=0$.
\begin{enumerate}
\item $P(s|t)_A$ $s>t+1$
 \begin{displaymath} \xymatrix{ P(s|t)_A \ar[rr] &&P(s-1|t)_A   \ar @{} [d] |{=} \ar[rr] &&P(s-2|t-1)_A\\
 &&0&&}
 \end{displaymath}

\item $P(s+1|s)_A$ 

We get
 \begin{displaymath} \xymatrix{ P(s+1|s)_A \ar[rr] &&P(s|s-2)_B   \ar @{} [d] |{=} \ar[rr] &&P(s-1|s-3)_B\\
 &&0&&}
 \end{displaymath}
 
\item $P(s|t)_B$
 \begin{displaymath} \xymatrix{ P(s|t)_B \ar[rr] &&P(s|t-1)_B   \ar @{} [d] |{=} \ar[rr] &&P(s-1|t-2)_B\\
 &&0&&}
 \end{displaymath}
since $s \neq t+1$.
\end{enumerate}
All in all, we have checked all cases to become zero.
 \end{itemize}
\item Now we are going to compute the multiplication with $G^{(n|m)}_{(k|l)}$.  
\begin{itemize}
\item Again, multiplying with $\Id^{(k|l)}_{(a|b})$ only changes the signs, so we get
\begin{align*}
&G^{(n|m)}_{(k|l)} \cdot \Id^{(k|l)}_{(a|b)}\\
=&  \begin{cases} P(s|t)_A \to 0 & t \neq s-1\\
								P(s|s-1)_A \to (-1)^{(n+k)(k+s)+1+(a+k)(b+s+1)} P(s-1|s-3)_B  \\
								P(s|t)_B \to (-1)^{(k+s+1)(n+t)+(a+k)(b+t)} P(s-1|t)_A \\ 
								\ \ \ +(-1)^{(k+s+1)(n+t+1)+s+t+(a+k)(b+t+1)} P(s|t-1)_A \\
								\end{cases}\\
								&=(-1)^{(a+k)(b+n)} G^{(n|m)}_{(a|b)}															
\end{align*}
\item Multiplication with $F^{(k|l)}_{(a|b)}$ yields to
\begin{enumerate}
\item $P(s|t)_A$ $s>t+1$

We compute
 \begin{displaymath} \xymatrix{ P(s|t)_A \ar[rr] &&0}
 \end{displaymath}
 
\item $P(s+1|s)_A$ 

Here we obtain
\begin{displaymath}
\xymatrix{ P(s+1|s)_A \ar[rr]^{(-1)^{(n+k)(k+s+1)+1}} &&P(s|s-2)_A  \ar[rr]^{(-1)^{(a+k)(b+s+1)+a+s}} &&P(s-1|s-2)_A}
\end{displaymath}
since for the second summand we get
\begin{displaymath} \xymatrix{ P(s+1|s)_A \ar[rr] &&P(s|s-2)_A   \ar @{} [d] |{=} \ar[rr] &&P(s|s-3)_A\\
 &&0&&}
 \end{displaymath}
\item $P(s|t)_B$ 

Here we have two summands, for the first we get
\begin{displaymath} \xymatrix{ P(s|t)_B \ar[rr] &&P(s|t-1)_A   \ar @{} [d] |{=} \ar[rr] &&P(s|t-2)_A\\
 &&0&&}
 \end{displaymath}
and for the second
\begin{displaymath}
\xymatrix{ P(s|t)_B \ar[rr]^{(-1)^{(k+s+1)(n+t)}} &&P(s-1|t)_A  \ar[rr]^{(-1)^{(a+k)(b+t+1)}} &&P(s-1|t-1)_A}
\end{displaymath}
\item $P(s|s-3)_B$

Examining the composition one obtains in addition
\begin{displaymath} \xymatrix{ P(s|s-3)_B \ar[rr] &&P(s-1|s-3)_A   \ar @{} [d] |{=} \ar[rr] &&P(s-2|s-3)_A\\
 &&0&&}
 \end{displaymath}
 \item $P(s|s-2)_B$

And in this case
\begin{displaymath} \xymatrix{ P(s|s-2)_B \ar[rr] &&P(s-1|s-2)_A   \ar @{} [d] |{=} \ar[rr] &&P(s-2|s-4)_B\\
 &&0&&}
 \end{displaymath}
\end{enumerate}
Putting all of them together, we obtain
\begin{align*}
&G^{(n|m)}_{(k|l)} \cdot F^{(k|l)}_{(a|b)}\\
&=  \begin{cases} P(s|t)_A \to 0 s>t+1 \\
								P(s|s-1)_A \to (-1)^{(n+k)(k+s)+(a+k)(b+s)+a+s} P(s-2|s-3)_A  \\
								P(s|t)_B \to (-1)^{(k+s+1)(n+t)+(a+k)(b+t+1)} P(s-1|t-1)_A \\ 
								\end{cases}\\
&=(-1)^{(n+k+1)(a+l+1)} K^{(n|m)}_{(a|b)}															
\end{align*}

\item Now we are going to do the same with  $\widetilde{F}^{(k|l)}_{(a|b})$:
\begin{enumerate}
\item $P(s|t)_A$ $s>t+1$

We compute
 \begin{displaymath} \xymatrix{ P(s|t)_A \ar[rr] &&0}
 \end{displaymath}
 
\item $P(s+1|s)_A$ 

Here we obtain
\begin{displaymath}
\xymatrix{ P(s+1|s)_A \ar[rr]^{(-1)^{(n+k)(k+s+1)+1}} &&P(s|s-2)_A  \ar[rr]^{(-1)^{(a+k+1)(b+s)}} &&P(s-1|s-2)_A}
\end{displaymath}
\item $P(s|t)_B$ 

Here we have two summands, for the first we get for $s>t+2$
\begin{displaymath} \xymatrix{ P(s|t)_B \ar[rr] &&P(s-1|t)_A   \ar @{} [d] |{=} \ar[rr] &&P(s-2|t)_A\\
 &&0&&}
 \end{displaymath}
 
and for the second
\begin{displaymath}
\xymatrix{ P(s|t)_B \ar[rr]^{(-1)^{(k+s+1)(n+t+1)+s+t}} &&P(s|t-1)_A  \ar[rr]^{(-1)^{(a+k+1)(b+t+1)}} &&P(s-1|t-1)_A}
\end{displaymath}
 \item $P(s|s-2)_B$

And in this case
\begin{displaymath} \xymatrix{ P(s|s-2)_B \ar[rr] &&P(s-1|s-2)_A   \ar @{} [d] |{=} \ar[rr] &&P(s-2|s-4)_B\\
 &&0&&}
 \end{displaymath}
\end{enumerate}
Putting all of them together, we obtain
\begin{align*}
&G^{(n|m)}_{(k|l)} \cdot \widetilde{F}^{(k|l)}_{(a|b)}\\
&=  \begin{cases} P(s|t)_A \to 0 s>t+1 \\
								P(s|s-1)_A \to (-1)^{(n+k)(k+s+1)+1+(a+k+1)(b+s)} P(s-2|s-3)_A  \\
								P(s|t)_B \to (-1)^{(k+s+1)(n+t+1)+s+t+(a+k+1)(b+t+1)} P(s-1|t-1)_A \\ 
								\end{cases}\\
&=(-1)^{(a+k+1)(b+n)+a+n} K^{(n|m)}_{(a|b)}															
\end{align*}	
\item  For multiplication with $G^{(k|l)}_{(a|b)}$ we just look at the $\hom$-space and see
\begin{align*}&G^{(n|m)}_{(k|l)} \cdot G^{(k|l)}_{(a|b)}\\&\in \hom^{(n+m)-(a+b)-6}(P_\bullet(n|m),P_\bullet(a|b)\langle (n+m)-(a+b)-8 \rangle)\end{align*}
and by Lemma \ref{Cor:posschmaps} case \ref{Cposs2} this cannot exist, since $-4 \nleq -6$
so the composition is zero.
%
\item  For  $G^{(n|m)}_{(k|l)} \cdot K^{(k|l)}_{(a|b)}$ we have
\begin{align*} &G^{(n|m)}_{(k|l)} \cdot K^{(k|l)}_{(a|b)}\\&\in \hom^{(n+m)-(a+b)-7}(P_\bullet(n|m),P_\bullet(a|b)\langle (n+m)-(a+b)-8 \rangle)\end{align*}
and by Lemma \ref{Cor:posschmaps} case \ref{Cposs1} this cannot exist, since $-3 \nleq -7$.
%
%
\item  The same argument holds for  $G^{(n|m)}_{(k|l)} \cdot J^{(k|l)}_{(a|b)}$ since we have
\begin{align*}&G^{(n|m)}_{(k|l)} \cdot J^{(k|l)}_{(a|b)}\\&\in \hom^{(n+m)-(a+b)-5}(P_\bullet(n|m),P_\bullet(a|b)\langle (n+m)-(a+b)-8 \rangle)\end{align*}
and by Lemma \ref{Cor:posschmaps} case \ref{Cposs3} this cannot exist, since $-3 \nleq -5$

%
%
%
 \end{itemize}
\item Yet we can work out the multiplications with $K^{(n|m)}_{(k|l)}$:
\begin{itemize}
\item Again, multiplying with $\Id^{(k|l)}_{(a|b})$ only changes the signs, so we get
\begin{align*}
&K^{(n|m)}_{(k|l)} \cdot \Id^{(k|l)}_{(a|b)}\\
=& \begin{cases} P(s|t)_A \to 0 \ t \neq s-1\\
								P(s|s-1)_A \to (-1)^{(n+k+1)(k+s)+(a+k)(b+s+1)} P(s-2|s-3)_A  \\
								P(s|t)_B \to (-1)^{(n+t)(k+s+1)+(a+k)(b+t+1)} P(s-1|t-1)_A \\ 
								\end{cases}\\
								&=(-1)^{(a+k)(b+n+1)} G^{(n|m)}_{(a|b)}															
\end{align*}
\item Like above one sees \begin{align*}
&K^{(n|m)}_{(k|l)} \cdot F^{(k|l)}_{(a|b)} \\& \in \hom^{(n+m)-(a+b)-5}(P_\bullet(n|m),P_\bullet(a|b)\langle (n+m)-(a+b)-6 \rangle)\end{align*}
and by Lemma \ref{Cor:posschmaps} case \ref{Cposs1} this cannot exist, since  $-3\nleq -5$.
\item The same holds for multiplication with $\widetilde{F}^{(k|l)}_{(a|b)}$.
\item \begin{myalignst}&K^{(n|m)}_{(k|l)} \cdot G^{(k|l)}_{(a|b)}\\&\in \hom^{(n+m)-(a+b)-7}(P_\bullet(n|m),P_\bullet(a|b)\langle (n+m)-(a+b)-8 \rangle)\end{myalignst}
and by Lemma \ref{Cor:posschmaps} case \ref{Cposs1} this cannot exist since $-3 \nleq -7$.
\item Last we have \begin{align*} &K^{(n|m)}_{(k|l)} \cdot J^{(k|l)}_{(a|b)}\\&\in \hom^{(n+m)-(a+b)-6}(P_\bullet(n|m),P_\bullet(a|b)\langle (n+m)-(a+b)-10 \rangle)\end{align*}
and by Lemma \ref{Cor:posschmaps} case \ref{Cposs4} this cannot exist, since  $-2\nleq -6$.				
\end{itemize}
\item Last we can compute the multiplications with $J^{(n|m)}_{(k|l)}=F^{(n|m)}_{(k+1|l)} \cdot \widetilde{F}^{(k+1|l)}_{(k|l)}$
\begin{itemize} 
\item For multiplication with $\Id$ we can use results already computed:
\begin{align*}
J^{(n|m)}_{(k|l)} \cdot \Id^{(k|l)}_{(a|b)}\\
&=F^{(n|m)}_{(k+1|l)} \cdot \widetilde{F}^{(k+1|l)}_{(k|l)} \cdot \Id^{(k|l)}_{(a|b)} \\
&=F^{(n|m)}_{(k+1|l)} \cdot \Id^{(k+1|l)}_{(a+1|b)} \cdot \widetilde{F}^{(a+1|b)}_{(a|b)} \\
&=(-1)^{(n+k+1)(b+l)} F^{(n|m)}_{(a+1|b)}  \cdot \widetilde{F}^{(a+1|b)}_{(a|b)} \\
&=(-1)^{(n+k+1)(b+l)} J^{(n|m)}_{(a|b)} 
\end{align*}
\item Next we get
\begin{align*}
J^{(n|m)}_{(k|l)} \cdot F^{(k|l)}_{(a|b)}\\
&=F^{(n|m)}_{(k+1|l)} \cdot \widetilde{F}^{(k+1|l)}_{(k|l)} \cdot F^{(k|l)}_{(a|b)} \\
&=F^{(n|m)}_{(k+1|l)} \cdot J^{(k+1|l)}_{(a|b)}\\
&=(-1)^{(n+k+1)(b+l)} B^{(n|m)}_{(a|b)} 
\end{align*}
\item Using the associativity we get
\begin{align*}
J^{(n|m)}_{(k|l)} \cdot \widetilde{F}^{(k|l)}_{(a|b)}\\
&=F^{(n|m)}_{(k+1|l)} \cdot( \widetilde{F}^{(k+1|l)}_{(k|l)} \cdot \widetilde{F}^{(k|l)}_{(a|b)}) \\
&=0
\end{align*}
\item For the multiplication with $G^{(k|l)}_{(a|b)}$ one sees
 \begin{align*}&J^{(n|m)}_{(k|l)} \cdot G^{(k|l)}_{(a|b)}\\&\in \hom^{(n+m)-(a+b)-8}(P_\bullet(n|m),P_\bullet(a|b)\langle (n+m)-(a+b)-5 \rangle)\end{align*}
and by Lemma \ref{Cor:posschmaps} case \ref{Cposs3} this cannot exist, since $-3 \nleq -5$.
\item Multiplying with $K^{(k|l)}_{(a|b)}$ delivers
 \begin{align*}&J^{(n|m)}_{(k|l)} \cdot K^{(k|l)}_{(a|b)}\\&\in \hom^{(n+m)-(a+b)-10}(P_\bullet(n|m),P_\bullet(a|b)\langle (n+m)-(a+b)-6 \rangle)\end{align*}
and by Lemma \ref{Cor:posschmaps} case \ref{Cposs4} this cannot exist, since $-2 \nleq -10$.
\item Last one checks similarly
 \begin{align*}&J^{(n|m)}_{(k|l)} \cdot J^{(k|l)}_{(a|b)}\\&\in \hom^{(n+m)-(a+b)-8}(P_\bullet(n|m),P_\bullet(a|b)\langle (n+m)-(a+b)-4 \rangle)\end{align*}
 which also is empty, since  $$n+m \nleq a+b+(n+m)-(a+b)-4)+2.$$
\end{itemize}
\item Last we check that
$$B^{(n|m)}_{(a|b)}=A^{(n|m)}_{(a+1|b)}\cdot \widetilde{F}^{(a+1|b)}_{(a|b)}.$$
Therefore write
\begin{align*}
&B^{(n|m)}_{(a|b)}\\&=(-1)^{(n+k)(l+b+1)} \cdot F^{(n|m)}_{(k|l)} \cdot F^{(k|l)}_{(a+1|b)} \cdot \widetilde{F}^{(a+1|b)}_{(a|b)}\\
&=A^{(n|m)}_{(a+1|b)}\cdot \widetilde{F}^{(a+1|b)}_{(a|b)}
\end{align*}
\end{enumerate}
\end{proof}

\chapter{Computations of the homotopies in Section \ref{sec:homotopies}}\label{Ap:Homot}
\begin{proof}[Proof of Lemma \ref{Le:homotopyF}]
We will only check the property for $H(F-(-1)^{k+l+1}\widetilde{F})^{(k+1|k)}_{(k|l)}$. 
Since the degree of the map $H$ is $k+l+1$ we have to check that the diagrams of the form
\begin{displaymath}
\xymatrix{\cdots \ar[r] &V \ar[d] \ar[r] \ar[ld] &Y\ar[ld]^{(-1)^{k+l}}\\
X \ar[r] & W  \ar[r] & \cdots}
\end{displaymath}
with $V$ a projective module occurring in the resolution, commute with the diagonal arrows behaving as a homotopy.\\
For $V$ we have to check different cases (shortly write $H$ for $H(F-(-1)^{k+l+1}\widetilde{F})^{(k+1|k)}_{(k|l)}$):
\begin{enumerate}[label={$H$\arabic*})]
\item $V=P(s|t), s>t+2$:

Since $H$ is zero on $P(s|t)_A$ we only have a diagram of the form
\begin{displaymath}
\xymatrix{\cdots \ar[r] &P(s|t)_A \ar[d] \ar[r] \ar[ld] &{\begin{matrix} P(s-1|t)_B \\P(s|t-1)_B \end{matrix}} \ar[ld]^{(-1)^{k+l}}\\
\cdots \ar[r] & {\begin{matrix} P(s-1|t)_A \\P(s|t-1)_A \end{matrix}}  \ar[r] & \cdots}
\end{displaymath}
and we want the triangle to commute. Therefore we check the signs:

\begin{displaymath} \xymatrix{ P(s|t)_A \ar[rr]^{(-1)^{(s+t+1)(k+1+s)}} &&P(s-1|t)_B  \ar @{} [d] |{=} \ar[rr]^{(-1)^{(s+t+1)(k+s+1)+k+l}} &&P(s-1|t)_A\\
& P(s|t)_A \ar[rr]^{(-1)^{k+l}} &&P(s-1|t)_A &}
\end{displaymath} 
and for $s<k+1$ 
\begin{displaymath} \xymatrix{ P(s|t)_A \ar[rr]^{(-1)^{(s+t+1)(k+1+s)+k+s}} &&P(s|t-1)_B  \ar @{} [d] |{=} \ar[rr]^{(-1)^{(s+t+1)(k+s)+k+l}} &&P(s|t-1)_A\\
 P(s|t)_A \ar[rr]^{(-1)^{l+t+1}} &&P(s|t-1)_A &}
\end{displaymath} 
None of the involved terms here will occur for $s=k+1$.
\item $V=P(s+2|s)_A$:

Here we have
\begin{displaymath}
\xymatrix{\cdots \ar[r] &P(s+2|s)_A \ar[d] \ar[r] \ar[ld] &P(s+2|s-1)_B \ar[ld]^{(-1)^{k+l}}\\
\cdots \ar[r] & {\begin{matrix} P(s+1|s)_A \\P(s+2|s-1)_A \end{matrix}}  \ar[r] & \cdots}
\end{displaymath}
If $s+2<k+1$ the diagram
\begin{displaymath} \xymatrix{ P(s+2|s)_A \ar[rr]^{-1} &&P(s+2|s-1)_B  \ar @{} [d] |{=} \ar[rr]^{(-1)^{k+s+k+l}} &&P(s+2|s-1)_A\\
 P(s+2|s)_A \ar[rr]^{(-1)^{l+s+1}} &&P(s+2|s-1)_A &}
\end{displaymath} 
commutes as we have already seen above.
Since we have two maps from $P(s+2|s)_A \to P(s+1|s)_A$, one comming from $F$ and one from $\widetilde{F}$ we obtain
\begin{displaymath} \xymatrix{ P(s+2|s)_A \ar @{} [d] |{=} \ar[rr]^{(-1)^{l+s+1+k+s}} &&P(s+1|s)_A   \\
-(P(s+2|s)_A \ar[rr]^{(-1)^{k+l}} &&P(s+1|s)_A) }
\end{displaymath} 
and by this the two maps cancel.
\item $V=P(s+1|s)_A$:

Here we have
\begin{displaymath}
\xymatrix{\cdots \ar[r] &P(s+1|s)_A \ar[d] \ar[r] \ar[ld] &P(s+1|s-1)_B \ar[ld]^{(-1)^{k+l}}\\
\cdots \ar[r] & {\begin{matrix} P(s|s-2)_B \\P(s+1|s-1)_A \end{matrix}}  \ar[r] & \cdots}
\end{displaymath}
If $s+1<k+1$ the diagram
\begin{displaymath} \xymatrix{ P(s+1|s)_A \ar[rr]^{(-1)^{k+s+1}} &&P(s+1|s-1)_B  \ar @{} [d] |{=} \ar[rr]^{(-1)^{k+l}} &&P(s+2|s-1)_A\\
 P(s+2|s)_A \ar[rr]^{(-1)^{l+s+1}} &&P(s+2|s-1)_A &&}
\end{displaymath} 
commutes as we have already seen above.
Again we have two maps from $P(s+1|s)_A \to P(s|s-2)_B$, one comming from $F$ and one from $\widetilde{F}$, so we have
\begin{displaymath} \xymatrix{ P(s+1|s)_A \ar @{} [d] |{=} \ar[rr]^{(-1)^{l+s+1+k+s}} &&P(s|s-2)_B   \\
-(P(s+1|s)_A \ar[rr]^{(-1)^{k+l}} &&P(s|s-2)_B) }
\end{displaymath} 
and by this the two maps cancel.

\item $V=P(s|t)_B$ and $s>t+2$:

\begin{displaymath}
\xymatrix{\cdots \ar[r] &P(s|t)_B \ar[d] \ar[r] \ar[ld] &{\begin{matrix} P(s+1|t)_B \\P(s|t+1)_B \end{matrix}} \ar[ld]^{(-1)^{k+l}}\\
P(s|t)_A \ar[r] & {\begin{matrix} P(s-1|t)_B \\P(s|t-1)_B\\P(s+1|t)_A \\P(s|t+1)_A \end{matrix}}  \ar[r] & \cdots}
\end{displaymath}
and we have to check that the sum of the diagonal arrows equals the vertical one.
First we check those including two vertical arrows, there we have
\begin{displaymath} \xymatrix{ P(s|t)_B \ar[rr]^{(-1)^{s+t}} &&P(s|t+1)_B  \ar @{} [d] |{=} \ar[rr]^{(-1)^{k+l+(s+t+1)(k+s)}} &&P(s|t+1)_A\\
-(P(s|t)_B \ar[rr]^{(-1)^{(s+t)(k+s)}} &&P(s|t)_A \ar[rr]^{(-1)^{l+t+1}} &&P(s|t+1)_A)}\end{displaymath}
so the sum is zero.\\
Similar we check for $s+1\leq k$
\begin{displaymath} \xymatrix{ P(s|t)_B \ar[rr]^{(-1)^{k+s+1}} &&P(s+1|t)_B  \ar @{} [d] |{=} \ar[rr]^{(-1)^{k+l+(s+t+1)(k+s+1)}} &&P(s+1|t)_A\\
-(P(s|t)_B \ar[rr]^{(-1)^{(s+t)(k+s)}} &&P(s|t)_A \ar[rr]^{(-1)^{k+l+s+t+1}} &&P(s+1|t)_A)}\end{displaymath}
Next we check that the triangles involving one vertical arrow and one diagonal (the left one) commute:
\begin{displaymath} \xymatrix{ P(s|t)_B \ar[rr]^{(-1)^{(s+t)(k+s)}} &&P(s|t)_A  \ar @{} [d] |{=} \ar[rr]^{(-1)^{(s+t+1)(k+s)+k+l+1}} &&P(s-1|t)_B\\
 P(s|t)_B \ar[rr]^{(-1)^{l+s+1}} &&P(s-1|t)_B &}
\end{displaymath}
and
\begin{displaymath} \xymatrix{ P(s|t)_B \ar[rr]^{(-1)^{(s+t)(k+s)}} &&P(s|t)_A  \ar @{} [d] |{=} \ar[rr]^{(-1)^{(s+t+1)(k+s)+l+s}} &&P(s|t-1)_B\\
 P(s|t)_B \ar[rr]^{(-1)^{k+l}} &&P(s|t-1)_B &}
\end{displaymath}
\item   $V=P(s|s-2)_B$:

Last we have to check
\begin{displaymath}
\xymatrix{\cdots \ar[r] &P(s|s-2)_B \ar[d] \ar[r] \ar[ld] &P(s+1|s)_A  \ar[ld]^{(-1)^{k+l}}\\
P(s|s-2)_A \ar[r] & {\begin{matrix} P(s|s-3)_B\\P(s+1|s-2)_A \\P(s|s-1)_A \end{matrix}}  \ar[r] & \cdots}
\end{displaymath}
The one passing over $P(s+1|s-2)_B$ is just commuting by the above computations, so we only have to compute the triangles involving the left one. Here we get
\begin{displaymath} \xymatrix{ P(s|s-2)_B \ar[rr] &&P(s|s-2)_A  \ar @{} [d] |{=} \ar[rr]^{(-1)^{l+s+1}} &&P(s|s-1)_A\\
 P(s|s-2)_B \ar[rr]^{(-1)^{l+s+1}} &&P(s|s-1)_A &}
\end{displaymath}
and
\begin{displaymath} \xymatrix{ P(s|s-2)_B \ar[rr] &&P(s|s-2)_A  \ar @{} [d] |{=} \ar[rr]^{(-1)^{k+l}} &&P(s|s-3)_B\\
 P(s|s-2)_B \ar[rr]^{(-1)^{k+l}} &&P(s|s-3)_B &}
\end{displaymath}\end{enumerate}
So we have checked all possible cases.
\end{proof}

\begin{proof}[Proof of Lemma \ref{Le:homotopyJ}]
Since the degree of the map $J$ is $m+n-(k+l)-2$ and therefore the degree of $H$ is $m+n-(k+l)-3$, we have to check that the diagrams of the form
\begin{displaymath}
\xymatrix{\cdots \ar[r] &V \ar[d] \ar[r] \ar[ld] &Y\ar[ld]^{(-1)^{k+l+m+n}}\\
X \ar[r] & W  \ar[r] & \cdots}
\end{displaymath}
with $V$ a projective module occurring in the resolution, commute with the diagonal arrows behaving as a homotopy.\\
For $V$ we have to check different cases:
\begin{enumerate}[label={$H$\arabic*})]
\item $V=P(s|t)_A, s>t+2$:

$H$ is zero on $P(s|t)_A$ and on all possible terms A-terms occuring in $Y$, so the onliest maps are
\begin{displaymath}
\xymatrix{\cdots \ar[r] &P(s|t)_A \ar[d] \ar[r] \ar[ld] &{\begin{matrix}P(s|t-1)_B\\P(s-1|t)_B\end{matrix}} \ar[ld]^{(-1)^{k+l+m+n}}\\
\cdots \ar[r] & {\begin{matrix}P(s-1|t-1)_A \\P(s|t-2)\end{matrix}} \ar[r] & \cdots}
\end{displaymath}
First check that
\begin{displaymath} \xymatrix{ P(s|t)_A \ar[rr] &&P(s|t-1)_B  \ar @{} [d] |{=} \ar[rr] &&P(s|t-2)_A\\
 &&0&&}
\end{displaymath}
so we are left to look for the triangle
\begin{displaymath} \xymatrix{ P(s|t)_A \ar[rr]^{(-1)^{(s+t+1)(n+s)+n+m+1}} &&P(s-1|t)_B  \ar @{} [d] |{=} \ar[rr]^{(-1)^{(n+k)(l+t)+(s+t)(n+s+1)+n+m+k+l}} &&P(s-1|t-1)_A\\
 P(s|t)_A \ar[rr]^{(-1)^{(n+k+1)(l+t+1)}} &&P(s-1|t-1)_A &}
 \end{displaymath}
 which commutes, too. The B-term always exists if the A-term $P(s-1|t-1)_A$ exists, since $m\geq k$.
 
\item $V=P(s+2|s)_A$:
3
Here we have
\begin{displaymath}
\xymatrix{\cdots \ar[r] &P(s+2|s)_A \ar[d] \ar[r] \ar[ld] &{\begin{matrix}P(s+2|s-1)_B\\P(s+2|s+1)_A\end{matrix}} \ar[ld]^{(-1)^{k+l+m+n}}\\
\cdots \ar[r] & {\begin{matrix}P(s+1|s-1)_A \\P(s+2|s-2)\end{matrix}} \ar[r] & \cdots}
\end{displaymath}
First check that
\begin{displaymath} \xymatrix{ P(s+2|s)_A \ar[rr] &&P(s+2|s-1)_B  \ar @{} [d] |{=} \ar[rr] &&P(s+2|s-2)_A\\
 &&0&&}
\end{displaymath}
so we are left to look for the triangle
\begin{displaymath} \xymatrix{ P(s+2|s)_A \ar[rr]^{(-1)^{m+s+1}} &&P(s+2|s+1)_A  \ar @{} [d] |{=} \ar[rr]^{(-1)^{(n+k)(l+s)+n+m+k+l}} &&P(s+1|s-1)_A\\
 P(s+2|s)_A \ar[rr]^{(-1)^{(n+k+1)(l+s+1)}} &&P(s+1|s-1)_A &}
 \end{displaymath}
 which commutes, too.
 \item $V=P(s+1|s)_A$:
 
Here we have
\begin{displaymath}
\xymatrix{\cdots \ar[r] &P(s+1|s)_A \ar[d] \ar[r] \ar[ld] &P(s+1|s-1)_B \ar[ld]^{(-1)^{m+n+k+l}}\\
P(s|s-2)_A \ar[r] & {\begin{matrix} P(s+1|s-2)_A \\P(s|s-1)_A \\P(s|s-3)_B\end{matrix}}  \ar[r] & \cdots}
\end{displaymath}
First we look at maps both belonging to $H$, one being the right diagonal arrow and one the left
\begin{displaymath} \xymatrix{ P(s+1|s)_A \ar[rr]^{(-1)^{m+s+1}} &&P(s+1|s-1)_B  \ar @{} [d] |{=} \ar[rr]^{(-1)^{(n+k)(l+s+1)+(n+s+1)+m+n+k+l}} &&P(s+1|s-2)_A\\
-(P(s+1|s)_A \ar[rr]^{(-1)^{(n+k)(l+s+1)}} &&P(s|s-2)_A \ar[rr]^{(-1)^{l+k+1}} &&P(s+1|s-2)_A)}\end{displaymath}
and the two maps cancel.\\
Now we check that the triangle including the left diagonal map and $A$ commutes:
\begin{displaymath} \xymatrix{ P(s+1|s)_A \ar[rr]^{(-1)^{(n+k)(l+s+1)}} &&P(s|s-2)_A  \ar @{} [d] |{=} \ar[rr]^{(-1)^{l+s+1}} &&P(s|s-1)_A\\
 P(s+1|s)_A \ar[rr]^{(-1)^{(n+k+1)(l+s+1)}} &&P(s+1|s-2)_A &&}
\end{displaymath} 
Last check that
\begin{displaymath} \xymatrix{ P(s+1|s)_A \ar[rr] &&P(s|s-2)_A  \ar @{} [d] |{=} \ar[rr] &&P(s|s-3)_A\\
 &&0&&}
\end{displaymath}
and therefore we have checked this case completly.

\item $V=P(s|t)_B$ and $s>t+2$:

Writing down only the terms where the maps are nonzero, we get
\begin{displaymath}
\xymatrix{\cdots \ar[r] &P(s|t)_B \ar[d] \ar[r] \ar[ld] &{\begin{matrix}P(s+1|t)_B\\P(s|t+1)_B\end{matrix}} \ar[ld]^{(-1)^{k+l+m+n}}\\
P(s|t-1)_A \ar[r] & {\begin{matrix}P(s+1|t-1)_A \\P(s|t)_A\\P(s-1|t-1)_B\\P(s|t-2)_B \end{matrix}} \ar[r] & \cdots}
\end{displaymath}
First check that
\begin{displaymath} \xymatrix{ P(s|t)_B\ar[rr] &&P(s|t-1)_A  \ar @{} [d] |{=} \ar[rr] &&P(s|t-2)_B\\
 &&0&&}
\end{displaymath}
Now we check that the two maps of $H$ cancel
\begin{displaymath} \xymatrix{ P(s|t)_B \ar[rr]^{(-1)^{n+m+s+t+1}} &&P(s|t+1)_B  \ar @{} [d] |{=} \ar[rr]^{(-1)^{(n+k)(l+t+1)+(s+t)(n+s)+n+m+k+l}} &&P(s|t)_A\\
-(P(s|t)_B \ar[rr]^{(-1)^{(n+k)(l+t)+(s+t+1)(n+s)}} &&P(s|t-1)_A   \ar[rr]^{(-1)^{l+t}} &&P(s|t)_A)}
\end{displaymath}
and
\begin{displaymath} \xymatrix{ P(s|t)_B \ar[rr]^{(-1)^{m+s+1}} &&P(s+1|t)_B  \ar @{} [d] |{=} \ar[rr]^{(-1)^{(n+k)(l+t)+(s+t)(n+s+1)+n+m+k+l}} &&P(s+1|t-1)_A\\
-(P(s|t)_B \ar[rr]^{(-1)^{(n+k)(l+t)+(s+t+1)(n+s)}} &&P(s|t-1)_A   \ar[rr]^{(-1)^{k+l+s+t}} &&P(s+1|t-1)_A)}
\end{displaymath}
 which commutes, too. \\
 We have to check the left triangle, were we get
\begin{displaymath} \xymatrix{ P(s|t)_B \ar[rr]^{(-1)^{(n+k)(l+t)+(s+t+1)(n+s)}} &&P(s|t-1)_A  \ar @{} [d] |{=} \ar[rr]^{(-1)^{(s+t)(k+s)+l+k+1}} &&P(s-1|t-1)_B\\
 P(s|t)_B \ar[rr]^{(-1)^{(n+k+1)(l+s+1)}} &&P(s-1|t-1)_A &&}
\end{displaymath}  
 which obviously commutes.
 
\item   $V=P(s|s-2)_B$:

Last we have to check
\begin{displaymath}
\xymatrix{\cdots \ar[r] &P(s|s-2)_B \ar[d] \ar[r] \ar[ld] &{\begin{matrix}P(s+1|s)_A \\P(s+1|s-2)_B\end{matrix}} \ar[ld]^{(-1)^{n+m+k+l}}\\
P(s|s-3)_A \ar[r] & {\begin{matrix} P(s-1|s-3)_B\\P(s|s-4)_B\\P(s|s-2)_A \\P(s+1|s-3)_A \end{matrix}}  \ar[r] & \cdots}
\end{displaymath} 
All maps involving the right $P(s+1|s-2)_B$ commutate by the same computations as in the previous case. Also the map $A$ and $H$ passing over $P(s|s-3)$ do. We are left to check the case
\begin{displaymath} \xymatrix{ P(s|s-2)_B \ar[rr]^{(-1)^{n+m+1}} &&P(s+1|s)_A  \ar @{} [d] |{=} \ar[rr]^{(-1)^{(n+k)(l+s+1)+n+m+k+l}} &&P(s|t)_A\\
-(P(s|s-2)_B \ar[rr]^{(-1)^{(n+k)(l+s)+(n+s)}} &&P(s|s-3)_A   \ar[rr]^{(-1)^{l+s}} &&P(s|s-2)_A)}
\end{displaymath}
which commutes by the tables in section \ref{sec:mapproj}.
\end{enumerate}
So we have checked all possible cases.
\end{proof}

\begin{proof}[Proof of Lemma \ref{Le:homotopyA}]
Since the degree of the map $A$ is $m+n-(k+l)-2$ and therefore the degree of $H$ is $m+n-(k+l)-3$ we have to check that the diagrams of the form
\begin{displaymath}
\xymatrix{\cdots \ar[r] &V \ar[d] \ar[r] \ar[ld] &Y\ar[ld]^{(-1)^{k+l+m+n}}\\
X \ar[r] & W  \ar[r] & \cdots}
\end{displaymath}
with $V$ a projective module occurring in the resolution, commute with the diagonal arrows behaving as a homotopy.\\
For $V$ we have to check different cases:
\begin{enumerate}[label={$H$\arabic*})]
\item $V=P(s|t)_A, s>t+3$:

$A$ is zero on $P(s|t)_A$ and $H$ is zero on all possible terms occuring in $Y$, so there are no nonzero maps occurring in the diagram and therefore the diagram commutes.
\item $V=P(s+3|s)_A$:

Here we have
\begin{displaymath}
\xymatrix{\cdots \ar[r] &P(s+3|s)_A \ar[d] \ar[r] \ar[ld] &P(s+2|s)_B \ar[ld]^{(-1)^{k+l+m+n}}\\
\cdots \ar[r] & P(s+1|s)_A  \ar[r] & \cdots}
\end{displaymath}
Again $A$ is zero on $P(s+3|s)_A$ and we only check that
\begin{displaymath} \xymatrix{ P(s+3|s)_A \ar[rr] &&P(s+2|s)_B  \ar @{} [d] |{=} \ar[rr] &&P(s+1|s)_A\\
 &&0&&}
\end{displaymath}
what is true.
\item $V=P(s+2|s)_A$:

Here we have
\begin{displaymath}
\xymatrix{\cdots \ar[r] &P(s+2|s)_A \ar[d] \ar[r] \ar[ld] &P(s+1|s)_A \ar[ld]^{(-1)^{m+n+k+l}}\\
\cdots \ar[r] & P(s+1|s-1)_A  \ar[r] & \cdots}
\end{displaymath}
since on all other terms the maps are zero. We get
\begin{displaymath} \xymatrix{ P(s+2|s)_A \ar[rr]^{(-1)^{m+s+1}} &&P(s+2|s+1)_A  \ar @{} [d] |{=} \ar[rr]^{(-1)^{(n+k)(l+s)+n+l+1+m+n+k+l}} &&P(s+1|s-1)_A\\
 P(s+2|s)_A \ar[rr]^{(-1)^{(n+k)(l+s)+k+s}} &&P(s+1|s-1)_A &}
\end{displaymath} 
which commutes.
\item $V=P(s+1|s)_A$:

Here we have
\begin{displaymath}
\xymatrix{\cdots \ar[r] &P(s+1|s)_A \ar[d] \ar[r] \ar[ld] &P(s+1|s-1)_B \ar[ld]^{(-1)^{m+n+k+l}}\\
P(s|s-2)_A \ar[r] & {\begin{matrix} P(s+1|s-2)_A \\P(s|s-1)_A \\P(s|s-3)_B\end{matrix}}  \ar[r] & \cdots}
\end{displaymath}
First we look at maps both belonging to $H$, one being the right diagonal arrow and one the left
\begin{displaymath} \xymatrix{ P(s+1|s)_A \ar[rr]^{(-1)^{m+s+1}} &&P(s+1|s-1)_B  \ar @{} [d] |{=} \ar[rr]^{(-1)^{(n+k)(l+s+1)+k+l+m+n+k+l}} &&P(s|s-1)_A\\
-(P(s+1|s)_A \ar[rr]^{(-1)^{(n+k)(l+s+1)+n+l+1}} &&P(s|s-2)_A \ar[rr]^{(-1)^{l+s+1}} &&P(s|s-1)_A)}\end{displaymath}
and the two maps cancel.\\
Now we check that the triangle including the left diagonal map and $A$ commutes
\begin{displaymath} \xymatrix{ P(s+1|s)_A \ar[rr]^{(-1)^{(n+k)(l+s+1)+n+l+1}} &&P(s|s-2)_A  \ar @{} [d] |{=} \ar[rr]^{(-1)^{k+l+1}} &&P(s+1|s-2)_A\\
 P(s+1|s)_A \ar[rr]^{(-1)^{(n+k)(l+s)}} &&P(s+1|s-2)_A &&}
\end{displaymath} 
Last check that
\begin{displaymath} \xymatrix{ P(s+1|s)_A \ar[rr] &&P(s|s-2)_A  \ar @{} [d] |{=} \ar[rr] &&P(s|s-3)_A\\
 &&0&&}
\end{displaymath}
and therefore we have checked this case completly.

\item $V=P(s|t)_B$ and $s>t+3$:

As above $H$ and $A$ are zero on $P(s|t)_B$ and $H$ is zero on all possible maps occuring in $Y$, so there is nothing to check.
\item $V=P(s|s-3)_B$:

\begin{displaymath}
\xymatrix{\cdots \ar[r] &P(s|s-3)_B \ar[d] \ar[r] \ar[ld] &P(s|s-2)_B\ar[ld]^{(-1)^{m+n+k+l}}\\
\cdots \ar[r] &P(s-1|s-2)_A \ar[r] & \cdots}
\end{displaymath}
Here we have to check that this triangle commutes:
\begin{displaymath} \xymatrix{ P(s|s-3)_B \ar[rr]^{(-1)^{n+m}} &&P(s|s-2)_B  \ar @{} [d] |{=} \ar[rr]^{(-1)^{(n+k)(l+s)+k+l+n+m+k+l}} &&P(s-1|s-2)_A\\
 P(s|s-3)_B \ar[rr]^{(-1)^{(n+k)(l+s)}} &&P(s-1|s-2)_A}
\end{displaymath}

\item   $V=P(s|s-2)_B$:

Last we have to check
\begin{displaymath}
\xymatrix{\cdots \ar[r] &P(s|s-2)_B \ar[d] \ar[r] \ar[ld] &P(s+1|s)_A  \ar[ld]^{(-1)^{n+m+k+l}}\\
P(s-1|s-2)_A \ar[r] & {\begin{matrix} P(s-1|s-3)_B\\P(s|s-2)_A  \end{matrix}}  \ar[r] & \cdots}
\end{displaymath} 
Checking with the tables in section \ref{sec:mapproj}, one gets

\begin{displaymath} \xymatrix{ P(s|s-2)_B \ar[rr]^{(n+k)(l+s)+k+l} &&P(s-1|s-2)_A  \ar @{} [d] |{=} \ar[rr]^{(-1)^{k+l}} &&P(s|s-2)_A\\
-(P(s|s-2)_B \ar[rr]^{(-1)^{n+m+1}} &&P(s+1|s)_A \ar[rr]^{(-1)^{(n+k)(l+s+1)+n+l+1+n+m+k+l}} &&P(s|s-2)_A)\\
 +P(s|s-2)_B \ar[rr]^{(-1)^{(n+k)(l+s)}} &&P(s|s-2)_A &}
\end{displaymath}
since the last morphism is passing over $P(s|s-1)$.
Now we check that the morphism involving only the left triangle commutes:
\begin{displaymath} \xymatrix{ P(s|s-2)_B \ar[rr]^{(n+k)(l+s)+k+l} &&P(s-1|s-2)_A  \ar @{} [d] |{=} \ar[rr]^{(-1)^{l+s+1}} &&P(s-1|s-3)_B\\
 P(s|s-2)_B \ar[rr]^{(-1)^{(n+k)(l+s)+k+s+1}} &&P(s|s-3)_B &}
\end{displaymath}
\end{enumerate}
So we have checked all possible cases.
\end{proof}


\bibliographystyle{alpha}

\end{document}